\theoremstyle{plain}
\newtheorem{lemma}{Lemma}[subsection]
  \newtheorem{proposition}[lemma]{Proposition}
  \newtheorem{corollary}[lemma]{Corollary}
  \newtheorem{theorem}[lemma]{Theorem}
  \newtheorem*{theorem*}{Theorem}
  \newtheorem*{fact*}{Fact}
  \newtheorem*{claim*}{Claim}
  \newtheorem{claim}[lemma]{Claim}
\theoremstyle{definition}
  \newtheorem{definition}[lemma]{Definition}
\theoremstyle{remark}
  \newtheorem{remark}[lemma]{Remark}
  \newtheorem{example}[lemma]{Example}
\numberwithin{equation}{section}
\DeclareMathOperator*{\EEE}{\mathlarger{\mathbb{E}}}
\newsavebox{\@brx}
\newcommand{\llangle}[1][]{\savebox{\@brx}{\(\m@th{#1\langle}\)}%
  \mathopen{\copy\@brx\kern-0.5\wd\@brx\usebox{\@brx}}}
\newcommand{\rrangle}[1][]{\savebox{\@brx}{\(\m@th{#1\rangle}\)}%
  \mathclose{\copy\@brx\kern-0.5\wd\@brx\usebox{\@brx}}}
\title[Quantitative bounds in the inverse theorem over cyclic groups]{Quantitative bounds in the inverse theorem for the Gowers $U^{s+1}$-norms over cyclic groups}
\author{Freddie Manners}
\address{Freddie Manners, UC San Diego, 9500 Gilman Drive \#0112, La Jolla, CA 92093, USA}
\email{fmanners@ucsd.edu}
\begin{document}

\newcommand{\eps}[0]{\varepsilon}

\newcommand{\CC}[0]{\mathbb{C}}
\newcommand{\EE}[0]{\mathbb{E}}
\newcommand{\FF}[0]{\mathbb{F}}
\newcommand{\NN}[0]{\mathbb{N}}
\newcommand{\PP}[0]{\mathbb{P}}
\newcommand{\QQ}[0]{\mathbb{Q}}
\newcommand{\RR}[0]{\mathbb{R}}
\newcommand{\TT}[0]{\mathbb{T}}
\newcommand{\ZZ}[0]{\mathbb{Z}}
\newcommand{\cA}[0]{\mathcal{A}}
\newcommand{\cB}[0]{\mathcal{B}}
\newcommand{\cC}[0]{\mathcal{C}}
\newcommand{\cE}[0]{\mathcal{E}}
\newcommand{\cF}[0]{\mathcal{F}}
\newcommand{\cG}[0]{\mathcal{G}}
\newcommand{\cH}[0]{\mathcal{H}}
\newcommand{\cP}[0]{\mathcal{P}}
\newcommand{\cS}[0]{\mathcal{S}}
\newcommand{\fb}[0]{\mathfrak{b}}
\newcommand{\fd}[0]{\mathfrak{d}}
\newcommand{\ff}[0]{\mathfrak{f}}
\newcommand{\fs}[0]{\mathfrak{s}}

\newcommand{\HK}[0]{\operatorname{HK}}
\newcommand{\LHS}[0]{\operatorname{LHS}}
\newcommand{\RHS}[0]{\operatorname{RHS}}
\newcommand{\id}[0]{\operatorname{id}}
\newcommand{\im}[0]{\operatorname{im}}
\newcommand{\poly}[0]{\operatorname{poly}}
\newcommand{\ev}[0]{\operatorname{ev}}
\newcommand{\spn}[0]{\operatorname{span}}
\newcommand{\codim}[0]{\operatorname{codim}}

\newcommand{\tdots}[0]{\mathinner{{\ldotp}{\ldotp}{\ldotp}}}
\newcommand{\wt}[0]{\widetilde}

\newcommand{\heis}[3]{ \left(\begin{smallmatrix} 1 & \hfill #1 & \hfill #3 \\ 0 & \hfill 1 & \hfill #2 \\ 0 & \hfill 0 & \hfill 1 \end{smallmatrix}\right)  }

\newcommand{\uppar}[1]{\textup{(}#1\textup{)}} 

\begin{abstract}
  We provide a new proof of the inverse theorem for the Gowers $U^{s+1}$-norm over groups $H=\ZZ/N\ZZ$ for $N$ prime.  This proof gives reasonable quantitative bounds (the worst parameters are double-exponential), and in particular does not make use of regularity or non-standard analysis, both of which are new for $s \ge 3$ in this setting.
\end{abstract}

\maketitle

\setcounter{tocdepth}{2}
\tableofcontents
\parskip1mm
\onehalfspace{}

\section{Introduction}%
\label{sec:intro}

\subsection{Main result}
Our main theorem is the following quantitative version of the inverse theorem for the Gowers norms over the cyclic group $\ZZ/N\ZZ$, for $N$ prime.

\begin{definition}
  Let $s \ge 1$ be an integer and $H$ an abelian group.  By a \emph{nilsequence} on $H$ of degree $s$, dimension $D$, complexity $M$ and parameter $K$, we mean a function $\psi \colon H \to \CC$ given by $\psi= F \circ p$, where:---
  \begin{itemize}
    \item there is a nilmanifold $G/\Gamma$ of degree $s$, dimension $D$ and complexity $M$;
    \item $p \colon H \to G/\Gamma$ is a polynomial map; and
    \item $F \colon G/\Gamma \to \CC$ is a $K$-Lipschitz function.
  \end{itemize}
  We say $\psi$ is a \emph{one-bounded} nilsequence if furthermore $\|F\|_\infty \le 1$.
\end{definition}

\begin{theorem}%
  \label{thm:main}
  Fix an integer $s \ge 1$. Let $N$ be a \uppar{large} prime and write $H = \ZZ/N\ZZ$.  If $f \colon H \to \CC$ is a function such that $\|f\|_\infty \le 1$ and $\|f\|_{U^{s+1}} \ge \delta$, then there exists a one-bounded nilsequence $\psi \colon H \to \CC$ with degree $s$, dimension $D = O_s(\delta^{-O_s(1)})$, parameter $K$ and complexity $M$,
  such that
  \[
    \left| \EE_{x \in H} f(x) \overline{\psi(x)} \right| \ge \eps;
  \]
  where if $s \le 3$ then
  \[
    \eps^{-1}, K, M \le \exp\big(O(\delta)^{O(1)}\big)
  \]
  and if $s \ge 4$ then
  \[
    \eps^{-1}, K, M \le \exp\exp\big(O_{s}(\delta)^{O_s(1)}\big) \ .
  \]
\end{theorem}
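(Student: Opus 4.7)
The plan is to proceed by induction on $s$. The case $s=1$ is standard Fourier analysis, $s=2$ is the quadratic inverse theorem of Green--Tao with single-exponential loss in $\delta$, and for $s \ge 3$ we assume the theorem at level $s-1$ and derive it at level $s$. The starting point is the derivative identity
\[
  \|f\|_{U^{s+1}}^{2^{s+1}} = \EE_{h \in H} \|\Delta_h f\|_{U^s}^{2^s},
\]
where $\Delta_h f(x) = f(x+h) \overline{f(x)}$, which shows that for a set $H_1 \subseteq H$ of density $\gtrsim \delta^{O(1)}$ we have $\|\Delta_h f\|_{U^s} \gtrsim \delta^{O(1)}$. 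Applying the inductive hypothesis produces, for each such $h$, a one-bounded degree-$(s-1)$ nilsequence $\psi_h = F_h \circ p_h$ on $H$ with $\left| \EE_x \Delta_h f(x) \overline{\psi_h(x)} \right| \gtrsim \eps'$.

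The central task is to glue the family $\{\psi_h\}_{h \in H_1}$ into a single degree-$s$ nilsequence correlating with $f$. First I would pigeonhole so that $(G/\Gamma, F)$ is independent of $h$, which is permitted because the inductive hypothesis delivers bounded dimension and complexity; only the polynomial sequence $p_h \colon H \to G$ then varies with $h$. Next, the multiplicative derivative identity $\Delta_{h_1+h_2} f(x) = \Delta_{h_1} f(x) \cdot \Delta_{h_2} f(x+h_1)$ (valid up to modulus-one corrections handled by averaging) combined with a Cauchy--Schwarz yields an approximate quasi-homomorphism $p_{h_1+h_2} \sim p_{h_1} \cdot T^{h_1} p_{h_2}$ modulo lower-order polynomial terms, for a positive-density set of pairs $(h_1, h_2)$. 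A Balog--Szemer\'edi--Gowers plus \freiman{} step then upgrades this approximate structure to genuine additivity of the top-degree coefficients on a Bohr-type structured subset, identifying $h \mapsto p_h$ with a true homomorphism modulo a controlled-complexity error.

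The hardest step, and the one where earlier proofs had to invoke a regularity lemma or non-standard analysis for $s \ge 3$, is the \emph{symmetry argument}: the leading coefficient of $p_h$, viewed as an $s$-linear form in $h$, must be shown symmetric modulo lower-order corrections, since only then can it arise as the $s$-fold derivative of a single degree-$s$ polynomial on $H$. I would establish symmetry by exchanging two derivative variables and applying an additional Cauchy--Schwarz to exhibit the symmetry defect itself as a structured object amenable to the inductive hypothesis, thereby bypassing any factorization or regularity decomposition. Once symmetry holds, a quantitative cocycle-integration argument reconstructs the degree-$s$ polynomial phase realising correlation with $f$. The quantitative cost of the inductive step at level $s$ is roughly exponential in $\delta^{-O(1)}$; the single-exponential--to--double-exponential jump at $s=4$ should reflect that the symmetry-argument recursion at this level first requires a fresh application of the level-$(s-1)$ theorem with a parameter already polynomially small in $\delta$, thereby combining two exponential losses which for $s \le 3$ can be avoided by bespoke low-degree arguments.
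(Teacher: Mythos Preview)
Your proposal is the Green--Tao--Ziegler strategy, not the paper's; and several steps you describe are precisely the places where GTZ are forced into regularity or non-standard analysis, so the claim that you avoid these is unsupported.

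Concretely: (a) You ``pigeonhole so that $(G/\Gamma, F)$ is independent of $h$''. One can pigeonhole on the nilmanifold (discrete data with bounded dimension/complexity), but $F$ is a Lipschitz function drawn from a continuum; handling this requires a further decomposition (e.g.\ into nilcharacters), which already introduces losses you do not account for. (b) The ``symmetry argument'' is the crux, and your description---``apply an additional Cauchy--Schwarz to exhibit the symmetry defect as a structured object amenable to the inductive hypothesis''---is a restatement of the goal, not a mechanism. In GTZ this step genuinely consumes the regularity/ultralimit machinery, and no quantitative version along these lines is known. (c) Your bound accounting is wrong: a strategy that, at level $s$, applies the level-$(s-1)$ theorem with a parameter already exponentially small in $\delta$ produces a \emph{tower} of height $O(s)$, not a double exponential. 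The paper explicitly notes (Section~\ref{sec:deduction} introduction) that ``a direct approach would give a final bound involving $O(s)$ iterated exponentials'' and works hard to avoid this.

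The paper's route is quite different. Rather than differentiating once and inducting, it studies the full $s$-fold derivative object $\cS_s f$ on $H^s$ and its partial Fourier transform $\fsig_s f$ on $H^{s-1}\times\widehat H$. A new Cauchy--Schwarz argument (Proposition~\ref{prop:norm-inequality}) shows that large $\|f\|_{U^{s+1}}$ forces $\fsig_s f$ to concentrate on the graph of an \emph{approximate polynomial} $\kappa\colon H^{s-1}\to\widehat H$ of degree $s-1$. The main new theorem (Theorem~\ref{thm:1pc}) then classifies such approximate polynomials as nil-polynomials, and this is where the single vs.\ double exponential arises (from a redundancy-elimination procedure in Theorem~\ref{thm:strong-derivs}, not from inductive descent). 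Finally, the paper explicitly \emph{avoids} the symmetry/anti-differentiation steps: it recovers a one-variable nilsequence by sampling $\cS_s f$ along diagonals and using a further Cauchy--Schwarz (Lemma~\ref{lem:key-multilin-sampling}), applying the inductive hypothesis only once at the very end with polynomial-in-$\delta$ parameter, so no iteration of exponentials occurs.
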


The various technical and not entirely standard terms used here (primarily ``complexity'') are discussed in Appendix~\ref{app:nilmanifolds}.  Although they form part of the statement of our main result, the precise definitions are not particularly relevant for the purposes of most of the paper.  We assume the reader is familiar with Gowers uniformity norms $U^{s+1}$ in general; see e.g.\ \cite[\S 11.1]{tao-vu} or~\cite[Appendix B]{gt-primes}.

The case $s=1$ of Theorem~\ref{thm:main} is an exercise in Fourier analysis. The case $s=2$ was proven by Green and Tao~\cite{green-tao-u3}, and it is known~\cite{gt-equiv} that the bounds in that case are the same up to polynomial losses as the best available bounds in a Freiman--Ruzsa theorem, which---following work of Sanders~\cite{sanders}---are significantly better than the ones we give above.  The cases $s=3$ and $s\ge 4$ were proven by Green, Tao and Ziegler (in the equivalent formulation of functions on the interval $\{1,\dots,N\}$) in~\cite{gtz-u4} and~\cite{gtz} respectively.  For $s=3$ the quantitative bounds are extremely poor, owing to repeated use of regularity-type methods; for $s=4$ the proof is phrased in terms of non-standard analysis and so formally gives no bounds at all, and informally could perhaps be rephrased to give a bound far up in the Ackerman hierarchy.  A different but similarly ineffective proof for general $s$ arises from work of Szegedy (in parts together with Anatol{\'\i}n Camarena)~\cite{szegedy,cs}; see~\cite{candela-1,candela-2,gmv-1,gmv-2,gmv-3} for related work.

Working over groups $H=\FF_p^n$ for $p$ fixed and $n$ large, the analogue of Theorem~\ref{thm:main}, with no quantitative bounds, was proven by Tao and Ziegler~\cite{tz}.  Recently, Gowers and Mili\'cevi\'c~\cite{gowers-milicevic} obtained a quantitative result in this setting for $s=3$ (and $p \ge 5$), where the relevant parameters (phrased rather differently to here, but corresponding primarily to the parameter $\eps^{-1}$ above) are given a double-exponential bound.
(The case $p =2,3$, with a triple-exponential bound, was later shown by Tidor~\cite{tidor}.)
Since the first draft of this paper, this was extended to all $s \ge 3$~\cite{gowers-milicevic-2}, with the quality of the bounds a tower of exponentials of fixed height depending on $s$.
There are many points of comparison between this work and~\cite{gowers-milicevic,gowers-milicevic-2}; however, it is not straightforward to make their methods work in the cyclic setting (even for $s=3$), and not straightforward to make the methods of this paper work over finite fields, so these results are related but nonetheless disjoint.

Recent work of Kazhdan and Ziegler~\cite{kz1,kz2,kz3,kz4}, again in the finite field setting $H=\FF_p^n$, also has parallels with the approach here.  These works do not prove an analogue of Theorem~\ref{thm:main} directly, but from part of a programme concerning the structure theory of approximate polynomials, which is the topic of much of this paper (see Section~\ref{subsec:approx-intro} below).  The inverse theorem for the $U^{s+1}$-norms over $\FF_p^n$ is a significant motivation and planned application of their work.  Again, the settings $H=\ZZ/N\ZZ$ and $H=\FF_p^n$ are analogous in some ways but different in others, so none of these results can currently be made to overlap.

\subsection{Applications}

The machinery of Theorem~\ref{thm:main} and its variants, sometimes known as ``higher order Fourier analysis'', has been used to prove some substantial results, a notable example being the work of Green and Tao on asymptotic counts of linear configurations in the primes~\cite{gt-primes}.
The use of an ineffective version of Theorem~\ref{thm:main} was one factor prohibiting a quantitative error term in that theorem.
There is another unrelated obstruction, concerning Siegel zeros; however, if one assumes (say) the generalized Riemann hypothesis, then in principle Theorem~\ref{thm:main} should give a quantitative error term in the main theorem of~\cite{gt-primes}.
Moreover, since the first draft of this paper, Tao and Ter\"av\"ainen~\cite{tao-teravainen} removed the Siegel zero problem using a new argument, and so---using Theorem~\ref{thm:main}---obtained an unconditional quantitative error term on the count of linear configurations in the primes.

We also note that, combining Theorem~\ref{thm:main} with the argument in~\cite{gt-szemeredi}, we can fairly cheaply obtain Szemer\'edi's theorem with a not ridiculous quantitative bound (a few iterated exponentials).  Of the many proofs of Szemer\'edi's theorem, only Gowers' original work~\cite{gowers-kap} gives a quantitative bound of similar shape.  This is maybe not a terribly inspiring application, in that the proof of Theorem~\ref{thm:main} incorporates many ideas from Gowers' argument~\cite{gowers-kap}, and that the latter also gives a strictly better bound; however, we mention this to illustrate the relative strength of Theorem~\ref{thm:main} in the context of other results in this area.

Finally, since the lack of effective bounds has been a frequent reason not to try applying the inverse theorem, the author is optimistic that Theorem~\ref{thm:main} will open up further applications where quantitative results were previously intractable.  (The survey~\cite{hatami-lovett} gives a flavour of some connected areas.)

\subsection{Approximate polynomials}%
\label{subsec:approx-intro}

The key ingredient in proving Theorem~\ref{thm:main} is a structure theorem for objects known as \emph{approximate polynomials}.  We briefly recall some definitions.

\begin{definition}
  Let $s \ge 0$ be an integer, $H$ a finite abelian group and $f \colon H \to A$ a function.  We write
  \begin{align*}
    \partial_h f \colon H &\to A \\
    x &\mapsto f(x) - f(x+h)
  \end{align*}
  for the discrete derivative of $f$.

  We say $f$ is an \emph{approximate} or \emph{1\% polynomial} of degree $s$ and parameter $\eps > 0$, if
  \[
    \left|\left\{ (x,h_1,\dots,h_{s+1}) \in H^{s+2} \colon \partial_{h_1} \dots \partial_{h_{s+1}} f(x) = 0 \right\}\right| \ge \eps |H|^{s+2} ;
  \]
in other words, if an $\eps$ fraction of the order $s+1$ derivatives of $f$ vanish.

  If $f$ is only defined on a subset $S \subseteq H$, we make the same definition, but only counting tuples $(x,h_1,\dots,h_{s+1})$ where the iterated derivative is well-defined.
\end{definition}

When, say, $H = \ZZ/N\ZZ = A$ for $N$ a prime (bigger than $s$) and $\eps=1$, it is a classical fact that this defines the class of polynomials $H \to A$  of degree at most $s$, i.e.\ $f(x) = a_0 + a_1 x + \cdots + a_s x^s$ for $a_0,\dots,a_s \in \ZZ/N\ZZ$.
When $\eps \approx 1$, the class of approximate polynomials consists precisely of functions which agree with some true polynomial at almost all values (see e.g.~\cite{akkl}).

When $\eps$ is small the class is richer.  The archetypal examples, for $A =\RR$, are the so-called \emph{bracket polynomials}. For instance, when $H = \ZZ/N\ZZ$, the function $x \mapsto \{ \beta x / N \}$ for $\beta \in \ZZ$, where $\{\cdot\}$ denotes the fractional part map $\RR \to [0,1)$, is an approximate polynomial for $s=1$ and, say, $\eps = 1/10$.  When $s=2$ more complicated examples arise, such as $x \mapsto \{ a \{ \alpha x / N \} \{ \beta x / N\} \}$ for $a \in \RR$ and $\alpha,\beta \in \ZZ$.  More generally, a bracket polynomial of degree $s$ is any algebraic expression built out of sums, products and fractional part maps; its degree $s$ is one more than the greatest width of product operations in a term. 

Other examples include any function that takes only $100$ distinct values; or a function that agrees with a bracket polynomial on a positive density set and is random elsewhere; or a function that switches between a few different bracket polynomials at random (e.g., $f(x) = f_1(x) \text{ or } f_2(x)$, where $f_1,f_2$ are bracket polynomials and the choice is made arbitrarily for each $x$).

Our structure theorem essentially says that these are the only sources of examples, at least for certain groups $H$ and for $A=\RR$.  However, rather than stating the result in terms of bracket polynomials, we use the language of nilmanifolds, which is essentially equivalent; see~\cite[Thoerem A]{bergelson-leibman}.\footnote{In fact the results in that paper are not sharp enough with respect to the degree to show the version of equivalence we allude to here; however, the sharper version is true and in any case not logically required in this paper.}

\begin{definition}%
  \label{def:nil-poly}
  Let $H$ be an abelian group and $f \colon H \to \RR$ a function.  We say $f$ is a \emph{nil-polynomial} of degree $s$, dimension $D$ and complexity $M$ if there exist:
  \begin{itemize}
    \item a nilmanifold $G/\Gamma$ of degree $s$, dimension $D$ and complexity $M$;
    \item a function $r \colon H \to G$ such that $d_G(\id_G,r(x)) \le M$ for all $x \in H$, and $r \bmod \Gamma \colon H \to G/\Gamma$ is a polynomial map; and
    \item a polynomial map $F \colon G \to \RR_{(s)}$;
  \end{itemize}
  such that $f = F \circ r$.
\end{definition}
Again see Appendix~\ref{app:nilmanifolds} for the definitions of terms such as ``complexity'', $\RR_{(s)}$, ``polynomial map'', and the metric $d_G$ on $G$.

\begin{theorem}%
  \label{thm:1pc}
  Suppose $s \ge 1$ is an integer, $H = (\ZZ/N\ZZ)^d$ where $N$ is a large prime and $d$ is a fixed positive integer, $X \subseteq H$ is a subset and $f \colon X \to \RR$ is an approximate polynomial of degree $s$ and parameter $\eps > 0$.

  Then there is a parameter $M$, where if $s \le 2$ we have
  \[
    M \le \exp\big(O_d(\eps)^{O_d(1)}\big)
  \]
  and if $s \ge 3$ then
  \[
    M \le \exp\exp\big(O_{d,s}(\eps)^{O_{d,s}(1)}\big) \ ,
  \]
  such that the following holds: there exists a subset $X' \subseteq X$ of size $|X|' \gg_{s} \eps^{O_{s}(1)} |H|$, and a nil-polynomial $g \colon H \to \RR$ of degree $s$, dimension $D \ll_{d,s} (1/\eps)^{O_{d,s}(1)}$ and complexity $M$, such that $f|_{X'} = g|_{X'}$.
\end{theorem}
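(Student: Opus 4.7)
The natural approach is to induct on the degree $s$, using the observation that if $f$ is an approximate polynomial of degree $s$ on $X$, then for a $\gg \eps^{O(1)}$ fraction of $h \in H$ the derivative $\partial_h f$ is, on $X \cap (X - h)$, an approximate polynomial of degree $s-1$ with parameter $\gg \eps^{O(1)}$. This is just a Fubini rearrangement of the $(s+2)$-tuple iteration defining ``approximate polynomial''.

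For the base case $s = 1$, the hypothesis reads $\partial_{h_1}\partial_{h_2} f(x) = 0$ on an $\eps$-density subset of $H^3$, so $\partial_h f(x)$ is approximately independent of $x$ on a large product-like set. Additive combinatorics applied to the resulting approximate graph of $h \mapsto \partial_h f$ inside $H \times \RR$ -- a Bogolyubov/\freiman-type argument -- produces a degree-$1$ nil-polynomial agreeing with $f$ on a positive-density subset; this should be essentially the Green-Tao $U^3$ analysis recast in the language of approximate polynomials.

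For the inductive step, apply the inductive hypothesis to $\partial_h f$ for each of the many good $h$, yielding a nil-polynomial $g_h$ of degree $s-1$, dimension $\ll \eps^{-O(1)}$ and controlled complexity, agreeing with $\partial_h f$ on a positive-density subset $X_h' \subseteq X \cap (X - h)$. The identity $\partial_{h_1+h_2} f(x) = \partial_{h_1} f(x) + \partial_{h_2} f(x+h_1)$ transfers to an approximate cocycle relation $g_{h_1+h_2}(x) \approx g_{h_1}(x) + g_{h_2}(x+h_1)$ holding on a positive-density set of triples $(x,h_1,h_2)$. One now wishes to (a) choose the representatives $g_h$ coherently across $h$; (b) show that the resulting $h \mapsto g_h$ is itself a polynomial map into a parameter nilmanifold classifying degree-$(s-1)$ nil-polynomials of bounded complexity; and (c) integrate this parameterization to produce a genuine degree-$s$ nil-polynomial $g \colon H \to \RR$ agreeing with $f$ on a large subset of $X$.

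The main technical obstacle is step (c), a symmetrization or cocycle-correction procedure. Two degree-$(s-1)$ nil-polynomials representing the same $\partial_h f$ differ by lower-order corrections, so one must fix a gauge in the family $\{g_h\}$ before the polynomial structure of $h \mapsto g_h$ becomes visible; the bracket-polynomial nature of nil-polynomials makes this gauge-fixing inherently discontinuous, and repairing the resulting defects requires a further internal descent and careful tracking of discrete stabilizers. It is this step that was carried out in \cite{gtz} via non-standard analysis, and its quantitative reworking -- costing only one additional exponential per increment in $s$, matching the double-exponential bound for $s \geq 3$ -- is presumably the main novelty driving the paper.
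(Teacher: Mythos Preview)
Your outline correctly identifies the natural inductive shape and the main obstacle (the gauge-fixing/integration of step (c)), but the paper takes a substantively different route, and a footnote in Section~\ref{subsec:hierarchy} explicitly rejects applying Theorem~\ref{thm:1pc} inductively to the derivatives: ``technical issues related to `reducibility' \ldots\ are problematic in that setting, but are more tractable if we continue to work with softer statements.'' Rather than converting each $\partial_h f$ into a nil-polynomial $g_h$ and then trying to cohere these, the paper first builds a recursive object called a \emph{polynomial hierarchy} (Definition~\ref{def:hierarchy}), recording only that $\partial^{s+1}\tilde f$ is a bounded integer combination of lower-degree functions at the cube vertices, with each of those obeying a similar condition one level down; no nilmanifolds appear until the very end. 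Your gauge-fixing worry becomes a global \emph{redundancy elimination} on this hierarchy (Section~\ref{subsec:redundancy}, Theorem~\ref{thm:strong-derivs}): the softer hierarchy retains exactly the flexibility needed to quotient out low-complexity linear relations among the functions, whereas committing to a nil-polynomial $g_h$ buries that freedom inside its nilmanifold and makes the relations among different $g_h$ hard to extract. Only after the hierarchy has been upgraded to a \emph{strong derivatives condition} with genuine cocycle structure does the paper build nil-polynomials (Section~\ref{sec:algebraic-poly-system}), and that induction is on the depth $t$ within a fixed hierarchy, not on $s$.

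Your bounds claim also does not hold up: ``one additional exponential per increment in $s$'' yields a tower of height $s$, not the uniform double exponential for all $s\ge 3$. The paper's double-exponential loss occurs exactly once, inside Theorem~\ref{thm:strong-derivs} for $t\ge 3$, and every other step is polynomial or single-exponential; if your induction on $s$ genuinely costs an exponential per step --- as it plausibly would, since the complexity of $g_h$ feeds into the construction of $g$ --- you would get far worse bounds than stated. (The paper makes a parallel remark at the start of Section~\ref{sec:deduction}: a na\"ive induction on $s$ there would also produce iterated exponentials, and avoiding this requires a global partition-of-unity argument.) Finally, your step (b) --- that $h\mapsto g_h$ should be a polynomial map into a ``parameter nilmanifold classifying degree-$(s-1)$ nil-polynomials of bounded complexity'' --- is not how the algebra works out: the construction that actually appears (Lemma~\ref{lem:poly-construction}) builds a degree-$s$ nil-polynomial from a \emph{single} lower-degree nil-polynomial $f$ together with a generalized cocycle $\lambda$, using the dual of $\poly(G,\RR_{(t-1)})$ for one fixed $G/\Gamma$, not from anything like a moduli space of all nil-polynomials; and making even that work requires the hierarchy and cocycle machinery of Sections~\ref{sec:1pc-global}--\ref{sec:hierarchy-improve} already to be in place.
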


Briefly, $f$ agrees on a large set with a nil-polynomial.  Another point of view is that this states that a large part of $f$ can be extended to a genuine (i.e.\ not approximate) polynomial, if we are prepared to pass to some unusual kind of extension of $H$ by the nilpotent group $\Gamma$.

Since the notion of an approximate polynomial is formally very similar to the definition of the Gowers uniformity norms, it is not too surprising that there is a strong connection between the structure theory of approximate polynomials (e.g.\ Theorem~\ref{thm:1pc}) and the inverse theory of Gowers norms (e.g.\ Theorem~\ref{thm:main}).  It is slightly more surprising that the strongest connection is between the inverse theory in degree $s$ and local polynomials of degree $s-1$.  Indeed, in the case $s=2$, this relates the inverse theory of the $U^3$-norm with the theory of approximately linear functions, which in turn is closely related to Freiman's theorem; these connections form the basis of the proofs in~\cite{green-tao-u3,gt-equiv}.  However, the existing proofs of the inverse theorem for $s>2$ do not use such a correspondence.

Our two tasks are therefore (i) to prove Theorem~\ref{thm:1pc}, and (ii) to use it to deduce Theorem~\ref{thm:main}.

Although no reduction of the exact shape of (ii) appears in the literature, this kind of correspondence is closely related to aspects of previous works~\cite{green-tao-u3,gowers-kap,gowers-milicevic}, and is anticipated explicitly in~\cite{kz1,kz3}.  The proof here builds on the previous arguments.  That said, some genuinely new difficulties arise, and we introduce new techniques to handle them, usually of a technical nature.  All this is covered in Section~\ref{sec:deduction}, and we refer to the introduction to that section for further details.

By contrast, the proof of Theorem~\ref{thm:1pc} in Sections~\ref{sec:1pc-global},\ref{sec:hierarchy-improve},\ref{sec:algebraic-poly-system} contains most of the novel content of the paper. No structural results for approximate polynomials of degree $s>1$ were formerly known, and e.g.\ for $H=\FF_p^n$ for $n$ large no such result yet exists.  That said, proving an analogue of Theorem~\ref{thm:1pc} in this finite field setting is a primary motivation for the ongoing programme~\cite{kz1,kz2,kz3}, making this work closely related, although the issues that arise in the finite field setting often have a different flavour.  Similarly,~\cite{gowers-milicevic} treats a related bilinear problem, again over finite fields.

Even in the linear case $s=1$ of Theorem~\ref{thm:1pc} (corresponding to the $U^3$ inverse theorem) the proof here is in principle new, and hence embeds a ``new'' proof of Freiman's theorem (up to standard reductions).  Of course, strong analogies with existing proofs can be detected.

\subsection{Limitations and bounds}

We make some further brief remarks concerning the statements of Theorem~\ref{thm:main} and Theorem~\ref{thm:1pc}.

First, the quality of the bounds on the various quantities is almost certainly not best possible.  It should presumably be possible to take the dimensions $D$ to be logarithmic in $1/\delta$ and the remaining parameters quasi-polynomial.

The primary reason we only get polynomial and single exponential bounds respectively when $s=2$ or $s=3$ is well-understood: it is the same obstruction to getting good bounds in classical proofs of Freiman's theorem, and concerns a failure to split sets up into their weakly-interacting pieces.  Another way of saying this is that when $s=2$, where better bounds are available using Sanders' work on Freiman--Ruzsa~\cite{sanders}, the proof here does not make any attempt to use anything like Sanders' ideas, and defaults to something more like the prior Freiman--Ruzsa proofs.  It is unlikely to be possible to use Sanders' result directly here as the setting is formally different, but it is possible to speculate what it might mean to use some of these ideas indirectly.

The loss of the second exponential when $s \ge 4$ is much less conceptual: it is buried deep within a rather technical part of the argument aimed at recursively eliminating unwanted low-complexity identities between certain functions.  The author has tried to remove this issue without success. It is unclear whether there is some genuine obstruction or whether a modification of the approach could avoid this further loss.

We also comment further on the restriction to cyclic groups of prime order.  In fact the same methods would give an inverse theorem for other abelian groups of bounded rank which have no large subgroups (where ``large'' means index at most some function of $\delta$). However this case can be deduced from the prime cyclic case anyway by elementary manipulations, so for simplicity we do not state this.

 Given~\cite{gowers-milicevic,gowers-milicevic-2,kz1,kz2,kz3}, it would of course be of interest to have a uniform approach over general finite groups.   However, both hypotheses---bounded rank and no large subgroups---are used in ways that appear to be difficult to eliminate.  For general groups, it would almost certainly be necessary to replace the study of approximate polynomials $H \to \RR$ with ones $H \to A$ for other abelian groups $A$: e.g.\ when $H=\FF_p^n$ for $p$ fixed the richest theory concerns approximate polynomials $H \to \FF_p^{m}$ (as discussed in~\cite{kz1,kz2,kz3}).  However the fact that the codomain $\RR$ is $|H|$-divisible is used fairly crucially in the proof here to avoid genuine ``cohomological'' obstructions, and so such a generalization is far from straightforward.

The use of the ``no small subgroups'' hypothesis is more suspect because elementary manipulations of the kind mentioned previously allow us to pass from prime cyclic groups to non-prime ones fairly easily.  The use of this hypothesis is genuine but may be avoidable.

Having said all this, many of the methods in the paper work over arbitrary groups or achieve essentially optimal bounds on the problems they address. The author is optimistic that some of these methods have more to say concerning these and other problems.

\subsection{Structure of the paper}

The proof of Theorem~\ref{thm:1pc} splits into three pieces, covered by Sections~\ref{sec:1pc-global},~\ref{sec:hierarchy-improve} and~\ref{sec:algebraic-poly-system} respectively.

The first part, Section~\ref{sec:1pc-global}, has a combinatorial flavor. In the case $s=1$, it corresponds roughly to taking an approximate polynomial $f \colon X \to \RR$ of degree $1$, and upgrading it to a more regular object, namely a globally defined function $\wt f \colon H \to \RR$ whose derivatives $\partial_{h_1} \partial_{h_2} \wt f(x)$ take (almost surely) a bounded number of distinct values.  The conclusion for general $s$ is that an approximate polynomial can be extended to a global function associated to an object we term a \emph{polynomial hierarchy}.

Next, it is necessary to upgrade this global structure to a stronger version of itself; this is handled in Section~\ref{sec:hierarchy-improve}.  The enemy here is what we call \emph{degeneracies} or \emph{redundancies} in the polynomial hierarchy object, which allow a particular function value to be given more than one low-complexity description.  Removing these ambiguities is a technical and uninspiring task; however, the reward is to begin to identify algebraic structure---in the form of what are given then perhaps non-standard term \emph{cocycles}, following~\cite{cs}---in the strengthened polynomial hierarchy object.

The third task is to show that these more regular global objects correspond, essentially, to nil-polynomials.  This segment, covered in Section~\ref{sec:algebraic-poly-system}, contains the more algebraic content, where we use a few constructions in the category of nilmanifolds to build something that captures the structure emerging from previous arguments.  These constructions are not deep but are rather involved.  In particular some time must be spent supplying the quantitative nilmanifold data we require (i.e., checking bounds on the ``complexity'' of a nilmanifold), which requires the theory expounded in Appendix~\ref{app:nilmanifolds}.

The deduction of the inverse theorem (Theorem~\ref{thm:main}) from the structure theorem for 1\% polynomials (Theorem~\ref{thm:1pc}) appears in Section~\ref{sec:deduction}.  It is in some sense elementary in flavour, consisting of some Fourier analysis and many applications of the Cauchy--Schwarz inequality, and generally gives good bounds.  We also require some nilmanifold constructions at this point.

To keep this introduction manageable, each section and most subsections have their own mini-introduction, giving a heuristic outline of the key tasks and ideas, and further expository passages appear throughout the paper.  The reader could profitably read some or all of these remarks before reading any one section or proof in depth.

A few lemmas and definitions which are either especially technical or outside the main flow of the argument are relegated to the appendices.

\subsection{Notation and definitions}%
\label{subsec:notation}

We write $[k]$ to denote the set $\{1,\dots,k\}$.  As is usual, we write $O_{r_1,\dots,r_k}(1)$ to denote a positive quantity bounded above by a constant depending only on the parameters $r_1,\dots,r_k$.  We write $O_{r_1,\dots,r_k}(X)$ to mean $O_{r_1,\dots,r_k}(1) X$, and $X \ll_{r_1,\dots,r_k} Y$ to mean $X = O_{r_1,\dots,r_k}(Y)$.  Also, for $x \in \RR$ or $\RR/\ZZ$ we write $e(x) = \exp(2 \pi i x)$.

If $X$ is a finite set, we write $\EE_{x \in X} f(x)$ for the average $\frac1{|X|} \sum_{x \in X} f(x)$.  If $S \subseteq X$ is a subset, we write $\mu_X(S)$ for the density $\EE_{x \in X} 1_S(x) = |S|/|X|$ (or just $\mu(S)$ if $X$ is clear from context); i.e., the measure of $S$ under the uniform probability measure on $X$.

Given a logical condition $\cP$ depending on certain variables, we write $[\cP]$ to denote the function taking value $1$ when $\cP$ holds and $0$ when $\cP$ does not hold.

Throughout it will be convenient to use the following terminology regarding ``cubes'' or ``parallelepipeds'', in preference to phrasing things explicitly in terms of iterated derivatives.  This terminology is common in the ``cubespace'' literature~\cite{cs,candela-1,candela-2,gmv-1,gmv-2,gmv-3}; however, we use it here in an elementary fashion and will not require the more abstract axiomatic approach from that area.

We write $\llbracket k \rrbracket$ as shorthand for the discrete cube $\{0,1\}^k$.  If $A$ is a set, we use the notation $A^{\llbracket k \rrbracket}$ to denote the set of functions $\llbracket k \rrbracket \to A$.

For $\omega, \eta \in \llbracket k \rrbracket$ we write $\omega \subseteq \eta$ to denote the usual partial ordering $\omega_i \le \eta_i \ \forall i \in [k]$, and $|\omega|$ to mean $\sum_{i=1}^k \omega_i$.

Given an abelian group $H$, by default we use the notation $C^k(H)$ to denote the subset of $H^{\llbracket k \rrbracket}$ consisting of functions $c \colon \llbracket k \rrbracket \to H$ of the form
\begin{equation}
  \label{eq:cube-definition}
  c(\omega) = x + \omega \cdot h
\end{equation}
where $h=(h_1,\dots,h_k) \in H^k$, $x \in H$ and $\omega \cdot h$ is shorthand for $\sum_{i \colon \omega_i = 1} h_i$.  Elements of $C^k(H)$ are called \emph{$k$-cubes}.\footnote{The term \emph{$k$-dimensional parallelepipeds} is sometimes used and is possibly more accurate, but \emph{$k$-cubes} is now more standard in the literature as it consumes fewer trees.}  It is clear that $C^k(H)$ is a subgroup of $H^{\llbracket k \rrbracket}$ isomorphic to $H^{k+1}$ under the correspondence $c \leftrightarrow (x,h_1,\dots,h_k)$.  Given $(x,h_1,\dots,h_k) \in H^{k+1}$ we sometimes write $\angle(x;h_1,\dots,h_k) \in C^k(H)$ to denote the corresponding cube defined by~\eqref{eq:cube-definition}.

Given an abelian group $A$ and a function $f \colon H \to A$, the $k$-th iterated derivative $\partial^k f \colon C^k(H) \to A$ is defined by
\[
  \partial^k f (c) = \sum_{\omega \in \llbracket k \rrbracket} (-1)^{|\omega|} f(c(\omega)) .
\]
It is straightforward to verify that this does in fact correspond to an iterated discrete derivative, in that if $c = \angle(x; h_1,\dots,h_k)$ then $\partial^k f(c) = \partial_{h_1} \dots \partial_{h_k} f(x)$.

By a \emph{face} of $\llbracket k \rrbracket$ of dimension $d$, we mean a subset of the form $\{\omega \in \llbracket k \rrbracket \colon \eta_0 \subseteq \omega \subseteq \eta_1\}$, where $\eta_0,\eta_1 \in \llbracket k \rrbracket$, $\eta_0 \subseteq \eta_1$ and $|\eta_1| - |\eta_0| = d$.  A face of codimension $d$ is a face of dimension $k-d$.  A face $F$ is called an \emph{upper face} if $\eta_1 = (1,1,\dots,1)$ and a \emph{lower face} if $\eta_0 = (0,0,\dots,0)$. 

For any face $F$ of $\llbracket k \rrbracket$ of dimension $d$, $F= \{\omega \colon \eta_0 \subseteq \omega \subseteq \eta_1\}$, its \emph{active coordinates} are the indices $1 \le i_1 < \cdots < i_d \le k$ such that $(\eta_1)_i = 1$ but $(\eta_0)_i = 0$, i.e.\ the coordinates that are not constant on $F$.  We define a map
\begin{align*}
  \tau_F \colon \llbracket k \rrbracket &\to \llbracket d \rrbracket \\
  \omega &\mapsto (\omega_{i_1},\omega_{i_2},\dots,\omega_{i_d}) .
\end{align*}
Noting that $\tau_F|_F \colon F \to \llbracket d \rrbracket$ is a bijection, we thereby define the inverse map $\imath_F \colon \llbracket d \rrbracket \to \llbracket k \rrbracket$.  If $A$ is some set and $c$ is a configuration in $A^{\llbracket k \rrbracket}$, we abuse notation somewhat to write $c|_F \colon \llbracket d \rrbracket \to A$ for the configuration $c|_F(\omega) = c(\imath_F(\omega))$.

For $\eta \in \llbracket k \rrbracket$, we write $F_\eta$ for the lower face $\{ \omega \in \llbracket k \rrbracket \colon \omega \subseteq \eta\}$ and $F^\eta$ for the upper face $\{ \omega \in \llbracket k \rrbracket \colon \omega \supseteq \eta\}$.  Also, for $i \in [k]$, we write $F_i = \{ \omega \in \llbracket k \rrbracket \colon \omega(i) = 0\}$ and $F^i = \{ \omega \in \llbracket k \rrbracket \colon \omega(i) = 1\}$ as shorthand for the lower and upper codimension one faces in direction $i$.  Although these pieces of notation overlap to some extent, it will always be possible to distinguish them in context based on the type of object $\eta \in \llbracket k \rrbracket$ or $i \in [k]$ respectively (which is further hinted by the use of Greek or Latin alphabets).

If $A$ is some set, $c,c' \in A^{\llbracket k \rrbracket}$ and $i \in [k+1]$, we write $[c,c']_i$ to denote the configuration $\llbracket k+1 \rrbracket \to A$ given by
\[
  [c,c']_i(\omega) = \begin{cases} c(\tau_{F_i}(\omega)) &\colon \omega \in F_i \\ c'(\tau_{F^i}(\omega)) &\colon \omega \in F^i \end{cases}
\]
i.e.\ the configuration obtained by placing $c$ and $c'$ on opposite faces of $\llbracket k+1 \rrbracket$ in direction $i$.

If $F = \{\omega \colon \eta_0 \subseteq \omega \subseteq \eta_1\}$ is a face of $\llbracket k \rrbracket$ of dimension $d$ and $c \in A^{\llbracket d \rrbracket}$, we write $\square^{k-d}_F(c)$ to denote the duplicated configuration in $A^{\llbracket k \rrbracket}$ given by
\[
  \square^{k-d}_F(c)(\omega) = c(\tau_F(\omega))
\]
i.e.\ the configuration which restricts to $c$ on every face parallel to $F$.  We abbreviate $\square^{r}_F$ to $\square^r$ if doing so does not create ambiguity; e.g., whenever $d=0$, in which case $\square^k(x)$ is a constant configuration.

If $\sigma \colon [k] \to [k]$ is a bijection, we abuse notation to write $\sigma \colon \llbracket k \rrbracket \to \llbracket k \rrbracket$ for the map $\sigma(\omega) = \omega \circ \sigma$ (thinking of $\omega$ as a function $[k] \to \{0,1\}$), and call this a \emph{coordinate permutation} of the discrete cube $\{0,1\}^k$.  Given $1 \le i \le k$, the $i$-th \emph{coordinate reflection} is the map $\llbracket k \rrbracket \to \llbracket k \rrbracket$ sending $[c,c']_i$ to $[c',c]_i$.

\subsection{Acknowledgements}

The author is grateful to Thomas Bloom for detailed comments and corrections to an earlier draft of this paper, and to Aled Walker, Ben Green, Timothy Gowers, Tamar Ziegler, Jacob Fox, Luka Mili\'cevi\'c, Yonatan Gutman, P\'eter Varj\'u,  and many others for helpful interactions and discussions concerning this project.

\section{From 1\% polynomials to global structure}%
\label{sec:1pc-global}

Throughout this section, we let $H$ be an arbitrary finite abelian group.  

\subsection{Outline of the argument}%
\label{subsec:improve-outline}

Suppose $f \colon X \to \RR$ is an approximate polynomial as in Theorem~\ref{thm:1pc}.  It may only be defined on a small subset $X \subseteq H$; or even if $X=H$, it may take arbitrary, unstructured values away from a small subset.  By contrast, the statement of Theorem~\ref{thm:1pc} requires us to construct a global function $F \circ p$ from polynomial maps $p \colon H \to G/\Gamma$ and $F \colon G \to \RR$, which implicitly extends $f$ (possibly in a slightly ambiguous or arbitrary fashion) to a function on all of $H$.  A reasonable question is how this extension process will be achieved within the proof.

Our approach is to select some random translates $a_1,\dots,a_L \in H$, and consider functions $f_i \colon x \mapsto f(x+a_i)$ defined on $X - a_i$.  Provided $L$ is large enough, the domains of these translated copies of $f$ will cover almost all of $H$ (even if we throw away unstructured values).  Of course, there is an obstruction to glueing these into a globally defined function, as if $f_i$ and $f_j$ are both defined at some $x$ their values will probably not agree.

However, the failure of $f_i$ and $f_j$ to agree is measured by the function $x \mapsto f_i(x) - f_j(x) = f(x+a_i) - f(x+a_j)$, which is just a translate of the derivative $\partial_{a_j-a_i} f$.  For many values of $h$, $\partial_h f$ is an approximate polynomial of degree $s-1$; so, often $f_i$ and $f_j$ do in fact agree up to a lower-degree correction, whose structure may in turn be understood by induction on $s$.

Suppose then that $\wt{f}$ is a function on almost all of $H$ defined by glueing the functions $f_i$ together fairly arbitrarily.  We then wish to understand (almost all of) the derivatives $\partial^{s+1} \wt f$.  The exact statement is not straightforward, but very roughly we can use similar logic to deduce that that such derivatives may be described in terms of the lower-order functions $\partial_{a_j-a_i} f$ described earlier, in a low-complexity fashion.  The precise statement of this is Lemma~\ref{lem:extend-global}.

In the special case $s=1$, the corrections $f_i - f_j$ are constant functions much of the time, and the final conclusion is that we can extend (a large part of) $f$ to a function $\wt{f}$ with the property that for some not-too-large finite set $T \subseteq \RR$ we have $\partial^{2} \wt{f}(c) \in T$ for almost all $c \in C^2(H)$; i.e., the derivative $\partial^2 \wt f$ almost surely takes a bounded number of distinct values.  This case is useful to keep in mind.  In fact this statement can be proven using well-established tools, such as the Balog--Szemer\'edi--Gowers theorem and the Pl\"unnecke--Ruzsa inequalities~\cite{gowers-kap,green-tao-u3}; however, this method does not generalize well to larger $s$ and so we develop a different one.

Given $\wt{f}$ with this property, there is still a considerable amount of work required to deduce the kind of rigid algebraic structure we want, and this is the topic of the next two sections.

The argument in the remainder of the current section divides further into several stages:---
\begin{itemize}
  \item (discarding bad points) We consider the set of $(s+1)$-cubes $c \in C^{s+1}(H)$ such that $\partial^{s+1} f(c) = 0$, and introduce a notion of a \emph{system of cubes} to capture a robust notion of when cubes $c'$ of dimension between $0$ and $s+1$ are ``popular'' with respect to this set, with a view to discarding unpopular cubes.  This is Section~\ref{subsec:cube-system}.
  \item (Balog--Szemer\'edi--Gowers / connectivity step) To deal with the case where $f$ switches arbitrarily between several unrelated structured pieces, we divide the domain of $f$ into robustly connected pieces in some sense.  This is Section~\ref{subsec:split}.
  \item (extension) With these issues out of the way, the core extension argument described above has a hope of working, and we turn to this in Section~\ref{subsec:improve-core}.
  \item (iteration) In Section~\ref{subsec:hierarchy} we apply these results recursively to the lower-degree functions $\partial_{a_j-a_j} f$ alluded to above, and record precise definitions for the recursive structure (derived from a \emph{polynomial hierarchy}) this imposes on the original $\wt{f}$.
\end{itemize}

\subsection{Systems of cubes}%
\label{subsec:cube-system}

If $X \subseteq H$ and $f \colon X \to \RR$ is a 1\% polynomial of degree $s$ and parameter $\eps$, an important quantity to track is the set of cubes on which the derivative of $f$ vanishes.  Using the notation from Section~\ref{subsec:notation}, we define
\begin{equation}
  \label{eq:good-cubes}
  S_{s+1} = \{ c \in C^{s+1}(H) \cap X^{\llbracket s+1 \rrbracket} \colon \partial^{s+1} f(c) = 0 \}  ;
\end{equation}
so by our hypothesis, $|S_{s+1}| \ge \eps\, |C^{s+1}(H)|$.

It could be that a particular $x \in X$ is not a vertex of many of the cubes $c \in S_{s+1}$.  This will happen if $f$ is a structured function in some places and random elsewhere, and $x$ is one of the random points.  In any case, for such an $x$ we should assume that the value $f(x)$ is not trustworthy and discard it.  Similarly, it will be useful to consider which $1$-dimensional cubes $[x,y]$ are faces of many cubes $c \in S_{s+1}$: if $f$ switches between several unrelated structured functions as described above, then this tells us which pairs $x$ and $y$ are likely to be in the same structured piece.

This could motivate considering sets such as
\[
  S_0 = \left\{ x \in H \colon |\{ c \in S_{s+1} \colon c(0,0,\dots,0) = x \}| \ge (\eps/100) |H|^{s+1} \right\}
\]
or more generally
\[
  S_k = \left\{ c \in C^k(H) \colon |\{ c' \in S_{s+1} \colon c'|_{F} = c \}| \ge (\eps/100) |H|^{s+1-k} \right\}
\]
where (say) $F = \{\omega \in \llbracket s+1 \rrbracket \colon \omega_{k+1} = \cdots = \omega_{s+1} = 0\}$, as giving a notion of ``good'' or ``popular'' cubes of smaller dimension.

The issue with this definition is that a cube in $S_k$ could in principle have a vertex that is not in $S_0$, and so on.  So, when we delete unpopular points not in $S_0$, we also lose some potentially popular $k$-cubes; then this might in turn cause previously popular points in $S_0$ to become unpopular, and so on.

The following definitions and results are designed to address these irritations.

\begin{definition}%
  \label{def:cube-system}
  A \emph{system of cubes} in $H$ of degree $s \ge 0$ and parameter $\delta > 0$ is a collection of subsets $S_k \subseteq C^k(H)$ for $0 \le k \le s+1$, with the following properties (see Section~\ref{subsec:notation} for definitions of the terms used):---
  \begin{enumerate}[label=(\roman*)]
    \item (symmetry) for each $k$, $0 \le k \le s+1$, if $c \in S_k$ then so is any coordinate permutation or coordinate reflection of $c$;

    \item (faces) for each $k$, $1 \le k \le s+1$, if $c \in S_k$ and $F$ is any face of $\llbracket k \rrbracket$ of dimension $d$, then $c|_F \in S_d$;

    \item (popularity) for each $0 \le k \le s$ and $c \in S_k$ there are at least $\delta |H|$ values $h \in H$ such that $[c, c+\square^k(h)]_{k+1} \in S_{k+1}$.
  \end{enumerate}
\end{definition}

\begin{remark}%
  \label{rem:cube-system-large}
  Note that if any $S_k$ is non-empty then $S_0$ is non-empty (by (ii)). For any $x \in S_0$ there are at least $\delta |H|$ cubes $[x,x+h] \in S_1$ (by (iii)), and hence at least $\delta |H|$ elements $x+h \in S_0$ (by (ii)).  Hence, $\mu(S_0) \ge \delta$, and by repeated application of (iii) it follows that $\mu(S_k) \ge \delta^{k+1}$ for each $k$.  Hence, a non-empty system of cubes automatically has positive density in each dimension.
\end{remark}

We first show that we can always obtain such a system from suitable large sets of cubes.

\begin{lemma}%
  \label{lem:cube-system}
  If $s \ge 0$ and $S_0,\dots,S_{s+1}$ are subsets $S_k \subseteq C^{k}(H)$ satisfying parts \uppar{i} and \uppar{ii} of Definition~\ref{def:cube-system}, and such that $\mu(S_{s+1}) \ge \eps$, then there exists a non-empty system of cubes $S'_0,\dots,S'_{s+1}$ with parameter $\delta \gg_s \eps$ such that $S'_{k} \subseteq S_k$ for each $k$.
\end{lemma}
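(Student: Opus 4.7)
The plan is an iterative pruning argument. Initialize $S'_k := S_k$ for every $k$. Repeatedly perform the following: whenever some $c \in S'_k$ (with $k \le s$) currently fails condition (iii)---meaning fewer than $\delta|H|$ values $h$ satisfy $[c, c + \square^k(h)]_{k+1} \in S'_{k+1}$---delete $c$ from $S'_k$ together with its entire orbit under coordinate permutations and reflections, and then propagate face closure by deleting from each $S'_j$ (for $j > k$) every cube that has a freshly deleted cube as a face. This process terminates, since cubes are only ever removed. The resulting $S'_0,\dots,S'_{s+1}$ satisfies (i)--(iii) by construction with the chosen $\delta$; the only remaining issue is to choose $\delta = c_s\eps$ so that $S'_{s+1}$ is non-empty, after which non-emptiness of the lower levels follows automatically from Remark \ref{rem:cube-system-large}.

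The central cost estimate is that a \emph{single} popularity violation of some $c \in S'_k$ ultimately forces at most $O_s\!\bigl(\delta|H|^{s+1-k}\bigr)$ subsequent deletions from $S'_{s+1}$. Indeed, since $S'_{k+1}$ is closed under coordinate symmetries, the number of cubes in $S'_{k+1}$ having $c$ (or a coordinate image of $c$) as a codimension-one face equals the popularity count in any fixed direction multiplied by the $2(k+1)$ face-position factor, so it is at most $O_s(\delta|H|)$. This is the only place where one extracts a $\delta$-saving from popularity; each subsequent face-closure cascade from level $j$ to level $j+1$ inflates the count by at most an additional factor of $2(j+1)|H|$ (a fixed $j$-cube is a codimension-one face of at most that many $(j+1)$-cubes). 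Telescoping through levels $k+1,k+2,\dots,s+1$ yields the claimed bound.

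Since a cube can be deleted at most once, the total number of popularity violations at level $k$ occurring during the process is at most $|C^k(H)| = |H|^{k+1}$. Summing,
\[
\text{total deletions from } S'_{s+1} \;\le\; \sum_{k=0}^{s} |H|^{k+1} \cdot O_s\!\bigl(\delta|H|^{s+1-k}\bigr) \;=\; O_s\!\bigl(\delta\,|H|^{s+2}\bigr).
\]
Choosing $\delta = c_s\eps$ with $c_s > 0$ a sufficiently small constant depending only on $s$ makes this at most $\tfrac{1}{2}|S_{s+1}|$, so $|S'_{s+1}| \ge \tfrac{1}{2}\eps\,|H|^{s+2} > 0$. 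The one delicate point is this cascade accounting: the single factor of $\delta$ extracted at the initial popularity step must absorb $s-k$ subsequent cascade levels each contributing a full factor of $|H|$, and this works out only because the ambient count $|H|^{k+1}$ of potential violations at level $k$ multiplied by the cascade blow-up $|H|^{s+1-k}$ exactly matches the ambient volume $|H|^{s+2}$, leaving the solitary $\delta$ as the net surplus. Once this is arranged, the procedure delivers the required non-empty system with parameter $\delta \gg_s \eps$.
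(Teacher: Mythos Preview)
Your proof is correct and follows essentially the same approach as the paper's: iteratively prune cubes failing condition (iii), cascade the deletions upward to maintain face closure, and choose $\delta \gg_s \eps$ small enough that the total mass removed from $S_{s+1}$ is less than $|S_{s+1}|$. The paper organizes the cascade slightly differently (jumping directly from level $k{+}1$ to level $\ell$ with a crude $4^\ell$ face-count bound rather than stepping one level at a time with your sharper $2(j{+}1)|H|$ factor), and it does not explicitly delete orbits, instead noting that symmetry is preserved because failure of (iii) is itself symmetric; but the accounting and the final estimate are the same.
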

\begin{proof}
  This is very similar to a proof that a dense graph has a subgraph with large minimum degree.  Essentially, we delete cubes whenever we have to and show there is something left at the end.

  Set $\delta = 2^{-2s-4} (s+1)^{-2} \eps$.  We now iteratively perform the following procedure: for some $k$, $0 \le k \le s$, take all cubes $c \in S_k$ which fail to satisfy condition (iii), and remove them.  Next, remove all cubes $c' \in S_{k+1}$ which restrict to any such $c$ on a face of codimension $1$; by hypothesis there are at most $2(k+1) \delta |H|$ of these (since a $(k+1)$-cube has $2(k+1)$ faces of codimension $1$, and for each face and each $c$ there are at most $\delta |H|$ possibilities for $c'$, by symmetry (i) and the failure of (iii) for $c$).  Finally, also remove all cubes in $S_\ell$ for $k+2 \le \ell \le s+1$ which have any of these $(k+1)$-cubes $c'$ as a face.  Since crudely an $\ell$-cube has at most $4^\ell$ faces, at most $4^{\ell}\cdot 2 \ell\, \delta\, |H|^{\ell-k}$ cubes are deleted from $S_{\ell}$ for each original $c \in S_k$.

  Note that after one complete iteration, conditions (i) and (ii) are still satisfied.  This process is repeated until nothing is deleted for any value of $k$.  The final sequence is now a system of cubes $S'_0,\dots,S'_{s+1}$ with parameter $\delta$ (with $S'_k \subseteq S_k$ for each $k$) by construction; the only danger is that it might be empty.

  Since each cube $c \in C^k(H)$ ($1 \le k \le s$) can be removed at most once, the total number of cubes removed from $C^{s+1}(H)$ in the whole process is at most
  \[
    \sum_{k=0}^{s} 4^{s+1}\cdot 2 (s+1)\, \delta\, |H|^{k+1} |H|^{s+1-k} = 2^{2s+3} (s+1)^2\, \delta\, |H|^{s+2} \le (\eps / 2) |H|^{s+2}
  \]
  and hence the final set $S'_{s+1}$ has size at least $|S_{s+1}| - (\eps/2)|H|^{s+2} > 0$ by hypothesis.
\end{proof}

The following is then immediate.
\begin{corollary}%
  \label{cor:cube-system}
  If $X \subseteq H$ and $f \colon X \to \RR$ is an approximate polynomial with degree $s \ge 0$ and parameter $\eps > 0$, then there exists a system of cubes $S_0,\dots,S_{s+1}$ with parameter $\delta \gg_s \eps$ such that $S_0 \subseteq X$ and $\partial^{s+1} f(c) = 0$ for all $c \in S_{s+1}$.
\end{corollary}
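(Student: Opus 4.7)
The plan is to exhibit sets $S_0,\dots,S_{s+1}$ satisfying conditions (i) and (ii) of Definition~\ref{def:cube-system} together with $\mu(S_{s+1}) \ge \eps$, and then invoke Lemma~\ref{lem:cube-system} to prune them into an actual system of cubes.

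First I would take $S_{s+1}$ to be exactly the set in \eqref{eq:good-cubes}: the $(s+1)$-cubes $c \in C^{s+1}(H) \cap X^{\llbracket s+1 \rrbracket}$ on which $\partial^{s+1} f$ vanishes. The approximate polynomial hypothesis gives $\mu(S_{s+1}) \ge \eps$. This $S_{s+1}$ is manifestly symmetric: coordinate permutations just reindex the defining sum for $\partial^{s+1} f$, and each coordinate reflection changes every $(-1)^{|\omega|}$ by the same global sign, so the vanishing condition is preserved; meanwhile $X^{\llbracket s+1 \rrbracket}$ is invariant under both operations.

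Next, for $0 \le k \le s$ I would define $S_k \subseteq C^k(H)$ to consist of all $k$-cubes $c$ arising as $c = c'|_F$ for some $c' \in S_{s+1}$ and some face $F$ of $\llbracket s+1 \rrbracket$ of dimension $k$. By construction $S_0 \subseteq X$. Property (i) for $S_k$ follows because applying a coordinate permutation or reflection to $c$ corresponds to pre-composing $\tau_F$ with the corresponding symmetry of $\llbracket k \rrbracket$; pulling this symmetry through the face embedding $\imath_F$ exhibits the result as the restriction of a symmetry of $c'$, which lies in $S_{s+1}$ by the previous paragraph. Property (ii) is immediate from the fact that a face of a face of $\llbracket s+1 \rrbracket$ is again a face of $\llbracket s+1 \rrbracket$.

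Finally I would apply Lemma~\ref{lem:cube-system} to the collection $S_0,\dots,S_{s+1}$ to obtain a non-empty system of cubes $S'_0,\dots,S'_{s+1}$ with parameter $\delta \gg_s \eps$ and $S'_k \subseteq S_k$ for every $k$. Then $S'_0 \subseteq S_0 \subseteq X$, and for every $c \in S'_{s+1} \subseteq S_{s+1}$ we have $\partial^{s+1} f(c) = 0$ by the defining property of $S_{s+1}$. There is no real obstacle here: the only mildly delicate point is verifying property (i) for the constructed $S_k$, which reduces to the symmetry of $S_{s+1}$ already checked; all the combinatorial loss in the density is absorbed into the passage to $S'_k$ carried out inside Lemma~\ref{lem:cube-system}.
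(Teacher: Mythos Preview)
Your proposal is correct and follows the same overall strategy as the paper: choose $S_{s+1}$ as in \eqref{eq:good-cubes}, pick auxiliary $S_k$ for $k\le s$ satisfying (i) and (ii), and then apply Lemma~\ref{lem:cube-system}. The only difference is in the choice of those auxiliary sets: the paper simply takes $S_k = C^k(H)\cap X^{\llbracket k\rrbracket}$ for $0\le k\le s$, for which (i) and (ii) are immediate (permutations and reflections just relabel vertices, and a face of a cube in $X^{\llbracket k\rrbracket}$ is in $X^{\llbracket d\rrbracket}$). Your choice---the faces of cubes in $S_{s+1}$---is a subset of this and also works, but it forces you to carry out the mildly fiddly symmetry verification you describe. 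The paper's choice sidesteps that entirely.
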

\begin{proof}
  We apply Lemma~\ref{lem:cube-system} to the sets $S_k = X^{\llbracket k \rrbracket}$ for $0 \le k \le s$, and
  $S_{s+1}$ as in~\eqref{eq:good-cubes}.
\end{proof}

When the initial sets $S_k \subseteq C^{k}(H)$ in Lemma~\ref{lem:cube-system} all have density close to $1$, we might hope that the densities of $S'_k$ and the parameter $\delta$ in the conclusion could also be taken close to $1$; however, Lemma~\ref{lem:cube-system} does not give this.  Proving such a result is more involved but indispensable throughout the paper for handling error sets.

\begin{lemma}%
  \label{lem:cube-system-ae}
  Suppose $s \ge 0$ and $S_0,S_1,\dots,S_{s+1}$ are sets, $S_k \subseteq C^k(H)$, such that $\mu(S_k) \ge 1-\eps$ for each $k$, $0 \le k \le s+1$. Then there exists a quantity $\delta \ll_s \eps^{O_s(1)}$, and \uppar{if $\delta < 1$} a non-empty system of cubes $S'_0,\dots,S'_{s+1}$ with parameter $1-\delta$, with $S'_k \subseteq S_k$ for all $0 \le k \le s+1$.
\end{lemma}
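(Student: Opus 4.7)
\emph{Plan.} I would avoid the iterative deletion approach used for Lemma \ref{lem:cube-system}, since near full density such a process can cascade uncontrollably between levels (a deleted low-dimensional cube causes mid-dimensional faces to fail, which causes high-dimensional cubes to fail through the popularity condition, and so on). Instead I would construct $S'_k$ via a one-shot ``great extension'' criterion that captures the entire hierarchical structure at once.

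First, replace each $S_k$ by its symmetric closure intersected with the face condition $\{c : c|_F \in S_{\dim F} \text{ for every face } F\}$; by Markov and a union bound over the $O_s(1)$ symmetries and faces this costs only $O_s(\eps)$ density. Call an $(s+1)$-cube $c^*$ \emph{great} if $c^* \in S_{s+1}$; by the face-closure step, greatness automatically forces every face $c^*|_F \in S_{\dim F}$. With $\delta$ to be chosen as a small fixed power of $\eps$, define
\[
S'_k = \bigl\{ c \in C^k(H) : \text{for every face } F \subseteq \llbracket k \rrbracket \text{ of dim } d, \ (1-\delta)|H|^{s+1-d} \text{ extensions of } c|_F \text{ to } (s+1)\text{-cubes are great}\bigr\}.
\]
Applying Markov level by level to the density of great $(s+1)$-cubes gives $\mu(S'_k) \geq 1 - O_s(\eps/\delta)$.

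Condition (i) is automatic after symmetrization, and (ii) is immediate: every face of $c|_F$ is itself a face of $c$, so the defining quantifier for $c \in S'_k$ restricts to the defining quantifier for $c|_F \in S'_d$. Condition (iii) is the main difficulty: given $c \in S'_k$ we need at least $(1-\delta)|H|$ values of $h$ with $[c, c+\square^k(h)]_{k+1} \in S'_{k+1}$, which requires controlling the extension-to-great condition not only on $c$ itself but also on the translate $c + \square^k(h)$ and on the ``mixed'' faces of $[c, c+\square^k(h)]_{k+1}$ that span both halves of the cube. I would handle this by a Fubini/Markov step showing that for random $h$ these auxiliary objects inherit the extension-to-great condition with good probability, possibly after intersecting $S'_k$ with an auxiliary translate-invariance condition at further cost $O_s(1/\delta)$ in density.

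The main obstacle is thus verifying (iii) while keeping the density loss polynomial in $\delta^{-1}$, since each distinct type of face of $[c, c+\square^k(h)]_{k+1}$ (bottom, top, mixed) demands a separate Markov-based control. Since there are only $O_s(1)$ such types and each correction loses a fixed polynomial factor in $1/\delta$, choosing $\delta$ as a sufficiently small fixed power of $\eps$ (depending on $s$) makes $\mu(S'_k) \geq 1 - \delta$ hold at every level and delivers the claimed bound $\delta \ll_s \eps^{O_s(1)}$.
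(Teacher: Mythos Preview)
Your approach is genuinely different from the paper's, and plausibly works, but the proposal leaves the main technical step as a sketch rather than a proof. The paper builds $S'_k$ \emph{recursively} from the top down: starting from $S^{(2)}_{s+1}=S_{s+1}$, it sets $T_k=\{c:\text{few }h\text{ with }[c,c+\square^k(h)]\in S^{(2)}_{k+1}\}$, then $U_k=\{c:\text{many translates of }c\text{ lie in }T_k\}$, and $S^{(2)}_k=C^k(H)\setminus T_k\setminus U_k$; a final pass re-imposes the face condition. Your one-shot ``many great $(s{+}1)$-extensions for every face'' criterion is a different organizing principle, and your observations about (i), (ii) and the density estimate are correct.

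The gap is exactly where you flag it. For the upper faces of $[c,c+\square^k(h)]_{k+1}$ you need $P(c|_{F'}+\square^d(h))$ for most $h$, and this is \emph{not} a consequence of $P(c|_{F'})$: it is a translate condition, and $S_{s+1}$ is not translation-invariant. Saying ``possibly after intersecting with an auxiliary translate-invariance condition'' identifies the fix without executing it. To complete the argument you must add a second predicate $Q(c')$ asserting that $P(c'+\square^{\dim c'}(h))$ holds for most $h$, redefine $S'_k$ to require both $P$ and $Q$ on every face, and then verify that $Q$ also propagates through the three face-types (lower, upper, mixed) --- the upper case needs the shift-invariance $Q(c'+\square(h))\Leftrightarrow Q(c')$, and the mixed case needs a further Fubini argument relating $Q$ of an extended cube to $Q$ of its base. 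This all works, with thresholds degrading by square roots at each Markov step, but none of it is written.

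The paper's recursive construction packages this more cleanly: the $U_k$ sets are precisely your $Q$-condition, but because popularity is defined relative to $S^{(2)}_{k+1}$ rather than to $S_{s+1}$, the case analysis for (iii) collapses to a single-step check rather than requiring you to verify that both $P$ and $Q$ survive simultaneously.
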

\begin{proof}
  If we replace $\eps$ with $\eps_0 = 2^{s+1} (s+1)!\, \eps$ and $S_k$ with the set $S_k^{(0)}$ consisting of all $c \in C^k(H)$ all of whose iterated coordinate permutations and reflections lie in $S_k$, we obtain sets $S_0^{(0)},\dots,S_{s+1}^{(0)}$ as in the statement which further obey the symmetry property (i) from Definition~\ref{def:cube-system}.

  Now define $S^{(1)}_k$ for each $0 \le k \le s+1$ to consist of all $c \in C^k(H)$ such that $c|_F \in S^{(0)}_{\dim F}$ for every face $F$ of $\llbracket k \rrbracket$.  (The case $F = \llbracket k \rrbracket$ says $S^{(1)}_k \subseteq S^{(0)}_k$.)
  
  Noting that for a uniform random cube $c \in C^k(H)$ the restriction $c|_F$ is a uniform random cube in $C^{\dim F}(H)$ (e.g., by uniqueness of Haar measure) and again crudely bounding the number of faces of $\llbracket k \rrbracket$ by $4^k$, by a union bound we get $\mu\big(S^{(1)}_k\big) \ge 1-\eps_1$ where $\eps_1 = 4^{s+1} \eps_0$.  Moreover, it is clear that $S^{(1)}_0,\dots,S^{(1)}_{s+1}$ obey properties (i) and (ii) from Definition~\ref{def:cube-system}.

  We now recursively define subsets $T_k, U_k, S^{(2)}_k \subseteq C^k(H)$ for $0 \le k \le s+1$ by $T_{s+1} = U_{s+1} = \emptyset$, $S^{(2)}_{s+1} = S^{(1)}_{s+1}$ and for $0 \le k \le s$,
  \begin{align*}
    T_k &= \big\{ c \in C^k(H) \colon \mu\big(\{ h \in H \colon [c, c+\square^k(h)]_{k+1} \notin S^{(2)}_{k+1}\}\big)  \ge \eta \big\} \\
    U_k &= \big\{ c \in C^k(H) \colon \mu\big(\{ h \in H \colon (c + \square^k(h)) \in T_k \}\big) \ge \eta \big\} \\
    S^{(2)}_k &= C^k(H) \setminus T_k \setminus U_k ,
  \end{align*}
  where $0 < \eta < 1$ is a parameter to be determined.
  
  Informally, for each $c \in C^k(H)$ we are considering a graph $\cG_c$ whose vertices are cubes $c + \square^k(h)$ that lie in $S^{(1)}_k$ and whose edges are pairs such that $[c+\square^k(h), c+\square^k(h')]_{k+1}$ lies in $S^{(2)}_{k+1}$. Then $T_k$ records the event that $c$ has low degree in $\cG_c$ (which is precisely a failure of property (iii)); $U_k$ records the event that many vertices in the graph $\cG_c$ have low degree; and $S^{(2)}_k$ is obtained by removing $c$ whenever either event occurs.

  We observe that $S^{(2)}_k \subseteq S^{(1)}_k$. This is clear for $k=s+1$; for $0 \le k \le s$, observe that if $c \in S^{(2)}_k$ then $[c,c+\square^k(h)]_{k+1} \in S^{(2)}_{k+1}$ for at least one $h \in H$, so $[c,c+\square^k(h)]_{k+1} \in S^{(1)}_{k+1}$ (by induction) and hence $c \in S^{(1)}_k$ (by restricting to a face).  Also, note that if $c \notin U_k$ then $c + \square^k(h) \notin U_k$ for all $h \in H$, and so for any $c \in S^{(2)}_k$ ($0 \le k \le s$) there are at least $(1-\eta) |H|$ values $h \in H$ such that $c + \square^k(h) \in S^{(2)}_k$.

  If $c \in C^k(H)$ and $h \in H$ are chosen uniformly at random then $[c, c+\square^k(h)]$ is a uniform random element of $C^{k+1}(H)$.  Hence we may estimate
  \[
    \PP\big([c,c+\square^k(h)] \notin S^{(2)}_{k+1}\big) \ge \eta\, \PP(c \in T_k)
  \]
  and hence $\mu(T_k) \le \big(1 - \mu\big(S^{(2)}_{k+1}\big)\big) / \eta$. Similarly, if $c \in C^k(H)$ and $h \in H$ are chosen uniformly at random then $c + \square^k(h)$ is a uniform random cube, and so
  \[
    \PP\big(c+\square^k(h) \in T_k \big) \ge \eta\, \PP(c \in U_k) 
  \]
  i.e.\ $\mu(T_k) \ge \eta\, \mu(U_k)$; hence, $\mu(U_k) \le \big(1 - \mu\big(S^{(2)}_{k+1}\big)\big) / \eta^2$.  It follows that
  \[
    1-\mu\big(S^{(2)}_k\big) \le (1/\eta + 1/\eta^2) \big(1 - \mu\big(S^{(2)}_{k+1}\big)\big)
  \]
  for $0 \le k \le s$, and since $1-\mu\big(S^{(2)}_{s+1}\big) \le \eps_1$ we conclude
  \[
    1 - \mu\big(S^{(2)}_k\big) \le \eps_1 (1/\eta + 1/\eta^2)^{s+1-k}
  \]
  for all $0 \le k \le s+1$, and
  \begin{align*}
    \mu(T_k) &\le (\eps_1 / \eta) (1/\eta + 1/\eta^2)^{s-k} \\
    \mu(U_k) &\le (\eps_1 / \eta^2) (1/\eta + 1/\eta^2)^{s-k}
  \end{align*}
  for all $0 \le k \le s$.  In particular, we have $\mu\big(S^{(2)}_0\big) \ge 1 - \eps_1 (1/\eta + 1/\eta^2)^{s+1}$, and so provided we choose, say, $\eta \ge 2 \eps_1^{1/2(s+1)}$, the right hand side is positive and so $S^{(2)}_0$ is non-empty.

  Finally, we define $S^{(3)}_k$ for $0 \le k \le s+1$ to consist of all $c \in C^k(H)$ such that $c|_F \in S^{(2)}_{\dim F}$ for every face $F$ of $\llbracket k \rrbracket$.  Again these are subsets of $S^{(2)}_k$, and it is clear that $S^{(3)}_k$ obey properties (i) and (ii) from Definition~\ref{def:cube-system}.  Moreover, note that $S^{(3)}_0 = S^{(2)}_0$ and hence will be non-empty provided $\eta$ is chosen as above.

  It remains to show that $S^{(3)}_k$ obeys condition (iii) for some parameter $\delta > 0$ to be determined.  Let $0 \le k \le s$. For each $c \in S^{(3)}_k$, we wish to bound
  \[
    \PP_{h \in H} \Big( [c, c+\square^k(h)]_{k+1} \notin S^{(3)}_{k+1} \Big)
  \]
  which by the definition of $S^{(3)}_{k+1}$ is at most
  \begin{equation}
    \label{eq:prob-missing}
    \sum_{F} \PP_{h \in H} \Big( \big([c, c+\square^k(h)]_{k+1}\big)|_F \notin S^{(2)}_{\dim F} \Big) 
  \end{equation}
  where the sum ranges over all faces $F$ of $\llbracket k+1 \rrbracket$.  We consider three kinds of faces $F$ separately:
  \begin{enumerate}[label=(\alph*)]
    \item $F$ is contained in the codimension $1$ lower face $F_{k+1}$;
    \item $F$ is contained in the codimension $1$ upper face $F^{k+1}$;
    \item both $F \cap F_{k+1}$ and $F \cap F^{k+1}$ are faces of strictly smaller dimension than $F$.
  \end{enumerate}
  In case (a), note that $\big([c, c+\square^k(h)]_{k+1}\big)|_F = c|_F$, which always lies in $S^{(3)}_{\dim F}$ by definition.

  In case (b), similarly $\big([c, c+\square^k(h)]_{k+1}\big)|_F = (c+\square^k(h))|_F = c|_F + \square^{\dim F}(h)$.  We remarked above that, for random $h$, this element lies in $S^{(2)}_{\dim F}$ with probability at least $1 - \eta$.

  Finally, in case (c) we have $\big([c, c+\square^k(h)]_{k+1}\big)|_F = [c|_{F \cap F_{k+1}}, c|_{F \cap F_{k+1}} + \square^{\dim F - 1}(h)]$.  Since $c|_{F \cap F_{k+1}} \in S^{(2)}_{\dim F-1}$ by assumption, this configuration lies in $S^{(2)}_{\dim F}$ with probability at least $1 - \eta$.

  So, for each $F$ the probability that $\big([c, c+\square^k(h)]_{k+1}\big)|_F \notin S^{(2)}_{k+1}$ is bounded by $\eta$.  Since again there are at most $4^{k+1}$ faces of $\llbracket k+1 \rrbracket$, the quantity~\eqref{eq:prob-missing} is bounded by $4^{s+1} \eta$.
  
  Therefore, if we set $\eta = 2 \eps_1^{1/2(s+1)}$ and $\delta = 4^{s+1} \eta$, provided $\delta < 1$ we find that $S^{(3)}_k \subseteq S_k$ is a non-empty system of cubes of parameter $1-\delta$, as required.
\end{proof}

We will typically use this in the form of the following corollary, which allows us to recover from losing a few cubes from a system of cubes.

\begin{corollary}%
  \label{cor:cube-system-patch}
  Suppose $s \ge 0$, $S_0,S_1,\dots,S_{s+1}$ is a system of cubes with parameter $\delta$, and $S'_k \subseteq S_k$ are subsets with $\mu(S_k \setminus S'_k) \le \eps$ for each $0 \le k \le s+1$, where $0 < \eps, \delta \le 1$ are parameters.  Then there exists a quantity $\delta' = \delta - O_s\big(\eps^{1/O_s(1)}\big)$, and \uppar{if $\delta' > 0$} a non-empty system of cubes $S''_k \subseteq S'_k$ with parameter $\delta'$, such that $\mu(S_k \setminus S''_k) \ll_s \eps^{1/O_s(1)}$ for each $0 \le k \le s+1$.
\end{corollary}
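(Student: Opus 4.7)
The plan is to adapt the iterative pruning argument from Lemma \ref{lem:cube-system-ae} to the relative setting, tracking density inside the system $S_k$ rather than inside $C^k(H)$. First I would replace $S'_k$ with its symmetric, face-closed hull $\widetilde S_k \subseteq S_k$, consisting of those $c \in S_k$ such that every coordinate permutation, coordinate reflection, and face restriction of $c$ lies in $S'$; since only $O_s(1)$ operations are involved, a union bound gives $\mu(S_k \setminus \widetilde S_k) \ll_s \eps$, and $\widetilde S_k$ satisfies properties (i) and (ii) of Definition \ref{def:cube-system}.

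Next I would mimic the construction of Lemma \ref{lem:cube-system-ae}. Set $S^{(2)}_{s+1} = \widetilde S_{s+1}$ and, for $k \le s$, define
\[
  T_k = \bigcup_{i \in [k+1]} \big\{c \in C^k(H) : \mu\big(\{h \in H : [c, c+\square^k(h)]_i \notin S^{(2)}_{k+1}\}\big) \ge \eta \big\},
\]
\[
  U_k = \big\{c : \mu(\{h \in H : c + \square^k(h) \in T_k\}) \ge \eta\big\}, \qquad S^{(2)}_k = C^k(H) \setminus T_k \setminus U_k,
\]
where $\eta > 0$ is a parameter to be chosen. Crucially, $S^{(2)}_k$ is not intersected with $\widetilde S_k$ at this stage, so that the $\square$-translation-invariance of $U_k$ remains available. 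The Markov-style estimate from Lemma \ref{lem:cube-system-ae} (using that $[c, c+\square^k(h)]_{k+1}$ is uniform in $C^{k+1}(H)$ for uniform $c, h$) gives $\mu(T_k \cup U_k) \ll_s \mu(S_{k+1} \setminus S^{(2)}_{k+1})/\eta^2$, and iteration yields $\mu(S_k \setminus S^{(2)}_k) \ll_s \eps/\eta^{2(s+1)}$. The inclusion $S^{(2)}_k \subseteq S_k$ is recovered a posteriori: if $c \in S^{(2)}_k$, then $c \notin T_k$ forces some $h$ with $[c, c+\square^k(h)]_{k+1} \in S^{(2)}_{k+1} \subseteq S_{k+1}$, and face-closure of $S_{k+1}$ gives $c \in S_k$.

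Finally, define $S''_k$ to consist of those $c \in \widetilde S_k$ all of whose face restrictions lie in $S^{(2)}_{\dim F}$; using again that $c|_F$ is uniform in $C^{\dim F}(H)$ for uniform $c$, the face-closure loses at most a factor $4^{s+1}$, preserving $\mu(S_k \setminus S''_k) \ll_s \eps/\eta^{2(s+1)}$. Properties (i) and (ii) of Definition \ref{def:cube-system} are then immediate. For property (iii), given $c \in S''_k \subseteq S_k$, I would start from the at least $\delta |H|$ values of $h$ with $[c, c+\square^k(h)]_{k+1} \in S_{k+1}$ provided by the system property of $S_k$, and subtract the bad ones via the three-case face analysis at the end of the proof of Lemma \ref{lem:cube-system-ae}: case (a) contributes no bad $h$; case (b) uses $c|_F \notin U_{\dim F}$ together with $\square$-translation invariance of $U$; case (c) uses $c|_{F \cap F_{k+1}} \notin T_{\dim F - 1}$. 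Each of the at most $4^{s+1}$ faces contributes at most $\eta |H|$ bad $h$, yielding popularity parameter at least $\delta - O_s(\eta)$. Setting $\eta = \eps^{1/(2s+3)}$ balances the two losses; non-emptiness (when $\delta' > 0$) follows from $\mu(S_0) \ge \delta$ (Remark \ref{rem:cube-system-large}) exceeding $\mu(S_0 \setminus S''_0) \ll_s \eps^{1/O_s(1)}$.

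The main obstacle is bookkeeping rather than conceptual: one must ensure that $S^{(2)}_k$ is \emph{not} intersected with $\widetilde S_k$ during pruning (so that the translation-invariance argument in case (b) survives), while still ending up with a set $S''_k \subseteq \widetilde S_k \subseteq S'_k$ that is simultaneously symmetric, face-closed, and has controlled popularity parameter.
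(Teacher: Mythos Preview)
There is a genuine gap in your Markov estimate. With $T_k$ defined via the event $[c,c+\square^k(h)]_i \notin S^{(2)}_{k+1}$, the uniform-cube argument only yields
\[
  \mu(T_k) \ll_s \frac{1 - \mu(S^{(2)}_{k+1})}{\eta},
\]
not $\mu(S_{k+1}\setminus S^{(2)}_{k+1})/\eta$ as you claim. These differ by $(1-\mu(S_{k+1}))/\eta$, and since the ambient system may have $\mu(S_{k+1})$ as small as $\delta^{k+2}$ (Remark~\ref{rem:cube-system-large}), this term dominates: when $\delta$ is small, almost every $c\in S_k$ already has up to $(1-\delta)|H|$ values of $h$ with $[c,c+\square^k(h)]_{k+1}\notin S_{k+1}\supseteq S^{(2)}_{k+1}$, so $T_k$ swallows $S_k$ and the iteration produces nothing. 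The natural repair is to redefine $T_k$ using the event $[c,\dots]\in S_{k+1}\setminus S^{(2)}_{k+1}$; then Markov gives the bound you want, and your three-case analysis goes through once you also use face-closure of $S_{k+1}$ in case~(c) (so that the initial $\delta|H|$ good values automatically have face restrictions inside $S_{\dim F}$).

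The paper avoids rerunning the Lemma~\ref{lem:cube-system-ae} argument altogether. It sets $T_k = C^k(H)\setminus(S_k\setminus S'_k)$, which has $\mu(T_k)\ge 1-\eps$ regardless of how small $\mu(S_k)$ is, applies Lemma~\ref{lem:cube-system-ae} as a black box to get a system $T'_k\subseteq T_k$ with parameter $1-\eta$ where $\eta\ll_s\eps^{1/O_s(1)}$, and then simply takes $S''_k=S_k\cap T'_k$. The key observation is that the intersection of two systems of cubes with parameters $\alpha$ and $\beta$ is again a system with parameter at least $\alpha+\beta-1$ (inclusion--exclusion on the set of good $h$ in Definition~\ref{def:cube-system}(iii)), so $S''_k$ has parameter $\ge\delta-\eta$; and $\mu(S_k\setminus S''_k)\le 1-\mu(T'_k)\le(s+2)\eta$ directly from Remark~\ref{rem:cube-system-large}.
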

\begin{proof}
  Let $T_k = C^k(H) \setminus (S_k \setminus S'_k)$ for $0 \le k \le s+1$; so $\mu(T_k) \ge 1 - \eps$.  By Lemma~\ref{lem:cube-system-ae}, we may find a non-empty system of cubes $T'_k \subseteq T_k$ with parameter $1-\eta$, where $\eta \ll_s \eps^{1/O_s(1)}$.  By Remark~\ref{rem:cube-system-large}, $\mu(T'_k) \ge (1-\eta)^{k+1} \ge 1 - (s+2) \eta$ for each $0 \le k \le s+1$.
  
  Now define $S''_k = S_k \cap T'_k$.  It is clear that $\mu(S_k \setminus S''_k) \le 1 - \mu(T'_k) \le (s+2) \eta$ for each $k$; in particular, $S''_0$ is non-empty provided $\delta - \eps - (s+2) \eta > 0$.  Moreover, the intersection of two systems of cubes with parameters $\alpha$ and $\beta$ respectively is a system of cubes with parameter at least $\alpha + \beta - 1$, and hence $S''_k$ form a system of cubes with parameter at least $\delta - \eta$, as required.
\end{proof}

\subsection{Connectivity and dividing into non-interacting pieces}%
\label{subsec:split}

Given an approximate polynomial $f \colon X \to \RR$, we suppose that we have located a system of cubes $S_0,\dots,S_{s+1}$ as in Corollary~\ref{cor:cube-system}.  We now consider the possibility that our approximate polynomial $f \colon S_0 \to \RR$ alternates between several unrelated structured functions.  Our goal is to cut up the domain $S_0$ into pieces, one for each structured component.

As stated above, it should be the case that if $x,y \in S_0$ are in different structured components, then there should be few derivatives $\partial^{s+1} f(c) = 0$ where $c$ has $[x,y]$ as a $1$-dimensional face.  In the language of systems of cubes, the analogous statement is that $[x,y] \notin S_1$.

Therefore, the splitting procedure is roughly to consider a graph whose vertex set is $S_0$ and whose edges are pairs $xy$ such that $[x,y] \in S_1$ (and $x \ne y$), and then to decompose $S_0$ by taking robust connected components of this graph.

This part of the argument plays an analogous role to the Balog--Szemer\'edi--Gowers theorem in related works.  

We fix the following notation.  If $\cG = (V,E)$ is a graph and $X,Y \subseteq V$, we write
\[
  E(X,Y) = \{(x,y) \colon x \in X,\, y \in Y,\, xy \in E \}
\]
and denote by
\[
  H(\cG) = \min_{S \subseteq V,\, 0 < |S| \le |V|/2} \frac{|E(S, V \setminus S)|}{|S|}
\]
the Cheeger (edge expansion) constant.  We also write $h(\cG) = H(\cG) / |V|$ for the normalized Cheeger constant, with $0 \le h(\cG) \le 1$ (except that by convention the one-vertex graph has Cheeger constant $+\infty$).

\begin{lemma}%
  \label{lem:split}
  Suppose $s \ge 0$ and $S_0,\dots,S_{s+1}$ is a system of cubes on $H$ with parameter $\delta$, $0<\delta\le 1$, and let $\eps > 0$ be another parameter.  We assume that $\eps \le \delta^C / C$ is sufficiently small, for some constant $C = C(s)$.

  Then there exists a non-empty system of cubes $S'_0,\dots,S'_{s+1}$ with parameter $\delta' \ge \delta - O_s\big(\eps^{1/O_s(1)} / \delta\big)$, with the following properties.  Write $\cG = (V,E)$ for the graph with $V = S'_0$ and $E = \{ xy \colon [x,y] \in S'_1,\, x \ne y\}$, and write $V_1,\dots,V_K \subseteq V$ for the connected components of $\cG$.  Then:---
  \begin{enumerate}[label=(\alph*)]
    \item $S'_k \subseteq S_k$, and $\mu(S_k \setminus S'_k) \ll_s \eps^{1/O_s(1)} / \delta$, for each $0 \le k \le s+1$;
    \item $K \le 1/\delta'$; and
    \item each subgraph $\cG[V_i]$ has Cheeger constant $H(\cG[V_i])$ at least $\eps |H|$.
  \end{enumerate}
\end{lemma}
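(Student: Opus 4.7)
The plan is to remove bad cuts from the graph $\cG$ with vertex set $V = S_0$ and edge set $\{xy : [x,y] \in S_1,\, x \ne y\}$ iteratively, then restore a valid system of cubes via Corollary \ref{cor:cube-system-patch}. By the popularity property of $S_0, S_1$, every vertex of $\cG$ has degree at least $\delta|H| - 1$; a double-counting argument using this min-degree shows that for any $S \subseteq V_i$ with $|S| \le \delta|H|/8$ one has $|E(S, V_i \setminus S)|/|S| \ge \delta|H|/2 \gg \eps|H|$. So ``small cuts'' automatically have Cheeger ratio well above $\eps|H|$, and we need only worry about ``large cuts'' with $|S| \ge \delta|H|/8$.

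Iterate: while some component $V_i$ admits a cut $S$ with $\delta|H|/8 \le |S| \le |V_i|/2$ and $|E(S, V_i\setminus S)|/|S| < 2\eps|H|$, remove the crossing edges from $S_1$ (and symmetrically), and remove any cube in $S_k$ ($k \ge 2$) having such an edge as a face. The threshold $|S| \ge \delta|H|/8$ forces every cut to require $|V_i| \ge \delta|H|/4$, so every surviving component stays of size $\ge \delta|H|/8$ throughout the process; hence the number of components is at most $8/\delta$ at all times, bounding the number of cuts by $O(1/\delta)$ and the density loss in each $S_k$ by $O_s(\eps/\delta)$. A Markov-type argument bounds the fraction of vertices that lost too many edges by $O(\eps/\delta^2)$; additionally discarding these bad vertices (and associated cubes) gives total density loss $O_s(\eps/\delta^2)$ in each $S_k$. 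Applying Corollary \ref{cor:cube-system-patch} then yields a system of cubes $S'_0, \dots, S'_{s+1}$ with $\mu(S_k \setminus S'_k) \le O_s((\eps/\delta^2)^{1/O_s(1)}) \le O_s(\eps^{1/O_s(1)}/\delta)$ (provided the exponent $O_s(1) \ge 2$) and parameter $\delta' \ge \delta - O_s(\eps^{1/O_s(1)}/\delta)$, handling (a). Conclusion (b), $K \le 1/\delta'$, follows directly from popularity of the final system: each $x \in S'_0$ has at least $\delta'|H|$ neighbours in the graph from $S'_1$, so each component contains at least $\delta'|H|$ vertices.

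The main obstacle is conclusion (c): verifying that every component of the final graph $\cG'$ (from $S'_1$) has Cheeger constant at least $\eps|H|$. For small cuts $|S| \le \delta'|H|/8$, this follows from the min-degree argument applied to $\cG'$ (whose min-degree is $\ge \delta'|H|$ by popularity of $S'$). For large cuts, the cut threshold $2\eps|H|$ (chosen larger than the target $\eps|H|$) provides a buffer against additional edges that Corollary \ref{cor:cube-system-patch} might remove within a component. Quantifying this buffer precisely, particularly in the case where the restoration step further splits a component into sub-pieces with worse Cheeger ratios, is the delicate technical heart of the argument; under the hypothesis $\eps \le \delta^C/C$ for a suitable constant $C = C(s)$, the buffer closes. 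If necessary, one iterates the entire cut-and-restore procedure to a fixed point, with a bootstrap arguing that the total parameter loss remains $O_s(\eps^{1/O_s(1)}/\delta)$.
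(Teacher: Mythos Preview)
Your overall strategy is in the right spirit, but there is a genuine gap in establishing conclusion (c), and you correctly identify it yourself without resolving it: after the single application of Corollary~\ref{cor:cube-system-patch} at the end, the patching may delete further edges and vertices inside components, potentially creating new bad cuts (including ones that split a previously good component). Your suggested fix---``iterate the entire cut-and-restore procedure to a fixed point, with a bootstrap''---is not carried out, and the bootstrap is not obvious: each round of patching has loss $\eps^{1/O_s(1)}$ rather than $\eps$, so a naive iteration could degrade the parameter faster than you can afford.

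The paper's proof sidesteps this difficulty entirely by \emph{interleaving} the cutting and the patching. At each stage $i$ one has a genuine system of cubes $S^{(i)}_0,\dots,S^{(i)}_{s+1}$ with parameter $\delta^{(i)}$. If its graph $\cG^{(i)}$ already satisfies (c), stop; otherwise make exactly one bad cut (removing $\le 2\eps|C^1(H)|$ edges from $S^{(i)}_1$) and immediately apply Corollary~\ref{cor:cube-system-patch} to produce $S^{(i+1)}$ with parameter $\delta^{(i+1)} \ge \delta^{(i)} - \eta$, $\eta \ll_s \eps^{1/O_s(1)}$. The point is that (c) is then \emph{the termination condition itself}, so there is nothing to verify after the fact. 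What remains is to bound the number of iterations: since each $\cG^{(i)}$ has minimum degree $\ge \delta^{(i)}|H|-1$ (this is where having a valid system of cubes at every step pays off), the bad cut always has both sides of size $>(\delta^{(i)}-\eps)|H|$, so each step genuinely increases the number of large components, while the minimum degree bounds the total number of components by $1/\delta^{(i)}$. This forces termination in $<2/\delta$ steps, so the total parameter loss is $O_s(\eps^{1/O_s(1)}/\delta)$ as required. Your ``buffer'' threshold $2\eps$ versus $\eps$, and the Markov cleanup of low-degree vertices, become unnecessary.
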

\begin{remark}%
  \label{rem:component-cubes}
  It is easy to see that for any $c \in S'_k$ ($1 \le k \le s+1$), all the values $\{c(\omega) \colon \omega \in \llbracket k \rrbracket\}$ are contained in the same component $V_i$ (since $c|_F \in S'_1$ for every $1$-dimensional face $F$, and the graph formed by $1$-dimensional faces of $\llbracket k \rrbracket$ is connected).  So, each set $V_i \subseteq C^0(H)$ carries its own system of cubes $S^{(i)}_0,\dots,S^{(i)}_{s+1}$ with $S^{(i)}_0 = V_i$ and with the same parameter $\delta'$, by simply intersecting $S'_k$ with $V_i^{\llbracket k \rrbracket}$.  In other words, $S'_0,\dots,S'_{s+1}$ decomposes naturally into $K$ non-interacting systems of cubes, one for each component $V_i$.
\end{remark}

\begin{proof}[Proof of Lemma~\ref{lem:split}]
  Throughout, let $\eta=O_s(\eps^{1/O_s(1)})$ be a quantity satisfying the statement of Corollary~\ref{cor:cube-system-patch} with parameter $\eps$.  It is acceptable to assume $\eta \ge \eps$.
  By choosing $C(s)$ in the statement appropriately, we may also assume $\eta \le \delta^2/4$.

  Set $S^{(0)}_k = S_k$ for $0 \le k \le s+1$.
  We will inductively define a sequence of (non-empty) systems of cubes $S^{(i)}_0,\dots,S^{(i)}_{s+1}$ for $0 \le i \le T$, with parameters $\delta^{(i)} := \delta - i \eta$, and satisfying $\mu(S_k \setminus S^{(i)}_k) \le i \eta$.  Here $T \le \lfloor 2/\delta \rfloor$ is some integer.

  First, we observe that $\delta^{(i)} \ge \delta - (2/\delta) \eta \ge \delta/2$ (using the bound $\eta \le \delta^2/4$).
  In particular, $\delta^{(i)}$ remains positive for $0 \le i \le T$, and all $\delta^{(i)}$ obey the bound $\delta^{(i)} \ge \delta - O_s\bigl(\eps^{1/O_s(1)}/\delta\bigr)$.

  Let $\cG^{(i)}$ be the graph with vertices $S^{(i)}_0$ and edges $\bigl\{ xy : [x,y] \in S^{(i)}_1, x \ne y \bigr\}$, as above.  
  Note $\cG^{(i)}$ has minimum degree at least $\delta^{(i)} |H| - 1$, by the definition of a system of cubes (specifically Definition~\ref{def:cube-system} (iii)).
  Hence, every connected component of $\cG^{(i)}$ has size $\ge \delta^{(i)} |H|$, and so $\cG^{(i)}$ has at most $1/\delta^{(i)}$ connected components.
  
  Suppose $S^{(i)}_0,\dots,S^{(i)}_{s+1}$ and $\delta^{(i)}$ have been chosen.
  If $\cG^{(i)}$ has the property that each connected component has Cheeger constant at least $\eps |H|$, then $S^{(i)}_0,\dots,S^{(i)}_{s+1}$ obey all the properties (a), (b), (c) in the statement, with $\delta'=\delta^{(i)}$, so we are done and may terminate the process.

  If not, by definition we can find disjoint sets $A, B \subseteq S^{(i)}_0$ with $0 < |A| \le |B|$, such that $|E(A,B)| < \eps |A|\, |H|$ and $A \cup B$ is a connected component of $\cG^{(i)}$.

  We note that necessarily $|A| > (\delta^{(i)} - \eps) |H|$.
  Indeed, every $x \in A$ has degree at least $\delta^{(i)} |H|-1$ in $\cG^{(i)}$, so has at least $\delta^{(i)} |H| - |A|$ neighbors in $B$; so, $|E(A,B)| \ge |A| \bigl(\delta^{(i)} |H| - |A|\bigr)$.
  Using $|E(A,B)| < \eps |A|\,|H|$ and rearranging gives the stated bound.

  Then, we define $S^{(i+1)}_0,\dots,S^{(i+1)}_{s+1}$ as follows.
  First define $R^{(i)}_1 \subseteq S^{(i)}_1$ by removing all elements $c=[x,y] \in S^{(i)}_1$ with $x \in A, y \in B$ or $x \in B, y \in A$ (of which there are at most $2 \eps |A|\,|H| \le \eps |C^1(H)|$).
  Then apply Corollary~\ref{cor:cube-system-patch} to $S^{(i)}_0,R^{(i)}_1,S^{(i)}_2,\dots,S^{(i)}_{s+1}$ to obtain a system of cubes $S^{(i+1)}_0,\dots,S^{(i+1)}_{s+1}$ with parameter $\delta^{(i+1)} = \delta^{(i)} - \eta$ such that $S^{(i+1)}_k \subseteq S^{(i)}_k$, $\mu(S_k^{(i)} \setminus S_k^{(i+1)}) \le \eta$ for each $0 \le k \le s+1$, and $S^{(i+1)}_1 \subseteq R^{(i)}_1$.
  It is clear that our inductive hypotheses hold for this $S^{(i+1)}_0,\dots,S^{(i+1)}_{s+1}$.
  
  Hence, we are done unless the process continues to time $T=\lfloor 2/\delta \rfloor$ without success, and it suffices to find a contradiction in this case.

  We claim that for $0 \le i \le T$, the graph $\cG^{(i)}$ has at least $i+1$ connected components.
  Indeed, suppose $0 \le i \le T-1$ and $\cG^{(i)}$ has $m$ connected components $X_1,\dots,X_m$ where $X_1=A \cup B$.
  In the graph corresponding to $R^{(i)}_1$, the $m+1$ sets $A,B,X_2,\dots,X_m$ are each a connected component or a union of connected components.
  Moreover, all these sets have size at least $(\delta^{(i)} - \eps) |H|$.
  Passing from $R^{(i)}_1$ to $S^{(i+1)}_1$, the only way the number of connected components could fall below $m+1$ is if all vertices in one of $A,B,X_2,\dots,X_m$ were deleted (i.e., belonged to $S^{(i)}_0 \setminus S^{(i+1)}_0$); however, there are at most $\eta |H|$ deleted vertices, and
  \[
    \delta^{(i)} - \eps - \eta \ge \delta - (i+2) \eta \ge \delta - (T+1) \eta \ge \delta - (2/\delta+1)(\delta^2/4) > 0
  \]
  so this is impossible, and the claim holds.

  However, this means the number of connected components of $\cG^{(T)}$ is at least $T+1$ but at most $1/\delta^{(T)}$, and since $T \le 2/\delta < T+1$, we get
  \[
    \frac2{\delta} < T+1 \le \frac1{\delta^{(T)}} = \frac1{\delta - T \eta} \le \frac1{\delta - \frac{2}{\delta} \frac{\delta^2}{4}} = \frac2{\delta}
  \]
  which gives the desired contradiction.
\end{proof}

\subsection{Extending to a global function}%
\label{subsec:improve-core}

Using the structure obtained in previous parts, we are now in a position to deploy the random sampling argument described in Section~\ref{subsec:improve-outline}.  Recall this allows us (roughly) to extend an approximate polynomial $f$ to a global function $\wt f$, and understand the derivatives $\partial^{s+1} \wt f$ in terms of lower-degree approximate polynomials.

\begin{lemma}%
  \label{lem:extend-global}
  Suppose $s \ge 1$ and $S_0,S_1,\dots,S_{s+1}$ is a non-empty system of cubes with parameter $\delta>0$, a function $f \colon S_0 \to \RR$ satisfies $\partial^{s+1} f(c) = 0$ for all $c \in S_{s+1}$, and the graph $\cG = (V,E)$, where $V=S_0$ and $E=\{ xy \colon [x,y] \in S_1\}$, has normalized Cheeger constant $h = h(\cG) > 0$.

  Let $\eps > 0$ be a further parameter.  Then there exist:---
  \begin{enumerate}[label=(\roman*)]
    \item a set $X \subseteq H$ with $\mu(X) \ge 1 - \eps$ and a function $\wt f \colon X \to \RR$ with $\wt f|_{X \cap S_0} = f|_{X \cap S_0}$;
    \item a set $S \subseteq C^{s+1}(H) \cap X^{\llbracket s+1 \rrbracket}$ with $\mu(S) \ge 1 - \eps$;
    \item positive integers $M,K \ll_s \bigl( \delta^{-1} h^{-4} \log(2/\eps) \log(2/\delta) \log(2/h)^2 \bigr)^{O_s(1)}$;
    \item for each $r \in [K]$, a non-empty system of cubes $S^{(r)}_0,\dots,S^{(r)}_s$ of degree $s-1$ and parameter $\delta$; and
    \item for each $r \in [K]$ a function $g_r \colon S^{(r)}_0 \to \RR$ such that $\partial^{s} g_r (c) = 0$ for all $c \in S^{(r)}_s$;
  \end{enumerate}
  such that for each $c \in S$ there exists a configuration $b \colon \llbracket s+1 \rrbracket \to \ZZ^K$ with $\|b(\omega)\|_1 \le M$ for each $\omega \in \llbracket s+1 \rrbracket$, giving
  \[
    \partial^{s+1} \wt f(c) = \sum_{\omega \in \llbracket s+1 \rrbracket} (-1)^{|\omega|} \sum_{i=1}^K b(\omega)_i g_i(c(\omega))
  \]
  \uppar{where each function $g_i(c(\omega))$ is indeed defined whenever the coefficient $b(\omega)_i$ is non-zero}.
\end{lemma}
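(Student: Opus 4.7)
The plan is to carry out the random-sampling extension outlined in Section \ref{subsec:improve-outline}. Sample translates $a_0 = 0, a_1, \dots, a_L \in H$ uniformly at random, define $\wt f(x) := f(x + a_{i(x)})$ for a choice function $i(x) \in \{0,1,\dots,L\}$ with $x + a_{i(x)} \in S_0$, and derive the required identity for $\partial^{s+1} \wt f(c)$ by subtracting the vanishing identity $\partial^{s+1} f(c + \square^{s+1}(a_r)) = 0$ for a suitable reference translate $a_r$ depending on $c$. The lower-degree functions $g_r$ in the conclusion arise by relabelling the $O(L^2)$ translated differences $G_{ij}(x) := f(x+a_i) - f(x+a_j) = \partial_{a_j - a_i} f(x+a_i)$, each of which is a translated derivative of $f$ and hence a degree-$(s-1)$ approximate polynomial.

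A Chernoff bound applied to $L \gg \delta^{-1} \log(2/\eps)$ independent uniform samples makes $\mu(X) \ge 1 - \eps$ for $X := \bigcup_i (S_0 - a_i)$; taking $i(x) = 0$ when $x \in S_0$ enforces $\wt f|_{X \cap S_0} = f|_{X \cap S_0}$. A standard averaging argument together with Corollary \ref{cor:cube-system-patch} shows that for most pairs $(i,j)$ the function $G_{ij}$ inherits a non-empty system of cubes of degree $s-1$ and parameter $\delta$ by slicing $S_{s+1}$ along the direction $a_j - a_i$. Now, given a cube $c$ with all vertices in $X$ and a reference index $r$ such that $c + \square^{s+1}(a_r) \in S_{s+1}$ -- which by the face axiom automatically ensures $c(\omega) + a_r \in S_0$ for every $\omega$ -- one has $\partial^{s+1} f(c + \square^{s+1}(a_r)) = 0$, and subtracting this identity from $\partial^{s+1} \wt f(c) = \sum_\omega (-1)^{|\omega|} f(c(\omega) + a_{i(c(\omega))})$ yields
\[
  \partial^{s+1} \wt f(c) = \sum_{\omega \in \llbracket s+1 \rrbracket} (-1)^{|\omega|} G_{i(c(\omega)),\, r}\bigl(c(\omega)\bigr).
\]
Under the relabelling of the $G_{ij}$ as the $g_r$, this places a single $+1$ entry into each $b(\omega)$, so $\|b(\omega)\|_1 = 1 \le M$.

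The main obstacle, and the source of the $h^{-4} \log(2/h)^2$ factor in the bound on $K$, is showing that such a reference index $r$ exists for all but an $\eps$-fraction of cubes $c$, i.e.\ that the set $S$ satisfies $\mu(S) \ge 1 - \eps$. While $\mu(S_{s+1}) \ge \delta^{s+2}$ on average, for individual cubes $c$ the density $|\{a \in H : c + \square^{s+1}(a) \in S_{s+1}\}|/|H|$ can be much smaller, particularly when the system of cubes concentrates near a bottleneck of $\cG$. The Cheeger constant $h$ of $\cG$ is the right tool to rule this out: via an expansion / random-walk argument on $\cG$ (invoking Cheeger's inequality $\lambda_2 \gtrsim h^2$ and the associated mixing-time bounds) one should show the slice density above is concentrated around its average up to a multiplicative loss polynomial in $h^{-1}$, after which a Chernoff bound over $L$ independent samples plus a union bound over the $2^{s+1}$ vertices of a cube deliver the claimed quantitative dependence. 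Threading this expansion input through the construction while preserving the system-of-cubes parameter at exactly $\delta$ in the conclusion is the chief technical difficulty.
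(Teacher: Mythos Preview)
Your proposal has the right skeleton but misidentifies where the Cheeger constant actually enters, and this is a genuine gap. You want a single reference index $r$ with $c+\square^{s+1}(a_r)\in S_{s+1}$ for most cubes $c$, and you propose to obtain this via an expander-mixing argument on $\cG$. But $h(\cG)$ is a property of $S_0$ and $S_1$ only; nothing about expansion in $\cG$ forces the set $S_{s+1}\subseteq C^{s+1}(H)$ to equidistribute among diagonal translate-classes $\{c+\square^{s+1}(a):a\in H\}$. Indeed the paper never proves any such equidistribution. Instead, the reference cube in $S_{s+1}$ is \emph{not} a global translate of $c$: one first fixes a vertex $x+a_k\in S_0$, then uses the popularity axiom (iii) of the system of cubes, together with a \emph{second} set of random shifts $h_1,\dots,h_L$, to build up $\angle(x+a_k;\,u_1+t_1,\dots,u_{s+1}+t_{s+1})\in S_{s+1}$ one dimension at a time, with each $t_j\in\{0,h_1,\dots,h_L\}$. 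This is why the functions $g_r$ include not only $g_{a_i,a_j}$ but also $g_{a_i+t,a_j+t}$ and $g_{a_i,a_j+t}$ for $t\in(s+1)T$, and why $K\ll_s L^{s+3}$ rather than $L^2$.

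There is a second related gap. Even granting your reference cube, your identity $\partial^{s+1}\wt f(c)=\sum_\omega(-1)^{|\omega|}G_{i(c(\omega)),r}(c(\omega))$ only makes sense numerically; for the conclusion of the lemma you need $c(\omega)$ to lie in the domain $S_0^{(a_i,a_r)}=\{x:[x+a_i,x+a_r]\in S_1\}$, since that is what gives $G_{i,r}$ its non-empty degree-$(s-1)$ system of cubes. Knowing $c(\omega)+a_i,\,c(\omega)+a_r\in S_0$ does not give $[c(\omega)+a_i,c(\omega)+a_r]\in S_1$. This is precisely where the Cheeger constant is actually used: via the random-subgraph connectivity lemma (Lemma~\ref{lem:subgraph-connected}), the induced subgraph of $\cG$ on the sampled vertices $\{x+a_i:x+a_i\in S_0\}$ is almost surely connected, so $f(x+a_k)-f(x+a_\ell)$ can be rewritten as a telescoping sum $\sum g_{a_{i_r},a_{i_{r+1}}}(x)$ along a path, with each term genuinely defined on its system of cubes. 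This path decomposition is the source of the $h^{-4}\log(2/h)^2$ factor and of the bound $\|b(\omega)\|_1\le 3L^2+1$ (not $1$).
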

In other words, the derivatives $\partial^{s+1} \wt f$ can be understood in terms of a bounded linear combination of approximate polynomials $g_1,\dots,g_K$ of degree $s-1$, evaluated at the vertices of $c$.

\begin{remark}%
  \label{rem:examples}
  We have already discussed the conclusion in the case $s=1$.  When $s=2$, useful examples that showcase the behavior described in the lemma in real life include those of the shape $f \colon \ZZ/N\ZZ \to \RR$ given by $f(x) = \{\alpha x /N\}\,\{ \beta x / N\}$ for arbitrary $\alpha,\beta \in \ZZ/N\ZZ$, where $\{\cdot\} \colon \RR \to [0,1)$ denotes the fractional part map.
  
  It is a somewhat informative exercise to compute the derivative $\partial^3 f(\angle(x;h_1,h_2,h_3))$ explicitly. For current purposes, it suffices to note that it is a bounded integer combination of the functions $x \mapsto \{\alpha x / N\}$, $x \mapsto \{\beta x / N\}$ and $x \mapsto 1$ evaluated at the vertices of $c=\angle(x;h_1,h_2,h_3)$, which is exactly the kind of structure produced by the lemma.
\end{remark}

\begin{remark}%
  \label{rem:case-s-zero}
  It is clear any $f$ obeying the hypotheses of Lemma~\ref{lem:extend-global} with $s=0$ is actually a constant function on its domain (since $f(x) = f(y)$ for all $xy \in E$ and $\cG$ is connected), and so can trivially be extended to a constant function on $H$.  This simpler statement corresponds to the $s=0$ case of the lemma, and may replace it in any inductive arguments.
\end{remark}

\begin{proof}[Proof of Lemma~\ref{lem:extend-global}]
  As stated above, the key step is to pick elements $a_1,\dots,a_L \in H$ uniformly and independently at random, and consider the translates $x \mapsto f(x+a_i)$.  The value of $L$ will be chosen to satisfy the following conditions (some of which may imply others):---
  \begin{enumerate}[label=(\Alph*)]
    \item $\exp\big(-\delta L / 8) \le 2^{-s-1} (\eps/40) (L+1)^{-s-1}$;
    \item $(\delta L/2+1) \exp\big(-(\delta L/2-1) h^4 / 64 \log(2/h^2) \big) \le 2^{-s-1} (\eps/40) (L+1)^{-s-1}$;
    \item $2\exp(-\delta (L-1)/2) \le 2^{-s-1} (\eps / 20) (L+1)^{-s-1}$; and
    \item $(s+1) \exp(-\delta L) \le \eps / 10$.
  \end{enumerate}
  Noting for (A),(B),(C) that for constants $\alpha,\beta \ge 1$ the inequality $\exp(X) \ge \alpha X^\beta$ holds provided (say) $X \ge 2 \beta \log (2 \alpha \beta)$, it follows that there is a choice of $L$ satisfying (A)--(D) while keeping $L \ll_s \delta^{-1} h^{-4} \log(2/\eps) \log(2/\delta) \log(2/h)^2$.
  
  For any pair $a,b \in H$ and $0 \le k \le s$ we define
  \[
    S^{(a,b)}_k = \big\{ c \in C^k(H) \colon [c+\square^{k}(a), c+\square^{k}(b)]_{k+1} \in S_{k+1} \big\} .
  \]
  Each $S^{(a,b)}_0,\dots,S^{(a,b)}_s$ inherits the properties in Definition~\ref{def:cube-system} and so is itself a system of cubes of degree $s$ and parameter $\delta$; however, for a particular $a,b \in H$ it may be empty.  By symmetry (Definition~\ref{def:cube-system}(i)) we also have $S^{(a,b)}_k = S^{(b,a)}_k$.  We can further define
  \[
    g_{a,b}(x) = f(x+a) - f(x+b)
  \]
  as a function $S^{(a,b)}_0 \to \RR$ (it is well-defined there, since $x \in S^{(a,b)}_0$ implies $[x+a,x+b] \in S_1$ which in particular implies $x+a,\, x+b \in S_0$).  For any $c \in S^{(a,b)}_s$, we have
  \[
    \partial^s g_{a,b}(c) = \partial^s f(c + \square^s(a)) - \partial^s f(c + \square^s(b)) = \partial^{s+1} f\big([c+\square^s(a),c+\square^s(b)]_{s+1}\big) = 0
  \]
  by hypothesis.  Note also that $g_{a,b} = -g_{b,a}$.

  If $1 \le k < \ell \le L$ and $x+a_k,\, x+a_\ell \in S_0$ then it makes sense to consider the difference $f(x+a_k) - f(x+a_\ell)$, which as discussed above measures our inability to glue the translates $x \mapsto f(x+a_k)$ and $x \mapsto f(x+a_\ell)$ together.  When furthermore $x \in S_0^{(a_k,a_\ell)}$, i.e.\ $[x+a_k,x+a_\ell] \in S_1$, this difference is exactly measured by the value $g_{a_k,a_\ell}(x)$.  However, this latter condition is too restrictive for what we need.  The next claim allows us to describe $f(x+a_k) - f(x+a_\ell)$ (almost surely) wherever it is defined, using a handful of different functions $g_{a_i,a_j}$, by exploiting our robust connectivity hypothesis on the graph $\cG$.

  \begin{claim}%
    \label{claim:samples-conn}
    Fix $x \in H$.  With probability at least $1 - 2^{-s-1} (\eps/20) (L+1)^{-s-1}$ with respect to $a_1,\dots,a_L$, the following hold: (i) there is at least one $i$ such that $x+a_i \in S_0$; and (ii) for every pair $k,\ell \in [L]$ such that $x+a_k,\, x+a_\ell \in S_0$, there exist integers $b_{i,j} \in \{-1,0,1\}$ for $1 \le i < j \le L$ such that
    \[
      f(x+a_{k}) - f(x+a_{\ell}) = \sum_{1 \le i < j \le L} b_{i,j} g_{a_i,a_j}(x)
    \]
    and with $x \in S^{(a_i,a_j)}_0$ whenever $b_{i,j} \ne 0$ \uppar{i.e., the expression is well-defined}.
  \end{claim}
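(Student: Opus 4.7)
The plan is to carry out the random-sampling telescope outlined in Section \ref{subsec:improve-outline}, in three stages. First I count the good samples. Let $T := \{i \in [L] : x + a_i \in S_0\}$. Since $\mu(S_0) \ge \delta$ by Remark \ref{rem:cube-system-large}, the events $\{x + a_i \in S_0\}$ are independent and each of probability at least $\delta$, and a multiplicative Chernoff bound gives $\Pr(|T| < \delta L/2) \le \exp(-\delta L/8)$, which is absorbed into half of the target error bound by condition (A). On this event $T \ne \emptyset$, yielding (i).

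The main technical step is to show that, conditional on $|T| \ge \delta L/2$, the induced subgraph $\cG[W]$ on $W := \{x + a_i : i \in T\}$ is connected except on a further small event. Conditional on $T$, the vertices $x + a_i$ ($i \in T$) are independent uniform samples from $V = S_0$, so this is a question about random subsets of an expander. I would grow the connected component of an arbitrary seed $x + a_{i_0} \in W$ by iterated edge expansion: at each stage, either the component already exhausts $W$, or its current vertex set $R$ satisfies $|R| \le |V|/2$ and $|E(R, V \setminus R)| \ge h|R|\,|V|$ by the Cheeger hypothesis, and a concentration argument shows that with high probability a positive fraction of those boundary edges land on an as-yet unused sample in $W$. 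This gives a failure probability of order $\exp(-|T|\, h^4 / \log(1/h^2))$, and a union bound over $\sim L/2$ candidate seeds matches the shape of condition (B). The exponent $h^{-4}$ appears because Cheeger's inequality only buys spectral gap $h^2/2$, so each expansion stage needs $\sim \log(1/h^2)/h^2$ samples for concentration; establishing this connectivity estimate with exactly this quantitative shape is the main obstacle.

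On the intersection of the two good events I complete the telescope. Given $k, \ell \in T$, choose a simple path $x + a_{i_0}, x + a_{i_1}, \dots, x + a_{i_r}$ in $\cG[W]$ joining $x + a_k$ to $x + a_\ell$. Each edge of the path corresponds to $[x + a_{i_{j-1}}, x + a_{i_j}] \in S_1$, which is precisely the condition $x \in S^{(a_{i_{j-1}}, a_{i_j})}_0$ making $g_{a_{i_{j-1}}, a_{i_j}}(x) = f(x + a_{i_{j-1}}) - f(x + a_{i_j})$ well-defined. Telescoping,
\[
  f(x + a_k) - f(x + a_\ell) \;=\; \sum_{j=1}^{r} g_{a_{i_{j-1}}, a_{i_j}}(x) .
\]
Since the path is simple, each unordered pair $\{a_i, a_j\}$ occurs at most once in the sum; absorbing signs via $g_{a,b} = -g_{b,a}$ yields coefficients $b_{i,j} \in \{-1, 0, 1\}$, completing (ii).
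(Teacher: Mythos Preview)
Your proposal is correct and matches the paper's proof essentially step for step: Chernoff to get $|T|\ge\delta L/2$, then a random-subgraph connectivity lemma for expanders, then the telescoping path argument. The paper packages the middle step as a black-box appendix lemma (Lemma~\ref{lem:subgraph-connected}) rather than sketching it inline, and its proof of that lemma uses a submartingale argument on edge expansion directly rather than passing through the spectral gap as your heuristic for the $h^4$ exponent suggests, but the quantitative shape and the overall strategy are the same.
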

  \begin{proof}[Proof of claim]
    We recall the graph $\cG$ from the statement.  Also let $V' = \big\{ i \colon x + a_i \in S_0 \big\}$ and $E' = \big\{ ij \colon [x+a_i, x+a_j] \in S_1\big\}$, and consider the graph $\cG'=(V',E')$ (which depends on $x$).  Since the elements $x+a_i$ for $i \in V'$ are independent uniform random samples of $S_0$, this graph $\cG'$ is the induced subgraph of $\cG$ on a random set of $|V'|$ vertices (allowing repetitions), in the precise sense of Lemma~\ref{lem:subgraph-connected}.  Since $|S_0| \ge \delta |H|$ (see Remark~\ref{rem:cube-system-large}), we have
    \[
      \PP\big[ |V'| \ge \delta L / 2 \big] \ge 1 - \exp(-\delta L / 8) \ge 2^{-s-1} (\eps/40) (L+1)^{-s-1}
    \]
    by Chernoff and (A).  Conditional on this event, by Lemma~\ref{lem:subgraph-connected} we have
    \[
      \PP\big[\cG' \text{ is connected}\big] \ge 1 - (\delta L/2+1) \exp\Big(-(\delta L/2-1) h^4 / 64 \log\big(2/h^2\big) \Big) ;
    \]
    and by (B) this probability is at least $1 - 2^{-s-1} (\eps / 40) L^{-s-1}$.

    If $k,\ell \in V'$, $k \ne \ell$, and $k=i_0,i_1,\dots,i_m=\ell$ is a path from $k$ to $\ell$ in $\cG'$, it is immediate that
    \[
      f(x+a_k) - f(x+a_\ell) = \sum_{r=0}^{m-1} \big( f(x+a_{i_r}) - f(x+a_{i_{r+1}}) \big) = \sum_{r=0}^{m-1} g_{i_{r},i_{r+1}}(x) .
    \]
    For each $1 \le i < j \le L$ the functions $g_{i,j}$, $g_{j,i}$ are used at most once in total in this sum, and so this expression has the required form.
  \end{proof}

  Let $X_0$ denote the set of all $x \in H$ for which the conclusion of Claim~\ref{claim:samples-conn} holds.  By the claim, $\EE[\mu(X_0)] \ge 1 - 2^{-s-1} (\eps/20) L^{-s-1}$.  In particular, for every $x \in X_0$ there is some $i \in [L]$ such that $x+a_i \in S_0$; so, we may define a function $f' \colon X_0 \to \RR$ by $f'(x) = f(x+a_i)$ where $i \in [L]$ is the least value for which $x+a_i \in S_0$.
  This choice is slightly arbitrary, but the second part of Claim~\ref{claim:samples-conn} allows us to pass freely between the values $f(x+a_i)$ so this is usually not important.
  
  We now need to analyse the derivatives of $f'$.  As progress towards this, the next claim shows that we can (almost surely) understand any \emph{fixed} single derivative $f'(x) - f'(x+h)$, at the expense of considering a collection of functions $g_{a,b}$ where $a$ and $b$ depend on $h$.

  \begin{claim}%
    \label{claim:hopping}
    Fix any $x, h \in H$.  Then with probability at least $1 - 2^{-s-1} (\eps/20)(L+1)^{-s-1}$ with respect to $a_1,\dots,a_L$, there exists $k,\ell \in [L]$ such that $[x+a_k,\, x+a_\ell+h] \in S_1$.

    If furthermore $x,x+h \in X_0$, then for some integers $b_{i,j}, b'_{i,j} \in \{-1,0,1\}$ for $1 \le i < j \le L$, we have
    \[
      f'(x) - f'(x+h) = g_{a_k,a_\ell+h}(x) + \sum_{1 \le i < j \le L} \left(b_{i,j} g_{a_i,a_j}(x) + b'_{i,j} g_{a_i,a_j}(x+h) \right) 
    \]
    where again we require that whenever some $g_{a,b}(y)$ appears with a non-zero coefficient it must be well-defined \uppar{in that $y \in S_0^{(a,b)}$}.
  \end{claim}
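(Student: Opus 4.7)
The plan is to run a similar probabilistic argument to Claim~\ref{claim:samples-conn} to produce the pair $(k,\ell)$, and then to extract the decomposition by concatenating three ``hops'': from $x+a_{i(x)}$ to $x+a_k$, from $x+a_k$ to $x+a_\ell+h$, and from $x+a_\ell+h$ to $(x+h)+a_{i(x+h)}$.

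For the first part, I would split the indices $[L]$ into two blocks $A = \{1,\dots,\lceil L/2\rceil\}$ and $B = [L]\setminus A$. Using $\mu(S_0) \ge \delta$ (Remark~\ref{rem:cube-system-large}), the probability that no $k \in A$ has $x+a_k \in S_0$ is at most $(1-\delta)^{|A|} \le \exp(-\delta L/2)$. Conditional on such a $k$ existing (take the least), fix it; now for any $\ell \in B$, the popularity property (Definition~\ref{def:cube-system}(iii), applied at dimension $0$ with the symmetry of (i)) says there are at least $\delta|H|$ values $y$ with $[x+a_k, y] \in S_1$, hence each $a_\ell$ satisfies $x+a_\ell+h$ landing in this neighborhood with probability at least $\delta$. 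Independence across $\ell \in B$ then gives failure probability at most $\exp(-\delta(L-1)/2)$. A union bound produces $2\exp(-\delta(L-1)/2)$, which by condition~(C) in the proof of Lemma~\ref{lem:extend-global} is bounded by $2^{-s-1}(\eps/20)(L+1)^{-s-1}$.

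For the second part, assume the above event and $x, x+h \in X_0$. Write $i(y)$ for the least index with $y+a_{i(y)} \in S_0$, so $f'(y) = f(y+a_{i(y)})$. Decompose
\[
f'(x) - f'(x+h) = \big(f(x+a_{i(x)}) - f(x+a_k)\big) + \big(f(x+a_k) - f(x+a_\ell+h)\big) + \big(f(x+a_\ell+h) - f((x+h)+a_{i(x+h)})\big).
\]
The middle term equals $g_{a_k, a_\ell+h}(x)$, which is well-defined precisely because $[x+a_k, x+a_\ell+h] \in S_1$ means $x \in S_0^{(a_k, a_\ell+h)}$. The first term is handled by applying the second conclusion of Claim~\ref{claim:samples-conn} at $x$ to the pair of indices $(i(x), k)$ (both of which satisfy $x+a_? \in S_0$), yielding $\sum_{i<j} b_{i,j} g_{a_i,a_j}(x)$ with $b_{i,j} \in \{-1,0,1\}$. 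Analogously the third term, rewritten as $f((x+h)+a_\ell) - f((x+h)+a_{i(x+h)})$, is handled by Claim~\ref{claim:samples-conn} applied at $x+h$ with the pair $(\ell, i(x+h))$, producing $\sum_{i<j} b'_{i,j} g_{a_i,a_j}(x+h)$ with $b'_{i,j} \in \{-1,0,1\}$. Summing gives the claimed identity, and the well-definedness assertion is inherited directly from Claim~\ref{claim:samples-conn} and the property of $(k,\ell)$ just established.

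The probabilistic step is completely routine once condition~(C) is in hand, and I expect no real obstacle: the only mild subtlety is to check that, in assembling the chain, the pairs of indices $(i(x),k)$ and $(\ell, i(x+h))$ to which Claim~\ref{claim:samples-conn} is applied really do lie in the respective good sets (which is automatic from $x,x+h \in X_0$ and the construction of $k,\ell$), so that the $\{-1,0,1\}$-valued coefficients from that claim directly become the $b_{i,j}$ and $b'_{i,j}$ required here.
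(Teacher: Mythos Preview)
The proposal is correct and follows essentially the same approach as the paper: split the indices into two halves to locate $k$ and $\ell$ via independent sampling (invoking condition~(C)), then write $f'(x)-f'(x+h)$ as a three-term telescoping sum with middle term $g_{a_k,a_\ell+h}(x)$ and outer terms handled by Claim~\ref{claim:samples-conn} at $x$ and $x+h$ respectively. The only (harmless) differences are cosmetic bookkeeping of which half has size $\lceil L/2\rceil$ versus $\lfloor L/2\rfloor$, and your explicit check that $(x+h)+a_\ell\in S_0$ follows from the face property of $S_1$, which the paper leaves implicit.
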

  \begin{proof}[Proof of claim]
    For the first part, divide the random variables $a_i$ into two halves $a_1,\dots,a_{\lfloor L/2 \rfloor}$ and $a_{\lfloor L/2 \rfloor+1},\dots,a_L$.  Since the values $x+a_k$ for $k \in \{1,\dots,\lfloor L/2 \rfloor\}$ are independent random samples of $H$, with probability at least $1 - (1 - \delta)^{(L-1)/2} \ge 1 - \exp(-\delta (L-1)/2)$ there is some $k$ such that $x+a_k \in S_0$.  Fixing this $k$, again $x+a_\ell+h$ for $\lfloor L/2\rfloor + 1 \le \ell \le L$ are uniform random samples from $H$, so by the definition of a system of cubes the probability that $[x+a_k,\, x+a_\ell+h] \in S_1$ for some $\ell$ in this range is at least $1 - (1 - \delta)^{L/2} \ge 1 - \exp(-\delta L/2)$.  By (C), the total failure probability is at most $2^{-s-1} (\eps/20) L^{-s-1}$ as required.

    For the second part, suppose $t,u \in [L]$ are values such that $f'(x) = f(x+a_t)$ and $f'(x+h) = f(x+h+a_u)$.  Then we may expand
    \[
      f'(x) - f'(x+h) = \big(f(x+a_t) - f(x+a_k)\big) - \big(f(x+h+a_u) - f(x+h+a_\ell)\big) + \big(f(x+a_k) - f(x+h+a_\ell) \big) .
    \]
    Since $x, x+h \in X_0$, the first two terms may be expanded in terms of $g_{a_i,a_j}$ as in Claim~\ref{claim:samples-conn}.  The last term is exactly $g_{a_k,a_\ell+h}(x)$ and is well-defined by the first part.
  \end{proof}

  We now choose a further set of uniform independent random translates $h_1,\dots,h_L \in H$, and write $T = \{0,h_1,\dots,h_L\}$ and $(s+1) T$ for the $(s+1)$-fold sumset of $T$.  Accordingly, we define $X_1 \subseteq H$ to consist of all $x \in H$ such that the first part of Claim~\ref{claim:hopping} succeeds for $x$ and for each $h \in (s+1) T$.  The bound from the claim shows that $\EE[\mu(X_1)] \ge 1 - 2^{-s-1} \eps/20$.

  We further define $X_2 \subseteq H$ to consist of all $x \in X_1$ such that $x+t \in X_0$ for each $t \in (s+1) T$ (so in particular, $x \in X_0$).  A union bound shows $\EE[\mu(X_2)] \ge 1 - 2^{-s-1} \eps / 10$.

  Finally, define $K$ and functions $g'_r$ for $r \in [K]$ to be, in any order, all functions of the form $g_{a_i,a_j}$, $g_{a_i+t, a_j+t}$ or $g_{a_i,a_j+t}$ for $1 \le i < j \le L$ and $t \in (s+1) T$, whose corresponding domains $S_0^{(a,b)}$ are non-empty.  It follows that $K \le 3 L^{s+3}$.

  With this is place, we can describe (almost all) general derivatives $\partial^{s+1} f'$.
  \begin{claim}
    Fix $c \in C^{s+1}(H)$, and suppose that $c(\omega) \in X_2$ for each $\omega \in \llbracket s+1 \rrbracket$.
    Write $c = \angle(x; u_1,\dots,u_{s+1})$ \uppar{see Section~\ref{subsec:notation}}, and let $k \in [L]$ be any index such that $x + a_k \in S_0$ \uppar{possible since $x = c(0,\dots,0) \in X_0$}.
    
    Then with probability at least $1-\eps/10$ with respect to $h_1,\dots,h_L$, there exist $t_1,\dots,t_{s+1} \in T$ such that $\angle(x+a_k; u_1+t_1,\dots,u_{s+1}+t_{s+1}) \in S_{s+1}$.

    If this conclusion holds, then there exist coefficients $b(\omega) \in \ZZ^K$ with $\|b(\omega)\|_1 \le 3 L^2 + 1$ for each each $\omega \in \llbracket s+1 \rrbracket$ \uppar{depending on $c$}, such that
    \[
      \partial^{s+1} f'(c) = \sum_{\omega \in \llbracket s+1 \rrbracket} (-1)^{|\omega|} \sum_{r=1}^K b(\omega)_r\, g'_r(c(\omega))
    \]
    and where again the coefficient $b(\omega)_r$ is only non-zero when $g'_r(c(\omega))$ is well-defined.
  \end{claim}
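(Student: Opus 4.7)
The plan splits into two pieces: a probabilistic step producing the cube $c^* = \angle(x+a_k;\, u_1+t_1,\ldots,u_{s+1}+t_{s+1}) \in S_{s+1}$, and a deterministic algebraic step which then expresses $\partial^{s+1} f'(c)$ in the required form.

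For the probabilistic step, I would build $c^*$ dimension-by-dimension using the popularity axiom (iii) of Definition \ref{def:cube-system}. The $0$-cube $x+a_k$ lies in $S_0$ by hypothesis; inductively, once $t_1,\ldots,t_{j-1}$ have been chosen so that the partial cube $\angle(x+a_k;\, u_1+t_1,\ldots,u_{j-1}+t_{j-1})$ lies in $S_{j-1}$, property (iii) supplies a set $A_j \subseteq H$ of density at least $\delta$ of valid extensions, and I seek $t_j \in T$ with $u_j+t_j \in A_j$. Reserving a fresh block of the samples $h_r$ for each step (so that $A_j$ is independent of the candidates tested at step $j$), the failure probability at step $j$ is at most $(1-\delta)^{L/(s+1)}$, and a union bound gives a total failure probability at most $(s+1)\exp(-\delta L/(s+1))$, which is at most $\eps/10$ by condition (D) (with the factor of $s+1$ in the exponent absorbed into the $\ll_s$ estimate on $L$).

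Given this cube, I would exploit $\partial^{s+1} f(c^*) = 0$ (as $c^* \in S_{s+1}$) to rewrite
\[
\partial^{s+1} f'(c) = \sum_{\omega \in \llbracket s+1 \rrbracket} (-1)^{|\omega|} \Big[ f'(c(\omega)) - f\big(c(\omega) + a_k + \tau(\omega)\big) \Big],
\]
where $\tau(\omega) = \sum_{i:\omega_i=1} t_i \in (s+1)T$. For each $\omega$ I insert the intermediate value $f'(c(\omega)+\tau(\omega))$ and split the bracket as
\[
\big[f'(c(\omega)) - f'(c(\omega)+\tau(\omega))\big] + \big[f'(c(\omega)+\tau(\omega)) - f(c(\omega)+a_k+\tau(\omega))\big].
\]
Because $c(\omega) \in X_2$ we have $c(\omega) \in X_1$ and $c(\omega)+\tau(\omega) \in X_0$, while $c(\omega)+\tau(\omega)+a_k = c^*(\omega) \in S_0$ as a vertex of $c^* \in S_{s+1}$. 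Hence Claim \ref{claim:hopping} (applied with $x = c(\omega)$, $h = \tau(\omega)$) expands the first summand as a bridge term $g_{a_{k'},a_{\ell'}+\tau(\omega)}(c(\omega))$ plus a $\{-1,0,1\}$-combination of values $g_{a_i,a_j}(c(\omega))$ and $g_{a_i,a_j}(c(\omega)+\tau(\omega))$, and Claim \ref{claim:samples-conn} (applied with $x = c(\omega)+\tau(\omega)$) expands the second summand as a $\{-1,0,1\}$-combination of values $g_{a_i,a_j}(c(\omega)+\tau(\omega))$. The algebraic identity $g_{a_i,a_j}(y+\tau) = g_{a_i+\tau,\, a_j+\tau}(y)$ then recasts every contribution as $g'_r(c(\omega))$ for an index $r$ among the enumerated list of shifted-pair functions; counting gives $\|b(\omega)\|_1 \le 1 + 3\binom{L}{2} \le 3L^2 + 1$.

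The main technical nuisance is bookkeeping the three shapes of $g'_r$ (namely $g_{a_i,a_j}$, $g_{a_i+t,\, a_j+t}$, and $g_{a_i,\, a_j+t}$) and verifying that each nonzero coefficient in $b(\omega)$ corresponds to a $g'_r$ that is actually defined at $c(\omega)$, i.e., whose underlying pair $(a,b)$ satisfies $c(\omega) \in S_0^{(a,b)}$. This is preserved because (a) Claims \ref{claim:samples-conn} and \ref{claim:hopping} produce only well-defined outputs, and (b) the shift identity above preserves the defining condition $[y+a, y+b] \in S_1$. The degenerate case $\tau(\omega) = 0$ (which includes $\omega = 0$) collapses some terms but remains comfortably within the stated bound.
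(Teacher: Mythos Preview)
Your proposal is correct and takes essentially the same approach as the paper: build $c^*$ dimension-by-dimension using the popularity axiom, then use $\partial^{s+1}f(c^*)=0$ and split each $f'(c(\omega)) - f(c^*(\omega))$ through the intermediate value $f'(c(\omega)+\tau(\omega))$ via Claims~\ref{claim:samples-conn} and~\ref{claim:hopping}. The only difference is that you reserve disjoint sample blocks per step (to handle the dependency of the good set $A_j$ on earlier choices $t_1,\dots,t_{j-1}\in T$), whereas the paper uses all $L$ samples at each step and invokes condition~(D) directly; this is a minor technical variation, and as you note the extra factor in the exponent is absorbed into the $\ll_s$ bound on $L$.
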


  Given the claim, we can finish the proof as follows. Set $S'$ to the set of all $c \in C^{s+1}(H)$ for which the claim succeeds; so,
  \[
    \EE[\mu(S')] \ge 1 - \eps/10 - 2^{s+1} \EE[1 - \mu(X_2)] \ge 1 - \eps/5.
  \]
  Therefore with probability at least $4/5$ each (say), we get that $\mu(S') \ge 1 - \eps$ and $\mu(X_2) \ge 1 - \eps$.  We can therefore fix some $a_1,\dots,a_L$ and $h_1,\dots,h_L$ so that both of these estimates hold simultaneously.

  Finally we apply a global translation to bring everything into line with the original function $f$.  Define $X \subseteq H$ by $X = X_2 + a_1$, $S \subseteq C^{s+1}(H)$ by $S = S' + \square^{s+1}(a_1)$, $\wt f \colon X \to \RR$ by $\wt f(x) = f'(x-a_1)$, and $g_r(x) = g'_r(x-a_1)$ for $r \in [K]$.
  
  By definition of $f'$, whenever $x \in S_0 \cap X$ we have $\wt f(x) = f'(x-a_1) = f(x)$ (as $1$ is the smallest index $i \in [L]$ such that $(x-a_1)+a_i \in S_0$), so (i) holds.  The bounds in (ii),(iii) are clear.  The systems of cubes attached to $g_r$ is the corresponding translate of that attached to $g'_r$, which has the required properties for (iv),(v).  The grand conclusion then follows immediately from the claim.

  \begin{proof}[Proof of claim]
    For the first part, we choose the values $t_1,\dots,t_{s+1} \in T$ in a recursive procedure.
    
    For the recursive step, we show that for any $0 \le m \le s$, any cube $\angle(y;e_1,\dots,e_m) \in S_m$ and any $e \in H$, with probability at least $1 - \exp(-\delta L)$ with respect to $h_1,\dots,h_L$ there exists some $t \in T$ such that $\angle(y; e_1,\dots,e_m, e + t) \in S_{m+1}$.  Indeed, by the definition of a system of cubes there are at least $\delta |H|$ values $z \in H$ such that $\angle(y;e_1,\dots,e_m,z) \in S_{m+1}$, and each $e + h_i$ for $i \in [L]$ is an independent uniform random sample from $H$, so one lands in the good set with probability at least $1 - (1-\delta)^L$.
    
    Starting with $x+a_k \in S_0$ and applying this for each $m$ from $0$ to $s$ increasing,
    we find the values $t_1,\dots,t_{s+1} \in T$ we need, with the total failure probability bounded by $(s+1) \exp(-\delta L)$, which in turn is at most $\eps/10$ (by (D)).

    For the second statement, we write $c' = \angle(x+a_k; u_1+t_1,\dots,u_{s+1}+t_{s+1}) \in S_{s+1}$ as above, and note that $\partial^{s+1} f(c') = 0$.  It follows that
    \[
      \partial^{s+1} f'(c) = \sum_{\omega \in \llbracket s+1 \rrbracket} (-1)^{|\omega|} \big(f'(c(\omega)) - f(c'(\omega)) \big).
    \]
    For each $\omega \in \llbracket s+1 \rrbracket$, we have $c'(\omega) = c(\omega) + a_k + t$ where $t = \omega \cdot (t_1,\dots,t_{s+1})$.  Since $c' \in S_{s+1}$ we also have $c'(\omega) \in S_0$, and since $c(\omega) \in X_2$ and $t \in (s+1) T$ it follows that $c(\omega) + t \in X_0$.  By Claim~\ref{claim:samples-conn} we can therefore write
    \begin{equation}
      \label{eq:first-fprime}
      f'(c(\omega) + t) - f(c'(\omega)) = \sum_{1 \le i < j \le L} b_{i,j} g_{a_i,a_j}(c(\omega)+t)
    \end{equation}
    where $b_{i,j} \in \{-1,0,1\}$. Since $g_{a_i,a_j}(x+t) = g_{a_i+t,a_j+t}(x)$ the terms on the right hand side can be expressed as a sum of values $g'_r(c(\omega))$ with total weight at most $L^2$.

    Next, since $c(\omega) \in X_1$ and $t \in (s+1) T$ still, by Claim~\ref{claim:hopping} we deduce that
    \begin{equation}
      \label{eq:second-fprime}
      f'(c(\omega)) - f'(c(\omega)+t) = g_{a_\ell,a_m+t}(x) + \sum_{1 \le i < j \le L} \left(b'_{i,j} g_{a_i,a_j}(x) + b''_{i,j} g_{a_i,a_j}(x+t) \right) 
    \end{equation}
    for some $b'_{i,j},\, b''_{i,j} \in \{-1,0,1\}$ and $\ell,m \in [L]$, and again every term on the right is one of the values $g'_r(c(\omega))$, with total weight $2 L^2 + 1$.  Adding~\eqref{eq:first-fprime} and~\eqref{eq:second-fprime} gives an expression for $\partial^{s+1} f'(c)$ of the desired form.
  \end{proof}
  This concludes the proof of Lemma~\ref{lem:extend-global}.
\end{proof}

\subsection{Iterating global structure and polynomial hierarchies}%
\label{subsec:hierarchy}

We now record the conclusions we get by applying Lemma~\ref{lem:extend-global} recursively to the lower-degree functions $g_1,\dots,g_K$ it produces, and so on.\footnote{We could instead apply a much stronger structure theorem, along the lines of Theorem~\ref{thm:1pc}, inductively to $g_1,\dots,g_K$.  It turns out that technical issues related to ``reducibility'', discussed in the next section, are problematic in that setting, but are more tractable if we continue to work with softer statements in the spirit of Lemma~\ref{lem:extend-global} for now.}    The following definitions are designed to capture the data that arises when we do so.

\begin{definition}%
  \label{def:hierarchy}
  Let $H$ be an abelian group, $X \subseteq H$ a subset, and let integers $s \ge 0$ and $d_0,d_1,\dots,d_s \ge 0$ be given.

  Suppose we have a tuple $f$ of functions $f_{i,j} \colon X \to \RR$ for $0 \le i \le s$ and $1 \le j \le d_i$.  In general we write $f_{\le t}$ for the sub-tuple $(f_{i,j})$ with $i \le t$, and similarly $f_{=t}$, etc..
  
  Given some other function $g \colon X \to \RR$, integers $k,t$ (with $0 \le k,t \le s+1$), a set $S \subseteq C^k(H) \cap X^{\llbracket k \rrbracket}$, and further function
  \[
    b \colon S \times \llbracket k \rrbracket \to \bigoplus_{i=0}^{t-1} \ZZ^{d_i}
  \]
  (so, $b(c, \omega)$ is a tuple of integers mirroring the tuple $f_{\le t-1}$),
  we say $g$ and $b$ satisfy the \emph{$k$-derivatives condition} on $S$ with respect to $f$, with degree $t$ and parameter $M \ge 1$, if
  \begin{equation}
    \label{eq:deriv-condition}
    \partial^{k} g (c) = \sum_{\omega \in \llbracket k \rrbracket } (-1)^{|\omega|} \big(b(c, \omega) \cdot f_{\le t-1}(c(\omega)) \big)
  \end{equation}
  holds for all $c \in S$, and if $\|b_{=r}(c, \omega)\|_1 \le M^{t-r}$ for all $(c,\omega) \in S \times \llbracket k \rrbracket$ and each $r$, $0 \le r \le t-1$.

  We often suppress explicit mention of $b$, and say more briefly that $g$ obeys the \emph{$(f,k,t,M)$-derivatives condition} on $S$ if a suitable $b$ exists.

  Now let $S_0,\dots,S_{s+1}$ be a non-empty system of cubes, with $S_0 \subseteq X$.
  If for each $i,j$ ($0 \le i \le s$, $1 \le j \le d_i$) there is a function $b^{i,j}$ such that $f_{i,j}$ and $b^{i,j}$ obey the $(i+1)$-derivatives condition on $S_{i+1}$ with respect to $f_{\le i-1}$, with degree $i$ and parameter $M$, we say $f$ (together with the functions $b^{i,j}$) forms a \emph{polynomial hierarchy} (on $S_0,\dots,S_{s+1}$, with degree $s$, dimensions $(d_0,d_1,\dots,d_s)$ and parameter $M$).

  Again we often suppress all mention of $b^{i,j}$ and say simply that $f$ is an \emph{$(s,d,M)$-polynomial hierarchy} on $S_0,\dots,S_{s+1}$.
\end{definition}

\begin{remark}
  In practice, the system of cubes $S_0,\dots,S_{s+1}$ in the definition of a hierarchy will always have parameter $1-\eps$ for some small $\eps > 0$, so should be thought of as encoding a precise technical notion of ``almost all cubes''.  

  Also, note that the expression $M^{t-r}$ in the norm bound on $b$ is equivalent up to polynomial losses (for fixed $s$) to simply using $M$ as the bound; however, the graded bound behaves better when we combine derivative conditions of different degrees.
\end{remark}

\begin{remark}%
  \label{rem:hierarchy-t0}
  Our convention is that having a function $g$ obey a $k$-derivatives condition with degree $t=0$ on a set $S$ is interpreted as a complicated way of saying that $\partial^k f(c) = 0$ for all $c \in S$.
  
  Most statements---but not all---avoid this degenerate case. For instance, it is implicit in the description of the bottom level functions $f_{0,j}$ in the hierarchy, for $j \in [d_0]$: by this convention they satisfy $f_{0,j}(x) - f_{0,j}(y) = 0$ whenever $[x,y] \in S_1$.  When $S_0,\dots,S_{s+1}$ has parameter greater than $1/2$, this is equivalent to saying that $f_{0,j}$ is a constant function on $S_0$ (since for any $x,y \in S_0$ we can find some $z \in H$ with $[x,z],[y,z] \in S_1$, so $f_{0,j}(x) = f_{0,j}(z) = f_{0,j}(y)$).
\end{remark}

We can conveniently summarize all the work of this section in this terminology, including a recursive application of Lemma~\ref{lem:extend-global}, in the following result.

\begin{corollary}%
  \label{cor:poly-hierarchy-everything}
  Let $s \ge 1$, $X \subseteq H$, and suppose $g \colon X \to \RR$ is an approximate polynomial with degree $s$ and parameter $\delta > 0$.  Also let $\eps > 0$ be a parameter.

  Then there exists a system of cubes $S_0,\dots,S_{s+1}$ with parameter $1-\eps$, an $(s-1,d,M)$-polynomial hierarchy $f$ on $S_0,\dots,S_{s}$, a set $Y \subseteq S_0 \cap X$, and a function $\wt{g} \colon S_0 \to \RR$, such that
  \begin{itemize}
    \item $g|_Y = \wt{g}|_Y$ and $\mu(Y) \gg_s \delta^{O_s(1)}$;
    \item $\wt{g}$ obeys the $(f,s+1,s,M)$-derivatives condition on $S_{s+1}$; and
    \item the quantities $D = \sum_{i=0}^{s-1} d_i$ and $M$ satisfy $D,M \ll_{s} (\eps \delta)^{-O_{s}(1)}$.
  \end{itemize}
\end{corollary}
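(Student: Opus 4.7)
The plan is to induct on $s$. The base case $s=1$ should follow directly: after applying Corollary \ref{cor:cube-system} to extract an initial system of cubes from $g$ and invoking Lemma \ref{lem:split} to ensure robust connectivity of the $1$-skeleton, Lemma \ref{lem:extend-global} produces the extension $\wt{g}$ together with degree-zero approximate polynomials $g_1,\dots,g_K$. By Remark \ref{rem:case-s-zero} each $g_r$ is constant on its component, so the $g_r$ serve directly as the bottom-level functions $f_{0,r}$, and the bounded integer expansion of $\partial^2 \wt{g}$ supplied by the lemma is precisely the required $(f,2,1,M)$-derivatives condition.

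For the inductive step at $s \ge 2$, I would first apply Corollary \ref{cor:cube-system} to obtain an initial system of cubes with parameter $\delta_0 \gg_s \delta$ on which $\partial^{s+1} g$ vanishes, then apply Lemma \ref{lem:split} with a small parameter $\eps_0$ to enforce robust connectivity. By pigeonhole over the at most $1/\delta_0$ resulting components I select one, $V_i$, of density $\gg_s \delta_0^2$, equipped with its inherited system of cubes (Remark \ref{rem:component-cubes}). Applying Lemma \ref{lem:extend-global} to $V_i$ with error parameter $\eps_1$ yields an extension $\wt{g}$ on a set $X_g$ of density $\ge 1-\eps_1$, a top-dimensional cube set of the same density, and degree-$(s-1)$ approximate polynomials $g_1,\dots,g_{K'}$, each on its own non-empty system of cubes, together with a bounded integer expansion of $\partial^{s+1}\wt{g}$ in terms of the $g_r$.

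I then apply the inductive hypothesis to each $g_r$ with a carefully chosen error parameter $\eps_2 \ll \eps/K'$, obtaining for each $r$ a degree-$(s-2)$ hierarchy $f^{(r)}$ on a cube system of parameter $1-\eps_2$, a set $Y^{(r)}$ of density $\gg_s \delta^{O_s(1)}$, and an extension $\wt{g}_r$ satisfying the $(f^{(r)}, s, s-1, M^{(r)})$-derivatives condition at the top of its system. I assemble a single hierarchy for $g$ by placing, at each level $0 \le i \le s-2$, the disjoint concatenation of the $f^{(r)}_{=i}$, and at level $s-1$ the $K'$ functions $\wt{g}_r$; the level-$(s-1)$ condition then holds for each $\wt{g}_r$ by construction. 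To rewrite $\partial^{s+1}\wt{g}$ in terms of the $\wt{g}_r$ rather than the $g_r$ one uses the pointwise agreement $g_r|_{Y^{(r)}} = \wt{g}_r|_{Y^{(r)}}$, which forces us to restrict to cubes whose vertices all lie in every $Y^{(r)}$ and in $X_g$. Lemma \ref{lem:cube-system-ae} applied to the intersection of all relevant large cube subsets, combined with Corollary \ref{cor:cube-system-patch}, then yields a single system of cubes $S_0,\dots,S_{s+1}$ of parameter $1-\eps$ on which every derivative condition holds simultaneously, and I take $Y = S_0 \cap X_g \cap V_i$.

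The main obstacle will be quantitative bookkeeping. The constraint $\eps_2 \ll \eps/K'$ is essential so that patching $K'$ systems preserves the final $1-\eps$ parameter, but $\eps_2$ itself enters the recursive bound as $(\eps_2\delta)^{-O_s(1)}$; since $K' \ll_s (\delta\eps)^{-O_s(1)}$ from Lemma \ref{lem:extend-global}, this closes cleanly after $s$ levels of recursion, and the assembled dimensions $D = K' + \sum_r D^{(r)}$ and parameter $M = O_s(\max_r M^{(r)})$ will satisfy the required bound $(\eps\delta)^{-O_s(1)}$.
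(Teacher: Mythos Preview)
There is a genuine gap in the substitution step. When you apply the Corollary inductively to each $g_r$, the conclusion only gives you agreement $g_r|_{Y^{(r)}} = \wt{g}_r|_{Y^{(r)}}$ on a set $Y^{(r)}$ of density $\gg_s \delta^{O_s(1)}$ in $H$; it says nothing about $Y^{(r)}$ being most of the domain of $g_r$. To rewrite $\partial^{s+1}\wt{g}(c)$ in terms of the $\wt{g}_r$'s you must have $c(\omega)\in Y^{(r)}$ whenever the expansion actually invokes $g_r$ at that vertex, i.e.\ whenever $c(\omega)$ lies in the domain of $g_r$. The bad set is therefore $\bigcup_r\bigl((\text{domain of }g_r)\setminus Y^{(r)}\bigr)$, and nothing in your inductive hypothesis bounds this: each summand could have density comparable to $\delta^{O_s(1)}$, not to $\eps$. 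Consequently the cube set on which the substituted derivatives condition holds need not have density close to $1$, and Lemma~\ref{lem:cube-system-ae} (which requires inputs of density $\ge 1-\eps$) does not apply to recover a system of parameter $1-\eps$.

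The paper fixes this by inducting not on the Corollary itself but on a strictly stronger intermediate Claim whose conclusion reads $g|_{S_0\cap T_0}=\wt{g}|_{S_0\cap T_0}$, where $T_0$ is the original (possibly small) domain and $S_0$ is a set of density $\ge 1-\eps'$. This guarantees that $\wt{g}$ agrees with $g$ on \emph{all but an $\eps'$-fraction} of the original domain, which is exactly what the substitution step needs: the bad set $(\text{domain of }g_r)\setminus(\text{agreement set})$ then has measure at most $\eps'|H|$ for each $r$, and intersecting $K'$ such constraints costs only $K'\eps'$. The Corollary's weaker conclusion (agreement on a merely positive-density $Y$) is then recovered at the very end by intersecting with one connected component. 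So your overall outline is correct, but you must load the induction with the stronger agreement statement from the start; inducting on the Corollary as stated is too weak to close.
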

\begin{proof}
  We apply Corollary~\ref{cor:cube-system} to $g$ to obtain a system of cubes $S_0^{(0)},\dots,S_{s+1}^{(0)}$ of parameter $\delta_0 \gg_s \delta$, with $S_0 \subseteq X$ and $\partial^{s+1} g(c) = 0$ for all $c \in S_{s+1}$.
  
  Next we apply Lemma~\ref{lem:split} to $S_0^{(0)},\dots,S_{s+1}^{(0)}$ to obtain a subsystem $S^{(1)}_0,\dots,S^{(1)}_{s+1}$ with parameter $\delta_1 \gg \delta_0 - O_s\big(h_0^{1/O_s(1)} / \delta_0\big)$ with the property that each connected component of $S^{(1)}_0$ has normalized Cheeger constant at least $h_0$.
  By a suitable choice of $h_0 \gg_s \delta^{O_s(1)}$ we can ensure $\delta_1 \ge \delta_0 / 2$.
  
  By Remark~\ref{rem:component-cubes}, we can pass to just one of these components to obtain a system of cubes $S^{(2)}_0,\dots,S^{(2)}_{s+1}$ with the same parameters.  Note $S^{(2)}_0 \subseteq S$ is large, in that $\mu\big(S^{(2)}_0\big) \ge \delta_1$ (see Remark~\ref{rem:cube-system-large} again).

  We now state for induction the following claim.
  \begin{claim}
    Let $t \ge 1$ and suppose $T_0,\dots,T_{t+1}$ is a system of cubes with parameter $\delta'>0$, such that the graph on $T_0,T_1$ has normalized Cheeger constant $h > 0$ \uppar{in the sense of Lemma~\ref{lem:split}}. Also suppose $g \colon T_0 \to \RR$ satisfies $\partial^{t+1} g(c) = 0$ for all $c \in T_{t+1}$, and let $\eps'>0$ be a parameter.
    
    Then there exists a system of cubes $S_0,\dots,S_{t+1}$ with parameter $1-\eps'$, a $(t-1,d,M)$-polynomial hierarchy $f$ on $S_0,\dots,S_{t}$, and a function $\wt{g} \colon S_0 \to \RR$, such that
    \begin{itemize}
      \item $g|_{S_0 \cap T_0} = \wt{g}|_{S_0 \cap T_0}$;
      \item $\wt{g}$ obeys the $(f,t+1,t,M)$-derivatives condition; and
      \item the quantities $D = \sum_{i=0}^{t-1} d_i$ and $M$ satisfy $D,M \ll_{t} (\eps' \delta' h)^{-O_{t}(1)}$.
    \end{itemize}
  \end{claim}

  To complete the proof, we apply the claim to $g$ and $T_k = S^{(2)}_k$, with $t=s$, $\delta' = \delta_1$, $h=h_0$ and $\eps' = \min(\eps, \delta_1 / 2)$. Using the function $\wt{g}$ and the system $S_0,\dots,S_{s+1}$ returned by the claim, together with $Y = S^{(2)}_0 \cap S_0$ (so that $\mu(Y) \ge \delta_1 - \eps' \ge \delta_1/2$), gives the result.

  \begin{proof}[Proof of claim]
    We apply Lemma~\ref{lem:extend-global} to $g$ with the further parameter $\eps_0$ to be determined. This gives integers $K, M_1 \ll_s (h \delta')^{-O_s(1)} \log(2/\eps_0)^{O_s(1)}$; functions $g_1,\dots,g_K$, where each $g_i$ for $1 \le i \le K$ is equipped with a system of cubes $T_0^{(i)},\dots,T_t^{(i)}$ of degree $t-1$ and parameter $\delta'$; sets $X \subseteq H$ and $S \subseteq C^{t+1}(H)$ with $\mu(X),\mu(S) \ge 1-\eps_0$; and a function $\wt{g} \colon X \to \RR$; all with the properties given in the statement.  
    
    By Lemma~\ref{lem:cube-system-ae}, we can build a system of cubes $S'_0,\dots,S'_{t+1}$ with parameter $1-O_t\big(\eps_0^{1/O_t(1)}\big)$ such that $X \subseteq S'_0$ and $S'_{t+1} \subseteq S$.  Then, $\wt{g}$ obeys the $(t+1)$-derivatives condition on $S'_{t+1}$ with respect to the tuple $(g_1,\dots,g_K)$ (i.e., having $d_{t-1} = K$ and $d_i=0$ for $0 \le i < t-1$), with degree $t$ and parameter $M_1$.

    Now we apply Lemma~\ref{lem:split} again to each function $g_i$ (or more accurately, to its system of cubes $T_0^{(i)},\dots,T_t^{(i)}$), with some parameter $h_1$, to get subsystems $T_0'^{(i)},\dots,T_t'^{(i)}$ with parameter $\delta_1 = \delta' - O_s\big(h_1^{1/O_s(1)} / \delta'\big)$ and $\mu\big(T_k'^{(i)} \setminus T_k^{(i)}\big) \ll_s h_1^{1/O_s(1)} / \delta'$, whose normalized Cheeger constants on each connected component are bounded below by $h_1$.  By Remark~\ref{rem:component-cubes} we may split these systems into connected components to get systems $T_0^{(i,j)},\dots,T_t^{(i,j)}$ for $1 \le i \le K$ and $1 \le j \le k_i$, where $k_i \le 1/\delta_1$ for each $i$.
    
    Now, each restriction $g_i|_{T_0^{(i,j)}}$ obeys the hypotheses of the claim, replacing $t$ by $t-1$, $h$ by $h_1$ and $\delta'$ by $\delta_1$.
    Applying the claim inductively (and referring to Remark~\ref{rem:case-s-zero} and Remark~\ref{rem:hierarchy-t0} for the absent base case $t=0$) with some new parameter $\eps_1$, for each such restriction we get a system of cubes $S_0^{(i,j)},\dots,S_t^{(i,j)}$ of parameter at least $1-\eps_1$, a polynomial hierarchy $f^{(i,j)}$ of degree $t-1$, total dimension $D^{(i,j)}$ and parameter $M^{(i,j)}$, and an extension of $g_{i,j}$ to a function $\wt g_{i,j}$ on $S_0^{(i,j)}$ that obeys the $\big(f^{(i,j)},t,t-1,M^{(i,j)}\big)$-derivatives condition, obeying the various conditions in the statement.

    Applying Lemma~\ref{lem:cube-system-ae} (or less wastefully, deploying some of the arguments in the proof), we can replace each of these systems of cubes with another one $S_0'^{(i,j)},\dots,S_{t+1}'^{(i,j)}$ of degree $t$ rather than $t-1$, with parameters at least $1-O_t\big(\eps_1^{1/O_t(1)}\big)$, and such that $S_k'^{(i,j)} \subseteq S_k^{(i,j)}$ for each $1\le k \le t$.

    If we now intersect all of the systems of cubes in sight, i.e, setting
    \[
      S_k = S'_k \cap \bigcap_{i,j} S_k'^{(i,j)}
    \]
    for each $0 \le k \le t+1$, then this is a system of cubes of parameter $1-\eps_2$ where
    \begin{equation}
      \label{eq:eps2}
      \eps_2 \ll_t \eps_0^{1/O_t(1)} + (K/\delta_1) \eps_1^{1/O_t(1)}
    \end{equation}
    (by a union bound).  Then, we can form a polynomial hierarchy $f$ by combining all the hierarchies $f^{(i,j)}$, as well as the functions $(\wt g_{i,j})_{i \in [K], j \in [k_i]}$; i.e.,
    \[
      f_{=k} = \bigoplus_{i,j} f_{=k}^{(i,j)}
    \]
    for $1 \le k < t-1$, and $f_{=t-1} = (\wt g_{1,1} ,\dots,\wt g_{K,k_K})$.  This is a $(t-1,d,M)$-polynomial hierarchy on $S_0,\dots,S_{t+1}$, where
    \[
      D = \sum_{k=0}^{t-1} d_i \le K/\delta_1 + \sum_{i,j} D^{(i,j)} \ll_t (K / \delta_1) (\eps_1 \delta_1 h_1)^{-O_t(1)}
    \]
    and
    \[
      M = \max \left(M_1, \max_{i,j} M^{(i,j)}\right) \ll_t \max \left((\delta' h)^{-O_t(1)} \log(2/\eps')^{O_t(1)}, (h_1 \delta_1 \eps_1)^{-O_t(1)} \right) \ ,
    \]
    following directly from the facts that each $f^{(i,j)}$ is a polynomial hierarchy and that each $\wt g_{i,j}$ obeys a derivatives condition on $f^{(i,j)}$.  Moreover, by construction $\wt{g}$ obeys the $(f,t+1,t, M)$-derivatives condition on $S_{t+1}$: indeed, it obeys a derivatives condition with respect to the functions $g_{i,j}$ on $S'_{t+1}$; the tuple $f$ contains all functions $\wt g_{i,j}$; and for any $c \in S_{t+1}$ we have $g_{i,j}(c(\omega)) = \wt g_{i,j}(c(\omega))$ for each $\omega \in \llbracket t+1 \rrbracket$ (since $S_0 \supseteq S_0^{(i,j)}$).

    Finally, we must exhibit a choice of the parameters $\eps_0$, $\eps_1$, $h_1$ so that $\eps_2 \le \eps'$ but $M$,$D$ are not too large.  For a suitable $h_1 \gg_t \delta'^{O_t(1)}$ we can guarantee $\delta_1 \ge \delta'/2$. Next we pick a suitable $\eps_0 \gg_t \eps'^{O_t(1)}$ so that the first term in~\eqref{eq:eps2} is at most $\eps'/2$.  This means $K \ll_t (\delta' \eps' h)^{-O_t(1)}$, and so we can pick $\eps_1$ small enough that the second term in~\eqref{eq:eps2} is again at most $\eps'/2$, while keeping $\eps_1 \gg_t (\delta' \eps')^{O_t(1)}$.
  \end{proof}

  This completes the proof of Corollary~\ref{cor:poly-hierarchy-everything}.
\end{proof}

\section{Improving the global structure}%
\label{sec:hierarchy-improve}

Again in this section we let $H$ denote an arbitrary finite abelian group unless otherwise stated. We will need to assume at times that $H$ has no large subgroups.

Our current task is to show that polynomial hierarchies (in the sense of Definition~\ref{def:hierarchy}) can always be replaced by ones with additional structural properties.  This prepares us for proving a structure theorem for polynomial hierarchies in the next section.

\subsection{Outline of the argument}

It is necessary to refine the derivatives condition from Definition~\ref{def:hierarchy} to get more information about the functions $b(c,\omega)$.  If we can assume these functions exhibit some kind of rigid algebraic structure, this can later be exploited to obtain genuinely algebraic objects.

To illustrate this we consider the case $s=1$.  For simplicity we assume the underlying system of cubes is everything.  The set-up is that we have one function $f \colon H \to \RR$ and a tuple of constant functions $f_{=0} = (f_{0,1},\dots,f_{0,d_0})$ such that for some $b \colon C^2(H) \times \llbracket 2 \rrbracket \to \ZZ^{d_0}$ we have
\begin{equation}
  \label{eq:s2-cocycle}
  \partial^2 f(c) = \big(b(c,00) - b(c,01) - b(c,10) + b(c,11)\big) \cdot f_{=0}  ;
\end{equation}
this is obtained from~\eqref{eq:deriv-condition} by noting that since $f_{=0}$ is constant we can drop the evaluation at $c(\omega)$ and group terms.  Our first refinement is to note that allowing four values $b(c, \omega)$ for each $c$ is clearly too much flexibility, and they can be replaced with a single value $B(c) = b(c,00) - b(c,01) - b(c,10) + b(c,11)$, the remaining freedom in the choice of $b(c,-)$ being entirely spurious.  Hence, from now on we work with $B(c)$ instead of $b(c,\omega)$.

The next step is to investigate how $B(c)$ behaves across different cubes $c$. Specifically, we consider the operation of ``glueing'' two cubes together along a common face; see Figure~\ref{fig:glueing} below for a graphical representation.  In symbols, if $c_0 = [[x,x+h],[z,z+h]]$ and $c_1=[[y,y+h],[z,z+h]]$ are two elements of $C^2(H)$ with a common upper face $[z,z+h]$, we can glue along this face to obtain $c = [[x,x+h],[y,y+h]]$.

If we know the values of $B(c_0)$, $B(c_1)$, then a possible value of $B(c)$ is just $B(c_0)-B(c_1)$: this will automatically satisfy~\eqref{eq:s2-cocycle}, since any derivative $\partial^2 g$ obeys the cocycle-type identity
\[
  \partial^2 g([[x,x+h],[y,y+h]]) = \partial^2 g([[x,x+h],[z,z+h]]) - \partial^2 g([[y,y+h],[z,z+h]])
\]
which we apply with $g=f$.  However, it is not necessarily true that $B(c) = B(c_0)-B(c_1)$, since $B(c)$ and $B(c_0)-B(c_1)$ could be two different bounded elements of $\ZZ^{d_0}$ whose dot products with $f_{=0}$ give the same value.

If this happens, though, it means that there is some low-weight relation between the constants $(f_{0,1},\dots,f_{0,d_0})$.  We think of this as a kind of \emph{redundancy} in the hierarchy at degree $0$.  In this case, we can apply some change of coordinates to find a smaller set of constants $f'_{=0}$, such that each original constant $f_{0,j}$ is a bounded integer combination of the new constants, but all the original low-weight relations have been quotiented out.  We then need to modify $B$ to $B'$ to reflect this coordinate change.

After this correction, the cocycle-like condition $B'(c) = B'(c_0) - B'(c_1)$ will indeed hold.  In Lemma~\ref{lem:cocycle} we prove a strong classification of functions satisfying cocycle-like conditions of this type: in this case, the conclusion would be that $B' = \partial^2 g$ for some $g \colon H \to \RR$ which is itself a nil-polynomial of degree $1$, and this is enough to complete the proof (although for $s>1$ there is rather more left to do, as covered in the next section).

The remaining task is to formalize the preceding discussion, introduce tolerance for errors (i.e., when $S_k \ne C^k(H)$), and generalize it to the case of higher $s$.  Unfortunately the last two of these, and the last in particular, seem to require far greater technical complexity than this sketch might suggest.  We briefly summarize some of the necessary components.

\begin{itemize}
  \item We first address the unnecessary freedom implicit in allowing $2^k$ distinct values $b(c,\omega)$ in~\eqref{eq:deriv-condition} for each $c$, for general $k$ and $t$.  Because $f_{\le t-1}$ no longer consists of constant functions this is a bit more involved.  However, it is possible (and useful) to put $b(c,\omega)$ into a simply stated normal form; see Definition~\ref{def:normal-form}. This is all addressed in Section~\ref{subsec:normal-form}.

  \item In Section~\ref{subsec:cocycles-and-strong} we investigate the behavior of $b(c,\omega)$ (assumed to be in normal form) under glueing cubes together.  This leads us to define generalizations of the cocycle-type conditions discussed above, which describe what we expect to see in the absence of ``redundancies''.  This is formalized in the key notion of the \emph{strong derivatives condition} in Definition~\ref{def:strong}, which uses among other things the notion of a what we term a \emph{generalized cocycle} in Definition~\ref{def:gen-cocycle}. 
    
    In the same section we state and prove an important structural result classifying these generalized cocycles (Lemma~\ref{lem:gen-cocycle}).  The core of the proof is an averaging argument that appears in~\cite{cs}, and which is not conceptually difficult; however, the need to handle the $99\%$ case (where $S_k \ne C^k(H)$) efficiently, imposes costs on the proof in terms of complexity.

  \item Lastly, in Section~\ref{subsec:redundancy} we consider what to do when ``redundancies'' are in fact present.  In the sketch above, a redundancy meant a linear combination of functions which is zero; in general the term has to include linear combinations of functions which behave a bit like they had smaller degree than they are supposed to.
    
    The main steps are to (i) formalize a precise notion of ``redundancy'', (ii) show that any failure to obey a strong derivatives condition must come from a redundancy in this precise sense, and (iii) demonstrate how to eliminate redundancies from a hierarchy in a protracted inductive fashion.  Again, this involves a great deal of technical work.
\end{itemize}

\subsection{A normal form for $b$-functions}%
\label{subsec:normal-form}

Recall our goal is to reduce the amount of freedom in functions $\omega \mapsto b(c,\omega)$ appearing in the derivatives condition~\eqref{eq:deriv-condition}.  That is, we would like a normal form for such $b$ which is unique up to elementary manipulations that do not alter the right hand side of~\eqref{eq:deriv-condition}.  In the discussion above for $s=1$, this exactly means those manipulations that do not alter the value of the quantity we termed $B(c)$.

The notation $F_\eta$, $F^\eta$ and $F_i$, $F^i$ to denote certain upper or lower faces of $\llbracket k \rrbracket$ will be used extensively in this section, and is given in Section~\ref{subsec:notation}.

The normal form we will use is not hard to define.

\begin{definition}%
  \label{def:normal-form}
  If $d_0,\dots,d_{t-1}$ are integers, we say a function
  \[
    \fb \colon \llbracket k \rrbracket \to \bigoplus_{r=0}^{t-1} \ZZ^{d_r}
  \]
  is in \emph{normal form}, if $\fb_{=i}(\omega) = 0$ for all $0 \le i \le t-1$ and all $\omega \in \llbracket k \rrbracket$, $|\omega| \ge i+1$.
  Similarly, if $S \subseteq C^k(H)$ we say a function
  \[
    b \colon S \times \llbracket k \rrbracket \to \bigoplus_{r=0}^{t-1} \ZZ^{d_r}
  \]
  is in normal form if $\omega \mapsto b(c,\omega)$ is normal form for each $c \in S$.
\end{definition}

We can show that any function $b$ appearing in a derivatives condition on a polynomial hierarchy can be put into normal form.

\begin{lemma}%
  \label{lem:canonical-b}
  Suppose that $f$ is an $(s,d,M)$-polynomial hierarchy on a system of cubes $S_0,\dots,S_{s+1}$; that $S \subseteq C^k(H)$ is a set of cubes such that $c|_F \in S_{\dim F}$ for each $c \in S$ and each face $F$ of $\llbracket k \rrbracket$ of dimension at most $s+1$; and that $g\colon S_0 \to \RR$ and $b$ obey the $(f,k,t,M)$-derivatives condition on $S \subseteq C^k(H)$.  Here $k,t$ are integers satisfying $k \ge 0$ and $1 \le t \le s+1$.
  
  Then there exists a function $b' \colon S \times \llbracket k \rrbracket \to \bigoplus_{i=0}^{t-1} \ZZ^{d_i}$ in normal form such that $g$ and $b'$ obey the $\big(f,k,t, O_{k,t}(M)\big)$-derivatives condition.  Also, for the part $b'_{=t-1}$ we may give an explicit formula:---
  \[
    b'_{=t-1}(c,\omega') = \sum_{\omega \in \llbracket k \rrbracket} Z_{t-1}(\omega,\omega') b_{=t-1}(c,\omega)
  \]
  where $Z_r(\omega,\omega')$ are the integer coefficients
  \begin{equation}
    \label{eq:zr}
    Z_r(\omega,\omega') = \sum_{\substack{\eta \in \llbracket k \rrbracket \\ \omega \supseteq \eta \supseteq \omega' \\ |\eta| \le r}} (-1)^{|\omega|-|\eta|}  .
  \end{equation}
\end{lemma}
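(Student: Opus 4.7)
My plan is to argue by induction on $t$, with base case $t=1$ and inductive step reducing $t$ to $t-1$ by first normalizing the top layer $b_{=t-1}$ via the stated explicit formula and then invoking the hypothesis on the lower layers. For $t=1$, the coefficients $Z_0(\omega,\omega')$ force $b'_{=0}$ to be supported on $\omega=0^k$ with value $\sum_\omega(-1)^{|\omega|}b_{=0}(c,\omega)$, and the difference in the two expressions for $\partial^k g(c)$ vanishes because the bottom-level functions $f_{0,j}$ are constant along the edges $c|_F\in S_1$ (by the hierarchy condition at degree $0$) and the $1$-skeleton of $\llbracket k \rrbracket$ is connected.

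For $t\ge 2$, I define $b'_{=t-1}$ via the stated formula and observe it is automatically in normal form because $Z_{t-1}(\omega,\omega')=0$ whenever $|\omega'|\ge t$ (no $\eta$ satisfies $\omega'\subseteq\eta$ and $|\eta|\le t-1$). The key computation is Möbius inversion on the Boolean lattice,
\[
  (-1)^{|\omega|} f(c(\omega)) = \sum_{\eta \subseteq \omega} (-1)^{|\omega|-|\eta|}\,\partial^{|\eta|} f(c|_{F_\eta}),
\]
which combined with the definition of $Z_{t-1}$ rewrites the difference
\[
  \sum_\omega (-1)^{|\omega|}b_{=t-1}(c,\omega)f_{=t-1}(c(\omega)) - \sum_{\omega'}(-1)^{|\omega'|}b'_{=t-1}(c,\omega')f_{=t-1}(c(\omega'))
\]
as $\sum_\omega b_{=t-1}(c,\omega)\sum_{\eta\subseteq\omega,\,|\eta|\ge t}(-1)^{|\omega|-|\eta|}\,\partial^{|\eta|} f_{=t-1}(c|_{F_\eta})$. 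Each high-degree derivative $\partial^{|\eta|}f_{=t-1}(c|_{F_\eta})$ with $|\eta|\ge t$ decomposes further as a signed sum of $\partial^t f_{=t-1}(c|_{F'})$ over $t$-subfaces $F'\subseteq F_\eta$ (by iterated two-cube splitting), and the hierarchy condition for $f_{t-1,j}$ — applicable since $c|_{F'}\in S_t$ by the lemma's face-restriction hypothesis — expresses each such $t$-derivative as a bounded combination of $f_{\le t-2}$-values at the vertices of $c|_{F'}$. Grouping these contributions by vertex defines a correction $r(c,\omega)$ to be added to $b_{\le t-2}$, with $\|r_{=s}\|_1\ll_{k,t}M\cdot M^{t-1-s}=M^{t-s}$ for $s\le t-2$.

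The modified tuple $\tilde b=(b'_{=t-1},\,b_{\le t-2}+r)$ then satisfies the $(f,k,t,O_{k,t}(M))$-derivatives condition with its top layer in normal form, and I invoke the inductive hypothesis on $\tilde b_{\le t-2}$ to normalize the remaining layers. A mild subtlety is that the recursive call's left-hand side is the cube-indexed quantity $\partial^k g(c)-\sum_{\omega'}(-1)^{|\omega'|}b'_{=t-1}(c,\omega')f_{=t-1}(c(\omega'))$ rather than literally a $k$-th derivative of a function on vertices; since the entire argument is pointwise in $c$ and nowhere uses the form $\partial^k g$, it suffices to prove a mild generalization of the lemma in which the LHS is an arbitrary cube function on $S$, from which the stated form follows. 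I expect the main obstacle to be the Möbius-type bookkeeping in the key identity above; once that is in hand, the reduction via the hierarchy condition and the propagation of the norm bounds are routine, aided by the graded norm bound $M^{t-r}$ baked into the definition of the derivatives condition.
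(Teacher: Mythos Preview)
Your approach is essentially the paper's: both normalize the top layer via the $Z_{t-1}$ transform, use the hierarchy condition to push the resulting high-order derivatives $\partial^{|\eta|}f_{=t-1}$ (for $|\eta|\ge t$) down into $f_{\le t-2}$, and then recurse on the lower layers. Your M\"obius inversion and the paper's ``discrete Leibniz rule'' are the same identity---swapping the order of summation in your difference expression recovers exactly the paper's $\sum_{|\eta|\ge t}(-1)^{|\eta|}B(\eta)\,\partial^{|\eta|}f_{=t-1}(c|_{F_\eta})$ with $B(\eta)=\sum_{\omega\supseteq\eta}(-1)^{|\omega|}b_{=t-1}(c,\omega)$.

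One bookkeeping caveat: inducting on $t$ as written creates a grading mismatch. After the top-layer step you have $\|\tilde b_{=s}\|_1\ll_{k,t} M^{t-s}$, but to invoke the inductive hypothesis for $t-1$ you need the $(f,k,t-1,M')$-condition, whose norm bound uses exponent $(t-1)-s$; the case $s=t-2$ then forces $M'\gg M^2$, and unrolling gives a final parameter worse than $O_{k,t}(M)$. The paper avoids this by keeping the grading exponent fixed at $t$ throughout: it phrases a one-step claim (layer $r\to r-1$) with hypothesis and conclusion both of shape $\|\cdot\|_1\le R^{t-i}$, and iterates $r=t-1,\dots,0$. Your argument goes through cleanly with the same adjustment---either state your generalized inductive hypothesis with grading $M^{t_0-s}$ for a fixed $t_0$ independent of the induction variable, or simply iterate a single-layer step as the paper does.
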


We will not prove anything along those lines now, but it will turn out that this does indeed account for all the surplus freedom in the choice of $b$; i.e., $b'$ satisfying these conditions is unique under elementary manipulations.

\begin{remark}%
  \label{rem:normal-form}
  Given any abelian group $A$ and any function $\tau \colon \llbracket k \rrbracket \to A$, and defining $\tau' \colon \llbracket k \rrbracket \to A$ as above by
  \[
    \tau'(\omega') = \sum_{\omega \in \llbracket k \rrbracket} Z_r(\omega,\omega') \tau(\omega)
  \]
  using the same coefficients $Z_r$, it is automatic that $\tau'(\omega') = 0$ when $|\omega'| \ge r+1$, and we also observe
  \[
    (\tau - \tau')(\omega') = -\sum_{\substack{\omega \supseteq \eta \supseteq \omega' \\ |\eta| \le r \\ \omega \ne \omega'}} (-1)^{|\omega| - |\eta|} \tau(\omega) 
    = \sum_{\substack{\omega \supseteq \eta \supseteq \omega' \\ |\eta| \ge r+1 \\ \omega \ne \omega'}} (-1)^{|\omega| - |\eta|} \tau(\omega) \\
  \]
  (see~\eqref{eq:cube-orthog} below)
  or equivalently
  \[
    \tau - \tau' = \sum_{|\eta| \ge r+1} (-1)^{|\eta|} 1_{F_\eta}(\omega') \left(\sum_{\omega \supseteq \eta} (-1)^{|\omega|} \tau(\omega) \right)  ;
  \]
  that is, $\tau-\tau'$ is a linear combination of indicator functions of (lower) faces of dimension at least $r+1$.  In fact these two properties uniquely determine the normal form transformation described by the matrix $Z_r$.
\end{remark}

\begin{proof}[Proof of Lemma~\ref{lem:canonical-b}]

  First we note that we may as well handle each $c \in S$ independently (and given it is possible to have $|S|=1$ this is no loss).  That is: fix some $c \in C^k(H)$ whose faces $c|_F$ all lie in $S_{\dim F}$ (provided $\dim F \le s+1$), and some configuration $\fb \colon \llbracket k \rrbracket \to \bigoplus_{r=0}^{t-1} \ZZ^{d_r}$.  Suppose that $\|\fb_{=r}(\omega)\|_1 \le M^{t-r}$ for each $r$ ($0 \le r \le t-1$) and $\omega \in \llbracket k \rrbracket$.  Then our goal is to find some $\fb' \colon \llbracket k \rrbracket \to \bigoplus_{r=0}^{t-1} \ZZ^{d_r}$ such that
  \begin{equation}
    \label{eq:pointwise-derivs}
    \sum_{\omega \in \llbracket k \rrbracket} (-1)^{|\omega|} \fb(\omega) \cdot f(c(\omega))
    =
    \sum_{\omega \in \llbracket k \rrbracket} (-1)^{|\omega|} \fb'(\omega) \cdot f(c(\omega)) \ ;
  \end{equation}
  $\fb'$ is in normal form; and $\|\fb'_{=r}(\omega)\|_1 \ll_{k,t} M^{t-r}$ for each $r$ ($0 \le r \le t-1$) and $\omega \in \llbracket k \rrbracket$.

  We record the following general lemma, which can be thought of as a discrete variant of the Leibniz rule for derivatives of products.
  \begin{lemma}[``Discrete Leibnitz rule'']%
    \label{lem:product-rule}
    Suppose $\alpha,\beta \colon \llbracket k \rrbracket \to \RR$ are two functions.  Then
    \[
      \sum_{\omega \in \llbracket k \rrbracket} (-1)^{|\omega|} \alpha(\omega) \beta(\omega)
       = \sum_{\eta \in \llbracket k \rrbracket} (-1)^{|\eta|} \left(\sum_{\omega \subseteq \eta} (-1)^{|\omega|} \alpha(\omega) \right) \left(\sum_{\omega' \supseteq \eta} (-1)^{|\omega'|} \beta(\omega') \right)  .
    \]
  \end{lemma}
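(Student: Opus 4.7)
The plan is to verify the identity by expanding the right-hand side, interchanging the order of summation, and then invoking a standard Boolean-lattice cancellation identity to collapse the inner sum.

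First, I expand the product and move the sum over $\eta$ innermost to obtain
\[
  \text{RHS} = \sum_{\omega,\omega' \in \llbracket k \rrbracket} (-1)^{|\omega|+|\omega'|}\, \alpha(\omega)\, \beta(\omega') \sum_{\substack{\eta \in \llbracket k \rrbracket \\ \omega \subseteq \eta \subseteq \omega'}} (-1)^{|\eta|}.
\]
Note the inner sum is empty (and hence zero) unless $\omega \subseteq \omega'$, so we may assume this.

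The key combinatorial observation is the following: for $\omega \subseteq \omega'$, setting $m = |\omega'| - |\omega|$,
\[
  \sum_{\substack{\eta \\ \omega \subseteq \eta \subseteq \omega'}} (-1)^{|\eta|} = (-1)^{|\omega|} \sum_{j=0}^{m} \binom{m}{j} (-1)^{j} = (-1)^{|\omega|} (1-1)^{m},
\]
which vanishes unless $m=0$, i.e.\ $\omega = \omega'$, in which case it equals $(-1)^{|\omega|}$. This is the identity
\begin{equation}
  \label{eq:cube-orthog}
  \sum_{\substack{\eta \\ \omega \subseteq \eta \subseteq \omega'}} (-1)^{|\eta|} = (-1)^{|\omega|} [\omega = \omega'],
\end{equation}
already hinted at in Remark \ref{rem:normal-form}.

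Substituting \eqref{eq:cube-orthog} back into the expansion, only the diagonal terms $\omega = \omega'$ survive, and the total sign becomes $(-1)^{|\omega|+|\omega|+|\omega|} = (-1)^{|\omega|}$. Thus
\[
  \text{RHS} = \sum_{\omega \in \llbracket k \rrbracket} (-1)^{|\omega|} \alpha(\omega) \beta(\omega),
\]
which is the left-hand side. There is no real obstacle here; the only thing to watch is the sign bookkeeping, which works out cleanly because the three factors $(-1)^{|\omega|+|\omega'|}$ and $(-1)^{|\omega|}$ from \eqref{eq:cube-orthog} combine on the diagonal to a single $(-1)^{|\omega|}$.
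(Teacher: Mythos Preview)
Your proof is correct and follows essentially the same approach as the paper: both expand the right-hand side, interchange the order of summation to isolate the inner sum $\sum_{\omega \subseteq \eta \subseteq \omega'} (-1)^{|\eta|}$, and evaluate it via the binomial identity $(-1)^{|\omega|}(1-1)^{|\omega'|-|\omega|}$ to reduce to the diagonal. The paper records this inner-sum identity as \eqref{eq:cube-orthog} for later reuse.
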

  \begin{proof}
    The right hand side expands to
    \[
      \sum_{\substack{\eta, \omega, \omega' \in \llbracket k \rrbracket \\ \omega \subseteq \eta \subseteq \omega'}} (-1)^{|\eta| + |\omega'| - |\omega|} \alpha(\omega) \beta(\omega')
    \]
    and so the result follows provided that for each pair $\omega \subseteq \omega'$ in $\llbracket k \rrbracket$,
    \begin{equation}
      \label{eq:cube-orthog}
      \sum_{\eta \colon \omega \subseteq \eta \subseteq \omega'} (-1)^{|\eta|} = \begin{cases} (-1)^{|\omega|} &\colon \omega = \omega' \\ 0 &\colon \omega \ne \omega' . \end{cases} 
    \end{equation}
    Indeed, the left hand side may be expressed as
    \[
      (-1)^{|\omega|} (1 - 1)^{|\omega'| - |\omega|}
    \]
    which is equal to the right hand side.
  \end{proof}

  One consequence is that for any $r$, $0 \le r \le t-1$,
  \[
    \sum_{\omega \in \llbracket k \rrbracket } (-1)^{|\omega|} \big(f_{=r}(c(\omega)) \cdot \fb_{=r}(\omega) \big) 
    = \sum_{\eta \in \llbracket k \rrbracket } (-1)^{|\eta|} \left(\sum_{\omega \subseteq \eta} (-1)^{|\omega|} f_{=r}(c(\omega)) \right) \cdot \left(\sum_{\omega' \supseteq \eta} (-1)^{|\omega'|} \fb_{=r}(\omega') \right) .
  \]
  We note that the left bracket is exactly $\partial^{|\eta|} f_{= r} \big(c|_{F_\eta}\big)$, which for terms with $|\eta| \ge r+1$ may be re-expressed in terms of lower-degree functions $f_{\le r-1}$ in the hierarchy, using the derivatives condition on each function in $f_{=r}$.  Hence, up to making lower-degree corrections we have more-or-less total freedom to choose the values $\sum_{\omega' \supseteq \eta} (-1)^{|\omega'|} \fb_{=r}(\omega')$ for $|\eta| \ge r+1$, and in particular could choose them to be zero, without affecting the left hand side of~\eqref{eq:pointwise-derivs}.  Iterating this observation for $r$ ranging from $t-1$ down to $0$ in that order will give the result.

  Formally, we set up the following claim for inductive purposes.
  \begin{claim}
    Let $f$, $k$, $t$, $M$ and $c$ be as above, let $r$ be an integer, $0 \le r \le t-1$, and suppose
    $
      \alpha \colon \llbracket k \rrbracket \to \bigoplus_{i=0}^{r} \ZZ^{d_i}
    $
    is any configuration with $\|\alpha_{=i}(\omega)\|_1 \le R^{t-i}$ for each $i$ \uppar{$0\le i \le r$} and $\omega \in \llbracket k \rrbracket$. Then there exist
    $
      \alpha' \colon \llbracket k \rrbracket \to \bigoplus_{i=0}^{r-1} \ZZ^{d_i}
    $
    and
    $
      \beta \colon \llbracket k \rrbracket \to \ZZ^{d_{r}}
    $
    such that $\|\beta(\omega)\|_1 \le 4^k R^{t-r}$ for each $\omega$; $\|\alpha'_{=i}(\omega)\|_1 \le R^{t-i} + 4^k R^{t-r} M^{r-i}$ for each $i$ \uppar{$0 \le i \le r-1$} and each $\omega$; $\beta(\omega) = 0$ for each $|\omega| \ge r+1$; and
    \begin{equation}
      \label{eq:claim-alpha-beta}
      \sum_{\omega \in \llbracket k \rrbracket} (-1)^{|\omega|} \alpha(\omega) \cdot f_{\le r}(c(\omega))
      = 
      \sum_{\omega \in \llbracket k \rrbracket} (-1)^{|\omega|} \alpha'(\omega) \cdot f_{\le r-1}(c(\omega))
      +
      \sum_{\omega \in \llbracket k \rrbracket } (-1)^{|\omega|} \beta(\omega) \cdot f_{=r}(c(\omega)) .
    \end{equation}
    Moreover, explicitly we have
    \begin{equation}
      \label{eq:claim-beta-formula}
      \beta(\omega') = \sum_{\omega \in \llbracket k \rrbracket} Z_{r}(\omega,\omega') \alpha_{=r}(\omega)
    \end{equation}
    where $Z_r$ are the coefficients in~\eqref{eq:zr}.
  \end{claim}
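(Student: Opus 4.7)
I would read off $\beta$ as the normal-form transform of $\alpha_{=r}$ predicted by Remark~\ref{rem:normal-form}, and then use the polynomial-hierarchy structure to absorb the residual $\alpha_{=r} - \beta$ together with $\alpha_{\le r-1}$ into $\alpha'$. Concretely, define $\beta(\omega') := \sum_{\omega} Z_r(\omega,\omega') \alpha_{=r}(\omega)$ as in the claim. Applying Remark~\ref{rem:normal-form} with $\tau = \alpha_{=r}$ directly gives (a) $\beta(\omega') = 0$ whenever $|\omega'| \ge r+1$, and (b) the difference decomposes as
\[
    \alpha_{=r}(\omega') - \beta(\omega') = \sum_{|\eta| \ge r+1} (-1)^{|\eta|} 1_{F_\eta}(\omega')\, v_\eta, \qquad v_\eta := \sum_{\omega \supseteq \eta} (-1)^{|\omega|} \alpha_{=r}(\omega),
\]
with $\|v_\eta\|_1 \le 2^k R^{t-r}$ by a crude union bound. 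The bound $\|\beta(\omega')\|_1 \le 4^k R^{t-r}$ is immediate from $|Z_r(\omega,\omega')| \le 2^k$ and at most $2^k$ contributing $\omega$; the explicit formula for $\beta$ is the defining one.

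Splitting the LHS of \eqref{eq:claim-alpha-beta} into its $f_{\le r-1}$ and $f_{=r}$ parts, and further splitting the $f_{=r}$ part via $\alpha_{=r} = \beta + (\alpha_{=r} - \beta)$, the $\beta$-piece is exactly the last sum on the RHS of \eqref{eq:claim-alpha-beta}. It remains to rewrite the residual $\sum_{\omega} (-1)^{|\omega|} (\alpha_{=r} - \beta)(\omega) \cdot f_{=r}(c(\omega))$ as a combination of $f_{\le r-1}(c(\omega))$ values; its sum with $\alpha_{\le r-1}$ will then be $\alpha'$. Substituting (b) and swapping the order of summation, the residual equals $\sum_{|\eta| \ge r+1} (-1)^{|\eta|}\, v_\eta \cdot \partial^{|\eta|} f_{=r}(c|_{F_\eta})$. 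For each $\eta$ with $|\eta| = r+1$, $c|_{F_\eta} \in S_{r+1}$ by the face hypothesis on $c$, and the $(r+1)$-derivatives condition applied to each $f_{r,j}$ supplies an expansion with coefficients of $\ell^1$-norm at most $M^{r-i}$ on the $f_{=i}$ piece. For $|\eta| > r+1$, I would fix any $(r+1)$-subset $\eta_0 \subseteq \eta$ and use the telescoping identity $\partial^{|\eta|} g(c|_{F_\eta}) = \sum_{\omega_1 \subseteq \eta \setminus \eta_0} (-1)^{|\omega_1|} \partial^{r+1} g(c|_{F_{\eta_0}^{(\omega_1)}})$ to decompose $F_\eta$ into $2^{|\eta|-r-1}$ parallel $(r+1)$-dimensional slices. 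Since $r+1 \le t \le s+1$, each such slice is a face of $c$ of dimension $r+1$ and so again lies in $S_{r+1}$, so the derivatives condition applies there as well.

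The main (though mild) obstacle is the bookkeeping for the norm bound. For any fixed vertex $\omega \in \llbracket k \rrbracket$, the residual contributes to $\alpha'_{=i}(\omega)$ only via those $\eta$ with $\eta \supseteq \omega$ and $|\eta| \ge r+1$, of which there are at most $2^{k-|\omega|} \le 2^k$; for each such $\eta$, $\omega$ lies in exactly one slice $F_{\eta_0}^{(\omega_1)}$ and so picks up a single coefficient of $\ell^1$-norm at most $\|v_\eta\|_1 \cdot \max_j \|b^{r,j}_{=i}\|_1 \le 2^k R^{t-r} \cdot M^{r-i}$. Combined with the untouched piece $\alpha_{\le r-1}(\omega)$ of norm at most $R^{t-i}$, this yields $\|\alpha'_{=i}(\omega)\|_1 \le R^{t-i} + 4^k R^{t-r} M^{r-i}$, as required.
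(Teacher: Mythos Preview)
Your proof is correct and follows essentially the same route as the paper. You define $\beta$ via the normal-form transform and recognize the residual $\alpha_{=r}-\beta$ as a combination of lower-face indicators via Remark~\ref{rem:normal-form}, which is equivalent to the paper's use of the discrete Leibniz rule (Lemma~\ref{lem:product-rule}) to split the $f_{=r}$ sum over faces $\eta$; your $v_\eta$ is exactly the paper's $B(\eta)$, and both arguments then invoke the hierarchy on the $(r+1)$-dimensional slices of each $F_\eta$ with $|\eta|\ge r+1$ to push the residual down to $f_{\le r-1}$.
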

  \begin{proof}[Proof of claim]
    First set
    \[
      B(\eta) = \sum_{\omega \supseteq \eta} (-1)^{|\omega|} \alpha_{=r}(\omega)
    \]
    for each $\eta \in \llbracket k \rrbracket$, so that
    \[
      \sum_{\omega \in \llbracket k \rrbracket} (-1)^{|\omega|} \alpha_{=r}(\omega) \cdot f_{= r}(c(\omega))
      = \sum_{\eta \in \llbracket k \rrbracket} (-1)^{|\eta|} B(\eta) \cdot \partial^{|\eta|} f_{= r}\big(c|_{F_\eta}\big)
    \]
    by Lemma~\ref{lem:product-rule}.  We then split the sum into cases $|\eta| \le r$ and $|\eta| \ge r+1$.

    Since $f$ is a polynomial hierarchy, for each $1 \le i \le d_{r}$ and each $\eta \in \llbracket k \rrbracket$, $|\eta| \ge r+1$, there is a configuration
    $
      \gamma^{i,\eta} \colon F_\eta \to \bigoplus_{j=0}^{r-1} \ZZ^{d_j}
    $
    with $\|\gamma^{i,\eta}_{=j}(\omega)\|_1 \le M^{r-j}$ for each $j$ and each $\omega \in F_\eta$, such that
    \[
      \partial^{|\eta|} f_{i}\big(c|_{F_\eta}\big) = \sum_{\omega \in F_{\eta}} (-1)^{|\omega|} \gamma^{i,\eta}(\omega) \cdot f_{\le r-1} (c(\omega)) \ ;
    \]
    indeed, when $|\eta| = r+1$ this is by definition, and when $|\eta| > r+1$ this follows by decomposing $\partial^{|\eta|} f_i\big(c|_{F_\eta}\big)$ as a sum of $(r+1)$-derivatives of $f_{i}$ on faces of $c|_{F_{\eta}}$ of dimension exactly $r+1$.\footnote{Another way of saying this is that the $(f,k,t,M)$-derivatives condition immediately implies the $(f,k',t,M)$-derivatives condition for $k' > k$.}

    We deduce that
    \[
      \sum_{\omega \in \llbracket k \rrbracket} (-1)^{|\omega|} \alpha_{=r}(\omega) \cdot f_{= r}(c(\omega))
      = \begin{aligned}[t]
        & \sum_{|\eta| \le r} (-1)^{|\eta|} B(\eta) \cdot \left(\sum_{\omega' \subseteq \eta} (-1)^{|\omega'|} f_{= r}(c(\omega')) \right) \\
        +\ &
      \sum_{|\eta| \ge r+1} (-1)^{|\eta|}  \sum_{i=1}^{d_r} B(\eta)_i \left(\sum_{\omega' \subseteq \eta} \gamma^{i,\eta}(\omega') \cdot f_{\le r-1}(c(\omega')) \right) \ .
      \end{aligned}
    \]
    Hence, we may define
    \[
      \alpha'(\omega') = \alpha_{\le r-1}(\omega') + \sum_{|\eta| \ge r+1} (-1)^{|\eta|} \sum_{i=1}^{d_{r}} B(\eta)_i \gamma^{i,\eta}(\omega')
    \]
    and
    \[
      \beta(\omega') = \sum_{\substack{\eta \supseteq \omega' \\ |\eta| \le r}} (-1)^{|\eta|} B(\eta)
    \]
    which by the preceding discussion satisfy~\eqref{eq:claim-alpha-beta}.  Unwrapping the definitions of $\beta$, $B$ and $Z_r$ immediately gives~\eqref{eq:claim-beta-formula}, and the bounds on $\|\beta(\omega)\|_1$ and $\|\alpha'_{=i}(\omega)\|_1$ follow from their definitions.
  \end{proof}

  To finish the proof of the lemma, we define $\fb'_{=r}$ and auxiliary configurations 
  \[
    \alpha^{(r)} \colon \llbracket k \rrbracket \to \bigoplus_{i=0}^r \ZZ^{d_i}
  \]
  for $r$ from $t-1$ down to $0$ recursively as follows: set $\alpha^{(t-1)} = \fb$, and then let $\alpha^{(r-1)}$ and $\fb'_{=r}$ be the configurations $\alpha'$ and $\beta$ respectively obtained by applying the claim to $\alpha^{(r)}$.  
\end{proof}

We note that a direct formula for $b'_{=i}$ in terms of $b_{=i}$ is not possible for $i < t-1$, as there are necessarily correction terms which depend on the polynomial hierarchy $f$.

\subsection{Cocycles and the strong derivatives condition}%
\label{subsec:cocycles-and-strong}

For this discussion suppose that $S_0,\dots,S_{s+1}$ is a system of cubes, $f$ is an $(s, d, M)$-polynomial hierarchy on $S_0,\dots,S_{s+1}$, that $g \colon S_0 \to \RR$ and $b(c,\omega)$ obey an $(f,k,t,M)$-derivatives condition on $S \subseteq C^k(H)$ (where $k \ge 0$ and $1 \le t \le s+1$), and that $b$ is in normal form in the sense of Definition~\ref{def:normal-form}.
We now consider compatibility conditions on $b(c,\omega)$ across different choices of $c$.

The key set-up consists of two cubes that can be glued together to form another cube.  Specifically, let $T_{k,i}$ denote the set of all triples of cubes $(c_0,c_1,c)$ in $C^k(H)$, such that $c_0$ and $c_1$ have the same upper face in direction $i$, i.e.\ $c_0|_{F^i} = c_1|_{F^i}$, and $c=[c_0|_{F_i}, c_1|_{F_i}]_i$ is obtained by glueing $c_0$ and $c_1$ together along that face.   Concretely, $T_{k,i}$ is indexed by tuples $(x,h_1,\dots,h_k,h_i') \in H^{k+2}$, as follows:
\begin{equation}
  \label{eq:tk}
  T_{k,i} = \Big\{ (c_0,c_1,c) \colon
  \begin{aligned}[t]
    &(x,h_1,\dots,h_k,h_i') \in H^{k+2}, \\
    c &= \angle(x; h_1,\dots,h_i,\dots,h_k) \\
    c_0 &= \angle(x; h_1,\dots,h_i+h_i',\dots,h_k)\\
    c_1 &= \angle(x+h_i; h_1,\dots,h_i',\dots,h_k)\ \Big\} \ .
  \end{aligned}
\end{equation}
The derivative $\partial^k g$ always obeys the cocycle-like condition $\partial^k g(c) - \partial^k g(c_0) + \partial^k g (c_1) = 0$ for any triple $(c_0,c_1,c) \in T_{k,i}$ where it is well-defined: indeed, each value $g(x)$ appears an equal number of times with a positive and negative sign.
When $k=2$ and $i=1$ this set-up is illustrated in Figure~\ref{fig:glueing}.

\begin{figure}
  \begin{center}
    \begin{tikzpicture}
      \draw
        (0,0) node[below left]  {$\scriptscriptstyle x$} --
        (3,0) node[below] {$\scriptscriptstyle x+h_1$} --
        (3,2) node[above] {$\scriptscriptstyle x+h_1+h_2$} --
        (0,2) node[above left]  {$\scriptscriptstyle x+h_2$} --
        (0,0)
        (1.5,1) node {$c$}

        (3,0) --
        (6,0) node[below right] {$\scriptscriptstyle x+h_1+h_1'$} --
        (6,2) node[above right] {$\scriptscriptstyle x+h_1+h_1'+h_2$} --
        (3,2)

        (4.5,1) node {$c_1$};

      \draw[decorate,decoration={brace,amplitude=10pt}]
        (6,-0.5) --
        (0,-0.5) node [midway,below,yshift=-10pt] {$c_0$};
    \end{tikzpicture}
    \caption{Glueing cubes together with $k=2$, $i=1$\label{fig:glueing}}
  \end{center}
\end{figure}
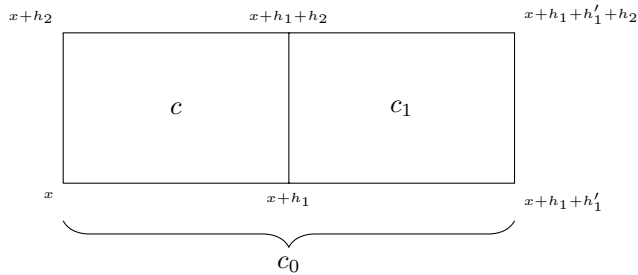

It follows that (provided $c_0,c_1 \in S$)
\begin{equation}
  \label{eq:trying-to-glue}
  \partial^k g(c) = \sum_{\omega \in \llbracket k \rrbracket} (-1)^{|\omega|} f_{\le t-1}(c_0(\omega)) \cdot b(c_0,\omega) - 
\sum_{\omega \in \llbracket k \rrbracket} (-1)^{|\omega|} f_{\le t-1}(c_1(\omega)) \cdot b(c_1,\omega)
\end{equation}
by combining the derivatives conditions at $c_0$ and $c_1$.
Regrouping terms, we see that
\begin{equation}
  \label{eq:really-trying-to-glue}
  \partial^k g(c) = \sum_{\omega \in \llbracket k \rrbracket} (-1)^{|\omega|} f_{\le t-1}(c(\omega)) \cdot \fb(\omega) + 
\sum_{\omega \in F^i} (-1)^{|\omega|} f_{\le t-1}(c_0(\omega)) \cdot (b(c_0,\omega) - b(c_1,\omega))
\end{equation}
where
\[
  \fb(\omega) = \begin{cases} b(c_0, \omega) &\colon \omega(i) = 0 \\ b(c_1, \omega \setminus \{i\}) &\colon \omega(i) = 1  . \end{cases}
\]
We would like this to be an alternative derivatives condition for $g$ at $c$, and then use some sort of uniqueness to compare $\fb$ with $b(c,\omega)$ and thereby get a cocycle-type compatibility statement between $b(c,-)$, $b(c_0,-)$ and $b(c_1,-)$.  There are two problems:---
\begin{enumerate}[label=(\roman*)]
  \item The second term involves values of $f$ at the vertices of the common upper face $c_0|_{F^i} = c_1|_{F^i}$, which are not vertices of $c$.
    Hence, if the associated coefficients are non-zero this is not a legal derivatives condition at $c$.
  \item If it happens to be true that $b(c_0,\omega) = b(c_1,\omega)$ for all $\omega \in F^i$, that term disappears and we do get an alternative derivatives condition at $c$ using the configuration $\fb$.
    However, $\fb$ is not typically in normal form, and would have to be placed in normal form before we could compare it to $b(c,-)$, which means in particular that we would lose explicit control over all but the top piece $\fb_{=t-1}$.
\end{enumerate}

It turns out it is natural to deal with (i) by introducing a requirement that $b(c_0,\omega) = b(c_1,\omega)$ for all $\omega \in F^i$.  This is usually true in practice, e.g.\ for the example functions and suitable hierarchies considered in Remark~\ref{rem:examples} (with everything put in normal form).  A failure of this requirement will be interpreted as evidence of some redundancy.

The objection in (ii) is unavoidable: a na{\"\i}ve cocycle-like compatibility condition simply does not hold for $b_{=i}$ for $i < t-1$ in this setting.  However, in the overall geography of the proof, having structure in $b_{=t-1}$ will be good enough to start an induction.

It is therefore natural to make the following definitions concerning the structure we expect to see in $b$, if there are no redundancies (a term which as yet still has no precise definition).  We refer to the definition of $T_{k,i}$ from~\eqref{eq:tk}.

\begin{definition}%
  \label{def:compatible}
  If $S \subseteq C^k(H)$, $b \colon S \times \llbracket k \rrbracket \to \ZZ^{d}$ is a function and $\delta>0$ a parameter, we say $b$ is \emph{$\delta$-almost upper compatible} on $S$ if the following holds: for all $\omega \in \llbracket k \rrbracket$ and all $i \in [k]$ with $\omega(i) = 1$, we have
  \[
    \big|\big\{ (c_0,c_1,c) \in T_{k,i} \colon c_0,c_1 \in S,\ b(c_0,\omega) \ne b(c_1,\omega) \big\}\big| \le \delta |H|^{k+2} \ .
  \]
  If this holds for $\delta=0$, we just say $b$ is \emph{upper compatible} on $S$. In that case, this equivalently states that for each fixed $\omega \in \llbracket k \rrbracket$ the function $c \mapsto b(c,\omega)$ only depends on the value of the upper face $c|_{F^\omega}$.
\end{definition}

\begin{definition}%
  \label{def:gen-cocycle}
  Let $S \subseteq C^k(H)$, let $r$ be an integer \uppar{$0 \le r \le k-1$} and let $b \colon S \times \llbracket k \rrbracket \to \ZZ^{d}$ a function in normal form.  Finally let $\delta>0$ be a parameter.
  
  We say $b$ is a \emph{generalized $k$-cocycle of type $r$ and loss $\delta>0$} on $S$, if (i) $b$ is $\delta$-almost upper compatible on $S$, and (ii) the following holds for each $i \in [k]$.  Of the triples $(c_0,c_1,c) \in T_{k,i}$ with $c_0,c_1,c \in S$, at most $\delta |H|^{k+2}$ fail to have the following property: for each $\omega' \in \llbracket k \rrbracket$ we have
  \[
    b(c,\omega') = \sum_{\omega \in \llbracket k \rrbracket} Z_r(\omega,\omega') \fb(\omega)
  \]
  where
  \[
    \fb(\omega) = \begin{cases} b(c_0, \omega) &\colon \omega(i) = 0 \\ b(c_1, \omega \setminus \{i\}) &\colon \omega(i) = 1 \end{cases}
  \]
  and $Z_r(\omega,\omega')$ are the coefficients defined in~\eqref{eq:zr}.
\end{definition}

\begin{definition}%
  \label{def:strong}
  If $X \subseteq H$, $(f_{i,j})$ is a tuple of functions $X \to \RR$ of shape $d_0,\dots,d_s$, and $g \colon X \to \RR$ and $b$ obey the $(f,k,t,M)$-derivatives condition on $S \subseteq C^k(H)$ (where $k \ge 0$ and $1 \le t \le s+1$), we say they satisfy the \emph{strong $k$-derivatives condition} on $S$ with respect to $f$, with degree $t$, parameter $M \ge 1$, and loss $\delta>0$, if:---
  \begin{enumerate}[label=(\roman*)]
    \item $b$ is in normal form;
    \item $b_{=i}$ is $\delta$-almost upper compatible on $S$ for each $i$ ($1 \le i \le t-1$); and
    \item $b_{=t-1}$ is a generalized $k$-cocycle of type $t-1$ and loss $\delta$ on $S$.
  \end{enumerate}
  As before we often just say $g$ obeys the \emph{$(f,k,t,M,\delta)$-strong derivatives condition} on $S$.
\end{definition}

\begin{remark}%
  \label{rem:b0-compat}
  Note that for $b$ in normal form, $b_{=0}$ satisfies $b_{=0}(c,\omega) = 0$ whenever $\omega \ne (0,\dots,0)$. It follows that $b_{=0}$ is automatically upper compatible, because all the values $b(c,\omega)$ considered in Definition~\ref{def:compatible} are zero.  Hence, it is not important whether Definition~\ref{def:strong}(ii) is stated for $1 \le i \le t-1$ or $0 \le i \le t-1$.
\end{remark}

The definition of a generalized $k$-cocycle of type $r$ given here, motivated by the discussion above, appears fairly ad-hoc: it just records the conditions we expect the top-level coefficients $b_{=t-1}$ to satisfy.
Nonetheless, such objects turn out to have a rigid algebraic classification.

\begin{lemma}%
  \label{lem:gen-cocycle}
  Let $S_0,\dots,S_{k}$ be a system of cubes of parameter $1-\eps$, where $\eps \le \eps_0(k)$ is sufficiently small, and let $\rho \colon S_k \times \llbracket k \rrbracket \to \ZZ^d$ be a generalized $k$-cocycle of type $r$ and loss at most $\delta$ on $S_k$.

  Then there exists a function $\lambda \colon H \to \RR^d$ with the following properties.  For each $c \in C^k(H)$, we consider the normal form of the configuration $\omega \mapsto \lambda(c(\omega))$, in the sense of Lemma~\ref{lem:canonical-b} and Remark~\ref{rem:normal-form}: i.e., we define
  \begin{equation}
    \label{eq:big-lambda}
    \Lambda(c,\omega') = \sum_{\omega \in \llbracket k \rrbracket} Z_r(\omega,\omega') \lambda(c(\omega))
  \end{equation}
  where $Z_r(-,-)$ are defined in~\eqref{eq:zr}.  Then:---
  \begin{enumerate}[label=(\roman*)]
    \item there are at most $O_{k}\big(\delta^{1/O_k(1)}\big) |C^k(H)|$ cubes $c \in S_k$ which do not satisfy $\Lambda(c,\omega) = \rho(c,\omega)$ for each $\omega \in \llbracket k \rrbracket$;
    \item for every $c \in C^{k-r}(H)$, the derivative $\partial^{k-r} \lambda(c)$ lies in $\ZZ^d$; equivalently, $\lambda \bmod \ZZ^d \colon H \to \RR^d / \ZZ^d$ is a polynomial map of degree $k-r-1$ \uppar{see Definition~\ref{def:poly-map}};
    \item we have $\sup_{x \in H} \|\lambda(x)\|_1 \ll_k \sup_{c \in S_k} \|\rho(c)\|_1 + d$; and
    \item for all $c \in C^{k}(H)$ and $\omega \in \llbracket k \rrbracket$ we have $\Lambda(c,\omega) \in \ZZ^d$.
  \end{enumerate}
\end{lemma}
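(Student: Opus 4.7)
The plan is to build $\lambda$ via an averaging argument in the spirit of the cocycle-is-a-coboundary strategy from the cubespace literature \cite{cs}.

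In Step 1, for each $x \in H$ I would define
\[
\lambda(x) := \EE_{\vec h \in H^k} \rho(\angle(x;\vec h), (0,\dots,0))\, \mathbf{1}_{\angle(x;\vec h) \in S_k} \Big/ \Pr\bigl[\angle(x;\vec h) \in S_k\bigr] ,
\]
where the denominator is bounded below by $1 - O_k(\eps)$ by the system-of-cubes hypothesis and $\eps$ sufficiently small, so the expectation is well-defined. Convexity of $\|\cdot\|_1$ immediately yields the bound in (iii) since $\rho$ takes integer values of bounded $\ell^1$-norm (the $+d$ slack accounts for any bounded integer correction that may be needed in later steps).

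In Step 2, I would exploit the cocycle condition iteratively. For a typical glueing triple $(c_0, c_1, c) \in T_{k,i}$, Definition \ref{def:gen-cocycle} expresses $\rho(c,\omega)$ as a specific combination of $\rho(c_0, \cdot)$ and $\rho(c_1, \cdot)$ governed by the normal-form transform $Z_r$. Iterating this glueing in all $k$ directions with fresh auxiliary variables $h_1',\dots,h_k'$ and averaging over them, $\rho(c,\omega)$ reduces to $\Lambda(c,\omega) = \sum_{\omega'} Z_r(\omega',\omega)\,\lambda(c(\omega'))$ up to errors bounded by the measure of bad glueing triples encountered during the iteration.

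In Step 3, conclusion (i) combined with Remark \ref{rem:normal-form} (the normal-form projection preserves alternating sums) yields $\partial^k \lambda(c) = \sum_\omega (-1)^{|\omega|} \rho(c,\omega) \in \ZZ^d$ for most $c$. More generally, considering a $(k-r)$-cube $c'$ embedded as a lower face of a random larger $k$-cube and invoking the identity $\Lambda = \rho$ on the upper faces parallel to $c'$, I would deduce $\partial^{k-r}\lambda(c') \in \ZZ^d$ for most $c'$; the extension to every $(k-r)$-cube then follows by exploiting the explicit averaging defining $\lambda$ together with translation invariance.

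The main technical obstacle is keeping the error in Step 2 polynomial in $\delta$. A naive sequential application of the cocycle condition along each of the $k$ directions would risk compounded, potentially exponential-in-$k$ loss; the cleaner strategy of \cite{cs} is to carry out the $k$ glueings simultaneously via a joint expectation over all auxiliary $h_i'$ and Cauchy--Schwarz, so that the failures combine additively by union bound rather than compounding multiplicatively. With this parallelization in place, the $O_k\bigl(\delta^{1/O_k(1)}\bigr)$ dependence in (i) should follow, and the rest of the proof is largely bookkeeping.
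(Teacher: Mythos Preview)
Your proposal has the right high-level flavour (averaging \`a la \cite{cs}) but misses two structural ingredients that the paper's proof relies on, and without them Step 2 and Step 3 as written do not go through.

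\textbf{Step 2 does not iterate cleanly.} The generalized cocycle relation in Definition~\ref{def:gen-cocycle} is not an additive identity like $\rho(c)=\rho(c_0)-\rho(c_1)$; it says $\rho(c,\omega)$ equals the \emph{normal-form transform} (via $Z_r$) of a configuration $\fb$ built from $\rho(c_0,\cdot)$ and $\rho(c_1,\cdot)$. Composing such normal-form operations across $k$ successive glueings, each in a different direction, is not a simple union-bound computation: the $Z_r$ transforms interact nontrivially with the partition into lower/upper faces at each step. The paper sidesteps this entirely by a structural observation you do not make: for each face $F$ of codimension $r$, the alternating sum $\rho_F(c)=\sum_{\omega\in F}(-1)^{|\omega|}\rho(c,\omega)$ \emph{does} satisfy a genuine additive cocycle identity $\rho_F(c)=\rho_F(c_0)-\rho_F(c_1)$ (and also depends only on $c|_F$). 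This reduces the generalized problem to the ordinary $(k{-}r)$-cocycle case, to which the averaging argument (carried out with an operator-norm contraction estimate, not just a union bound) is then applied separately. One then must show the resulting functions $\lambda_F$ coincide across faces; this is a further nontrivial step.

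\textbf{Step 3 has a genuine gap.} Getting $\partial^{k-r}\lambda(c')\in\ZZ^d$ for \emph{most} $c'$ does not upgrade to \emph{all} $c'$ by ``translation invariance'': your averaged $\lambda$ takes real (non-integer) values, and there is no reason the integrality should propagate. The paper handles this by a separate lemma (Lemma~\ref{lem:99pc-poly}): given a function whose $(s{+}1)$-th derivative vanishes on $1-\eps$ of cubes, one constructs a genuine polynomial $\wt f$ agreeing with $f$ on a large set, via a majority-vote argument over tricube configurations. This correction is then applied to $\lambda\bmod\ZZ^d$ to force (ii) everywhere, possibly modifying $\lambda$ on a small set (which is where the ``$+d$'' slack in (iii) actually comes from). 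This step is essential and is not subsumed by the averaging construction.
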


It is a straightforward fact (which we will not prove) that every function $\Lambda$ obtained in this way---i.e., by applying the normal form transformation to some $\lambda$ as in~\eqref{eq:big-lambda}---is automatically a generalized $k$-cocycle of type $r$.
So, the lemma implies we can ``explain'' every generalized $k$-cocycle $\rho$ of type $r$ in a natural way; specifically by (i) finding a polynomial map $H \to \RR^d \bmod \ZZ^d$, (ii) choosing some lift to a function $\lambda \colon H \to \RR^d$, and (iii) taking the normal form of $(c,\omega) \mapsto \lambda(c(\omega))$, in the sense of Lemma~\ref{lem:canonical-b}, to obtain $\rho$.
Moreover, this statement is robust under small losses $\delta>0$ (and handling this loss accounts for a large part of the complexity of the proof.)

The remainder of this subsection is spent in the proof of Lemma~\ref{lem:gen-cocycle}, which is fairly involved.  We first switch the discussion from generalized cocycles to simpler objects which, following~\cite{cs}, we refer to simply as (non-generalized) \emph{cocycles}.  The definition differs from~\cite{cs} only to introduce epsilon errors.

\begin{definition}%
  \label{def:cocycle}
  Let $H$ and $A$ be any abelian groups, and $S \subseteq C^k(H)$ a subset, where $k \ge 1$.  A function $\rho \colon S \to A$ is called a \emph{$k$-cocycle} on $S$ with loss $\delta$, if the following holds for each $i \in [k]$:
    at most $\delta |H|^{k+2}$ triples $(c_0,c_1,c) \in T_{k,i}$ with $c_0,c_1,c \in S$, fail to satisfy the identity
    \begin{equation}
      \label{eq:proper-cocycle}
      \rho(c) = \rho(c_0) - \rho(c_1) .
    \end{equation}
    In other words, $\rho$ is (almost surely) additive under glueing two cubes together along a common face in direction $i$.
\end{definition}

We remark that $k$-cocycles (on $S$, with loss $\delta$) are precisely generalized $k$-cocycles of type $0$ (on $S$, with loss $\delta$), although in this special case the definition is much more straightforward and the equivalence is not totally obvious.

The key example of cocycles are derivatives $\rho = \partial^k \lambda$ where $\lambda \colon X \to A$ is any function on $X \subseteq A$; these could be considered ``coboundaries'', although we will not formally adopt this term.  For general groups $A$, and even in the error-free case $S=C^k(H)$, $\delta=0$, there may exist cocycles $\rho$ which are not of this type.  However, when $A=\RR^d$ the ``cohomology'' is trivial, and every $k$-cocycle agrees almost surely (as $\delta \rightarrow 0$, and provided $\mu(S) \approx 1$) with a coboundary $\partial^k \lambda$.

The technical statement along these lines that we will need is the following (which is in fact a special case of Lemma~\ref{lem:gen-cocycle}).

\begin{lemma}%
  \label{lem:cocycle}
  Let $H$ be a finite abelian group, $S_0,\dots,S_{k}$ a system of cubes on $H$ with parameter $1-\eps$ where $\eps \le \eps_0(k)$ is sufficiently small, and $\rho \colon S_k \to \ZZ^d$ a $k$-cocycle on $S_k$ with loss $\delta$.

  Then there exists a function $\lambda \colon H \to \RR^d$ such that:---
  \begin{enumerate}[label=(\roman*)]
    \item $\partial^k \lambda(c) = \rho(c)$ for all but at most $O_k\big(\delta^{1/O_k(1)}\big) |C^k(H)|$ cubes $c \in S_k$;
    \item $\partial^k \lambda(c) \in \ZZ^d$ for every $c \in C^k(H)$, or equivalently $\lambda \bmod \ZZ^d \colon H \to \RR^d/\ZZ^d$ is a polynomial map of degree $k-1$ \uppar{see Definition~\ref{def:poly-map}}; and
    \item we have $\sup_{x \in H} \|\lambda(x)\|_1 \ll_k \sup_{c \in S_k} \|\rho(c)\|_1 + d$.
  \end{enumerate}
\end{lemma}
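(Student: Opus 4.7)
The plan is to build $\lambda$ by an averaging construction and then verify the three properties in turn. Extend $\rho$ to a function $\rho^* \colon C^k(H) \to \ZZ^d$ by declaring $\rho^*(c) = 0$ for $c \notin S_k$, and define
\[
  \lambda(x) := (-1)^k \EE_{(h_1,\dots,h_k) \in H^k} \rho^*(\angle(x; h_1,\dots,h_k)).
\]
Property (iii) is then immediate, since $\|\lambda(x)\|_1 \le \EE_h \|\rho^*(\angle(x;h))\|_1 \le \sup_{c \in S_k} \|\rho(c)\|_1$ (the additional $d$ in the conclusion will be absorbed by the integrality adjustment in (ii)).

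For (i), I would compute $\partial^k \lambda(c)$ at a fixed $c = \angle(x; e_1, \ldots, e_k) \in S_k$ by iterated application of the cocycle identity. Expanding,
\[
  \partial^k \lambda(c) = (-1)^k \EE_h \sum_{\omega \in \llbracket k \rrbracket} (-1)^{|\omega|} \rho^*(\angle(x + \omega \cdot e; h_1, \ldots, h_k)),
\]
I would then apply, for each $i = 1, \ldots, k$ in turn, the cocycle identity in direction $i$ with glueing parameter $e_i$, with the effect of collapsing the sum in direction $i$ while inserting $e_i$ into the $i$-th slot of the argument of $\rho^*$. After $k$ rounds the inner sum reduces to $\rho(c)$. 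Each individual application requires a certain glueing triple to lie in $T_{k,i}$ with both cubes in $S_k$ and the cocycle identity $\rho(c) = \rho(c_0) - \rho(c_1)$ holding; since $S_0, \ldots, S_k$ has parameter $1-\eps$ and $\rho$ has loss $\delta$, each such requirement fails with probability at most $O(\delta + \eps)$ over the random choice of $h$. A union bound over the $O_k(1)$ applications, followed by Markov, yields the required exceptional bound $O_k(\delta^{1/O_k(1)}) |C^k(H)|$ after choosing $\eps \le \eps_0(k)$ appropriately.

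For (ii), this is the main obstacle. The function $\partial^k \lambda$ is automatically an \emph{exact} cocycle on all of $C^k(H)$ (with zero loss), since for any function $g$ one has the identity $\partial^k g(c) = \partial^k g(c_0) - \partial^k g(c_1)$ whenever $(c_0, c_1, c) \in T_{k,i}$; this follows because the common upper face $c_0|_{F^i} = c_1|_{F^i}$ contributes zero to the difference. By (i), $\partial^k \lambda$ takes values in $\ZZ^d$ on a set of density $\ge 1 - O_k(\delta^{1/O_k(1)})$ in $C^k(H)$. Setting $\phi := \partial^k \lambda \bmod \ZZ^d$, we obtain an exact cocycle valued in $\RR^d/\ZZ^d$ that vanishes on a large set. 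For any fixed $c \in C^k(H)$, the identity $\phi(c) = \phi(c_0(h_1')) - \phi(c_1(h_1'))$ holds for every $h_1' \in H$; one wants to find some $h_1'$ for which both $c_0$ and $c_1$ lie in the vanishing set, forcing $\phi(c) = 0$. The subtlety is that for a fixed $c$ the cubes $c_0(h_1'), c_1(h_1')$ lie in a one-parameter slice of $C^k(H)$ that might intersect the bad set atypically. To handle this I would iterate the glueing into a balanced tree of depth $O_k(1)$ using multiple directions, so that the leaves are jointly close to uniform on $C^k(H)$; a union bound then shows that for every $c$ there is a choice of glueing parameters for which all leaves lie in the vanishing set, forcing $\phi \equiv 0$ and therefore $\partial^k \lambda \in \ZZ^d$ pointwise. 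Equivalently, $\lambda \bmod \ZZ^d$ is then a polynomial map of degree $k-1$, which is the second formulation of (ii).

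The main technical difficulty is this integrality propagation: it is easy to get $\partial^k \lambda \in \ZZ^d$ almost everywhere from the averaging formula, but extending to every cube requires careful use of the exactness of the cocycle relation $\partial^k \lambda(c) = \partial^k \lambda(c_0) - \partial^k \lambda(c_1)$, together with a tree-iteration argument to route around the small set where $\phi \neq 0$. The error bookkeeping in (i)---accounting for both the $\delta$ loss from the cocycle and the $\eps$ loss from the system of cubes, compounded over the $k$ iterations---is the other place where the choice of polynomial in $O_k(\delta^{1/O_k(1)})$ is pinned down.
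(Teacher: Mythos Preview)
Your argument for (i) has a fundamental gap. The identity you need --- that the inner sum $\sum_\omega (-1)^{|\omega|}\rho^*(\angle(x+\omega\cdot e;\,h))$ collapses to $\rho(c)$ via $O_k(1)$ cocycle applications for each fixed $h$ --- is simply false, even when $\rho$ is an exact cocycle on all of $C^k(H)$. For instance when $k=1$ and $\rho=\partial g$, the inner sum equals $\partial^2 g(\angle(x;e,h))$, which genuinely depends on $h$. What \emph{is} true is that the \emph{average} over $h$ equals $\rho(c)$ in the exact case, but this is proved by shift-invariance of the expectation, not by pointwise cocycle identities (this is precisely the content of the paper's operator identity $\partial\circ\sigma=\prod_i(\id+\tau_i\circ\delta_i)$). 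Once errors are present, the bad triples and the cubes outside $S_k$ contribute to the average, and one only obtains $\|\partial^k\lambda(c)-\rho(c)\|_1 = O_k\big((\eps+\delta)\sup_{c'}\|\rho(c')\|_1\big)$ for essentially \emph{every} $c\in S_k$ --- approximate equality everywhere rather than exact equality almost everywhere, and Markov cannot convert one into the other. The paper flags this explicitly (``the presence of errors mean that these straightforward formulae are simply not true and\ldots must be corrected'') and instead runs a functional-analytic argument: after constructing good sets $S$ and $I_i$ on which the cocycle relations hold robustly, one shows that a perturbed operator $Z'$ has operator norm at most $1/2$, so that $\id+Z'$ is invertible on the subspace $V'$ of functions supported on $S$ and satisfying the cocycle relations exactly on the $I_i$; solving $(\id+Z')g=\pi_S\rho$ then yields $\lambda=\sigma(g)$ with $\partial^k\lambda=\rho$ \emph{exactly} on $S$. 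Your averaging formula $\sigma(\rho^*)$ is only the zeroth term of the resulting Neumann series.

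Your proposal for (ii) has a related problem. Even granting (i), iterated cocycle-gluing cannot force $\phi=\partial^k\lambda\bmod\ZZ^d$ to vanish at every $c$: under any sequence of gluings starting from $c$, each vertex of $c$ persists in exactly one leaf cube (the lower face of $c$ in direction $i$ becomes the lower face of $c_0$, and so on recursively), so if some vertex of $c$ is a ``bad'' point you cannot route around it regardless of tree depth or choice of directions. The paper's remedy is to \emph{modify} $\lambda$ on a small set of points: it applies a 99\%-polynomial lemma (a tricube/majority-vote construction) to $\lambda\bmod\ZZ^d$ to produce a genuine degree-$(k-1)$ polynomial map into $\RR^d/\ZZ^d$ that agrees with $\lambda\bmod\ZZ^d$ on a large set $X$, and redefines $\lambda$ on $H\setminus X$ accordingly. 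The replacement values are chosen in $[0,1)^d$, which is exactly where the $+d$ in the bound (iii) comes from.
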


The case $\eps=0$ and $S = C^k(H)$ essentially appears as a special case of~\cite[Lemma 3.19]{cs} (or~\cite[Theorem 4.11]{gmv-2} or~\cite[Lemma 2.5.7]{candela-2}).  The main idea in this case is to define $\lambda$ by a straightforward averaging process:
\[
  \lambda(x) := \EE_{h_1,\dots,h_k \in H}\ \rho(\angle(x;h_1,\dots,h_k))
\]
and then verify that $\partial^k \lambda  = \rho$ for this choice of $\lambda$.  This idea is still at the heart of the proof of Lemma~\ref{lem:cocycle}; however, the presence of errors mean that these straightforward formulae are simply not true and---along with many other things---must be corrected.

Specifically, the proof of Lemma~\ref{lem:cocycle} splits into two parts.
First we find a function $\lambda' \colon H \to \RR^d$ such that parts (i) and (iii) of the statement hold.
By (i), we have that $\partial^k \lambda'(c) \in \ZZ^d$ for almost all $c \in S_k$ (since $\rho(c) \in \ZZ^d$ for these $c$), but for the remaining few $c \in C^k(H)$ part (ii) may fail.
In the second stage, we correct $\lambda'$ (if necessary) to a function $\lambda$ which further satisfies $\partial^k \lambda(c) \in \ZZ^d$ for every $c \in C^k(H)$, while leaving most of its values unchanged.
That is, we prove the following two statements.

\begin{lemma}%
  \label{lem:rough-cocycle}
  If $H$, $S_0,\dots,S_{k}$, $\eps$, $\rho$ and $\delta$ are as in the hypotheses of Lemma~\ref{lem:cocycle}, then there exists a function $\lambda \colon H \to \RR^d$ such that
  \begin{enumerate}[label=(\roman*)]
    \item $\partial^k \lambda(c) = \rho(c)$ for all but at most $O_k\big(\delta^{1/O_k(1)}\big) |C^k(H)|$ cubes $c \in S_k$; and
    \item $\sup_{x \in H} \|\lambda(x)\|_1 \ll_k \sup_{c \in S_k} \|\rho(c)\|_1$.
  \end{enumerate}
\end{lemma}

\begin{lemma}%
  \label{lem:99pc-poly}
  Let $s \ge 0$ be an integer, $H$ a finite abelian group and $A$ any abelian group.  Given $f \colon H \to A$, let
  \[
    S = \big\{ c \in C^{s+1}(H) \colon \partial^{s+1} f(c) = 0 \big\}
  \]
  and suppose $\mu(S) \ge 1 - \eps$ where $\eps < 2^{-2s-2}$.  Also define
  \begin{equation}
    \label{eq:bigX}
    X = \big\{ x \in H \colon |\{ c \in S \colon c(\vec0) = x \}| \ge (1/2) |H|^{s+1} \big\}
  \end{equation}
  \uppar{so necessarily $\mu(X) \ge 1 - 2 \eps$}.

  Then there exists a function $\wt f \colon H \to A$ such that $\partial^{s+1} \wt f$ is identically zero, and with $f(x) = \wt f(x)$ for all $x \in X$.
\end{lemma}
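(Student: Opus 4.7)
I would construct $\wt f$ by a self-correcting majority-vote formula and verify both conclusions directly. Set
\[
  P_h(x) := -\sum_{\omega \in \llbracket s+1 \rrbracket \setminus \{\vec 0\}} (-1)^{|\omega|}\, f(x + \omega \cdot h),
\]
so that $P_h(x) = f(x) - \partial^{s+1} f(\angle(x;h))$, hence $P_h(x) = f(x)$ exactly when $\angle(x;h) \in S$; and define $\wt f(x)$ to be the value of $A$ attained by $P_h(x)$ for a strict majority of $h \in H^{s+1}$, if such a value exists. A BLR-style second-moment computation shows $\wt f$ is everywhere well-defined: for independent uniform $h, h' \in H^{s+1}$, telescoping one coordinate at a time expresses $P_h(x) - P_{h'}(x)$ as a signed sum of at most $s+1$ derivatives $\partial^{s+1} f(c_i)$, where each $c_i$ is marginally a uniform $(s+1)$-cube in $C^{s+1}(H)$. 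A union bound gives $\Pr[P_h(x) \ne P_{h'}(x)] \le (s+1)\eps$, so writing $Q(x,v) := \Pr_h[P_h(x) = v]$ we have $\sum_v Q(x,v)^2 \ge 1 - (s+1)\eps$ and hence $\max_v Q(x,v) \ge 1 - (s+1)\eps > 1/2$ by the hypothesis $\eps < 2^{-2s-2}$. Thus $\wt f(x)$ is uniquely defined and attained by at least a $1 - (s+1)\eps$ fraction of $h$; moreover if $x \in X$ then $\{h : P_h(x) = f(x)\}$ has density $\ge 1/2$ and must coincide with the unique majority cluster, giving $\wt f = f$ on $X$.

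For the identity $\partial^{s+1}\wt f \equiv 0$, I would fix $y \in H$ and $k \in H^{s+1}$ and use the popularity bound. A union bound over the $2^{s+1}$ vertices of $\angle(y;k)$ shows that for at least a $1 - 2^{s+1}(s+1)\eps > 0$ fraction of $h$, the identity $P_h(y + \omega \cdot k) = \wt f(y + \omega \cdot k)$ holds for every $\omega \in \llbracket s+1 \rrbracket$. For any such $h$, substituting the definition of $P_h$ and reorganising the resulting double sum over $\llbracket s+1 \rrbracket^2$ yields
\[
  \partial^{s+1}\wt f(\angle(y;k)) \;=\; \partial^{s+1} P_h(\angle(y;k)) \;=\; -\sum_{\omega' \ne \vec 0} (-1)^{|\omega'|} \partial^{s+1} f\bigl(\angle(y + \omega' \cdot h;\, k)\bigr).
\]
If $h$ can moreover be chosen so that $\angle(y + \omega' \cdot h;\, k) \in S$ for every $\omega' \ne \vec 0$, then each summand on the right vanishes and we obtain $\partial^{s+1}\wt f(\angle(y;k)) = 0$, as required.

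The hardest part of the argument is simultaneously arranging the two conditions on $h$. For fixed $\omega' \ne \vec 0$, the translate $y + \omega' \cdot h$ is uniformly distributed in $H$ as $h$ varies, so the cube-inclusion condition fails with probability at most $(2^{s+1} - 1)\eps_k$, where $\eps_k := \mu(\{x : \angle(x;k) \notin S\})$. Although $\EE_k \eps_k = \eps$, for an adversarial direction $k$ the density $\eps_k$ could a priori be close to $1$, rendering the naive union bound vacuous. I would resolve this by first passing to a refined ``very popular'' subset $X_\alpha := \{x : |\{h : \angle(x;h) \in S\}| \ge (1-\alpha)|H|^{s+1}\}$ with $\alpha$ on the order of $\sqrt{\eps \cdot 2^{s+1}}$; a Markov-type estimate gives $\mu(X_\alpha^c) \ll \sqrt{\eps / 2^{s+1}}$, which is small under $\eps < 2^{-2s-2}$, and for cubes with all vertices in $X_\alpha$ the vertex-matching event on $h$ has probability close to $1$, leaving enough slack to absorb the $\eps_k$ loss uniformly in $k$. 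To propagate the vanishing identity from this dense family of cubes to all of $C^{s+1}(H)$, I would then treat $\partial^{s+1}\wt f$ itself as a candidate $(s+1)$-cocycle on $C^{s+1}(H)$ in the sense of Definition \ref{def:cocycle}, exploiting its face-gluing rigidity to extend vanishing from a dense subset of cubes to every cube.
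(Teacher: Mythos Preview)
Your construction of $\wt f$ by majority vote and the verification that $\wt f=f$ on $X$ are essentially what the paper does. The genuine gap is in the last step, and your proposed resolution does not close it.

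In your decomposition
\[
  \partial^{s+1}\wt f(\angle(y;k)) \;=\; -\sum_{\omega' \ne \vec 0} (-1)^{|\omega'|}\,\partial^{s+1} f\bigl(\angle(y+\omega'\cdot h;\,k)\bigr),
\]
every cube on the right has the \emph{fixed} direction vector $k$, so their membership in $S$ is controlled by $\eps_k$, which you cannot bound for adversarial $k$. Passing to $X_\alpha$ only sharpens the vertex-matching event $P_h(\cdot)=\wt f(\cdot)$, which was already under control; it does nothing to the $\eps_k$ term. And the appeal to ``face-gluing rigidity'' of $\partial^{s+1}\wt f$ as a cocycle is vacuous here: the derivative $\partial^{s+1} g$ of \emph{any} function $g$ is a cocycle, and ``$\partial^{s+1} g$ vanishes on a $(1-\eps)$-fraction of cubes'' is precisely the hypothesis of the lemma, not its conclusion. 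So that property alone cannot force global vanishing; you would be arguing in a circle.

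The paper avoids this with a single algebraic identity (the ``tricube''). One checks that
\[
  \sum_{\eta \in \llbracket s+1\rrbracket} (-1)^{|\eta|}\, P_{\vec h_\eta}(x_\eta) \;=\; 0
  \qquad\text{for all }x,\vec h,\vec h',
\]
where $x_\eta = x+\eta\cdot\vec h'$ and $(\vec h_\eta)_i$ equals $h_i$ if $\eta_i=0$ and $h_i-h'_i$ if $\eta_i=1$. The point is that if one fixes the \emph{target} cube $\angle(x;\vec h')$ and lets $\vec h$ be uniform, then \emph{every} $\vec h_\eta$ is marginally uniform in $H^{s+1}$. Hence each event $P_{\vec h_\eta}(x_\eta)=\wt f(x_\eta)$ has probability at least $1-2^{s+1}\eps$, and a union bound over the $2^{s+1}$ vertices $\eta$ leaves positive probability under $\eps<2^{-2s-2}$. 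For any good $\vec h$ the identity reads $\sum_\eta(-1)^{|\eta|}\wt f(x_\eta)=0$, i.e.\ $\partial^{s+1}\wt f(\angle(x;\vec h'))=0$, and this holds for \emph{every} $x,\vec h'$ with no extension step needed. The moral difference from your decomposition is that the tricube randomises the \emph{direction vectors} of all the auxiliary cubes, whereas your reorganisation randomises only their base points.
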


In the application to Lemma~\ref{lem:cocycle}, the function $f$ in Lemma~\ref{lem:99pc-poly} will be $f = \lambda \bmod \ZZ^d$ where $\lambda$ is the function obtained from Lemma~\ref{lem:rough-cocycle} (so $A = \RR^d/\ZZ^d$).

\begin{proof}[Proof of Lemma~\ref{lem:cocycle} assuming Lemma~\ref{lem:rough-cocycle} and Lemma~\ref{lem:99pc-poly}]
  We first choose $\lambda' \colon H \to \RR^d$ according to Lemma~\ref{lem:rough-cocycle}; so, $\rho(c) = \partial^k \lambda'(c)$ for all $c \in S$, where $S \subseteq S_k$ is a set such that $|S_k \setminus S| \ll_k \delta^{1/O_k(1)} |C^k(H)|$, and $\lambda'$ obeys the size bound (iii).

  Let $X \subseteq H$ be defined as in~\eqref{eq:bigX} with respect to this $S$ and with $s=k-1$.  For each $x \in S_0$ there are at least $(1 - \eps)^k |H|^k$ cubes $c \in S_k$ with $c(\vec 0) = x$. Therefore every $x \in S_0 \setminus X$ accounts for at least $\big((1-\eps)^k - 1/2\big) |H|^k$ elements $c \in S_k \setminus S$ with $c(0,\dots,0) = x$, and hence
  \[
    |S_0 \setminus X| \le |S_k \setminus S|\ \big((1 - \eps)^{k} - 1/2\big)^{-1} |H|^{-k} \ll_k \delta^{1/O_k(1)} |H|
  \]
  provided $\eps$ is sufficiently small in terms of $k$.
  
  Letting $f \colon H \to \RR^d/\ZZ^d$ be the function $f(x) = \lambda'(x) \bmod \ZZ^d$, we have in particular that $\partial^k f(c) = 0$ for all $c \in S$.  Applying Lemma~\ref{lem:99pc-poly}, we can find $\wt{f} \colon H \to \RR^d / \ZZ^d$ such that $\partial^k \wt{f}(c) = 0$ for all $c$, and with $f(x) = \wt{f}(x)$ for all $x \in X$.

  Finally, define $\lambda \colon H \to \RR^d$ by setting $\lambda(x) = \lambda'(x)$ if $x \in X$, and otherwise by taking the value $\lambda(x)$ such that $\lambda(x) \bmod \ZZ^d = \wt{f}(x)$ and $\big\|\wt{\lambda}(x)\big\|_1$ is as small as possible (and in particular at most $d$).  It is immediate that $\partial^k \lambda(c) \in \ZZ^d$ for any $c \in C^k(H)$. Moreover, $\partial^k \lambda(c) = \partial^k \lambda'(c) = \rho(c)$ for all $c \in S$ such that $c(\omega) \in X$ for all $\omega \in \llbracket k \rrbracket$, and the number of $c \in S_k$ which do not have these properties is at most
  \[
    |S_k \setminus S| + 2^k |S_0 \setminus X|\,|H|^k \ll_k \delta^{1/O_k(1)} |C^k(H)|
  \]
  as required.
\end{proof}

We next consider Lemma~\ref{lem:99pc-poly}, which is a ``property testing'' result for polynomial maps $H \to A$ of degree $s$.
The case $H=\FF_2^d$, $A = \FF_2$ appears in~\cite{akkl}, and various related results appear in the literature (e.g.\ \cite[Lemma 4.5]{tz}).  However, as far as the author is aware the general case is not quotable in the form we need, although it is essentially well-known.
For completeness, then, we record a proof of Lemma~\ref{lem:99pc-poly} in general, using similar ideas to~\cite{akkl} but being brief with some details.

\begin{proof}[Proof of Lemma~\ref{lem:99pc-poly}]
  The key idea is to show that for $x \in H$, the value
  \[
    g_{\vec h}(x) = -\sum_{\omega \in \llbracket s+1 \rrbracket \setminus \vec0} (-1)^{|\omega|} f(x + \omega \cdot \vec h)
  \]
  where $\vec h \in H^{s+1}$, is almost surely constant as $\vec h$ varies.  Indeed, if $\vec h = (h_1,\dots,h_{s+1})$ and $\vec h' = (h'_1,\dots,h'_{s+1})$, then
  \[
    g_{\vec h}(x) - g_{\vec h'}(x) = \partial^{s+1} f(\angle(x; h)) - \partial^{s+1} f(\angle(x; h'))
  \]
  and we also have the identity
  \begin{equation}
    \label{eq:tricube}
    \partial^{s+1} f(\angle(x; h')) = \sum_{\eta \in \llbracket s+1 \rrbracket} (-1)^{|\eta|} \partial^{s+1} f(\angle(x_\eta; \vec h_{\eta}))
  \end{equation}
  where $x_\eta = x + \eta \cdot h'$, and $\vec h_\eta = (r_1,\dots,r_{s+1})$ where $r_i = h_i$ if $\eta_i = 0$ or $r_i = h_i-h'_i$ if $\eta_i=1$. This corresponds to a ``tricube'' configuration in the language of~\cite{cs}; e.g.\ when $s=1$ this corresponds to the diagram in Figure~\ref{fig:tricube}, which demonstrates that cancellation occurs at all the internal vertices.

  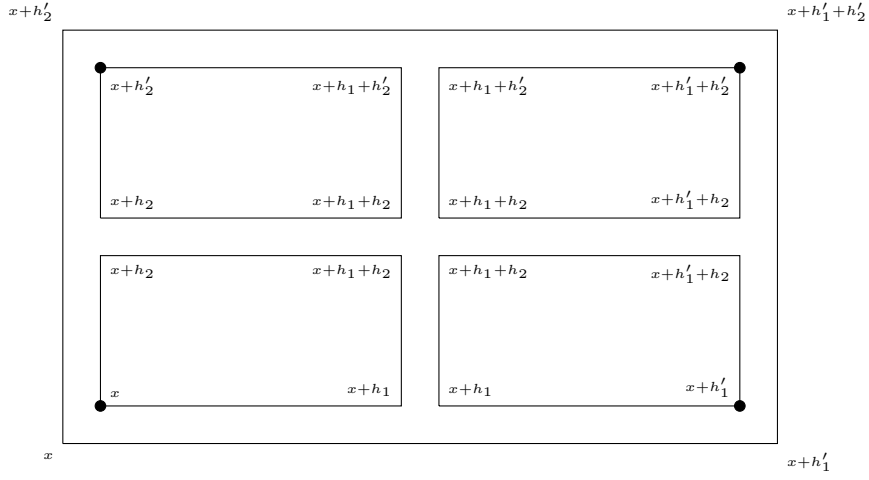
\begin{figure}%
    \label{fig:tricube}
    \begin{center}
      \begin{tikzpicture}[xscale=2.0]
        \tikzstyle{mydot} = [circle, draw, fill = black, inner sep = 0pt, minimum size = 4pt]
        \draw
          (0,0) node[above right] {$\scriptscriptstyle x$} node[mydot] {} --
          (2,0) node[above left]  {$\scriptscriptstyle x+h_1$} --
          (2,2) node[below left]  {$\scriptscriptstyle x+h_1+h_2$} --
          (0,2) node[below right] {$\scriptscriptstyle x+h_2$} --
          (0,0)

          (2.25,0) node[above right] {$\scriptscriptstyle x+h_1$} --
          (4.25,0) node[above left]  {$\scriptscriptstyle x+h_1'$} node[mydot] {} --
          (4.25,2) node[below left]  {$\scriptscriptstyle x+h_1'+h_2$} --
          (2.25,2) node[below right] {$\scriptscriptstyle x+h_1+h_2$} --
          (2.25,0)

          (0,2.5) node[above right] {$\scriptscriptstyle x+h_2$} --
          (2,2.5) node[above left]  {$\scriptscriptstyle x+h_1+h_2$} --
          (2,4.5) node[below left]  {$\scriptscriptstyle x+h_1+h_2'$} --
          (0,4.5) node[below right] {$\scriptscriptstyle x+h_2'$} node[mydot] {} --
          (0,2.5)

          (2.25,2.5) node[above right] {$\scriptscriptstyle x+h_1+h_2$} --
          (4.25,2.5) node[above left]  {$\scriptscriptstyle x+h_1'+h_2$} --
          (4.25,4.5) node[below left]  {$\scriptscriptstyle x+h_1'+h_2'$} node[mydot] {} --
          (2.25,4.5) node[below right] {$\scriptscriptstyle x+h_1+h_2'$} --
          (2.25,2.5)

          (-0.25,-0.5) node[below left]  {$\scriptscriptstyle x$} --
          (4.5,-0.5)    node[below right] {$\scriptscriptstyle x+h_1'$} --
          (4.5,5)       node[above right] {$\scriptscriptstyle x+h_1'+h_2'$} --
          (-0.25,5)    node[above left]  {$\scriptscriptstyle x+h_2'$} --
          (-0.25,-0.5);
      \end{tikzpicture}
      \caption{A tricube configuration}
    \end{center}
   \end{figure}

  Note $(x_{\vec 0}; \vec h_{\vec 0}) = (x; \vec h)$. On the other hand, for each $\eta \ne 0$, when $x$ is fixed and $\vec h, \vec h'$ are chosen uniformly at random, $\angle(x_\eta; \vec h_\eta)$ is a uniform random element of $C^{s+1}(H)$, and hence $\partial^{s+1}(\angle(x_\eta; \vec h_\eta)) = 0$ holds with probability at least $1-\eps$.
  Hence, for each $x \in H$, we have that
  \[
    \partial^{s+1} f(\angle(x; h)) - \partial^{s+1} f(\angle(x; h')) = 0
  \]
  for at least a $1 - (2^{s+1}-1) \eps$ fraction of pairs $(\vec h,\vec h')$, and so there is some $a \in A$ such that $g_{\vec h}(x)=a$ holds for at least $(1-2^{s+1} \eps) |H|^{s+1}$ values $\vec h \in H^{s+1}$.

  Define $g \colon H \to A$ by picking this majority value.  Note that for $x \in X$, for at least a $1/2-2^{s+1}\eps$ proportion of $\vec h \in H^{s+1}$ we have
  \[
    f(x) - g(x) = f(x) - g_{\vec h}(x) = \partial^{s+1} f(x; \vec h) = 0
  \]
  and so $f(x) = g(x)$ for all such $x$.
  
  We now claim $\partial^{s+1} g(c) = 0$ for all $c \in C^{s+1}(H)$.  Indeed, using the same set-up as~\eqref{eq:tricube}, we have
  \[
    \sum_{\eta \in \llbracket s+1 \rrbracket} (-1)^{|\eta|} g_{\vec h_{\eta}}(x_\eta) = 0
  \]
  for all $x \in H$ and $\vec h, \vec h' \in H^{s+1}$.  Fixing $x$ and $\vec h'$ and letting $\vec h$ vary uniformly at random, each $\vec h_\eta$ is also uniform random, and hence with probability at least $1 - 2^{2s+2} \eps$ we have $g_{\vec h_{\eta}}(x_\eta) = g(x_\eta)$ for each $\eta \in \llbracket s+1 \rrbracket$.  So, since $\eps < 2^{-2s-2}$ we have $\partial^{s+1} g(\angle(x; \vec h')) = 0$, for any $x$ and $\vec h'$, as required.
\end{proof}

We now prove Lemma~\ref{lem:rough-cocycle}.  We recall the definition of $T_{k,i}$ in~\eqref{eq:tk}, and will pass freely between thinking of $t \in T_{k,i}$ as a triple $(c_0,c_1,c)$, or by abuse of notation as a tuple $(x,h_1,\dots,h_k,h_i') \in H^{k+2}$, under the bijection given again in~\eqref{eq:tk}.

\begin{proof}[Proof of Lemma~\ref{lem:rough-cocycle}]
  Note that if $\delta$ is large then the conclusion is vacuous (e.g., set $\lambda = 0$), so we are free to assume $\delta \le \delta_0(k)$ is sufficiently small.

  We first do some combinatorics to nominate a set of cubes $S \subseteq S_k$ on which $\rho$ and $\partial^k \lambda$ will eventually agree.
  \begin{claim}%
    \label{claim:siprops}
    There exist sets $S \subseteq S_k$, and $I_i \subseteq T_{k,i}$ for each $i \in [k]$, such that:---
    \begin{enumerate}[label=(\roman*)]
      \item $\mu(S_k \setminus S) \ll_k \delta^{1/O_k(1)}$;
      \item for each $i \in [k]$ and $(c_0,c_1,c) \in I_i$ we have $c_0,c_1,c \in S$ and $\rho(c) = \rho(c_0) - \rho(c_1)$; and
      \item for each $c' \in S$ and each $i \in [k]$ we have
        \[
          \big|\big\{(c_0,c_1,c) \in I_i \colon c = c'\big\}\big| \ge (1 - \eta) |H| 
        \]
        where $\eta \le 2 \eps + O_k\big(\delta^{1/O_k(1)}\big)$.
    \end{enumerate}
  \end{claim}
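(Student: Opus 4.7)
The plan is to first clean up cocycle failures via Markov's inequality, then apply Corollary \ref{cor:cube-system-patch} to refine to a sub-system with good popularity properties; this approach sidesteps any ``cascading'' issues that would arise from a naive iterative cleanup where removing cubes creates new bad triples at surviving ones. Define $\gamma_i(c') := |\{h_i' \in H : c_0, c_1 \in S_k,\ \rho(c') \ne \rho(c_0) - \rho(c_1)\}|$, where $c_0, c_1$ are the cubes determined by $c'$ and $h_i'$ as in \eqref{eq:tk}. The cocycle hypothesis gives $\sum_{c' \in S_k} \gamma_i(c') \le \delta|H|^{k+2}$, so by Markov the initial bad set $R_0 := \{c' \in S_k : \gamma_i(c') > \delta^{1/2}|H|\text{ for some }i\}$ has density at most $k\delta^{1/2}$ in $C^k(H)$.

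Apply Corollary \ref{cor:cube-system-patch} to the system $S_0, \dots, S_k$ (of parameter $1-\eps$) with subsets $S'_j = S_j$ for $j<k$ and $S'_k = S_k \setminus R_0$: since $\mu(S_k \setminus S'_k) \le k\delta^{1/2}$ and the other gaps are zero, this produces a non-empty sub-system $S''_0, \dots, S''_k$ with $S''_j \subseteq S'_j$, parameter $1 - \eps - O_k(\delta^{1/O_k(1)})$, and $\mu(S_j \setminus S''_j) \ll_k \delta^{1/O_k(1)}$. Set $S := S''_k$ and $I_i := \{(c_0, c_1, c) \in T_{k,i} : c_0, c_1, c \in S,\ \rho(c) = \rho(c_0) - \rho(c_1)\}$, so that conditions (i) and (ii) are immediate.

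To verify (iii), fix $c' \in S$ and $i \in [k]$. By the face property of the new sub-system, both $c'|_{F_i}$ and $c'|_{F^i}$ lie in $S''_{k-1}$; by the popularity condition of the sub-system combined with coordinate-permutation symmetry (and reflection for the upper face), at most $(\eps + O_k(\delta^{1/O_k(1)}))|H|$ of the $|H|$ cubes in each of the two face-classes (over which $c_0$ and $c_1$ range as $h_i'$ varies in $H$) fall outside $S = S''_k$. Since $S \subseteq S_k \setminus R_0$, the cocycle-failure contribution is controlled by $\gamma_i(c') \le \delta^{1/2}|H|$. Combining these bounds, the total bad count for direction $i$ is at most $2\eps|H| + O_k(\delta^{1/O_k(1)})|H|$, giving $\eta \le 2\eps + O_k(\delta^{1/O_k(1)})$. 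The main point to get right is the ordering of steps: cocycle control must survive the sub-system refinement, which is automatic here because $S''_k \subseteq S_k \setminus R_0$ inherits the density bound on $\gamma_i$, while the factor of $2$ in the leading term $2\eps$ reflects that both the $c_0$-side (lower face $c'|_{F_i}$) and the $c_1$-side (lower face $c'|_{F^i}$) contribute one copy of $\eps|H|$ via popularity.
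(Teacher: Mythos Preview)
Your proof is correct and follows essentially the same strategy as the paper: remove cubes with too many cocycle failures via a Markov argument, refine to a sub-system of cubes (the paper uses Lemma \ref{lem:cube-system-ae} directly and then intersects with $S_k$, you use Corollary \ref{cor:cube-system-patch}, which amounts to the same thing), and then combine popularity of the refined system with the surviving cocycle control. One small cosmetic point: the ``reflection'' you mention for the upper face is not actually needed---once $c'|_{F^i}$ is viewed as an element of $S''_{k-1}$, popularity plus coordinate-permutation symmetry suffices, since $c_1$ ranges exactly over cubes of the form $[c'|_{F^i}, c'|_{F^i}+\square^{k-1}(h'_i)]_i$.
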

  \begin{proof}[Proof of claim]
    Finding these sets is reminiscent of the proof of Lemma~\ref{lem:cube-system-ae}.  We first define $I'_i \subseteq T_{k,i}$ to consist of all triples $(c_0,c_1,c)$ such that $c_0,c_1,c \in S_k$ and $\rho(c) = \rho(c_0) - \rho(c_1)$.  By hypothesis, for each $i \in [k]$,
  \[
    \big| \big(T_{k,i} \cap S_k^3\big) \setminus I'_i \big| \le \delta |H|^{k+2} \ .
  \]
  Also note that for each $c' \in S_k$ there are at least $(1-2\eps)|H|$ triples $(c_0,c_1,c) \in T_{k,i}$ with $c_0,c_1,c \in S_k$ and $c = c'$: given $c = c'$ at least $(1-\eps)|H|$ of the valid choices of $c_0$ are in $S_k$ (since $S_k$ is a system of cubes with parameter $1-\eps$) and similarly for $c_1$.

  Set
  \[
    \cA_i = \big\{ c' \in S_k \colon |\{ (c_0,c_1,c) \in I'_i \colon c = c'\}| \ge (1 - 2\eps - \delta^{1/2})|H| \big\}
  \]
  for each $i \in [k]$.  Since each $c' \in S_k \setminus \cA_i$ contributes at least $\delta^{1/2}|H|$ elements $(c_0,c_1,c') \in \big(T_{k,i} \cap S_k^3 \big) \setminus I'_i$, we have $\mu(S_k \setminus \cA_i) \le \delta^{1/2}$.  We then take $S' = \bigcap_{i=1}^k \cA_i$, meaning $\mu(S_k \setminus S') \le k \delta^{1/2}$.

  We now pick $S'_0,\dots,S'_{k}$ to be a system of cubes with parameter $1-\delta'$, where $\delta' \ll_k \delta^{1/O_k(1)}$, such that $S'_k$ is disjoint from $S_k \setminus S'$ (by Lemma~\ref{lem:cube-system-ae}). We then take $S = S'_k \cap S_k$.  Finally, set
  \[
    I_i = \big\{ (c_0,c_1,c) \in I'_i \colon c_0,c_1,c \in S \big\} .
  \]
  Properties (i) and (ii) from the claim are immediate (as ever noting Remark~\ref{rem:cube-system-large}), so we check (iii).  For a fixed $c' \in S$, since $c' \in S'$ there are at least $(1 - 2 \eps - \delta^{1/2}) |H|$ triples $(c_0,c_1,c) \in I'_i$ with $c=c'$, and repeating the argument above on the system of cubes $S'_0,\dots,S'_{k}$ there are at least $(1 - 2 \delta') |H|$ triples $(c_0,c_1,c) \in T_{k,i}$ with $c=c'$ such that $c_0,c_1 \in S'_k$.  It follows that at least $(1 - 2 \eps - \delta^{1/2} - 2 \delta')|H|$ of these triples satisfy both conditions and hence lie in $I_i$, as required.
  \end{proof}

  From now on we fix a choice of sets $S$ and $I_i$ with these properties, and will show that there exists a choice of $\lambda \colon H \to \RR^d$ such that $\partial^k \lambda(c) = \rho(c)$ for all $c \in S$, and such that the size bound (ii) holds.
  For this stronger claim, we lose nothing by working with each coordinate of $\RR^d$ separately, and hence we will now assume without loss of generality that $d=1$.  (For the size bound (ii), the $d=1$ version gives us that $\sup_{x \in H} |\lambda(x)_i| \ll_k \sup_{c \in S_k} |\rho(c)_i|$ for each $i \in [d]$, and summing over $i$ gives the desired bound.)
  Also note that the remaining task is a problem concerning satisfiability of a system of linear equations, and so we will use linear algebraic rather than combinatorial tools.
  
  We first do some analysis that corresponds to the setting where there are no losses, i.e.\ as if $S = C^k(H)$ and $I_i = T_{k,i}$ for each $i \in [k]$; this will be useful for the general case.  We set up some notation.  Let $U$ denote the real vector space of all functions $H \to \RR$ and $V$ the space of all functions $C^k(H) \to \RR$.  So, $\partial^k \colon U \to V$ is a linear map, which we abbreviate to $\partial$.

  Also, for each $i \in [k]$ let $W_i$ be the space of all functions $T_{k,i} \to \RR$ (or by abuse of notation, $H^{s+2} \to \RR$), and define a map
  \begin{align*}
    \delta_i \colon V &\to W_i \\
    f &\mapsto
      \big( (c_0,c_1,c) \mapsto f(c_0) - f(c_1) - f(c)
          \big)
  \end{align*}
  measuring the success or failure of~\eqref{eq:proper-cocycle} (so e.g.\ $\delta_i f = 0$ for all $i$ if and only if $f$ is a $k$-cocycle on $C^k(H)$ with loss $0$).  We note that $\delta_i \circ \partial \equiv 0$ for each $i$: this precisely records the fact that derivatives (or ``coboundaries'') are cocycles.

  We also define linear maps
  \begin{align*}
    \sigma \colon V &\to U \\
    f &\mapsto \big(x \mapsto \EE_{h_1,\dots,h_k \in H} f(\angle(x;h_1,\dots,h_k)) \big)
  \end{align*}
  i.e.\ the function obtained by averaging $f$ over all $c \in C^k(H)$ whose bottom left vertex is $x$; and
  \begin{align*}
    \tau_i \colon W_i &\to V \\
     g &\mapsto \left(\angle(x;h_1,\dots,h_k) \mapsto \EE_{h \in H} g(x,h_1,\dots,h_k,h) \right)
  \end{align*}
  (noting the abuse of notation); equivalently, the function whose value at $c'$ is obtained by averaging $g$ over all $(c_0,c_1,c) \in T_{k,i}$ with $c=c'$.
  
  Finally, define a linear map $Z \colon V \to V$ by
  \[
    Z = \partial \circ \sigma - \prod_{i=1}^k (\id_V + \tau_i \circ \delta_i) .
  \]
  (It is not hard to show that the maps $\tau_i \circ \delta_i$ commute, but alternatively we can just compose the maps $\id_V + \tau_i \circ \delta_i$ in a fixed order.)

  The following observation is key in this setting.
  \begin{claim}
    The map $Z$ is identically zero.
  \end{claim}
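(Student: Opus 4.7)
The plan is to introduce auxiliary operators on $V$ that refactor $\tau_i \delta_i$ and then exploit commutativity to reduce $\prod_i(\id + \tau_i\delta_i)$ to $\partial\circ\sigma$. Concretely, for each $i \in [k]$ I would define three operators on $V$: the direction-$i$ averaging operator $N_i$ sending $f(\angle(x;h_1,\dots,h_k))$ to $\EE_{g \in H} f(\angle(x; h_1,\dots,h_{i-1},g,h_{i+1},\dots,h_k))$; the base-shift operator $T_i f(\angle(x;h)) = f(\angle(x+h_i;h))$; and $F_i := \id_V - T_i$.

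The first step, which is the only calculation that uses the definitions of $\tau_i$ and $\delta_i$, is to verify the identity $\id_V + \tau_i \delta_i = F_i N_i$. This is just unwrapping \eqref{eq:tk}: evaluating $\tau_i \delta_i f$ at $c' = \angle(x;h_1,\dots,h_k)$ gives $\EE_{h'}[f(c_0) - f(c_1) - f(c')]$; the change of variable $g = h_i + h'$ turns $\EE_{h'} f(c_0)$ into $(N_i f)(c')$, while $\EE_{h'} f(c_1)$ directly equals $(N_i f)(\angle(x+h_i;h)) = (T_i N_i f)(c')$, yielding $(F_i N_i - \id_V)f$.

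Next I would check that the operators $F_i N_i$ for distinct indices $i$ commute. Since $T_i$ and $T_j$ commute (they are independent shifts) and $N_i$ commutes with both $T_j$ and $N_j$ for $j \ne i$ (they act on orthogonal coordinates), everything built from them for different indices commutes; in particular $\prod_i F_i N_i = \bigl(\prod_i F_i\bigr)\bigl(\prod_i N_i\bigr)$. Then $\prod_i N_i f$ is the total average over all direction variables, which equals $(\sigma f)(x)$ viewed as an element of $V$ that depends only on $x$. Applying $\prod_i F_i = \prod_i(\id - T_i)$ to a function of $x$ alone and expanding by subsets $S \subseteq [k]$ identified with $\omega \in \llbracket k \rrbracket$ gives exactly the definition of $\partial^k$; hence $\prod_i F_i(\sigma f) = \partial(\sigma f)$, so $\prod_i(\id_V + \tau_i\delta_i) = \partial \circ \sigma$ and $Z \equiv 0$.

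There is no real obstacle in this argument — the whole proof is driven by the algebraic identity $\id + \tau_i\delta_i = F_i N_i$, after which everything reduces to the trivial commutativity of operations on disjoint coordinates. The only mild subtlety is making sure the change of variable $g = h_i + h'$ in the verification of that identity is handled correctly, so that $\EE_{h'} f(c_1)$ is recognized as $T_i N_i f$ rather than $N_i T_i f$ (the two generally differ, but we only need the former).
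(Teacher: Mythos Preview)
Your proof is correct and essentially the same as the paper's. The paper computes explicitly that $(\id_V + \tau_i\delta_i)f(\angle(x;h)) = \EE_{h'_i}[f(\angle(x;\dots,h'_i,\dots)) - f(\angle(x+h_i;\dots,h'_i,\dots))]$, which is exactly your identity $\id_V + \tau_i\delta_i = F_i N_i$, and then iterates over $i$ to obtain $\partial\circ\sigma$ just as you do; your operator formalism merely packages the same calculation more cleanly.
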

  Note that this suffices for the case $\eps = \delta = 0$: if $f$ is a $k$-cocycle on $C^k(H)$ with loss $0$, so $\delta_i(f) = 0$ for each $i$, then $Z(f) = \partial(\sigma(f)) - f = 0$ and hence $f = \partial(\sigma(f))$ is the derivative of $\sigma(f)$.

  \begin{proof}[Proof of claim]
    Expanding the definitions, for $f \in V$ we have
    \[
      \partial(\sigma(f))(\angle(x;h_1,\dots,h_k)) = \sum_{\omega \in \llbracket k \rrbracket} (-1)^{|\omega|} \EE_{h'_1,\dots,h'_k \in H}\ f\big(\angle(x + \omega \cdot \vec h; h'_1,\dots,h'_k)\big) .
    \]
    Similarly,
    \begin{align*}
      &\tau_i(\delta_i(f)) (\angle(x;h_1,\dots,h_k))
      \\ &= \EE_{h'_i \in H} 
      \Big[ f\big(\angle(x;h_1,\tdots,h_i+h'_i,\tdots,h_k)\big)
        -
        f\big(\angle(x;h_1,\tdots,h_i,\tdots,h_k)\big)
        -
        f\big(\angle(x+h_i;h_1,\tdots,h'_i,\tdots,h_k)\big) \Big] \\
      &= -f\big(\angle(x;h_1,\dots,h_k)\big)
        + \EE_{h'_i \in H}\ f\big(\angle(x;h_1,\tdots,h'_i,\tdots,h_k)\big)
        - \EE_{h'_i \in H}\ f\big(\angle(x+h_i;h_1,\tdots,h'_i,\tdots,h_k)\big)
    \end{align*}
    where we used shift-invariance of the average $\EE_{h'_i \in H}$.  It follows that
    \[
      (\id_V + \tau_i \circ \delta_i)(f)(\angle(x;h_1,\dots,h_k)) = \EE_{h'_i \in H} \big[ f\big(\angle(x;h_1,\tdots,h'_i,\tdots,h_k)\big) - f\big(\angle(x+h_i;h_1,\tdots,h'_i,\tdots,h_k)\big) \big]
    \]
    and hence
    \[
      \left(\prod_{i=1}^k (\id_V + \tau_i \circ \delta_i)\right)(f)\big(\angle(x;h_1,\dots,h_k)\big) = \EE_{h'_1,\dots,h'_k \in H} \sum_{\omega \in \llbracket k \rrbracket} (-1)^{|\omega|} f\big(\angle(x + \omega \cdot \vec h; h'_1,\dots,h'_k)\big)
    \]
    which, comparing with the expression above, proves the claim.
  \end{proof}

  We now adapt this set-up to accommodate our sets $S$ and $I_i$.  We define some more linear operators:
  \begin{align*}
    \pi_S \colon V &\to V \\
    \pi_{I_i} \colon W_i &\to W_i 
  \end{align*}
  given by $\pi_S(f)(c) = 1_S(c) f(c)$ and $\pi_{I_i}(g)(z) = 1_{I_i}(z) g(z)$; i.e., setting values to zero outside the sets $S$ or $I_i$.  Also define a modified map $Z' \colon V \to V$ by
  \[
    Z' = \pi_S \circ \partial \circ \sigma - \prod_{i=1}^k \pi_S \circ (\id_V + \tau_i \circ \pi_{I_i} \circ \delta_i )  .
  \]
  Further let $V' \subseteq V$ be the subspace consisting of functions $f \in V$ such that $\delta_i(f)|_{I_i} \equiv 0$ for each $i \in [k]$ and $f(c) = 0$ whenever $c \notin S$; equivalently,
  \[
    V' = \im \pi_S \cap \bigcap_{i=1}^k \ker (\pi_{I_i} \circ \delta_i)  .
  \]
  Note that if $g \in \ker (\pi_{I_i} \circ \delta_i)$ then $\pi_S(g) \in \ker (\pi_{I_i} \circ \delta_i)$ also, since the value of $(\pi_{I_i} \circ \delta_i)(g)$ depends only on values $g(c)$ for $c \in S$, by Claim~\ref{claim:siprops}(ii).  Therefore, $\im(\pi_S \circ \partial) \subseteq V'$ (since $\delta_i \circ \partial = 0$ for all $i \in [k]$).  It follows that if $g \in V'$ then
  \[
    Z'(g) = (\pi_S \circ \partial)(\sigma(g)) - g \in V'
  \]
  i.e., $Z'(V') \subseteq V'$.

  We also consider the supremum norms $\|\cdot\|_\infty$ on $U$, $V$, $W_i$ or $V'$, and write $\|\cdot\|$ for the $\ell^\infty \to \ell^\infty$ operator norm of a linear map between any of these spaces.  It is clear from the definitions that $\|\partial\| \le 2^k$, $\|\delta_i\| \le 3$ for each $i \in [k]$, $\|\sigma\| \le 1$, $\|\tau_i\| \le 1$ for each $i \in [k]$, and $\|\pi_S\| \le 1$ and $\|\pi_{I_i}\| \le 1$ for each $i \in [k]$.

  We make the following claim.
  \begin{claim}%
    \label{cl:triv-kernel}
    Provided $\eps \le \eps_0(k)$ and $\delta \le \delta_0(k)$ are small enough, we have $\|Z'\| \le 1/2$ \uppar{as a map $V \to V$}. It follows that $\|(\id_{V'} + Z')(g)\|_\infty \ge (1/2) \|g\|_\infty$ for any $g \in V$, and hence the map $\id_{V'} + Z' \colon V' \to V'$ is injective and therefore surjective.
  \end{claim}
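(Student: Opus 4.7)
The plan is to exploit the previous claim, which showed $Z = \partial \circ \sigma - \prod_i (\id + E'_i) = 0$ as operators on $V$, where $E'_i = \tau_i \delta_i$. Writing $E_i = \tau_i \pi_{I_i} \delta_i$, the first step is to observe that $E_i$ factors through $\pi_S$ on both sides: since $(c_0,c_1,c) \in I_i$ forces $c_0,c_1,c \in S$ by the definition of $I_i$, the identities $\pi_S E_i = E_i = E_i \pi_S$ hold, so $\pi_S$ commutes with each $(\id + E_i)$. Using this to collapse the internal $\pi_S$'s in the definition of $P$ yields $P = \pi_S \prod_{i=1}^k (\id + E_i)$ (in any fixed order), and combined with $\partial \sigma = \prod_i (\id + E'_i)$ from the previous claim, this gives
\[
  Z' = \pi_S \left[\prod_{i=1}^k (\id + E'_i) - \prod_{i=1}^k (\id + E_i)\right] .
\]

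The next step is to apply the standard telescoping identity
\[
  \prod_i (\id + E'_i) - \prod_i (\id + E_i) = \sum_{j=1}^k \left(\prod_{i<j}(\id + E'_i)\right)(E'_j - E_j)\left(\prod_{i>j}(\id + E_i)\right)
\]
and bound each of the $k$ summands in $\ell^\infty$ operator norm after applying $\pi_S$. The crucial pointwise estimate is $\|\pi_S (E'_j - E_j) f\|_\infty \le 3\eta \|f\|_\infty$ for any $f \in V$: for $c \in S$ one has $(E'_j - E_j) f(c) = \EE_{h'_j}(1 - 1_{I_j})(c, h'_j)[f(c_0) - f(c_1) - f(c)]$, and Claim \ref{claim:siprops}(iii) gives $\EE_{h'_j}(1 - 1_{I_j})(c, h'_j) \le \eta$ for such $c$. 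To propagate this smallness through the surrounding factors, I would track a pair of bounds $(B,B')$ with $B$ controlling the global $\ell^\infty$ norm and $B'$ controlling the $\ell^\infty$ norm on $S$ of the intermediate expression after applying the rightmost factors to $g$. Direct calculation (using $\|\id + E'_i\|, \|\id + E_i\| \le 2$ plus that $E_i$ is supported on $S$ and uses only values on $S$) shows that $(\id + E_i)$ updates $(B,B') \mapsto (B + 3B', 4B')$ while $(E'_i - E_i)$ updates $(B, B') \mapsto (3B, 3\eta B)$. Starting from $(\|g\|_\infty, \|g\|_\infty)$ and noting that every summand contains at least one $(E'_j - E_j)$ factor, the final $B'$-component is bounded by $C_k \eta \|g\|_\infty$, so $\|Z'\| \ll_k \eta$.

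Since Claim \ref{claim:siprops} gives $\eta \le 2\eps + O_k\bigl(\delta^{1/O_k(1)}\bigr)$, choosing $\eps \le \eps_0(k)$ and $\delta \le \delta_0(k)$ sufficiently small forces $\|Z'\| \le 1/2$. I expect the main obstacle to be this bookkeeping step: individually, neither $(\id + E_i)$ nor $(E'_i - E_i)$ has $\ell^\infty$ operator norm that shrinks with $\eps$ or $\delta$, so the small parameter $\eta$ must be extracted from a single $(E'_j - E_j)$ factor and carried intact through every subsequent factor of the telescoped product. The auxiliary $S$-restricted norm $B'$ is precisely what makes this propagation work, because the $(\id + E_i)$ recursion preserves $B'$ up to a constant factor of $4$ rather than mixing in the much larger $B$.

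For the final deduction, the inclusion $Z'(V') \subseteq V'$ was established immediately before the claim, so $\id_{V'} + Z' \colon V' \to V'$ is well-defined. For any $g \in V'$ we have $\|(\id_{V'} + Z')g\|_\infty \ge \|g\|_\infty - \|Z'\|\,\|g\|_\infty \ge \tfrac{1}{2}\|g\|_\infty$, so the kernel is trivial. Since $V' \subseteq V = \RR^{|C^k(H)|}$ is a finite-dimensional vector space, injectivity of the linear map $\id_{V'} + Z'$ implies surjectivity, completing the proof.
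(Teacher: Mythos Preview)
Your reduction is correct and is a cleaner variant of the paper's approach. The observation that $\pi_S E_i = E_i = E_i \pi_S$ (hence $\pi_S$ commutes with $\id+E_i$) is valid, and collapsing the interspersed $\pi_S$'s to obtain
\[
  Z' = \pi_S\Big[\prod_{i=1}^k(\id+E'_i)-\prod_{i=1}^k(\id+E_i)\Big]
\]
lets you run a single telescoping, whereas the paper uses two (one to commute $\pi_S$ through the product of $\phi_i$'s, another to compare $\phi_i$ with $\phi'_i$). Both routes end at the same $\|Z'\|\ll_k\eta$ bound.

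There is, however, a gap in your bookkeeping. In the telescoped sum
\[
  \sum_{j}\Big[\prod_{i<j}(\id+E'_i)\Big](E'_j-E_j)\Big[\prod_{i>j}(\id+E_i)\Big],
\]
the factors to the \emph{left} of the middle are $(\id+E'_i)$, not $(\id+E_i)$, and you only record the update rule for $(\id+E_i)$. Your assertion that the recursion ``preserves $B'$ up to a constant factor of $4$ rather than mixing in the much larger $B$'' is exactly what fails for $(\id+E'_i)$: since $E'_i$ reads values of its input at \emph{all} of $c_0,c_1,c$, not just those in $S$, one only gets, for $c\in S$,
\[
  |E'_i h(c)| \le 3(1-\eta)\,\|\pi_S h\|_\infty + 3\eta\,\|h\|_\infty
\]
by Claim~\ref{claim:siprops}(iii) (for $c\in S$, all but an $\eta$ fraction of triples lie in $I_i$ and hence have $c_0,c_1\in S$). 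Thus the correct update for $(\id+E'_i)$ is $(B,B')\mapsto(4B,\,4B'+3\eta B)$. This still gives the conclusion: after the $(E'_j-E_j)$ step you have $(B,B')=(3B_0,3\eta B_0)$ for some $B_0\ll_k\|g\|_\infty$, and iterating the above $j-1$ times keeps $B'\ll_k\eta\|g\|_\infty$ since every injection of $B$ into $B'$ carries an $\eta$. So the argument is salvageable with this one extra computation, but as written it is incomplete.

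A minor point: $\|\id+E_i\|,\|\id+E'_i\|\le 4$, not $2$, since $\|\delta_i\|\le 3$; this affects only constants.
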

  This is enough to complete the proof, as follows.  Let $f = \pi_S(\rho)$; by hypothesis $f \in V'$.  By the claim, there is some $g \in V'$ such that
  \[
    f = g + Z'(g) = (\pi_S \circ \partial \circ \sigma)(g)
  \]
  (as $g \in V'$ and so all the terms involving $\pi_{I_i} \circ \delta_i$ vanish), and hence $f|_S = \partial(\sigma(g))|_S$.  Then $\lambda = \sigma(g)$ satisfies $\partial \lambda(c) = f(c)$ for all $c \in S$ as well as $\|\lambda\|_\infty \le \|g\|_\infty \le 2 \|f\|_{\infty} \le 2 \|\rho\|_\infty$.
  Hence $\lambda$ obeys parts (i) and (ii) as required, with the implied constant in (ii) in fact being $2$.

  \begin{proof}[Proof of Claim~\ref{cl:triv-kernel}]
    It is immediate from the definition that $\pi_S \circ Z' = Z'$.  Since $Z=0$, we can write
    \[
      Z' = \pi_S \circ (Z' - Z)
      = \left(\prod_{i=1}^k \pi_S \circ \big(\id_V + \tau_i \circ \pi_{I_i} \circ \delta_i\big)\right) - \pi_S \circ \left(\prod_{i=1}^k \big(\id_V + \tau_i \circ \delta_i\big)\right) \ .
    \]
    For convenience we define
    \begin{align*}
      \phi_i &= \id_V + \tau_i \circ \delta_i \\
      \phi'_i &= \id_V + \tau_i \circ \pi_{I_i} \circ \delta_i
    \end{align*}
    so that
    \[
      Z' = \prod_{i=1}^k \pi_S \circ \phi'_i - \pi_S \circ \prod_{i=1}^k \phi_i .
    \]
    Note that it is immediate from the operator norm bounds given above that $\|\phi_i\|,\, \|\phi'_i\| \le 4$ for each $i \in [k]$.
    We also claim the bound $\|\pi_S \circ \phi_i \circ (1 - \pi_S)\| \le 2 \eta$, for $\eta$ as in Claim~\ref{claim:siprops}(iii).  Indeed, since $\pi_S \circ (1 - \pi_S) = 0$ we have
    \[
      \pi_S \circ \phi_i \circ (1 - \pi_S) = \pi_s \circ \tau_i \circ \delta_i \circ (1 - \pi_S)
    \]
    and by unwrapping the definitions, if $g$ is supported on $C^k(H) \setminus S$ and $c' \in S$ then
    \[
      \big|(\tau_i \circ \delta_i)(g)(c')\big| = \big|\EE_{(c_0,c_1,c) \in T_{k,i} \colon c = c'} \big(g(c_0) - g(c_1) - g(c)\big) \big|
      \le 2 \eta \|g\|_{\infty}
    \]
    by Claim~\ref{claim:siprops}(ii,iii), since at most $\eta |H|$ terms in the average fail to have $c_0,c_1 \in S$.  The claimed norm bound follows. 
    
    For $1 \le j \le k-1$ we note
    \begin{align*}
      &\left(\prod_{i=1}^j (\pi_S \circ \phi_i)\right) \circ \left(\prod_{i=j+1}^k \phi_i\right) - \left(\prod_{i=1}^{j+1} (\pi_S \circ \phi_i) \right) \circ \left( \prod_{i=j+2}^k \phi_i \right) \\
      =\ & \left(\prod_{i=1}^{j-1} (\pi_S \circ \phi_i)\right) \circ \big(\pi_S \circ \phi_j \circ \phi_{j+1} - \pi_S \circ \phi_j \circ \pi_S \circ \phi_{j+1}\big) \circ \left(\prod_{i=j+2}^k \phi_i \right) \\
    =\ & \left(\prod_{i=1}^{j-1} (\pi_S \circ \phi_i)\right) \circ \left(\pi_S \circ \phi_j \circ (1 - \pi_S) \right) \circ \left(\prod_{i=j+1}^k \phi_i \right)
    \end{align*}
    and by telescoping we deduce that
    \[
      \left\| \pi_S \circ \prod_{i=1}^k \phi_i - \prod_{i=1}^k (\pi_S \circ \phi_i) \right\| \le (k - 1) 4^{k-1} (2 \eta) .
    \]

    Next we claim that $\|\pi_S \circ (\phi_i - \phi'_i)\| \le 3 \eta$.  We have
    $\phi_i - \phi_i' = \tau_i \circ (1 - \pi_{I_i}) \circ \delta_i$. So, for $g \in V$ and $c' \in S$, unwrapping the definitions gives
    \[
      \big|(\phi_i - \phi'_i)(g)(c')\big| = \left|\EE_{(c_0,c_1,c) \in T_{k,i} \colon c = c'} \ (1 - 1_{I_i}(c_0,c_1,c)) \big(g(c_0) - g(c_1) - g(c) \big)\right|
      \le 3 \eta \|g\|_\infty
    \]
    as required, again using Claim~\ref{claim:siprops}(iii).
    By telescoping again, we have
    \begin{align*}
      \left\|\prod_{i=1}^k (\pi_S \circ \phi'_i) - \prod_{i=1}^k (\pi_S \circ \phi_i) \right\| &\le \sum_{j=1}^k \left\| \left(\prod_{i=1}^{j-1} (\pi_S \circ \phi_j) \right) \circ \big(\pi_S \circ (\phi'_j - \phi_j)\big) \circ \left(\prod_{i=j+1}^{k} (\pi_S \circ \phi'_i)\right) \right\| \\
      &\le k\, 4^{k-1} (3 \eta)
    \end{align*}
    and combining these estimates yields $\|Z'\| \le 5 k\, 4^{k-1} \eta$, which for $\eps$ and $\delta$ sufficiently small in terms of $k$ is at most $1/2$, as required.
  \end{proof}

  This completes the proof of Lemma~\ref{lem:rough-cocycle} (and therefore of that of Lemma~\ref{lem:cocycle}).
\end{proof}

Finally we deduce the classification of generalized cocycles from that of non-generalized cocycles.

\begin{proof}[Proof of Lemma~\ref{lem:gen-cocycle}]
  As usual, we are free to assume $\delta \le \delta_0(k)$ is sufficiently small.
  We fix $\rho \colon S_k \times \llbracket k \rrbracket \to \ZZ^d$ as in the statement.

  The argument has several stages.  First, as we have done on other occasions, we locate some sets of configurations with good properties where the relevant hypotheses of Definition~\ref{def:gen-cocycle} and Definition~\ref{def:compatible} hold.  Second, we show that, on these sets, $\rho$ induces a collection of un-generalized cocycles $\rho_F$ (one for each face $F \subseteq \llbracket k \rrbracket$ of codimension $r$), to which we can apply Lemma~\ref{lem:cocycle}; i.e., we find functions $\lambda_F \colon H \to \RR$ such that $\rho_F = \partial^{k-r} \lambda_F$ on its domain.  Finally, we use a separate argument to show that (without loss of generality) all these $\lambda_F$ are the same, and this common function gives the $\lambda$ we require.

  First, for each $i \in [k]$, let $I'_i \subseteq T_{k,i}$ denote the set of all triples $(c_0,c_1,c)$ such that $c_0,c_1,c \in S_k$, and which are not in the exceptional sets for $\rho$ described in Definition~\ref{def:compatible} or Definition~\ref{def:gen-cocycle}: that is, for all $\omega \in F^i$ we have $\rho(c_0,\omega) = \rho(c_1,\omega)$, and for all $\omega \in \llbracket k \rrbracket$ we have
  \[
    \rho(c,\omega') = \sum_{\omega \in \llbracket k \rrbracket} Z_r(\omega,\omega') \fb(\omega)
  \]
  where
  \[
    \fb(\omega) = \begin{cases} b(c_0, \omega) &\colon \omega(i) = 0 \\ b(c_1, \omega \setminus \{i\}) &\colon \omega(i) = 1 . \end{cases}
  \]
  The hypotheses of being a generalized $k$-cocycle of type $r$ and loss $\delta$ immediately implies that
  \[
    \big| \big(T_{k,i} \cap S_k^3\big) \setminus I'_i \big| \le (2^k + 1) \delta |H|^{k+2}
  \]
  for each $i \in [k]$.  The next claim locates cubes $c \in S_k$ which are trustworthy in the sense of belonging to many triples of $I'_i$ for each $i$, and so on in a hereditary fashion, in the spirit of Definition~\ref{def:cube-system}.
  \begin{claim}%
    \label{claim:siprops2}
    There exists a system of cubes $S'_0,\dots,S'_k$ with $S'_j \subseteq S_j$ for $0 \le j \le k$, and sets $I_i \subseteq I'_i$ for each $i \in [k]$, such that:---
    \begin{enumerate}[label=(\roman*)]
      \item $S'_0,\dots,S'_k$ has parameter $1-\eps - O_k\big(\delta^{1/O_k(1)}\big)$, and $\mu(S_j \setminus S_j') \ll_k \delta^{1/O_k(1)}$ for each $0 \le j \le k$;
      \item for each $i \in [k]$ and $(c_0,c_1,c) \in I_i$ we have $c_0,c_1,c \in S'_k$;
      \item for each $c' \in S$ and each $i \in [k]$ we have
        \begin{align*}
          \big|\big\{(c_0,c_1,c) \in I_i \colon c_0 = c'\big\}\big| &\ge (1 - \eta) |H| \\
          \big|\big\{(c_0,c_1,c) \in I_i \colon c_1 = c'\big\}\big| &\ge (1 - \eta) |H| \\
          \big|\big\{(c_0,c_1,c) \in I_i \colon c = c'\big\}\big| &\ge (1 - \eta) |H| 
        \end{align*}
        where $\eta \le 2 \eps + O_k\big(\delta^{1/O_k(1)}\big)$; and
      \item we have 
        \[
          \big| \big(T_{k,i} \cap {S'}_k^3\big) \setminus I_i \big| \ll_k \delta^{1/O_k(1)} |H|^{k+2}
        \]
        for each $i \in [k]$.
    \end{enumerate}
  \end{claim}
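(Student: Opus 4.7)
The plan is to adapt the proof of Claim~\ref{claim:siprops} from Lemma~\ref{lem:rough-cocycle} to handle all three positions ($c_0$, $c_1$, $c$) of each triple uniformly, rather than only the ``top'' position~$c$.  The argument should proceed in three stages: first define popular subsets of $S_k$ with respect to each of the three positions; then pass to a subsystem via Corollary~\ref{cor:cube-system-patch}; and finally verify the pointwise condition (iii) using the popularity clause of the new subsystem together with the definition of the popular subsets.

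First, for each $i \in [k]$ and each $j \in \{0,1,2\}$ (indexing the three roles $c_0$, $c_1$, $c$ in a triple), I would define $\cA_i^{(j)} \subseteq S_k$ to consist of those $c'$ such that at least $(1 - 2\eps - \delta^{1/2})|H|$ triples in $T_{k,i} \cap I'_i$ have $c'$ in position~$j$.  An elementary consequence of Definition~\ref{def:cube-system}(iii), applied to the two codimension-one faces $c'|_{F_i}$ and $c'|_{F^i}$ of any $c' \in S_k$, is that at least $(1-2\eps)|H|$ triples in $T_{k,i} \cap S_k^3$ have $c'$ in each given position.  Combined with the hypothesis on $|(T_{k,i} \cap S_k^3) \setminus I'_i|$ and a Markov bound, each $c' \in S_k \setminus \cA_i^{(j)}$ must account for more than $\delta^{1/2}|H|$ triples in $T_{k,i} \cap S_k^3$ that lie outside $I'_i$, yielding $\mu(S_k \setminus \cA_i^{(j)}) \ll_k \delta^{1/2}$.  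Intersecting gives $\mu(S_k \setminus S^*) \ll_k \delta^{1/2}$, where $S^* = \bigcap_{i,j} \cA_i^{(j)}$.

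Next, applying Corollary~\ref{cor:cube-system-patch} to the original system $S_0, \dots, S_k$ with $S^*$ replacing $S_k$ produces the desired subsystem $S'_0, \dots, S'_k$ of parameter $1 - \eps - O_k(\delta^{1/O_k(1)})$, satisfying $S'_k \subseteq S^*$ and $\mu(S_j \setminus S'_j) \ll_k \delta^{1/O_k(1)}$ for each $j$; this is (i).  Setting $I_i = \{(c_0, c_1, c) \in I'_i \colon c_0, c_1, c \in S'_k\}$ gives (ii) immediately, and (iv) follows at once from the inclusion $(T_{k,i} \cap (S'_k)^3) \setminus I_i \subseteq (T_{k,i} \cap S_k^3) \setminus I'_i$ together with the cocycle hypothesis.

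The main task remaining is the pointwise bound (iii).  Fix $c' \in S'_k$ and $(i, j)$.  Applying the popularity property of the new subsystem $S'_0, \dots, S'_k$ (with parameter $1 - \eps - O_k(\delta^{1/O_k(1)})$) to the faces $c'|_{F_i}$ and $c'|_{F^i}$ shows that at least $(1 - 2\eps - O_k(\delta^{1/O_k(1)}))|H|$ triples with $c'$ in position~$j$ have all three cubes in $S'_k$; simultaneously, $c' \in \cA_i^{(j)}$ implies that at least $(1 - 2\eps - \delta^{1/2})|H|$ of them lie in $I'_i$.  Intersecting these two sets via a union argument gives at least $(1 - \eta)|H|$ triples in $I_i$, for $\eta$ of the required shape $O(\eps) + O_k(\delta^{1/O_k(1)})$.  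The delicate part is the accounting for the constant in front of $\eps$: a naive union absorbs a ``system-of-cubes tax'' of $2\eps$ from each of the two sources, so obtaining the claim's stated constant~$2$ (rather than~$4$) requires careful tracking of the overlap between the ``cube outside $S_k$'' contribution in $\cA_i^{(j)}$ and the ``cube outside $S'_k$'' contribution from the subsystem popularity --- or equivalently, trading a slightly weaker parameter in (i) against a sharper constant in (iii).
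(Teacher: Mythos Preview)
Your approach is essentially the paper's own: define the three popular sets $\cA_i,\cB_i,\cC_i$ (your $\cA_i^{(j)}$), intersect, clean up to a subsystem, and set $I_i$ accordingly.  The Markov argument for $\mu(S_k\setminus\cA_i^{(j)})\ll_k\delta^{1/2}$ and the deduction of (ii) and (iv) are exactly as in the paper.

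The one point where you diverge is in the choice of clean-up lemma, and this is precisely what causes the constant-$2$ headache you flag at the end.  The paper does \emph{not} apply Corollary~\ref{cor:cube-system-patch} here; instead it applies Lemma~\ref{lem:cube-system-ae} directly to the complement of $S_k\setminus S^*$, obtaining an auxiliary system $S''_0,\dots,S''_k$ with parameter $1-O_k(\delta^{1/O_k(1)})$ --- crucially, \emph{independent of $\eps$} --- and then sets $S'_j=S_j\cap S''_j$.  When verifying (iii), one uses the popularity clause of the auxiliary system $S''_\bullet$ (not of the intersection $S'_\bullet$): for $c'\in S'_k\subseteq S''_k$, at least $(1-2\delta')|H|$ triples with $c'$ in a given slot have the other two cubes in $S''_k$; since membership of $I'_i$ already forces them into $S_k$, they land in $S'_k=S_k\cap S''_k$.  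Combining with $c'\in\cA_i^{(j)}$ gives $\eta\le 2\eps+\delta^{1/2}+2\delta'$, which is the stated $2\eps+O_k(\delta^{1/O_k(1)})$.  Using Corollary~\ref{cor:cube-system-patch} as a black box hides the auxiliary system and forces you to pay the $\eps$-tax twice; your suggested fix of ``tracking the overlap'' amounts to reopening the corollary, which is just the paper's argument.
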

  \begin{proof}[Proof of claim]
    This is almost identical to the proof of Claim~\ref{claim:siprops}, so we will be brief and focus on what is different.  The definition of $I'_i$ above directly replaces that from Claim~\ref{claim:siprops}.
    As well as 
    \[
      \cA_i = \big\{ c' \in S_k \colon |\{ (c_0,c_1,c) \in I'_i \colon c = c'\}| \ge (1 - 2\eps - \delta^{1/2})|H| \big\}
    \]
    we define
    \begin{align*}
      \cB_i &= \big\{ c' \in S_k \colon |\{ (c_0,c_1,c) \in I'_i \colon c_0 = c'\}| \ge (1 - 2\eps - \delta^{1/2})|H| \big\} \\
      \cC_i &= \big\{ c' \in S_k \colon |\{ (c_0,c_1,c) \in I'_i \colon c_1 = c'\}| \ge (1 - 2\eps - \delta^{1/2})|H| \big\}
    \end{align*}
    so that $\mu(S_k \setminus \cA_i) \ll_k \delta^{1/2}$ as before, and similarly for $\cB_i$ and $\cC_i$.  We then set $S'$ to be the intersection of $\cA_i \cap \cB_i \cap \cC_i$ over all $i \in [k]$, and locate a system of cubes $S''_0,\dots,S''_k$ with parameter $1-O_k\big(\delta^{1/O_k(1)}\big)$ such that $S_k \setminus S'$ is disjoint from $S''_k$. Finally we set $S'_j = S_j \cap S''_j$, and
    \[
      I_i = \big\{ (c_0,c_1,c) \in I'_i \colon c_0,c_1,c \in S'_k \big\} .
    \]
    The argument to show that (iii) holds is unchanged, and it is immediate that
    \[
      \mu(I'_i \setminus I_i) \le 3 \mu(S_k \setminus S'_k) \ll_k \delta^{1/O_k(1)}
    \]
    which implies (iv).
  \end{proof}

  For any face $F \subseteq \llbracket k \rrbracket$ of codimension $r$, we define
  \begin{align*}
    \rho_F \colon S_k &\to \ZZ^k \\
    c &\mapsto \sum_{\omega \in F} (-1)^{|\omega|} \rho(c,\omega) .
  \end{align*}
  Our next task is to use these functions to reformulate the definition of a generalized cocycle in a more natural and symmetrical way.
  \begin{claim}%
    \label{claim:nice-cocycle}
    Take any $i \in [k]$ and $(c_0,c_1,c) \in I'_i$. For each face $F \subseteq \llbracket k \rrbracket$ of codimension $r$:---
    \begin{enumerate}[label=(\roman*)]
      \item if $F \subseteq F^i$, then $\rho_F(c_0) = \rho_F(c_1)$;
      \item if $F \subseteq F_i$, then $\rho_F(c_0) = \rho_F(c)$;
      \item if $F \nsubseteq F^i$ and $F \nsubseteq F_i$, then $\rho_F(c) = \rho_F(c_0) - \rho_F(c_1)$;
      \item if $F \subseteq F^i$ and $F' = \{ \omega \setminus \{i\} \colon \omega \in F\}$ is the opposite face in direction $i$, then $\rho_F(c) = -\rho_{F'}(c_1)$.
    \end{enumerate}
  \end{claim}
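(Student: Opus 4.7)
The plan is to derive all four identities by direct manipulation of Definitions \ref{def:compatible} and \ref{def:gen-cocycle}, exploiting the two properties of triples in $I'_i$: upper compatibility (so $\rho(c_0,\omega) = \rho(c_1,\omega)$ whenever $\omega \in F^i$) and the cocycle identity (so $\rho(c,-)$ equals the normal-form transform of the glued configuration $\fb$ of type $r$).

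The one combinatorial input I will need is that, for any $\tau \colon \llbracket k \rrbracket \to \ZZ^d$ with normal-form counterpart $\tau'$ of type $r$, and any face $F$ of codimension $r$, one has $\sum_{\omega \in F}(-1)^{|\omega|}\tau(\omega) = \sum_{\omega \in F}(-1)^{|\omega|}\tau'(\omega)$. By the explicit formula in Remark \ref{rem:normal-form}, $\tau-\tau'$ is a linear combination of $1_{F_\eta}$ with $|\eta| \ge r+1$, so it suffices to check that $\sum_{\omega \in F\cap F_\eta}(-1)^{|\omega|}=0$ for each such $\eta$. Writing $F = \{\omega \colon \alpha \subseteq \omega \subseteq \beta\}$ with $|\beta|-|\alpha| = k-r$, either $F\cap F_\eta = \emptyset$ (when $\alpha \nsubseteq \eta$) or $F\cap F_\eta$ is itself a face of dimension $|\beta\cap\eta|-|\alpha|$; the dimension drops to $0$ only if $\beta\cap\eta = \alpha$, in which case a direct count of the free coordinates shows $|\eta| \le r$, contradicting $|\eta| \ge r+1$. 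In every case the alternating sum vanishes.

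Granted this, (i) is immediate: for $\omega \in F \subseteq F^i$ upper compatibility gives $\rho(c_0,\omega)=\rho(c_1,\omega)$ pointwise, and summing with signs proves the claim. For (ii), (iii) and (iv), the cocycle identity combined with the previous paragraph allows one to replace $\rho(c,\omega)$ by $\fb(\omega)$ inside $\rho_F(c)$. In case (ii), $\omega_i=0$ on $F$, so $\fb(\omega)=\rho(c_0,\omega)$ and $\rho_F(c) = \rho_F(c_0)$ at once. In case (iv), $\omega_i=1$ on $F$, so $\fb(\omega)=\rho(c_1,\omega\setminus\{i\})$; the bijection $\omega \mapsto \omega\setminus\{i\}$ maps $F$ onto $F'$ and flips signs via $(-1)^{|\omega|} = -(-1)^{|\omega|-1}$, yielding $\rho_F(c) = -\rho_{F'}(c_1)$.

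Case (iii) is the only one requiring both inputs. Since $i$ is active on $F$, split $F = F_0 \sqcup F_1$ with $F_0 = F\cap F_i$ and $F_1 = F\cap F^i$; the map $\omega \mapsto \omega\setminus\{i\}$ gives a bijection $F_1 \to F_0$. Applying the (ii) and (iv) arguments to the two halves yields
\[
\rho_F(c) = \sum_{\omega \in F_0}(-1)^{|\omega|}\bigl(\rho(c_0,\omega) - \rho(c_1,\omega)\bigr).
\]
On the other hand, upper compatibility implies that the $F_1$-portions of $\rho_F(c_0)$ and $\rho_F(c_1)$ cancel, so $\rho_F(c_0) - \rho_F(c_1)$ reduces to exactly the same expression. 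No genuine obstacle arises: the entire argument is a careful unwinding of the definitions once the normal-form-preserves-face-sums observation has been isolated.
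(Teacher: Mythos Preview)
Your proof is correct and follows essentially the same route as the paper's: first isolate the fact that face-sums $\sum_{\omega\in F}(-1)^{|\omega|}\tau(\omega)$ are invariant under the type-$r$ normal-form transformation (via Remark~\ref{rem:normal-form} and the dimension count on $F\cap F_\eta$), then use this to replace $\rho(c,-)$ by $\fb$ inside $\rho_F(c)$ and read off each case from the definition of $\fb$, invoking upper compatibility where needed. The only cosmetic difference is that you spell out the dimension count for $F\cap F_\eta$ explicitly where the paper simply asserts the intersection has dimension at least~$1$.
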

  Statement (i) is immediate from the hypothesis of upper compatibility. By contrast, statement (ii) is symmetrically a statement about ``lower compatibility'', which we did not assume, and which we must justify in a more roundabout way.  Taken together, these roughly state (in a way we will make precise shortly) that $\rho_F(c)$ only really depends on the face $c|_F$, not on the rest of $c$.  After that, we will use (iii,iv) to deduce that the implied functions $S_{k-r} \to \ZZ^d$ obtained this way are non-generalized cocycles.
  \begin{proof}[Proof of claim]
    As stated, (i) is immediate from upper compatibility:
    \[
      \rho_F(c_0) = \sum_{\omega \in F} (-1)^{|\omega|} \rho(c_0,\omega) = \sum_{\omega \in F} (-1)^{|\omega|} \rho(c_1,\omega) = \rho_F(c_1) .
    \]
    For the remaining parts, we recall by Remark~\ref{rem:normal-form} that $\omega \mapsto \fb(\omega) - \rho(c,\omega)$ is a linear combination of indicator functions of faces $\cF$ each of dimension at least $r+1$.  For each such $\cF$ we have that $F \cap \cF$ is a face of dimension at least $1$, and hence $\sum_{\omega \in F \cap \cF} (-1)^{|\omega|} = 0$ (see~\eqref{eq:cube-orthog}).  Therefore,
    \[
      \rho_F(c) = \sum_{\omega \in F} (-1)^{|\omega|} \rho(c,\omega) = \sum_{\omega \in F} (-1)^{|\omega|} \fb(\omega)  .
    \]
    If we now expand the definition of $\fb$, we deduce
    \[
      \rho_F(c) = \sum_{\omega \in F \cap F_i} (-1)^{|\omega|} \rho(c_0, \omega) + \sum_{\omega \in F \cap F^i} (-1)^{|\omega|} \rho(c_1,\omega \setminus \{i\}) .
    \]
    If $F \subseteq F_i$, this just says that $\rho_F(c) = \rho_F(c_0)$, as required for (ii).  If $F \subseteq F^i$, this is exactly the statement we need for (iv).
    
    In the last case (iii), we have $\{ \omega \setminus \{i\} \colon \omega \in F \cap F^i \} = F \cap F_i$, and so we can rewrite this as
    \[
      \rho_F(c) = \sum_{\omega \in F \cap F_i} (-1)^{|\omega|} \rho(c_0, \omega) - \sum_{\omega \in F \cap F_i} (-1)^{|\omega|} \rho(c_1,\omega)
    \]
    and recalling that $\rho(c_0,\omega) = \rho(c_1,\omega)$ for all $\omega \in F^i$, this implies
    \[
      \rho_F(c) = \sum_{\omega \in F} (-1)^{|\omega|} \rho(c_0, \omega) - \sum_{\omega \in F} (-1)^{|\omega|} \rho(c_1,\omega)
    \]
    or $\rho_F(c) = \rho_F(c_0) + \rho_F(c_1)$, as required.
  \end{proof}

  We now formalize the statement above, that there exist non-generalized cocycles corresponding to each $\rho_F$, and state some relationships between them.
   \begin{claim}%
     \label{claim:cocycles-well-defined}
    For each face $F \subseteq \llbracket k \rrbracket$ of codimension $r$ there exists a $(k-r)$-cocycle $\rho'_F \colon S'_{k-r} \to \ZZ^d$ on $S'_{k-r}$ with loss $O_k\big(\delta^{1/O_k}\big)$, such that $\rho_F(c) = \rho'_F(c|_F)$ for every $c \in S'_k$.
     
     Moreover, if faces $F = \{\omega \colon \eta_0 \subseteq \omega \subseteq \eta_1\}$ and $F' = \{\omega \colon \eta'_0 \subseteq \omega \subseteq \eta'_1\}$ of codimension $r$ are parallel, in the sense that they have the same set of active coordinates \uppar{see Section~\ref{subsec:notation}}, then $\rho'_F = (-1)^{|\eta_0| - |\eta_0'|} \rho'_{F'}$.
  \end{claim}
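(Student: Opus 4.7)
My plan is to descend $\rho_F$ to a well-defined function $\rho'_F$ on restrictions and then read off the cocycle and parallel-face properties from Claim~\ref{claim:nice-cocycle}.

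\emph{Well-definedness.} Parts (i) and (ii) of Claim~\ref{claim:nice-cocycle} say that in a typical triple $(c_0,c_1,c)\in I'_i$ with $i$ an inactive coordinate of $F$, the function $\rho_F$ is preserved under the move $c_0 \leftrightarrow c_1$ (if $F \subseteq F^i$) or $c \leftrightarrow c_0$ (if $F \subseteq F_i$). A direct computation from the definition of the restriction shows that precisely these moves preserve $c|_F$: for example, when $F \subseteq F^i$ with $\eta_0(i)=1$, both $c_0|_F$ and $c_1|_F$ equal $\angle(x+\eta_0\cdot h+h'_i;\,h_{\mathrm{active}})$. Iterating over all $r$ inactive coordinates, and using the system-of-cubes properties of $S'_0,\dots,S'_k$ to keep losses controlled, we find that for all but an $O_k(\delta^{1/O_k(1)})$-fraction of pairs $c,c' \in S'_k$ with $c|_F = c'|_F$ we have $\rho_F(c) = \rho_F(c')$. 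We then define $\rho'_F(d)$ on $S'_{k-r}$ to be the majority value of $\rho_F(c)$ over lifts $c \in S'_k$ with $c|_F = d$; this yields $\rho_F(c) = \rho'_F(c|_F)$ outside an error set of the same order.

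\emph{Cocycle property.} Fix an active coordinate $j \in [k-r]$ of the restriction and let $i \in [k]$ be the corresponding active coordinate of $F$. Given $(d_0,d_1,d) \in T_{k-r,j}$ with all three in $S'_{k-r}$, the lifts $(c_0,c_1,c) \in T_{k,i}$ with $c_\star|_F = d_\star$ form a set of size $\approx |H|^r$ parametrised by the inactive data $(h_m)_{m \text{ inactive}}$. A union bound over the ``bad'' events (a vertex leaving $S'_k$, the triple leaving $I_i$, or some $\rho_F(c_\star) \neq \rho'_F(d_\star)$) shows that for all but an $O_k(\delta^{1/O_k(1)})$-fraction of input triples we may select a lift avoiding all these failures; part (iii) of Claim~\ref{claim:nice-cocycle} applied to $(c_0,c_1,c)$ then gives the cocycle relation $\rho'_F(d) = \rho'_F(d_0) - \rho'_F(d_1)$ directly. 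Hence $\rho'_F$ is a $(k-r)$-cocycle on $S'_{k-r}$ with loss $O_k(\delta^{1/O_k(1)})$.

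\emph{Parallel faces.} Two parallel faces are joined by a sequence of single-coordinate flips of an inactive index $i$ between its ``upper'' and ``lower'' settings. For one such flip, an explicit calculation shows that $c|_F = c_1|_{F'}$ for any $(c_0,c_1,c) \in I'_i$, and so part (iv) of Claim~\ref{claim:nice-cocycle} descends to the relation $\rho'_F = -\rho'_{F'}$ on $S'_{k-r}$ (modulo the usual loss). Chaining such relations across all inactive coordinates on which $\eta_0$ and $\eta'_0$ disagree yields a total sign of $(-1)$ to the number of flips, whose parity coincides with that of the exponent stated. The main obstacle is arithmetic of the errors: each of the three steps introduces an additional $O_k(\delta^{1/O_k(1)})$ loss, and one must verify that these combine without degradation of the exponent. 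This follows from repeated union bounds together with the hereditary structure of $S'_0,\dots,S'_k$ guaranteed by Claim~\ref{claim:siprops2}, which ensures that the various ``bad'' sets at different dimensions can be simultaneously ruled out with only a polynomial deterioration in $\delta$.
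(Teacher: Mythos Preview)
Your approach to the cocycle and parallel-face parts is essentially the paper's, but the well-definedness step has a genuine gap. The claim asserts \emph{exact} agreement, $\rho_F(c)=\rho'_F(c|_F)$ for \emph{every} $c\in S'_k$, and likewise the parallel-face relation is an exact identity of functions. Your majority-vote construction only gives these modulo an error set, so it does not prove the statement as written.

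More seriously, the loss you claim for the majority step is not justified. When you ``iterate over all $r$ inactive coordinates'', the per-step failure probability of staying inside $S'_k$ and inside $I_i$ is governed by the parameter $\eta \le 2\eps + O_k(\delta^{1/O_k(1)})$ from Claim~\ref{claim:siprops2}(iii), and the $\eps$ contribution does \emph{not} vanish with $\delta$. So the almost-sure agreement you obtain between random pairs $(c,c')$ carries an $O_k(\eps)$ loss, not $O_k(\delta^{1/O_k(1)})$. This would then propagate into the cocycle loss and break the claim (and ultimately the conclusion (i) of Lemma~\ref{lem:gen-cocycle}, where the error must go to zero with $\delta$ while $\eps$ stays a fixed small constant).

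The paper avoids both issues by a pigeonhole trick you are missing. Using Claim~\ref{claim:siprops2}(iii) deterministically (not on average), from \emph{every} $c\in S'_k$ one can reach at least $(1-\eta)^r|H|^r > \tfrac12|H|^r$ cubes $c''\in S'_k$ with $c''|_F=c|_F$ and $\rho_F(c'')=\rho_F(c)$, by changing inactive coordinates one at a time and invoking Claim~\ref{claim:nice-cocycle}(i,ii). Applying the same to $c'$ and intersecting the two majorities forces $\rho_F(c)=\rho_F(c')$ exactly, for all $c,c'\in S'_k$ with the same restriction. This absorbs the $\eps$-dependence into the single inequality $(1-\eta)^r>\tfrac12$ (which holds once $\eps\le\eps_0(k)$), and yields a genuinely well-defined $\rho'_F$ with no residual error. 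With exact well-definedness in hand, your arguments for the cocycle property and the parallel-face relation go through cleanly, and the only remaining loss---coming from Claim~\ref{claim:siprops2}(iv)---is indeed $O_k(\delta^{1/O_k(1)})$.
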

  \begin{proof}[Proof of claim]
    The first step is to show that $\rho_F$ descends to a well-defined function on $S'_{k-r}$.  That is, we claim that if $c,c' \in S'_k$ and $c|_F = c'|_F$ then $\rho_F(c) = \rho_F(c')$.  We can then define $\rho'_F(\wt c)$ for $\wt c \in S'_{k-r}$ unambiguously to be $\rho_F(c)$ for any $c \in S'_{k}$ which has $c|_F = \wt c$: such a $c$ always exists, as $S'_0,\dots,S'_k$ is a system of cubes with positive parameter.

    To prove that $\rho_F(c) = \rho_F(c')$ whenever $c|_F=c'|_F$, we show the following: for every $c \in S'_k$ there are more than $(1/2) |H|^r$ cubes $c'' \in S'_k$ with $c|_F = c''|_F$ and $\rho_F(c) = \rho_F(c'')$.  Applying this to $c'$ as well, the two sets of cubes $c''$ must intersect, and hence $\rho_F(c) = \rho_F(c')$ as required.

    We exhibit a large collection of such cubes $c''$ by making changes in one coordinate $i$ at a time (without ever changing the restriction to $F$).  Take any $c_1 \in S'_k$ with $c_1|_F = c|_F$, and write $c_1 = \angle(x;h_1,\dots,h_k)$.
    
    If $i \in [k]$ and $F \subseteq F_i$, we note that are at least $(1-\eta) |H|$ values $h \in H$ such that $c_2 = \angle(x;h_1,\dots,h_i+h,\dots,h_k)$ is in $S'_k$, and has $c_1|_F = c_2|_F$ and $\rho_F(c_1) = \rho_F(c_2)$, by Claim~\ref{claim:siprops2}(ii,iii) and Claim~\ref{claim:nice-cocycle}(i). Similarly, if $F \subseteq F^i$, there are at least $(1-\eta)|H|$ values $h \in H$ such that $c_3 = \angle(x-h;h_1,\dots,h_i+h,\dots,h_k)$ is in $S'_k$, and has $c_1|_F = c_3|_F$ and $\rho_F(c_1) = \rho_F(c_3)$, by Claim~\ref{claim:siprops2}(ii,iii) and Claim~\ref{claim:nice-cocycle}(ii).

    Applying this to each of the $i \in [k]$ which are not active coordinates of $F$ (of which there are $r$) in turn, we obtain $(1-\eta)^r |H|^r$ cubes $c''$ with the desired properties (noting all possible outcomes are distinct).  Provided $(1-\eta)^r > 1/2$, which we may assume, this suffices.

    The second step is to show that these $\rho'_F$ are $(k-r)$-cocycles with suitable parameters.  Fix a triple $(c_0,c_1,c) \in T_{k-r,i}$ for $i \in [k-r]$, such that $c_0,c_1,c \in S'_{k-r}$ but $\rho'_F(c) \ne \rho'_F(c_0) - \rho'_F(c_1)$. The goal is to show the number of such triples is small.
    
    By Claim~\ref{claim:nice-cocycle}(iii), there cannot be a triple $(\wt{c_0},\wt{c_1},\wt{c}) \in I_{j}$ such that $\wt{c_0}|_F = c_0$, $\wt{c_1}|_F = c_1$, $\wt{c}|_F = c$, where $j \in [k]$ is the index corresponding to $i \in [k-r]$ when we restrict to $F$.  However, since $S'_0,\dots,S'_k$ is a system of cubes with parameter $1 - \eps - O_k\big(\delta^{1/O_k}\big)$, by a union bound there are always at least $\big(1 - 3 \eps - O_k\big(\delta^{1/O_k(1)}\big)\big)|H|$ ways to augment a triple in the system to one of dimension one higher, and so
    \[
      \big|\big\{ (\wt{c_0}, \wt{c_1}, \wt{c}) \in T_{k,j} \colon \wt{c_0}|_F = c_0,\, \wt{c_1}|_F = c_1,\, \wt{c}|_F = c,\ \wt{c_0}, \wt{c_1}, \wt{c} \in S'_k \big\}\big| \ge \big(1 - 3 \eps - O_k\big(\delta^{1/O_k(1)}\big)\big)^r |H|^r .
    \]
    Again we are free to assume the right hand side is at least $|H|^r/2$. So, every original triple $(c_0,c_1,c)$ gives rise to at least $|H|^r/2$ triples  $(\wt{c_0},\wt{c_1},\wt{c})$ in $\big(T_{k,j} \cap {S'}_k^3\big) \setminus I_j$, all distinct, and so by Claim~\ref{claim:siprops2} that there are at most $O_k\big(\delta^{1/O_k(1)}\big)$ such bad triples $(c_0,c_1,c)$, as required.

    Lastly we verify the second part of the claim.  In the first instance suppose that $F,F'$ are two parallel faces of $\llbracket k \rrbracket$ of codimension $r$, and moreover that $F \subseteq F^i$ and $F' = \{ \omega \setminus \{i\} \colon \omega \in F\}$ for some $i \in [k]$ (in other words, these are adjacent parallel faces in direction $i$).
    
    For any $c \in S'_{k-r}$, pick any $\wt{c} \in S'_k$ such that $\wt{c}|_F = c$ (which we may), and then pick any $(\wt{c_0},\wt{c_1},\wt{c}) \in I_i$; so, $\wt{c_1}|_{F'} = c$.  By Claim~\ref{claim:nice-cocycle}(iv) we have $\rho_F(\wt{c}) = -\rho_{F'}(\wt{c_1})$, and hence $\rho'_F(c) = -\rho'_{F'}(c)$, which agrees with what we were supposed to prove in this special case.
    
    For general parallel faces $F$ and $F'$, we note that it is possible to move from any face to any parallel one in at most $r$ steps, where each step consists of moving to an adjacent parallel face in the above sense, so the claimed statement follows from applying this special case at most $r$ times.
  \end{proof}

  We can now apply Lemma~\ref{lem:cocycle} to deduce that for each face $F$ of codimension $r$, there is some $\lambda_F \colon H \to \RR^d$ such that $\rho'_F(c) = \partial^{k-r} \lambda_F(c)$ for all but $O_k\big(\delta^{1/O_k(1)}\big) |C^{k-r}(H)|$ cubes $c \in S'_{k-r}$, and moreover that properties (ii) and (iii) from Lemma~\ref{lem:cocycle} hold for $\lambda_F$.  Also, we may assume that if $F = \{\omega \colon \eta_0 \subseteq \omega \subseteq \eta_1\}$ and $F' = \{\omega \colon \eta'_0 \subseteq \omega \subseteq \eta'_1\}$ are parallel faces of codimension $r$ as in Claim~\ref{claim:cocycles-well-defined}, then $\lambda_{F} = (-1)^{|\eta_0| - |\eta_1|} \lambda_{F'}$.

  In fact, we claim that without loss of generality all the functions $\lambda_F$ are equal up to sign changes.

  \begin{claim}%
    \label{claim:lambda-compat}
    There exists a single function $\lambda \colon H \to \RR^d$ such that $\lambda_F(x) = (-1)^{|\eta_0|} \lambda(x) + C_F$ for each face $F = \{\omega \colon \eta_0 \subseteq \omega \subseteq \eta_1\}$ of codimension $r$ and each $x \in S'_0$, where $C_F \in \RR^d$ are constants \uppar{and $C_F = 0$ for at least one $F$}.
  \end{claim}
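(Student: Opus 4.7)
My plan is to identify a single underlying function $\lambda$ obtained by averaging $\rho$, and verify that each $\lambda_F$ agrees with a signed shift of $\lambda$ up to a constant. Pick some fixed $\omega_* \in \llbracket k \rrbracket$ with $|\omega_*| = r$ and define the candidate by
\[
  \lambda(x) := (-1)^r \cdot \mathbb{E}\big[\rho(c, \omega_*) : c \in S'_k,\ c(\omega_*) = x\big]
\]
(with a mild modification to give an integer-valued $(k{-}r)$-th derivative). By the parallel-face identification of Claim \ref{claim:cocycles-well-defined}, it suffices to check the identity for one face $F$ per ``direction'' (choice of active coordinate set in $[k]$), and I would fix such an $F$.

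For this $F$, the core step is a direct calculation of $\rho_F(c) = \sum_{\omega \in F}(-1)^{|\omega|}\rho(c,\omega)$ on $c \in S'_k$. By iteratively applying the direction-$i$ generalized cocycle identity of Definition \ref{def:gen-cocycle} for each active coordinate $i$ of $F$, and using the normal form property (which forces $\rho(c,\omega) = 0$ for $|\omega| > r$) together with the upper compatibility of Definition \ref{def:compatible}, the sum telescopes into a pure difference of $\rho(c', \omega_*)$-values taken along a collection of gluing parents $c'$ of $c$. Averaging over the nuisance parameters introduced by the iterated glueing, and applying the definition of $\lambda$, the expression collapses to $\rho_F(c) = (-1)^{|\eta_0(F)|}\partial^{k-r}\lambda(c|_F)$, which then descends to $\rho'_F = (-1)^{|\eta_0(F)|}\partial^{k-r}\lambda$ on most of $S'_{k-r}$. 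Combined with $\partial^{k-r}\lambda_F = \rho'_F$ from Lemma \ref{lem:cocycle}, this forces $\mu_F := \lambda_F - (-1)^{|\eta_0(F)|}\lambda$ to satisfy $\partial^{k-r}\mu_F = 0$ almost everywhere, so $\mu_F \bmod \mathbb{Z}^d$ is a polynomial map of degree at most $k-r-1$ by Lemma \ref{lem:cocycle}(ii).

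To promote this to $\mu_F$ actually being a constant, I would iterate the same argument on subfaces $G \subseteq F$ of higher codimension: for each codimension-$r'$ subface $G$ with $r' > r$, the analogous calculation comparing the restriction of $\lambda_F$ to the vertices of $G$ with the corresponding $\lambda_{G'}$ pins down one degree of the polynomial ambiguity. Running this from $r' = k$ (single-vertex subfaces) down to $r' = r+1$ eliminates all positive-degree components of $\mu_F \bmod \mathbb{Z}^d$. A final application of Lemma \ref{lem:99pc-poly} absorbs the exceptional sets from the $\delta$-losses, ensuring the required identity $\lambda_F(x) = (-1)^{|\eta_0(F)|}\lambda(x) + C_F$ holds on all of $S'_0$, not merely almost everywhere.

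The hardest step will be the telescoping calculation in the second paragraph: combining the generalized cocycle identity (which mixes weights via the coefficients $Z_r$) with upper compatibility across simultaneous active directions of $F$, while accurately tracking signs via $|\eta_0|$, so that the identity $\rho_F(c) = (-1)^{|\eta_0(F)|}\partial^{k-r}\lambda(c|_F)$ emerges cleanly, without spurious lower-order correction terms to be chased through the subsequent induction.
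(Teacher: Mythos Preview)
Your approach has a real gap in the step where you try to promote $\partial^{k-r}\mu_F = 0$ to $\mu_F$ being constant. Vanishing of the $(k{-}r)$-th derivative only gives degree $\le k{-}r{-}1$, and your proposed fix---iterating on subfaces $G \subseteq F$ of higher codimension and invoking ``the corresponding $\lambda_{G'}$''---does not make sense as stated: the functions $\lambda_F$ were only ever constructed (via Lemma~\ref{lem:cocycle}) for faces of codimension exactly $r$, so there are no $\lambda_G$ for higher-codimension subfaces to compare against. Without those objects the induction has nothing to feed on, and you are left with a genuine polynomial ambiguity of degree up to $k{-}r{-}1$ that your argument does not eliminate.

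The paper sidesteps all of this with a much shorter and cleaner observation: rather than building $\lambda$ from scratch by averaging $\rho$, it simply compares two existing functions $\lambda_{F_\eta}$, $\lambda_{F_{\eta'}}$ (for lower faces $F_\eta, F_{\eta'}$ of codimension $r$) via their \emph{full} $k$-th derivative. Since $\partial^{k-r}\lambda_F = \rho'_F$ on each codimension-$r$ face (and on parallel copies, up to sign), decomposing $\llbracket k \rrbracket$ into such faces gives
\[
  \partial^k \lambda_F(c) \;=\; \sum_{\omega \in \llbracket k \rrbracket} (-1)^{|\omega|}\rho(c,\omega)
\]
for every $c \in S'_k$, and the right-hand side does not depend on $F$. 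Hence $\partial^k(\lambda_{F_\eta} - \lambda_{F_{\eta'}}) = 0$ on $S'_k$; Lemma~\ref{lem:99pc-poly} then upgrades this to an exact identity $\partial^k\tau' \equiv 0$ on all of $C^k(H)$, and the elementary averaging computation $\EE_{h_1,\dots,h_k}\partial^k\tau'(\angle(x;h)) = \tau'(x) - \EE_y\tau'(y)$ forces $\tau'$ to be constant. Finally one sets $\lambda := \lambda_{F_0}$ for any fixed $F_0$; the sign and constant for general $F$ come from the parallel-face relation in Claim~\ref{claim:cocycles-well-defined}. The key point you missed is that passing to the $k$-th derivative (rather than the $(k{-}r)$-th) kills the polynomial ambiguity in one stroke, with no iteration required.
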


  We claim this $\lambda$ has all the properties required to complete the proof of Lemma~\ref{lem:gen-cocycle}.  Indeed, parts (ii) and (iii) of the statement follow immediately from the corresponding parts of Lemma~\ref{lem:cocycle}.  For (i), we assert that---in the notation of the statement---we have $\rho(c,\omega') = \Lambda(c,\omega')$ for each $c \in S'_k$ and $\omega' \in \llbracket k \rrbracket$, which is sufficient (noting Claim~\ref{claim:siprops2}(i)).  To see this, we note that for any face $F$ of dimension $k-r$ and $c \in S'_k$ we have
  \[
    \sum_{\omega \in F} (-1)^{|\omega|} \lambda(c(\omega)) = \rho_F(c) = \sum_{\omega \in F} (-1)^{|\omega|} \rho(c,\omega)
  \]
   since $\partial^{k-r}\lambda = \partial^{k-r}\lambda_F$ for any $F$ (as the constants $C_F$ do not affect the derivative, since $k>r$).  It follows that for any $F$ of dimension at least $k-r$ we also have
  \begin{equation}
    \label{eq:lambda-vs-rho}
    \sum_{\omega \in F} (-1)^{|\omega|} \lambda(c(\omega)) = \sum_{\omega \in F} (-1)^{|\omega|} \rho(c,\omega)
  \end{equation}
  by summing over smaller faces.
  We can now compute directly from the definitions of $\Lambda$ and $Z_r$ (i.e.,~\eqref{eq:big-lambda} and~\eqref{eq:zr}) that for all $c \in S'_k$ and $\omega' \in \llbracket k \rrbracket$,
  \begin{align}
    \Lambda(c,\omega') &= \sum_{\omega \in \llbracket k \rrbracket} Z_r(\omega,\omega') \lambda(c(\omega))
    = \sum_{\substack{|\eta| \le r \\ \eta \supseteq \omega'}} (-1)^{|\eta|} \sum_{\omega \supseteq \eta} (-1)^{|\omega|} \lambda(c(\omega)) \label{eq:big-lambda-as-deriv}\\
    &= \sum_{\substack{|\eta| \le r \\ \eta \supseteq \omega'}} (-1)^{|\eta|} \sum_{\omega \supseteq \eta} (-1)^{|\omega|} \rho(c,\omega) \nonumber
  \end{align}
  by~\eqref{eq:lambda-vs-rho}, and rearranging gives
  \[
    \Lambda(c,\omega')
    = \sum_{\omega \supseteq \omega'} (-1)^{|\omega|} \rho(c,\omega) \sum_{\substack{|\eta| \le r \\ \omega \supseteq \eta \supseteq \omega'}} (-1)^{|\eta|} .
  \]
  The outer summand is zero if $|\omega| > r$ (as then $\rho(c,\omega) = 0$) and so the condition $|\eta| \le r$ is redundant.  Therefore the inner sum is given by~\eqref{eq:cube-orthog}, and so the right hand side is precisely $\rho(c,\omega')$, so $\Lambda(c,\omega')=\rho(c,\omega')$ as required for (i).

  Finally we show part (iv).
  For any $c \in C^k(H)$, by (ii) we have $\sum_{\omega \in F} (-1)^{|\omega|} \lambda(c(\omega))  \in \ZZ^d$ for any face $F \subseteq \llbracket k \rrbracket$ of dimension $k-r$, and hence (by summing over smaller faces as above) for any face $F$ of dimension at least $k-r$.
  But then for any $\omega' \in \llbracket k \rrbracket$ we have $\Lambda(c,\omega') \in \ZZ^d$ by~\eqref{eq:big-lambda-as-deriv} (which holds for any $c$).

  \begin{proof}[Proof of Claim~\ref{claim:lambda-compat}]
  Consider some pair of lower faces $F_{\eta}, F_{\eta'}$ where $\eta,\eta' \in \llbracket k \rrbracket$ have $|\eta|, |\eta'| = k-r$.  We observe that for any $c \in S'_k$,
  \[
    \partial^k \lambda_{F_{\eta}} (c) = \sum_{\omega \in \llbracket k \rrbracket} (-1)^{|\omega|} \rho(c,\omega) = \partial^k \lambda_{F_{\eta'}} (c)
  \]
  where we applied the proof of~\eqref{eq:lambda-vs-rho} to the face $\llbracket k \rrbracket$.  Hence, writing $\tau = \lambda_{F_{\eta}} - \lambda_{F_{\eta'}}$, we have that $\partial^k \tau(c) = 0$ for all $c \in S'_k$.  Now by Lemma~\ref{lem:99pc-poly}, there is some function $\tau' \colon H \to \RR^d$ such that $\tau(x) = \tau'(x)$ for all $x \in S'_0$ (as for $\eps,\delta$ small enough the set $X$ in~\eqref{eq:bigX} is all of $S'_0$, by properties of the system of cubes $S'_0,\dots,S'_k$) and such that $\partial^k \tau'(c) = 0$ for all $c \in C^k(H)$.  This last condition forces $\tau'$ to be constant: for instance, we can compute that for each $x \in H$,
  \begin{equation}
    \label{eq:triv-poly}
    0 = \EE_{h_1,\dots,h_k \in H} \partial^k \tau'(\angle(x;h_1,\dots,h_k)) = \tau'(x) - \EE_{y \in H} \tau'(y)
  \end{equation}
    and hence $\tau'$ takes its average value anywhere.  Hence, $\lambda_{F_{\eta}}$ and $\lambda_{F_{\eta'}}$ differ by a global constant.  Because every face is parallel to a lower face, by the second part of Claim~\ref{claim:cocycles-well-defined} we deduce that for every pair of faces $F,F'$ the functions $\lambda_F,\lambda_{F'}$ differ by a global constant and the appropriate sign change.  Setting $\lambda=\lambda_{F_0}$ for any face $F_0$ of codimension $r$, this proves the claim.
  \end{proof}

  This completes the proof of Lemma~\ref{lem:gen-cocycle}.  
\end{proof}

\subsection{Finding and eliminating redundancy}%
\label{subsec:redundancy}

Next, we formalize what it means to find redundancies in a polynomial hierarchy, and recursively replace the hierarchy with a simpler one to eliminate them.  We introduce some definitions.

\begin{definition}%
  \label{def:reduce}
  Given non-negative integers $s$, $d_0,\dots,d_s$ and $d_0',\dots,d_s'$, and two tuples $(f_{i,j})$ for $0 \le i \le s$, $j \in [d_i]$ and $(f'_{i,j})$ for $0 \le i \le s$, $j \in [d'_i]$, consisting of functions $X \to \RR$ on some set $X$, we say \emph{$f'$ reduces $f$} on $X$ with parameter $K \ge 1$ if the following holds: for each function $f_{i,j}$ and each $x \in X$ there exists $v(x) \in \bigoplus_{r=0}^i \ZZ^{d'_r}$ such that $\|v(x)\|_1 \le K$ and $f_{i,j}(x) = v(x) \cdot f'(x)$.
\end{definition}

The motivation for this definition is that $f'$ carries at least as much information as $f$ for the purposes of derivatives conditions, in the sense of the following result.

\begin{lemma}%
  \label{lem:use-reduction}
  If $X \subseteq H$, $(f_{i,j})$ \uppar{$0 \le i \le s$, $j \in [d_i]$} is a tuple of functions $X \to \RR$, $g \colon X \to \RR$ is a function satisfying the $(f,k, t, M)$-derivatives condition on $S \subseteq C^k(H)$ \uppar{where $k \ge 0$ and $1 \le t \le s+1$}, and $(f'_{i,j})$ \uppar{$0 \le i \le s$, $j \in [d'_i]$} is some other tuple of functions $X \to \RR$ such that $f'$ reduces $f$ on $X$ with parameter $K$;
  then $g$ satisfies the $(f',k, t, KM)$-derivatives condition.
\end{lemma}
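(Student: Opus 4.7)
The plan is to construct an explicit witness $b'$ for the claimed derivatives condition by mechanically substituting the reduction of $f$ by $f'$ into the derivatives condition for $g$. The step to worry about is verifying the $L^1$-bound on the new coefficients, though this is really just a geometric sum estimate.

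Let $b \colon S \times \llbracket k \rrbracket \to \bigoplus_{i=0}^{t-1} \ZZ^{d_i}$ be the function witnessing the $(f,k,t,M)$-derivatives condition for $g$. Using the hypothesis that $f'$ reduces $f$ with parameter $K$, for each triple $(i,j,x)$ with $0 \le i \le t-1$, $j \in [d_i]$, $x \in X$, fix some vector $v_{i,j,x} \in \bigoplus_{r=0}^{i} \ZZ^{d'_r}$ with $\|v_{i,j,x}\|_1 \le K$ such that $f_{i,j}(x) = v_{i,j,x} \cdot f'(x)$. (Note the crucial point that $v_{i,j,x}$ has no components above degree $i$.) Now define
\[
  b'(c,\omega) \ =\ \sum_{i=0}^{t-1} \sum_{j=1}^{d_i} b(c,\omega)_{i,j}\, v_{i,j,\,c(\omega)}
  \ \in\ \bigoplus_{r=0}^{t-1} \ZZ^{d'_r} .
\]
By the defining property of $v_{i,j,x}$ we have $b(c,\omega) \cdot f_{\le t-1}(c(\omega)) = b'(c,\omega) \cdot f'(c(\omega))$ for every $(c,\omega) \in S \times \llbracket k \rrbracket$, so substituting into \eqref{eq:deriv-condition} immediately gives the corresponding identity for $g$ with respect to $f'$ and $b'$.

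It remains to control $\|b'_{=r}(c,\omega)\|_1$. Since $(v_{i,j,x})_{=r}=0$ whenever $i<r$, only the summands with $i \ge r$ contribute to $b'_{=r}$, and hence
\[
  \|b'_{=r}(c,\omega)\|_1 \ \le\ \sum_{i=r}^{t-1} \sum_{j=1}^{d_i} |b(c,\omega)_{i,j}| \, \|(v_{i,j,c(\omega)})_{=r}\|_1 \ \le\ K \sum_{i=r}^{t-1} \|b_{=i}(c,\omega)\|_1 \ \le\ K \sum_{i=r}^{t-1} M^{t-i}.
\]
Assuming $K,M\ge 2$ (which we may do without loss of generality, by replacing $K$ and $M$ by $\max(K,2)$ and $\max(M,2)$ respectively), the geometric sum is at most $2M^{t-r}$, and $2K \cdot M^{t-r} \le (KM)^{t-r}$ for $t-r \ge 1$; for $t=0$ the derivatives condition is interpreted as $\partial^k g(c) = 0$ (see Remark \ref{rem:hierarchy-t0}) and the statement is trivial. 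This gives the required $(f',k,t,KM)$-derivatives condition and completes the proof. The only mildly delicate point is the geometric sum, and it is purely mechanical.
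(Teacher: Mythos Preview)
Your approach is exactly the substitution argument the paper gives (the paper's proof is a single sentence), and your explicit construction of $b'$ is correct. One small slip in the bound: the inequality $2K\cdot M^{t-r}\le (KM)^{t-r}$ fails when $t-r=1$. The fix is trivial---for $t-r=1$ the sum $\sum_{i=r}^{t-1}M^{t-i}$ is just $M$, so the bound is exactly $KM=(KM)^1$; for $t-r\ge 2$ your inequality $2K\cdot M^{t-r}\le K^{t-r}M^{t-r}$ holds once $K\ge 2$. With that case distinction (or by bounding $K\sum_{i=r}^{t-1}M^{t-i}\le (KM)^{t-r}$ directly), the argument goes through.
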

\begin{proof}
  This follows simply by substituting the expressions for $f_{i,j}(x)$ in terms of $f'$ given by Definition~\ref{def:reduce}, into the derivatives condition~\eqref{eq:deriv-condition}.
\end{proof}

We are now in a position to state the main result of this subsection, which shows that a polynomial hierarchy can be upgraded to a strong one without losing information, at the expense of a loss in parameters and a small further error set.

\begin{theorem}%
  \label{thm:strong-derivs}
  Let $s \ge t \ge 1$ be integers, let $S_0,\dots,S_{s+1}$ be a system of cubes of parameter $1-\eps$, let $f$ be an $(s-1,d,M)$-polynomial hierarchy on $S_0,\dots,S_s$, and suppose $g \colon S_0 \to \RR$ obeys the $(f,s+1,t,M)$-derivatives condition on $S_{s+1}$.  Write $D = \sum_{i=0}^s d_i$, and let $\delta > 0$ be an arbitrary parameter.

  Suppose finally that $\eps \le \eps_0(s)$ is sufficiently small.   Then there exists a system of cubes $S'_0,\dots,S'_{s+1}$ with parameter at least $1-\eps-\delta$, and with $S'_k \subseteq S_k$ and $\mu(S_k \setminus S'_k) \le \delta$ for each $k$; and a parameter $M'$, where
  \[
    M' \le O_s(M / \delta)^{O_s(D)^{O(1)}}
  \]
  if $t \le 2$, and
  \[
    M' \le O_s(M / \delta)^{O_s(D)^{O_s(D)}}
  \]
  if $t \ge 3$, such that the following holds.  Either $H$ has a subgroup of size at least $|H|/M'$, or 
  there is an $(s,d',M')$-polynomial hierarchy $f'$ which reduces $f$ with parameter $M'$, such that $g$ obeys the $(f',s+1,t,M',\delta)$-strong derivatives condition on $S'_{s+1}$.
\end{theorem}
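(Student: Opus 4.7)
The plan is to proceed by a top-down induction on the level $i = t-1, t-2, \ldots, 1$, at each step identifying and quotienting out ``redundancies'' in the hierarchy so that the relevant portion of $b$ becomes upper-compatible (and, for $i = t-1$, becomes an honest generalized cocycle). The preliminary move is to apply Lemma \ref{lem:canonical-b}, at a polynomial cost in $M$, so that $b$ is already in normal form.

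To clean up the top piece $b_{=t-1}$, fix a direction $i_0 \in [s+1]$ and consider, for almost every triple $(c_0, c_1, c) \in T_{s+1, i_0}$ with $c_0, c_1, c \in S_{s+1}$, the comparison between the derivatives condition at $c$ and the alternative expression produced by glueing $c_0$ and $c_1$ along the face $F^{i_0}$ (equation \eqref{eq:really-trying-to-glue}). After both sides are placed in normal form, any residual discrepancy produces a bounded-norm integer vector $v \in \ZZ^{d_{t-1}}$ together with an identity showing that $v \cdot f_{=t-1}(x)$ equals a low-complexity expression in $f_{\le t-2}$ evaluated at nearby points. If the total measure of failing triples is at most $\delta$, then $b_{=t-1}$ is already a generalized $(s+1)$-cocycle of type $t-1$ and upper-compatible up to loss $\delta$, and nothing needs to be done at this level.

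Otherwise these failures produce a large set $\Sigma \subseteq \ZZ^{d_{t-1}}$ of such approximate relations. The central step is then a Freiman-type argument: either $H$ has a subgroup of size $\ge |H|/M'$ (which is explicitly permitted by the conclusion), or the $\ZZ$-span of $\Sigma$ has bounded index inside a sublattice $\Lambda^* \le \ZZ^{d_{t-1}}$ such that $v \cdot f_{=t-1}$ is \emph{globally} low-complexity in $f_{\le t-2}$ for every $v \in \Lambda^*$. Applying Smith normal form then yields a smaller tuple $f'_{=t-1}$, with $d'_{t-1} \le d_{t-1}$, that spans $f_{=t-1}$ in the sense of Definition \ref{def:reduce} and has no short redundancies left; Lemma \ref{lem:use-reduction} transfers the derivatives condition to $f'$, and Lemma \ref{lem:gen-cocycle} feeds the resulting genuine cocycle structure to produce a function $\lambda$ that can be absorbed into $f'$ at the appropriate level (which is the mechanism by which the degree of $f'$ may exceed that of $f$).

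Cleaning level $t-1$ introduces new correction terms at levels $\le t-2$, so we re-run the same cleanup inductively downwards, shrinking $S_\bullet$ by $O(\delta/s)$ at each step. Taking the intersection of all the shrunken cube systems yields $S'_\bullet$ with parameter at least $1 - \eps - \delta$, and concatenating the reductions gives the final $f'$. The main obstacle, and the source of the worse bound when $t \ge 3$, is that cleaning at level $i$ can create new failures at level $i' < i$, and correcting these can in turn re-break level $i$. Handling this without losing control requires a protracted inner recursion on the dimension: $t \le 2$ needs only a one-shot pass and yields a polynomial-in-$D$ exponent, whereas $t \ge 3$ forces an inner loop whose depth can reach $D$, producing the $(M/\delta)^{O_s(D)^{O_s(D)}}$ blowup advertised in the statement.
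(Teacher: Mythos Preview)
Your high-level shape (find redundancies from failures of upper compatibility / the cocycle condition, quotient them out, iterate) matches the paper, but the proposal has a genuine gap at the central step and misidentifies where the subgroup alternative comes from.

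The discrepancy between $b(c,-)$ and the glued configuration does not directly give an identity expressing $v \cdot f_{=t-1}$ as a low-complexity combination of $f_{\le t-2}$. What it gives (after the Cauchy--Schwarz arguments of Lemmas~\ref{lem:incompatible-implies-1pc} and~\ref{lem:no-cocycle-implies-1pc}, which you do not mention) is only that $v \cdot f_{\le R}$ is an \emph{approximate} polynomial of degree $R-1$: a positive but small fraction of its $R$-th derivatives vanish. Going from this $1\%$ statement to a global $(f,R,R,M')$-derivatives condition, and then to the decomposition $v\cdot f_{\le R} = \phi_0 + \phi_1$ you implicitly use, is the content of Lemmas~\ref{lem:1pc-implies-derivs} and~\ref{lem:derivs-implies-derivs}, and is by far the hardest part of the argument. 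In particular, the subgroup alternative does not arise from any lattice argument on the set $\Sigma$ of failing vectors; it arises inside Lemma~\ref{lem:1pc-implies-derivs}, where one grows the set of cubes on which a derivatives condition holds by glueing and translating, and Fournier's theorem on near-maximal additive energy is invoked to show this growth can only stall at a large subgroup. Your ``Freiman-type argument'' on $\spn_\ZZ(\Sigma)$ is not a substitute for this.

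Two smaller points. First, Lemma~\ref{lem:gen-cocycle} is not used in the proof of this theorem: the goal here is only to arrange that $b_{=t-1}$ \emph{is} a generalized cocycle, not to classify it (that classification is used later, in Section~\ref{sec:algebraic-poly-system}, and inside the proof of Lemma~\ref{lem:derivs-implies-derivs}). Second, the paper's induction is not top-down on levels but on the scalar $I = \sum_{r=0}^{t-1}(r+1)d_r$: one failure yields one $v$ with $v_{=R}\ne 0$, the bootstrap above lets you move one function from level $R$ to level $R-1$, and $I$ drops by exactly one. The better bound for $t\le 2$ comes from collecting all failing $v$ at once and doing a single change of basis (as you sketch), which is only possible because Lemmas~\ref{lem:1pc-implies-derivs} and~\ref{lem:derivs-implies-derivs} are invoked with $t=1$ and so option~(B) never occurs.
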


\begin{remark}
  The weaker bound when $t \ge 3$ is unsatisfactory, but the author has been unable to improve it.
  In the proof, we argue the two cases separately, although using all the same ingredients: roughly, the difference is that when $t \le 2$ we can remove all redundancies at the same time, whereas when $t \ge 3$ we resort to applying induction on the dimensional quantity $\sum_{r=0}^{s-1} (r+1) d_r$.
  
  It is possible to adapt the former strategy to the case $t \ge 3$, by removing all redundancies in the same degree in one go, and then appling induction on $t$.
  However, at one specific point in the argument (the proof of Lemma~\ref{lem:1pc-implies-derivs}) something new and bad happens, in that we need to assume that a strong derivatives condition holds recursively in lower-degree parts of the hierarchy $f$ for the argument to go through.
  It is still possible to structure the argument to avoid cyclic dependencies in the parameters, but doing so nonetheless causes a fairly disastrous explosion in the bounds, and they end up much worse than using the more direct induction on dimensions.
\end{remark}

The upgraded hierarchy $f'$ in Theorem~\ref{thm:strong-derivs}, which should be thought of as $f$ with enough redundancies removed, will be constructed from $f$ using a combination of the following steps.
\begin{enumerate}[label=(\roman*)]
  \item Suppose some function $f_{i,j}$, assumed by hypothesis to satisfy the $(f,i+1,i,M)$-derivatives condition, in fact satisfies the stronger $(f,i,i-1,M')$-derivatives condition.  Then moving $f_{i,j}$ out of $f_{=i}$ and into $f_{=i-1}$ gives a tuple that reduces $f$.
  \item Suppose instead that $f_{i,j}$ has the form $f_{i,j}(x) = k(x) \cdot f_{\le i-1}$, where $k \colon S_0 \to \bigoplus_{r=0}^{i-1} \ZZ^{d_r}$ is an arbitrary function taking bounded values.  This could happen: in fact, such a function even satisfies the stronger $(f,0,i,M)$-derivatives condition.  However, it is also clear that just deleting $f_{i,j}$ gives a tuple that reduces $f$.
  \item If $f_{i,j}$ satisfies the $(f,i,i,M)$-derivatives condition (stronger than the $(f,i+1,i,M)$-condition from the hypothesis, but weaker than (i)) it turns out we can decompose $f_{i,j} = \phi_0 + \phi_1$ where $\phi_0$ has the properties from (i) and $\phi_1$ has the properties from (ii), and deal with each accordingly.
  \item We can perform any of these operations, and in particular (iii), replacing $f_{i,j}$ (a function in the tuple) with a bounded linear combination $v \cdot f_{\le i}$ of such functions (with $v_{=i} \ne 0$), by first applying a suitable change of basis.
\end{enumerate}

The dichotomy we need to establish is that either $g$ already obeys a strong derivatives condition on the hierarchy $f$, or if not then have an opportunity to apply one of these reduction steps to $f$.

This is achieved in two further stages.  First we first show that whenever $g$ fails to obey a strong derivatives condition, some particular linear combination $v \cdot f_{\le i}$ is an approximate polynomial of lower degree than expected (i.e., $i-1$).  Then, we argue that this approximate polynomial condition can be boosted to a derivatives condition for $v \cdot f_{\le i}$ of lower degree than expected as in (iv) above; or in some cases that, if not, some $v' \cdot f_{\le i'}$ is similarly an approximate polynomial of lower degree than expected, with $i' < i$.

We now state the necessary ingredients precisely.  First, we show that if $g$ and $b$ obey a derivatives condition but the compatibility condition (Definition~\ref{def:compatible}) fails to hold for $b$, then we can find some approximate polynomial $v \cdot f_{\le i}$ as discussed above.  We first give a technical statement and then its generic consequence.
\begin{lemma}%
  \label{lem:incompatible-implies-1pc}
  Fix $t \ge 2$ and $k \ge 0$; let $X \subseteq H$; let $f$ be a tuple of functions $X \to \RR$, of shape $d_0,\dots,d_{t-1}$ with $\sum_{r=0}^{t-1} d_r = D$; and suppose that $g \colon X \to \RR$ and $b$ obey the $(f,k,t,M)$-derivatives condition on $S \subseteq C^k(H)$ and that $b$ is in normal form.  Suppose also that $\gcd(|H|, (t-2)!) = 1$.
  
  Suppose moreover that for some $R$, $1 \le R \le t-1$, some $i \in [k]$ and $\omega \in \llbracket k \rrbracket$ with $\omega(i) = 1$, and some $v \in \bigoplus_{i=1}^{R} \ZZ^{d_i}$, there are at least $\eps |H|^{k+2}$ triples $(c_0,c_1,c) \in T_{k,i}$ \uppar{defined as in~\eqref{eq:tk}} such that:---
  \begin{itemize}
    \item $c_0,c_1 \in S$;
    \item $b_{\le R}(c_0,\omega) - b_{\le R}(c_1,\omega) = v$; and
    \item $b_{>R}(c_0,\omega') = b_{>R}(c_1,\omega')$ for all $\omega' \in F^i$.
  \end{itemize}
  Then the function $v \cdot f_{\le R}$ satisfies
  \[
    \big|\big\{c \in C^{R}(H) \colon \partial^{R} (v \cdot f_{\le R})(c) = 0 \big\}\big| \ge O_{k,t}(M)^{-O_{k,t}(D)} \eps^{O_{k,t}(1)} |C^{R}(H)| ,
  \]
  i.e.\ is an approximate polynomial with the parameter shown.  \uppar{As usual, we interpret expressions such as the one above only to count those $c \in C^R(H)$ where $\partial^R (v \cdot f_{\le R})$ is actually well-defined.}
\end{lemma}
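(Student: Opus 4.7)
The plan is to combine the $(f,k,t,M)$-derivatives condition applied at both $c_0$ and $c_1$ with the automatic cocycle identity $\partial^k g(c) = \partial^k g(c_0) - \partial^k g(c_1)$ (which holds for any $(c_0,c_1,c) \in T_{k,i}$), producing a ``master identity'' satisfied by every good triple, and then apply pigeonhole and differencing to distill a polynomial-vanishing statement for $v \cdot f_{\le R}$. For a triple satisfying the hypothesis, splitting the sums over $\llbracket k \rrbracket$ according to whether $\omega'(i)=0$ or $\omega'(i)=1$ and using $c_0(\omega') = c_1(\omega') =: y_{\omega'}$ on $F^i$ yields
\[
    \partial^k g(c) = \sum_{\omega' \in \llbracket k \rrbracket} (-1)^{|\omega'|} \fb(\omega')\cdot f(c(\omega')) + \sum_{\omega' \in F^i} (-1)^{|\omega'|} B(\omega')\cdot f(y_{\omega'}),
\]
with $\fb$ the folded configuration from \eqref{eq:really-trying-to-glue} and $B(\omega') := b(c_0,\omega') - b(c_1,\omega')$. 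The hypothesis $B_{>R}|_{F^i}\equiv 0$ collapses the second sum to $\sum_{\omega'\in F^i}(-1)^{|\omega'|} B_{\le R}(\omega') \cdot f_{\le R}(y_{\omega'})$, with distinguished contribution $(-1)^{|\omega|} v \cdot f_{\le R}(y_\omega)$ at $\omega'=\omega$ (noting that we may assume $v \ne 0$, and normal form then forces $|\omega| \le R$).

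Next I would pigeonhole to fix the remaining auxiliary coefficients. Each $B_{\le R}(\omega')$ and each $\fb(\omega')$ has $\ell^1$-norm at most $\poly(M)$, so their joint possible values number at most $O_{k,t}(M)^{O(D)}$; pigeonhole refines the good triples to a sub-collection of density $\ge \eps \cdot O_{k,t}(M)^{-O_{k,t}(D)}$ on which the tuples $(B_{\le R}(\omega'))_{\omega' \in F^i}$ and $(\fb(\omega'))_{\omega' \in \llbracket k \rrbracket}$ are pinned to fixed vectors $(v_{\omega'})$ and $(\beta(\omega'))$, with $v_\omega = v$. On this refined set, the master identity becomes
\[
    \sum_{\omega'\in F^i}(-1)^{|\omega'|} v_{\omega'}\cdot f_{\le R}(y_{\omega'}) = \partial^k g(c) - \sum_{\omega'\in \llbracket k \rrbracket}(-1)^{|\omega'|}\beta(\omega') \cdot f(c(\omega')) ,
\]
whose LHS depends only on the upper face $y = c_0|_{F^i}$, while the RHS depends on the full $k$-cube $c$. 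Two good triples sharing an upper face (same $(x, (h_j)_{j \ne i}, h_i')$, varying $h_i$) force the RHSs to coincide; more generally, telescoping over $R$ discrete differences in directions chosen from $\{h_j : j \ne i\} \cup \{h_i'\}$ and absorbing the resulting vertex-sum contributions into $f_{\le R-1}$ via the hierarchy-derivatives conditions satisfied by each $f_{=R,j}$, one aims to isolate $\partial^R(v \cdot f_{\le R}) = 0$ on a set of $R$-cubes of density $\gg O_{k,t}(M)^{-O_{k,t}(D)} \eps^{O_{k,t}(1)}$. The coprimality $\gcd(|H|,(k-1)!) = 1$ enters to prevent division losses in these averaging operations.

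The principal obstacle will be this final distillation step: one needs to cleanly cancel the non-distinguished terms $v_{\omega'} \cdot f_{\le R}(y_{\omega'})$ (for $\omega' \ne \omega$) via the $R$-fold differencing inside $F^i$, and simultaneously absorb the vertex-sum RHS into degree-$(R-1)$ contributions through the hierarchy-derivatives conditions of $f_{=r}$ for $r \le R$. Since the vectors $v_{\omega'}$ and $\beta(\omega')$ arise by pigeonhole rather than explicit construction, the cancellation cannot be checked directly but must be argued by comparing admissible outputs: the only way the refined identity can persist, given the bounded-complexity structure enforced on both sides by the hierarchy, is for $v \cdot f_{\le R}$ to behave as a polynomial of degree at most $R-1$ up to hierarchy-absorbable corrections, which is exactly the desired approximate-polynomial conclusion. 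Tracking the quantitative losses through the pigeonhole factor $O_{k,t}(M)^{O_{k,t}(D)}$ and the Cauchy--Schwarz-style averagings $\eps^{O_{k,t}(1)}$ then yields the advertised bound.
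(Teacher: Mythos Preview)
Your setup matches the paper's: derive the glued identity \eqref{eq:really-trying-to-glue}, use the hypothesis $B_{>R}|_{F^i}\equiv 0$ together with normal form to restrict the upper-face sum to $|\omega'|\le R$, and pigeonhole on the bounded coefficient tuples to pin everything to fixed vectors. The paper pigeonholes on $b(c_0,\omega'),b(c_1,\omega')$ jointly rather than on $(\fb,B)$, but this is cosmetic.

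The gap is in your finishing step. First, you invoke ``the hierarchy-derivatives conditions satisfied by each $f_{=R,j}$'' to absorb vertex-sum contributions into $f_{\le R-1}$---but no such conditions are available here. In this lemma $f$ is merely a tuple of functions, not a polynomial hierarchy; there is nothing that lets you reduce $\partial^{R'} f_{=R,j}$ to lower-degree data. Second, the final paragraph (``the only way the refined identity can persist\dots'') is not an argument: you have a linear identity
\[
  \sum_{\eta\in\llbracket k\rrbracket} G_\eta(c(\eta)) \;+\; \sum_{\substack{\omega'\in F^i\\ |\omega'|\le R}} (-1)^{|\omega'|} v_{\omega'}\cdot f_{\le R}(c_0(\omega')) \;=\; 0
\]
holding on a dense set of tuples $(x,h_1,\dots,h_k,h_i')\in H^{k+2}$, and you need to extract the conclusion that $v\cdot f_{\le R}$ is an approximate polynomial of degree $R-1$. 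What is needed is precisely the discrete Cauchy--Schwarz complexity machinery (Definition~\ref{def:cauchy-schwarz-complexity} and Lemma~\ref{lem:gvn}): one checks that the system of linear forms $\phi_\eta(x,h,h_i')=x+\eta\cdot h$ (for $\eta\in\llbracket k\rrbracket$) together with $\psi_\nu(x,h,h_i')=x+h_i+h_i'+\nu\cdot(h_j)_{j\ne i}$ (for $|\nu|\le R-1$) has Cauchy--Schwarz complexity at most $R-1$ at the distinguished form $\psi_{\omega\setminus\{i\}}$, with denominator dividing $(k-1)!$, and then Lemma~\ref{lem:gvn} gives the result directly. The explicit cover $\Sigma_1,\dots,\Sigma_R$ uses one class $\{\phi_\eta:\eta\in\llbracket k\rrbracket\}$ killed by the $h_i'$-direction, then classes indexed by coordinates $j$ with $\omega(j)=1$, and finally classes $\{\psi_\nu:|\nu|=r\}$ killed by a diagonal vector; the coprimality hypothesis $\gcd(|H|,(k-1)!)=1$ is what makes the nonzero values $\phi_j(\sigma_r)$ invertible. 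Your vague ``telescoping over $R$ discrete differences'' is pointing at the right phenomenon but without this framework you have no way to certify that the elimination actually succeeds.
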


If $v_{=R} \ne 0$, this is meant to be surprising: a priori, $v \cdot f_{\le R}$ is a linear combination of objects of degree (at most) $R$, and so would typically be an approximate polynomial of degree $R$, but under these hypotheses turns out to be one of degree $R-1$.  On the other hand, if $b$ is upper compatible then $v$ is necessarily zero and the conclusion is not surprising at all.

We recall (Remark~\ref{rem:b0-compat}) that the issue of upper compatibility does not arise for $b_{=0}$, and hence the restriction to $t \ge 2$ is natural.

The more simply stated consequence of Lemma~\ref{lem:incompatible-implies-1pc} is the following.

\begin{corollary}%
  \label{cor:incompatible-implies-1pc}
  Suppose $t$, $k$, $d_0,\dots,d_{t-1}$, $X$, $S$, $f$, $g$, $b$ and $D$ are as in the first paragraph of Lemma~\ref{lem:incompatible-implies-1pc} \uppar{with $\gcd(|H|,(t-2)!) = 1$}, and that $b_{=r}$ fails to be $\delta$-upper compatible on $S$, for some $1 \le r \le t-1$ and some $\delta > 0$.

  Then there exists $R$, $1 \le R \le t-1$, and some $v \in \bigoplus_{i=1}^R \ZZ^{d_i}$ with $v_{=R} \ne 0$ and $\|v\|_1 \ll_t M^{O_t(1)}$, such that
  \[
    \big|\big\{c \in C^{R}(H) \colon \partial^{R} (v \cdot f_{\le R})(c) = 0 \big\}\big| \ge O_{k,t}(M)^{-O_{k,t}(D)} \delta^{O_{k,t}(1)} |C^{R}(H)|  .
  \]
\end{corollary}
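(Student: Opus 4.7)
My strategy is to deduce the corollary from Lemma \ref{lem:incompatible-implies-1pc} via a pigeonholing on the ``discrepancy profile'' of bad triples. First, from the hypothesized failure of $\delta$-upper compatibility for $b_{=r}$, I would select specific $i_0 \in [k]$ and $\omega_0 \in \llbracket k \rrbracket$ with $\omega_0(i_0) = 1$ (so that $\omega_0 \in F^{i_0}$) such that at least $\delta |H|^{k+2}$ triples $(c_0,c_1,c) \in T_{k,i_0}$ satisfy $c_0, c_1 \in S$ and $b_{=r}(c_0,\omega_0) \ne b_{=r}(c_1,\omega_0)$.

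For each such triple, I would record the restricted discrepancy profile $\Delta(\omega') := b(c_0,\omega') - b(c_1,\omega') \in \bigoplus_i \ZZ^{d_i}$ for $\omega' \in F^{i_0}$. Since $b$ is in normal form with $\|b_{=i}(c,\omega)\|_1 \le M^{t-i}$, each $\Delta_{=i}(\omega')$ ranges over a set of size at most $(2M^{t-i}+1)^{d_i}$, so the full profile $\Delta|_{F^{i_0}}$ takes at most $O_{k,t}(M)^{O_{k,t}(D)}$ distinct values across all triples. A single pigeonhole then isolates a common profile $\Delta_*$ shared by at least $\delta / O_{k,t}(M)^{O_{k,t}(D)} \cdot |H|^{k+2}$ of the original triples.

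Next, I would let $R$ be the largest index in $\{r,\dots,t-1\}$ for which $\Delta_{*,=R}(\omega^\star) \ne 0$ for some $\omega^\star \in F^{i_0}$. This is well-defined because $\Delta_{*,=r}(\omega_0) \ne 0$ by construction and $\omega_0 \in F^{i_0}$. Fix such a witness $\omega^\star$ and set $\omega := \omega^\star$, $v := \Delta_{*,\le R}(\omega)$. By construction $\omega(i_0) = 1$. The maximality of $R$ forces $\Delta_{*,>R}(\omega') = 0$ for every $\omega' \in F^{i_0}$, which is exactly the upper-agreement hypothesis $b_{>R}(c_0,\omega') = b_{>R}(c_1,\omega')$ required by Lemma \ref{lem:incompatible-implies-1pc}. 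The norm bounds on $b$ give $\|v\|_1 \ll_t M^{O_t(1)}$, and $v_{=R} = \Delta_{*,=R}(\omega) \ne 0$ by the choice of $\omega$.

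Applying Lemma \ref{lem:incompatible-implies-1pc} with this $R$, $i_0$, $\omega$, $v$ and with input density $\delta / O_{k,t}(M)^{O_{k,t}(D)}$ then yields the stated approximate polynomial bound for $v \cdot f_{\le R}$. The argument is essentially just a pigeonhole followed by direct invocation of the preceding lemma; the only bookkeeping subtlety is the maximality choice of $R$, which is what guarantees both $v_{=R} \ne 0$ in the conclusion and the vanishing of $\Delta_{*,>R}$ needed to invoke the lemma. I do not anticipate any substantive obstacle beyond this.
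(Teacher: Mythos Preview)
Your proposal is correct. Both your argument and the paper's reduce to a pigeonhole plus a direct application of Lemma~\ref{lem:incompatible-implies-1pc}, but the bookkeeping differs slightly. The paper first selects $R$ as the largest level at which $b_{=R}$ fails a geometrically-decreasing upper compatibility threshold $\delta 2^{-(k+1)R}$, uses a union bound to find $\omega,i$ where many triples have a discrepancy at level $R$ but none above, and only then pigeonholes on the single value $v = b_{\le R}(c_0,\omega) - b_{\le R}(c_1,\omega)$. You instead pigeonhole once on the full discrepancy profile $\Delta|_{F^{i_0}}$ across all levels and all vertices of the upper face, and read off $R$ and $\omega^\star$ from the fixed profile afterwards. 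Your single-pigeonhole route is arguably cleaner; the paper's route does a smaller pigeonhole (only on $v$ at one vertex) but needs the geometric threshold trick to handle levels above $R$. Both lose a factor of $O_{k,t}(M)^{O_{k,t}(D)}$ and land in the same place.
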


A similar pair of statements holds if the function $b_{=t-1}$ fails to satisfy the other requirements of a generalized cocycle.
\begin{lemma}%
  \label{lem:no-cocycle-implies-1pc}
  Suppose $t \ge 1$, $k \ge  0$, $f$ is a $(t-1,d,M)$-polynomial hierarchy on a system of cubes $S_0,\dots,S_t$, and $g$ and $b$ obey the $(f,k,t,M)$-derivatives condition on $S \subseteq C^k(H)$, where $c|_F \in S_{\dim F}$ for each $c \in S$ and each face $F$ of $\llbracket k \rrbracket$ of dimension at most $t$, and $b$ is in normal form.  Suppose also that $\gcd(|H|, (t-1)!) = 1$ and write $D = \sum_{r=0}^{t-1} d_r$.

  Now suppose that for some $i \in [k]$, some $\omega \in \llbracket k \rrbracket$ and some $z \in \ZZ^{d_{t-1}}$, the following holds: there are at least $\eps |H|^{k+2}$ triples $(c_0,c_1,c) \in T_{k,i}$ such that $c_0,c_1,c \in S$, $b(c_0,\omega') = b(c_1,\omega')$ for each $\omega' \in F^i$, and
  \[
    b_{=t-1}(c,\omega) - \sum_{\omega' \in \llbracket k \rrbracket} Z_{t-1}(\omega,\omega') \fb_{=t-1}(\omega') = z
  \]
  where $Z_{t-1}$ is as in~\eqref{eq:zr} and
  \[
    \fb(\omega') = \begin{cases} b(c_0, \omega') &\colon \omega'(i) = 0 \\ b(c_1, \omega' \setminus \{i\}) &\colon \omega'(i) = 1 \end{cases}
  \]
  as in Definition~\ref{def:gen-cocycle}.  Then there exists $v \in \bigoplus_{r=0}^{t-1} \ZZ^{d_r}$ with $v_{=t-1}=z$, $\|v\|_1 \ll_t M^{O_t(1)}$, and such that
  \[
    \big|\big\{c \in C^{t-1}(H) \colon \partial^{t-1} (v \cdot f_{\le t-1})(c) = 0 \big\}\big| \gg_{k,t} O(M)^{-O_{k,t}(D)} \eps^{O_{k,t}(1)} |C^{t-1}(H)|  .
  \]
\end{lemma}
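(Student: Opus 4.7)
The plan is to parallel the proof of Lemma \ref{lem:incompatible-implies-1pc}, adapted to the top degree via the normal-form transformation of Lemma \ref{lem:canonical-b}. First, for each of the $\ge \eps|H|^{k+2}$ good triples $(c_0,c_1,c)$, I would write down two expressions for $\partial^k g(c)$: the $(f,k,t,M)$-derivatives condition at $c$ yields $\sum_{\omega} (-1)^{|\omega|} b(c,\omega) \cdot f_{\le t-1}(c(\omega))$, and the universal cocycle identity $\partial^k g(c) = \partial^k g(c_0) - \partial^k g(c_1)$ combined with the derivatives conditions at $c_0$ and $c_1$ produces $\sum_\omega (-1)^{|\omega|} \fb(\omega) \cdot f_{\le t-1}(c(\omega))$, exactly as in \eqref{eq:really-trying-to-glue}; the hypothesis $b(c_0,\omega') = b(c_1,\omega')$ for $\omega'\in F^i$ is what forces the common upper-face terms to cancel. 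Equating the two gives
\[
\sum_{\omega \in \llbracket k \rrbracket} (-1)^{|\omega|} \big(b(c,\omega) - \fb(\omega)\big) \cdot f_{\le t-1}(c(\omega)) = 0.
\]

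Next, I would put $\fb$ in normal form $\fb'$ via Lemma \ref{lem:canonical-b}, noting that the top piece reads $\fb'_{=t-1}(\omega) = \sum_{\omega'} Z_{t-1}(\omega,\omega') \fb_{=t-1}(\omega')$ exactly. Setting $u := b(c,\cdot) - \fb'$ (a difference of two normal-form configurations), the hypothesis becomes $u_{=t-1}(\omega_0) = z$ on the dense triple set, and I may restrict attention to $|\omega_0|=t-1$, since normal form forces $z=0$ when $|\omega_0|\ge t$ and smaller cases can be boosted to this one. The lower-degree corrections $\fb_{=r}-\fb'_{=r}$ are linear combinations of face indicators (Remark \ref{rem:normal-form}), and using the polynomial hierarchy's derivatives condition on each $f_{=r}$ rewrites every such correction in terms of $f_{\le r-1}$. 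This produces the working identity
\[
\sum_\omega (-1)^{|\omega|} u(\omega) \cdot f_{\le t-1}(c(\omega)) = \sum_\omega (-1)^{|\omega|} \alpha(c,\omega) \cdot f_{\le t-2}(c(\omega)),
\]
where $\alpha$ is built from $b(c_0,\cdot)$, $b(c_1,\cdot)$ and the hierarchy data and satisfies $\|\alpha(c,\omega)\|_1 \ll_{k,t} M^{O_{k,t}(1)}$.

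Then I would apply the discrete Leibniz rule (Lemma \ref{lem:product-rule}) to the left-hand side to isolate the contribution of $f_{=t-1}$. Normal-form-ness of $u_{=t-1}$ kills all terms with $|\eta|\ge t$, and for $|\eta|=t-1$ only $\omega=\eta$ contributes, giving a leading sum $\sum_{|\eta|=t-1} u_{=t-1}(\eta) \cdot \partial^{t-1} f_{=t-1}(c|_{F_\eta})$; every remaining contribution lives either in $f_{\le t-2}$ or in lower-order derivatives of $f_{=t-1}$ on faces of dimension $\le t-2$, and so can be bundled into the polynomial hierarchy's lower levels. Pigeonholing across the dense triple set, the tuple consisting of the values $\{u_{=t-1}(\eta)\}_{|\eta|=t-1}$, the piece $u_{<t-1}$, and $\alpha$ has $\ell^1$-entries bounded by $M^{O_t(1)}$ across $O_{k,t}(D)$ coordinates, hence takes at most $O(M)^{O_{k,t}(D)}$ distinct values. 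Restricting to a sub-collection on which all these equal fixed vectors, with $u_{=t-1}(\omega_0)=z$, costs at most that factor in density and yields an identity $\partial^{t-1}(v \cdot f_{\le t-1})(c|_{F_{\omega_0}}) = 0$ for a single vector $v$ with $v_{=t-1}=z$ and $\|v\|_1 \ll_t M^{O_t(1)}$; as the triple varies, $c|_{F_{\omega_0}}$ sweeps out the claimed dense set of $(t-1)$-cubes.

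The main technical obstacle is the disentangling of the other $|\eta|=t-1$ terms: the faces $F_\eta$ with $\eta \ne \omega_0$ share vertices with $F_{\omega_0}$, so $c|_{F_\eta}$ is not independent of $c|_{F_{\omega_0}}$ as the triple varies. Producing a single $v$ that works simultaneously for a dense family of $(t-1)$-cubes requires choosing which of the $k+2$ free directions of the triple to freeze, so that $c|_{F_\eta}$ for $\eta\ne\omega_0$ lies in a family small enough that its contribution can be absorbed into $v_{<t-1}$ via pigeonhole. This disentangling and the dimensional pigeonholing together produce the quantitative loss $O(M)^{-O_{k,t}(D)}$ in the final bound.
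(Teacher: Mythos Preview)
Your opening steps are correct and match the paper: from the cocycle identity $\partial^k g(c)=\partial^k g(c_0)-\partial^k g(c_1)$ and the compatibility assumption on $F^i$ you obtain $\sum_{\omega'} (-1)^{|\omega'|} (b(c,\omega') - \fb(\omega')) \cdot f(c(\omega')) = 0$, and putting $\fb$ in normal form $\fb'$ via Lemma~\ref{lem:canonical-b} gives $(b(c,\omega) - \fb'(\omega))_{=t-1} = z$ at the distinguished vertex $\omega$. After pigeonholing the full tuples $b(c,\cdot)$ and $\fb'(\cdot)$ to constants $B$, $B'$ (both in normal form, so vanishing for $|\omega'|\ge t$), one has
\[
\sum_{|\omega'| \le t-1} (-1)^{|\omega'|} (B(\omega') - B'(\omega')) \cdot f(c(\omega')) = 0
\]
on a set of cubes $c \in C^k(H)$ of density $\gg O(M)^{-O_{k,t}(D)} \eps$.

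The gap is in your final isolation step. This identity involves $f$ at \emph{all} vertices $c(\omega')$ with $|\omega'| \le t-1$, and you need to conclude that $(B(\omega)-B'(\omega)) \cdot f$ is an approximate polynomial of degree $t-2$. Your plan---freeze the $h_j$ for those $j$ with $\omega_j=0$, then absorb the contributions from the remaining faces $F_\eta$ ($|\eta|=t-1$, $\eta \ne \omega$) into $v_{<t-1}$---does not work. After freezing, the term $U(\eta)\cdot\partial^{t-1} f(c|_{F_\eta})$ still depends on values $f(c(\omega'))$ at points $c(\omega')$ that involve the frozen shifts $\bar h_{j^*}$ (for any $j^*$ with $\eta_{j^*}=1$, $\omega_{j^*}=0$); such points are translates of vertices of $c|_{F_\omega}$ by nonzero fixed elements of $H$ and are therefore \emph{not} vertices of $c|_{F_\omega}$. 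Hence no integer vector $w$ can express that contribution as $w \cdot f_{\le t-2}$ evaluated at vertices of $c|_{F_\omega}$, and pigeonholing on its value (a real number) is also unavailable. The Leibniz step is not the problem; the obstruction is that faces $F_\eta$ with $\eta\ne\omega$ genuinely involve foreign evaluation points, and no amount of freezing removes them.

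The paper's proof sidesteps this entirely. After arriving at the constant-coefficient identity above, it recognizes the situation as a relation $\sum_{|\eta| \le t-1} F_\eta(\psi_\eta(x,h_1,\dots,h_k)) = 0$ holding on a dense set, with linear forms $\psi_\eta(x,h) = x + \eta \cdot h$ and $F_\eta = (B(\eta)-B'(\eta)) \cdot f$. The system $\{\psi_\eta : |\eta| \le t-1\}$ has Cauchy--Schwarz complexity at most $t-2$ at index $\omega$ with denominator $(t-1)!$: the covering sets are built exactly as in the proof of Lemma~\ref{lem:incompatible-implies-1pc}, first using the coordinates $j$ with $\omega_j=1$ and then the level sets $\{\eta : |\eta|=r\}$ for $|\omega|<r\le t-1$. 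A single application of Lemma~\ref{lem:gvn} then gives that $v \cdot f := (B(\omega) - B'(\omega)) \cdot f$ has the claimed density of vanishing $(t-1)$-derivatives, with $v_{=t-1} = z$ and $\|v\|_1 \ll_t M^{O_t(1)}$. In short, the missing ingredient in your argument is precisely the discrete Cauchy--Schwarz complexity lemma, which is what performs the ``disentangling'' you correctly flag as the crux.
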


Again, the conclusion is surprising whenever $z \ne 0$, which exactly corresponds to a failure of the relevant part of Definition~\ref{def:gen-cocycle}.  In particular we again record a simple consequence.

\begin{corollary}%
  \label{cor:no-cocycle-implies-1pc}
  Suppose $t$, $k$, $f$, $S_0,\dots,S_t$, $S$, $g$, $b$ and $D$ are as in the first paragraph of Lemma~\ref{lem:no-cocycle-implies-1pc} \uppar{and $\gcd(|H|,(t-1)!) = 1$}, and $b_{=t-1}$ is not a generalized $k$-cocycle of type $t-1$ and loss $\delta$.

  Then there exists $v \in \bigoplus_{r=0}^{t-1} \ZZ^{d_r}$ with $v_{=t-1} \ne 0$, $\|v\|_1 \ll_t M^{O_t(1)}$, and such that
  \[
    \big|\big\{c \in C^{t-1}(H) \colon \partial^{t-1} (v \cdot f_{\le t-1})(c) = 0 \big\}\big| \gg_{k,t} O(M)^{-O_{k,t}(D)} \delta^{O_{k,t}(1)} |C^{t-1}(H)|  .
  \]
\end{corollary}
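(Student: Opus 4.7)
The argument is a pigeonhole reduction to either Lemma \ref{lem:incompatible-implies-1pc} or Lemma \ref{lem:no-cocycle-implies-1pc}, split on the way in which $b_{=t-1}$ fails to be a generalized $k$-cocycle of type $t-1$ and loss $\delta$ in the sense of Definition \ref{def:gen-cocycle}. Either (A) $b_{=t-1}$ is not $\delta$-almost upper compatible on $S$, or (B) $b_{=t-1}$ is $\delta$-upper compatible but the cocycle identity fails on more than $\delta |H|^{k+2}$ triples for some $i \in [k]$.

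In Case (A), Definition \ref{def:compatible} provides $\omega \in \llbracket k \rrbracket$ and $i \in [k]$ with $\omega(i)=1$ such that more than $\delta |H|^{k+2}$ triples $(c_0,c_1,c) \in T_{k,i}$ with $c_0,c_1 \in S$ satisfy $b_{=t-1}(c_0,\omega) \neq b_{=t-1}(c_1,\omega)$. For each such triple the full difference $v := b_{\le t-1}(c_0,\omega) - b_{\le t-1}(c_1,\omega) \in \bigoplus_{r=0}^{t-1} \ZZ^{d_r}$ satisfies $\|v_{=r}\|_1 \le 2M^{t-r}$ by the derivatives-condition bound on $b$, and hence takes at most $\prod_{r} O(M^{t-r})^{d_r} \le O_t(M)^{O_t(D)}$ distinct values. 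Pigeonholing, a fixed $v$ with $v_{=t-1} \neq 0$ arises on a $\delta / O_t(M)^{O_t(D)}$ fraction of the triples, and Lemma \ref{lem:incompatible-implies-1pc} applied with $R = t-1$ (its condition on $b_{>R}$ being vacuous) yields the required approximate-polynomial conclusion for $v \cdot f_{\le t-1}$, with $\|v\|_1 \ll_t M^{O_t(1)}$ as needed.

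In Case (B), we first pigeonhole on the position $\omega \in \llbracket k \rrbracket$ and on the value $z \in \ZZ^{d_{t-1}}$ of the cocycle defect (with $\|z\|_1 \ll_t M$ \emph{a priori}, so at most $O_t(M)^{O(d_{t-1})}$ choices) to fix $(\omega, z)$ with $z \ne 0$ that accounts for a positive fraction of the bad triples. The $\delta$-upper compatibility of $b_{=t-1}$ then lets us restrict further to triples where $b_{=t-1}(c_0,\omega') = b_{=t-1}(c_1,\omega')$ for every $\omega' \in F^i$, at the cost of at most $2^k \delta |H|^{k+2}$ triples. A last pigeonhole on the lower-order difference pattern $\big(b_{\le t-2}(c_0,\omega') - b_{\le t-2}(c_1,\omega')\big)_{\omega' \in F^i}$ reduces to a subset with a fixed pattern $w$. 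When $w \equiv 0$, full upper compatibility $b(c_0,\omega') = b(c_1,\omega')$ on $F^i$ holds for all the restricted triples, and Lemma \ref{lem:no-cocycle-implies-1pc} produces the desired $v$ with $v_{=t-1} = z \neq 0$.

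The principal obstacle is the residual sub-case $w \not\equiv 0$: upper compatibility strictly fails on our triples in some lower-degree component, yet we still need a $v$ whose top part $v_{=t-1}$ is nonzero, so we cannot appeal directly to a lower-$R$ version of Lemma \ref{lem:incompatible-implies-1pc}. The plan is to re-run the proof of Lemma \ref{lem:no-cocycle-implies-1pc} while carrying the residual $w$-terms along the common upper face $F^i$; these terms are absorbed into a suitably extended $v_{\le t-2}$, while $v_{=t-1} = z$ is preserved. Since $\|w\|_1, \|z\|_1 \ll_t M^{O_t(1)}$, the output still satisfies $\|v\|_1 \ll_t M^{O_t(1)}$. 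The most delicate step is tracking the density of cubes $c \in C^{t-1}(H)$ on which $\partial^{t-1}(v \cdot f_{\le t-1})$ vanishes, ensuring it remains at the claimed level $O_{k,t}(M)^{-O_{k,t}(D)} \eps^{O_{k,t}(1)}$ after the successive pigeonhole losses.
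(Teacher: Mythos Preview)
The paper's own proof is three lines: pigeonhole on $(i,\omega,z)$ with $z\ne 0$, then invoke Lemma~\ref{lem:no-cocycle-implies-1pc}. It neither splits into your Cases~(A)/(B) nor explains how the full compatibility hypothesis $b(c_0,\omega')=b(c_1,\omega')$ on $F^i$ required by Lemma~\ref{lem:no-cocycle-implies-1pc} is obtained. You have correctly noticed that something is being glossed over, and your Case~(A) via Lemma~\ref{lem:incompatible-implies-1pc} with $R=t-1$ is a clean and valid way to dispatch the failure of upper compatibility of $b_{=t-1}$ itself.

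Your Case~(B), however, is incomplete. There is first a quantitative slip: after pigeonholing on $(\omega,z)$ you hold only about $2^{-k}O_{k,t}(M)^{-d_{t-1}}\delta\,|H|^{k+2}$ triples, so then removing up to $2^{k}\delta\,|H|^{k+2}$ of them to force $b_{=t-1}$-compatibility may annihilate the set. The fix is to pigeonhole on the whole tuple $\big(b(c_0,\cdot),b(c_1,\cdot),b(c,\cdot)\big)$ at once, at cost $O_{k,t}(M)^{O_{k,t}(D)}$, fixing $z$ and $w$ simultaneously. More seriously, your $w\ne 0$ sub-case is only a plan. To execute it you must re-run the proof of Lemma~\ref{lem:no-cocycle-implies-1pc} with the residual $w$-terms at the upper-face vertices $c_0(\omega')$, $\omega'\in F^i$, $|\omega'|\le t-2$; this adds extra linear forms $\psi_\nu = x+h_i+h'_i+\sum_{j\ne i}\nu_j h_j$ for $|\nu|\le t-3$ to the system, now on $\ZZ^{k+2}$, and you must verify that the Cauchy--Schwarz complexity at $\phi_\omega$ remains $\le t-2$. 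It does---each $\psi_\nu$ can be absorbed into one of the existing sets $\Sigma_r$ by choosing the $h'_i$-coordinate of the witness $\sigma_r$ appropriately (e.g.\ $h'_i=-1$ in the coordinate-$i$ set when $\omega_i=1$, and $h'_i=0$ otherwise)---but this verification is the actual content of the argument, and your proposal does not carry it out.
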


Next, we show that we can bootstrap these approximate polynomial conclusions to improved derivatives conditions.  Crucially, we now need some hypothesis on the subgroups of the ambient group $H$.

\begin{lemma}%
  \label{lem:1pc-implies-derivs}
  Let $t \ge 1$, let $S_0,\dots,S_{t+1}$ be a system of cubes of parameter $1-\eps$ where $\eps \le \eps_0(t)$ is sufficiently small; let $f$ be an $(t-1,d,M)$-polynomial hierarchy on $S_0,\dots,S_{t}$; and suppose $\phi \colon S_0 \to \RR$ satisfies the $(f,t+1,t,M)$-derivatives condition on $S_{t+1}$.  Suppose also that
  \[
    \big|\big\{c \in C^{t}(H) \colon \partial^{t} \phi(c) = 0 \big\}\big| \ge \delta |C^t(H)|.
  \]
  Also, let $\eta > 0$ be another parameter; suppose $H$ has no proper subgroups of order at least $\delta |H|/J$ where $J=J(t)$ is some absolute constant; suppose $\gcd(|H|,(t-1)!) = 1$; and write $D = \sum_{i=0}^{t-1} d_i$.
  
  Then either:---
  \begin{enumerate}[label=(\Alph*)]
    \item $\phi$ satisfies the $\big(f,t,t,M'\big)$-derivatives condition on a set $S \subseteq S_t$ with $\mu(S_t \setminus S) \le \eta$, where $M' \ll_t M \delta^{-O_t(1)}$; or
    \item for some $R$, $1 \le R \le t-1$, and some $v \in \bigoplus_{r=1}^{R} \ZZ^{d_r}$ with $v_{=R} \ne 0$ and $\|v\|_1 \ll_t (M/\delta)^{O_t(1)}$, we have
      \[
        \big|\big\{c \in C^{R}(H) \colon \partial^R (v \cdot f_{\le R})(c) = 0 \big\}\big| \ge O(M/\delta)^{-O_t(D)} \eta^{O_t(1)} |C^R(H)| .
      \]
  \end{enumerate}
  Note in particular that if $t=1$ then option (B) is nonsensical and so necessarily (A) holds.
\end{lemma}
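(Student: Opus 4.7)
The plan is to convert the approximate-polynomial hypothesis into a pointwise representation of $\partial^t \phi(c)$ as a bounded integer combination of $f_{\le t-1}$ evaluated at the vertices of $c$, valid for almost all $c \in S_t$. If this conversion ever fails, the failure will manifest as a lower-degree approximate polynomial identity, yielding (B).

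First, apply Corollary \ref{cor:cube-system} to the set $P = \{c \in C^t(H) : \partial^t \phi(c) = 0\}$ (of density at least $\delta$) to obtain a system of cubes $T_0, \dots, T_t$ of parameter $\delta' \gg_t \delta$ with $T_t \subseteq P \cap S_t$. For each $c \in S_t$, define
\begin{equation*}
  H_c \;=\; \bigl\{ h \in H : c + \square^t(h) \in T_t \text{ and } [c, c + \square^t(h)]_{t+1} \in S_{t+1}\bigr\}.
\end{equation*}
Averaging against the density of $T_t$ and $S_{t+1}$ shows $|H_c| \gg_t (\delta \eta)^{O_t(1)} |H|$ for all but an $\eta$-fraction of $c \in S_t$. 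For any such ``good'' $c$ and $h \in H_c$, the $(f, t+1, t, M)$-derivatives condition applied at $[c, c + \square^t(h)]_{t+1}$, combined with $\partial^t \phi(c + \square^t(h)) = 0$, gives
\begin{equation*}
  \partial^t \phi(c) \;=\; A(c, h) \;-\; B(c, h),
\end{equation*}
where $A(c, h) = \sum_{\omega' \in \llbracket t \rrbracket} (-1)^{|\omega'|} b^{\textup{bot}}(c, h, \omega') \cdot f_{\le t-1}(c(\omega'))$ is already in the target form, and $B(c, h) = \sum_{\omega'} (-1)^{|\omega'|} b^{\textup{top}}(c, h, \omega') \cdot f_{\le t-1}(c(\omega') + h)$ collects the shifted-vertex contributions, with $b^{\textup{bot}}$ and $b^{\textup{top}}$ obtained by splitting the normal-form $b([c, c+\square^t(h)]_{t+1}, \cdot)$ (see Lemma \ref{lem:canonical-b}) along the final coordinate.

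The base case $t = 1$ is immediate: $f_{=0}$ is constant (Remark \ref{rem:hierarchy-t0}), so $f_{\le 0}(y+h) = f_{\le 0}(y)$ and $B(c, h)$ may be rewritten at the vertices of $c$; reading off $b'(c, \omega') = b^{\textup{bot}}(\omega') - b^{\textup{top}}(\omega')$ gives (A) with $M' = O(M)$, and (B) is vacuous. For $t \ge 2$, we proceed by a dichotomy. If the normal-form $b$ fails to be upper-compatible on $S_{t+1}$ at the relevant coordinate, Corollary \ref{cor:incompatible-implies-1pc} already produces a vector $v$ with $v_{=R} \ne 0$ such that $v \cdot f_{\le R}$ is an approximate polynomial of degree $R - 1$, giving (B). Otherwise $b^{\textup{top}}$ depends only on the shifted cube $c' = c + \square^t(h)$, and subtracting the representations of $\partial^t\phi(c)$ arising from two choices $h, h' \in H_c$ yields the consistency identity $A(c, h) - A(c, h') = B(c, h) - B(c, h')$, i.e.\ a relation among $f_{\le t-1}$-values at vertices of $c$, $c + \square^t(h)$, and $c + \square^t(h')$. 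Averaging this identity over $(h, h') \in H_c \times H_c$ --- using that $c \pm \square^t(h)$ lies in $T_t$ where $\partial^t \phi$ vanishes --- we attempt to extract a canonical integer-valued $b'(c, \omega')$ supplying the $(f, t, t, O_t(M))$-derivatives condition.

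The subgroup hypothesis on $H$ enters precisely when this averaging reveals ambiguity: should different choices $h \in H_c$ produce systematically different candidate coefficients, the ``stabilizing'' set of shifts must form (up to a Freiman-type inefficiency) an approximate subgroup of $H$; the no-large-subgroups assumption then forces this set to be all of $H$, securing consistency and hence (A), or else the failure of consistency is packaged via Corollary \ref{cor:no-cocycle-implies-1pc} into a nontrivial low-degree identity, giving (B). The main obstacle is ensuring that this averaging preserves the integer (rather than merely rational) nature of $b'(c, \omega')$ with bounded coefficients, while keeping the final parameter $M'$ polynomial in $M$ and independent of $\delta, \eta$; this requires weaving together the normal-form structure of $b$, the cocycle-type classifications of \S\ref{subsec:cocycles-and-strong}, and the density bounds provided by $H_c$, and is where the divisibility hypothesis $\gcd(|H|, (t-1)!) = 1$ is needed to invert certain discrete averages cleanly.
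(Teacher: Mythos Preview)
Your central claim --- that $|H_c| \gg_t (\delta\eta)^{O_t(1)}|H|$ for all but an $\eta$-fraction of $c \in S_t$ --- is false, and this is the essential difficulty the lemma is designed to overcome. The set $T_t$ has density only $\gg_t \delta^{O_t(1)}$, and nothing prevents it from being supported on cubes $\angle(x;h_1,\dots,h_t)$ whose direction tuple $(h_1,\dots,h_t)$ lies in a set of density $\sim\delta$ in $H^t$. In that case, for a cube $c\in S_t$ with a different direction tuple, \emph{no} translate $c+\square^t(h)$ lies in $T_t$ at all, so $H_c=\emptyset$ for all but a $\delta$-fraction (not a $(1-\eta)$-fraction) of $c$. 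An averaging argument only gives $\EE_c |H_c| \gg \delta^{O_t(1)}|H|$, which says nothing about almost every $c$.

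The translation mechanism you describe is correct and is exactly Claim~\ref{claim:translate} in the paper, but it only upgrades a derivatives condition on a set $S$ to one on the set $W$ of cubes with many translates in $S$ --- and $W$ need be no larger than $S$ itself. The paper therefore runs an iterative growth scheme: alternately translate (Claim~\ref{claim:translate}) and \emph{glue} pairs of cubes along a common face (Claim~\ref{claim:stitch-derivs}), passing from $S$ to $W$ to $W+_{i,\gamma}W$. The growth step is analysed in Claim~\ref{claim:anything-grows}: either $|W+_{i,\gamma}W|\ge(1+c_2)|S|$, or the fibers $V_{\vec h,i}$ have additive energy so close to maximal that Fournier's theorem produces a large proper subgroup of $H$, or $S$ was already almost everything. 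The no-large-subgroups hypothesis kills the middle option, so the set grows geometrically until it fills $S_t$ up to density $1-c_1$; a final application of translation and glueing (Claim~\ref{claim:augment-derivs}) then reaches density $1-\eta$. Your sketch locates the subgroup hypothesis vaguely in ``ambiguity of averaging'', but in fact it enters precisely through this additive-energy dichotomy in the growth step, which your argument omits entirely.

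A secondary point: the reason the output parameter in (A) is $O_t(M)$ rather than depending on $\delta,\eta$ is that Claim~\ref{claim:translate} rebuilds the $t$-derivatives condition from the ambient $(t{+}1)$-condition on $Y$ (with parameter $O_t(M)$) at each step, rather than compounding the parameter of the previous iterate --- so the iteration does not blow up $M$.
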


Finally, we show that a function obeying an $(f,t,t,M')$-derivatives condition can be split into a piece obeying the more usual $(f,t,t-1,M'')$-derivatives condition, and a function that is pointwise a bounded linear combination of the values $f_{\le t-1}$ (as in part (iii) above).

\begin{lemma}%
  \label{lem:derivs-implies-derivs}
  Let $t \ge 1$, let $f$ be an $(t-1,d,M)$-polynomial hierarchy on a system of cubes $S_0,\dots,S_{t}$ of parameter $\eps$ where $\eps \le \eps_0(t)$ is sufficiently small, and suppose $\phi \colon S_0 \to \RR$ and $b$ satisfy the $(f,t,t,M)$-derivatives condition on $S_{t}$.  Also let $\eta > 0$ be a parameter.

  Then either:---
  \begin{enumerate}[label=(\Alph*)]
    \item there exists a set $S \subseteq S_{t}$ with $\mu(S_t \setminus S) \le \eta$, and a decomposition $\phi = \phi_0 + \phi_1$ such that (i) $\phi_0$ satisfies the $(f,t,t-1,O_t(M))$-derivatives condition on $S$, and (ii) for each $x \in S_0$ there exists $v(x) \in \ZZ^{d_{t-1}}$ with $\phi_1(x) = v(x) \cdot f_{=t-1}(x)$ and $\|v(x)\|_1 \ll_t M + d_{t-1}$;

    \item or, for some $R$, $1 \le R \le t-1$, and some $v \in \bigoplus_{r=1}^{R} \ZZ^{d_r}$ with $v_{=R} \ne 0$ and $\|v\|_1 \ll_t M^{O_t(1)}$, we have
    \[
      \big|\big\{c \in C^{R}(H) \colon \partial^R (v \cdot f_{\le R})(c) = 0 \big\}\big| \ge O_t(M)^{-O_{t}(D)} \eta^{O_{t}(1)} |C^R(H)| .
    \]
  \end{enumerate}
  Again, when $t=1$ statement (A) holds unconditionally.
\end{lemma}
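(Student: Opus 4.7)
The plan is to use the cocycle structure hiding inside $b_{=t-1}$: if $b_{=t-1}$ is a generalized cocycle, then it arises (via Lemma~\ref{lem:gen-cocycle}) from an essentially $\ZZ$-valued function $\lambda$ on $H$, and one can peel the top-degree $f_{=t-1}$ contribution of $\partial^{t}\phi$ off as $\partial^{t}(\lambda\cdot f_{=t-1})$ modulo lower-degree corrections; if $b_{=t-1}$ is not a generalized cocycle, Corollary~\ref{cor:incompatible-implies-1pc} or Corollary~\ref{cor:no-cocycle-implies-1pc} hands us the approximate polynomial identity required for case (B). The first step is to put $b$ into normal form via Lemma~\ref{lem:canonical-b}, at the cost of replacing $M$ by $O_{t}(M)$, and to fix a threshold $\delta \ll_{t} \eta^{O_{t}(1)}$ chosen so that every ``loss $\delta$'' exceptional set produced below has measure at most $\eta$.

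Next one runs the dichotomy on $b_{=t-1}$. If the almost-upper-compatibility clause of Definition~\ref{def:compatible} fails at loss $\delta$, Corollary~\ref{cor:incompatible-implies-1pc} supplies some $R\in\{1,\dots,t-1\}$ and $v \in \bigoplus_{r=1}^{R}\ZZ^{d_{r}}$ with $v_{=R}\neq 0$, $\|v\|_{1}\ll_{t} M^{O_{t}(1)}$, and $v\cdot f_{\le R}$ an approximate polynomial of degree $R-1$ with parameter $O_{t}(M)^{-O_{t}(D)}\eta^{O_{t}(1)}$. If upper compatibility holds at this loss but the cocycle identity of Definition~\ref{def:gen-cocycle} fails, Corollary~\ref{cor:no-cocycle-implies-1pc} provides exactly the same conclusion (with $R=t-1$ and $v_{=t-1}\neq 0$). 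In either situation case (B) holds, so henceforth assume neither failure occurs.

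Then $b_{=t-1}$ really is a generalized $t$-cocycle of type $t-1$ and loss $\delta$, and Lemma~\ref{lem:gen-cocycle} with $k=t$, $r=t-1$ produces $\lambda \colon H \to \RR^{d_{t-1}}$ with $\|\lambda\|_{\infty} \ll_{t} M+d_{t-1}$, such that the induced $\Lambda(c,\omega') := \sum_{\omega} Z_{t-1}(\omega,\omega')\lambda(c(\omega))$ equals $b_{=t-1}(c,\cdot)$ on a set $S \subseteq S_{t}$ with $\mu(S_{t}\setminus S)\le \eta$. Since $k-r=1$, clause (ii) of the lemma forces $\partial^{1}\lambda \in \ZZ^{d_{t-1}}$ on every edge, so after a constant shift $\lambda$ may be taken $\ZZ^{d_{t-1}}$-valued. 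A direct calculation from the formula for $Z_{t-1}$ (or from Remark~\ref{rem:normal-form}, noting that in this range the only contributing face is $F_{\vec 1}=\llbracket t\rrbracket$) collapses to $\Lambda(c,\omega') = \lambda(c(\omega'))-\lambda(c(\vec 1))$ for all $\omega' \in \llbracket t \rrbracket$.

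Set $\phi_{1}(x) := \lambda(x)\cdot f_{=t-1}(x)$, so that (A)(ii) is immediate with $\|v(x)\|_{1} = \|\lambda(x)\|_{1} \ll_{t} M + d_{t-1}$. For $c \in S$, substituting $\Lambda$ for $b_{=t-1}$ in the $(f,t,t,M)$-derivatives condition and using the identity above for $\Lambda$ gives
\[
  \partial^{t}(\phi-\phi_{1})(c) = -\lambda(c(\vec 1))\cdot\partial^{t}f_{=t-1}(c) + \sum_{\omega'\in\llbracket t\rrbracket} (-1)^{|\omega'|} b_{\le t-2}(c,\omega')\cdot f_{\le t-2}(c(\omega')).
\]
The first term unfolds to an $f_{\le t-2}$-expansion by applying the polynomial-hierarchy condition satisfied by each $f_{t-1,j}$, and combining it with the second term yields an $(f,t,t-1,M')$-derivatives condition for $\phi_{0}:=\phi-\phi_{1}$ on $S$ with $M' \ll_{t} M^{O_{t}(1)}+d_{t-1}$; this is case (A). The degenerate case $t=1$ does not fit this framework but is disposed of directly: since $S_{0},S_{1}$ has parameter $1-\eps$, the graph $(S_{0},S_{1})$ has diameter at most $2$, so $\phi_{1}(x):=\phi(x)-\phi(x_{0})$ for some fixed $x_{0}\in S_{0}$ is of the form $v(x)\cdot f_{=0}$ with $\|v(x)\|_{1}\le 2M$, and $\phi_{0}:=\phi(x_{0})$ trivially satisfies the required vanishing condition. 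The main obstacle in the general case will be the bookkeeping in the central cocycle step: one must verify that the generalized cocycle property is precisely the algebraic identity needed to absorb the $f_{=t-1}$ contribution of $\partial^{t}\phi$ into a derivative of a genuine function $\phi_{1}$, and that any deviation from this property is detected quantitatively by the lemmas of Section~\ref{subsec:cocycles-and-strong} as a lower-degree approximate polynomial.
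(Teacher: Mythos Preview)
Your proposal is correct and follows essentially the same route as the paper: put $b$ in normal form; if $b_{=t-1}$ fails upper compatibility or the generalized cocycle identity, invoke Corollaries~\ref{cor:incompatible-implies-1pc} and~\ref{cor:no-cocycle-implies-1pc} to land in case~(B); otherwise apply Lemma~\ref{lem:gen-cocycle} to extract an integer-valued $\lambda$, set $\phi_1=\lambda\cdot f_{=t-1}$, and verify that $\phi_0=\phi-\phi_1$ satisfies a degree~$t-1$ derivatives condition. The only cosmetic difference is in this last verification: you compute $\Lambda(c,\omega')=\lambda(c(\omega'))-\lambda(c(\vec 1))$ explicitly and absorb the residual $-\lambda(c(\vec1))\cdot\partial^t f_{=t-1}(c)$ term via the hierarchy condition on $f_{=t-1}$, whereas the paper achieves the same effect by re-applying Lemma~\ref{lem:canonical-b} to the modified coefficients $\fb_{=t-1}=b_{=t-1}-\lambda$ and observing that the normal-form output at level $t-1$ vanishes (the hierarchy correction is then built into the lower-order terms of that lemma). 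The $t=1$ case is handled identically in both.
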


We briefly record the deductions of Corollary~\ref{cor:incompatible-implies-1pc} and Corollary~\ref{cor:no-cocycle-implies-1pc}, which are direct pigeonholing arguments.

\begin{proof}[Proof of Corollary~\ref{cor:incompatible-implies-1pc}]
  The hypothesis implies (\emph{a fortiori}) that there is some $r$, $1 \le r \le t-1$, such that $b_{=r}$ is not $\delta 2^{-(k+1)r}$-almost upper compatible. We let $R$ denote the largest $r$ for which this holds.
  
  By a union bound, this implies that there exists $\omega \in \llbracket k \rrbracket$ and $i \in [k]$ with $\omega(i) = 1$, such that the number of triples $(c_0,c_1,c) \in T_{k,i}$ such that $c_0,c_1 \in S$, $b_{=R}(c_0,\omega)-b_{=R}(c_1,\omega) \ne 0$, but $b_{>R}(c_0,\omega') = b_{>R}(c_1,\omega')$ for all $\omega' \in F^i$, is at least
  \[
    2^{-(k+1)R} \delta\, |H|^{k+2} - \sum_{r=R+1}^{t-1} \sum_{\omega' \in F^i} 2^{-(k+1)r } \delta\, H^{k+2} \gg_{k,t} \delta\, |H|^{k+2} \ .
  \]
  Since $\|b(c_0,\omega)\|_1,\ \|b(c_1,\omega)\|_1 \ll_t M^{O_t(1)}$, there are at most $O_t(M)^{O_t(D)}$ possible values $v = b(c_0,\omega)-b(c_1,\omega)$ with $v_{=R} \ne 0$, and so one such value $v$ occurs for $\gg_{k,t} \delta O(M)^{-O_t(D)} |H|^{k+2}$ triples.  

  This choice of $\omega$, $i$ and $v$ (with $v_{=R} \ne 0$) then obeys the hypotheses of Lemma~\ref{lem:incompatible-implies-1pc}, and the result follows.
\end{proof}

\begin{proof}[Proof of Corollary~\ref{cor:no-cocycle-implies-1pc}]
  If $b_{=t-1}$ fails to be $2^{-2k}\delta$ upper-compatible on $S$, we are done by Corollary~\ref{cor:incompatible-implies-1pc}, so we can assume that it is.
  Since $b_{=t-1}$ is not a generalized $k$-cocycle of type $t-1$ and loss $\delta$, part (ii) of Definition~\ref{def:gen-cocycle} must fail.
  By pigeonholing in the choice of $\omega \in \llbracket k \rrbracket$ in that definition, there exists $i \in [k]$ and $\omega \in \llbracket k \rrbracket$ such that for at least $2^{-k} \delta |H|^{k+2}$ triples $(c_0,c_1,c) \in T_{k,i}$, we have
  \begin{equation}
    \label{eq:no-cocycle-neq}
    b_{=t-1}(c,\omega) - \sum_{\omega' \in \llbracket k \rrbracket} Z_{=t-1}(\omega,\omega') \fb_{=t-1}(\omega') \ne 0
  \end{equation}
  where $\fb$ and $Z_{t-1}$ are as in the statement of Lemma~\ref{lem:no-cocycle-implies-1pc}.
  Since $b_{=t-1}$ is $2^{-2k}\delta$ upper-compatible, for at least $2^{-k-1} \delta |H|^{k+2}$ of these triples $(c_0,c_1,c)$ we simultaneously have $b_{=t-1}(c_0,\omega')=b_{=t-1}(c_1,\omega')$ for all $\omega' \in F^i$.
  
  Finally, since the left-hand side of~\eqref{eq:no-cocycle-neq} takes $O(M)^{O_{k,t}(D)}$ distinct values, we can find a fixed $z \in \ZZ^{d_{t-1}}$, $z \ne 0$, such in addition to these previous conditions,
  \[
    b_{=t-1}(c,\omega) - \sum_{\omega' \in \llbracket k \rrbracket} Z_{t-1}(\omega,\omega') \fb_{=t-1}(\omega') = z
  \]
  holds for at least $2^{-k-1} \delta M^{-O_{k,t}(D)}|H|^{k+2}$ such triples $(c_0,c_1,c)$.
  At this point, the hypotheses of Lemma~\ref{lem:no-cocycle-implies-1pc} are satisfied, and applying that lemma gives the result.
\end{proof}

We now prove Theorem~\ref{thm:strong-derivs}, assuming all of these statements.  The proofs of Lemma~\ref{lem:incompatible-implies-1pc}, Lemma~\ref{lem:no-cocycle-implies-1pc}, Lemma~\ref{lem:1pc-implies-derivs} and Lemma~\ref{lem:derivs-implies-derivs} are given in the next two sections.

\begin{proof}[Proof of Theorem~\ref{thm:strong-derivs}]
  We first argue the weaker bound that holds for large $t$, and which is more direct.  Since we treat the case $t=1$ in detail below, to avoid corner cases we assume $t \ge 2$ here.  We work by induction on the quantity $I = \sum_{r=0}^{t-1} (r+1) d_r$.

  Fix $b$ such that $g$ and $b$ obey the $(f,s+1,t,M)$-derivatives condition.  Clearly if the same $g$ and $b$ obey the $(f,s+1,t,M,\delta)$-strong derivatives condition there is nothing to do.
  
  If not, either some $b_{=r}$ is not $\delta$-almost upper compatible (for $1 \le r \le t-1$), or $b_{=t-1}$ fails to be a generalized $(s+1)$-cocycle of type $t-1$ and loss $\delta$.
  By Corollary~\ref{cor:incompatible-implies-1pc} or Corollary~\ref{cor:no-cocycle-implies-1pc} respectively, in either case we deduce that there is some $R$, $1 \le R \le t-1$, and some $v \in \bigoplus_{r=0}^R \ZZ^{d_r}$ such that $v_{=R} \ne 0$, $\|v\|_1 \ll_t M^{O_t(1)}$ and
  \begin{equation}
    \label{eq:1pc-cond}
    \big|\big\{c \in C^{R}(H) \colon \partial^{R} (v \cdot f_{\le R})(c) = 0 \big\}\big| \ge \nu |C^{R}(H)|
  \end{equation}
  where $\nu \ge O_{s}(M)^{-O_{s}(D)} \delta^{O_{s}(1)}$.
  We note that these bounds on $\nu$ and $\|v\|_1$ are stronger than the bound
  \begin{equation}
    \label{eq:new-v-nu-bound}
    \|v\|_1,\ 1/\nu \le O_s(M/\delta)^{O_s(D)^{O_s(1)}}.
  \end{equation}
  
  We then invoke Lemma~\ref{lem:1pc-implies-derivs} on $v \cdot f_{\le R}$ (noting that $v \cdot f_{\le R}$ obeys an $\big(f,R+1,R, \|v\|_1 M\big)$-derivatives condition), as well as Lemma~\ref{lem:derivs-implies-derivs}, with parameter $\eta=(\delta/I)^{C}/C$ for some suitable absolute constant $C=C(s)$ to be specified shortly.
  There are two possible outcomes.
  
  One (option (B) in either case) is that we deduce~\eqref{eq:1pc-cond} again but with a smaller value $R'$, $1 \le R' < R$, and new values $\nu'$ and $v'$.
  Moreover the bounds this gives on $\nu'$ and $\|v'\|_1$ are of the same form as~\eqref{eq:new-v-nu-bound}, albeit with worse implied constants.
  If this happens, we try again with these new parameters until this case no longer occurs, which must happen after at most $t-2$ attempts.
  In what follows, we abuse notation to write $v$, $R$ and $\nu$ for the final choices obtained, which still obey~\eqref{eq:new-v-nu-bound}.\footnote{Although~\eqref{eq:new-v-nu-bound} is initially pessimistic, it becomes roughly the best we can say after iterating option (B) several times.}
  
  Given the other outcome (option (A) both times), with the help of Corollary~\ref{cor:cube-system-patch}, we find: a subsystem of cubes $S'_{0},\dots,S'_{s+1}$ with $S'_k \subseteq S_k$ for each $0 \le k \le s+1$; a decomposition $v \cdot f_{\le R} = \phi_0 + \phi_1$; and a parameter $M'$; such that the following hold:---
  \begin{itemize}
    \item $S'_0,\dots,S'_{s+1}$ has parameter $1-\eps-O_s(\eta^{1/O_s(1)})$ and $\mu(S_k \setminus S'_k) \ll_s \eta^{1/O_s(1)}$ for each $0 \le k \le s+1$;
    \item the function $\phi_0$ obeys the $(f,R,R-1,M')$-derivatives condition on $S'_R$;
    \item $\phi_1(x) = u(x) \cdot f_{\le R-1}(x)$ for each $x \in S'_0$, where $u \in \bigoplus_{i=0}^{R-1} \ZZ^{d_i}$ has $\|u\|_1 \le M'$; and
    \item $M' \le O_s(M/\delta)^{-O_s(D)^{O_s(1)}}$.
  \end{itemize}
  By setting the constant $C=C(s)$ in the choice of $\eta$ appropriately, we can assume that $S'_0,\dots,S'_{s+1}$ has parameter at least $1-\eps-\delta/2I$ and that $\mu(S_k \setminus S'_k) \le \delta/2I$ for each $k$.

  To conclude, we define a new hierarchy $f'$ by deleting some $f_{R,j}$ from $f_{=R}$ and adding $\phi_0$ to level $f_{=R-1}$.
  Specifically, pick any $j \in [d_R]$ such that $v_{R,j} \ne 0$ (possible by hypothesis); then set
  \[
    d'_r = \begin{cases} d_r - 1 &\colon r = R \\ d_r + 1 &\colon r=R-1 \\ d_r &\colon \text{otherwise} \end{cases}
  \]
  and
  \[
    f'_{=r} = \begin{cases}
      f_{=r} &\colon r > R \\
      \frac1{v_{R,j}} (f_{R,1},\dots,f_{R,j-1},f_{R,j+1},\dots,f_{R,d_R}) &\colon r=R \\
      \frac1{v_{R,j}} (f_{R-1,1},\dots,f_{R-1,d_{R-1}}, \phi_0)  &\colon r = R-1 \\
      \frac1{v_{R,j}} f_{=r} &\colon r < R-1  .
    \end{cases}
  \]
  By the properties above, this is an $(s-1,d',M'')$-polynomial hierarchy on $S'_0,\dots,S'_s$, which reduces $f$ with parameter $K$, where $K, M'' \le (M' |v_{R,j}|)^{O_s(1)}$.
  Hence, using Lemma~\ref{lem:use-reduction}, the original hypotheses of the theorem are satisfied by $g$, $f'$, $\eps'=\eps+\delta/2I$ and $K M''$.
  Moreover, the quantity $I$ has decreased by one.

  The result now follows by inductive hypothesis, applied with $\delta' = (1-1/2I)\delta$.
  In the inductive step, we use the fact (whose proof is straightforward) that if $f'$ reduces $f$ with parameter $K_1$ and $f''$ reduces $f'$ with parameter $K_2$ then $f''$ reduces $f$ with parameter $K_1 K_2$.
  
\vspace{\baselineskip}

  To achieve the better bound when $t \le 2$, we adapt this argument slightly.
  First, note that when $t \le 2$ in the theorem, in the above argument we only ever apply Lemma~\ref{lem:1pc-implies-derivs} or Lemma~\ref{lem:derivs-implies-derivs} with $t=1$, and so the recursive step of reducing $R$ (i.e., option (B) in the dichotomies) never arises.

  Next, instead of finding just one $v$ that represents a failure of the strong derivatives condition for $g$, we record a list of all such $v$ simultaneously.
  Firstly, when $t=2$ we define a set of vectors $V \subseteq \ZZ^{d_1}$ to consist of all $v$ such that, for some $\omega \in \llbracket s+1 \rrbracket$ and $i \in [s+1]$ with $\omega(i)=1$, there are at least $\sigma |H|^{s+3}$ triples $(c_0,c_1,c) \in T_{s+1,i}$ such that $c_0,c_1 \in S_{s+1}$ and $b(c_0,\omega)_{=1}-b(c_1,\omega)_{=1}= v$ (i.e., $v$ obeys the hypothesis of Lemma~\ref{lem:incompatible-implies-1pc} in the special case $t=2$, $k=s+1$, with these values of $\omega$, $i$ and with $\eps$ replaced by $\sigma$).
  Similarly, for $t=1$ or $t=2$ we define $W \subseteq \ZZ^{d_{t-1}}$ to consist of all $z$ that obey the hypothesis of Lemma~\ref{lem:no-cocycle-implies-1pc} (with $k=s+1$ and $\eps$ replaced by $\sigma$).
  Here $\sigma>0$ is some parameter to be determined.
  We then set $U = V \cup W$ (if $t=2$) or $U = W$ (if $t=1$), so $U \subseteq \ZZ^{d_{t-1}}$.

  Note that it follows from the definitions of $V$ and $W$ that any $u \in U$ has $\|u\|_1 \ll M^{O_s(1)}$.
  Also, we now fix a subset $U' \subseteq U$ of size at most $d_{t-1}$ such that $\spn_\QQ(U') = \spn_\QQ(U)$ as vector subspaces of $\QQ^{d_{t-1}}$.

  If $u \in V$, Lemma~\ref{lem:incompatible-implies-1pc} tells us that $u \cdot f_{=1}$ is an approximate polynomial of degree $t-1$ and parameter at least $O_s(M)^{-O_s(D)} \sigma^{O_s(1)}$.
  If $u \in W$ and $t=1$, by Lemma~\ref{lem:no-cocycle-implies-1pc} we get the same conclusion; if $t=2$ then we get that $v \cdot f_{\le 1}$ is an approximate polynomial of degree $1$ with the same parameter for some $v \in \ZZ^{d_0} \oplus \ZZ^{d_1}$ with $v_{=1}=u$; however, since $f_{=0}$ consists of constant functions (by Remark~\ref{rem:hierarchy-t0}) this implies the same conclusion for $u \cdot f_{=1}$.
  Hence, for all $u \in U$, $u \cdot f_{=1}$ is an approximate polynomial of degree $t-1$ and parameter at least $O_s(M)^{-O_s(D)} \sigma^{O_s(1)}$.

  When $t=1$, the statement that $u \cdot f_{=0}$ as an approximate polynomial of degree $0$ states that it is zero a positive fraction of the time (and in fact we did not need the machinery of Lemma~\ref{lem:incompatible-implies-1pc} or Lemma~\ref{lem:no-cocycle-implies-1pc} to tell us this).  Moreover, each $f_{0,j}$ is a constant function (again, see Remark~\ref{rem:hierarchy-t0}), so $u \cdot f_{=0}$ is identically zero.

  We can therefore build a new system of cubes as follows.
  We pick a subset $J \subseteq [d_0]$, $|J|=d_0 - |U'|$ such that the standard basis vectors $\{e_j \in \ZZ^{d_0} \colon j \in J\}$ together with $U'$ form a basis $\cB=(\beta_1,\dots,\beta_{d_0})$ for $\QQ^{d_0}$.
  We then define $f'_{=0}$ to be $\frac1{\det A} (f_{0,j})_{j \in J}$.
  Write $A$ for the $d_0 \times d_0$ matrix with the vectors $v \in \cB$ as rows, so that
  \[
    \beta_j = \sum_{i=1}^{d_0} A_{ji} e_i, \qquad\qquad e_i = \sum_{j=1}^{d_0} A^{-1}_{ij} \beta_j.
  \]
  Hence, each original function $f_{0,i}$ for $i \in [d_0]$ can be expressed as an integer linear combination
  \begin{equation}
    f_{0,i} = \frac1{\det A} \sum_{j=1}^{d_0} \cA_{ij} (\beta_j \cdot f_{=0})
            = \sum_{j=1}^{|J|} \cA_{ij} f'_{0,j}
               \label{eq:t1-f-exp}
  \end{equation}
  where $\cA = (\det A) A^{-1}$ is the adjugate matrix of $A$, and the second equality holds because $\beta_j \cdot f_{=0}$ is just $(\det A) f'_{0,j}$ if $j \le |J|$ and so $\beta_j$ is one of the standard basis vectors, or identically zero if $\beta_j \in U'$.
  Since the rows of $A$ all have $\|b_j\|_1 \ll M^{O_s(1)}$, by the Hadamard bound (say) each coefficient $\cA_{ij}$ has size $O(M)^{O_s(D)}$, and so (absorbing a factor of $d_0$ into the term $O(1)^D$) we find that $f'$ reduces $f$ with parameter $O(M)^{O_s(D)}$.

  When $t=2$, we further apply Lemma~\ref{lem:1pc-implies-derivs} and Lemma~\ref{lem:derivs-implies-derivs} (with $t$ replaced by $t-1=1$) to $u \cdot f_{=1}$ for each $u \in U'$, again with a suitably small $\eta = \delta^{C}/d_1 C$, where $C=C(s)$ is an absolute constant to be chosen shortly. 
  Recall that option (B) cannot occur as we are in the case $t=1$ of these lemmas.
  Hence, for each $u \in U'$ we find:
  \begin{itemize}
    \item a set $S_u \subseteq S_1$ with $\mu(S_1 \setminus S_u) \le \eta$;
    \item a decomposition $u \cdot f_{=1} = \phi_{0,u} + \phi_{1,u}$ on $S_0$, where
    \item $\phi_{0,u}$ satisfies $\partial^1 \phi_{0,u}(c) = 0$ for all $c \in S_u$, and
    \item for each $x \in S_0$, $\phi_{1,u}(x) = k_u(x) \cdot f_{=0}$ for some $k_u(x) \in \ZZ^{d_0}$ with $\|k_u(x)\|_1 \le O(M)^{O(D)} \sigma^{-O(1)}$.
  \end{itemize}
  By taking $S = \bigcap_{u \in U'} S_u$, which satisfies $\mu(S_1 \setminus S) \le |U'| \eta \le \delta^{C(s)}/C(s)$, applying Corollary~\ref{cor:cube-system-patch}, and choosing $C(s)$ suitably, we obtain a system of cubes $S'_0,\dots,S'_{s+1}$ with parameter at least $1-\eps-\delta$ (or $0.51$, whichever is larger) such that $S_u \subseteq S'_1$ for each $u \in U'$, $S'_k \subseteq S_k$ for each $k$ and $\mu(S_k\setminus S'_k) \le \delta$ for each $k$.
  In particular, $\phi_{0,u}$ is a constant function on $S'_0$ (by Remark~\ref{rem:hierarchy-t0}).
  
  The new system of cubes is then built as follows.
  Again pick a subset $J \subseteq [d_1]$ such that $\cB = \{e_j \in \ZZ^{d_1} \colon j \in J \} \cup U'$, $\cB = (\beta_1,\dots,\beta_{d_1})$ is a basis for $\QQ^{d_1}$, and let $A$ be the matrix with $\cB$ as its rows.
  Set
  \begin{align*}
    f'_1 &= \frac1{\det A} (f_{1,j})_{j \in J} \\
    f'_0 &= \frac1{\det A} \bigl( f_{=0} \cup (\phi_{0,u})_{u \in U'} \bigr).
  \end{align*}
  For $i \in [d_1]$ we have
  \begin{align}
    \nonumber
    f_{1,i}(x) &= \frac1{\det A} \sum_{j=1}^{d_1} \cA_{i,j} (\beta_j \cdot f_{=1}(x)) \\
               &= \sum_{j=1}^{|J|} \cA_{ij} f'_{0,j} + \left( \sum_{j = |J|+1}^{d_1} \cA_{ij} k_{\beta_j}(x) \right) \cdot f_{=0} + \sum_{j = |J|+1}^{d_1} \cA_{ij} \phi_{0,\beta_j}(x)
               \label{eq:t2-f-exp}
  \end{align}
  (recalling $\beta_j \in U'$ for $|J|+1 \le j \le d_1$).
  It follows that this new hierarchy $f'$ reduces $f$ with parameter at most $O(M/\sigma)^{O(D)}$ (bounding the coefficients as in the case $t=1$).

  Finally we claim in both cases $t=1$ and $t=2$ that the new $(f',s+1,t,M')$-derivatives condition on $g$ (obtained from the calculations above and Lemma~\ref{lem:use-reduction}) is in fact an $(f',s+1,t,M',\delta)$-strong derivatives condition.

  Let $b$ be the original coefficients (so that $g$ and $b$ obey an $(f,s+1,t,M)$-derivatives condition) and
  let $b'$ be the new coefficients provided by Lemma~\ref{lem:use-reduction} (so that $g$ and $b'$ obey an $(f',s+1,t,M')$-derivatives condition).
  In both~\eqref{eq:t1-f-exp} and~\eqref{eq:t2-f-exp} above, the functions $f_{t-1,j}$ are expressed as fixed linear combinations $\sum_{j=1}^{|J|} \cA_{ij} f'_{t-1,r}$ plus lower-order terms.
  It follows that dually
  \begin{equation}
    \label{eq:old-b-new-b}
    b'_{t-1,j}(c,\omega) = \sum_{i=1}^{d_{t-1}} \cA_{ij} b_{t-1,i}(c,\omega)
  \end{equation}
  for all $j \in [d'_{t-1}]$, $c \in S'_{s+1}$ and $\omega \in \llbracket s+1 \rrbracket$.

  Suppose then that $g$ and $b'$ fail to obey the strong $(f',s+1,t,M',\delta)$-derivatives condition on $S'_{s+1}$.
  First, suppose $b'$ fails to be $\delta$ upper-compatible.
  Arguing as in the proof of Corollary~\ref{cor:incompatible-implies-1pc}, there exist $i \in [s+1]$, $\omega \in F^i$ and $\gg_s \delta |H|^{s+3}$ triples $(c_0,c_1,c) \in T_{s+1,i}$ such that $b'_{=t-1}(c_0,\omega) \ne b'_{=t-1}(c_1,\omega)$.
  By~\eqref{eq:old-b-new-b}, it follows that for these triples,
  \[
    \left( \sum_{i=1}^{d_t-1} \cA_{ij} \bigl( b_{t-1,i}(c_0,\omega) - b_{t-1,i}(c_0,\omega) \bigr) \right)_{j=1}^{|J|} \ne 0.
  \]
  Equivalently, this states that $b_{=t-1}(c_0,\omega) - b_{=t-1}(c_1,\omega)$ is \emph{not} supported on the latter basis vectors $\beta_{|J|+1},\dots,\beta_{d_{t-1}}$, i.e., is not contained in $\spn_\QQ(U')$.
  By pigeonholing as in the proof of Corollary~\ref{cor:incompatible-implies-1pc}, we may find $v \in \ZZ^{d_1} \setminus \spn_\QQ(U')$ such that
  \[
    b_{=t-1}(c_0,\omega) - b_{=t-1}(c_1,\omega) = v
  \]
  for $\gg_s \delta M^{-O(D)} |H|^{s+3}$ such triples $(c_0,c_1,c)$.
  Setting $\sigma = O_s(M)^{-O_s(D)} \delta$ for suitable implied constants, there are at least $\sigma |H|^{s+3}$ such triples, and hence $v \in V$ by definition, which is a contradiction as by construction $v \notin \spn_\QQ(U)$.
  
  If $b'$ fails to be a generalized $k$-cocycle of type $t-1$ and loss $\delta$, we argue similarly: pigeonholing as in the proof of Corollary~\ref{cor:no-cocycle-implies-1pc}, and again taking $\sigma = O(M)^{-O_s(D)} \delta$, we obtain $z \in \ZZ^{d_1} \setminus \spn_\QQ(U')$ obeying the criterion to be in the set $W$, which is a contradiction.
  The details are essentially a reiteration of the proof of Corollary~\ref{cor:no-cocycle-implies-1pc} and the argument above.
\end{proof}

\subsection{Locating approximate polynomials by Cauchy--Schwarz}

The main ingredient in both Lemma~\ref{lem:incompatible-implies-1pc} and Lemma~\ref{lem:no-cocycle-implies-1pc} is repeated application of the Cauchy--Schwarz inequality.  It will be convenient to systematize this process using the notion of Cauchy--Schwarz complexity; see~\cite[Definition 1.5]{gt-primes}. We recall a slight variant of the definition.

\begin{definition}%
  \label{def:cauchy-schwarz-complexity}
  Let $\phi_1,\dots,\phi_k \colon \ZZ^d \to \ZZ$ be linear forms, and $j \in [k]$ some index.  Say $(\phi_1,\dots,\phi_k)$ have \emph{Cauchy--Schwarz complexity $\le t$ with denominator $Q$ at position $j$} if there are sets $\Sigma_1,\dots,\Sigma_{t+1} \subseteq [k] \setminus \{j\}$ with union $[k] \setminus \{j\}$, and elements $\sigma_1,\dots,\sigma_{t+1} \in \ZZ^d$, such that for each $r \in [t+1]$ we have $\phi_i(\sigma_r) = 0$ for all $i \in \Sigma_r$, but $q_r := \phi_j(\sigma_r)$ satisfies $q_r \ne 0$ and moreover $q_r | Q$.
\end{definition}

\begin{remark}
  The more usual way of stating this is that $\phi_j \notin \spn(\phi_i \colon i \in S_r)$ for each $r$.  This is equivalent up to getting quantitative control on the parameter $Q$, which becomes significant in groups with torsion.
\end{remark}

This notion is usually applied to bounding multilinear averages of real or complex valued functions by $U^k$-norms (such as~\cite[Theorem 2.3]{gowers-wolf}, or the original~\cite[Proposition 7.1]{gt-primes} which works in a more difficult setting).  The following lemma can be thought of as a discrete analogue of this result.

\begin{lemma}[Discrete Cauchy--Schwarz complexity]%
  \label{lem:gvn}
  Suppose $\phi_1,\dots,\phi_k \colon \ZZ^d \to \ZZ$ is a system of linear forms of Cauchy--Schwarz complexity $\le t$ with denominator $Q$ at position $j$, and suppose that $f_1,\dots,f_k \colon H \to \RR$ are functions such that
  \[
    \Big|\Big\{ x \in H^d \colon \sum_{i=1}^k f_i(\phi_i(x)) = 0 \Big\}\Big| \ge \delta |H|^d.
  \]
  Then provided $\gcd(|H|,Q) = 1$, we have
  \[
    \big|\big\{ c \in C^{t+1}(H) \colon \partial^{t+1} f_j(c) = 0 \big\}\big| \ge \delta^{2^{t+1}} |C^{t+1}(H)| .
  \]
  \uppar{Note we abuse notation to write $\phi_i \colon H^d \to H$ for the induced homomorphism.}
\end{lemma}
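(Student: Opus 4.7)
The plan is to mimic the classical proof of the generalized von Neumann theorem (relating multilinear averages to Gowers norms) in a purely combinatorial setting: apply Cauchy--Schwarz once for each direction $\sigma_r$, so that after $t+1$ iterations the single equation $\sum_i f_i(\phi_i(\cdot)) = 0$ is replaced by $\partial^{t+1} f_j$ vanishing on many cubes. Concretely, set $A_0 = \{x \in H^d : \sum_i f_i(\phi_i(x)) = 0\}$, so $|A_0| \ge \delta |H|^d$, and define inductively
\[
  A_r = \{(x, s_1, \ldots, s_r) \in H^{d+r} : (x, s_{<r}) \in A_{r-1} \text{ and } (x + s_r \sigma_r, s_{<r}) \in A_{r-1}\}.
\]
Unwinding the recursion, $(x, s_1, \ldots, s_{t+1}) \in A_{t+1}$ means exactly that every vertex of the parallelepiped $\angle(x; s_1\sigma_1, \ldots, s_{t+1}\sigma_{t+1}) \subseteq H^d$ lies in $A_0$.

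For the density, I would apply Cauchy--Schwarz at each step on the cosets of the cyclic subgroup $\langle \sigma_r\rangle \le H^d$, with $(s_1, \ldots, s_{r-1})$ fixed; the identity $\EE_{x, s} 1_B(x) 1_B(x+s\sigma) = \EE_C (|B \cap C|/|C|)^2$ (average over cosets $C$ of $\langle \sigma \rangle$) combined with Jensen gives the bound $\ge (|B|/|H|^d)^2$ for any $B \subseteq H^d$. Applying this fibrewise and then Jensen again in the outer average over $(s_1,\ldots,s_{r-1})$ yields $|A_r|/|H|^{d+r} \ge (|A_{r-1}|/|H|^{d+r-1})^2$, so $|A_{t+1}| \ge \delta^{2^{t+1}} |H|^{d+t+1}$. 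This fibred Cauchy--Schwarz is the main technical point, since the shifts $s\sigma_r$ need not range over all of $H^d$; everything still works because the density of $B$ is preserved in expectation over cosets.

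For each $(x, s_1, \ldots, s_{t+1}) \in A_{t+1}$, taking the alternating sum over $\epsilon \in \{0,1\}^{t+1}$ of the defining equations and swapping order of summation gives
\[
  \sum_i \sum_{\epsilon} (-1)^{|\epsilon|} f_i\!\left(\phi_i(x) + \textstyle\sum_r \epsilon_r s_r \phi_i(\sigma_r)\right) = 0.
\]
For each $i \ne j$ there exists some $r$ with $i \in \Sigma_r$, whence $\phi_i(\sigma_r) = 0$; then the $\epsilon_r$-difference collapses and the $i$-th summand vanishes. Only the $i = j$ term survives, yielding
\[
  \partial^{t+1} f_j\bigl(\angle(\phi_j(x); s_1 q_1, \ldots, s_{t+1} q_{t+1})\bigr) = 0,
\]
where $q_r = \phi_j(\sigma_r)$.

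Finally, the hypothesis $\gcd(|H|, Q) = 1$ together with $q_r \mid Q$ makes each $q_r$ a unit in $\ZZ/|H|\ZZ$, so $s_r \mapsto s_r q_r$ is a bijection of $H$; furthermore $\phi_j \colon H^d \to H$ is surjective because its image contains $\{s q_1 : s \in H\} = H$. Hence the map $(x, s_1, \ldots, s_{t+1}) \mapsto \angle(\phi_j(x); s_1 q_1, \ldots, s_{t+1} q_{t+1})$ from $H^{d+t+1}$ to $C^{t+1}(H)$ is a uniform $|H|^{d-1}$-to-one surjection, and pushing forward the density bound on $A_{t+1}$ gives $|\{c \in C^{t+1}(H) : \partial^{t+1} f_j(c) = 0\}| \ge \delta^{2^{t+1}} |C^{t+1}(H)|$, as required.
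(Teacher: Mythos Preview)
Your proof is correct and follows essentially the same approach as the paper's: iterate a discrete Cauchy--Schwarz/Jensen step once in each direction $\sigma_r$, observe that the alternating sum kills all $f_i$ with $i\ne j$ because $\phi_i(\sigma_r)=0$ for some $r$, and then push forward to $C^{t+1}(H)$ using that each $q_r$ is a unit. The only packaging difference is that the paper first isolates the iterated step as a standalone lemma (on functions $f(x)=\sum_r g_r(x+K_r)$ for subgroups $K_r\le X$, with $K_r=\{s\sigma_r:s\in H\}$ and $g_r$ absorbing $\sum_{i\in\Sigma_r} f_i\circ\phi_i$), whereas you carry out the induction directly on the sets $A_r$.
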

The proof is given in Appendix~\ref{app:cs}, and as might be expected consists of multiple applications of the Cauchy--Schwarz inequality.

\begin{proof}[Proof of Lemma~\ref{lem:incompatible-implies-1pc}]
  If $v=0$ the conclusion is trivial, so we may assume $v \ne 0$.
  Since $b$ is in normal form, if $|\omega| \ge R+1$ we have $b_{\le R}(c,\omega) = 0$ for all $c \in S$, which would force $v=0$; hence $|\omega| \le R$.

  Since $b(c_0,\omega')$ and $b(c_1,\omega')$ are bounded by $M^{O_k(1)}$ in $\ell^1$-norm, each takes at most $O(M)^{O_k(D)}$ distinct values, so we may pigeonhole to assume these functions are constant in $c_0$ and $c_1$.  
  That is, we choose global values $B_0(\omega')$ and $B_1(\omega')$ for each $\omega' \in \llbracket k \rrbracket$, such that at least $\eps\, O(M)^{-O_{k,t}(D)}$ triples $(c_0,c_1,c) \in T_{k,i}$ satisfy the hypotheses from the statement as well as $b(c_0,\omega') = B_0(\omega')$ and $b(c_1,\omega') = B_1(\omega')$ for each $\omega' \in \llbracket k \rrbracket$.
  Let $T'$ denote this set of triples, with $\mu(T') \ge \eps\, O(M)^{-O_{k,t}(D)}$.

  Recalling~\eqref{eq:trying-to-glue}, we have that
  \[
    \partial^k g(c) = \sum_{\omega' \in \llbracket k \rrbracket} (-1)^{|\omega'|} B_0(\omega') \cdot f_{\le t-1}(c_0(\omega')) - 
    \sum_{\omega' \in \llbracket k \rrbracket} (-1)^{|\omega'|} B_1(\omega') \cdot f_{\le t-1}(c_1(\omega')) 
  \]
  for all $(c_0,c_1,c) \in T'$.
  Recalling that $c_0(\omega') = c_1(\omega')$ for $\omega' \in F^i$ and
  regrouping terms, we get that
  \begin{equation}
    \label{eq:1st-sum}
    \sum_{\eta \in \llbracket k \rrbracket} (-1)^{|\eta|} G_{\eta}(c(\eta)) + \sum_{\omega' \in F^i} (-1)^{|\omega'|} (B_0(\omega') - B_1(\omega')) \cdot f_{\le R}(c_0(\omega')) = 0
  \end{equation}
  on $T'$, where $G_\eta$ for $\eta \in \llbracket k \rrbracket$ are some functions $H \to \RR$ absorbing all terms of the form $f_{\le t-1}(c_0(\omega))$ and $f_{\le t-1}(c_1(\omega))$ for $\omega \in F_i$, as well as $g(c(\omega))$ for $\omega \in \llbracket k \rrbracket$, and whose precise forms are not important.

  Recall that $b$ is in normal form, and so $b_{\le R}(c_0,\omega')=b_{\le R}(c_1,\omega')=0$ whenever $|\omega| > R$.
  Moreover, by hypothesis $b_{>R}(c_0,\omega')=b_{>R}(c_1,\omega')$ for all $\omega' \in F^i$ and all $(c_0,c_1,c) \in T'$.
  Combining these two statements, if $\omega' \in F^i$ and $|\omega'| > R$ then $b(c_0,\omega')=b(c_1,\omega')$.
  It follows that $B_0(\omega')=B_1(\omega')$ whenever $\omega' \in F^i$, $|\omega'| > R$, and so~\eqref{eq:1st-sum} implies
  \begin{equation}
    \label{eq:2nd-sum}
    \sum_{\eta \in \llbracket k \rrbracket} (-1)^{|\eta|} G_{\eta}(c(\eta)) + \sum_{\substack{\omega' \in F^i \\ |\omega'| \le R}} (-1)^{|\omega'|} (B_0(\omega') - B_1(\omega')) \cdot f_{\le R}(c_0(\omega')) = 0
  \end{equation}
  on $T'$.
  Note the summand with $\omega'=\omega$ is exactly $v \cdot f_{\le R}(c_0(\omega))$.

  It therefore suffices to show that the system of linear forms $\ZZ^{k+2} \to \ZZ$ consisting of
  \[
    \phi_{\eta}(x,h_1,\dots,h_k,h_i') = x + \eta \cdot (h_1,\dots,h_k)
  \]
  for $\eta \in \llbracket k \rrbracket$, together with
  \[
    \psi_{\nu}(x,h_1,\dots,h_k,h_i') = x + h_i' + \nu \cdot (h_1,\dots,h_k)
  \]
  for $\nu \in F^i$ with $|\nu| \le R$, has Cauchy--Schwarz complexity at most $R-1$ at $\psi_{\omega}$, with denominator $(t-2)!$: the result then follows directly from Lemma~\ref{lem:gvn} applied to~\eqref{eq:2nd-sum}.    

  To check the Cauchy--Schwarz complexity we define $R$ sets of forms $\Sigma_r$ and corresponding vectors $\sigma_r \in \ZZ^{k+2}$ as in Definition~\ref{def:cauchy-schwarz-complexity}, as follows:---
  \begin{enumerate}[label=(\alph*)]
    \item $\Sigma_1 = \{ \phi_{\eta} \colon \eta \in \llbracket k \rrbracket \} $ and $\sigma_1 = (0,0,\dots,0,1)$ (i.e., $h'_i = 1$ and all other coordinates are zero);
    \item $\Sigma_2,\dots,\Sigma_{|\omega|}$ are chosen according to the following scheme: for each coordinate $j\ne i$ with $\omega(j) = 1$, we take a set
      \[
        \Sigma = \bigl\{ \psi_{\nu} \colon \nu \in F^i,\ |\nu| \le R,\ \nu(j) = 0\bigr\}
      \]
      and a vector $\sigma = (x,h_1,\dots,h_k,h'_i)$ where $h_j = 1$ and all other entries are zero; and
    \item for $|\omega|+1\le r \le R$ we take
      \[
        \Sigma_r = \bigl\{ \psi_{\nu} \colon \nu \in F^i,\ |\nu| = r \bigr\}
      \]
      and the vector $\sigma_r = (-r-1,1,1,\dots,1)$.
  \end{enumerate}
  Note that if $\nu \in F^i$, $|\nu| \le R$ and $\nu \ne \omega$, then either $|\nu| > |\omega$ (in which case $\psi_\nu$ is covered by one of the sets in case (c)) or there is some $j \ne i$ with $\nu(j)=0$ but $\omega(j)=1$ (in which case $\psi_\nu$ is covered by one of the sets in case (b)).
  Finally, $\psi_{\omega}(\sigma_r) \in \{1,-1,-2,\dots,-R+|\omega|\}$, which are all non-zero divisors of $(t-2)!$. 
\end{proof}

\begin{proof}[Proof of Lemma~\ref{lem:no-cocycle-implies-1pc}]
  The strategy is very similar to the proof of the previous lemma.
  Recalling the hypothesis that $b(c_0,\omega') = b(c_1,\omega')$ for all $\omega' \in F^i$ and all triples $(c_1,c_1,c)$ in our set, we deduce from~\eqref{eq:really-trying-to-glue} that
  \begin{align*}
    \partial^k g(c) &= \sum_{\omega' \in \llbracket k \rrbracket} (-1)^{|\omega'|} \fb(c_0,c_1,c; \omega') \cdot f_{\le t-1}(c(\omega')) \\
    &= \sum_{\omega' \in \llbracket k \rrbracket} (-1)^{|\omega'|} b(c; \omega') \cdot f_{\le t-1}(c(\omega'))
  \end{align*}
  where
  \[
    \fb(c_0,c_1,c;\omega') = \begin{cases} b(c_0, \omega') &\colon \omega'(i) = 0 \\ b(c_1, \omega' \setminus \{i\}) &\colon \omega'(i) = 1  . \end{cases}
  \]
  By Lemma~\ref{lem:canonical-b} we may define $\fb'(c_0,c_1,c;-) \colon \llbracket k \rrbracket \to \bigoplus_{r=0}^{t-1} \ZZ^{d_i}$ such that
  \[
    \sum_{\omega' \in \llbracket k \rrbracket} (-1)^{|\omega'|} \fb(c_0,c_1,c; \omega') \cdot f_{\le t-1}(c(\omega'))
    =
    \sum_{\omega' \in \llbracket k \rrbracket} (-1)^{|\omega'|} \fb'(c_0,c_1,c; \omega') \cdot f_{\le t-1}(c(\omega'))
  \]
  and moreover $\fb'(c_0,c_1,c;-)$ is in normal form and $\|\fb'(c_0,c_1,c;\omega)\|_1 \ll_{k,t} M^{O_{k,t}(1)}$.
  By that lemma, we also get that
  \[
    \fb'_{=t-1}(c_0,c_1,c; \omega) = \sum_{\omega' \in \llbracket k \rrbracket} Z_{t-1}(\omega,\omega') \fb_{=t-1}(c_0,c_1,c;\omega')
  \]
  and hence $b_{=t-1}(c,\omega) - \fb'_{=t-1}(c_0,c_1,c;\omega) = z$ for all triples in our set.
  
  As before, we now pigeonhole the values of $b(c,\omega')$ and $\fb'(c_0,c_1,c;\omega')$: i.e., we define a set of triples $T' \subseteq T_{k,i}$ to consist of all $(c_0,c_1,c)$ which obey all the hypotheses from the statement together with $b(c,\omega') = B(\omega')$ and $\fb'(c_0,c_1,c; \omega')= B'(\omega')$ for each $\omega' \in \llbracket k \rrbracket$, for some choice of $B$ and $B'$ such that $|T'| \ge \eps\, O(M)^{-O_{k,t}(D)} |H|^{k+2}$
  (and $\|B\|_1,\|B'\|_1 \ll_{k,t} M^{O_{k,t}(1)}$).

  We conclude that
  \[
    \sum_{\omega' \in \llbracket k \rrbracket} (-1)^{|\omega'|} (B(\omega') - B'(\omega')) \cdot f(c(\omega')) = 0
  \]
  for all $(c_0,c_1,c) \in T'$; or rather at this point, for all $c \in C^k(H)$ which appear in at least one triple $(c_0,c_1,c) \in T'$, which constitute a subset of $C^k(H)$ with density at least $\eps\, O(M)^{-O_{k,t}(D)}$.  Note that because $b$ and $\fb'$ were in normal form, $B(\omega') = B'(\omega') = 0$ whenever $|\omega'| > t-1$.  In particular if $|\omega| > t-1$ then $z=0$ and there is nothing to prove, so we may assume $|\omega| \le t-1$.

  It therefore suffices to show that the system of linear forms $\ZZ^{k+1} \to \ZZ$ consisting of
  \[
    \psi_{\eta}(x,h_1,\dots,h_k) = x + \eta \cdot (h_1,\dots,h_k)
  \]
  for all $\eta \in \llbracket k \rrbracket$ with $|\eta| \le t-1$, has Cauchy--Schwarz complexity at most $t-2$ at index $\eta=\omega$, with denominator ${(t-1)!}$.
  If this holds, by Lemma~\ref{lem:gvn} we deduce that $(B(\omega) - B'(\omega)) \cdot f$ is an approximate polynomial with parameter $O(M)^{-O_{k,t}(D)} \eps^{O_{t}(1)}$, and since $(B(\omega) - B'(\omega))_{=t-1} = z$ this implies the result, setting $v= B(\omega)-B'(\omega)$.

  To verify this, we use a $t-1$ sets of linear forms similar to those in the proof of Lemma~\ref{lem:incompatible-implies-1pc}:---
  \begin{itemize}
    \item choose $\Sigma_1,\dots,\Sigma_{|\omega|}$ by considering each coordinate $j\ne i$ with $\omega(j) = 1$ and defining a set
      \[
        \Sigma = \big\{ \psi_{\eta} \colon |\nu| \le t-1,\ \eta(j) = 0\big\}
      \]
      and a vector $\sigma = (x,h_1,\dots,h_k)$ where $h_j = 1$ and all other entries are zero; and
    \item for $|\omega|+1\le r \le t-1$ we take
      \[
        \Sigma_r = \big\{ \psi_{\eta} \colon |\eta| = r \big\}
      \]
      and set $\sigma_r = (-r,1,1,\dots,1)$.
  \end{itemize}
  As in the proof of Lemma~\ref{lem:incompatible-implies-1pc}, these sets cover all forms $\psi_\eta$ with $\eta \in \llbracket k \rrbracket$, $|\eta| \le t-1$ and $\eta \ne \omega$. 
  Finally, we have $\psi_\omega(\sigma_r) \in \{ 1,-1,-2,\dots,-(t-1)-|\omega|\}$, which are all integers dividing ${(t-1)!}$.
\end{proof}

\subsection{Boosting approximate polynomials to derivatives conditions}

We now tackle Lemma~\ref{lem:1pc-implies-derivs} and Lemma~\ref{lem:derivs-implies-derivs}.

The proof of Lemma~\ref{lem:1pc-implies-derivs} is perhaps the most technically involved part of the paper.  There are some similarities between its proof and that of Lemma~\ref{lem:extend-global}.  We give a brief outline before starting on the details.

Recall we assume that $\phi$ obeys an $(f,t+1,t,M)$-derivatives condition almost everywhere, and is an approximate polynomial of degree $t-1$ and parameter $\delta>0$.  We can interpret the second statement as saying that $\phi$ obeys a $(f,t,t,0)$-derivatives condition, but on a small set of cubes of density $\delta$.

The basic approach is to grow the $t$-derivatives condition from this small set of cubes to one on progressively larger sets; or, find along the way that part (B) of the dichotomy holds and stop.  In doing so, we use in an essential way that we already have a global $(t+1)$-derivatives condition in play.

There are two basic mechanisms for ``growing'' the set of cubes.  One is to observe, as we essentially already did in~\eqref{eq:really-trying-to-glue}, that if $c_0,c_1 \in C^t(H)$ have the same upper face $c_0|_{F^i}=c_1|_{F^i}$, and $t$-derivatives conditions holds at both $c_0$ and $c_1$ and are compatible on the shared upper face, then we can deduce a derivatives condition on the cube $c=[c_0|_{F_i},c_1|_{F_i}]_i$ obtained by glueing $c_0$ and $c_1$ along their shared upper face.

The other is to note that if $c_0 \in C^t(H)$, $h \in H$ and $c_1 = c_0 + \square^t(h)$ is a translate of $c_0$ on which a $t$-derivatives condition holds, and $[c_0,c_1]_{t+1} \in C^{t+1}(H)$ has a $(t+1)$-derivatives condition, then assuming again some kind of compatibility on $c_1$ we can deduce a $t$-derivatives condition on $c_0$.

The conclusion of applying these two arguments is that we can assume the set of cubes at which a $t$-derivatives condition holds is---roughly---closed under glueing and under translation.  We would like to deduce that such a set of cubes is close to being all of $C^t(H)$, and this turns out to be true, unless the process gets stuck inside a large subgroup of $H^t$ in some sense.  To locate such a subgroup we use a standard result on sets of very large additive energy, due to Fournier~\cite{fournier}.  Finally, we can use our hypothesis that $H$ has no large subgroups to rule out this case.

This approach is complicated in practice in several ways.  One is that the ambient system of cubes $S_0,\dots,S_{t+1}$ has parameter $1-\eps$ where $\eps$ is small in absolute terms but large compared to the other parameters $\delta,\eta$, meaning we cannot afford to lose sets of density $\eps$ at any stage and must work around that.  Another is the need to address the compatibility statements alluded to vaguely above. This is handled by part (B) and Cauchy--Schwarz arguments (old or new), but we still only get to assume compatibility holds almost all the time, which adds another layer of small losses and parameters.

\begin{proof}[Proof of Lemma~\ref{lem:1pc-implies-derivs}]
  Because it will come up a lot, we say \emph{option (B) holds with parameters $K$, $\alpha$} if (as in (B) in the statement) for some $R$, $1 \le R \le t-1$, and some $v \in \bigoplus_{r=1}^{R} \ZZ^{d_r}$ with $v_{=R} \ne 0$ and $\|v\|_1 \le K$, we have
  \[
    \big|\big\{c \in C^{R}(H) \colon \partial^R (v \cdot f_{\le R})(c) = 0 \big\}\big| \ge \alpha |C^R(H)|.
  \]
  If this holds for parameters $K \ll_t (M/\delta)^{O_t(1)}$ and $\alpha \ge O(M/\delta)^{-O_t(D)} \eta^{O_t(1)}$, we will be done; however, we also use this terminology in cases where it is not yet obvious that $K,\alpha$ obey these bounds.
  In the case $t=1$ of the lemma, this statement should be taken to be false whenever it appears.

  As stated above, the first key argument is that, given a derivatives condition on a set of cubes $S$, we can deduce a derivatives condition on the set of cubes obtained by glueing cubes in $S$ together along common faces (or deduce that option (B) holds).  The mechanism was discussed in~\eqref{eq:really-trying-to-glue}, assuming some kind of upper compatibility.  We now state a result of this form precisely.

  \begin{definition}
    If $S \subseteq C^k(H)$, $i \in [k]$ and $\gamma > 0$, write $S +_{i,\gamma} S$ for the set of all $c \in C^k(H)$ such that
    \[
      \big|\big\{ (c_0,c_1,c') \in T_{k,i} \colon c = c',\ c_0 \in S, c_1 \in S \big\}\big| \ge \gamma |H|  ;
    \]
    i.e., all $c \in C^k(H)$ which can be obtained by glueing together two elements of $S$ along a common upper face in direction $i$, in many ways.
  \end{definition}

  We note that if $S' \subseteq S$ and $\gamma > \gamma'$ then
  \begin{equation}
    \label{eq:concat-loss}
    |(S +_{i,\gamma} S) \setminus (S' +_{i,\gamma'} S')| \le \frac{ 2 |S \setminus S'| }{\gamma - \gamma'}
  \end{equation}
  since each element $c$ of the left hand set accounts for at least $(\gamma - \gamma')|H|$ triples $(c_0,c_1,c) \in T_{k,i}$ for which either $c_0 \in S \setminus S'$ or $c_1 \in S \setminus S'$.

  \begin{claim}%
    \label{claim:stitch-derivs}
    Let $k \ge 1$ and $i \in [k]$, suppose $\phi$ and $b$ obey an $(f,k,t,m)$-derivatives condition on a set $S \subseteq C^k(H)$, and let $\nu, \gamma > 0$ be parameters.  Then either $\phi$ obeys an $(f,k,t,O_{k,t}(m))$-derivatives condition on a subset $S' \subseteq S +_{i,\gamma} S$ with $|(S +_{i,\gamma} S) \setminus S'| \le \nu |C^k(H)|$; or, option (B) holds with parameters
    $O(m)^{O_{k,t}(1)}$ and $O(m)^{-O_{k,t}(D)} (\nu \gamma)^{O_{k,t}(1)}$.
  \end{claim}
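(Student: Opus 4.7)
The plan is to exploit the glueing identity \eqref{eq:really-trying-to-glue} discussed immediately before the claim. For every triple $(c_0,c_1,c) \in T_{k,i}$ with $c_0,c_1 \in S$, that identity expresses $\partial^k \phi(c)$ as a ``glued'' derivatives expression with coefficients $\fb$ built from $b(c_0,\cdot)$ on $F_i$ and $b(c_1,\cdot)$ on $F^i$, plus an ``error'' term indexed by $F^i$ that vanishes precisely when $b(c_0,\omega) = b(c_1,\omega)$ for every $\omega \in F^i$. Whenever the error vanishes, we simply read off an $(f,k,t,O_{k,t}(m))$-derivatives condition at $c$.

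First I would apply Lemma \ref{lem:canonical-b} to put $b$ into normal form on $S$, absorbing the resulting $O_{k,t}(1)$ factor into $m$. Next, define
\[
  S' = \big\{ c \in S +_{i,\gamma} S : \exists\, (c_0,c_1,c) \in T_{k,i} \text{ with } c_0,c_1 \in S \text{ and } b(c_0,\omega) = b(c_1,\omega)\ \forall\, \omega \in F^i \big\}.
\]
For each $c \in S'$ pick one such completion arbitrarily and set $\fb(c,\omega) = b(c_0,\omega)$ when $\omega \in F_i$, $\fb(c,\omega) = b(c_1, \omega \setminus \{i\})$ when $\omega \in F^i$; the glueing identity then hands us the first alternative with parameter $O_{k,t}(m)$ (the normal form transformation has already been paid for, and the new $\fb$ has $\ell^1$-norm controlled by that of $b$).

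Otherwise, $|(S +_{i,\gamma} S) \setminus S'| > \nu |C^k(H)|$, so each such $c$ has \emph{no} good completion, meaning all $\ge \gamma |H|$ of its completions $(c_0,c_1,c) \in T_{k,i}$ with $c_0,c_1 \in S$ violate compatibility at some $\omega \in F^i$. This produces at least $\nu \gamma\, |H|^{k+2}$ triples in $T_{k,i}$ with $c_0,c_1 \in S$ and $b(c_0,\omega) \neq b(c_1,\omega)$ for some $\omega \in F^i$. Pigeonholing on $\omega$ (at most $2^{k-1}$ choices) and on the largest degree $r \in \{1,\dots,t-1\}$ where $b_{=r}(c_0,\omega) - b_{=r}(c_1,\omega) \ne 0$, we conclude that $b_{=r}$ fails $\delta$-almost upper compatibility on $S$ for some $\delta \gg_{k,t} \nu\gamma$. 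Corollary \ref{cor:incompatible-implies-1pc}, applicable precisely because $b$ is in normal form, then yields option (B) with parameter $O(m)^{-O_{k,t}(D)}(\nu\gamma)^{O_{k,t}(1)}$, matching the claim.

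The only real obstacle is parameter bookkeeping: I need the normal-form conversion, the pigeonhole on $(\omega,r)$, and the invocation of Corollary \ref{cor:incompatible-implies-1pc} to compose to the advertised $(\nu\gamma)^{O_{k,t}(1)}$ exponent rather than something worse. None of these steps introduce anything subtler than a constant-factor loss depending only on $k,t$, so the composition goes through; the substantive content of the claim is really just one application of the glueing identity plugged into the already-developed ``failure of upper compatibility implies a lower-degree approximate polynomial'' mechanism.
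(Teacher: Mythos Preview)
Your proposal is correct and matches the paper's proof essentially line for line: put $b$ in normal form via Lemma \ref{lem:canonical-b}, define $S'$ as the set of $c$ admitting at least one upper-compatible gluing triple, read off the derivatives condition at such $c$ from \eqref{eq:really-trying-to-glue}, and otherwise count $\ge \nu\gamma|H|^{k+2}$ incompatible triples to feed into Corollary \ref{cor:incompatible-implies-1pc}. The only cosmetic difference is ordering: the paper invokes the corollary first (either option (B) holds or $b$ is $(\nu\gamma)$-almost upper compatible) and then deduces the bound on $|(S+_{i,\gamma}S)\setminus S'|$, whereas you run the contrapositive.
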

  \begin{proof}[Proof of claim]
    By Lemma~\ref{lem:canonical-b}, we are free to assume $b$ is in normal form, at the expense of replacing $m$ by $O_{k,t}(m)$.
    
    By Corollary~\ref{cor:incompatible-implies-1pc} we have that either option (B) holds with the parameter stated, or we may assume $b$ is $(\nu \gamma)$-almost upper compatible.

    Define $S'$ to consist of those $c \in S +_{i,\gamma} S$ such that at least one triple $(c_0,c_1,c) \in T_{k,i}$ with $c_0,c_1 \in S$ has $b(c_0,\omega) = b(c_1,\omega)$ for all $\omega \in F^i$ (i.e., an upper compatible triple).  So, for each $c$ in $S +_{i,\gamma} S$ but not in $S'$ there are at least $\gamma |H|$ distinct incompatible triples by hypothesis; hence,
    \[
      \gamma |H|\, |(S +_{i,\gamma} S) \setminus S'| \le \nu \gamma\, |H|^{k+2}
    \]
    and the bound on $S'$ follows.

    Finally, if $c \in S'$ and $(c_0,c_1,c)$ is any compatible triple with $c_0,c_1 \in S$, then~\eqref{eq:really-trying-to-glue} implies a derivatives condition at $c$ as required.
  \end{proof}

  By hypothesis, $\phi$ obeys an underlying $(f,t+1,t,M)$-derivatives condition on $S_{t+1}$.  Although $S_{t+1}$ contains almost all cubes with vertices in $S_0$, as measured by $\eps$, it is likely that $\eps$ is much larger than $\delta$ or $\eta$, and so this notion of ``almost all'' is quite weak.
  Our first use for Claim~\ref{claim:stitch-derivs} is to boost this $(f,t+1,t,M)$-derivatives condition to a larger set of cubes (although, throughout, the underlying the vertex set $S_0 \subseteq H$ does not grow).
  In fact we state a result in slightly greater generality.
  \begin{claim}%
    \label{claim:augment-derivs}
    Let $k \ge 1$, $S'_0,\dots,S'_{k}$ be a system of cubes with parameter $\eps'$, where $\eps' \le \eps'_0(k,t)$ is sufficiently small, and suppose $\phi$ obeys an $(f,k,t,m)$-derivatives condition on $S'_k$.
    Also let $\nu > 0$ be a further parameter.
    
    Then either there exists a set $S \subseteq C^{k}(H) \cap S_0'^{\llbracket k \rrbracket}$ such that
    \[
      |(C^k(H) \cap S_0'^{\llbracket k \rrbracket}) \setminus S| \le \nu |C^{k}(H)|
    \]
    and such that $\phi$ obeys an $\big(f,k,t,O_{k,t}(m)\big)$-derivatives condition on $S$; or, option (B) holds with parameters $O(m)^{O_{k,t}(1)}$ and $O(m)^{-O_{k,t}(D)} \nu^{O_{k,t}(1)}$.
  \end{claim}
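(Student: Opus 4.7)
The approach is to iterate Claim~\ref{claim:stitch-derivs} once in each coordinate direction $j = 1, \dots, k$, growing the set of cubes on which $\phi$ obeys a derivatives condition from the initial system $S'_k$ out to (almost) all of $C^k(H) \cap {S'_0}^{\llbracket k \rrbracket}$. Concretely, I would set $R_0 = S'_k$ and define $R_j$ as the output of Claim~\ref{claim:stitch-derivs} applied to $R_{j-1}$ in direction $i=j$ with $\gamma = 1/2$ and internal tolerance $\nu_j = \nu/(2k)$. Either some iteration triggers option (B) with parameter $O(m)^{-O_{k,t}(D)} (\nu_j \gamma)^{O_{k,t}(1)} = O(m)^{-O_{k,t}(D)} \nu^{O_{k,t}(1)}$, matching the dichotomy in the statement, or each step produces $R_j \subseteq R_{j-1} +_{j, 1/2} R_{j-1}$ on which $\phi$ obeys an $(f,k,t,O_t(m_{j-1}))$-derivatives condition, so the parameter grows by only a bounded multiplicative factor per iteration and ends at $O_{k,t}(m)$ after $k$ steps. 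By induction $R_j \subseteq {S'_0}^{\llbracket k \rrbracket}$ (vertices of $c \in R_j$ come from the lower sub-faces of the cubes in $R_{j-1}$ used to glue $c$ together), so the final $S := R_k$ automatically lies inside the required ambient set.

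The density analysis rests on the observation that when $(c_0,c_1,c) \in T_{k,i}$ is sampled uniformly, each of $c_0$ and $c_1$ is individually a uniform cube in $C^k(H)$. Thus if $R \subseteq C^k(H)$ has complementary density $\rho$, then $\mu(C^k(H) \setminus (R +_{i,1/2} R)) \le 4\rho$ by Markov. Writing $\rho_j = \mu(C^k(H) \setminus R_j)$ and iterating, $\rho_j \le 4\rho_{j-1} + \nu_j$, so from $\rho_0 \le (k+1)\eps'$ (Remark~\ref{rem:cube-system-large}) we deduce $\rho_k \le 4^k(k+1)\eps' + \nu/2$. For $\eps' \le \eps'_0(k,t)$ sufficiently small (in particular $4^k(k+1)\eps'_0 \le \nu/2$, which forces $\eps'_0$ to depend implicitly on $\nu$ through the budgeting below), this is at most $\nu$, and taking $S = R_k$ finishes option (A).

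The main obstacle is the multiplicative $4^k$ loss in the density iteration: the hypothesis $\eps' \le \eps'_0(k,t)$ must really be read as making $\eps'$ small not just in terms of $k,t$ but also relative to $\nu$, since in the intended use (inside Lemma~\ref{lem:1pc-implies-derivs}) the parameter $\nu$ scales with the fine parameter $\eta$ rather than the coarse $\eps$ of the ambient system. This is the only delicate bookkeeping. All failures of compatibility for the updated $b$-values are absorbed straight into option (B) by Claim~\ref{claim:stitch-derivs}, so no separate argument is needed on that front; one simply has to verify that the compounded $O_t(\cdot)$ constants from $k$ iterations still leave us with an $O_{k,t}$-bounded final parameter, which they do.
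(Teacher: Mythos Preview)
Your outline matches the paper's: iterate Claim~\ref{claim:stitch-derivs} once per coordinate direction. The gap is in the density accounting. You track $\rho_j = \mu(C^k(H)\setminus R_j)$ and obtain $\rho_j \le 4\rho_{j-1} + \nu_j$; since $\rho_0 = \mu(C^k(H)\setminus S'_k)$ is of order $\eps'$, the iteration produces an unavoidable $4^k \eps'$ term. You notice this and propose to absorb it by letting $\eps'_0$ depend on $\nu$, but that contradicts the statement (where $\eps'_0 = \eps'_0(k,t)$ only) and, more to the point, fails in the actual applications: in Lemma~\ref{lem:1pc-implies-derivs} the claim is invoked with the ambient system of parameter $1-\eps$ and with $\nu = \nu_0$ chosen at the very end as $\min(\delta,\eta)^C/C$, so $\nu$ can be arbitrarily small compared to $\eps'$. (A smaller slip: $\nu_j = \nu/(2k)$ also does not survive the geometric amplification; you would need $\nu_j$ of order $4^{-k}\nu$.)

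The paper repairs this by tracking a different quantity. It introduces intermediate target sets $Z^{(i)}$ consisting of all $c\in C^k(H)$ whose restrictions to each codimension-$i$ face (in the first $i$ directions) lie in $S'_{k-i}$; so $Z^{(0)} = S'_k$ and $Z^{(k)} = C^k(H)\cap {S'_0}^{\llbracket k\rrbracket}$. Using the system-of-cubes parameter $1-\eps'$ one checks $Z^{(i)} \subseteq Z^{(i-1)} +_{i,\,1-2^i\eps'} Z^{(i-1)}$. One then tracks $|Z^{(i)}\setminus S^{(i)}|$ instead of $|C^k(H)\setminus S^{(i)}|$; via \eqref{eq:concat-loss} and the stitch-derivs loss $\nu'$ this satisfies
\[
  |Z^{(i)}\setminus S^{(i)}| \le \frac{2}{1-2^i\eps'-\gamma'}\,|Z^{(i-1)}\setminus S^{(i-1)}| + \nu'|C^k(H)|,
\]
which starts at zero since $Z^{(0)} = S^{(0)}$. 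With $\gamma' = 1/4$, $\eps' \le 2^{-k-2}$ and $\nu' = 4^{-k}\nu$ this gives $|Z^{(k)}\setminus S^{(k)}|\le \nu|C^k(H)|$ with no $\eps'$-dependent residue. The essential idea you are missing is to let the target set grow with $i$ rather than measuring against $C^k(H)$ throughout.
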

  \begin{proof}[Proof of claim]
    We apply Claim~\ref{claim:stitch-derivs} for each $i \in [k]$ in turn, with parameters $\nu'$ and $\gamma'$ to be determined.  Specifically, define a sequence of sets $S^{(i)} \subseteq C^{k}(H)$ for $0 \le i \le k$ by taking $S^{(0)} = S'_{k}$ and setting $S^{(i)}$ to be the subset of $S^{(i-1)} +_{i,\gamma'} S^{(i-1)}$ produced by applying the claim.

    Also define sets $Z^{(i)}$ for $0 \le i \le k$ to consist of all $c \in C^{k}(H)$ such that $c|_F \in S'_{k-i}$, for each of the $2^{i}$ faces of codimension $i$ with active coordinates $[i]$; i.e., for all faces
    \[
      F = \{\omega \in \llbracket k \rrbracket \colon \omega_{j} = \sigma_j \ \forall j \in [i]\}  ,
    \]
    where $\sigma \in \llbracket i \rrbracket$. So, $Z^{(0)} = S'_{k}$, and $Z^{(k)}$ consists of all $c \in C^{k}(H)$ whose vertices $c(\omega)$ all lie in $S'_0$.  Then we claim that for each $i$, $1 \le i \le k$,
    \[
      Z^{(i)} = Z^{(i-1)} +_{i,1-2^i \eps'} Z^{(i-1)} .
    \]
    The inclusion $\supseteq$ is immediate.  Conversely, suppose $c \in S^{(i)}$ and $F,F'$ are a pair of opposite codimension $i$ faces in direction $i$,
    \begin{align*}
      F &= \{ \omega \in \llbracket k \rrbracket \colon \omega_{j} = \sigma'_j \ \forall j \in [i-1],\ \omega(i) = 0 \} \\
      F' &= \{ \omega \in \llbracket k \rrbracket \colon \omega_{j} = \sigma'_j \ \forall j \in [i-1],\ \omega(i) = 1 \}
    \end{align*}
    where $\sigma' \in \llbracket i-1 \rrbracket$; so, $c|_F$ and $c|_{F'}$ are in $S_{k-i}$ by hypothesis. Then by definition of a system of cubes there are at least $(1-2\eps')|H|$ values $h \in H$ such that, writing $c' = c|_F + \square^{k-i}(h)$, both $[c|_F, c']_i$ and $[c|_{F'}, c']_i$ are cubes in $S_{k+1-i}$.  Taking a union bound over all $2^{i-1}$ such pairs $(F,F')$, there are at least $(1-2^i \eps')|H|$ elements $h \in H$ such that $[c|_{F_i}, c|_{F_i} + \square^s(h)] \in Z^{(i-1)}$ and $[c|_{F^i}, c|_{F_i} + \square^s(h)] \in Z^{(i-1)}$, and hence $c \in Z^{(i-1)} +_{i,1-2^i \eps'} Z^{(i-1)}$ as required.

    By the definition of $S^{(i)}$ and~\eqref{eq:concat-loss} we have that for each $i$, $1 \le i \le k$,
    \begin{align*}
      \big|Z^{(i)} \setminus S^{(i)}\big| &= \big|(Z^{(i-1)} +_{i,1-2^i \eps'} Z^{(i-1)}) \setminus (S^{(i-1)} +_{i,\gamma'} S^{(i-1)})\big| + \big| (S^{(i-1)} +_{i,\gamma'} S^{(i-1)}) \setminus S^{(i)}\big| \\
      &\le \frac2{1-2^i\eps' - \gamma'} \big|Z^{(i-1)} \setminus S^{(i-1)}\big| + \nu' |C^{s+1}(H)| .
    \end{align*}
    We may assume $\eps' \le 2^{-k-2}$, and set $\gamma' = 1/4$ and $\nu' = 4^{-k} \nu$. Since $Z^{(0)} = S^{(0)}$, the resulting bound on $|Z^{(k)} \setminus S^{(k)}|$ proves the claim.
  \end{proof}

  For the remainder of the argument, we assume this claim has been applied with $k=t+1$, $m=M$, the system $S_0,\dots,S_{t+1}$ with parameter $\eps$ from the statement, and some $\nu_0 > 0$ to be determined.
  We rename the set of cubes produced this way as $Y \subseteq C^{t+1}(H)$.
  So, either $\phi$ obeys the $\big(f,t+1,t,O_t(M)\big)$-derivatives condition on $Y$ and $\big|(C^{t+1}(H) \cap S_0^{\llbracket t+1 \rrbracket}) \setminus Y\big| \le \nu_0 |C^{t+1}(H)|$; or, option (B) holds with parameters $O(M)^{O_t(1)}$ and $O(M)^{-O_{t}(D)} \nu_0^{O_{t}(1)}$.
  We may assume the former, provided we check at the end that our final choice of $\nu_0$ is not too small for part (B) of the statement.

  Our other key component in expanding the derivatives condition on subsets of $C^t(H)$ is to pass from a derivatives condition on $c \in C^t(H)$ to a derivatives condition on translates $c + \square^t(h)$ of $c$.

  \begin{claim}%
    \label{claim:translate}
    Suppose $\phi$ and $b$ obey an $(f,t,t,m)$-derivatives condition on some set $S \subseteq C^t(H) \cap S_0^{\llbracket t \rrbracket}$, and let $\alpha > 0$ be a further parameter.
    Consider the set
    \begin{equation}
      \label{eq:def-w}
      W = \big\{ c \in C^t(H) \cap S_0^{\llbracket t \rrbracket}  \colon |\{ h \in H \colon c + \square^t(h) \in S \}| \ge \alpha |H| \big\} ;
    \end{equation}
    i.e., those cubes which have many translates in $S$.  Then either there exists a subset $W' \subseteq W$ such that $|W \setminus W'| \le (3 \nu_0 / \alpha) |C^t(H)|$ and $\phi$ obeys an $\big(f,t,t,O_t(m+M)\big)$-derivatives condition on $W'$; or, option (B) holds with parameters $O(m+M)^{O_t(1)}$ and $O(m + M)^{-O_{t}(D)} \nu_0^{O_t(1)}$.
  \end{claim}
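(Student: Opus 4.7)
The plan is to use the $(f, t+1, t, O_t(M))$-derivatives condition on $Y$ (established earlier via Claim~\ref{claim:augment-derivs}) in conjunction with the $(f, t, t, m)$-derivatives condition on $S$ at $c' = c + \square^t(h)$, combined via the telescoping identity $\partial^t \phi(c) = \partial^t \phi(c') + \partial^{t+1} \phi([c,c']_{t+1})$, to produce a candidate $(f, t, t, O_t(M))$-derivatives condition at each $c \in W$. This candidate will be genuine whenever the $b$- and $b'$-coefficients agree on the vertices of $c'$; if agreement fails too often, a Cauchy--Schwarz argument will yield option (B).

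First I would discard $c \in W$ with fewer than $\alpha|H|/2$ \emph{useful} $h$ (those for which $c + \square^t(h) \in S$ and $[c, c+\square^t(h)]_{t+1} \in Y$). Double-counting against $|C^{t+1}(H) \setminus Y| \le \nu_0 |H|\,|C^t(H)|$ removes at most $(2\nu_0/\alpha)|C^t(H)|$ cubes. After normal-forming $b$ and $b'$ via Lemma~\ref{lem:canonical-b}, the two derivatives conditions combine at each useful triple $(c, h, c', \bar c) := (c, h, c+\square^t(h), [c, c']_{t+1})$ into
\[
  \partial^t \phi(c) = \sum_\omega (-1)^{|\omega|} b'(\bar c, (\omega, 0)) \cdot f_{\le t-1}(c(\omega)) + \sum_\omega (-1)^{|\omega|} E(c,h)(\omega) \cdot f_{\le t-1}(c'(\omega)),
\]
where $E(c,h)(\omega) := b(c', \omega) - b'(\bar c, (\omega, 1))$. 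When $E(c,h) \equiv 0$, setting $\tilde b(c, \omega) := b'(\bar c, (\omega, 0))$ provides a valid $t$-derivatives condition at $c$ with coefficients bounded by $O_t(M)$. I then pigeonhole the values of $b(c',\cdot)$ and $b'(\bar c,\cdot)$ (each taking $\le O(m+M)^{O_t(D)}$ distinct values) to fix them as constants $B, B'$; let $W' \subseteq W$ consist of those $c$ lying in some useful triple of a pigeonholed class for which $E_0 := B - B'(\cdot, 1)$ vanishes.

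The dichotomy is now: if $|W \setminus W'| \le (3\nu_0/\alpha)|C^t(H)|$, option (A) of the claim holds on $W'$. Otherwise, using $|W \setminus W'| \cdot \alpha \ge 3\nu_0 |C^t(H)|$ together with the fact that no useful triple on $W \setminus W'$ has $E_0 = 0$, a short counting argument yields a pigeonholed class with $E_0 \ne 0$ containing useful triples of density $\gg \nu_0 \cdot O(m+M)^{-O_t(D)}$. Subtracting the combined identity for two such triples $(c, h_1), (c, h_2)$ sharing the same $c$ eliminates the $c$-only terms, leaving
\[
  \sum_\omega (-1)^{|\omega|} E_0(\omega) \cdot \big(f_{\le t-1}(c(\omega)+h_1) - f_{\le t-1}(c(\omega)+h_2)\big) = 0
\]
for $(c, h_1, h_2)$ of density $\nu_0^{O_t(1)} O(m+M)^{-O_t(D)}$. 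Letting $R$ be the largest index with $(E_0)_{=R} \ne 0$ (so $R \le t - 1$ by normal form, with $(E_0)_{=R}(\omega_0) \ne 0$ for some $|\omega_0| \le R$), the final step is a Cauchy--Schwarz complexity analysis, following the template of Lemmas~\ref{lem:incompatible-implies-1pc} and~\ref{lem:no-cocycle-implies-1pc}, for the $2 \cdot 2^t$ linear forms $y + h_j + \omega \cdot k$ ($|\omega| \le R$, $j \in \{1, 2\}$) at position $(\omega_0, 1)$; Lemma~\ref{lem:gvn} will then yield that $E_0(\omega_0) \cdot f_{\le R}$ is an approximate polynomial of degree $R - 1$, delivering option (B) with $v := E_0(\omega_0)$ (satisfying $v_{=R} \ne 0$).

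The main obstacle is precisely this final Cauchy--Schwarz complexity analysis: two ``copies'' of linear forms indexed by $j \in \{1, 2\}$ must be handled together, and a naive cover requires $R + 1$ sets, giving only complexity $R$ and thus the trivial conclusion that $v \cdot f_{\le R}$ is an approximate polynomial of its own degree. Achieving complexity $R - 1$ requires crucially exploiting the normal-form vanishing $E_0(\omega) = 0$ for $|\omega| > R$ to restrict attention to forms with $|\omega| \le R$, playing a role analogous to the upper-compatibility hypothesis in Lemma~\ref{lem:incompatible-implies-1pc}; getting the count down to exactly $R$ covering sets is the delicate technical point.
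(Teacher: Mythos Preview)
Your setup through the definition of $W''$ (losing $2\nu_0/\alpha$), combining the two derivatives conditions on the $(t+1)$-cube $\bar c = [c,c']_{t+1}$, and putting the resulting configuration in normal form, matches the paper essentially line for line. The divergence is exactly where you flag it, and the obstacle you identify is real: subtracting two instances $(c,h_1)$, $(c,h_2)$ to cancel the $c$-only terms doubles the system of linear forms, and the extra ``kill all $j=2$'' set you need pushes the count to $R+1$, giving complexity $R$ and a trivial conclusion. I do not see a way to merge that extra set with one of the others.

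The paper's fix is simpler than what you are attempting: \emph{do not subtract}. The point is that the $c$-only terms you are trying to cancel --- both the first sum $\sum_\omega (-1)^{|\omega|} B'(\omega,0)\cdot f(c(\omega))$ and the left-hand side $\partial^t\phi(c)$ itself --- are functions of individual vertices $\bar c(\eta)$ on the \emph{lower} face $F_{t+1}$ of the $(t+1)$-cube $\bar c$. So after pigeonholing the normal-formed $\fb$ to a constant $B$, one has directly an identity
\[
  \sum_{\eta \in \llbracket t+1 \rrbracket \,:\, \eta(t+1)=0 \text{ or } |\eta|\le R} G_\eta\big(\bar c(\eta)\big) = 0
\]
on a set of $(t+1)$-cubes $\bar c$ of density $\gg \nu_0\, O(m+M)^{-O_t(D)}$, where the restriction $|\eta|\le R$ on the upper face comes from $\fb_{>R}(\cdot,\eta)=0$ there. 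The target vertex $\omega$ lies on the upper face, so $\omega(t+1)=1$; hence one of the $|\omega|$ type-1 covering sets (the one for coordinate $t+1$, with $\sigma$ nonzero only in $h_{t+1}$) already kills every lower-face form. Together with the remaining $|\omega|-1$ type-1 sets and the $R-|\omega|$ type-2 sets, this is $R$ sets total, i.e.\ complexity $R-1$, and Lemma~\ref{lem:gvn} delivers option~(B). In short, the role of your extra $(R+1)$-th set is played for free by the $(t+1)$-st coordinate of $\omega$, which you lose access to by passing to the subtracted equation.
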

  \begin{proof}[Proof of claim]
    We pick functions $b \colon Y \times \llbracket t+1 \rrbracket \to \bigoplus_{i=0}^{t-1} \ZZ^{d_i}$ and $b' \colon S \times \llbracket t \rrbracket \to \bigoplus_{i=0}^{t-1} \ZZ^{d_i}$ such that $\phi$ and $b$ obey the $(f,t+1,t,O_t(M))$-derivatives condition on $Y$, and $\phi$ and $b'$ obey the $(f,t,t,m)$-derivatives condition on $S$, as hypothesized.

    First we set
    \[
      W'' = \big\{ c \in C^t(H) \cap S_0^{\llbracket t \rrbracket} \colon |\{h \in H \colon c+\square^t(h) \in S,\ [c, c+\square^t(h)]_{t+1} \in Y \}| \ge \alpha / 2\big\} .
    \]
    Each $c \in W \setminus W''$ contributes at least $(\alpha/2) |H|$ elements to $\big(C^{t+1}(H) \cap S_0^{\llbracket t+1 \rrbracket}\big) \setminus Y$, and therefore
    \[
      |W \setminus W''| \le (2/\alpha) \nu_0 |C^t(H)| .
    \]

    Now suppose $c_0 \in W''$ and take any $h \in H$ such that $c_1 = c_0 + \square^t(h) \in S$ and $c = [c_0, c_1]_{t+1} \in Y$, as in the definition.  We have
    \begin{align}
      \nonumber
      \partial^t \phi(c_0) &= \partial^{t+1} \phi(c) + \partial^t \phi(c_1) \\
      \nonumber
      &= \sum_{\omega \in \llbracket t+1 \rrbracket} (-1)^{|\omega|} b(c, \omega) \cdot f(c(\omega))+ \sum_{\omega' \in \llbracket t \rrbracket} (-1)^{|\omega'|} b'(c_1, \omega') \cdot f(c_1(\omega')) \\
      &= \sum_{\omega \in \llbracket t+1 \rrbracket} (-1)^{|\omega|} \fb(c, \omega) \cdot f(c(\omega))
      \label{eq:nice-deriv}
    \end{align}
    where $\omega \mapsto \fb(c,\omega)$ is the configuration obtained by putting
    \[
      \omega \mapsto \begin{cases} 
        b(c,\omega) &\colon \omega(t+1) = 0 \\
        b(c,\omega) - b'(c_1,(\omega_1,\dots,\omega_t)) &\colon \omega(t+1) = 1
      \end{cases}
    \]
    into normal form (by Lemma~\ref{lem:canonical-b}).

    If $\fb(c,\omega)$ has the property that $\fb(c, \omega) = 0$ for all $\omega \in F^{t+1}$, then
    \[
      \partial^t \phi(c_0) = \sum_{\omega \in \llbracket t \rrbracket} (-1)^{|\omega|} \fb(c_0,\omega) \cdot f(c_0(\omega))
    \]
    and so an $\big(f,t,t,O_t(m+M)\big)$-derivatives condition holds at $c_0$.
    Also note that this conclusion holds if the hypothesis on $\fb(c,\omega)$ is met for any value of $h$.
    Finally, if $t=1$ then this property always holds, as in this case since $\fb$ is in normal form we have $\fb(c,\omega)=0$ for any $\omega \ne 0$.

    So, we define $W' \subseteq W''$ to consist of all $c_0 \in W''$ such that, in the above notation, there is at least one $h \in H$ and corresponding $c_1$, $c$ and $\fb$ with the properties above such that $\fb(c, \omega) = 0$ for all $\omega \in F^{t+1}$.  Hence, if $|W'' \setminus W'| \le (\nu_0 / \alpha) |C^t(H)|$, the set $W'$ has all the desired properties and we are done.

    If on the contrary $|W'' \setminus W'| \ge (\nu_0 / \alpha) |C^t(H)|$, we must show that option (B) holds with the indicated parameters.
    To do this, we again use a Cauchy--Schwarz argument, sufficiently similar to the proof of Lemma~\ref{lem:no-cocycle-implies-1pc} that we will describe it fairly concisely.

    Note first that for each $c_0 \in W'' \setminus W'$ there are at least $\alpha/2$ values $h \in H$ such that $c_1 = c_0 + \square^t(h) \in S$, $c = [c_0,c_1]_{t+1} \in Y$ and, in the above notation, $\fb(c,\omega) \ne 0$ for at least one $\omega \in F^{t+1}$.
    Hence there at least $(\nu_0/2) |C^{t+1}(H)|$ such cubes $c$ (such that $c \in Y$, $c=[c_0,c_1]_{t+1}$ where $c_0 \in W''$, and $\fb(c,\omega) \ne 0$ for at least one $\omega \in F^{t+1}$) in total.
    
    By pigeonholing, there exists some $R$, $1 \le R \le t-1$, some $\omega \in \llbracket t+1 \rrbracket$ with $\omega(t+1) = 1$, and $\gg_t \nu_0 |C^{t+1}(H)|$ cubes $c \in C^{t+1}(H)$ as above, such that $\fb_{=R}(c, \omega) \ne 0$ but $\fb_{>R}(c,\omega') = 0$ for all $\omega' \in F^{t+1}$.
    Since $\fb$ is in normal form and $\fb_{=R}(c,\omega) \ne 0$, necessarily $|\omega| \le R$.
    Similarly, if $\omega' \in F^{t+1}$ and $|\omega'| > R$ then $\fb_{>R}(c,\omega')=0$ (by hypothesis) and $\fb_{\le R}(c,\omega')=0$ (by normal form) so more simply $\fb(c,\omega')=0$ for such $\omega'$.
    
    By further pigeonholing to make the values $\fb(c,-)$ constant, we may pick $B \colon \llbracket t+1 \rrbracket \to \bigoplus_{i=0}^{t-1} \ZZ^{d_i}$ such that $O(M+m)^{-O_t(D)} \nu_0 |C^{t+1}(H)|$ of these cubes $c$ have $\fb(c,\omega') = B(\omega')$ for all $\omega' \in \llbracket t+1 \rrbracket$.
    For these $c=[c_0,c_1]_{t+1}$, noting from the discussion above that $B(\omega')=0$ if $\omega' \in F^{t+1}$ and $|\omega'|>R$, by~\eqref{eq:nice-deriv} we have
    \[
      - \partial^t \phi(c_0) + \sum_{\substack{\omega' \in \llbracket t+1 \rrbracket \\ \omega'(t+1) = 0 \text{ or } |\omega'| \le R}} (-1)^{|\omega'|} B(\omega') \cdot f(c(\omega')) = 0.
    \]
    Hence, Lemma~\ref{lem:gvn} will show that option (B) holds with parameters $O(M+m)^{O_t(1)}$ and 
      $O(M+m)^{-O_t(D)} \nu_0^{O_t(1)}$
    as required; provided the system of linear forms $\ZZ^{t+2} \to \ZZ$ consisting of
    \[
      \phi_\eta(x,h_1,\dots,h_{t+1}) = x + \eta \cdot (h_1,\dots,h_{t+1})
    \]
    for $\eta \in \llbracket t+1 \rrbracket$ such that $\eta(t+1) = 0$ or $|\eta| \le R$, has Cauchy--Schwarz complexity at most $R$ at $\eta = \omega$ with denominator $(t-1)!$.  Indeed, much as before we can take:--- 
    \begin{itemize}
      \item sets $\Sigma_1,\dots,\Sigma_{|\omega|}$ given for each index  $j \in [t+1]$ with $\omega(j) = 1$ by
        \[
          \Sigma = \big\{ \phi_\eta \colon \eta(j) = 0 \big\}
        \]
        and with corresponding vector $\sigma = (0,\dots,0,1,0,\dots,0)$ (i.e., $\sigma_j = 1$ and all other entries are zero); and
      \item $\Sigma_{|\omega|+1,\dots,\Sigma_{|R|}}$ are given by $\Sigma_i = \{ \phi_\eta \colon |\eta| = i\}$, with $\sigma_i = (-i,1,1,\dots,1)$;
    \end{itemize}
    which have the required properties.
  \end{proof}

  Combining translation (Claim~\ref{claim:translate}) and glueing (Claim~\ref{claim:stitch-derivs}), we can almost always pass from a derivatives condition on $S \subseteq C^t(H)$ to one on a larger set.

  \begin{claim}%
    \label{claim:anything-grows}
    For any constant $c_1>0$, there exist $c_2,c_3 \gg_{t,c_1} 1$ such that the following holds.  Let $S_0,\dots,S_{t+1}$ be the system of cubes of parameter $1-\eps$ from the statement, and let $S \subseteq C^t(H) \cap S_0^{\llbracket t \rrbracket}$ be any set of cubes of size $\delta' |C^t(H)|$.  There exists a parameter $\gamma \gg_{t,c_1} \delta'^2$, such that if we define
    \[
      W = \big\{ c \in C^t(H) \cap S_0^{\llbracket t \rrbracket}  \colon |\{ h \in H \colon c + \square^t(h) \in S \}| \ge (\delta'/2) |H| \big\}
    \]
    as above, then one of the following occurs:---
    \begin{enumerate}[label=(\Roman*)]
      \item we get a size increase, in that for some $i \in [t]$ we have $|W +_{i,\gamma} W| \ge \big(1 + c_2 - O_t(\eps) \big) |S|$;
      \item $H$ has a proper subgroup of order at least $c_3 \delta' |H|$; or
      \item the set $S$ was almost everything to start with, in that $\delta' \ge 1 - c_1$.
    \end{enumerate}
  \end{claim}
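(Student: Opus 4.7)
The plan is to exploit the cylindrical structure of $W$ in the $x$-coordinate, reducing Claim~\ref{claim:anything-grows} to an additive-combinatorial statement about slices of an auxiliary set $\Pi \subseteq H^t$, then resolved via Fournier's theorem on sets with very small additive doubling.

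First, identify $C^t(H) \cong H^{t+1}$ via $c = \angle(x; h_1, \dots, h_t)$, and observe that $c \mapsto c + \square^t(h)$ shifts only the $x$-coordinate. With $\alpha = c_\alpha \delta'$ for a small constant $c_\alpha = c_\alpha(c_1, t)$ to be chosen, set
\[
\Pi = \bigl\{(h_1, \dots, h_t) \in H^t : \bigl|\{x \in H : \angle(x; h_1, \dots, h_t) \in S\}\bigr| \geq 2\alpha |H|\bigr\} ,
\]
so that a pigeonhole on $|S| = \delta'|H|^{t+1}$ gives $|\Pi| \geq (1 - 2c_\alpha)\delta'|H|^t$. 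Since membership $c \in W$ depends only on whether $(h_1, \dots, h_t) \in \Pi$ (and not on $x$), $W$ is, modulo $O_t(\eps)$-losses from the $S_0^{\llbracket t \rrbracket}$ constraint, the cylinder $H \times \Pi$. For each $i \in [t]$ and each $(h_j)_{j \neq i} \in H^{t-1}$, let $B = B_{(h_j)_{j \neq i}} = \{h \in H : (h_1, \dots, h, \dots, h_t) \in \Pi\}$ (with $h$ in the $i$-th position). Unravelling $T_{t,i}$ from \eqref{eq:tk}, a direct calculation gives that $c = \angle(x; h_1, \dots, h_t) \in W +_{i, \gamma} W$ iff (again up to $O_t(\eps)$-losses) $h_i$ lies in the popular-differences set $B^\star := \{h \in H : r_B(h) \geq \gamma|H|\}$, where $r_B(h) = |\{(b_1, b_2) \in B \times B : b_1 - b_2 = h\}|$; hence
\[
|W +_{i, \gamma} W| = (1 - O_t(\eps)) |H| \sum_{(h_j)_{j \neq i}} |B^\star_{(h_j)_{j \neq i}}| .
\]

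Suppose (I) fails for every $i$: then $\sum |B^\star| \leq (1 + c_2 + O_t(\eps))\delta'|H|^t$, while $\sum |B| = |\Pi| \geq (1 - 2c_\alpha)\delta'|H|^t$. The elementary bound $|B^\star| \geq |B| - \gamma|H|^2/|B|$, combined with the choice $\gamma = c_\gamma \delta'^2$ with $c_\gamma \ll c_\alpha^2$, yields $|B^\star| \geq (1 - \eta_1)|B|$ on slices with $|B| \geq \alpha|H|$ for any $\eta_1 > 0$ of our choosing. A weighted-Markov argument then locates a positive $|B|$-mass of such slices satisfying $|B^\star| \leq (1 + c_2^{\prime\prime})|B|$ for some small $c_2^{\prime\prime} = c_2^{\prime\prime}(c_1, t)$. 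For each such slice, a popular-differences version of Fournier's theorem (which can be deduced from the classical $|A - A|$-statement via a Balog--Szemer\'edi--Gowers detour on the energy estimate $E(B) \geq (1 - \eta_1) \gamma^2 |H|^2 |B|$) gives a subgroup $K_{(h_j)} \leq H$ with $B$ contained in a single coset of $K_{(h_j)}$ and $|K_{(h_j)}| \leq (1 + O(c_2^{\prime\prime})) |B|$. If some $K_{(h_j)}$ is a proper subgroup of size $\geq c_3 \delta'|H|$, we obtain option~(II). Otherwise each $K_{(h_j)}$ is either all of $H$ (whence $|B| \geq (1 - O(c_2^{\prime\prime}))|H|$) or is of size $< c_3 \delta'|H|$ (whence $|B| < c_3 \delta'|H|$); tracking the $|B|$-mass in each case shows that for sufficiently small $c_3$ and $c_\alpha$ the ``small proper subgroup'' contribution is negligible relative to $|\Pi|$, so most mass is contributed by ``$K_{(h_j)} = H$'' slices, forcing $|\Pi| \geq (1 - c_1)|H|^t$ and hence $\delta' \geq 1 - c_1$, i.e., option~(III).

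The main obstacle is the coordinated choice of the constants $c_\alpha, c_\gamma, c_2, c_3, c_2^{\prime\prime}$ so that the chain of inequalities in the previous step closes cleanly, and in particular so that the dichotomy between options~(II) and (III) is forced. A secondary task is to formulate the popular-differences version of Fournier's theorem with the correct quantitative bounds; this is essentially standard via BSG, though the constants may degrade from $c_2^{\prime\prime}$ to a polynomial in $c_2^{\prime\prime}$.
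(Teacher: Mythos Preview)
Your strategy---reduce to fibers $B_{(h_j)_{j\ne i}}$ of an auxiliary set $\Pi\subseteq H^t$, relate $W+_{i,\gamma}W$ to popular differences of these fibers, and invoke Fournier to force fibers close to cosets---is essentially the paper's approach (the paper's $V$ and $V_{\vec h,i}$ are your $\Pi$ and $B$). However, two steps in your sketch do not go through as written.

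First, your route to Fournier is incorrect. The energy estimate you quote, $E(B)\ge(1-\eta_1)\gamma^2|H|^2|B|$, is far too weak: with $\gamma\asymp\delta'^2$ and $|B|\gtrsim\delta'|H|$ it gives only $E(B)\gtrsim\delta'^{5}|H|^3$, nowhere near the $(1-\beta)|B|^3$ that Fournier requires, and a BSG detour from such a bound yields only a dense subset of $B$ with bounded doubling, not that $B$ itself is nearly a coset. The correct bound---which your setup \emph{does} support---comes from Cauchy--Schwarz on $\sum_{h\in B^\star}r_B(h)\ge|B|^2-\gamma|H|^2\ge(1-\eta_1)|B|^2$: one gets
\[
E(B)\ \ge\ \frac{\bigl(\sum_{h\in B^\star}r_B(h)\bigr)^2}{|B^\star|}\ \ge\ \frac{(1-\eta_1)^2|B|^4}{(1+c_2'')|B|}\ =\ \bigl(1-O(\eta_1+c_2'')\bigr)|B|^3,
\]
and then Fournier applies directly with $\beta=O(\eta_1+c_2'')$. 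This is exactly the paper's Cauchy--Schwarz step, phrased there as bounding the second moment of $r(x,y)$ by $|X|\,E(V_{\vec h,i})$.

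Second, and more seriously, your endgame is missing an idea. Knowing that, in a single direction $i$, most $|B|$-mass sits in ``full'' slices ($|B|\ge(1-o(1))|H|$) does \emph{not} force $|\Pi|\ge(1-c_1)|H|^t$. For instance, with $t=2$ and $\Pi=H\times A$ for $|A|=\delta|H|$, every nonempty direction-$1$ slice is all of $H$, yet $|\Pi|=\delta|H|^2$ can be arbitrarily small. (In this example the growth or subgroup actually appears in direction $2$, but your argument does not use the interaction between directions.) The paper closes this gap by combining all $t$ directions via the Loomis--Whitney inequality: writing $L_{\mathrm{big},i}$ for the set of $\vec h\in H^{t-1}$ with $|V_{\vec h,i}|\ge(1-\rho)|H|$, one has $|L_{\mathrm{big},i}|\le(1-\rho)^{-1}|V|/|H|$, and if $B\subseteq V$ is the set of $v$ with $\pi_i(v)\in L_{\mathrm{big},i}$ for every $i$ then $|B|\le\prod_i|\pi_i(B)|^{1/(t-1)}\le((1-\rho)^{-1}|V|/|H|)^{t/(t-1)}$, which rearranges to $|V|\ge(1-o(1))|H|^t$. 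This step is essential and is absent from your sketch; the ``coordinated choice of constants'' you anticipate is not the obstacle---a genuinely new inequality is.
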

  \begin{proof}[Proof of claim]
    We write
    \begin{equation}
      \label{eq:v-def}
      V = \big\{ (h_1,\dots,h_t) \in H^t \colon |\{ x \in H \colon \angle(x;h_1,\dots,h_t) \in S \}| \ge (\delta'/2) |H| \big\}
    \end{equation}
    which means that
    \[
      W = \big\{ \angle(x;h_1,\dots,h_t) \in S_0^{\llbracket t \rrbracket} \colon (h_1,\dots,h_t) \in V \big\}
    \]
    by definition.  We record the estimate
    \begin{equation}
      \label{eq:v-bound}
      |S| = \sum_{(h_1,\dots,h_t) \in H^t} \big|\big\{ x \in H \colon \angle(x;h_1,\dots,h_t) \in S \big\}\big|  \le |V|\,|H| + (\delta'/2) |H|^{t+1}
    \end{equation}
    which implies $|V| \ge (\delta'/2) |H|^t = |S|/2|H|$.
    
    For each $i \in [t]$ and each choice $\vec h = (h_1,\dots,h_{i-1},h_{i+1},\dots,h_t) \in H^{t-1}$, we define the fiber $V_{\vec h, i} = \big\{ h_i \in H \colon (h_1,\dots,h_i,\dots,h_t) \in V\big\}$, as well as a directed graph (with self-loops) with vertex set
    \[
      X = \big\{ x \in H \colon \angle(x;h_1,\dots,h_{i-1},h_{i+1},\dots,h_t) \in S_0^{\llbracket t-1 \rrbracket} \big\}
    \]
    and edge set
    \[
      \cE = \big\{ (x,y) \in X \times X \colon y-x \in V_{\vec h,i} \big\}  .
    \]
    It follows that $\angle(x;h_1,\dots,h_t) \in W$ if and only if $x \in X$, $x+h_i \in X$ and $(x,x+h_i) \in \cE$.  Moreover, $\angle(x;h_1,\dots,h_t) \in W +_{i,\gamma} W$ if and only if $x$ and $y=x+h_i$ are in $X$ and there are at least $\gamma |H|$ elements $z \in X$ such that $xz, yz \in \cE$.

    We note that $|X| \ge (1-2^{t-1} \eps) |H|$: as $x$ ranges over $H$, each vertex of $\angle(x; h_1,\dots,h_{i-1},h_{i+1},\dots,h_t)$ is excluded from $S_0$ with probability $\eps$, and we take a union bound.
    
    Similarly, $|\cE| \ge |V_{\vec h, i}| (2|X| - |H|)$, since each pair $h \in V_{\vec h, i}$ and $x \in H$ yields an edge $(x,x+h) \in \cE$ unless either $x \in H \setminus X$ or $x+h \in H \setminus X$, each of which occurs with probability exactly $1-|X|/|H|$ as $x$ varies over $H$.  Combining these estimates, we deduce $|\cE| \ge |V_{\vec h,i}| |H| (1 - 2^t \eps)$.

    Given $x,y \in X$, write $r(x,y) = |\{ z \in X \colon xz,yz \in \cE\}|$.  Note
    \begin{align*}
      \big|\big\{ (x,y) \in X \times X \colon r(x,y) \ge \gamma |H|\big\}\big|
      &\ge \frac{\left(\sum_{(x,y) \in X \times X} r(x,y)\, [ r(x,y) \ge \gamma |H| ]\right)^2}
      {\sum_{(x,y) \in X \times X} r(x,y)^2\, [ r(x,y) \ge \gamma |H| ]}
    \end{align*}
    by Cauchy--Schwarz.  Bounding based on the cases $r(x,y) < \gamma |H|$ and $r(x,y) \ge \gamma |H|$, we have
    \[
      \sum_{(x,y) \in X \times X} r(x,y) \le \gamma |H| |X|^2 + \sum_{(x,y) \in X \times X} r(x,y)\, [ r(x,y) \ge \gamma |H| ]
    \]
    and conversely by double-counting and Cauchy--Schwarz we have the lower bound
    \[
      \sum_{(x,y) \in X \times X} r(x,y) = \sum_{z \in X} |\{x \in X \colon xz \in \cE\}|^2 \ge |\cE|^2 / |X| .
    \]
    Combining these, and using the lower bound on $|\cE|$ above, gives
    \[
      \sum_{(x,y) \in X \times X} r(x,y)\, [r(x,y) \ge \gamma |H|] \ge |\cE|^2 / |X| - \gamma |H| |X|^2 \ge |V_{\vec h, i}|^2 |H| (1 - 2^{t+1} \eps) - \gamma |H|^3 .
    \]
    Meanwhile, the second moment of $r(x,y)$ can be controlled in terms of the additive energy of $V_{\vec h,i}$:
    \begin{align*}
      \sum_{(x,y) \in X \times X} r(x,y)^2 &= 
      \big|\big\{(x,y,z,z') \in X^4 \colon xz, xz', yz, yz' \in \cE\big\}\big| \\
      &\le \big|\big\{ x\in X,\ (y,z,z') \in H^3 \colon z-x, z'-x, z-y, z'-y \in V_{\vec h, i} \big\}\big| \\
      &= |X|\, E(V_{\vec h, i})
    \end{align*}
    where $E(A)$ denotes the additive energy $|\{ (a,a',b,b') \in A^4 \colon a-a'=b-b'\}|$ of a set $A \subseteq H$.

    Putting this together, we deduce that
    \begin{align}
      \label{eq:local-triples-lower}
      \nonumber
      \big|\big\{ (x,y) &\in X \times X \colon r(x,y) \ge \gamma |H|\big\}\big|
      \ge \frac{\big( |V_{\vec h, i}|^2 |H| (1 - 2^{t+1} \eps) - \gamma |H|^3 \big)^2}{|H| E(V_{\vec h, i})} \\
      &= |H|\,|V_{\vec h,i}|\ \big(E(V_{\vec h,i}) / |V_{\vec h,i}|^3 \big)^{-1} \big( 1 - 2^{t+1} \eps - \gamma |H|^2 / |V_{\vec h, i}|^2\ \big)^2 .
    \end{align}
      
    We now consider some cases based on the additive energy of $V_{\vec h, i}$.  Let $\beta > 0$ be a further parameter.  If $E(V_{\vec h,i }) \ge (1 - \beta) |V_{\vec h,i }|^3$, a well-known argument of Fournier (\cite{fournier}, or see e.g.\ \cite{green-barbados}) shows that there is some subgroup $H' \le H$ and a coset $x + H'$ such that
    \[
      \big|(x + H') \cap V_{\vec h, i}\big| \ge \big(1 - 10 \beta^{1/2}\big) \max(|H'|, |V_{\vec h, i}|) .
    \]
    In particular, $|H'| \ge (1 - 10 \beta^{1/2}) |V_{\vec h, i}|$; but then either $H'=H$; or $H'$ is a proper subgroup of order at least $c_3 \delta' |H|$ and (II) holds, which we may assume does not happen; or $|V_{\vec h, i}| \le (1 - 10\beta^{1/2})^{-1} c_3 \delta' |H|$ must be rather small.  On the other hand, when $H' = H$ we have that $|V_{\vec h, i}| \ge \big(1 - 10 \beta^{1/2}\big) |H|$, i.e.\ $V_{\vec h,i}$ is almost all of $H$.

    Accordingly, for each $i \in [t]$ we distinguish three kinds of $\vec h \in H^{t-1}$:
    \begin{align*}
      L_{\operatorname{sml},i} &= \big\{ \vec h \in H^{t-1} \colon |V_{\vec h, i}| \le 2 \delta' c_3 |H| \big\} , \\
      L_{\operatorname{big},i} &= \big\{ \vec h \in H^{t-1} \colon |V_{\vec h, i}| \ge (1 - \rho)|H| \big\}  \setminus L_{\operatorname{sml},i} , \\
      L_{\operatorname{med},i} &= \big\{ \vec h \in H^{t-1} \colon E(V_{\vec h, i}) \le (1 - \beta) |V_{\vec h, i}|^3 \big\} \setminus L_{\operatorname{sml},i} \setminus L_{\operatorname{big},i}  ;
    \end{align*}
    where $\rho>0$ is some further parameter to be determined.  Provided we choose $\beta < \min(1/2, \rho^2)/100$ and so $(1 - 10 \beta^{1/2})^{-1} \le 2$ and $1 - 10 \beta^{1/2} \ge 1-\rho$, the discussion above tells us exactly that these three sets partition $H^{t-1}$.
    
    Note also that whenever $\vec h \notin L_{\operatorname{sml},i}$ we may modify~\eqref{eq:local-triples-lower} to 
    \begin{equation}
      \label{eq:better-triples-local}
      \big|\big\{ (x,y) \in X \times X \colon r(x,y) \ge \gamma |H| \big\}\big| \ge |H|\,|V_{\vec h,i}|\ \big(E(V_{\vec h,i}) / |V_{\vec h,i}|^3 \big)^{-1} \big(1 - 2^{t+1} \eps - \gamma / 2 \delta'^2 c_3^2 \big) .
    \end{equation}
    We can bound the total number of $v \in V$ that lie over fibers $L_{\operatorname{sml},i}$ by
    \[
      \sum_{\vec h \in L_{\operatorname{sml},i}} |V_{\vec h,i}| \le 2 \delta' c_3 |H|^s \le 4 c_3\, |V|
    \]
    where we used the lower bound on $|V|$ above.
    Also, we denote the proportion of $v \in V$ that lie over fibers $L_{\operatorname{med},i}$ by
    \[
      \sigma_i = \frac1{|V|} \sum_{\vec h \in L_{\operatorname{med},i}} |V_{\vec h,i}| .
    \]

    We now bound $|W +_{i,\gamma} W|$ below by summing~\eqref{eq:better-triples-local} over $\vec h \in H^{t-1}$, giving
    \begin{align}
      \nonumber
      |W +_{i,\gamma} W| &\ge \begin{aligned}[t] & \sum_{\vec h \in L_{\operatorname{med},i}} |H|\,|V_{\vec h,i}|\ (1 - \beta)^{-1} \big(1 - 2^{t+1} \eps - \gamma / 2 \delta'^2 c_3^2  \big) \\
        + & \sum_{\vec h \in L_{\operatorname{big},i}} |H|\,|V_{\vec h,i}|\ \big(1 - 2^{t+1} \eps  - \gamma / 2 \delta'^2 c_3^2 \big) 
      \end{aligned} \\
      \label{eq:best-triples}
      &\ge |H|\, |V|\, \big(1 - 2^{t+1} \eps - \gamma / 2 \delta'^2 c_3^2 - 4 c_3 \big) (1 + \beta \sigma_i) \ .
    \end{align}
    We would like to deduce the size increase (I).  Recalling from above that $|H|\,|V| \ge (1 - \alpha/\delta')|S|$, and since clearly $\sigma_i \ge 0$ for each $i$, the only way (I) can fail to happen is if both of these inequalities are nearly tight (up to small multiplicative factors), in which case we will show that $\delta' \approx 1$ and so (III) holds.

    For this last part, consider the projections $\pi_i \colon H^t \to H^{t-1}$ given by
    \[
      \pi_i(h_1,\dots,h_t) = (h_1,\dots,h_{i-1},h_{i+1},h_s)
    \]
    for each $i \in [t]$, and define
    \[
      B = \big\{ v \in V \colon \pi_i(v) \in L_{\operatorname{big},i} \text{ for all } i \in [t] \big\} .
    \]
    Since any elements $v \in V \setminus B$ must have either $\pi_i(v) \in L_{\operatorname{med},i}$ or $\pi_i(v) \in L_{\operatorname{sml},i}$ for some $i \in [t]$, a union bound gives
    \[
      |B|/|V| \ge 1 - \sum_{i=1}^t \sigma_i - 4 t c_3 .
    \]
    It is also clear that $|L_{\operatorname{big},i}| \le (1-\rho)^{-1} |V| / |H|$ for each $i \in [t]$, as every $\vec h \in L_{\operatorname{big},i}$ accounts for at least $(1-\rho) |H|$ elements of $V$.   Also, the image $\pi_i(B)$ is a subset of $L_{\operatorname{big},i}$ for each $i \in [t]$, so by the Loomis--Whitney inequality we deduce
    \[
      |B| \le \prod_{i=1}^t |\pi_i(B)|^{1/(t-1)} \le (1-\rho)^{-t/(t-1)} (|V|/|H|)^{t/(t-1)}
    \]
    and rearranging gives
    \begin{equation}
      \label{eq:v-is-large}
      |V| \ge (|B|/|V|)^{t-1} (1 - \rho)^{t} |H|^t \ge |H|^t \left(1 - t \sum_{i=1}^t \sigma_i - t \rho - 4 t^2 c_3 \right)
    \end{equation}
    i.e.\ if $\sigma_i \approx 0$ for each $i$ then $V$ is almost all of $H^t$.  If furthermore $|V| \approx |S|/|H|$ then this will imply $\delta' \approx 1$ as required, in a way to be made precise.

    It remains only to bolt the quantitative statements together and pick values for all the parameters.
    We are free to assume $c_1 \le 1$, or else (III) holds vacuously.

    We start by setting $\rho = c_1 / 10 t$, $\beta = c_1^2 / 10^4 t^2$ and $\gamma = 2 \delta'^2 c_3^3$.
    We will also assume that our final choice of $c_3$ obeys $c_3 \le c_1 / 40 t^2$ and $c_3 \le c_1/200$.

    In the case that $|V| \le \delta' (1 + c_1/10) |H|^t$ and $\sigma_i \le c_1 / 10 t^2$ for all $i \in [t]$, from~\eqref{eq:v-is-large} and these parameter choices we get $|V| \ge |H|^s (1 - 3c_1/10)$ and so $\delta' \ge 1 - c_1$, giving (III).
    We may therefore assume one of these inequalities fails.
    
    If $|V| \ge \delta' (1 + c_1/10) |H|^t$ then~\eqref{eq:best-triples} gives
    \[
      |W +_{i,\gamma} W| \ge |S| \big(1 - 2^{t+1} \eps - 5 c_3\big) \big(1 + c_1 / 10 \big)
    \]
    for any $i \in [t]$. The right hand side is at least $|S| (1 + c_1 / 20 - O_t(\eps))$, which is acceptable for (I) provided our final choice of $c_2$ has $c_2 \le c_1 / 20$.
    
    Finally, if $\sigma_i \ge c_1 / 10 t^2$ for some $i \in [t]$, then by~\eqref{eq:best-triples} again we get
    \[
      |W +_{i,\gamma} W| \ge |S| \big(1 - 2^{t+2} \eps - 5 c_3\big) \big(1 + (c_1 / 10 t^2)(c_1^2 / 10^4) \big)
    \]
    and so if we set $c_2 = c_3 = c_1^3 / 10^6 t^2$, the right hand side is again at least $|S|(1 + c_2 - O_t(\eps))$, giving (I).
  \end{proof}

  The proof of Lemma~\ref{lem:1pc-implies-derivs} now proceeds as follows.
  First consider the set
  \[
    U_0 = \big\{ c \in C^t(H) \cap S_0^{\llbracket t \rrbracket} \colon \partial^t \phi(c) = 0 \big\}
  \]
  which by hypothesis has $\mu(U_0) \ge \delta$.
  It is clear that in particular $\phi$ (together with the zero function) obeys an $(f,t,t,0)$-derivatives condition on $U_0$.
  
  We recursively use Claim~\ref{claim:anything-grows} (and Claim~\ref{claim:translate}, Claim~\ref{claim:stitch-derivs}, and~\eqref{eq:concat-loss}) to define further subsets $U_j \subseteq C^t(H) \cap S_0^{\llbracket t \rrbracket}$ for $1 \le j \le K$ on which $\phi$ obeys an $\big(f,t,t,m_j\big)$-derivatives condition (for some parameters $m_j$ to be determined), as follows.
  
  Suppose inductively that $U_j$ is defined, that $\mu(U_j) \ge \delta$ and that $\phi$ obeys an $(f,t,t,m_j)$-derivatives condition on $U_j$.
  We apply Claim~\ref{claim:anything-grows} with $S = U_j$, and with a parameter $c_1$ depending only on $t$ to be determined.
  Provided the choice of $J(t)$ in the statement of the lemma obeys $J(t) > 1/c_3$, option (II) cannot occur.
  
  If option (I) occurs, let $W$, $i$ and $\gamma \gg_{t,c_1} \mu(U_j)^2$ be as in the claim, and set $U'_{j+1} = W +_{i,\gamma} W$, so that $\mu(U'_{j+1}) \ge (1 + c_2 - O_t(\eps))\, \mu(U_j)$.
  By Claim~\ref{claim:translate} there is a subset $W' \subseteq W$ such that $\mu(W \setminus W') \le 6 \nu_0 / \delta$ and $\phi$ obeys an $(f,t,t,O_t(M+m_j))$-derivatives condition\footnote{In a previous version of this paper, it was stated that $\phi$ obeys an $(f,t,t,O_t(M))$-derivatives condition where the parameters do not depend on $m_j$; however, this was not correct and the dependence on $m_j$ here is necessary.  The effect on the bounds is ultimately not significant.  The author is grateful to Thomas Bloom for pointing this out.} on $W'$ (unless option (B) holds with parameters $O(M+m_j)^{O_t(1)}$ and $O(M+m_j)^{-O_t(D)} \nu_0^{O_t(1)}$).
  
  We then let $U_{j+1}$ be the set obtained by applying Claim~\ref{claim:stitch-derivs} to $W' +_{i,\gamma/2} W'$ with parameter $\nu = \nu_0/\delta$, so that $\phi$ again obeys the $\bigl(f,t,t,O_t(M+m_j)\bigr)$-derivatives condition on $U_{j+1}$ (or, noting the bound on $\gamma$, option (B) occurs with parameters $O(M+m_j)^{O_t(1)}$ and $O(M)^{-O_t(D)} (\nu_0 \delta)^{O_t(1)}$), and
  \begin{align*}
    \mu(U'_{j+1} \setminus U_{j+1}) &\le \mu\bigl(U'_{j+1} \setminus (W' +_{i,\gamma/2} W')\bigr) + \mu\bigl((W' +_{i,\gamma/2} W') \setminus U_{j+1}\bigr) \\
                                    &\le 24 \nu_0/\delta + \nu_0 / \delta = 25 \nu_0 / \delta
  \end{align*}
  where we used the definition $U'_{j+1}=W +_{i,\gamma} W$ and~\eqref{eq:concat-loss}.
  If we demand that our final choice of $\nu_0$ obeys $\nu_0 \le c_2 \delta^2 /100$, and that $\eps_0 \le c_2 / C$ where $C=C(t)$ is a constant chosen such that the term $O_t(\eps)$ in Claim~\ref{claim:anything-grows}(I) is bounded by $(C/4)\eps$ (and hence by $c_2/4$), then we establish that
  \[
    \mu(U_{j+1}) \ge (1 + c_2 - O_t(\eps))\, \mu(U_j) - 25 \nu_0 / \delta \ge (1 + c_2/2)\, \mu(U_j),
  \]
  and particular we still have $\mu(U_{j+1}) \ge \delta$.
  We may also choose a parameter $m_{j+1}=O_t(M+m_j)$ so that $\phi$ obeys an $(f,t,t,m_{j+1})$-derivatives condition on $U_{j+1}$.
  
  If on the other hand option (III) in Claim~\ref{claim:anything-grows} occurs, we terminate the process and set $K=j$ for this final index.
  As $\mu(U_j)$ is increasing by a factor of at least $1+c_2/2$ at each step in the process, this must happen eventually, and in fact for some $K \ll_{c_2} \log (1 / \delta)$.
  
  In conclusion, we have located a set $U_K \subseteq C^t(H) \cap S_0^{\llbracket t \rrbracket}$ with $\mu(U_K) \ge 1 - c_1$ and such that $\phi$ obeys an $\big(f,t,t,m_K\big)$-derivatives condition on $U_K$, where our recursive bound $m_{j+1} \ll_t m_j+M$ (and $m_0=0$) implies that
  \begin{equation}
    \label{eq:mk-bound}
    m_K \ll_t \delta^{-O_{t,c_2}(1)} M.
  \end{equation}

  We are again in the situation that a derivatives condition holds for almost all cubes with vertices in $S_0$, but only ``almost all'' in a weak sense, since $c_1$ is still a constant depending only on $t$ (to be determined).
  However, we need the derivatives condition to hold on all but $\eta |C^t(H)|$ cubes in $S_t$, where $\eta>0$ is the parameter from the statement and could be very small.
  
  We address this in two stages.
  First, we will use Claim~\ref{claim:translate} to expand our derivatives condition to a system of cubes $S'_0,\dots,S'_t$ whose vertex set $S'_0$ is almost all of $S_0$ (in a strong sense); however, $S'_t$ may only be almost all of $S_t$ in a weak sense.
  Then, we apply Claim~\ref{claim:augment-derivs} to upgrade this to almost all of $S_t$ in a strong sense also.

  The first step is captured by the following claim.
  During the course of the proof, we will finally fix the choices of $c_1$, $\nu_0$ and $\eps_0(t)$ to suitable values.
  \begin{claim}
    There exists a system of cubes $S'_0,\dots,S'_t$ with parameter $1-\eps'$ for some $\eps' \le \eps'_0(t)$ \uppar{where $\eps'_0(t)$ is the quantity from Claim~\ref{claim:augment-derivs}}, with $S'_k \subseteq S_k$ for each $k$ and $\mu(S_0 \setminus S'_0) \le 2^{-t-1} \eta$ \uppar{where $\eta>0$ is the parameter from the statement}, and such that $\phi$ obeys an $\big(f,t,t,O_t(M+m_K)\big)$ derivatives condition on $S'_t$; or, option (B) holds with parameters $O(M+m_K)^{O_t(1)}$ and $O(M+m_K)^{-O_t(D)} \nu_0^{O_t(1)}$.
  \end{claim}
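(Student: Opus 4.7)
The plan is to combine the $(f,t,t,O_t(M))$-derivatives condition on $U_K$ (where $\mu(U_K) \ge 1-c_1$, with $c_1$ still a free absolute constant to be determined) with the translation mechanism from Claim \ref{claim:translate}, and then convert the resulting derivatives condition into one on a genuine system of cubes via Corollary \ref{cor:cube-system-patch}.

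First, I fix the free parameter $c_1$ to be polynomially small in $\eta\delta$: specifically, $c_1 \le (\eta\delta)^{C}/C$ for a suitable $C = C(t)$. I also impose a further constraint on the pre-existing parameter $\nu_0$, namely $\nu_0 \le (\eta\delta)^{C}/C$, chosen small enough that the prospective option (B) bound arising from Claim \ref{claim:translate}, namely $O(M)^{-O_t(D)}\nu_0^{O_t(1)}$, is dominated by $O(M)^{-O_t(D)}(\eta\delta)^{O_t(1)}$. These constraints are compatible with all earlier restrictions on $c_1$ and $\nu_0$, since the iteration in Claim \ref{claim:anything-grows} terminates in $O_t(\log(1/c_1))$ steps regardless of how small $c_1$ is.

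Next, I apply Claim \ref{claim:translate} with $S = U_K$, $m = O_t(M)$, $\alpha = 1/2$, and the above $\nu_0$. One outcome is option (B) with the required parameter, in which case we are done. Otherwise, I obtain a set $W' \subseteq W$ with $\phi$ satisfying an $(f,t,t,O_t(M))$-derivatives condition on $W'$ and with $|W \setminus W'| \le 6\nu_0 |C^t(H)|$. A Fubini/Markov-type argument then bounds $\mu(C^t(H) \setminus W)$: the set of $\vec h \in H^t$ whose fiber in $U_K$ has fewer than $|H|/2$ basepoints has measure at most $2c_1$, so $\mu((C^t(H) \cap S_0^{\llbracket t \rrbracket}) \setminus W) \le 2c_1$. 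Combined with $\mu(C^t(H) \setminus S_0^{\llbracket t \rrbracket}) \le 2^t \eps$ and the $W \setminus W'$ bound, this gives $\mu(C^t(H) \setminus W') \le 2c_1 + 2^t\eps + 6\nu_0$.

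Finally, I apply Corollary \ref{cor:cube-system-patch} to the original system $S_0,\ldots,S_t$ with subsets $T_k = S_k$ for $k < t$ and $T_t = S_t \cap W'$. Since $\mu(S_t \setminus T_t) \le \mu(C^t(H) \setminus W') \le O_t(c_1 + \nu_0 + \eps)$, the corollary produces a system of cubes $S'_0,\ldots,S'_t$ with $S'_k \subseteq S_k$ for each $k$, $S'_t \subseteq W'$, parameter at least $1 - \eps - O_t\big((c_1+\nu_0+\eps)^{1/O_t(1)}\big)$, and $\mu(S_k \setminus S'_k) \ll_t (c_1+\nu_0+\eps)^{1/O_t(1)}$. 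By my choice of $c_1$ and $\nu_0$ polynomially small in $\eta\delta$, together with the standing assumption $\eps \le \eps_0(t)$ sufficiently small, both the parameter bound $1 - \eps' \ge 1 - \eps'_0(t)$ and the density bound $\mu(S_0 \setminus S'_0) \le 2^{-t-1}\eta$ are met. Since $S'_t \subseteq W'$, the derivatives condition on $W'$ restricts to $S'_t$, completing the proof. The only real obstacle is the parameter juggling to ensure all the constraints (on $\eps$, $\eta$, $c_1$, $\nu_0$, and the exponents emerging from Corollary \ref{cor:cube-system-patch}) can be simultaneously satisfied without circularity, but this is pure bookkeeping rather than any substantive difficulty.
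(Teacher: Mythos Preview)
There is a real gap in the parameter management. Your choice $c_1 \le (\eta\delta)^C/C$ is not ``compatible with all earlier restrictions'': the growth step in the iteration producing $U_K$ needs option (I) of Claim~\ref{claim:anything-grows} to give an actual increase, i.e.\ $(1+c_2 - O_t(\eps))\mu(U_j) - 7\nu_0/\alpha \ge (1+c_2/2)\mu(U_j)$, and this forces $\eps \ll_t c_2$. Since $c_2 = c_1^3/(10^6 t^2)$ in the proof of that claim, you would need $\eps \ll_t (\eta\delta)^{3C}$, whereas the hypothesis of the ambient Lemma~\ref{lem:1pc-implies-derivs} only provides $\eps \le \eps_0(t)$, an absolute constant. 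For small $\eta$ the iteration therefore fails to grow and never reaches $\mu(U_K) \ge 1-c_1$.

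Conversely, if $c_1$ is kept as an absolute constant depending only on $t$, your direct application of Corollary~\ref{cor:cube-system-patch} to $S_0,\dots,S_t$ with a level-$t$ loss of order $c_1$ propagates a loss of order $c_1^{1/O_t(1)}$ down to $S_0$, which again cannot be made $\le 2^{-t-1}\eta$. (The $2^t\eps$ term in your bound on $\mu(S_t\setminus T_t)$ is in fact unnecessary, since $S_t\subseteq S_0^{\llbracket t\rrbracket}$, but dropping it does not help.) The paper's resolution is a translation-invariance trick: the set $\wt V$ of all $c \in C^t(H)$ whose direction vector lies in $V$ is invariant under $c \mapsto c + \square^t(h)$, so the system $S^{(1)}_0,\dots,S^{(1)}_t$ built from $\wt V$ via Lemma~\ref{lem:cube-system-ae} can be taken translation-invariant as well, forcing $S^{(1)}_0 = C^0(H)$. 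The only loss in $S'_0 = S_0 \cap S^{(2)}_0$ then arises from subsequently removing $W \setminus W'$ (of density $O(\nu_0/c_1)$) and patching, giving $\mu(S_0 \setminus S'_0) \ll_t (\nu_0/c_1)^{1/O_t(1)}$; now $c_1$ stays absolute while $\nu_0$ alone is taken small in $\eta$.
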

  By~\eqref{eq:mk-bound}, the parameters in option (B) are acceptable.

  Given the claim, we can finish the argument as follows.
  Applying Claim~\ref{claim:augment-derivs} to $S'_0,\dots,S'_t$ with parameter $\nu = \eta/2$, we find a set $S \subseteq {S'_0}^{\llbracket t \rrbracket}$ with $\mu\bigl((C^t(H) \cap {S'}_0^{\llbracket t \rrbracket}) \setminus S\bigr) \le \eta/2$ and such that $\phi$ obeys an $\big(f,t,t,O_t(M+m_K)\big)$-derivatives condition on $S$, as required; or option (B) holds with parameters $O(M+m_K)^{O_t(1)}$ and $O(M+m_K)^{O_t(D)} \eta^{O_t(1)}$.
  Since
  \[
    \mu\bigl(C^t(H) \cap (S_0^{\llbracket t \rrbracket} \setminus {S'}_0^{\llbracket t \rrbracket})\bigr) \le 2^t \mu(S_0 \setminus S'_0) \le \eta / 2
  \]
  we deduce $\mu\bigl((C^t(H) \cap S_0^{\llbracket t \rrbracket}) \setminus S\bigr) \le \eta$ (and hence $\mu(S_t \setminus S) \le \eta$) as required.

  \begin{proof}[Proof of claim]
    Define a set $W$ as in~\eqref{eq:def-w}:
    \[
      W = \big\{ c \in C^t(H) \cap S_0^{\llbracket t \rrbracket}  \colon |\{ h \in H \colon c + \square^t(h) \in U_K \}| \ge c_1 |H| \big\}
    \]
    and apply Claim~\ref{claim:translate} to find a set $W' \subseteq W$ such that $\phi$ obeys an $\big(f,t,t,O_t(m_K)\big)$-derivatives condition on $W'$ and $|W \setminus W'| \le (3 \nu_0 / c_1) |C^t(H)|$.

    It remains only to find $S'_0,\dots,S'_t$ obeying the conditions in the statement and with $S'_t \subseteq W'$.
    As in~\eqref{eq:v-def}, we define
    \[
      V = \big\{ (h_1,\dots,h_t) \in H^t \colon |\{ x \in H \colon \angle(x;h_1,\dots,h_t) \in U_K \}| \ge c_1 |H| \big\}
    \]
    and consider the set of cubes
    \[
      \wt{V} = \big\{ \angle(x; h_1,\dots,h_t) \in C^t(H) \colon (h_1,\dots,h_t) \in V \big\} .
    \]
    By definition, $W = \wt{V} \cap S_0^{\llbracket t \rrbracket}$.
    Just as in~\eqref{eq:v-bound}, we can bound $|U_K| \le c_1 |H|^{t+1} + |V|\,|H|$, and note $|\wt V| = |V|\,|H|$; so as $|U_K| \ge (1-c_1) |C^t(H)|$ we deduce $|\wt V| \ge (1 - 2 c_1) |C^t(H)|$.
    
    Applying Lemma~\ref{lem:cube-system-ae} to $\wt{V}$, we may define a (non-empty) system of cubes $S^{(1)}_0,\dots,S^{(1)}_{t}$ with parameter $1-\eps^{(1)}$ where $\eps^{(1)} \ll_t c_1^{1/O_t(1)}$, such that $S^{(1)}_t \subseteq \wt{V}$.

    Since $\wt{V}$ is translation-invariant (i.e., if $c \in \wt{V}$ then so is $c + \square^t(h)$ for any $h \in H$), it is fairly clear from the proof of Lemma~\ref{lem:cube-system-ae} that the sets $S^{(1)}_k \subseteq C^k(H)$ inherit the same property; or alternatively, we can force this directly by replacing $S^{(1)}_k$ with
  \[
    \big\{ c + \square^k(h) \colon c \in S^{(1)}_k,\; h \in H \big\}
  \]
    for each $k$, and noting that this defines another system of cubes with the same parameter $\eps^{(1)}$ that is definitely translation-invariant, and still satisfies $S^{(1)}_t \subseteq \wt V$.
    We assume this has been done (if necessary).
    In particular, $S^{(1)}_0$ is translation-invariant, or in other words $S^{(1)}_0 = C^0(H)$ is everything.

    Also define a (non-empty) system of cubes $S^{(2)}_0,\dots,S^{(2)}_t$ by removing elements of $W \setminus W'$ from $C^t(H)$ and applying Lemma~\ref{lem:cube-system-ae}.
    This has parameter $1-\eps^{(2)}$ where $\eps^{(2)} \ll_t (\nu_0/c_1)^{1/O_t(1)}$, and $S^{(2)}_t \cap (W \setminus W') = \emptyset$.

    We finally define the system of cubes $S'_k = S_k \cap S^{(1)}_k \cap S^{(2)}_k$ for $0 \le k \le t$.
    It has parameter $1-\eps'$ where $\eps' = \eps + \eps^{(1)} + \eps^{(2)}$, and is non-empty provided $\eps'<1$.
    Moreover:---
    \begin{itemize}
      \item $S'_t \subseteq S_t \subseteq S_0^{\llbracket t \rrbracket}$,
      \item $S'_t \subseteq S^{(1)}_t \subseteq \wt V$ and so $S'_t \subseteq S_0^{\llbracket t \rrbracket} \cap \widetilde{V} = W$; and
      \item $S'_t \subseteq S^{(2)}_t$ which is disjoint from $W \setminus W'$, so $S'_t \subseteq W'$.
    \end{itemize}
    Finally, since $S^{(1)}_0 = H$ we have $\mu(S_0 \setminus S'_0) \le \eps^{(2)}$ (using Remark~\ref{rem:cube-system-large}).

    Lastly, we choose parameters.
    Since $\eps^{(1)} \ll_t c_1^{1/O_t(1)}$, we may choose $c_1$ depending only on $t$ such that $\eps^{(1)} \le \eps'_0(t)/3$, where $\eps'_0(t)$ is the quantity from the statement of Claim~\ref{claim:augment-derivs}.
    By choosing $\nu_0 \le (\eta/c_1)^C/C$ for a suitable constant $C=C(t)$, we may ensure that
    \[
      \eps^{(2)} \le \min\bigl( \eps'_0(t)/3,\ 2^{-t-1} \eta \bigr)
    \]
    and so $\mu(S_0 \setminus S'_0) \le \eps^{(2)} \le 2^{-t-1} \eta$ as required.
    Finally, we may insist $\eps_0(t)$ from the statement obeys $\eps_0(t) \le \eps'_0(t)/3$, so that $\eps' \le \eps'_0(t)$ as required.
  \end{proof}
  We can now choose any $\eta_0$ obeying $\eta_0 \le c_2 \delta^2/100$ and $\eta_0 \le (\eta/c_1)^C/C$ for some given constant $C=C(t)$; clearly (as we have now chosen $c_1$ and hence $c_2$ to be fixed constants depending on $t$) both bounds have the form $O(\eta \delta)^{O_t(1)}$, and this completes our choice of parameters.
  We retrospectively note that this means any outstanding appeals to option (B) had acceptable bounds.
  This completes the proof of Lemma~\ref{lem:1pc-implies-derivs}.
\end{proof}

It remains to prove Lemma~\ref{lem:derivs-implies-derivs}. This is comparatively straightforward, provided we borrow a special case of the theory of cocycles (Lemma~\ref{lem:gen-cocycle}).

\begin{proof}[Proof of Lemma~\ref{lem:derivs-implies-derivs}]
  As always, we may assume $b$ is in normal form, at the expense of replacing $M$ with $O_t(M)$.

  We first consider the case $t=1$ separately.  Recall (Remark~\ref{rem:hierarchy-t0}) that $f_{=0}$ are constant functions (if $\eps < 1/2$), and so we may write the hypothesis as $\phi(x)-\phi(y) = b([x,y], 0) \cdot f_{=0}$ for all $[x,y] \in S_1$.
  
  We will show that $\phi = \phi_0 + \phi_1$, where (i) $\phi_0$ is a constant function, and (ii) for all $x \in S_0$ we have $\phi_1(x) = v(x) \cdot f_{=0}$ for some $v(x) \in \ZZ^{d_0}$ with $\|v(x)\|_1 \ll M$.  Indeed, we pick any $x_0 \in S_0$ and set $\phi_0(x) = \phi(x_0)$ for all $x \in S_0$.  Setting $\phi_1(y) := \phi(y) - \phi_0(y)$ for any $y \in S_0$, we note that we can pick $z \in S_0$ such that $[x_0,z], [y,z] \in S_1$ (provided $\eps < 1/2$) and so
  \[
    \phi_1(y) = \phi(y) - \phi(x_0) = \partial^1 \phi ([y,z]) - \partial^1 \phi ([x_0,z]) = \big(b([y,z], 0) - b([x_0,z], 0)\big) \cdot f_{=0}
  \]
  which has the required form.

  When $t > 1$, we instead apply Corollary~\ref{cor:incompatible-implies-1pc} and Corollary~\ref{cor:no-cocycle-implies-1pc} to deduce that if option (B) does not hold then $b$ obeys the $\big(f,t,t,O_t(M),\delta\big)$-strong derivatives condition, where $\delta \gg_t \eta^{O_t(1)}$ is a parameter to be determined.  In particular, we may assume $b_{=t-1}$ is a generalized $t$-cocycle of type $t-1$ and loss at most $\delta$.
  
  By Lemma~\ref{lem:gen-cocycle}, we deduce that there is a subset $S \subseteq S_t$ with $\mu(S_t \setminus S) \ll_t \delta^{1/O_t(1)}$, and a function $\lambda \colon H \to \RR^{d_{t-1}}$ such that
  \begin{equation}
    \label{eq:Lambda}
    \Lambda(c,\omega') := \sum_{\omega \in \llbracket t \rrbracket} Z_{t-1}(\omega,\omega') \lambda(c(\omega))
  \end{equation}
  as in the statement\footnote{\label{footnote:diff}In the case of a $t$-cocycle of type $t-1$, an inspection of the definition of $Z_{t-1}(\omega,\omega')$ in~\eqref{eq:zr} together with~\eqref{eq:cube-orthog} shows that this identity reduces to $\Lambda(c,\omega') = \lambda(\omega')-\lambda(\vec 1)$.  However, this does not greatly simplify the calculations: what is really important is that the right-hand side is what you get by putting $\omega \mapsto \lambda(c(\omega))$ into normal form at level $t-1$.} satisfies $\Lambda(c,\omega) = b_{=t-1}(c,\omega)$ for all $c \in S$ and $\omega \in \llbracket t \rrbracket$.
  Also, $\|\lambda(x)\|_1 \ll_t M + d_{t-1}$ for each $x \in H$, and $\partial^1 \lambda([x,y]) \in \ZZ^{d_{t-1}}$ for all $x,y \in H$.
  By a suitable choice $\delta=\eta^{O_t(1)}/O_t(1)$ we can ensure $\mu(S_t \setminus S) \le \eta$.
  
  It is easy to check that if we change $\lambda$ by adding a global constant, e.g.\ to $x \mapsto \lambda(x) - \lambda(0)$, then~\eqref{eq:Lambda} still holds (see~\eqref{eq:zr}, or Remark~\ref{rem:normal-form}, or Footnote~\ref{footnote:diff}), so we may assume without loss of generality that we have done this so that $\lambda(0)=0$.
  In particular,
  \[
    \lambda(x)=\lambda(x)-\lambda(0)=\partial^1\lambda([x,0]) \in \ZZ^{d_{t-1}}
  \]
  and so $\lambda$ itself takes values in $\ZZ^{d_{t-1}}$.

  Having done this, we can define $\phi_1(x) := \lambda(x) \cdot f_{=t-1}(x)$ and $\phi_0(x) := \phi(x) - \phi_1(x)$, which we claim have the desired properties.
  This is clear for $\phi_1$.
  It remains to show that $\phi_0$ obeys an $(f,t,t-1,O_t(M))$-derivatives condition on $S$.
  Taking any $c \in S$, we certainly have
  \begin{align*}
    \partial^t \phi_0(c) = \partial^t \phi(c)- \partial^t \phi_1(c) &= \sum_{\omega \in \llbracket t \rrbracket} (-1)^{|\omega|} b(c,\omega) \cdot f(c(\omega)) - \sum_{\omega \in \llbracket t \rrbracket} \lambda(c(\omega)) \cdot f_{=t-1}(c(\omega)) \\
                                                                    &= \sum_{\omega \in \llbracket t \rrbracket} (-1)^{|\omega|} \fb(c,\omega) \cdot f(c(\omega))
  \end{align*}
  where $\fb_{=t-1}(c,\omega) = b_{=t-1}(c, \omega) - \lambda(c(\omega))$ and $\fb_{=i} = b_{=i}$ for all $0 \le i < t-1$.
  Let $\fb'(c,\omega)$ be the configuration obtained by placing $\fb(c,\omega)$ in normal form, by Lemma~\ref{lem:canonical-b}: hence, we still have
  \[
    \partial^t \phi_0(c) = \sum_{\omega \in \llbracket t \rrbracket} (-1)^{|\omega|} \fb'(c,\omega) \cdot f(c(\omega))
  \]
  and using the explicit formula for $\fb'_{=t-1}$ in Lemma~\ref{lem:canonical-b} gives 
  \begin{align*}
    \fb'_{=t-1}(c,\omega') &= \sum_{\omega \in \llbracket t \rrbracket} Z_{t-1}(\omega,\omega') \fb_{=t-1}(c,\omega) \\
                           &= \sum_{\omega \in \llbracket t \rrbracket} Z_{t-1}(\omega,\omega') b_{=t-1}(c,\omega) -  \sum_{\omega \in \llbracket t \rrbracket} Z_{t-1}(\omega,\omega') \lambda(c(\omega)) \\
                           &= b_{=t-1}(c,\omega) - \Lambda(c,\omega) = 0
  \end{align*}
  where we used the definition of $\fb$,~\eqref{eq:Lambda} and the fact that $b_{=t-1}(c,\omega)$ is already in normal form.
  Since $\fb'_{=t-1}$ is identically zero, we deduce that $\phi_0$ and $\fb'_{\le t-2}$ obey an $\big(f,t,t-1,O_t(M)\big)$-derivatives condition on $S$, as required.
\end{proof}

\section{Recovering algebraic structure}%
\label{sec:algebraic-poly-system}

\subsection{The structure theorem for polynomial hierarchies}%
\label{subsec:poly-structure}

We will now use the improved structure obtained in Section~\ref{sec:hierarchy-improve} to show that any function obeying a derivatives condition on a polynomial hierarchy, such as the function $\wt g$ in the conclusion of Corollary~\ref{cor:poly-hierarchy-everything}, agrees almost everywhere with a nil-polynomial.

For inductive reasons it is necessary to work over a slightly larger class of groups $H$ than we traditionally care about.  We also handle $(f,s+1,t,M)$-derivatives conditions for $1 \le t \le s$, again for induction. The precise statement is as follows.

\begin{theorem}%
  \label{thm:poly-structure}
  Let $H = (\ZZ/N^\kappa\ZZ)^n$ for some $N$ prime and $n,\kappa \ge 1$.  For $s \ge 1$, let $S_0,\dots,S_{s+1}$ be a system of cubes of parameter $1-\eps$, let $f$ be an $\big(s-1,d,M)$-polynomial hierarchy on $S_0,\dots,S_{s}$, and suppose $g \colon S_0 \to \RR$ obeys the $(f,s+1,t,M)$-derivatives condition on $S_{s+1}$ for some $t$ \uppar{$1 \le t \le s$}.  Write $D = \sum_{i=0}^{s-1} d_i$, and let $\eta > 0$ be a further parameter.
  
  Then there exists a quantity $M'$ with
  \[
    M' \le O(M/\eta)^{O_{n,s}(D)^{O_{s}(D)}}
  \]
  if $s \ge 3$, or
  \[
    M' \le O_n(M/\eta)^{O_n(D)^{O(1)}}
  \]
  if $s \le 2$, such that the following holds: provided $\eps \le \eps_0(s)$ is sufficiently small and $N \ge M'$, then
  there exists a nil-polynomial $g' \colon H \to \RR$, with degree $s$, dimension $O(D)^{O_{n,s}(1)}$ and complexity at most $M'$, such that $g(x) = g'(x)$ for all but at most $\eta |H|$ values $x \in S_0$.
\end{theorem}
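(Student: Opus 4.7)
The argument is an induction, primarily on $t$ with $s$ fixed, supplemented by a secondary recursion in which the theorem is invoked on the lower levels of the hierarchy at a smaller value of $s$.  The three main tools are: Theorem~\ref{thm:strong-derivs} to upgrade the $(f,s{+}1,t,M)$-derivatives condition to a strong one (replacing $f$ by a hierarchy $f'$ that reduces it); Lemma~\ref{lem:gen-cocycle}, applied to the resulting top-level coefficient $b_{=t-1}$ (a generalized $(s{+}1)$-cocycle of type $t{-}1$), to extract a function $\lambda\colon H \to \RR^{d'_{t-1}}$ whose reduction mod $\ZZ^{d'_{t-1}}$ is a polynomial map $H \to \TT^{d'_{t-1}}$ of degree $s{-}t{+}1$; and Lemma~\ref{lem:99pc-poly} to close the base case, in which (after one or more subtractions) the remaining function has a.e.\ vanishing $(s{+}1)$-derivative and extends to an honest polynomial on $H$, which is tautologically a nil-polynomial on an abelian nilmanifold.

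In the inductive step, I would apply the strong-derivatives upgrade, apply the cocycle lemma to produce $\lambda$, and then consider $h(x) := g(x) - \lambda(x) \cdot f'_{=t-1}(x)$.  A Leibniz-style computation (Lemma~\ref{lem:product-rule}), using the normal form of $b$ together with the relation from Lemma~\ref{lem:gen-cocycle} between $b_{=t-1}$ and values of $\lambda$ at vertices of $c$, shows that the top-level contribution to $\partial^{s+1} g$ is cancelled by $\partial^{s+1}(\lambda \cdot f'_{=t-1})$, so $h$ obeys an $(f'_{\le t-2}, s{+}1, t{-}1, M'')$-derivatives condition a.e.\ on $S'_{s+1}$; by induction on $t$, $h$ agrees a.e.\ with a nil-polynomial $h'$ of degree $s$.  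Separately, each component $f'_{t-1,j}$ satisfies an $(f'_{\le t-2}, t, t{-}1, M')$-derivatives condition by the definition of a polynomial hierarchy, so applying the theorem recursively with $s$ replaced by $t{-}1 < s$ realises each such function a.e.\ as a nil-polynomial of degree $t{-}1$ on some nilmanifold.  To realise the target $g' = h' + \lambda \cdot f'_{=t-1}$ as a single nil-polynomial, I would take a fiber product over $H$ of the nilmanifolds carrying $h'$, each $f'_{t-1,j}$, and the torus $\TT^{d'_{t-1}}$ carrying $\lambda \bmod \ZZ^{d'_{t-1}}$, and then adjoin a degree-$s$ central coordinate via a Heisenberg-type central extension so that the product $\lambda(x) \cdot f'_{=t-1}(x)$ appears as a polynomial map into $\RR_{(s)}$; dimension and complexity bounds follow from the functoriality of these constructions in Appendix~\ref{app:nilmanifolds}.

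The main obstacle is this last assembly step: realising a pointwise product of a torus-valued polynomial map with a nil-polynomial-valued one as a nil-polynomial requires a genuine (non-abelian) central extension rather than just a fiber product, together with careful bookkeeping of how ``complexity'' behaves under these operations.  A secondary subtlety is the double-exponential blow-up when $s \ge 3$, inherited from Theorem~\ref{thm:strong-derivs} being applied once per recursive level; combined with the recursive realisation of the $f'_{i,j}$ at smaller $s$, the parameters must be arranged to avoid any further iterated-exponential losses.  This, together with the need for the inductive calls to remain admissible with $N$ still large relative to all intermediate complexities, is part of the reason for allowing $H = (\ZZ/N^\kappa\ZZ)^n$ in the hypothesis rather than only prime cyclic groups.
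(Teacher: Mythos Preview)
Your high-level plan is close to the paper's, and you correctly identify all the main ingredients: the upgrade to a strong derivatives condition, the classification of the top-level $b_{=t-1}$ as coming from some $\lambda$ via Lemma~\ref{lem:gen-cocycle}, the recursion at smaller $s$ to realise the $f'_{t-1,j}$ as nil-polynomials, and the need for a Heisenberg-type extension to multiply a torus-valued polynomial with a nil-polynomial.  However, there is a genuine gap in your inductive step.

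The claim that $h(x) := g(x) - \lambda(x)\cdot f'_{=t-1}(x)$ obeys an $(f'_{\le t-2}, s{+}1, t{-}1, M'')$-derivatives condition is false in general.  The derivatives condition (Definition~\ref{def:hierarchy}) requires \emph{integer} coefficients $b(c,\omega)$; but here $\lambda$ need not be integer-valued.  Indeed, Lemma~\ref{lem:gen-cocycle} only gives that $\lambda\bmod\ZZ^{d'_{t-1}}$ is polynomial of degree $s{+}1{-}t$, which is $\ge 1$ whenever $t\le s$.  Tracing the Leibniz computation you outline, the error terms involve the real numbers $\sum_{\omega\supseteq\eta}(-1)^{|\omega|}\lambda(c(\omega))$ for $|\eta|\ge t$; these are $(s{+}1{-}|\eta|)$-th derivatives of $\lambda$ and are not integers once $|\eta|\ge t$.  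So the inductive hypothesis does not apply to $h$, and since integrality is essential throughout (e.g.\ the pigeonholing in Theorem~\ref{thm:strong-derivs}), this cannot simply be relaxed.  The special case where the subtraction works is Lemma~\ref{lem:derivs-implies-derivs}, in which $k=t$ so the cocycle has degree $0$ and $\lambda$ can be shifted to be $\ZZ$-valued; that is not the situation here.

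The paper handles this by reversing the order of your last two steps.  Rather than subtracting $\lambda\cdot f'_{=t-1}$ and assembling the nil-polynomial at the end, it \emph{first} realises each $f'_{t-1,j}$ as a nil-polynomial $\ff_j$ (by recursion at smaller $s$), and then invokes a construction lemma (Lemma~\ref{lem:poly-construction}) to build nil-polynomials $g_j$ on the extension $H'=(\ZZ/N^{2\kappa}\ZZ)^n$ whose derivatives condition has top part exactly $\Lambda_j$.  These $g_j$ come equipped with their own honest polynomial hierarchy (with integer coefficients), so $\wt g := g-\sum_j g_j$ genuinely obeys an $(f'',s{+}1,t{-}1)$-derivatives condition on $H'$, and the induction on $t$ proceeds.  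One then descends from $H'$ back to $H$ via Lemma~\ref{lem:extension-fix}.  The passage to $H'$ is essential: as the paper remarks after Lemma~\ref{lem:poly-construction}, there can be cohomological obstructions to building such $g_j$ directly on $H$.  You correctly intuited this obstruction in your last paragraph, but it bites earlier than you thought --- it is exactly what blocks your subtraction from reducing $t$.
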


This is one of the two places in the work where serious discussion of nilmanifolds is necessary, and we will certainly need to use discuss notions from Appendix~\ref{app:nilmanifolds} (polynomial maps, filtrations, etc.) a fair amount.

The more serious nilmanifold content is in Section~\ref{subsec:nilmanifold-constructions}, and quotes more technical content from Appendix~\ref{app:nilmanifolds}. An attempt has been made to limit the amount of Appendix~\ref{app:nilmanifolds} the reader would actually have to read to follow the proofs, and to make it clear when we are merely performing technical checks of technical properties as opposed to anything conceptual. However, some engagement with the theory is inevitably required for a detailed reading of that section.

We make some remarks which will be useful in the proof of Theorem~\ref{thm:poly-structure}.
\begin{remark}%
  \label{rem:nil-vs}
  If $g,g' \colon H \to \RR$ are nil-polynomials of degrees $s$ and $s'$, dimensions $D$ and $D'$ and complexities $M$ and $M'$ respectively, then for $\lambda \in \RR$ we have that $x \mapsto \lambda g(x)$ is again a nil-polynomial of degree $s$, dimension $D$ and complexity $M$, and $g+g'$ is a nil-polynomial of degree $\max(s,s')$, dimension $D+D'$ and complexity $\max(M,M')$.
  
  Indeed, if $g = F \circ r$, defined with respect to a nilmanifold $G/\Gamma$, and similarly $g' = F' \circ r'$ on $G'/\Gamma'$, as in Definition~\ref{def:nil-poly}, then $\lambda g = (\lambda F) \circ r$ has the same form. For $g+g'$,  we work on the direct product $\wt{G} = G \times G'$, $\wt{\Gamma} = \Gamma \times \Gamma'$ (which inherits complexity $\max(M,M')$ in the sense of Definition~\ref{def:nilmanifold-complex}) and define $\wt{r}(x) = (r(x), r'(x))$ and $\wt{F}(y,y') = F(y) + F'(y')$, giving $g+g' = \wt{F} \circ \wt{r}$.
\end{remark}

\begin{remark}%
  \label{rem:t=0}
  Suppose instead that $g \colon H \to \RR$ obeys the hypothesis $\partial^{s+1} g(c) = 0$ for all $c \in S_{s+1}$, where $S_0,\dots,S_{s+1}$ is as in Theorem~\ref{thm:poly-structure}.  This corresponds to the missing case $t=0$ of the theorem, and is useful for inductive purposes.
  
  In this case, we can deploy an argument we made in the proof of Claim~\ref{claim:lambda-compat}. By Lemma~\ref{lem:99pc-poly}, we know that there is some $\wt{g}\colon H \to \RR$ such that $\partial^{s+1} \wt{g} \equiv 0$ and $g(x) = \wt{g}(x)$ for all $x \in S_0$ (again arguing that the set $X$ in~\eqref{eq:bigX} contains all of $S_0$).  Then,~\eqref{eq:triv-poly} again shows $\wt{g}$ must be constant, and hence so is $g$.
\end{remark}

The key step in the proof of Theorem~\ref{thm:poly-structure} is as follows: given $g$ and $b$ obeying a strong $(f,s+1,t,M,\delta)$-derivatives condition, we consider the top-level part $b_{=t-1}$, whose structure we understand in terms of generalized cocycles, and construct a nil-polynomial obeying a derivatives condition with the same top-level part $b_{=t-1}$.  Subtracting the two gives a function obeying an $(f,s+1,t-1,M')$-derivatives condition, which we can handle by induction.

The technical statement encoding this step is the following.

\begin{lemma}%
  \label{lem:poly-construction}
  Let $H = (\ZZ/N^\kappa\ZZ)^n$ for some $n,\kappa \ge 1$ and some prime $N$.  Let $s \ge 1$ and $t$, $1 \le t \le s$, be integers, and let $f \colon H \to \RR$ be a nil-polynomial of degree $t-1$, dimension $D$ and complexity $M$ on a nilmanifold $G/\Gamma$.  Further let $\lambda \colon H \to \RR$ be a function with $|\lambda(x)| \le M$ for each $x \in H$ and such that $\lambda \bmod \ZZ \colon H \to \RR/\ZZ$ is a polynomial map of degree $s+1-t$.  For $c \in C^{s+1}(H)$, write
  \[
    \Lambda(c,\omega') = \sum_{\omega \in \llbracket k \rrbracket} Z_r(\omega,\omega') \lambda(c(\omega))
  \]
  as in~\eqref{eq:big-lambda}.  Finally suppose $N > s$.

  Write $H' = (\ZZ/N^{2\kappa}\ZZ)^n$.  Then there exists a nil-polynomial $g \colon H' \to \RR$ of degree $s$, dimension at most $D'$ and complexity at most $M'$, and a $(t-1,d,M')$-polynomial hierarchy $f'$ on the full system $C^0(H'),\dots,C^{t}(H')$, such that:---
  \begin{enumerate}[label=(\roman*)]
    \item $d_{t-1}=1$ and $f'_{t-1,1}$ is the function $x \mapsto f(x \bmod N^\kappa)$; and
    \item $g$ obeys an $(f', s+1, t, M')$-derivatives condition with some $b$, such that $b_{=t-1}(c,\omega) = \Lambda(c \bmod N^\kappa, \omega)$ for all $c \in C^{s+1}(H')$ and $\omega \in \llbracket s+1 \rrbracket$.
  \end{enumerate}
  Here, $D' \ll_s D^{O_s(1)}$ and $M' \ll_s (DM)^{O_s(1)}$ are parameters, and we have $\sum_{r=0}^{t-1} d_r \ll_s D^{O_s(1)}$.
\end{lemma}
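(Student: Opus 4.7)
The plan is to realize $g$ as (essentially) the product $\mu\cdot f$, where $\mu\colon H' \to \RR$ is a polynomial lift of $\lambda$, on an enlarged product nilmanifold; the identification $b_{=t-1}(c,\omega) = \Lambda(c,\omega)$ then emerges from the normal-form relation between $\mu$ and $\Lambda$ via a short Leibniz-type computation.

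First, I would lift $\lambda$ to a polynomial on $H'$. The hypothesis that $\lambda \bmod \ZZ$ is a polynomial map of degree $s+1-t$ on $H$, combined with $N > s$ and the fact that $H'$ has exponent $N^{2\kappa}$ (providing the needed integer-arithmetic room), yields a polynomial map $\mu\colon H' \to \RR_{(s+1-t)}$ of degree $s+1-t$ with $\mu(x) \equiv \lambda(x \bmod N^\kappa) \pmod{\ZZ}$ pointwise and $|\mu(x)| \le M^{O_s(1)}$. The integer discrepancy $n(x) := \mu(x) - \lambda(x \bmod N^\kappa)$ is also bounded by $M^{O_s(1)}$, and its contribution will be absorbed into $b_{<t-1}$ in the final step.

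Second, I would construct the ambient nilmanifold. Write $f = F \circ r$ with $r\colon H \to G$ polynomial and $F\colon G \to \RR_{(t-1)}$ a polynomial map of degree $t-1$, and realize $\mu = F_\mu \circ r_\mu$ on an abelian nilmanifold $G_\mu/\Gamma_\mu$ of degree $s+1-t$ with $F_\mu\colon G_\mu \to \RR_{(s+1-t)}$. Form the product nilmanifold $G' = G \times G_\mu$ with the product filtration, and the polynomial map $r'(x) = (r(x \bmod N^\kappa), r_\mu(x))\colon H' \to G'$. Using the standard multiplication operation for polynomial maps into graded $\RR_{(k)}$-spaces (Appendix \ref{app:nilmanifolds}), form $F' = F \cdot F_\mu\colon G' \to \RR_{(s)}$, a polynomial map of degree $(t-1) + (s+1-t) = s$. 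Then $g = F' \circ r'$ is a nil-polynomial of degree $s$ satisfying $g(x) = f(x \bmod N^\kappa)\cdot \mu(x)$, with dimension and complexity controlled by the claimed bounds through standard Appendix \ref{app:nilmanifolds} bookkeeping. This step is the heart of the construction and the main technical obstacle.

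Third, I would verify the derivatives condition. Directly expanding,
\[
  \partial^{s+1}g(c) = \sum_{\omega}(-1)^{|\omega|}\mu(c(\omega))\,f(c(\omega) \bmod N^\kappa).
\]
Writing $\mu = \lambda \circ \pi + n$ (where $\pi(x) = x \bmod N^\kappa$) and applying Remark \ref{rem:normal-form} with $r=t-1$ to the configuration $\omega \mapsto \lambda(c(\omega) \bmod N^\kappa)$, the right-hand side becomes
\[
  \sum_\omega (-1)^{|\omega|}\Lambda(c \bmod N^\kappa,\omega)\,f(c(\omega) \bmod N^\kappa) + E(c),
\]
where the error $E(c)$ is a sum of terms of the shape $\partial^{s+1-|\eta|}\mu(c|_{F^\eta}) \cdot \partial^{|\eta|}f(c|_{F_\eta})$ with $|\eta|\geq t$, together with contributions from $n(c(\omega)) \cdot f(c(\omega) \bmod N^\kappa)$. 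Each $\partial^{|\eta|}f$ with $|\eta|\geq t$ is itself expressible via the intrinsic polynomial-hierarchy structure of the nil-polynomial $f$: the filtration of $G$ supplies auxiliary nil-polynomials $f'_{=i}$ (for $0 \leq i \leq t-2$) and bounded-integer coefficient functions that realize $\partial^{|\eta|}f$ as a vertex-combination of values of these lower-degree functions. Substituting these expressions, and also accommodating the integer function $n$, packages all error contributions into $b_{<t-1}(c,\omega)$, leaving the derivatives condition with top-level coefficient exactly $\Lambda$. The dimensional overhead $D^{O_s(1)}$ and the complexity tracking through Appendix \ref{app:nilmanifolds} arise here and in Step 2; Step 1's lift is routine by comparison.
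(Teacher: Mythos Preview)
Your approach has a genuine gap in Step 3 that cannot be repaired without essentially rebuilding the paper's construction.

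The issue is integrality of the lower-order coefficients. After your Leibniz decomposition, the error term is
\[
  E(c) = \sum_{|\eta| \ge t} (-1)^{|\eta|} B_\eta\, \partial^{|\eta|} f(c|_{F_\eta}),
  \qquad B_\eta = \sum_{\omega \supseteq \eta} (-1)^{|\omega|} \mu(c(\omega)).
\]
You then substitute the hierarchy expansion $\partial^{|\eta|} f(c|_{F_\eta}) = \sum_\omega (-1)^{|\omega|} b^f(c|_{F_\eta},\omega) \cdot f''(c(\omega))$ from Proposition~\ref{prop:nil-is-hierarchy}. But the resulting coefficient of $f''_{i,j}(c(\omega))$ is $B_\eta \cdot b^f_{i,j}$, and $B_\eta$ is a \emph{real number}, not an integer. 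Since $\mu$ (whether you take it to be $\lambda$ itself or a nil-polynomial lift) has $\mu \bmod \ZZ$ polynomial of degree exactly $s+1-t$, the quantity $B_\eta$ is essentially $\partial^{s+1-|\eta|}\mu$ on an upper face, and for $|\eta| \ge t$ this derivative has order $\le s+1-t$, which is \emph{not} high enough to force it into $\ZZ$. Definition~\ref{def:hierarchy} requires $b$ to take values in $\bigoplus_r \ZZ^{d_r}$, so this is fatal.

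Note also that a genuine polynomial map $\mu \colon H' \to \RR_{(s+1-t)}$ in the sense of Definition~\ref{def:poly-map} must be constant on a finite group, so your Step 1 as literally stated cannot produce anything useful; you really need $\mu$ to be a nil-polynomial, and then the problem above is unavoidable. This is precisely the ``cohomological obstruction'' flagged in the paragraph after the lemma: the product $\lambda \cdot f$ does \emph{not} in general satisfy a derivatives condition with top piece $\Lambda$ and integer lower pieces, even on $H'$.

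The paper's construction is substantially more elaborate for exactly this reason. Rather than forming a product nilmanifold, it builds a four-factor semi-direct product $G' = G \times \RR \times V \times V$ where $V = \poly(G,\RR_{(t-1)})^\ast$ is the \emph{dual} of the polynomial space, with a twisted group law that encodes an ``accumulator'' for the interaction between $\lambda$ and $f$. The function $F'$ reads off a value from this accumulator, and the derivatives condition is verified via the Host--Kra correction of Lemma~\ref{lem:nil-derivs}: the correction element $\gamma'(c,\omega)$ lies in the integral lattice $\Gamma'$, so its last coordinate $\theta(c,\omega)$ lies in the dual lattice $\Upsilon$, and this is what forces the lower-order coefficients to be integers. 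The passage from $H$ to $H'$ is used in a specific way (the cross-term $(\wt\lambda(x)-\wt\lambda(x+y))\,\ev_{p(x+y)^{-1}}$ lands in $\Upsilon$ only when $y \in N^{2\kappa}\ZZ^n$), which your product construction does not exploit.
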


Note that we are forced to pass from $H$ to the group extension $H'$.  In a sense this is necessary: under the hypotheses given, there may not exist a function $g \colon H \to \RR$ obeying a derivatives condition whose top piece $b_{=t-1}$ is $\Lambda$, on a hierarchy of bounded complexity whose top piece $f'_{=t-1}$ is the single function $f$, irrespective of whether $g$ is a nil-polynomial: the requirements this imposes on $g$ may just be inconsistent.  In other words, $\Lambda$ being a generalized cocycle is not a sufficient condition for it to appear as $b_{=t-1}$ alongside $f$ in a derivatives condition, as above: there may be further cohomological obstructions.

We can rectify this runaway group extension after the fact using the following construction, which is closely related to~\cite[Theorem 1.5]{me-periodic} or~\cite[Proposition C.2]{gtz}.

\begin{lemma}%
  \label{lem:extension-fix}
  Let $H = (\ZZ/N^\kappa\ZZ)^n$ and $H' = (\ZZ/N^{\kappa'}\ZZ)^n$, where $N$ is prime and $n \ge 1$, $\kappa' \ge \kappa \ge 1$ are integers.  Suppose $g' \colon H' \to \RR$ is a nil-polynomial on $H'$, with degree $s \ge 0$, dimension $D$ and complexity $M$.  Define the inclusion $\imath \colon H \to H'$ by $\imath(x_1,\dots,x_n) = (x_1,\dots,x_n)$ whenever $x_1,\dots,x_n$ are integers with $0 \le x_i < N^\kappa$.
  
  Then the function $g \colon H \to \RR$ given by $g = g' \circ \imath$ is a nil-polynomial of degree $s$, dimension $\ll_{s,n} D^{O_{s,n}(1)}$ and complexity $\ll_{s,n} (D M)^{O_{s,n}(1)}$.
\end{lemma}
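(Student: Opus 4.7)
The plan is to build the enlarged nilmanifold explicitly, by using the Leibman-style Taylor expansion for the polynomial map underlying $g'$ and then absorbing the ``non-homomorphic part'' of $\imath$ into a larger nilpotent structure. This is very much in the spirit of \cite[Theorem 1.5]{me-periodic} and \cite[Proposition C.2]{gtz}, which handle similar lifts between cyclic groups of different periods, and the proof should largely transplant from that setting.

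First, I would unpack $g' = F' \circ r'$ via Definition \ref{def:nil-poly}, where $r' \colon H' \to G$ satisfies that $r' \bmod \Gamma$ is a polynomial map of degree $s$ on the nilmanifold $G/\Gamma$ of complexity $M$, and $F' \colon G \to \RR_{(s)}$ is polynomial. Since $F'$ will be reused unchanged, the task reduces to showing that $r = r' \circ \imath \colon H \to G$ descends, modulo a suitable lattice $\tilde \Gamma$ sitting inside a suitable overgroup $\tilde G \supseteq G$, to a polynomial map of degree $s$ with controlled complexity.

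Second, I would apply the Mal'cev/Leibman Taylor expansion for polynomial maps on $H' = (\ZZ/N^{\kappa'}\ZZ)^n$, obtaining an ordered factorization
\[
r'(y) = \prod_{|\alpha|\le s} g_\alpha^{\binom{y_1}{\alpha_1} \cdots \binom{y_n}{\alpha_n}}, \qquad g_\alpha \in G_{|\alpha|},
\]
for $y \in H'$, where the exponents are interpreted using the canonical integer lift. Since $N > s$ and $\gcd(k!, N) = 1$ for $0 \le k \le s$, each binomial $\binom{\cdot}{k}$ descends to a well-defined map $\ZZ/N^\kappa\ZZ \to \ZZ/N^\kappa\ZZ$. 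Substituting $y = \imath(x)$ evaluates the binomials at the integer lifts $\tilde x_i \in \{0,\dots,N^\kappa-1\}$, and comparing with the mod-$N^\kappa$ reduction gives a decomposition
\[
\binom{\tilde x_i}{\alpha_i} = P_{i,\alpha_i}(x_i) + N^\kappa \rho_{i,\alpha_i}(x_i),
\]
where $P_{i,\alpha_i}$ is a fixed integer-valued polynomial descending to the $\ZZ/N^\kappa$-binomial, and $\rho_{i,\alpha_i}$ is a bounded correction. Expanding the ordered product, $r$ becomes a product of main factors $g_\alpha^{\prod_i P_{i,\alpha_i}(x_i)}$ (each a polynomial map $H \to G$ of degree $|\alpha|$ in the Leibman sense) together with correction factors of the form $g_\alpha^{N^\kappa \cdot (\text{bounded})}$ and commutators produced by reordering.

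Third, I would absorb the correction and commutator factors into an enlarged nilmanifold $\tilde G / \tilde \Gamma$, obtained by adjoining $\ll_{s,n} D$ new generators (one per multi-index $\alpha$, attached at filtration level $|\alpha|$) and a lattice $\tilde \Gamma \supseteq \Gamma$ absorbing the $N^\kappa$-divisibility of the correction exponents. Because $g_\alpha \in G_{|\alpha|}$ and $[G_i, G_j] \subseteq G_{i+j}$, every commutator produced in the reordering lies in filtration degree $\ge |\alpha| + |\beta|$, and in particular the resulting map is still polynomial of degree $s$. The main obstacle is the bookkeeping: carefully tracking all commutators produced during reordering and verifying that the correction terms can be consistently absorbed while preserving the degree-$s$ filtration. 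The complexity bounds $\dim \tilde G \ll_{s,n} D^{O_{s,n}(1)}$ and $M' \ll_{s,n} (DM)^{O_{s,n}(1)}$ should then follow from the nilmanifold complexity calculus in Appendix \ref{app:nilmanifolds}, applied term-by-term to this enlargement.
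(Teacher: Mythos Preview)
Your Taylor-expansion approach has a real gap concerning the $N$-independence of the bounds. For $\tilde x_i\in\{0,\dots,N^\kappa-1\}$ the value $\binom{\tilde x_i}{\alpha_i}$ is of order $N^{\kappa\alpha_i}$, so your ``bounded correction'' $\rho_{i,\alpha_i}$ is in fact of order $N^{\kappa(\alpha_i-1)}$, and even the main-term exponents $\prod_i P_{i,\alpha_i}(x_i)$ have size about $N^{\kappa|\alpha|}$. If you absorb this by enlarging the lattice to contain elements like $g_\alpha^{N^\kappa}$, the structure constants of $\tilde G/\tilde\Gamma$ (in the sense of Definition~\ref{def:explicit-group}(iv)) acquire $N$-dependence, contradicting the conclusion; if instead you want a lift $\tilde r$ with $d_{\tilde G}(\id,\tilde r(x))$ bounded independently of $N$ (as Definition~\ref{def:nil-poly} demands), you have not said how to achieve it. You also invoke $N>s$, which is not among the hypotheses of the lemma.

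The paper uses a genuinely different construction that sidesteps this issue: it takes $\tilde G=\RR^n\ltimes\poly(\ZZ^n,G)$, with $\RR^n$ acting by the shift $\tau(p,t)\colon y\mapsto p(y+t)$, and $\tilde\Gamma=\ZZ^n\ltimes\poly(\ZZ^n,\Gamma)$. The point is that evaluating the lifted polynomial $p'\colon\ZZ^n\to G$ at the integer representative of $x$ amounts to shifting $p'$ by that integer and then evaluating at $0$; recording the shift parameter as $x/N^\kappa\in[0,1)^n$ in the $\RR^n$ factor yields a naturally bounded element of $\tilde G$, while the nilmanifold $\tilde G/\tilde\Gamma$ itself is defined without any reference to $N$ (all $N$-dependence sits in the polynomial map, not in the structure constants). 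This packaging of the entire Taylor expansion into a single element of $\poly(\ZZ^n,G)$ is precisely what the references you cite do, and it is the ingredient missing from your sketch.
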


\begin{proof}[Proof of Theorem~\ref{thm:poly-structure} assuming Lemma~\ref{lem:poly-construction} and Lemma~\ref{lem:extension-fix}]
  We proceed by induction on $s$ and $t$; i.e., we will assume that the theorem holds for all smaller values of $s$, and for the same $s$ but smaller values of $t$.

  We begin by applying Theorem~\ref{thm:strong-derivs} to $g$ and $f$ with some parameter $\delta$ to be determined. So, we may assume that $g$ and $b$ obey an $(f',s+1,t,M_1,\delta)$-strong derivatives condition on $S'_{s+1}$, where $f'$ is a modified $(s,d',M_1)$-polynomial hierarchy on a subsystem $S'_0,\dots,S'_{s+1}$, with bounds on $d'$, $S'_k$ and $M_1$ as stated in Theorem~\ref{thm:strong-derivs}.  Here $\delta>0$ is a parameter to be determined.

  By our hypotheses on $b$ and Lemma~\ref{lem:gen-cocycle}, we may find $\lambda \colon H \to \RR^{d'_{t-1}}$ such that $b(c,\omega) = \Lambda(c,\omega)$ for all $\omega \in \llbracket s+1 \rrbracket$, for all but $O_s\big(\delta^{1/O_s(1)}\big) |C^{s+1}(H)|$ cubes $c \in S'_{s+1}$, where $\Lambda(c,\omega)$ is defined as in~\eqref{eq:big-lambda}.  We will write $\lambda_j$ and $\Lambda_j$ to refer to individual coordinates of these functions, for $j \in [d'_{t-1}]$.

  We recall that each function $f_{0,j}$ for $j \in [d'_0]$ is constant (provided $\eps < 1/2$; see Remark~\ref{rem:hierarchy-t0}), and a constant function is a nil-polynomial of degree $0$, with $G = \Gamma = \{\ast\}$ being the trivial group, $r \colon H \to G$ the unique map and $F \colon G \to \RR_{(0)}$ taking the constant value.

  By induction on $s$ (when $t \ge 2$) or the previous paragraph (when $t=1$), each function $f'_{t-1,j} \colon S'_0 \to \RR$ for $j \in [d'_{t-1}]$ agrees on all but $\eta_1 |H|$ values $x \in S'_0$ with a nil-polynomial $\ff_j \colon H \to \RR$ of degree $t-1$, whose dimension $D_1$ and complexity $M_2$ obey the bounds from the statement of the theorem.  Here $\eta_1 > 0$ is another a parameter to be determined.

  For each $\ff_j$, we apply Lemma~\ref{lem:poly-construction} to find a nil-polynomial $g_j \colon H' \to \RR$ of degree $s$, dimension $D_2 \ll_s D_1^{O_s(1)}$ and complexity $M_3 \ll_s (D_1 M_2)^{O_s(1)}$, with all the properties given in the statement of the lemma, where $H' = (\ZZ/N^{2 \kappa}\ZZ)$.

  Note that we may lift $S'_0,\dots,S'_{s+1}$ to a system of cubes $\wt{S'}_0,\dots,\wt{S'}_{s+1}$ on $H'$, $g$ to a function $H' \to \RR$ and $f'$ to a polynomial hierarchy on $H'$ (by composing with the projection $H' \to H$), without any loss of parameters.
  
  We therefore combine each of the new polynomial hierarchies $f'^{(j)}$ produced by Lemma~\ref{lem:poly-construction}, together with our original hierarchy $f'$, into a new hierarchy $f''$ on $\wt{S'}_0,\dots,\wt{S'}_{s+1}$, whose dimension and complexity are still controlled polynomially in terms of the other parameters.  Since $f'_{t-1,j}$ and $\ff_j$ agree on all but $\eta_1 |H'|$ values each, we may discard each $f'_{t-1,j}$ from the hierarchy and replace all references to it with $\ff_j$, at the expense of replacing $\wt{S'}_0,\dots,\wt{S'}_{s+1}$ with a subsystem $\wt S''_0,\dots,\wt S''_{s+1}$ that loses another $O_s\big((d'_{t-1} \eta_1)^{1/O_s(1)}\big)$ in its parameter (by Corollary~\ref{cor:cube-system-patch}).
  
  So, $g$ and $g_j$ all obey $(f'',s+1,t, M_3)$-derivatives conditions on $\wt S''_{s+1}$.  Write (with some abuse of notation) $b$ and $b^{(j)}$ for the other functions appearing in these derivatives conditions.  By the definition of $\Lambda$ and the conclusion of Lemma~\ref{lem:poly-construction} again, we have
  \[
    b_{=t-1} - \sum_{j=1}^{d'_{t-1}} b^{(j)}_{=t-1} = 0
  \]
  and hence $\wt{g} = g - \sum_{j=1}^{d'_{t-1}} g_j$ actually obeys an $\big(f'', s+1, t-1, M_3(1 + d'_{t-1})\big)$-derivatives condition on $\wt{S''}_{s+1}$, as the terms in the $t-1$ level cancel.

  By induction on $t$ (or Remark~\ref{rem:t=0}, if $t=1$) we have that $\wt{g}$ agrees with a nil-polynomial on $H'$ of degree $s$, complexity $M_4$ and dimension $D_3$ on all but $\eta_1 |H'|$ values $x \in \wt S''_0$ (again obeying the bounds in the statement).  By Remark~\ref{rem:nil-vs}, $g' = \wt{g} + \sum_{j=1}^{d'_{t-1}} g_j$ is therefore also a nil-polynomial of degree $s$, complexity $M_4$ and dimension $D_4 = D_3 + d'_{t-1} D_2$ on $H'$, and has the property that $g'(x) = g(x \bmod N)$ for all but $\eta_1 |H'|$ values $x \in \wt S''_0$, and hence all but $O_s\big(\eta_1^{1/O_s(1)}\big) |H'|$ of those values $x \in H'$ with $x \bmod N^\kappa \in S_0$.

  It is clear that these properties of $g'$ also hold for its translates $x \mapsto g'(x+a)$ for $a \in \ker(H' \to H)$.  Writing $\imath \colon H \to H'$ as in Lemma~\ref{lem:extension-fix}, by averaging there is some such $a$ such that $g(x) = g'(\imath(x) + a)$ for all but $O_s\big(\eta_1^{1/O_s(1)}\big) |H|$ values $x \in S_0$.  Applying Lemma~\ref{lem:extension-fix} to $x \mapsto g'(x+a)$, we obtain a nil-polynomial $x \mapsto g'(\imath(x)+a)$ with the degree $s$, dimension $D_5 \ll_{s,n} D_4^{O_{s,n}(1)}$ and complexity $M_5 \ll_{s,n} (D_4 M_4)^{O_{s,n}(1)}$.

  Finally, without laboring the point, we note that it suffices to take $\eta_1 = (\eta / C d'_{t-1})^C$ and $\delta = \eta^{C} / C$ for some suitable $C=C(s)$, and that then all the other various quantities $D_1,\dots,D_5$ and $M_1,\dots,M_5$, and in particular the final ones, have bounds of the form required.
\end{proof}

\subsection{Nilmanifold constructions}%
\label{subsec:nilmanifold-constructions}

We now turn to proving the two construction lemmas above, Lemma~\ref{lem:poly-construction} and Lemma~\ref{lem:extension-fix}.  We first record some generalities concerning spaces of polynomial maps $G \to G'$ between filtered groups, which will be relevant throughout.  Again see Appendix~\ref{app:nilmanifolds} for the fundamental definitions.

\begin{proposition}%
  \label{prop:poly-space}
  Let $G_\bullet$ and $G'_\bullet$ be explicitly presented filtered groups, with degrees $s$, $s'$, dimensions $D$, $D'$, complexities $M,M'$ and integral subgroups $\Gamma$, $\Gamma'$ respectively.  We consider the space $\poly(G, G')$ of polynomial maps $G \to G'$ with these filtrations.
  \begin{enumerate}
    \item The space $\poly(G, G')$ is a group under pointwise multiplication.
    \item We can put a filtration on $\poly(G, G')$ by $\poly(G, G')_i = \poly\big(G, {G'}^{+i}\big)$, where ${G'}_\bullet^{+i}$ is the filtered group with ${G'}^{+i}_j = G'_{i+j}$.  This is indeed a filtration on $\poly(G, G')$.
    \item Any polynomial map $\Gamma \to G'$ can be extended uniquely to a polynomial map $G \to G'$, and hence $\poly(\Gamma, G') \cong \poly(G, G')$ are interchangeable.
    \item There are one-parameter subgroups $p_{i,j} \colon \RR \to \poly(G, G')_i$, denoted $\alpha \mapsto p_{i,j}^\alpha$ \uppar{writing $p_{i,j}$ for $p_{i,j}^1$}, for $0 \le i \le s'$ and $j \in [\fd_i]$, where $\fd_i \ll_{s,s'} (D' D)^{O_{s,s'}(1)}$ are non-negative integers, with the property that for any $p \in \poly(G,G')$ there are unique coefficients $\alpha_{i,j} \in \RR$ such that $p = \prod_{i=0}^{s'} \prod_{j=1}^{\fd_i} p_{i,j}^{\alpha_{i,j}}$ \uppar{with products ordered from left to right}.

      Moreover, $p \in \poly(G,G')_{i'}$ if and only if $\alpha_{i,j} = 0$ whenever $i < i'$; and $p \in \poly(\Gamma, \Gamma')$ \uppar{equivalently, $p(\Gamma) \subseteq \Gamma'$} if and only if $\alpha_{i,j} \in \ZZ$ for each $i,j$.
      
      In particular, $(p_{i,j})$ for $i \ge 1$ make $\poly(G, G')_1$ into an explicitly presented filtered group of degree $s'$, dimension $\ll_{s,s'} (D' D)^{O_{s,s'}(1)}$ and complexity $\ll_{s,s'} (D M D'M')^{O_{s,s'}(1)}$,  with integral subgroup exactly $\poly(\Gamma, \Gamma')_1$.
    \item There are left and right actions of $G$ by homomorphisms of $\poly(G, G')$, by translation: for $g \in G$ and $p \in \poly(G, G')$ we let $\fs(g, p)$ denote the map $x \mapsto p(g x)$ and similarly $\tau(p, g)$ is the map $x \mapsto p(x g)$.
      
      If $p \in \poly(G, G')_i$ then so are $\fs(g, p)$ and $\tau(p,g)$, and if furthermore $g \in G_{i'}$ then $p \cdot \fs(g,p)^{-1} \in \poly(G, G')_{i+i'}$, and similarly for $\tau$.
    \item If $g \in G$, $p_{i',j'}$ is one of the polynomials in (iv) and we expand $\fs(g, p_{i',j'})$ or $\tau(p_{i',j'}, g)$ as $\prod_{i=0}^{i'} \prod_{j=1}^{\fd_i} p_{i,j}^{\alpha_{i,j}}$ as in (iv), then $\alpha_{i,j}$ satisfy 
      \[
        \sum_{i,j} |\alpha_{i,j}| \ll_{s,s'} (D' D M' M)^{O_{s,s'}(1)} (1 + d_G(\id_G, g))^{O_{s,s'}(1)}.
      \]
      If $g \in \Gamma$ then $\alpha_{i,j}$ as above are integers.
    \item If $p = \prod_{i=0}^{s'} \prod_{j=1}^{\fd_i} p_{i,j}^{\alpha_{i,j}}$, then for some $R = O_{s,s'}(1)$ the sum $\sum_{i=0}^{s'} \sum_{j=1}^{\fd_i} |\alpha_{i,j}|$ is bounded by
      \[
        O(D' D M' M)^{O_{s,s'}(1)} \big(1 + \sup \big\{ d_{G'}(\id_{G'}, p(\gamma)) \colon \gamma \in \Gamma,\, d_G(\id_G,\gamma) \le R \big\} \big)^{O_{s,s'}(1)};
      \]
      i.e., the size of the coefficients in the expansion of $p$ are controlled by the size of elements $p(\gamma)$ for bounded $\gamma \in \Gamma$.
    \item For $x_1,x_2 \in G$ and any $p_{i,j}$ the quantity $d_{G'}\big(p_{i,j}(x_1), p_{i,j}(x_2)\big)$ is bounded by
      \[
        O(D' D M' M)^{O_s(1)} d_G(x_1, x_2) \big(1 + d_G(\id_G, x_1) + d_G(\id_G, x_2) \big)^{O_{s,s'}(1)} .
      \]
  \end{enumerate}
\end{proposition}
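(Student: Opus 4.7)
The plan is to deduce the entire proposition from the standard structure theory of polynomial maps between filtered nilpotent groups (in the sense of Leibman, later refined by Green--Tao), combined with explicit quantitative work in Mal'cev coordinates supplied by Appendix \ref{app:nilmanifolds}. For (1) and (2), the Leibman-style closure of polynomial maps under pointwise product and inverse shows that $\poly(G, G')$ is a group; the filtration property $[\poly(G, G')_i, \poly(G, G')_j] \subseteq \poly(G, G')_{i+j}$ is then obtained by combining the iterated-derivative characterization of polynomial maps (an iterated commutator $\partial_{g_1} \dots \partial_{g_{i+j}}$ kills a polynomial of degree less than $i+j$) with the filtration hypothesis $[G'_i, G'_j] \subseteq G'_{i+j}$, after which one only has to check that the $\poly(G, G')_i$ intersect trivially at the top, which follows from their definition.

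For (3), I would fix a Mal'cev basis of $G$ adapted to $\Gamma$ and use that $G$ is divisible along this basis; any polynomial map $\Gamma \to G'$ is determined by its Taylor coefficients at the basis elements (via successive discrete derivatives), and these coefficients make sense in $G'$ itself (rather than just in $\Gamma' \subseteq G'$) so the formula extends to $G$. This is also the key input for (4): choosing a Mal'cev basis $X_{i,j}$ of $G'$ adapted to the filtration and a Mal'cev basis $Y_k$ of $G$ adapted to $\Gamma$, the classical ``binomial coefficient'' polynomial maps built out of these bases give the $p_{i,j}$. The factorization $p = \prod p_{i,j}^{\alpha_{i,j}}$ is then obtained greedily: peel off the top-degree piece using the formula
\[
\alpha_{s',j} = \text{(the } X_{s',j}\text{-coordinate of } \partial^{s'} p \text{ evaluated at some specific configuration)},
\]
then multiply by $p_{s', j}^{-\alpha_{s',j}}$, which lowers the degree by one, and iterate. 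Uniqueness, the characterization of $\poly(\Gamma, \Gamma')_1$ in terms of integer coefficients, and the claimed dimension bound $\fd_i \ll_{s,s'} (D'D)^{O_{s,s'}(1)}$ all follow from counting the generators needed at each filtration level.

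For (5), the statement that translation by $g \in G_{i'}$ lowers degree by $i'$ reduces to the commutator formula
\[
p(gx) = p(x) \cdot [p(x)^{-1} p(gx)],
\]
where the bracket expansion of $p(x)^{-1} p(gx)$, using the Baker--Campbell--Hausdorff-type formulae in Mal'cev coordinates, takes values in $G'_{j+i'}$ whenever $p \in \poly(G,G')_j$, by the filtration hypothesis $[G_{i'}, G'_j] \subseteq G'_{i'+j}$ applied at the level of the formal series. Parts (6), (7) and (8) are then purely quantitative: writing everything in Mal'cev coordinates, $\alpha_{i,j}$ in (6) is a polynomial in the Mal'cev coordinates of $g$ of controlled degree and coefficients, yielding the claimed bound; (7) is an inversion of this, obtained by interpolating $p$ from its values on a bounded set of lattice points (the Vandermonde-type linear system has size $\ll_{s,s'}1$ and entries of controlled norm); and (8) is a direct Lipschitz estimate on the explicit polynomial formulae for $p_{i,j}$.

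The main obstacle will be part (7), since inverting the evaluation map $p \mapsto (p(\gamma))_{\gamma \in \Gamma, d_G(\id_G, \gamma) \le R}$ with polynomial control on all relevant Mal'cev coordinates requires choosing $R$ and the evaluation points carefully so that the resulting Vandermonde-type system is non-degenerate with an explicit lower bound on its determinant; this is where one must actually do book-keeping with the complexity $M, M'$ of the filtered groups rather than just invoking the existence of the Mal'cev basis. Everything else is a careful but essentially mechanical unwinding of definitions in Appendix \ref{app:nilmanifolds}.
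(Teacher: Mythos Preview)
Your approach is essentially the paper's: reduce to Mal'cev coordinates, use a binomial-coefficient Taylor basis, and read off the quantitative bounds. Two places where you make your life harder than necessary, though.

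For part (v), the expression $[G_{i'}, G'_j] \subseteq G'_{i'+j}$ is ill-formed (these are subgroups of different groups), and the BCH detour is unnecessary. The statement $p \cdot \fs(g,p)^{-1} \in \poly(G,G')_{i+i'}$ says exactly that $x \mapsto p(x)p(gx)^{-1} = \partial_g p(x)$ is polynomial with the shifted filtration, which is immediate from the iterated-derivative characterization (Proposition~\ref{prop:poly-equiv}): if $\partial_{h_1}\dots\partial_{h_m} p \in G'_{i+i_1+\dots+i_m}$ for all $h_r \in G_{i_r}$, then the same holds for $\partial_g p$ with an extra $i'$ added.

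For part (vii), no Vandermonde inversion or determinant lower bound is needed. The Taylor expansion already gives the coefficients \emph{explicitly} as finite-difference derivatives at the origin (equation~\eqref{eq:alpha-form} in the paper): each $\alpha_k$ is an $O_{s,s'}(1)$-bounded integer combination of values $\sigma'(p(\sigma^{-1}(\ell)))$ for non-negative integer tuples $\ell$ with entries at most $s'$. So (vii) follows directly from Proposition~\ref{prop:metric-compare} applied to those finitely many values, with $R = O_{s,s'}(1)$ just the bound on $\|\ell\|_1$. This is the part you flagged as the main obstacle, but it is actually the easiest once the Taylor formula is in hand.
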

The proof is given in Appendix~\ref{appsub:misc}.
\begin{remark}%
  \label{rem:poly-space-ab}
  In some applications $G'_\bullet$ is abelian, and more specifically $G'_\bullet = \RR_{(s')}$ (the abelian group $\RR$ with the degree $s'$ filtration).
  
  In this case we convert these statements to additive notation.
  Moreover, part (iv) can be stated more succinctly in terms of the basis $\{p_{i,j}\}$ for the vector space $\poly\big(G,\RR_{(s')}\big)$ and the full rank sub-lattice $\poly\big(\Gamma, \ZZ_{(s')}\big)$ it generates, and 
  in (ii) the shifted filtration ${G'}^{+i}_\bullet$ is just $\RR_{(s'-i)}$.
\end{remark}

Before we begin proving Lemma~\ref{lem:poly-construction}, we give a kind of converse to Theorem~\ref{thm:poly-structure} which will be needed in the proof.  In other words, this shows that nil-polynomials always obey a derivatives condition on a polynomial hierarchy.

\begin{proposition}%
  \label{prop:nil-is-hierarchy}
  Let $H$ be any abelian group and $g = F \circ r$ a nil-polynomial on $H$ of degree $s$, dimension $D$ and complexity $M$, where $r \colon H \to G$, $F \colon G \to \RR_{(s)}$ and $G/\Gamma$ are as in Definition~\ref{def:nil-poly}.
  Then there is a sequence of non-negative integers $(d_0,\dots,d_{s-1})$, and collection of polynomial maps $F_{i,j} \colon G \to \RR_{(i)}$ for $0 \le i \le s-1$ and $j \in [d_i]$, such that the following hold.

  \begin{enumerate}[label=(\roman*)]
    \item If we expand
      \[
        F = \sum_{i_1=0}^s \sum_{j_1=1}^{\fd_{i_1}} \alpha_{i_1,j_1} p_{i_1,j_1}
      \]
      as in Proposition~\ref{prop:poly-space}(iv), then $F_{i,j}$ are be exactly the functions $\alpha_{i_1,j_1} p_{i_2,j_2}$, where $0 \le i_1 < i_2 \le s$, $j_1 \in [\fd_{i_1}]$, $j_2 \in [\fd_{i_2}]$, and $i_2 = s-i$.
    \item For every $\gamma \in \Gamma$ we may write
      \[
        F - \tau(F,\gamma) = \sum_{i=0}^{s-1} \sum_{j=1}^{d_i} b^{(\gamma)}_{i,j} F_{i,j}
      \]
      where $b^{(\gamma)}_{i,j} \in \ZZ$ are integers with $\|b\|_1 \ll_s (D M)^{O_s(1)} (1 + d_G(\id_G,\gamma))^{O_s(1)}$.

    \item Write $f_{i,j} = F_{i,j} \circ r$. Then the tuple $f=(f_{i,j})$ is an $(s-1,d,M')$-polynomial hierarchy on $C^0(H),\dots,C^s(H)$, and there is some $b$ such that $g$ and $b$ obey an $(f, s+1, s, M')$-derivatives condition on $C^{s+1}(H)$, where $M' \ll_s (D M)^{O_s(1)}$.
  
  \end{enumerate}
\end{proposition}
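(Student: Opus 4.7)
The plan is to construct the $F_{i,j}$ directly from a basis decomposition of $F$, verify the translation identity (ii) using the filtration machinery in Proposition~\ref{prop:poly-space}, and then combine (ii) with a polynomial lift of $r$ to produce the derivatives condition in (iii).

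First I would invoke Proposition~\ref{prop:poly-space}(iv) to obtain basis elements $p_{i_1,j_1} \in \poly(G, \RR_{(s)})_{i_1}$ for $0 \le i_1 \le s$, $j_1 \in [\fd_{i_1}]$ with $\fd_{i_1} \ll_s D^{O_s(1)}$, and expand $F = \sum_{i_1,j_1} \alpha_{i_1,j_1}\, p_{i_1,j_1}$. Statement (i) is then a definition: enumerate quadruples $(i_1, j_1, i_2, j_2)$ with $0 \le i_1 < i_2 \le s$, group them by $i = s - i_2$, and set $F_{i,j} = \alpha_{i_1,j_1}\, p_{i_2,j_2}$, noting $p_{i_2,j_2} \in \poly(G, \RR_{(s-i_2)}) = \poly(G, \RR_{(i)})$. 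The dimension bound $d_i \ll_s D^{O_s(1)}$ is immediate from (iv).

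For (ii) I would compute $F - \tau(F, \gamma) = \sum_{i_1,j_1} \alpha_{i_1,j_1}\bigl(p_{i_1,j_1} - \tau(p_{i_1,j_1}, \gamma)\bigr)$. By Proposition~\ref{prop:poly-space}(v), each difference $p_{i_1,j_1} - \tau(p_{i_1,j_1}, \gamma)$ lies in $\poly(G, \RR_{(s)})_{i_1+1}$, so only terms $p_{i_2,j_2}$ with $i_2 > i_1$ appear in its basis expansion; by Proposition~\ref{prop:poly-space}(vi), for $\gamma \in \Gamma$ these coefficients are integers of size $O_s((DM)^{O_s(1)}(1+d_G(\id_G,\gamma))^{O_s(1)})$. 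Regrouping terms and absorbing each scalar $\alpha_{i_1,j_1}$ into $F_{i,j}$ gives $F - \tau(F, \gamma) = \sum_{i,j} b^{(\gamma)}_{i,j} F_{i,j}$ with the required $\|b\|_1$-bound.

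For (iii), I would first invoke the lifting construction in Appendix~\ref{app:nilmanifolds} to produce a polynomial map $\tilde{r} \colon H \to G$ of degree $\le s$ (with $H$ given the degree-$1$ filtration) lifting $r \bmod \Gamma$ with complexity polynomial in $M$. Writing $r(x) = \tilde{r}(x)\gamma(x)$ with $\gamma(x) \in \Gamma$ and applying (ii) pointwise gives
\[
g(x) = \tau(F, \gamma(x))(\tilde{r}(x)) = F(\tilde{r}(x)) - \sum_{i,j} b^{(\gamma(x))}_{i,j}\, F_{i,j}(\tilde{r}(x)).
\]
By the composition lemma for polynomial maps between filtered groups (Appendix~\ref{app:nilmanifolds}), $F \circ \tilde{r}$ and each $F_{i,j} \circ \tilde{r}$ are polynomials $H \to \RR$ of ordinary degree $\le s$ and $\le i$ respectively, so $\partial^{s+1}(F \circ \tilde{r}) = 0$ and $\partial^{i+1}(F_{i,j} \circ \tilde{r}) = 0$. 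Applying $\partial^{s+1}$ to the display annihilates the first term and yields a derivatives condition for $g$ in the auxiliary functions $\tilde{f}_{i,j} = F_{i,j} \circ \tilde{r}$. To convert to $f_{i,j} = F_{i,j} \circ r$, I would apply (ii) to each $F_{i,j}$ itself (viewed in $\poly(G, \RR_{(i)})$): this expresses $f_{i,j} - \tilde{f}_{i,j}$ as an integer combination of $\tilde{f}_{i',j'}$ with $i' < i$, a unit-diagonal lower-triangular system that inverts over $\ZZ$, and substitution produces the $(f, s+1, s, M')$-derivatives condition for $g$. Running the same argument with each $F_{i,j}$ in the role of $F$ gives the $(i+1)$-derivatives condition for each $f_{i,j}$ against $f_{\le i-1}$; the sub-basis elements arising in that recursion, namely $\alpha_{i_1,j_1}\, p_{i_2'', j_2''}$ with $i_2'' > i_2$, are exactly the $F_{i', j'}$'s with $i' = s - i_2'' < i$ already present in our tuple, matching the polynomial-hierarchy structure.

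The main obstacle is the composition identity $\partial^{s+1}(F \circ \tilde{r}) = 0$: this relies on the functorial behavior of polynomial maps between filtered groups (with the ordinary polynomial degree of the composition controlled by the codomain filtration of $F$), together with a quantitative existence statement for the lift $\tilde{r}$ of bounded complexity. Both are standard but technical facts from the nilmanifold toolkit in Appendix~\ref{app:nilmanifolds}; tracking complexity bounds through them and through the triangular inversion is the main bookkeeping task.
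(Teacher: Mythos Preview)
Your treatment of (i) and (ii) is essentially the same as the paper's and is fine.

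The approach to (iii), however, has a genuine gap. You invoke a global polynomial lift $\tilde r \colon H \to G$ of $r \bmod \Gamma$, but no such lift exists in the relevant case where $H$ is a finite group. The lifting result in Appendix~\ref{app:nilmanifolds} (Proposition~\ref{prop:z-lift}) applies only to $\ZZ^A$; the appendix explicitly notes that for finite $H$, polynomial maps $H \to G$ are constant, since $G$ is a torsion-free simply connected nilpotent Lie group. Indeed, your subsequent claim that $\partial^{s+1}(F \circ \tilde r) = 0$ would, by the averaging argument in \eqref{eq:triv-poly}, force $F \circ \tilde r$ to be constant on $H$, which cannot match the original nil-polynomial $g$ unless $g$ itself is constant.

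The paper avoids this obstacle by working cube by cube rather than via a global lift. This is the content of Lemma~\ref{lem:nil-derivs}: for each $c \in C^{s+1}(H)$, the configuration $\omega \mapsto r(c(\omega))\Gamma$ lies in $\HK^{s+1}(G/\Gamma)$, so by Proposition~\ref{prop:effective-cubes} one can find bounded $\gamma(c,\omega) \in \Gamma$ such that $\omega \mapsto r(c(\omega))\gamma(c,\omega)$ lies in $\HK^{s+1}(G)$. Since $F$ is polynomial, the alternating sum of $F$ over this corrected cube vanishes, giving
\[
\partial^{s+1}(F \circ r)(c) = \sum_{\omega \in \llbracket s+1 \rrbracket} (-1)^{|\omega|} \bigl(F - \tau(F, \gamma(c,\omega))\bigr)\bigl(r(c(\omega))\bigr),
\]
and now (ii) applies to each term on the right to produce the $b$-function. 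The corrections $\gamma(c,\omega)$ depend on $c$ and need not come from a global $\gamma \colon H \to \Gamma$; this is precisely the freedom that makes the argument work. Running the same lemma with each $F_{i,j}$ in place of $F$ (at degree $i$) gives the hierarchy structure, with no need for the triangular inversion you describe.
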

We first show the following.

\begin{lemma}%
  \label{lem:nil-derivs}
  If $F' \circ r$ be a nil-polynomial $H \to \RR$ on $G/\Gamma$, of degree $t$, dimension $D$ and complexity $M$. For any $c \in C^{t+1}(H)$ there exist values $\gamma(c,\omega) \in \Gamma$ with $d_G(\id_G, \gamma(c,\omega)) \ll_t (MD)^{O_t(1)}$ such that
  \[
    \partial^{t+1}(F' \circ r) (c) = \sum_{\omega \in \llbracket t+1 \rrbracket} (-1)^{|\omega|} \big(F' - \tau(F', \gamma(c,\omega))\big)(r(c(\omega))).
  \]
\end{lemma}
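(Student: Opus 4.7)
The plan is to recast the desired identity as the vanishing of a certain signed sum, then produce that vanishing by lifting a polynomial cube in $G/\Gamma$ to a polynomial cube in $G$. Specifically, since $\tau(F',\gamma)(g) = F'(g\gamma)$, the claim is equivalent to
\[
  \sum_{\omega \in \llbracket t+1 \rrbracket} (-1)^{|\omega|} F'\bigl(r(c(\omega))\, \gamma(c,\omega)\bigr) = 0.
\]
So it suffices to produce $\gamma(c,\omega) \in \Gamma$, of $d_G$-size $\ll_t (DM)^{O_t(1)}$, such that the configuration $\tilde p(\omega) := r(c(\omega))\, \gamma(c,\omega)$ is the restriction to $\llbracket t+1\rrbracket$ of a polynomial map $\tilde p \colon \ZZ^{t+1} \to G$ compatible with the degree-$t$ filtration $G_\bullet$; for once we have this, $F' \circ \tilde p \in \poly(\ZZ^{t+1}, \RR_{(t)})$ and so its $(t+1)$-fold signed sum vanishes automatically.

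I would build $\tilde p$ by quantitative cube-lifting, as follows. The map $\omega \mapsto c(\omega) = x + \omega \cdot (h_1,\dots,h_{t+1})$ is linear in $\omega$, so composition with $r \bmod \Gamma$ (which is polynomial by the definition of a nil-polynomial) gives a polynomial map $\bar p \colon \ZZ^{t+1} \to G/\Gamma$. Expand $\bar p$ in the canonical generators $p_{i,j} \bmod \Gamma$ for $\poly(\ZZ^{t+1}, G)/\poly(\ZZ^{t+1}, \Gamma)$ supplied by Proposition \ref{prop:poly-space}(iv): this gives coefficients $\alpha_{i,j}$ defined modulo $\ZZ$. Now choose real representatives $\tilde\alpha_{i,j} \in \RR$ for each $\alpha_{i,j}$ and set $\tilde p = \prod p_{i,j}^{\tilde\alpha_{i,j}}$; then $\tilde p$ is a polynomial map $\ZZ^{t+1} \to G$ of the right type, and $\tilde p \bmod \Gamma = \bar p$, so $\gamma(c,\omega) := r(c(\omega))^{-1} \tilde p(\omega) \in \Gamma$ as required.

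For the norm bound, I would use Proposition \ref{prop:poly-space}(vii) to bound the coefficients of $\bar p$: the values $\bar p(\omega) = r(c(\omega)) \bmod \Gamma$ are the projections of elements of $G$ of $d_G$-size at most $M$ (since $d_G(\id_G, r(\cdot)) \le M$ by hypothesis), and Proposition \ref{prop:poly-space}(vii) converts size bounds at bounded integer points into bounds on the expansion coefficients, giving $\sum |\alpha_{i,j}| \ll_t (DM)^{O_t(1)}$. Choosing each $\tilde\alpha_{i,j} \in \RR$ within distance $1$ of $\alpha_{i,j}$ preserves this size bound, and then Proposition \ref{prop:poly-space}(viii) (or iterated application of the bi-invariant Lipschitz bounds for the $p_{i,j}$) yields $d_G(\id_G, \tilde p(\omega)) \ll_t (DM)^{O_t(1)}$ at each vertex $\omega \in \llbracket t+1 \rrbracket$. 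The triangle inequality and bi-invariance of $d_G$ then give the claimed bound on $\gamma(c,\omega) = r(c(\omega))^{-1} \tilde p(\omega)$.

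The main obstacle is purely bookkeeping: verifying that the coefficient bound from Proposition \ref{prop:poly-space}(vii) applies cleanly, given the mild mismatch between the ambient group ($G/\Gamma$ with its polynomial structure) and the version of the proposition (stated for $G \to G'$). This is a routine adaptation—one works with a coset-valued expansion and lifts to real coefficients—but it is the only place any nontrivial calculation is required.
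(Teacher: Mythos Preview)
Your approach is correct and is essentially the same as the paper's: both arguments lift the configuration $\omega \mapsto r(c(\omega))\Gamma \in \HK^{t+1}(G/\Gamma)$ to a genuine Host--Kra cube $\tilde c \in \HK^{t+1}(G)$ with bounded entries, and then use that $F'$ sends this to $\HK^{t+1}(\RR_{(t)})$, where the alternating sum vanishes. The only difference is packaging: the paper does the lifting in one line by quoting Proposition~\ref{prop:effective-cubes}, whereas you re-derive that proposition through the $\poly(\ZZ^{t+1},G)$ machinery of Proposition~\ref{prop:poly-space}.

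Two small fixes to your bookkeeping. First, your appeal to Proposition~\ref{prop:poly-space}(vii) is unnecessary and does not quite typecheck: the coefficients $\alpha_{i,j}$ of $\bar p$ live in $\RR/\ZZ$, so one simply takes representatives $\tilde\alpha_{i,j}\in[0,1)$, and then $\sum_{i,j}|\tilde\alpha_{i,j}|$ is bounded by the number of generators, which is $\ll_t D^{O_t(1)}$ by Proposition~\ref{prop:poly-space}(iv). Second, $d_G$ is right-invariant rather than bi-invariant; the bound you need follows instead from subadditivity of the word norm, $d_G(\id_G, r(c(\omega))^{-1}\tilde p(\omega)) \le d_G(\id_G, r(c(\omega))) + d_G(\id_G, \tilde p(\omega)) \le M + O_t((DM)^{O_t(1)})$.
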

\begin{proof}
  We choose $\gamma(c,\omega)$ to ``correct'' the configuration $\omega \mapsto r(c(\omega))$ to a Host--Kra cube.  Specifically, since $\omega \mapsto r(c(\omega)) \Gamma$ lies in $\HK^{t+1}(G/\Gamma)$ by the hypothesis on $r$, by Proposition~\ref{prop:effective-cubes} we can find $\gamma(c,\omega)$ with $d_G(r(c(\omega)), \gamma(c,\omega)) \ll_t D$ (and hence $d_G(\id_G,\gamma(c,\omega)) \ll_t (M D)^{O_t(1)}$), such that the configuration $\wt c(\omega) = r(c(\omega)) \gamma(c,\omega)$ lies in $\HK^{t+1}(G)$.

  Since $F'$ is a polynomial map, we have that $\omega \mapsto F(\wt c(\omega))$ lies in $\HK^{t+1}(\RR_{(t)})$, and hence
  \[
    \sum_{\omega \in \llbracket t+1 \rrbracket} (-1)^{|\omega|} F(\wt c(\omega)) = 0 .
  \]
  It follows immediately that
  \[
    \partial^{t+1}(F' \circ r) (c) = \sum_{\omega \in \llbracket r+1 \rrbracket} (-1)^{|\omega} \Big(F'(r(c(\omega))) - F'(\wt c(\omega)) \Big)
  \]
  and on unwrapping the definitions of $\wt c$ and $\tau$ this completes the proof.
\end{proof}

\begin{proof}[Proof of Proposition~\ref{prop:nil-is-hierarchy}]
  Assuming (ii) and noting the choice (i), applying Lemma~\ref{lem:nil-derivs} with $F'=F$ proves that $F \circ r$ obeys the derivatives condition on $(f_{i,j})$ required in (iii).

  To finish the proof we need the following.
  \begin{claim}
    For any polynomial $p_{i',j'}$ as in Proposition~\ref{prop:poly-space}, where $0 \le i' \le s$ and $j' \in [\fd_{i'}]$, and any $\gamma \in \Gamma$, we have
    \[
      p_{i',j'} - \tau(p_{i',j'}, \gamma) = \sum_{i=i'+1}^{s} \sum_{j=1}^{d_i} b_{i,j} p_{i,j}
    \]
    where $b_{i,j} \in \ZZ$ depend on $\gamma$ and $\|b\|_1 \ll_s (DM)^{O_s(1)} (1 + d_G(\id_G,\gamma))^{O_s(1)}$.
  \end{claim}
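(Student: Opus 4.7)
The plan is to deduce the claim directly from Proposition \ref{prop:poly-space}, working additively with $G'=\RR_{(s)}$ as in Remark \ref{rem:poly-space-ab} (so $D'=M'=1$). The first step is the key filtration fact: $p_{i',j'} - \tau(p_{i',j'},\gamma)$ lies in $\poly(G,\RR_{(s)})_{i'+1}$. By the shifted filtration convention of Proposition \ref{prop:poly-space}(ii), the space $\poly(G,\RR_{(s)})_{i'}$ is precisely $\poly(G,\RR_{(s-i')})$, so $p_{i',j'}$ is a polynomial map of degree at most $s-i'$. The standard characterization of polynomial maps into $\RR_{(t)}$ as those whose $(t{+}1)$-fold iterated discrete derivatives along any directions in $G = G_0$ vanish then gives that $-\partial_\gamma p_{i',j'} = p_{i',j'} - \tau(p_{i',j'},\gamma)$ is polynomial of degree at most $s-i'-1$, i.e.\ lies in $\poly(G,\RR_{(s)})_{i'+1}$.

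Next, Proposition \ref{prop:poly-space}(iv) uniquely expands
\[
  p_{i',j'} - \tau(p_{i',j'},\gamma) = \sum_{i=0}^{s}\sum_{j=1}^{\fd_i} b_{i,j}\, p_{i,j} \qquad (b_{i,j}\in\RR),
\]
and the final clause of (iv) combined with the filtration fact just established forces $b_{i,j}=0$ for all $i\le i'$, giving the claimed range of summation. For integrality I would observe that $p_{i',j'}$ sends $\Gamma$ into $\ZZ$ by its definition as a basis map with unit integer coefficient, and $\tau(p_{i',j'},\gamma)$ does too because $\Gamma$ is a subgroup, so $\gamma'\gamma\in\Gamma$ whenever $\gamma'\in\Gamma$ and hence $p_{i',j'}(\gamma'\gamma)\in\ZZ$. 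Their difference therefore lies in $\poly(\Gamma,\ZZ_{(s)})$, and the second half of (iv) gives $b_{i,j}\in\ZZ$.

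The final step is the $\ell^1$ bound, obtained by applying Proposition \ref{prop:poly-space}(vi) to $\tau(p_{i',j'},\gamma)$: its coefficients in the $\{p_{i,j}\}$ basis have $\ell^1$ norm $\ll_s (DM)^{O_s(1)}(1+d_G(\id_G,\gamma))^{O_s(1)}$. Subtracting $p_{i',j'}$ adds at most a single unit to the coefficient $\alpha_{i',j'}$, which is absorbed in the same bound; indeed the filtration argument above already guarantees it is cancelled. The degenerate case $i'=s$ is trivial since $p_{s,j'}\in\poly(G,\RR_{(0)})$ is constant and so invariant under $\tau$. The only point that requires any care is the very first step---confirming that ``derivative kills one degree'' is immediate from the way the filtration on $\poly(G,G')$ is set up, rather than needing to invoke the stronger clause of Proposition \ref{prop:poly-space}(v), which would only supply the degree drop when $\gamma$ lies strictly deeper in $G_\bullet$ than $G_0$.
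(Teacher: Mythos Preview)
Your proof is correct and follows essentially the same route as the paper, which simply cites Proposition~\ref{prop:poly-space}(iv,v) for the existence of the integer expansion and (vi) for the $\ell^1$ bound. Your concern about (v) is unfounded: since $G_\bullet$ is an explicitly presented filtered group its filtration is proper, so $\gamma \in \Gamma \subseteq G = G_0 = G_1$ lies in $G_1$, and (v) directly gives $p_{i',j'} - \tau(p_{i',j'},\gamma) \in \poly(G,\RR_{(s)})_{i'+1}$ without needing your hand-rolled degree-drop argument (which is nonetheless correct).
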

  With the choice of $F_{i,j}$ in (i) this proves (ii), and combined with Lemma~\ref{lem:nil-derivs} applied with $F'=F_{i,j}$ we get a derivatives condition for each function $F_{i,j} \circ r$, proving $f$ is a polynomial hierarchy as required in (iii).
  \begin{proof}[Proof of claim]
    The fact that integers $b_{i,j}$ of this form exist is immediate by Proposition~\ref{prop:poly-space}(iv,v) (converting to additive notation).

    In this abelian setting, the bound on $\|b\|_1$ is immediate from Proposition~\ref{prop:poly-space}(vi).
  \end{proof}
  This completes the proof of Proposition~\ref{prop:nil-is-hierarchy}.
\end{proof}

\begin{proof}[Proof of Lemma~\ref{lem:poly-construction}]
  Let $r \colon H \to G$ and $F \colon G \to \RR_{(t-1)}$ be the functions associated to the nil-polynomial $f$, as in Definition~\ref{def:nil-poly}.  Also let $D_1,\dots,D_{t-1}$ and $\gamma_{i,j}$ for $1 \le i \le t-1$ and $j \in [D_i]$ be the data associated to the explicitly presented filtered group $G$.

  Our task is twofold.  First we construct the nilmanifold $G'/\Gamma'$ on which the nil-polynomial $g$ from the statement is defined, and followed by the remaining data of $g$.  Second, we use Proposition~\ref{prop:nil-is-hierarchy} to construct a polynomial hierarchy, and verify that it has the required properties with respect to $g$.

  {\bf \noindent Preliminaries.}  
  We consider the real vector space $\poly\big(G, \RR_{(t-1)}\big)$, and note the conclusions of Proposition~\ref{prop:poly-space} (and Remark~\ref{rem:poly-space-ab}) in this setting.  In particular we write $e_{i,j}$ (where $0 \le i \le t-1$ and $j \in [\fd_i]$) for the basis given in Proposition~\ref{prop:poly-space}(iv), which has the property that $\{e_{i,j} \colon i \ge i'\}$ is a basis for $\poly\big(G, \RR_{(t-1)}\big)_{i'}$, which is the same as $\poly\big(G, \RR_{(t-1-i')}\big)$.

  We also consider the linear algebraic dual $V = \poly\big(G, \RR_{(t-1)}\big)^\ast$.  This again has the structure of a filtered vector space, by defining $V_i \subseteq V$ for $i=0,1,\dots$ by $V_0 = V$ and
  \[
    V_i = \big\{ \phi \in V \colon \phi(z) = 0 \ \forall z \in \poly\big(G, \RR_{(i-1)}\big) \big\}
  \]
  when $i \ge 1$; i.e., $V_i = \poly\big(G, \RR_{(i-1)}\big)^\perp$.  It follows that $V_0 \supseteq V_1 \supseteq \dots \supseteq V_{t} = 0$.

  The left action $\fs$ of $G$ on $\poly(G, \RR_{(t-1)})$ induces a dual action on $V$, i.e.\ sending a linear map $\phi \colon \poly\big(G, \RR_{(t-1)}\big) \to \RR$ in $V$ to $\fs(z,\phi) \colon F \mapsto \phi\big(\fs(z, F)\big)$, for $z \in G$.
  It follows from Proposition~\ref{prop:poly-space}(v) that for $\phi \in V_i$ and $z \in G_{i'}$ we have $\fs(z,\phi) \in V_i$ and $\phi - \fs(z,\phi) \in V_{i+i'}$.

  The basis $\{e_{i,j}\}$ above induces a dual basis $\{e_{i,j}^\ast\}$ for $V$.  If we change the labels, writing $e'_{i,j} = e_{t-1-i,j}^\ast$ for $j \in [\fd'_i] = [\fd_{t-1-i}]$, the remarks above imply that $\{e'_{i,j} \colon i \ge i'\}$ is a basis for $V_{i'}$, for each $i'$, $0 \le i' \le t-1$.
  
  We define an $\ell^1$-norm $\|\cdot\|_1$ on $V$ with respect to this basis.  By Proposition~\ref{prop:poly-space}(vi), for $z \in G$ the linear map $\fs(z) \colon V \to V$ obeys the operator norm bound
  \begin{equation}
    \label{eq:tau-l1-l1}
    \|\fs(z)\|_{\ell^1 \to \ell^1} \ll_s (M D)^{O_s(1)} (1 + d_G(\id_G, z))^{O_s(1)} .
  \end{equation}

  We also note that the lattice $\Upsilon \subseteq V$ generated by $\{e'_{i,j}\}$ is exactly the dual lattice of $\poly\big(\Gamma, \ZZ_{(t-1)}\big)$; that is,
  \[
    \Upsilon = \big\{ v \in V \colon v(F) \in \ZZ \ \forall F \in \poly\big(\Gamma, \ZZ_{(t-1)}\big) \big\}.
  \]

  Given $z \in G$, there is an element $\ev_z \in V$ defined by $\ev_z(F) = F(z)$; so, $\ev_z = \fs(z,\ev_{\id_G})$.  It is immediate that $\ev_\gamma \in \Upsilon$ for all $\gamma \in \Gamma$.
  A close inspection of the definition of the basis vectors $e_{i,j}$ (in the proof of Proposition~\ref{prop:poly-space}) shows that $e_{t-1,1}$ is the constant function $1$ and $e_{i,j}(\id_G) = 0$ whenever $i < t-1$; hence, $\ev_{\id_G} = e'_{0,1}$ is already one of our standard basis vectors.

  {\bf \noindent Construction of $G'$.}
  The intuition, such as it is, is that we want to build a machine which stores (roughly) three pieces of information: an element of $z \in G$, corresponding roughly to $r(x)$; an element of $t \in \RR$, corresponding roughly to $\lambda(x)$; and an element of $V$ which acts as an ``accumulator''.  It should support the following operations: we should be able to move around $G$ by right-multiplying by elements of $\Gamma$; and also translate $\RR$ by elements $m \in \ZZ$, but in the process we do something that behaves like adding $m F(z)$ to the accumulator.
  
  The idea is that when we correct a configuration $\omega \mapsto (r(c(\omega)), \lambda(c(\omega)), \dots)$ for $c \in C^{t+1}(H')$ to a Host--Kra cube in $\HK^{t+1}(G')$, as in the proof of Proposition~\ref{prop:nil-is-hierarchy}, the top-level correction will be exactly one of the form $\Lambda(c,\omega) F(r(c(\omega)))$ at each vertex.

  We now define $G'$ formally. As a set, we have $G' = G \times \RR \times V \times V$.\footnote{The first copy of $V$ was not mentioned in the informal sketch above: roughly it exists because the object has to be a group, and so we need a spare copy of $V$ for $G$ to act on by translation as we move around.}  The group law of $G'$ is the somewhat complicated semi-direct product construction
  \begin{align*}
    (z, a, u, v) \ast (z', a', u', v') &= \big(z z',\, a + a',\, u+\fs(z, u') ,\, v+\fs(z,v') + a' u \big).
  \end{align*}
  The filtration on $G'$ is the following: if $\RR_i$ is taken to be $\RR$ for $i \le s+1-t$ and $\{0\}$ otherwise (i.e., $\RR$ is $\RR_{(s+1-t)}$) then
  \[
    G'_i = G_i \times \RR_i \times V_i \times V_{\max(i-s-1+t,0)} ;
  \]
  so the filtration on the last copy of $V$ has been shifted $s+1-t \ge 1$ places to the right (which is acceptable given $V$ is an abelian group).

  The filtration is not proper, as $V_0 \ne V_1$ in general.  As usual, we may consider the subgroup $G'_1$ with the filtration $G'_1 = G'_1 \supseteq G'_2 \supseteq \dots$, which is proper, and $g$ will ultimately be defined on this smaller group; however, it is convenient in passing to work on all of $G'$.
  
  We make the following technical claim.
  \begin{claim}
    This group $G'_\bullet$ is indeed a filtered group, and $G'_1$ can be made into an explicitly presented filtered group with dimension $\ll_s D^{O_s(1)}$ and complexity $\ll_s (M D)^{O_s(1)}$, with integral subgroup $\Gamma' = \Gamma \times \ZZ \times \Upsilon \times \Upsilon$.
  \end{claim}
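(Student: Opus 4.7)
The plan is to verify the three assertions of the claim---that $G'_\bullet$ is a filtered group, that $\Gamma'$ is closed under the group law, and that $G'_1$ admits an explicit presentation with the claimed dimension and complexity bounds---by direct calculation, leveraging Proposition \ref{prop:poly-space} throughout to control the action $\fs$ on $V$. First, I would verify the group axioms: associativity of $\ast$ follows from the facts that $\fs$ is a left action of $G$ on $V$ and that the cocycle $(a,u) \mapsto au$ is $\ZZ$-bilinear, and the inverse of $(z,a,u,v)$ is $(z^{-1},-a,-\fs(z^{-1},u),-\fs(z^{-1},v) + a\,\fs(z^{-1},u))$.

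Second, to check $[G'_i, G'_j] \subseteq G'_{i+j}$, I would compute $[g, g']$ componentwise for $g = (z,a,u,v) \in G'_i$, $g' = (z',a',u',v') \in G'_j$. The $G$-coordinate is handled by the filtration on $G$; the $\RR$-coordinate vanishes. The third coordinate works out to $u - \fs([z,z']z', u) + \fs(z, u') - \fs([z,z'], u')$, and each summand lies in $V_{i+j}$ by the dualized form of Proposition \ref{prop:poly-space}(v): for $\phi \in V_k$ and $w \in G_\ell$, $\phi - \fs(w,\phi) \in V_{k+\ell}$. The fourth coordinate is the delicate case. Writing $\alpha = s+1-t$, the commutator decomposes into abelianized pieces in $v, v'$ analogous to the third coordinate, the cross term $a'u - au'$, and cocycle corrections $(a+a')\bigl(\fs([z,z']z', u) - u\bigr)$, $(a+a')\bigl(\fs(z,u')-u'\bigr)$, and $a'\bigl(\fs([z,z'],u')-u'\bigr)$. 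The abelianized and cocycle pieces land in $V_{\max(i+j-\alpha,0)}$ by another application of Proposition \ref{prop:poly-space}(v). The cross term is precisely why the fourth-coordinate filtration was shifted by $\alpha$: the condition $a' \ne 0$ forces $j \le \alpha$, so $a' u \in V_i \subseteq V_{\max(i+j-\alpha, 0)}$ (the inclusion using $i \ge i+j-\alpha$ when $\alpha \ge j$), and symmetrically for $au'$.

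Third, closure of $\Gamma' = \Gamma \times \ZZ \times \Upsilon \times \Upsilon$ under $\ast$ reduces, in its only non-trivial part, to $\fs(\gamma, \Upsilon) \subseteq \Upsilon$ for $\gamma \in \Gamma$. This follows because $\Upsilon$ was defined as the $\ZZ$-dual of $\poly(\Gamma, \ZZ_{(t-1)})$: if $F \in \poly(\Gamma, \ZZ_{(t-1)})$, then $x \mapsto F(\gamma x)$ also sends $\Gamma$ to $\ZZ$, so $\langle \fs(\gamma, u), F\rangle = \langle u, \fs(\gamma, F)\rangle \in \ZZ$ for any $u \in \Upsilon$. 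The compatibility of the filtration $\Gamma'_i := \Gamma' \cap G'_i$ with the product structure is automatic, since each factor intersects each filtration level in a sublattice spanned by a subset of the distinguished basis.

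Fourth, I would construct an explicit presentation of $G'_1$ by exhibiting, at each filtration level $i \ge 1$, the following generators: the explicit generators of $\Gamma_i$ inherited from the given presentation of $G$; the element $1 \in \ZZ$ (appearing only at levels $i \le \alpha$); the basis vectors $e'_{k,j}$ for $k \ge i$ in the third coordinate; and the basis vectors $e'_{k,j}$ for $k \ge \max(i-\alpha, 0)$ in the fourth coordinate. The total dimension is $\dim \Gamma + 1 + 2\sum_k \fd'_k$, which is $\ll_s D^{O_s(1)}$ by Proposition \ref{prop:poly-space}(iv) applied to $\poly(G, \RR_{(t-1)})$. The complexity bound reduces to bounding the structure constants appearing when commutators of these generators are expanded in the basis; this in turn reduces to controlling $\|\fs_\gamma\|_{\ell^1 \to \ell^1}$ for bounded $\gamma \in \Gamma$ (supplied by Proposition \ref{prop:poly-space}(vi), giving $\ll_s (MD)^{O_s(1)}$), together with the trivial bounds on the integer cocycle $(n, e'_{k,j}) \mapsto n\, e'_{k,j}$. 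The main obstacle will be the bookkeeping for the fourth-coordinate commutator---tracking each of the seven summands and verifying that each lands in $V_{\max(i+j-\alpha,0)}$---and then propagating the norm bounds through these expressions to meet the target complexity; once that is done, the remaining checks are routine applications of Proposition \ref{prop:poly-space}.
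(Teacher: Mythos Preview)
Your approach is correct and follows essentially the same route as the paper. The only notable difference is tactical: where you compute the full commutator $[g,g']$ for arbitrary $g \in G'_i$, $g' \in G'_j$ and track seven summands in the fourth coordinate, the paper instead computes only the three nontrivial single-entry commutators
\[
[(z,0,0,0),(\id,0,u,0)], \quad [(z,0,0,0),(\id,0,0,v)], \quad [(\id,a,0,0),(\id,0,u,0)],
\]
and notes all other single-entry pairs commute. This suffices by standard commutator calculus (e.g.\ the identity $[ab,c] = [a,c]^b[b,c]$ together with normality of each $G'_k$), so the paper's version is shorter but relies on this implicit reduction. Your direct computation avoids that reliance at the cost of more bookkeeping; either is fine. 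The treatment of $\Gamma'$ and the explicit presentation is the same in both.
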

  \begin{proof}[Proof of claim]
    It is straightforward to verify that the law above does in fact define a group, and to evaluate the commutators
    \begin{align*}
      [(z, 0, 0, 0), (\id, 0, u, 0)] &= (\id, 0, u - \fs(z^{-1},u), 0) \\
      [(z, 0, 0, 0), (\id, 0, 0, v)] &= (\id, 0, 0, v - \fs(z^{-1}, u)) \\
      [(\id, a, 0, 0), (\id, 0, u, 0)] &= (\id, 0, 0, -a u)
    \end{align*}
    with other single-entry elements commuting.
    These verify that the filtration described really is a filtration.
    We can define one-parameter subgroups
    \begin{align*}
      a &\mapsto (\gamma_{i,j}^a, 0, 0, 0) &
      a &\mapsto (\id, 0, a e'_{i',j'}, 0) \\
      a &\mapsto (\id, a, 0, 0) &
      a &\mapsto (\id, 0, 0, a e'_{i',j'})
    \end{align*}
    for $1 \le i \le t-1$ and $j \in [D_i]$, or $0 \le i' \le t-1$ and $j' \in [\fd'_i]$.  It is clear (given the properties of $\gamma_{i,j}$) that any element of $G'$ is a product of such elements in a unique way.  If we omit the only instance $a \mapsto (\id, 0, a e'_{0,1}, 0)$ which does not lie in $G'_1$ from the list, this gives the subgroup $G'_1$ the structure of an explicitly presented filtered group, of degree $s$, dimension $\ll_s D^{O_s(1)}$ and complexity $\ll_s (M D)^{O_s(1)}$ as required.  The identification of the integral subgroup $\Gamma'$ is immediate.
  \end{proof}

  {\bf \noindent Construction of $g$.}  We now locate a function $r' \colon H' \to G'_1$ with the properties we require.

  Recall $H' = (\ZZ/N^{2 \kappa}\ZZ)^n$.  In general we omit explicit references to the usual quotient maps $H' \to H$ or $\ZZ^d \to H'$.
  \begin{claim}
    There is some function $v \colon H' \to V_1$, with $\|v(x)\|_1 \ll_s D^{O_s(1)}$ for each $x \in H'$, such that if we define $r' \colon H' \to G'$ by
    \[
      r'(x) = \big(r(x),\, \lambda(x),\, \ev_{r(x)} - \ev_{\id_G},\, v(x) \big)
    \]
    then $r' \bmod \Gamma' \colon H' \to G'/\Gamma'$ is a polynomial map.
  \end{claim}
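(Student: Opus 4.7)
The plan is to (i) verify that the first three coordinates of $r'$ are individually polynomial mod the corresponding integer lattices, (ii) exhibit some lift $v_0$ making the full map $r'$ polynomial mod $\Gamma'$, and (iii) replace $v_0$ pointwise by the smallest representative in its $\Upsilon$-coset to obtain the stated $\ell^1$-bound.

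For (i), three checks are needed. The $G$-component $r$ is polynomial mod $\Gamma$ by the hypothesis that $f = F \circ r$ is a nil-polynomial on $G/\Gamma$. The $\RR$-component $\lambda$ is polynomial mod $\ZZ$ of degree $s+1-t$ by hypothesis, matching the chosen $\RR$-filtration on $G'$. For the first $V$-component $z \mapsto \ev_z - \ev_{\id_G}$ I will verify polynomiality directly: Proposition \ref{prop:poly-space}(v) applied to the dual action shows that for $\gamma \in \Gamma_i$ the difference $\ev_z - \fs(\gamma, \ev_z) = \ev_z - \ev_{z\gamma}$ lies in $V_i$ by construction of the dual filtration, so iterated derivatives of $z \mapsto \ev_z$ land at the right filtration level; composition with $r$ yields a polynomial map $H' \to V$ of degree $t-1$, and integrality mod $\Upsilon$ follows because $\ev_\gamma$ pairs integrally with $\poly(\Gamma,\ZZ_{(t-1)})$ for any $\gamma \in \Gamma$.

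For (ii), I will use the explicit presentation of $G'_1$ from the preceding claim. Write $r'(x)$ as a product $\prod_{i,j} p_{i,j}^{\alpha_{i,j}(x)}$ of the one-parameter subgroups (those of the $G$-, $\RR$-, and first-$V$-factors, together with those of the last-$V$-factor), so that the coefficients in the first three blocks are determined (and polynomial of the right degrees) by step (i). Using the twist $v + \fs(z,v') + a' u$ in the group law to expand this product, one reads off that the coefficients in the last-$V$-factor must equal certain bilinear combinations of the $\lambda$-coefficients and the $\ev_r$-coefficients, plus terms absorbed into $\Upsilon$; these combinations are polynomial functions of $x$ of total degree at most $(s+1-t)+(t-1) = s$, fitting into $V_1$ (the correct filtration level, since the last-factor filtration is shifted by $s+1-t$). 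This defines a function $v_0 \colon H' \to V_1$ such that $r'$ (with $v = v_0$) is polynomial mod $\Gamma'$.

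For (iii), since the construction only pins down $v_0$ up to $\Upsilon$ pointwise, I will set $v(x)$ to be a representative of $v_0(x) + \Upsilon$ of smallest $\ell^1$-norm in the basis $\{e'_{i,j}\}$ of $V$ (restricted to $V_1$). Because $\Upsilon = \bigoplus_{i,j} \ZZ\, e'_{i,j}$ is the standard integer lattice in that basis, such a representative exists with $\|v(x)\|_1 \le \tfrac{1}{2}\dim V_1 \ll_s D^{O_s(1)}$. Replacing $v_0(x)$ by $v(x)$ multiplies the fourth coordinate of $r'(x)$ pointwise by an element of $\Upsilon \subseteq \Gamma'$, hence does not alter $r'(x) \bmod \Gamma'$, so polynomiality mod $\Gamma'$ is preserved. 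The main obstacle is the bookkeeping in step (ii): one must carefully verify, using the commutator identities recorded in the preceding claim, that the cross-term $a' u$ produces exactly the polynomial expressions claimed for the last-factor coefficients, with no higher-degree obstruction escaping into a filtration level beyond $s$.
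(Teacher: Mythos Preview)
Your step (ii) contains a genuine gap that hides the main content of the argument. You assert that the last-$V$-factor coefficients are ``bilinear combinations of the $\lambda$-coefficients and the $\ev_r$-coefficients'' which are ``polynomial functions of $x$ of total degree at most $s$''. But a formal degree count is not what is needed: you must show that these combinations, reduced modulo $\Upsilon$, define a polynomial map on the finite group $H'$. This does \emph{not} follow from the individual components being polynomial modulo their lattices. Concretely, if $\alpha(x) \bmod \ZZ$ and $\beta(x) \bmod \ZZ$ are polynomial maps $H' \to \RR/\ZZ$ of degrees $s+1-t$ and $t-1$, it is simply false in general that $\alpha(x)\beta(x) \bmod \ZZ$ is polynomial of degree $s$; this is exactly the ``cohomological obstruction'' the paper flags just before the lemma, and is the reason one is forced to pass from $H$ to the extension $H'$.

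The paper's proof handles this by first lifting $r \bmod \Gamma$ and $\lambda \bmod \ZZ$ to honest polynomial maps $p \colon \ZZ^n \to G$ and $\wt\lambda \colon \ZZ^n \to \RR$ (via Proposition~\ref{prop:z-lift}), then writing down an explicit polynomial map $p' \colon \ZZ^n \to G'$ as a product of constants and homomorphisms, and finally checking that $p' \bmod \Gamma'$ is $N^{2\kappa}$-periodic. That last check is where the real work is: one Taylor-expands $\wt\lambda$ and $\ev_{p(\cdot)^{-1}}$ and observes that their coefficients lie in $N^{-\kappa}\ZZ$, so the problematic cross-term $(\wt\lambda(x)-\wt\lambda(x+y))\,\ev_{p(x+y)^{-1}}$ lands in $\Upsilon$ precisely when $y \in N^{2\kappa}\ZZ^n$. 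Your outline never passes to $\ZZ^n$ and never invokes the $\kappa$ versus $2\kappa$ arithmetic, so it cannot succeed as written.

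A smaller point: in step (i) your justification for the third coordinate is not right. You need $\ev_{r(x)\gamma} - \ev_{r(x)} \in \Upsilon$ for $\gamma \in \Gamma$, i.e.\ $w(r(x)\gamma) - w(r(x)) \in \ZZ$ for every $w \in \poly(\Gamma,\ZZ_{(t-1)})$; but $r(x) \notin \Gamma$, and $\tau(w,\gamma)-w \in \poly(\Gamma,\ZZ)$ does not force integer values at non-integral points. The correct statement is that the pair $(r(x),\, \ev_{r(x)} - \ev_{\id_G})$ is polynomial modulo $\Gamma \ltimes \Upsilon$ as a map into the semi-direct product $G \ltimes V$, which one sees cleanly only after passing to the lift $p$ on $\ZZ^n$ as above. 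Your step (iii) is fine once (ii) is established.
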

  \begin{proof}[Proof of claim]
    Since $r \bmod \Gamma \colon H \to G/\Gamma$ is a polynomial map, we may extend it to a polynomial map $\ZZ^n \to G/\Gamma$ given by $x \mapsto r(x \bmod N^\kappa) \Gamma$.  By Proposition~\ref{prop:z-lift} we can then find a polynomial map $p \colon \ZZ^n \to G$ such that $p(x) \Gamma = r(x \bmod N^\kappa) \Gamma$ for all $x \in \ZZ^n$.

    Similarly, $\lambda \bmod \ZZ \colon H \to \RR/\ZZ$ is a polynomial map of degree $s+1-t$, so by Proposition~\ref{prop:z-lift} we may choose a polynomial map $\wt{\lambda} \colon \ZZ^n \to \RR_{(s+1-t)}$ such that $\wt{\lambda}(x) \bmod \ZZ = \lambda(x \bmod N^\kappa) \bmod \ZZ$ for all $x \in \ZZ^n$.

    We next define a polynomial map $p' \colon \ZZ^n \to G'$ by
    \begin{align*}
      p'(x) &= \big(\id_G, \wt{\lambda}(x), 0, 0\big) \ast \big(\id_G, 0, -\ev_{\id}, 0\big) \ast \big(p(x), 0, 0, 0\big) \ast \big(\id_G, 0, \ev_{\id}, 0\big) \\
      &= \big(p(x),\, \wt{\lambda}(x),\, \ev_{p(x)} - \ev_{\id_G},\, 0 \big) .
    \end{align*}
    By the first line definition, the comments in Example~\ref{ex:poly-examples}, and our hypotheses, this is a polynomial map.  By the second line, we note that $\ev_{z} - \ev_{\id_G} \in V_1$ for any $z \in G$ (since it evaluates to zero on constant functions), and hence $p'(x) \in G'_1$ for all $x$ and $p'$ is a polynomial map $\ZZ^n \to G'_1$.

    We claim that $p' \bmod \Gamma' \colon \ZZ^n \to G'_1 / \Gamma'_1$ descends to a polynomial map on $H' = (\ZZ/N^{2\kappa}\ZZ)^n$; that is, that $p'(x+y)^{-1} p'(x) \in \Gamma'$ for all $x \in \ZZ^n$ and $y \in N^{2\kappa} \ZZ^n$.
    To see this, we compute
    \[
      p'(x+y)^{-1} p'(x) = \Big( \begin{aligned}[t] &p(x+y)^{-1} p(x),\; \wt{\lambda}(x) - \wt{\lambda}(x+y),\; \ev_{p(x+y)^{-1} p(x)} - \ev_{\id_G},\\ &\; \big(\wt{\lambda}(x) - \wt{\lambda}(x+y)\big) \big(\ev_{p(x+y)^{-1}} - \ev_{\id_G}\big) \Big) . \end{aligned}
    \]
    Whenever $y \in N^\kappa \ZZ^n$ we know that $p(x+y)^{-1} p(x) \in \Gamma$, $\wt \lambda(x) - \wt \lambda(x+y) \in \ZZ$ and $\ev_{\id_G} \in \Upsilon$, so it suffices to see when $(\wt{\lambda}(x) - \wt{\lambda}(x+y)) \ev_{p(x+y)^{-1}} \in \Upsilon$.  By Proposition~\ref{prop:taylor} we may expand
    \[
      \wt{\lambda}(x) = \sum_{w(\ell) \le s+1-t} \alpha_\ell \binom{x}{\ell}
    \]
    for some $\alpha_\ell \in \RR$, where the sum is over all $n$-tuples of non-negative integers $(\ell_1,\dots,\ell_n)$ with $w(\ell) \le s+1-t$; in this case $w(\ell) = \sum_{j=1}^n \ell_j$, and $\binom{x}{\ell} = \prod_{j=1}^n \binom{x_j}{\ell_j}$ is the multivariate binomial coefficient as in~\eqref{eq:gen-binom}.
    Similarly, since $x \mapsto \ev_{p(x)^{-1}}$ is a polynomial map $\ZZ^n \to V$ it may be expanded as
    \[
      \ev_{p(x)^{-1}} = \sum_{i=0}^{t-1} \sum_{j=1}^{\fd_i} \sum_{w(k) \le t-1} \beta_{i,j,k} \binom{x}{k} e'_{i,j}
    \]
    for some $\beta_{i,j,k} \in \RR$, by Proposition~\ref{prop:taylor} applied to each coordinate of $V$.
    Provided $N > s$ (say) and so the denominators of binomial coefficients are coprime to $N$, the maps $x \mapsto \ev_{p(x)^{-1}} \bmod \Upsilon$ and $x \mapsto \wt{\lambda}(x) \bmod \ZZ$ are $N^\kappa$-periodic if and only if $\alpha_\ell, \beta_{i,j,k} \in N^{-\kappa} \ZZ$ for each $\ell$ and $i,j,k$.  It follows that if $y \in N^{2\kappa} \ZZ^n$ then $\wt{\lambda}(x) - \wt{\lambda}(x+y) \in N^{\kappa} \ZZ$, $\ev_{p(x+y)^{-1}} \in N^{-\kappa} \Upsilon$ and hence their product lies in $\Upsilon$, as required.

    We will now choose $v \colon H' \to V$, and thereby $r' \colon H' \to G'_1$ with the form above, so that $p'(x) \Gamma' = r'(x) \Gamma'$ for every $x \in H'$.  We may compute
    \[
      p'(x)^{-1} r'(x) 
      = \begin{aligned}[t] \Big(&p(x)^{-1} r(x),\; \lambda(x) - \wt \lambda(x),\; \ev_{p(x)^{-1} r(x)} - \ev_{\id_G},\; \\
        &\fs\big(p(x)^{-1}, v(x)\big) + \big(\lambda(x) - \wt \lambda(x)\big)\big(\ev_{p(x)^{-1}} - \ev_{\id_G}\big) \Big)
      \end{aligned}
    \]
    and this lies in $\Gamma'$ if and only if
    \[
      \fs\big(p(x)^{-1}, v(x)\big) \in \big(\wt \lambda(x) - \lambda(x)\big)\big(\ev_{p(x)^{-1}} - \ev_{\id_G}\big) + \Upsilon .
    \]
    Using the fact that $\fs(p(x)^{-1}, u) - u \in V_{i+1}$ whenever $u \in V_i$, and induction on $i$, it is clear that there is an unique choice of $v(x)$ for each $x$ that obeys this condition and also has all its coordinates $v(x)_{i,j} \in [0,1)$ for $0 \le i \le s$ and $j \in [\fd'_i]$.  We take this as the definition of $v(x)$, and thereby of $r'$. 

    Noting that $\ev_{p(x)^{-1}} - \ev_{\id_G} \in V_1$ again, this unique choice will necessarily have $v(x)_{=0} = 0$, so the construction implies that $v(x) \in V_1$ for all $x$.
    It is also clear that $r' \bmod \Gamma'_1 \colon H' \to G'_1 / \Gamma'_1$ is a polynomial map, since it is the same as $p' \bmod \Gamma'_1$.
  \end{proof}

  We have $d_G(\id_G, r(x)) \le M$ for all $x \in H$ by hypothesis, and therefore $\|\fs(r(x), \ev_{\id})\|_1 \ll_s (M D)^{O_s(1)}$ for each $x \in H'$ by~\eqref{eq:tau-l1-l1}.  Moreover, $|\lambda(x)|_1 \le M$ for all $x \in H$ by hypothesis, and $\|u(x)\|_1 \ll_s D^{O_s(1)}$ for all $x \in H'$.  It follows that $d_{G'}(\id_{G'}, r'(x)) \ll_s (MD)^{O_s(1)}$ for all $x \in H'$.

  We now define $F' \colon G' \to \RR_{(s)}$ by $F'(z, a, u, v) = v(F) + a F(\id_G)$, which is a polynomial map. Indeed, the second factor of $V$ carries a filtration of degree $s$, and any linear function from a degree $s$ filtered abelian group to $\RR_{(s)}$ is a polynomial map; and similarly a linear function $\RR_{(s+1-t)} \to \RR_{(s)}$ is always a polynomial map (noting $t \ge 1$).

  Hence, $g := F' \circ r' \colon H' \to \RR$ is a nil-polynomial of degree $s$ and with dimension and $\ll_s D^{O_s(1)}$ and complexity $\ll_s (D M)^{O_s(1)}$, as required.

  {\bf \noindent The derivatives condition on $g$.}  We can apply Proposition~\ref{prop:nil-is-hierarchy} to the original nil-polynomial to get a polynomial hierarchy $f''$ of degree $t-2$, dimension $\ll_s D^{O_s(1)}$ and complexity $\ll_s (D M)^{O_s(1)}$, such that $f$ obeys an $\big(f'', t,t-1, O_s(D M)^{O_s(1)}\big)$-derivatives condition.  If $t=1$ we can skip this step and take $f''$ to be the empty tuple.

  It follows in either case that the tuple $f'$ with $f'_{\le t-2} = f''$ and with $f'_{=t-1}$ the single function $f$, is again a polynomial hierarchy, having degree $t-1$, dimension $\ll_s D^{O_s(1)}$ and complexity $\ll_s (D M)^{O_s(1)}$, and of the form required by the statement.  So, it suffices to show that $g$ obeys a derivatives condition on $f'$.

  To do this we first apply Lemma~\ref{lem:nil-derivs}, or more accurately its proof, to $g$.  Given $c \in C^{s+1}(H')$, as before we can use Proposition~\ref{prop:effective-cubes} to find a configuration
  \[
    \gamma'(c,\omega) = \big(\gamma(c,\omega),\; \alpha(c; \omega),\; \beta(c,\omega),\; \theta(c,\omega) \big) \in \Gamma'
  \]
  with $d_{G'}\big(\id_{G'}, \gamma'(c,\omega)\big) \ll_s (D M)^{O_s(1)}$, such that $\omega \mapsto r(c(\omega)) \gamma'(c(\omega))$ lies in $\HK^{s+1}(G')$.  Given that $r(c(\omega)) \in G \times \RR \times V_1 \times V_1$ by construction, we can further insist that $\beta, \theta \in V_1$.  Moreover, since the configuration $\omega \mapsto \lambda(c(\omega)) - \Lambda(c,\omega)$ lies in $\HK^{s+1}(\RR_{(s+1-t)})$, by Remark~\ref{rem:normal-form}, multiplying on the right by $\big(\id_G, -\alpha(c,\omega) - \Lambda(c,\omega), 0,0\big)$ if necessary we can assume that $\alpha = -\Lambda$. Note this does not significantly change the bound on $d_{G'}\big(\id_{G'}, \gamma'(c,\omega)\big)$.
  
  Then as before we have that
  \[
    \partial^{s+1} g(c) = \sum_{\omega \in \llbracket s+1 \rrbracket} (-1)^{|\omega|} \Big[ F'\big(r(c(\omega))\big) - F'\big(r(c(\omega)) \gamma'(c,\omega)\big) \Big]
  \]
  and we can further compute
  \[
    F'\big(r(c(\omega)) \gamma'(c,\omega)\big) = \begin{aligned}[t] &\big(\lambda(c(\omega)) - \Lambda(c,\omega)\big) F(\id_G) \\
      +& \Big(v(c(\omega)) + \fs\big(r(c(\omega)), \theta(c,\omega)\big) - \Lambda(c,\omega) \big(\ev_{r(c(\omega))} - \ev_{\id_G}\big) \Big)(F)
    \end{aligned}
  \]
  and so
  \[
    F'\big(r(c(\omega))\big) - F'\big(r(c(\omega)) \gamma'(c,\omega)\big) = \Lambda(c,\omega) F(r(c(\omega))) - \theta(c,\omega)\big(\fs(r(c(\omega)),F)\big) .
  \]
  The first term is precisely $\Lambda(c,\omega) f(c(\omega))$, as desired.  For the second term, we can certainly expand
  \[
    F = \sum_{i=0}^{t-1} \sum_{j=1}^{\fd_{i}} \xi_{i,j} e_{i,j}
  \]
  for real numbers $\xi_{i,j}$, as in Proposition~\ref{prop:nil-is-hierarchy}(i), and
  \[
    \theta(c,\omega) = \sum_{i'=1}^{t-1} \sum_{j'=1}^{\fd'_{i'}} \theta(c,\omega)_{i',j'} e'_{i',j'}
  \]
  in the basis for $V_1$ (recalling $\theta(c,\omega) \in V_1 \cap \Upsilon$). Hence, to understand the functions $x \mapsto \theta(c,\omega)\big(\fs(r(x), F)\big)$ it suffices to understand the maps $G \to \RR$ of the form $z \mapsto e'_{i',j'} \big(\fs(z, e_{i,j})\big)$. In other words, these are the matrix coefficients of the left translation action of $G$ on $\poly\big(G, \RR_{(s)}\big)$, as a function of $z \in G$.
  
  Such functions do turn out to be polynomial maps $G \to \RR$ (of degree $i-i'$), and so can in turn be expressed as a linear combination of our standard basis vectors, with small integer coefficients.  However, this statement requires a small amount of theory to prove.
  \begin{claim}
    For any $i,j$ and $i',j'$, the map $\phi(z) = \fs(z, e'_{i',j'})(e_{i,j}) = e'_{i',j'}(\fs(z, e_{i,j}))$ is a polynomial  map $G \to \RR_{(i-i')}$, and admits a decomposition
    \[
      \phi = \sum_{i_1=0}^{i-i'} \sum_{j_1=1}^{\fd_{j_1}} p_{i_1,j_1}^{\alpha_{i_1,j_1}}
    \]
    where $\alpha_{i_1,j_1}$ are integers \uppar{depending on $i,j$ and $i',j'$} with $\sum_{i_1,j_1} |\alpha_{i_1,j_1}| \ll_{s} (D M)^{O_s(1)}$.
  \end{claim}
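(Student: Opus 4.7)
The approach is to analyze $\fs(z, e_{i,j})$ as a polynomial-valued function of $z$. First I would expand $\fs(z, e_{i,j}) = \sum_{k,l} c_{k,l}(z) e_{k,l}$ in the given basis and use Proposition \ref{prop:poly-space}(v) (in additive form via Remark \ref{rem:poly-space-ab}) to establish two facts: since $\fs(z, e_{i,j}) \in \poly(G, \RR_{(t-1)})_i$ for every $z$, we have $c_{k,l} \equiv 0$ when $k < i$; and for any $z_0 \in G_m$, the difference $\fs(z z_0, e_{i,j}) - \fs(z, e_{i,j})$ lies in $\poly(G, \RR_{(t-1)})_{i+m}$, so the coefficient $c_{k,l}$ has a vanishing discrete derivative in the direction of $z_0$ whenever $m > k - i$. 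This is exactly the filtration condition saying each $c_{k,l}$ is a polynomial map $G \to \RR$ of degree at most $k - i$. Taking $k = t-1-i'$ and $l = j'$ identifies $\phi$ with $c_{t-1-i', j'}$, and Proposition \ref{prop:poly-space}(iv) then supplies the required expansion in the basis $\{p_{i_1, j_1}\}$ with $i_1$ constrained by the degree bound.

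For the integrality of the coefficients $\alpha_{i_1, j_1}$, I would use the criterion in Proposition \ref{prop:poly-space}(iv) that integrality of coefficients corresponds to $\phi$ taking integer values on $\Gamma$. For $\gamma \in \Gamma$ and any $\gamma' \in \Gamma$, we have $\fs(\gamma, e_{i,j})(\gamma') = e_{i,j}(\gamma \gamma') \in \ZZ$ because $\Gamma$ is closed under multiplication and $e_{i,j} \in \poly(\Gamma, \ZZ_{(t-1)})$. Thus $\fs(\gamma, e_{i,j}) \in \poly(\Gamma, \ZZ_{(t-1)})$, and combined with $e'_{i',j'} \in \Upsilon$ (by definition of $\Upsilon$ as the dual lattice to $\poly(\Gamma, \ZZ_{(t-1)})$), this gives $\phi(\gamma) = e'_{i',j'}(\fs(\gamma, e_{i,j})) \in \ZZ$.

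For the $\ell^1$-bound on the coefficients, my plan is to invoke Proposition \ref{prop:poly-space}(vii), which reduces the estimate to controlling $|\phi(\gamma)|$ uniformly over $\gamma \in \Gamma$ with $d_G(\id_G, \gamma) \leq R$ for some $R = O_s(1)$. Since $e'_{i',j'}$ has $\ell^1$-norm one in the chosen basis on $V$, we have $|\phi(\gamma)| \leq \|\fs(\gamma, e_{i,j})\|_1$ in the parallel $\ell^1$-norm on $\poly(G, \RR_{(t-1)})$. Applying the operator-norm estimate analogous to \eqref{eq:tau-l1-l1} (which follows from Proposition \ref{prop:poly-space}(vi) applied to the action of $G$ on $\poly(G, \RR_{(t-1)})$ itself, rather than its dual) yields $|\phi(\gamma)| \ll_s (DM)^{O_s(1)} (1 + d_G(\id_G, \gamma))^{O_s(1)}$, which for $d_G(\id_G, \gamma) \leq R$ gives the desired $(DM)^{O_s(1)}$ bound. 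The main bookkeeping obstacle throughout is keeping the two dual bases $\{e_{k,l}\}$ and $\{e'_{i,j}\}$ aligned with their respective filtration shifts, so that $\phi$ is correctly identified as a single coefficient $c_{k,l}$ of degree $k-i$; once that identification is in place, the three statements of the claim follow almost mechanically from the corresponding parts of Proposition \ref{prop:poly-space}.
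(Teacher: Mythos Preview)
Your approach is correct and essentially the same as the paper's: both use Proposition~\ref{prop:poly-space}(v) for the polynomial-map property, $\phi(\Gamma)\subseteq\ZZ$ for integrality, and part~(vii) together with part~(vi) for the $\ell^1$ bound. The paper packages the first step as computing $\partial_{h_1}\cdots\partial_{h_k} e_{i,j}$ directly and then applying $e'_{i',j'}$, rather than expanding in the basis first; note that your single-derivative observation (that $c_{k,l}(zz_0)-c_{k,l}(z)$ vanishes for $z_0\in G_m$ with $m>k-i$) needs to be iterated to actually yield the degree bound, which works because repeated right-differencing of $z\mapsto\fs(z,e_{i,j})$ amounts to post-composing with products of operators $1-\fs(z_r)$ and hence lands in $\poly_{i+m_1+\cdots+m_r}$ by repeated use of (v).
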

  \begin{proof}[Proof of claim]
    For any $h_1 \in G_{i_1},\dots,h_k \in G_{i_k}$ the function $p' = \partial_{h_1} \dots \partial_{h_k} e_{i,j}$ is in $\poly\big(G, \RR_{(i - i_1 - \cdots - i_k)}\big)$ by Proposition~\ref{prop:poly-equiv}, and hence so is $\fs(z, p')$ for any $z \in G$.  By definition of $V_{i'}$, $e'_{i',j'}$ vanishes on $\poly\big(G, \RR_{(i'-1)}\big)$, and so $e'_{i',j'}(\fs(z,e_{i,j}))$ is zero whenever $i_1+\cdots+i_k \ge i-i'+1$. By Proposition~\ref{prop:poly-equiv} again this proves that $\phi$ is a polynomial map of degree $i-i'$.

    It is clear $\phi(\Gamma) \subseteq \ZZ$.  Decomposing in the basis $\{e_{i_1,j_1}\}$ as in the statement, it follows that $\alpha_{i_1,j_1}$ are integers.

    Since the values $\|\phi(z)\|_1$ for $z \in \Gamma$ with $d_{G}(\id_G, \gamma) \le R$ are $\ll_{s} (R D M)^{O_{s}(1)}$---this is equivalent to Proposition~\ref{prop:poly-space}(vi)---we conclude by Proposition~\ref{prop:poly-space}(vii) that $\sum_{i_1,j_1} |\alpha_{i_1,j_1}| \ll_{s} (D M)^{O_s(1)}$.
  \end{proof}
  
  Hence, each value $\theta(c,\omega)(\fs(r(x), F))$ has the form $\sum_{i,j} b_{i,j} f_{i,j}(x)$, where $f_{i,j}$ are the functions in Proposition~\ref{prop:nil-is-hierarchy} and $b_{i,j}$ are integers with $\|b\| \ll_s (D M)^{O_s(1)}$.  This proves the required derivatives condition on $g$, and so completes the proof of Lemma~\ref{lem:poly-construction}.
\end{proof}

The proof of Lemma~\ref{lem:extension-fix} has many similar elements, but slightly less involved.

\begin{proof}[Proof of Lemma~\ref{lem:extension-fix}]
  Let $G_\bullet$, $r \colon H' \to G$ and $F \colon G \to \RR_{(s)}$ be the data associated to the nil-polynomial $g' \colon H' \to \RR$; so in particular $G_\bullet$ is an explicitly presented flitered group of degree $s$, dimension $D$ and complexity $M$.
  
  We consider $G' = \poly(\ZZ^n, G_\bullet)$ where $\ZZ^n$ carries the standard filtration.  Again this is a special case of the set-up of Proposition~\ref{prop:poly-space}, and the conclusions apply.
  
  In particular we recall that if $p \in G'$ and $t \in \RR^n$ then $\tau(p,t)$ denotes the shift $x \mapsto p(x+t)$.  (Since $\ZZ^n$ is abelian we need not distinguish between a left and a right action.)  We can therefore define a semi-direct product $\wt{G} = \RR^n \ltimes G'$ accordingly; i.e., $\wt{G}$ consists of pairs $\RR^n \times G'$ with the group law
  \[
    (t, p) \ast (t', p') = (t + t',\; \tau(p, t') p') .
  \]
  We give this the filtration $\wt{G}_i = (\RR^n)_i \ltimes \poly\big(\RR^n, G_\bullet\big)_i$, where $(\RR^n)_i$ is $\RR^n$ if $i=0,1$ and zero otherwise (i.e.\ the standard filtration).  By Proposition~\ref{prop:poly-space}(v) this is indeed a filtration, although again it is not proper.  Moreover, taking the one-parameter subgroups $(0, p_{i,j}^a)$ as in Proposition~\ref{prop:poly-space}(iv) together with $a \mapsto (a e_j, \id)$ for each basis vector $e_j$ of $\RR^n$ ($1 \le j \le n$), the subgroup $\wt{G}_1$ is an explicitly presented filtered group of degree $s$, dimension $\ll_s n^{O_s(1)} D$ and complexity $\ll_s (D M n)^{O_s(1)}$.  The integral subgroup is identified with $\ZZ^n \ltimes \poly(\ZZ^n, \Gamma)$.

  We now construct maps $\wt{r} \colon H \to \wt{G}_1$ and $\wt{F} \colon \wt{G}_1 \to \RR_{(s)}$ such that $g = \wt{F} \circ \wt{r}$ becomes a nil-polynomial, for $g$ as in the statement.

  First note that, since $r \bmod \Gamma \colon H' \to G/\Gamma$ is a polynomial map, we can define a polynomial map $\ZZ^n \to G/\Gamma$ (by $x \mapsto r(x \bmod N^\kappa) \Gamma$), and by Proposition~\ref{prop:z-lift} we can find a polynomial map $p' \colon \ZZ^n \to G$ such that $p'(x) \Gamma = r(x \bmod N^\kappa) \Gamma$ for all $x \in \ZZ^n$.

  We can extend $p'$ uniquely to a polynomial map $\RR^n \to G$ (Proposition~\ref{prop:poly-space}(iii)) and rescale to get $p \colon \RR^n \to G$ given by $p(x) = p'(N^\kappa x)$.  Finally, using Proposition~\ref{prop:fun-dom} and Proposition~\ref{prop:poly-space}(iv) again, we write $p = P p_0$ where $p_0 \in \poly(\ZZ^n, \Gamma)$ and $\sigma_{G'}(P) \in [0,1)^{\dim(G')}$ lies in the fundamental domain (see Definition~\ref{def:explicit-group} concerning the coordinate map $\sigma_{G'}$). 

  We define a map $\wt{p} \colon \ZZ^n \to \wt{G}$ by
  \[
    \wt{p}(x) = (0, p_0) \ast \big(x/N^\kappa, \id_{G'}\big) \ast \big(0, p_0^{-1}\big) = \big(x/N,\; \tau(p_0,x/N)\, p_0^{-1}\big) ;
  \]
  this is a polynomial map by the comments in Example~\ref{ex:poly-examples} (in particular, constant functions and suitable group homomorphism are polynomial maps).  Moreover by Proposition~\ref{prop:poly-space}(v), $\wt{p}(x) \in \wt{G}_1$ for all $x$, so the same expression defines a polynomial map $\ZZ^n \to \wt{G}_1$.  Finally note
  \[
    \wt{p}(x) \wt{\Gamma} = (0, p_0) \ast \big(x/N^\kappa, \id_{G'}\big) \wt{\Gamma}
  \]
  which is unchanged under replacing $x$ by $x + N^\kappa \lambda$ for any $\lambda \in \ZZ^n$; i.e., $\wt{p}(x) \bmod \wt{\Gamma}$ descends to a polynomial map $H \to \wt{G} / \wt{\Gamma}$ or $\wt{G}_1 / \wt{\Gamma_1}$.

  We define $\wt{F} \colon \wt{G} \to \RR_{(s)}$ by setting $F'(t, \rho) = F(\rho(0))$ (which is a polynomial map) and then $\widetilde{F}(y) = F'((0, P) \ast y)$.

  Finally we define $\wt{r}$ as follows.  For any $g \in G$ let $\{x\}_{G}$ denote the element of the fundamental domain (in the sense of Proposition~\ref{prop:fun-dom}) with $\{x\}_G \Gamma = x \Gamma$, and define $\{\cdot\}_{G'}$ similarly.  Note $\{\rho\}_{G'}(0) = \{\rho(0)\}_{G}$.  Then for $x \in \{0,\dots,N^\kappa-1\}^n$ we set
  \[
    \wt{r}(x) = \big(x/N^\kappa,\; \{\tau(p_0, x/N^\kappa)\}_{G'}\; \{p_0(x/N^\kappa)\}_G^{-1}\; P(x/N^\kappa)^{-1}\; r(x)\big)
  \]
  where we allow elements of $G$ to appear as constant functions $\ZZ^n \to G$ in $\poly(\ZZ^n, G)$.
  It follows from our various bounds and the triangle inequality that $d_{\wt{G}}(\id, \wt{r}(x)) \ll_s (n M D)^{O_s(1)}$ (using the fact that $d_G(\id_G, P(t)) \ll_s D$ whenever $t \in [0,1)$).  Also, for all such $x$, 
  \[
    (0, P) \ast \wt{r}(x) = \big(x/N^\kappa,\; \tau(P,x/N^\kappa)\; \{\tau(p_0,x/N^\kappa)\}_{G'}\ \{p_0(x/N^\kappa)\}_G^{-1}\ P(x/N^\kappa)^{-1}\ r(x)\big)
  \]
  and hence
  \[
    \wt{F}(\wt{r}(x)) = F\big(P(x/N^\kappa)\;\{p_0(x/N^\kappa)\}_G \;\{p_0(x/N^\kappa)\}_G^{-1}\; P(x/N^\kappa)^{-1} r(x)\big) = F(r(x))
  \]
  as required.

  Finally we verify that $\wt{r}(x) \wt{\Gamma} = \wt{p}(x) \wt{\Gamma}$ for every $x \in \{0,\dots,N^\kappa-1\}^{n}$; as we already know the latter defines a polynomial map $H \to \wt{G}_1/\wt{\Gamma}_1$, this complete the proof that $\wt{F} \circ \wt{r}$ is a nil-polynomial.  We recall that $r(x) \Gamma = p(x/N^\kappa) \Gamma = P(x/N^\kappa) p_0(x/N^\kappa) \Gamma$ for all $x$, and hence
  \[
    p_0(x/N^\kappa)^{-1} \, P(x/N^\kappa)^{-1} \, r(x) \in \Gamma . 
  \]
  By definition $\{p_0(x/N^\kappa)\}_{G} \Gamma = p_0(x/N^\kappa) \Gamma$, so
  \[
    \wt{r}(x) \wt{\Gamma} = \big(x/N^\kappa,\; \{\tau(p_0,x/N^\kappa)\}_{G'} \big) \wt{\Gamma} = \big(x/N^\kappa,\; \tau(x/N^\kappa, p_0)\, p_0^{-1}\big) \wt{\Gamma}
  \]
  since $(0, p_0) \in \wt{\Gamma}$.  But the right hand side matches $\wt p(x) \wt \Gamma$, as required, and this completes the proof.
\end{proof}

This completes the proof of Theorem~\ref{thm:poly-structure}.  Combining this with Corollary~\ref{cor:poly-hierarchy-everything} completes the proof of Theorem~\ref{thm:1pc}.

\section{Deduction of the inverse theorem}%
\label{sec:deduction}

We now prove Theorem~\ref{thm:main}. As stated above, the most significant ingredient is Theorem~\ref{thm:1pc}.

The argument deducing Theorem~\ref{thm:main} from Theorem~\ref{thm:1pc} is very much related to the original work of Gowers~\cite{gowers-kap}, and in the special case $s=2$ to~\cite{green-tao-u3}; the recent~\cite{gowers-milicevic} also employs a similar strategy in this phase.  The approach taken in~\cite{gtz} is rather different.

However, there are some technical differences between what we do here and what is done in all these other works.  Notably,~\cite{gowers-kap} and~\cite{gowers-milicevic} locate an object with approximate multilinear structure, in a suitable sense, whereas we work with approximate polynomial structure in the sense of Theorem~\ref{thm:1pc}.  This means we require a new Cauchy--Schwarz argument to locate approximate polynomial structure in Proposition~\ref{prop:norm-inequality} below, which is not entirely straightforward.

Another difference concerns the method for passing from this approximate polynomial object back to the required nilsequence.  The other works employ a ``symmetry argument'' and an anti-differentiation step, both of which seem hard to apply in this setting.  Here we instead use a sampling method and yet another Cauchy--Schwarz argument; see Section~\ref{subsec:antidiff}.

We also have to take more care to avoid poor bounds in the final step, where we apply the inverse theorem inductively to a lower-degree object.  A direct approach would give a final bound involving $O(s)$ iterated exponentials. To avoid this, we must maintain global rather than local control of certain functions, using a partition of unity argument.  This significantly adds to the work in Section~\ref{subsec:hs-nil}.

\subsection{Identifying a local polynomial object}

\newcommand{\fsig}[0]{\wt{\cS}}

For this subsection we let $H$ be any finite abelian group.  Suppose $f \colon H \to \CC$ is some function.

We define a function
\begin{align*}
  \cS_s f \colon H^s &\to \CC \\
  (h_1,\dots,h_s) &\mapsto \EE_{x \in H} \prod_{\omega \in \llbracket s \rrbracket} \cC^{|\omega|} f(x + \omega \cdot \vec{h})
\end{align*}
where $\cC$ is the complex conjugation operator ($\cC^m(f) = \overline{f}$ if $m$ is odd and $f$ if $m$ is even), $|\omega|$ denotes $\sum_{i=1}^s \omega_i$, and we write $\vec h = (h_1,\dots,h_s)$ and $\omega \cdot \vec h = \sum_{i \in \omega} h_i$.

We also define
\begin{align*}
  \fsig_s f \colon H^{s-1} \times \widehat{H} &\to \CC \\
  (h_1,\dots,h_{s-1},\chi) &\mapsto \EE_{h}\ \chi(-h) \cS_s f(h_1,\dots,h_{s-1},h) ,
\end{align*}
the Fourier transform of $\cS_s f$ in its last argument (although it doesn't matter which argument, as $\cS_s f$ is symmetric in $h_1,\dots,h_s$).

We record some basic properties of $\fsig_s$ (see~\cite{gowers-kap}).
\begin{proposition}%
  \label{prop:sig-facts}
  For any $s \ge 1$ and any function $f \colon H \to \CC$ with $\|f\|_\infty \le 1$ and $\fsig_s f$ defined as above, the following hold:
  \begin{enumerate}
    \item\label{item:sig-facts-1} for any $h_1,\dots,h_{s-1} \in H$ and $\chi \in \widehat{H}$, the value $\fsig_s f(h_1,\dots,h_{s-1},\chi)$ is real and non-negative;
    \item\label{item:sig-facts-2} for any fixed $h_1,\dots,h_{s-1} \in H$, the sum
      \[
        \sum_{\chi \in \widehat{H}} \fsig_s f(h_1,\dots,h_{s-1},\chi)
      \]
      lies in $[0,1]$;
    \item\label{item:sig-facts-3} there is an identity
      \[
        \EE_{h_1,\dots,h_{s-1} \in H} \sum_{\chi \in \widehat{H}} \fsig_s f(h_1,\dots,h_{s-1},\chi)^2 = \| f\|_{U^{s+1}}^{2^{s+1}} .
      \]
  \end{enumerate}
\end{proposition}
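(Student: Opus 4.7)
The plan is to recognize that $\widetilde{\cS}_s f(h_1,\dots,h_{s-1},\chi)$ is essentially the squared magnitude of the Fourier transform of a certain auxiliary function, after which all three parts follow from standard Fourier identities (Parseval and the Wiener--Khinchin description of autocorrelations). Concretely, I would fix $\vec{h}=(h_1,\dots,h_{s-1}) \in H^{s-1}$ and define the auxiliary function
\[
  g_{\vec h}(x) \;=\; \prod_{\omega' \in \llbracket s-1 \rrbracket} \cC^{|\omega'|} f\bigl(x + \omega' \cdot \vec h\bigr),
\]
so that factoring the cube $\llbracket s \rrbracket = \llbracket s-1 \rrbracket \times \{0,1\}$ in the definition of $\cS_s f$ yields
\[
  \cS_s f(h_1,\dots,h_{s-1},h_s) \;=\; \EE_{x \in H}\, g_{\vec h}(x)\, \overline{g_{\vec h}(x+h_s)},
\]
i.e.\ the autocorrelation of $g_{\vec h}$ at lag $h_s$. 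Taking the Fourier transform in $h_s$ (as in the definition of $\widetilde{\cS}_s f$) and a standard change of variables $y = x + h_s$ then gives
\[
  \widetilde{\cS}_s f(\vec h,\chi) \;=\; \bigl|\widehat{g_{\vec h}}(\chi)\bigr|^2,
\]
for a suitable convention of the Fourier transform $\widehat{g_{\vec h}}$ on $H$. Part (i) is then immediate.

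For part (ii), Parseval on $H$ gives
\[
  \sum_{\chi \in \widehat H} \widetilde{\cS}_s f(\vec h,\chi) \;=\; \sum_\chi \bigl|\widehat{g_{\vec h}}(\chi)\bigr|^2 \;=\; \EE_{x}\,|g_{\vec h}(x)|^2,
\]
which lies in $[0,1]$ since $|g_{\vec h}(x)| \le \prod_{\omega'} \|f\|_\infty \le 1$.

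For part (iii), squaring the identity $\widetilde{\cS}_s f(\vec h,\chi) = |\widehat{g_{\vec h}}(\chi)|^2$ and summing, one uses the classical identity $\sum_\chi |\widehat{g}(\chi)|^4 = \|g\|_{U^2(H)}^4$ (proved by expanding the fourth power, summing characters via $\sum_\chi \chi(y) = |H|\mathbf{1}_{y=0}$, and parametrizing the constraint $x_1 - x_2 + x_3 - x_4 = 0$ by $(x,k_1,k_2) \mapsto (x, x+k_1, x+k_1+k_2, x+k_2)$). Thus
\[
  \sum_\chi \widetilde{\cS}_s f(\vec h,\chi)^2 \;=\; \|g_{\vec h}\|_{U^2(H)}^4 \;=\; \EE_{x,k_1,k_2}\prod_{(\sigma_1,\sigma_2) \in \llbracket 2 \rrbracket} \cC^{\sigma_1+\sigma_2} g_{\vec h}\bigl(x + \sigma_1 k_1 + \sigma_2 k_2\bigr).
\]
Finally, substituting the definition of $g_{\vec h}$ and averaging over $\vec h$, the product over $\omega' \in \llbracket s-1 \rrbracket$ (inside each copy of $g_{\vec h}$) combines with the product over $(\sigma_1,\sigma_2) \in \llbracket 2 \rrbracket$ into a single product over $\omega = (\omega',\sigma_1,\sigma_2) \in \llbracket s+1 \rrbracket$, with the conjugation exponents matching $|\omega|$. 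Renaming $k_1,k_2$ to $h_s,h_{s+1}$ identifies the result with the standard formula $\|f\|_{U^{s+1}}^{2^{s+1}} = \EE_{x,h_1,\dots,h_{s+1}} \prod_\omega \cC^{|\omega|} f(x + \omega \cdot \vec h)$.

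The only genuinely delicate step is the combinatorial bookkeeping in part (iii) that matches the cube $\llbracket s-1 \rrbracket \times \llbracket 2 \rrbracket$ with $\llbracket s+1 \rrbracket$ with the correct parities $|\omega|$; everything else is a direct application of Parseval and the autocorrelation/Fourier duality.
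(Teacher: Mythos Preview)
Your proposal is correct and follows essentially the same approach as the paper: both identify $\widetilde{\cS}_s f(\vec h,\chi)$ as $|\widehat{g_{\vec h}}(\chi)|^2$ for the auxiliary function $g_{\vec h}$ (the paper does this computation inline without naming $g_{\vec h}$), and then part (ii) is Parseval/Fourier inversion and part (iii) is Parseval applied to $\cS_s f$ in the last variable followed by the same cube-merging expansion you describe.
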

\begin{proof}
  Writing $\vec h = (h_1,\dots,h_{s-1})$, part (i) follows from the identity
  \begin{align*}
    \fsig_s f(\vec h,\chi) &= \EE_{h, x \in H}\ \chi(-h) \prod_{\omega \in \llbracket s \rrbracket} \cC^{|\omega|} f\big(x + \omega \cdot (\vec h,h)\big) \\
    &= \EE_{h, x \in H}\ \chi(-h) \prod_{\omega \in \llbracket s-1 \rrbracket} \cC^{|\omega|} f\big(x + \omega \cdot \vec h\big) \prod_{\omega \in \llbracket s-1 \rrbracket} \overline{\cC^{|\omega|} f\big(x + h + \omega \cdot \vec h\big)}  \\
    &= \left| \EE_{x \in H}\ \chi(x) \prod_{\omega \in \llbracket s-1 \rrbracket} \cC^{|\omega|} f\big(x + \omega \cdot \vec h \big)  \right|^2 .
  \end{align*}
  For part (ii), note by Fourier inversion that
  \[
    \sum_{\chi \in \widehat{H}} \fsig_s f(h_1,\dots,h_{s-1},\chi) = \cS_s f(h_1,\dots,h_{s-1},0) = \EE_{x \in H}   \prod_{\omega \in \llbracket s \rrbracket} \cC^{|\omega|} f(x + \omega \cdot (h_1,\dots,h_{s-1},0)) 
  \]
  and as the right hand side is an average of complex numbers of absolute value at most $1$, it is bounded by $1$.

  For (iii), by Parseval's identity,
  \[
    \EE_{h_1,\dots,h_{s-1} \in H} \sum_{\chi \in \widehat{H}} \left| \fsig_s f(h_1,\dots,h_{s-1},\chi) \right|^2 = \EE_{h_1,\dots,h_s} \left| \cS_s f (h_1,\dots,h_s)\right|^2
  \]
  and the right hand side can be expanded as
  \begin{align*}
    &\phantom{=} \EE_{h_1\dots,h_s} \EE_{x,y} \prod_{\omega \in \llbracket s \rrbracket} \cC^{|\omega|} f(x + \omega \cdot \vec h) \prod_{\omega \in \llbracket s \rrbracket}  \overline{\cC^{|\omega|} f(y + \omega \cdot \vec h)} \\
    &= \EE_x \EE_{h_1,\dots,h_{s+1}} \prod_{\omega \in \llbracket s+1 \rrbracket}\cC^{|\omega|} f\big(x + \omega \cdot (h_1,\dots,h_{s+1}) \big) 
  \end{align*}
  which is exactly the definition of $\|f\|_{U^{s+1}}^{2^{s+1}}$.
\end{proof}

The intuition when considering $\fsig_s f$ is as follows.  If $f(x)$ were actually a phase polynomial of degree $s$ such as $f(x) = e(\alpha x^s)$ (where $H = \ZZ/N\ZZ$ and $\alpha \in \frac1N \ZZ$) then the repeated multiplicative derivative $\cS_s f$ is given by
\[
  \prod_{\omega \in \llbracket s \rrbracket} \cC^{|\omega|} f(x + \omega \cdot \vec{h}) = e(s! \,\alpha\, h_1 \dots h_s)
\]
which is a multilinear function in $h_1,\dots,h_s$.  Specifically, for $h_1,\dots,h_{s-1}$ fixed, this is a character in the variable $h_{s}$ with frequency $s!\, \alpha\, h_1 \dots h_{s-1}$, so when we take Fourier transform in $h_{s}$ we find that
\[
  \fsig_s f(h_1,\dots,h_{s-1}, \chi) = \begin{cases} 1 &\colon \chi = \big(t \mapsto e(s!\, \alpha\, h_1 \dots h_{s-1}\, t) \big) \\ 0 &\colon \text{otherwise.} \end{cases}
\]
So, $\fsig_s f$ is the graph of a multilinear function $H^{s-1} \to \widehat{H}$.

When $f$ is not a phase polynomial but merely has large $U^{s+1}$-norm, some weak analogues of these facts are preserved.  Specifically, $\fsig_s f$ is large on the graph of some ``$1\%$ multilinear'' function $H^{s-1} \to \widehat{H}$.  This is the approach taken in~\cite{gowers-kap, gowers-milicevic}, for some suitable precise notion of ``$1\%$ multilinear''.

Note that a multilinear function $H^{s-1} \to \widehat{H}$ is a special case of a polynomial function $H^{s-1} \to \widehat{H}$ of degree $s-1$.  So, an alternative but related approach, we will follow here, is instead to show that $\fsig_s f$ looks like the graph of a $1\%$ polynomial function in the sense made precise above.

Specifically, we show the following.
\begin{proposition}%
  \label{prop:norm-inequality}
  Let $f \colon H \to \CC$ be any function with $\|f\|_{\infty} \le 1$, and let $\fsig_s f \colon H^{s-1} \times \widehat{H} \to [0,1]$ be as above.  Let $\tau \colon H^{s-1} \times \widehat{H} \to [0,1]$ be any function with $\sum_{\chi} \tau(h, \chi) \le 1$ for every $h \in H^{s-1}$, and such that the inner product $\langle \fsig_s f, \tau \rangle  = \EE_{h \in H^{s-1}} \sum_{\chi \in \widehat H} \fsig_s f(h,\chi) \tau(h,\chi)$ is at least $\delta$.

  Then $\tau$ respects many $s$-cube configurations on $H^{s-1}$, in the following sense:
  \[
    \EE_{a, t_1,\dots,t_{s} \in H^{s-1}} \sum_{\substack{(\chi_\omega)_{\omega \in \llbracket s \rrbracket} \in \widehat{H} \\ \sum_{\omega \in \llbracket s \rrbracket} \chi_\omega = 0}}\  \prod_{\omega \in \{0,1\}^s} \tau \big(a + \omega \cdot (t_1,\dots,t_s), \chi_\omega\big) \ge \delta^{M}
  \]
where\footnote{This value is certainly not best possible, although it is unclear by how much. In general we will not worry too much about optimizing constants that depend only on $s$.} $M = 2^{s + 2^{s-1}}$.
\end{proposition}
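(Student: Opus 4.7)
The plan is to move everything to ``physical space'' via Parseval and then execute a Gowers-style chain of Cauchy--Schwarz inequalities in the $\vec h$-variables.  First I would rewrite both the hypothesis and the conclusion as Fourier-dual expressions.  On the one hand, using the identity $\widetilde\cS_s f(\vec h,\chi) = |\widehat{g_{\vec h}}(\chi)|^2$ noted in the proof of Proposition~\ref{prop:sig-facts}, and expanding $|\widehat{g_{\vec h}}(\chi)|^2 = \EE_{x,u} g_{\vec h}(x)\overline{g_{\vec h}(x-u)}\chi(u)$, the hypothesis becomes
\[
  \delta \le \EE_{\vec h, x, u}\, g_{\vec h}(x)\,\overline{g_{\vec h}(x-u)}\, T(\vec h, u),
\]
where $T(\vec h,u) := \sum_\chi \tau(\vec h,\chi)\chi(u)$ has $|T|\le 1$.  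Crucially, the product $g_{\vec h}(x)\overline{g_{\vec h}(x-u)}$ unfolds to $\prod_{\omega\in\llbracket s\rrbracket}\cC^{|\omega|}f(x+\omega\cdot(\vec h,-u))$, so setting $\vec H=(\vec h,-u)\in H^s$ the hypothesis becomes a weighted Gowers-type average
\[
  \delta \le \EE_{\vec H, x}\, \Big[\prod_{\omega\in\llbracket s\rrbracket}\cC^{|\omega|}f(x+\omega\cdot \vec H)\Big]\, T'(\vec H),
\]
with $T'(\vec H)=T(\vec h,-v)$, $|T'|\le 1$.  On the conclusion side, the orthogonality identity $[\sum_\omega\chi_\omega=0]=\EE_x\prod_\omega\chi_\omega(x)$ rewrites the LHS as $\EE_x\EE_{a,\vec t\in H^{s-1}}\prod_\omega\hat\tau(a+\omega\cdot\vec t, x)$, i.e.\ an average of signless $s$-cubes of $\hat\tau(\cdot,x)$ over $H^{s-1}$.

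Next I would execute $s$ Cauchy--Schwarz doubling steps, one per coordinate of $\vec H$.  Each such step introduces a new step-variable $\vec T_i$ (and doubles the role of $T'$ into a pair on opposite faces of a cube in direction $i$), while turning the $f$-factor into a richer $U^{s+1}$-cube of $f$.  After all $s$ doublings we obtain an inequality of the form
\[
  \delta^{2^s} \le \EE_{\vec H_0,\vec T_1,\dots,\vec T_s}\ \Big(\prod_{\omega\in\llbracket s\rrbracket} T'(\vec H_0+\omega\cdot\vec T_\bullet)\Big)\cdot \big[\text{degree $s+1$ Gowers cube in } f\big].
\]
The Gowers--Cauchy--Schwarz inequality then allows one to absorb the $f$-factor into $\|f\|_{U^{s+1}}^{2^{s+1}}\le 1$; applied with an appropriate power (essentially another Cauchy--Schwarz of order $2^{s-1}$ is needed because each $g_{\vec h}$ is itself a $2^{s-1}$-fold product of $f$'s) this costs an additional exponent of $2^{2^{s-1}}$, so the remaining inequality becomes
\[
  \delta^{2^{s+2^{s-1}}} = \delta^M \le \EE_{\vec H_0, \vec T_\bullet}\prod_{\omega\in\llbracket s\rrbracket}T'(\vec H_0+\omega\cdot\vec T_\bullet).
\]

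Finally I would project back to $H^{s-1}$ by inspecting the resulting variables: the first $s-1$ coordinates of $\vec H_0$ and of each $\vec T_i$ parametrise $a\in H^{s-1}$ and $\vec t\in(H^{s-1})^s$ in the conclusion, while the last coordinate of $\vec H_0$ matches the Fourier variable $x$ dual to $\chi$; the step variables in the last coordinate average out by shift-invariance.  Expanding $T'=\hat\tau$ back via $T(\vec h,u)=\sum_\chi\tau(\vec h,\chi)\chi(u)$ and invoking the orthogonality $\EE_x\prod_\omega\chi_\omega(x)=[\sum_\omega\chi_\omega=0]$ identifies the right-hand side above with the conclusion exactly, yielding the required bound $\delta^M$.

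The main obstacle is the careful bookkeeping of variables and the correct pairing of Cauchy--Schwarz applications, in particular making sure that the $f$-side genuinely collapses to a power of $\|f\|_{U^{s+1}}\le 1$ rather than a higher-degree Gowers norm that we do not control.  Equivalently, one must verify that each of the $s$ doublings is applied in a direction where it both creates the needed cube structure in $\vec h$ and allows the $f$-factor to be fully closed up; the $s$ coordinates of $\vec H$ plus the inner $x$-variable provide exactly enough degrees of freedom to close a $U^{s+1}$-cube, and the extra $2^{s-1}$ power in $M$ reflects the minimum amount of additional flattening needed to handle the $g_{\vec h}$ already being a $(s{-}1)$-dimensional Gowers product of $f$.
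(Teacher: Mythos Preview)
Your Parseval reduction is correct and matches the paper: the hypothesis becomes $\delta \le |\EE_{x,\vec H\in H^s}\,T'(\vec H)\prod_{\omega\in\llbracket s\rrbracket}\cC^{|\omega|}f(x+\omega\cdot\vec H)|$ and the conclusion becomes $\EE_{a,\vec t\in H^{s-1}}\EE_{y\in H}\prod_\omega T(a+\omega\cdot\vec t,y)\ge\delta^M$. But the core step---``$s$ Cauchy--Schwarz doublings, one per coordinate of $\vec H$''---does not do what you claim. If you double coordinate $i$ of $\vec H$ (Cauchy--Schwarz in the variable $H_i$), you cannot isolate $T'$ alone: the $f$-factors with $\omega_i=0$ are independent of $H_i$ and hence get duplicated along with $T'$, while those with $\omega_i=1$ are lost or replaced. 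After $s$ such steps you do not obtain a single $U^{s+1}$-cube of $f$; you obtain a mess with roughly $2^s$-fold more $f$-factors than you started with, none of which close up. Separately, your ``projection back'' step fails: after $s$ doublings in all coordinates of $H^s$ (including the last), the last coordinates of the arguments of the $2^s$ copies of $T'$ form an $s$-cube $u_0+\omega\cdot\vec u$ in $H$, not a common value $y$. Shift-invariance cannot collapse these to a single $y$; the conclusion genuinely requires all $2^s$ copies of $T$ to share the last argument, and this constraint (the Fourier-side condition $\sum_\omega\chi_\omega=0$) is not produced by box-norm doubling.

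The paper's argument is structurally different. It first peels off the last coordinate, writing $\delta\le\EE_y|\langle\cS_{s-1}f_y,T_y\rangle|$ with $f_y=\Delta_y f$ and $T_y=T(\cdot,y)$, and reduces to a lemma of the shape $|\langle\cS_{s-1}k,R\rangle|\le\|R\|_{U^s(H^{s-1})}^{1/M'}$ for one-bounded $k\colon H\to\CC$ and $R\colon H^{s-1}\to\CC$. The lemma is proved by $2^{s-1}$ Cauchy--Schwarz steps, one for each \emph{vertex} $\eta\in\llbracket s-1\rrbracket$ (not one per coordinate): each step replaces the $k$-factor at position $\eta$ by its dual function $k'_\eta(y)=\EE_{x+\eta\cdot h=y}\,\overline{R(h)}\prod_{\omega\ne\eta}\cC^{|\omega|+|\eta|+1}k_\omega(x+\omega\cdot h)$. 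The crucial observation is that each such dual function is a \emph{generalized convolution of degree $s-1$} (grouping the $\omega\ne\eta$ by $\min\{i:\omega_i\ne\eta_i\}$ gives $s-1$ classes, plus $R$ in its own class). Once all $2^{s-1}$ factors are generalized convolutions, a change of variables shows directly that $|\langle\cS_{s-1}(k_\omega),R\rangle|\le\|R\|_{U^s}$, with no further loss. This generalized-convolution mechanism is the idea you are missing; without it there is no way to control the form by a Gowers norm of $R$ while keeping the $f$-factors one-bounded.
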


In particular this applies when $\tau = \fsig_s f$ and $\|f\|_{U^{s+1}}$ is large.  The case we are most interested in, however, is when $\tau$ is $\{0,1\}$-valued and hence the indicator of the graph of some (partial) function $H^{s-1} \to \widehat{H}$.  We record the conclusion in this case.

\begin{corollary}%
  \label{cor:found-1pc}
  Let $f \colon H \to \CC$ be any function with $\|f\|_{\infty} \le 1$, let $A \subseteq H^{s-1}$ be some set and $\kappa \colon A \to \widehat{H}$ some function such that $\fsig f(x, \kappa(x)) \ge \eps$ for all $x \in A$.  Then $\kappa$ is an approximate polynomial with parameter $(\eps \mu(A))^M$, where $M = 2^{s + 2^{s-1}}$.
\end{corollary}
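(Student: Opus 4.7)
The plan is to apply Proposition \ref{prop:norm-inequality} to the indicator of the graph of $\kappa$ and then to translate the resulting cube inequality directly into the definition of an approximate polynomial of degree $s-1$.

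First I would set
\[
  \tau(h,\chi) \;=\; 1[h \in A]\, 1[\chi = \kappa(h)]
\]
and verify the two hypotheses of the proposition. The bound $\sum_\chi \tau(h,\chi) \le 1$ is immediate, since for each $h$ at most one $\chi$ contributes. For the inner product,
\[
  \langle \fsig_s f, \tau\rangle \;=\; \EE_{h \in H^{s-1}} 1[h \in A]\, \fsig_s f(h,\kappa(h)) \;\ge\; \mu(A)\,\eps
\]
using the pointwise lower bound on $A$ from the hypothesis.

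Next I would feed this into Proposition \ref{prop:norm-inequality} with $\delta = \eps\mu(A)$ to obtain
\[
  \EE_{a,t_1,\dots,t_s \in H^{s-1}} \sum_{(\chi_\omega)} \prod_{\omega \in \llbracket s \rrbracket} \tau(a + \omega\cdot \vec t,\chi_\omega) \;\ge\; (\eps\mu(A))^M,
\]
where the inner sum is over tuples satisfying the cube constraint on the characters. For the specific $\tau$ above, the product on the left is either $0$ or $1$; it is $1$ precisely when every vertex $a + \omega\cdot\vec t$ lies in $A$ and $\chi_\omega = \kappa(a+\omega\cdot\vec t)$ for every $\omega$. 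Thus the character cube identity (read with the signed/iterated derivative convention that makes the archetypal multilinear example go to zero, as recorded in the computation leading to $\cS_s f$) becomes exactly $\partial^s \kappa(\angle(a;t_1,\dots,t_s)) = 0$.

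So the density of $s$-cubes $c \in C^s(H^{s-1})$ with all vertices in $A$ and $\partial^s \kappa(c) = 0$ is at least $(\eps\mu(A))^M$, which is precisely the statement that $\kappa$ is an approximate polynomial of degree $s-1$ on $A$ with parameter $(\eps\mu(A))^M$. The deduction is entirely a bookkeeping exercise once Proposition \ref{prop:norm-inequality} is available; the only point needing care is to line up the sign convention of the character cube identity with the signed iterated discrete derivative in the definition of approximate polynomial, which is what forces the degree to be $s-1$ rather than anything larger.
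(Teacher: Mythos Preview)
Your proof is correct and is precisely the specialization the paper has in mind: the corollary is stated immediately after Proposition~\ref{prop:norm-inequality} with no separate proof, since it is exactly the case where $\tau$ is the indicator of the graph of $\kappa$. Your flag about the sign convention is well placed: the displayed constraint $\sum_\omega \chi_\omega = 0$ in the proposition should really carry the alternating signs $(-1)^{|\omega|}$ (this is what the $U^s$-norm computation in the proof actually yields), and with that reading the character identity is literally $\partial^s \kappa(c)=0$, giving an approximate polynomial of degree $s-1$ as you say.
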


We pay a modest price for working with $1\%$ polynomial functions rather than $1\%$ multilinear ones, in that Proposition~\ref{prop:norm-inequality} is harder to prove than the analogous multilinear statement.  As might be expected, the proof proceeds by multiple applications of the Cauchy--Schwarz inequality; however, both the number of applications required and the care involved in choosing them are greater.

\begin{proof}[Proof of Proposition~\ref{prop:norm-inequality}]
  We first remove all the Fourier transforms.  Let
  \begin{align*}
    T \colon H^s &\to \CC \\
      (h_1,\dots,h_s) &\mapsto \sum_{\chi \in \widehat{H}} \chi(h_s) \tau(h_1,\dots,h_{s-1},\chi)
  \end{align*}
  be the inverse Fourier transform of $\tau$ in its last argument.  By the hypothesis on $\tau$ we have $\|T\|_\infty \le 1$.  By Parseval, the correlation hypothesis $\langle \fsig_s f, \tau \rangle \ge \delta$ from the statement is equivalent to $|\langle \cS_s f, T \rangle| \ge \delta$ (where $\langle \cS_s f, T \rangle = \EE_{x \in H^s} \overline{f(x)} T(x)$).  Finally, the desired conclusion is equivalent to the lower bound
  \[
    \left| \EE_{a, t_1,\dots,t_{s} \in H^{s-1}}\ \EE_{y \in H}\ \prod_{\omega \in \llbracket s \rrbracket} T \big(a + \omega \cdot (t_1,\dots,t_s), y \big) \right| \ge \delta^{M} ,
  \]
  under taking Fourier transforms.  For each $y \in H$ we define $T_y \colon H^{s-1} \to \CC$ by $T_y(h_1,\dots,h_{s-1}) = T(h_1,\dots,h_{s-1},y)$ and $f_y \colon H \to \CC$ by $f_y(x) = f(x) \overline{f(x+y)}$.  It suffices to show the following.
  \begin{lemma}%
    \label{lem:norm-inequality}
    For any functions $k \colon H \to \CC$ and $R \colon H^{s-1} \to \CC$ with $\|k\|_\infty,\, \|R\|_\infty \le 1$, we have
    \[
      |\langle \cS_{s-1} k, R \rangle| \le \|R\|_{U^s}^{1/M'}
    \]
  where $M' = 2^{2^{s-1}}$.
  \end{lemma}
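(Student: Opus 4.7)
The plan is to prove this by iterated Cauchy--Schwarz, in the spirit of the standard ``generalized von Neumann'' arguments underlying the Gowers norms. Expanding,
\[
I := \langle \cS_{s-1} k, R \rangle = \EE_{x \in H,\, h \in H^{s-1}}\ R(h) \prod_{\omega \in \llbracket s-1 \rrbracket} \overline{\cC^{|\omega|} k(x + \omega \cdot h)},
\]
we observe an asymmetry: $R$ appears once while $k$ appears $2^{s-1}$ times. Each Cauchy--Schwarz doubling squares $|I|$, doubles the number of $R$-factors, introduces a new shift direction, and duplicates the $k$-factors while keeping them pointwise bounded by $1$. My plan is to apply Cauchy--Schwarz once in each of the $s$ outer variables $x, h_1, \ldots, h_{s-1}$.

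The first step, Cauchy--Schwarz in $x$, pairs $x$ against a new copy $x'$; setting $t_s := x' - x \in H$, the $k$-factors at $x$ and $x'$ combine into $\cS_{s-1}k$-type factors evaluated at shifted configurations, while $R(h)$ now appears twice inside a doubled inner expression. Each of the subsequent $s-1$ applications, performed in the variable $h_i$, pairs $h_i$ with a new copy $h'_i$ and introduces a difference $t_i := h'_i - h_i \in H$, producing a shift of $R$ in the $i$th coordinate. After all $s$ steps, $|I|^{2^s}$ is bounded by an average of products of $R$ over the $2^s$ vertices of an $s$-cube in $H^{s-1}$ (with shift directions given by the $t_i$, suitably embedded) times a large product of $k$-factors; the $k$-factors are trivially bounded by $1$, and the $R$-part is exactly $\|R\|_{U^s}^{2^s}$. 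In particular this would give $|I|^{2^s} \le \|R\|_{U^s}^{2^s}$, which is actually considerably stronger than required, so the stated exponent $M' = 2^{2^{s-1}}$ leaves room for a cruder bookkeeping that does not require sharp control at every step.

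The main obstacle I anticipate is combinatorial rather than conceptual: tracking the complex conjugation signs $\cC^{|\omega|}$ through each Cauchy--Schwarz doubling so that the final product over $\llbracket s \rrbracket$ matches the conjugation pattern of the $U^s$-norm configuration, and ensuring that the $s$ introduced shift directions are genuinely free so that the $R$-factors form a non-degenerate $s$-cube. One must also exercise care at the first step (CS in $x$), where the introduced shift $t_s \in H$ does not naturally live in $H^{s-1}$; it must be promoted to a shift on $H^{s-1}$ either by embedding as a constant in one coordinate or by absorbing it into one of the existing $h_i$ directions via a change of variables. With this bookkeeping handled, the argument reduces to a routine sequence of Cauchy--Schwarz applications, and the desired power of $\|R\|_{U^s}$ falls out after recognizing the final $R$-average as (a power of) its $U^s$-norm.
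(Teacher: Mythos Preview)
Your plan has a genuine gap for $s\ge 3$, and it is not just bookkeeping. The difficulty is that Cauchy--Schwarz in the \emph{scalar} variables $x,h_1,\dots,h_{s-1}$ cannot manufacture enough degrees of freedom to produce a full $U^s$-cube for $R$ on $H^{s-1}$.

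First, the step ``Cauchy--Schwarz in $x$'' does not double $R$: since $R(h)$ is independent of $x$, any CS that pairs $x$ with $x'$ leaves $R$ on the bounded side (or produces $|R(h)|^2$, which is useless for Gowers norms). Only CS in the $h_i$ genuinely doubles $R$, and each such step introduces a shift $t_i e_i\in H^{s-1}$ along a single coordinate axis. After the $s-1$ steps in $h_1,\dots,h_{s-1}$ you obtain (after the change of variables $y=x+h_1+\dots+h_{s-1}$) a factored expression
\[
|I|^{2^{s-1}}\le \EE_{t_1,\dots,t_{s-1}} A(t)\,B(t),\qquad A(t)=\EE_{h}\prod_{\epsilon\in\llbracket s-1\rrbracket}\cC^{|\epsilon|}R\Big(h+\sum_i\epsilon_i t_i e_i\Big),
\]
with $|B|\le 1$. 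One further CS (in $t$, or in $h$ after bounding $B$) gives at most one \emph{free} shift $u\in H^{s-1}$, so the final $R$-configuration is an $s$-cube with directions $t_1 e_1,\dots,t_{s-1}e_{s-1},u$: all but one direction are axis-aligned. This restricted average is \emph{not} bounded by $\|R\|_{U^s}^{2^s}$. For instance if $R(h_1,\dots,h_{s-1})=\phi(h_1)$ with $|\phi|\equiv 1$, then for $s\ge 3$ the factors indexed by $\epsilon_2,\dots,\epsilon_{s-1}$ pair off into $|\phi|^2=1$ and the restricted average equals $1$, whereas $\|R\|_{U^s}=\|\phi\|_{U^s(H)}$ can be arbitrarily small. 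A cleaner way to see the obstruction is a dimension count: $s$ scalar CS steps give $2s$ scalar variables in total, but a free $U^s$-cube of $R$ on $H^{s-1}$ requires $(s+1)(s-1)=s^2-1$ scalars, and $2s<s^2-1$ once $s\ge 3$.

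The paper's argument is structurally different and this is where the exponent $M'=2^{2^{s-1}}$ comes from. One applies Cauchy--Schwarz once per \emph{vertex} $\eta\in\llbracket s-1\rrbracket$ (so $2^{s-1}$ times, not $s$), each time replacing the single factor $k_\eta$ by a ``dual function'' which is a generalized convolution of degree $s-1$ built from $R$ and the other $k_\omega$. Each such CS step implicitly introduces $s-1$ new scalar variables, not one. Once every $k_\omega$ is a generalized convolution, a separate change-of-variables argument unfolds the whole expression into an honest generalized convolution against $R$ on $H^{s-1}$, which is then bounded by $\|R\|_{U^s}$ via the standard complexity argument. Your outline is essentially correct for $s=2$ (and there it even gives the sharper bound $|I|\le\|R\|_{U^2}$), but for $s\ge 3$ the axis-aligned shifts are the real obstacle, and this is precisely what the dual-function machinery is designed to overcome.
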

  Indeed, given this we could deduce that
  \[
    \delta \le \EE_y |\langle \cS_{s-1} f_y, T_y \rangle | \le \EE_y \left(\|T_y\|_{U^s}^{1/M'} \right)
    \le \left(\EE_y \|T_y\|_{U^s}^{2^s} \right)^{1/2^s M}
  \]
  by monotonicity of $\ell^p$ norms, and Proposition~\ref{prop:norm-inequality} follows.

  \begin{proof}[Proof of Lemma~\ref{lem:norm-inequality}]
    For any collection of functions $\big(k_\omega \colon H \to \CC \colon \omega \in \llbracket s-1\rrbracket\big)$, write
    \begin{align*}
      \cS_{s-1} (k_\omega) \colon H^{s-1} &\to \CC \\
      (h_1,\dots,h_{s-1}) &\mapsto \EE_x \prod_{\omega \in \llbracket s-1 \rrbracket} \cC^{|\omega|} k_\omega(x + \omega \cdot \vec h)
    \end{align*}
    for the multilinear generalization of $\cS_{s-1}$.  Also, we recall the notion of a \emph{generalized convolution} of degree $k$ (see~\cite{cfz-expo}) which is the function on an abelian group $G$ given by
    \begin{align*}
      \ast(F_1,\dots,F_{k+1}) \colon G &\to \CC \\
      x &\mapsto \EEE_{\substack{y_1,\dots,y_{k+1} \in G \\ y_1 + \cdots + y_k = x}} F_1(y_2,y_3,\dots,y_{k+1}) F_2(y_1,y_3,\dots,y_{k+1}) \dots F_{k+1}(y_1,y_2,\dots,y_k)
    \end{align*}
    where $F_i \colon G^k \to \CC$ are given functions.  A standard application of the Cauchy--Schwarz inequality $k+1$ times shows that for any function $r \colon G \to \CC$ and functions $F_i \colon G \to \CC$ with $\|F_i\|_\infty \le 1$, we have
    \begin{equation}
      \label{eq:csc}
      \big|\EE_{x \in G}\, \mathord{\ast}(F_1,\dots,F_{k+1})(x)\, r(x) \big| \le \|r\|_{U^{k+1}} .
    \end{equation}
    We first show the following.
    \begin{claim}%
      \label{claim:conv-is-easy}
      If each function $k_\omega \colon H \to \CC$ for $\omega \in \llbracket s-1 \rrbracket$ is a generalized convolution of degree $s-1$ of $1$-bounded functions $F_{\omega,i} \colon H^{s-1} \to \CC$, and $R \colon H^{s-1} \to \CC$ is any function, then
      \[
        \left|\langle \cS_{s-1}(k_\omega), R \rangle \right| \le \|R\|_{U^s} .
      \]
    \end{claim}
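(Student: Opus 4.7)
The plan to prove Claim \ref{claim:conv-is-easy} is to identify $\cS_{s-1}(k_\omega)$ as (essentially) a generalized convolution of degree $s-1$ on the group $H^{s-1}$, so that the desired bound follows directly from \eqref{eq:csc} applied with $G=H^{s-1}$ and $k=s-1$.

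I will begin by substituting the generalized convolution expansion $k_\omega(u) = \EE_{\vec y_\omega \colon \sum_i y_{\omega,i}=u}\prod_{i=1}^{s} F_{\omega,i}(\vec y_{\omega,-i})$ into $\langle \cS_{s-1}(k_\omega), R\rangle$, obtaining a multilinear average of the $F_{\omega,i}$'s against $R$. Then comes the central change of variables: for each fixed $i\in[s]$, regard $(y_{\omega,i})_{\omega\in\llbracket s-1\rrbracket}$ as an arbitrary function $\llbracket s-1\rrbracket \to H$ and decompose it in the face basis as $y_{\omega,i} = a_i + \omega \cdot \vec w_i + e_i(\omega)$, where $a_i\in H$, $\vec w_i\in H^{s-1}$, and $e_i(\omega)\in H$ is the purely higher-order piece vanishing whenever $|\omega|\le 1$. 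The constraints $\sum_i y_{\omega,i} = x+\omega\cdot\vec h$ then split cleanly into $\sum_i a_i = x$ (absorbed by the outer $\EE_x$), $\sum_i \vec w_i = \vec h$, and $\sum_i e_i(\omega)=0$ for each $|\omega|\ge 2$.

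After this change of variables the expression takes the shape
\[
\langle \cS_{s-1}(k_\omega), R\rangle = \EE_{\vec w_1,\ldots,\vec w_s\in H^{s-1}} R\Big(\textstyle\sum_i \vec w_i\Big)\, \Phi(\vec w_1,\ldots,\vec w_s),
\]
where $\Phi$ is the average of $\prod_\omega \cC^{|\omega|}\prod_i F_{\omega,i}(y_{\omega,-i})$ over the auxiliary variables $(a_i)_i$ and $(e_i(\omega))_{i,|\omega|\ge 2}$, subject to the constraints $\sum_i e_i(\omega) = 0$. The crucial structural feature, revealed upon swapping the order of the two products, is that the $i$-th factor $\prod_\omega \cC^{|\omega|} F_{\omega,i}(y_{\omega,-i})$ depends only on $\{a_j,\vec w_j, e_j\}_{j\ne i}$ and is independent of $\vec w_i$. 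This is precisely the asymmetry required for a generalized convolution of degree $s-1$ in the $\vec w_i$'s on $H^{s-1}$.

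The main obstacle is that the constraints $\sum_i e_i(\omega)=0$ for $|\omega|\ge 2$ couple the auxiliary variables across the $s$ factors, so $\Phi$ does not literally factor as $\prod_i G_i(\vec w_1,\ldots,\widehat{\vec w_i},\ldots,\vec w_s)$ in a pointwise sense; there is a genuine cohomological obstruction in the higher-order corrections. My plan to bypass this is to run the Cauchy--Schwarz argument from the proof of \eqref{eq:csc} directly on our expression: apply the Cauchy--Schwarz inequality once for each $\vec w_i$, using the $\vec w_i$-independence of the $i$-th factor to eliminate that factor at the cost of doubling the remaining variables and the $R$-evaluation. After $s$ iterations, the $F_{\omega,i}$-dependent contributions are bounded by $1$ (by the one-boundedness of the $F_{\omega,i}$ and the fact that doubling preserves $\ell^\infty$-bounds), while the $R$-dependent part assembles into $\|R\|_{U^s}^{2^s}$; taking $2^s$-th roots yields the claim. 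Equivalently, the system of linear forms produced by our expansion has Cauchy--Schwarz complexity at most $s-1$ at the position of $R$---with the $s$ slicing directions being the individual shifts in $\vec w_i$, each of which kills the arguments of all $F_{\omega,j}$ with $j=i$---so the bound can also be viewed as an instance of the generalized von Neumann inequality in the spirit of Lemma \ref{lem:gvn}.
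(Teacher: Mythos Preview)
Your approach is correct and close in spirit to the paper's, but you have manufactured an obstacle that is not really there. Once you have the decomposition $y_{\omega,i}=a_i+\omega\cdot\vec w_i+e_i(\omega)$ and have observed that the $j$-th factor $\prod_\omega \cC^{|\omega|}F_{\omega,j}(y_{\omega,-j})$ depends only on $\{a_i,\vec w_i,e_i\}_{i\ne j}$, simply reverse the order of your averaging: for each \emph{fixed} choice of $(a_i)_i$ and $(e_i)_i$ satisfying $\sum_i e_i=0$, the remaining integral over $\vec w_1,\dots,\vec w_s\in H^{s-1}$ is literally $\langle \ast(\tilde G_1,\dots,\tilde G_s),R\rangle$ on $H^{s-1}$ with $1$-bounded $\tilde G_j$, so \eqref{eq:csc} applies directly and gives $\|R\|_{U^s}$; then average over the auxiliary variables and apply the triangle inequality. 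There is no need to run the Cauchy--Schwarz steps by hand, and the coupling $\sum_i e_i=0$ is harmless because it only constrains the \emph{outer} averaging variables.

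The paper's proof does exactly this, but via a cleaner (redundant) change of variables $y_{\omega,i}=z_{\omega,i}+\omega\cdot t_i$ with $\sum_i t_i=h$ and $\sum_i z_{\omega,i}=x$: here the $z_{\omega,i}$ absorb both your $a_i$ and $e_i(\omega)$ in one go, so for each fixed $(x,\{z_{\omega,i}\})$ the inner integral over the $t_i$ is already a generalized convolution against $R$ on $H^{s-1}$, and \eqref{eq:csc} applies immediately. Your bijective parametrization works just as well once you stop trying to make $\Phi$ factor and instead freeze the auxiliaries first. (Minor point: your closing reference to Lemma~\ref{lem:gvn} is off---that lemma is the \emph{discrete} analogue, about counting solutions; the relevant input here is just \eqref{eq:csc}.)
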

    \begin{proof}[Proof of claim]
      We may expand out the left hand side as
      \begin{align*}
        &\EEE_{x \in H} \EEE_{h \in H^{s-1}} \overline{R(h)} \prod_{\omega \in \llbracket s-1 \rrbracket} \cC^{|\omega|} \mathord{\ast}(F_{\omega,1},\dots,F_{\omega,s})(x + \omega \cdot h) \\
        = &\EEE_{x \in H} \EEE_{h \in H^{s-1}} \overline{R(h)} \EEE_{\substack{y_{\omega,i} \in H \\ \sum_i y_{\omega,i} = x + \omega \cdot h}} \prod_{i=1}^{s} \prod_{\omega \in \llbracket s-1 \rrbracket} \cC^{|\omega|} F_{i,\omega}(y_{\omega,1},\dots,y_{\omega,i-1},y_{\omega,i+1},\dots,y_{\omega,s})
      \end{align*}
      and introduce a change of variables\footnote{This change of variables is redundant insofar as each value of the old variables $x,h,y_{\omega,i}$ corresponds to many values of the new variables $x,h,t_i,y_{\omega,i}$, but since we are averaging and the fibers have constant size this is not a problem.} $t_1,\dots,t_s \in H^{s-1}$ and $z_{\omega,i} \in H$ (for $i \in [s]$ and $\omega \in \llbracket s-1 \rrbracket$) such that $\sum_{i=1}^s t_i = h$, $\sum_{i=1}^s z_{\omega,i} = x$ for each $\omega$ and $y_{\omega,i} = z_{\omega,i} + \omega \cdot t_i$, so that
      \begin{equation}
        \label{eq:gen-conv-stuff}
        \LHS = \EEE_{x \in H} \EEE_{\substack{z_{\omega,i} \\ \sum_{i=1}^s z_{\omega,i} = x}} \left( \EEE_{h \in H^{s-1}} \overline{R(h)} \EEE_{\substack{t_1,\dots,t_s \in H^{s-1} \\ \sum_{i=1}^s t_i = h}} \prod_{j=1}^s F^{(x,z_{\omega,i})}_j(t_1,\dots,t_{j-1},t_{j+1},t_s) \right)
      \end{equation}
      where $F_j^{(x,z_{\omega,i})} \colon (H^{s-1})^{s-1} \to \CC$ is a $1$-bounded function depending on $x$ and the variables $z_{\omega,i}$ given by
      \[
        F_j^{(x,z_{\omega,i})} (t_1,\dots,t_{j-1},t_{j+1},t_s) = \prod_{\omega \in \llbracket s-1 \rrbracket} \cC^{|\omega|} F\Big(
        \begin{aligned}[t]
          &z_{\omega,1}+\omega \cdot t_1,\dots, z_{\omega,{j-1}}+\omega \cdot t_{j-1}, \\
          &z_{\omega,{j+1}}+\omega \cdot t_{j+1}, \dots,z_{\omega,s} + \omega \cdot t_s \Big).
        \end{aligned}
      \]
      Hence, the term in parentheses in~\eqref{eq:gen-conv-stuff} has the form $\big\langle \mathord{\ast}\big(F_j^{(x,z_{\omega,i})}\big), R \big\rangle$ and so is bounded in magnitude by $\|R\|_{U^s}^{2^s}$ (by~\eqref{eq:csc} applied with $G=H^{s-1}$). The claim follows by the triangle inequality.
    \end{proof}

    It therefore suffices to show that the original quantity $|\langle \cS_{s-1} k, R \rangle|$ may be bounded in terms of a quantity $|\langle \cS_{s-1}(k_\omega), R \rangle|$ where each $k_\omega$ is a generalized convolution of bounded functions.  This is achieved by $2^{s-1}$ further applications of Cauchy--Schwarz.  For any functions $k_\omega$ and any $\eta \in \llbracket s-1 \rrbracket$, one application of Cauchy--Schwarz to the function $k_{\eta}$ yields
    \[
      \left| \langle \cS_{s-1}(k_\omega), R \rangle \right| \le \|k_{\eta}\|_2 \left| \langle \cS_{s-1}(k'_\omega), R \rangle \right|^{1/2}
    \]
    where $(k'_\omega)_{\omega \in \llbracket s-1 \rrbracket}$ are new functions given by $k'_\omega = k_\omega$ if $\omega \ne \eta$, and
    \[
      k'_{\eta}(y) = \EEE_{\substack{x \in H, h \in H^{s-1} \\ x + \eta \cdot h = y}} R(h) \prod_{\omega \in \llbracket s-1 \rrbracket \setminus \{\eta\}} \cC^{|\omega| + |\eta| + 1} k_\omega(x + \omega \cdot h)
    \]
    is a corresponding ``dual function''.

    By grouping the functions $\big\{k_\omega \colon \omega \in \llbracket s-1 \rrbracket \setminus \{\eta\}\big\}$ into $s-1$ classes based on the value of $\min\{ i \colon \omega_i \ne \eta_i\}$, and putting $R(h)$ in a class by itself, we see that $k'_\eta$ is a generalized convolution of degree $s-1$ of $1$-bounded functions (and so is itself $1$-bounded).  Applying this argument once for each $\eta$, we can replace all of the original functions $k$ by (different) functions which are generalized convolutions.  Combining this with Claim~\ref{claim:conv-is-easy} gives the result.
  \end{proof}
  This concludes the proof of Proposition~\ref{prop:norm-inequality}.
\end{proof}

\begin{remark}%
  \label{rem:find-any-1pc}
  It follows that if $\|f\|_{U^{s+1}} \ge \delta$, we can locate a subset $S \subseteq H^{s-1}$ of density $\mu(S) \ge \delta^{1/O_s(1)}$ and a function $\kappa \colon S \to \widehat H$, such that $\fsig_s f(x, \kappa(x)) \ge \delta^{1/O_s(1)}$ for all $x \in S$ and $\kappa$ is an approximate polynomial with parameter $\ge \delta^{1/O_s(1)}$.  Indeed, we can set $\kappa(x)$ to be any value in $\widehat H$ such that $\fsig_s f(x, \kappa(x))$ is large, if possible, and set $S$ to consist of all $x \in H^{s-1}$ for which it is possible.  The other properties then follow from Proposition~\ref{prop:norm-inequality} and Proposition~\ref{prop:sig-facts}.

  This is the basic mechanism for locating approximate polynomials to which we will apply Theorem~\ref{thm:1pc}.  However, for technical reasons to do with bounds we will need to find not just one such function $\kappa$, whose graph correlates weakly with $\fsig_s f$, but a family whose graphs collectively cover all large values of $\fsig_s f$.  This occurs in Lemma~\ref{lem:use-1pc} below.  However, until we get there the reader should keep in mind a single representative function $\kappa$ of this type.
\end{remark}

\subsection{Passing to a real-valued approximate polynomial}

A technical issue is that the partial approximate polynomials $\kappa \colon H^{s-1} \to \widehat{H}$ discussed in Remark~\ref{rem:find-any-1pc} have codomain $\widehat{H}$, whereas our structure theorem for approximate polynomial functions only applies when the codomain is $\RR$.  However, in the case that $H = \ZZ/N\ZZ$ for $N$ prime, and hence $\widehat{H} = \frac1N \ZZ/\ZZ$, it is straightforward to pass from a $\widehat{H}$-valued approximate polynomial to an $\RR$-valued one, at the cost of worsening the parameter by a small amount.  A similar statement is true replacing $\frac1N \ZZ/\ZZ$ by any product of $O(1)$ cyclic groups.

\begin{lemma}%
  \label{lem:lift-to-real}
  Let $G$ be any finite abelian group, $X \subseteq G$ a subset, $N$ a positive integer and $\kappa \colon X \to \frac1N \ZZ/\ZZ$ an approximate polynomial of degree $s$ with parameter $\delta$.

  Then there exists a function $\wt \kappa \colon X \to \frac1N \ZZ$ such that $\wt \kappa(x) \bmod{1} = \kappa(x)$ for all $x \in X$ and $\wt{\kappa}$ is an approximate polynomial with parameter $\delta' \ge \delta / 3^{2^{s+1}} - 2^{2s+2}/|G|$.
\end{lemma}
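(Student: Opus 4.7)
The plan is to choose a canonical lift of $\kappa$ and then perturb it by a small independent random integer at each point, using linearity of expectation to produce a good deterministic lift.

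First I would define $\wt\kappa_0\colon X \to \tfrac1N\ZZ \cap [0,1)$ as the canonical representative of $\kappa$. The key observation is that for any tuple $(x, h_1,\dots,h_{s+1})$ on which the derivative is defined and with $\partial^{s+1}\kappa(c) = 0$ in $\tfrac1N\ZZ/\ZZ$ (where $c = \angle(x; h_1,\dots,h_{s+1})$), the real number
\[
  \partial^{s+1}\wt\kappa_0(c) = \sum_{\omega \in \llbracket s+1 \rrbracket} (-1)^{|\omega|} \wt\kappa_0(c(\omega))
\]
is necessarily an integer lying in $(-2^s, 2^s)$, being a difference of two sums of $2^s$ values each taken from $[0,1)$.

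Next, consider the random lift $\wt\kappa(x) = \wt\kappa_0(x) + a(x)$, where $\{a(x)\}_{x \in X}$ are i.i.d.\ uniform on $\{-1,0,1\}$; this remains a lift since $a(x) \in \ZZ$. Call a tuple \emph{good} if the derivative is defined, $\partial^{s+1}\kappa(c) = 0 \bmod 1$, and the $2^{s+1}$ vertices $c(\omega)$ are all distinct (non-degenerate). For a good tuple the adjustments $a(c(\omega))$ are mutually independent, and the event $\partial^{s+1}\wt\kappa(c)=0$ amounts to $\sum_\omega(-1)^{|\omega|}a(c(\omega)) = -\partial^{s+1}\wt\kappa_0(c) =: k$ with $|k| < 2^s$. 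Any such $k$ can be written as $\sum_\omega (-1)^{|\omega|} b_\omega$ with $b \in \{-1,0,1\}^{\llbracket s+1\rrbracket}$---for instance, place $\operatorname{sign}(k)$ in $|k|$ of the even-$|\omega|$ coordinates and $0$ elsewhere---and this specific realization occurs with probability exactly $3^{-2^{s+1}}$.

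Finally, I would count degenerate tuples: equality of two cube vertices $c(\omega) = c(\omega')$ is a nontrivial $\pm 1$ linear relation among the $h_i$'s, cutting out at most $|G|^{s+1}$ tuples per relation. With at most $\binom{2^{s+1}}{2} \le 2^{2s+2}$ pairs, the density of degenerate tuples in $G^{s+2}$ is at most $2^{2s+2}/|G|$. The good tuples thus have density at least $\delta - 2^{2s+2}/|G|$ in $G^{s+2}$, so by linearity of expectation the expected density of tuples with $\partial^{s+1}\wt\kappa(c) = 0$ is at least $3^{-2^{s+1}}(\delta - 2^{2s+2}/|G|) \ge \delta/3^{2^{s+1}} - 2^{2s+2}/|G|$. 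Picking a realization achieving the expectation yields the required $\wt\kappa$. There is no serious obstacle here; the only mild subtlety is the combinatorial representability of $k$ as a signed $\{-1,0,1\}$-sum at the cube vertices, which follows routinely from $|k| < 2^s$.
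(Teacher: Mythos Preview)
Your proof is correct and follows essentially the same approach as the paper: take the canonical lift to $[0,1)$, add an i.i.d.\ uniform $\{-1,0,1\}$ perturbation, observe that the required integer correction has magnitude at most $2^s-1$ and hence is achievable with probability at least $3^{-2^{s+1}}$ on non-degenerate cubes, bound the degenerate cubes by $O(2^{2s}/|G|)$, and conclude by linearity of expectation. The only cosmetic difference is that you give an explicit construction of the representing $b_\omega$'s (placing $\operatorname{sign}(k)$ at even-weight vertices) where the paper simply asserts existence.
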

\begin{proof}
  For $y \in \RR/\ZZ$ we let $\{y\}$ denote the unique value in $[0,1)$ such that $\{y\} \bmod 1 = y$.  Then define a random function $\wt K \colon X \to \RR$ by 
  \[
    \wt K(x) = \{\kappa(x)\} + R(x)
  \]
  where $R \colon X \to \{-1,0,1\}$ is a function chosen uniformly at random.

  Suppose $c \in C^{s+1}(G) \cap X^{\llbracket s+1 \rrbracket}$ is such that the values $c(\omega) \in G$ are all distinct, and satisfies $\partial^{s+1} \kappa(x) = 0$.  Then $\partial^{s+1} \{\kappa\}(c)$ is an integer (as reducing modulo $1$ recovers $\partial^{s+1} \kappa(c)$) and lies between $-2^{s}+1$ and $2^s-1$.  It follows that there is at least one choice of values $r_\omega \in \{-1,0,1\}$ for $\omega \in \llbracket s+1 \rrbracket$ such that
  \[
    \sum_{\omega \in \llbracket s+1 \rrbracket} (-1)^{|\omega|} \Big( \big\{\kappa(c(\omega))\big\} + r_\omega \Big) = 0 .
  \]
  Hence,
  \[
    \PP\big[ \partial^{s+1} \wt K(c) = 0 \big] \ge 1/3^{2^{s+1}} .
  \]
  Noting that the number of choices of $c \in C^{s+1}(G)$ with $c(\omega_1) = c(\omega_2)$ for some particular distinct $\omega_1, \omega_2 \in \llbracket s+1 \rrbracket$ is exactly $|G|^{s+1}$ (as the associated group homomorphism $C^{s+1}(G) \to G$ sending $c$ to $c(\omega_1) - c(\omega_2)$ is surjective), the number of $c \in C^{s+1}(G)$ where the values $c(\omega)$ fail to be distinct is at most $\binom{2^{s+1}}{2} |G|^{s+1}$.  Hence,
  \[
    \EEE\Big[\big|\big\{c \in C^{s+1}(G) \colon \partial^{s+1} \wt K(c) = 0\big\}\big|\Big] \ge \delta |G|^{s+2} / 3^{2^{s+1}} - 2^{2s+2} |G|^{s+1}
  \]
  and so there is some particular value $\wt K = \wt \kappa$ with the required properties.
\end{proof}

\subsection{Applying the structure theorem for approximate polynomials}

Throughout this subsection we take $H = \ZZ/N\ZZ$ for $N$ prime.

We are now in a position to apply Theorem~\ref{thm:1pc}.  As discussed in Remark~\ref{rem:find-any-1pc}, in order to prevent a parameter explosion in a later part of the proof it turns out to be preferable to show that almost all of the mass of $\fsig f$, rather than just a part of it, can be explained by a collection of nil-polynomials.

Specifically, the following lemma states that most of the large support of $\fsig f$ is covered by the graphs of a few nil-polynomials $g_i$.

\begin{lemma}%
  \label{lem:use-1pc}
  Let $f \colon H \to \RR$ be a function with $\|f\|_{\infty} \le 1$, and let $\eps > 0$ be a parameter.  Then there exists an integer $L \ll_s \eps^{O_s(1)}$, sets $A_1,\dots,A_L \subseteq H^{s-1}$ and nil-polynomials $g_1,\dots,g_L \colon H^{s-1} \to \RR$ of degree $s-1$, dimension $\ll_s \eps^{O_s(1)}$ and complexity $M$, such that $g_i(x) \in \frac1N \ZZ$ for all $x \in A_i$, and such that the following holds.
  
  For each $i \in [L]$, let $S_i$ denote the graph of the induced function $g_i|_{A_i} \colon H^{s-1} \to \widehat{H}$, i.e.\ the subset
  \[
    S_i = \big\{ \big(x,\ t \mapsto e(g_i(x)\, t)\big) \colon x \in A_i\big\} \subseteq H^{s-1} \times \widehat H
  \]
  and write $S = \bigcup_{i=1}^L S_i$.  Then
  \begin{equation}
    \label{eq:graph-cover}
    \EE_{x \in H^{s-1}} \sum_{\chi \in \widehat{H}} \big(1 - 1_{S}(x,\chi)\big)\, \fsig f(x, \chi)^2 \le \eps.
  \end{equation}
  Finally, we have $M \le \exp\big(O_s(\eps)^{O_s(1)}\big)$ if $s \le 3$ and $M \le \exp \exp\big(O_s(\eps)^{O_s(1)}\big)$ if $s \ge 4$.
\end{lemma}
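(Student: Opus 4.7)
The plan is a greedy covering argument driven by an $L^2$ energy decrement. By Proposition \ref{prop:sig-facts}(iii), the total energy $\EE_x \sum_\chi \fsig_s f(x,\chi)^2 = \|f\|_{U^{s+1}}^{2^{s+1}} \le 1$, so a procedure that removes a definite portion of the energy at each iteration terminates in polynomially many steps.

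Initialize $S^{(0)} = \emptyset$. At iteration $i \ge 1$, let $\mathcal{E}_{i-1} := \EE_x \sum_\chi (1-1_{S^{(i-1)}}(x,\chi))\, \fsig_s f(x,\chi)^2$ be the remaining uncovered energy. If $\mathcal{E}_{i-1} \le \eps$, stop and set $L = i-1$. Otherwise, since $\sum_\chi \fsig_s f(x,\chi) \le 1$ by Proposition \ref{prop:sig-facts}(ii), the contribution to $\mathcal{E}_{i-1}$ from pairs with $\fsig_s f < \eps/2$ is at most $\eps/2$, so the set
\[
  A_i := \big\{ x \in H^{s-1} : \exists\, \chi,\; (x,\chi) \notin S^{(i-1)} \text{ and } \fsig_s f(x,\chi) \ge \eps/2 \big\}
\]
has density $\mu(A_i) \ge \eps/2$ (using the same fiber-sum bound). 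Pick any such $\chi$ for each $x \in A_i$ to obtain a partial function $\kappa_i \colon A_i \to \widehat H$ with $\fsig_s f(x,\kappa_i(x)) \ge \eps/2$ everywhere on $A_i$. Corollary \ref{cor:found-1pc} then asserts that $\kappa_i$ is an approximate polynomial of degree $s-1$ with parameter $\gg_s \eps^{O_s(1)}$.

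To invoke Theorem \ref{thm:1pc} we must pass to a real-valued object: Lemma \ref{lem:lift-to-real} lifts $\kappa_i$ to $\wt\kappa_i \colon A_i \to \tfrac1N\ZZ$, still an approximate polynomial of degree $s-1$ with parameter $\gg_s \eps^{O_s(1)}$ (the additive term $2^{2s+2}/|H^{s-1}|$ is negligible once $N$ exceeds the relevant function of $\eps$). Applying Theorem \ref{thm:1pc} on $H^{s-1} = (\ZZ/N\ZZ)^{s-1}$ (with $d=s-1$) produces a nil-polynomial $g_i \colon H^{s-1} \to \RR$ of degree $s-1$, dimension $\ll_s \eps^{-O_s(1)}$ and complexity $M$ meeting the bounds stated in the lemma---single exponential when $s-1 \le 2$, double when $s-1 \ge 3$, matching the dichotomy $s \le 3$ vs.\ $s \ge 4$---such that $g_i = \wt\kappa_i$ on a subset $A'_i \subseteq A_i$ of density $\gg_s \eps^{O_s(1)}$. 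Redefine $A_i := A'_i$ and $S_i := \{(x,\, t \mapsto e(g_i(x)\, t)) : x \in A_i\}$; by construction $g_i(x) \in \tfrac1N\ZZ$ on $A_i$.

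Each new pair added to $S^{(i)}$ carries $\fsig_s f(x,\chi) \ge \eps/2$, hence contributes at least $\eps^2/4$ to the covered energy, yielding $\mathcal{E}_{i-1} - \mathcal{E}_i \ge \mu(A_i)\cdot \eps^2/4 \gg_s \eps^{O_s(1)}$. Since $\mathcal{E}_0 \le 1$, the procedure halts after $L \ll_s \eps^{-O_s(1)}$ steps. The main bookkeeping task---not a genuine obstacle---is checking that the approximate-polynomial parameter is degraded only polynomially by Corollary \ref{cor:found-1pc} and Lemma \ref{lem:lift-to-real}, so that the complexity bound inherited from Theorem \ref{thm:1pc} is exactly of the form claimed.
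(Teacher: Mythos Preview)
Your proposal is correct and follows essentially the same approach as the paper: a greedy energy-decrement iteration, using the threshold $\eps/2$ to isolate large values, then applying Corollary~\ref{cor:found-1pc}, Lemma~\ref{lem:lift-to-real}, and Theorem~\ref{thm:1pc} in exactly that order at each step, with the fiber-sum bound from Proposition~\ref{prop:sig-facts}(ii) controlling both the density of $A_i$ and the energy decrement. The paper's write-up differs only cosmetically (it names the large-value set $R$ up front and phrases the disjointness of the $S_i$ explicitly), but the argument is the same.
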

\begin{proof}
  We write
  \[
    R = \big\{ (x, \chi) \in H^{s-1} \times \widehat{H} \colon \fsig f(x,\chi) \ge \eps/2 \big\}
  \]
  for the points where $\fsig f$ is not too small.  We note
  \begin{equation}
    \label{eq:small-points}
    \EE_{x \in H^{s-1}} \sum_{\chi \in \widehat{H}} (1 - 1_R(x,\chi))\, \fsig(x,\chi)^2 \le \EE_{x \in H^{s-1}} \sum_{\chi \in \widehat{H}} (\eps/2) \fsig(x,\chi) \le \eps/2
  \end{equation}
  by Proposition~\ref{prop:sig-facts}\ref{item:sig-facts-2}.
  
  Since the result is trivial if $N < C\eps^{-C}$ for some $C=C(s)$ (e.g., take $L = N^s$, all sets $A_i$ to have size $1$ and $g_i$ every possible constant function) we can assume a suitable lower bound on $N$ in terms of $\eps$ of this form.
  
  Let $\eta>0$ be a parameter to be specified later.  We choose sets $A_i$ and nil-polynomials $g_i$ iteratively, in such a way that the following properties hold: (i) the resulting sets $S_i$ are disjoint, (ii) each set $A_i$ has $\mu(A_i) \ge \eta$, and (iii) $S_i \subseteq R$ for each $i$.
  
  Given these assumptions, we have
  \[
    \EE_{x \in H^{s-1}} \sum_{\chi \in \widehat{H}} 1_{\bigcup_{i=1}^L S_i}(x,\chi)\, \fsig f(x, \chi)^2 \ge \sum_{i=1}^L \mu(A_i)\, (\eps/2)^2 \ge L\, \eps^2\, \eta / 4
  \]
  and as the left hand side is bounded by $\|f\|_{U^{s+1}}^{2^{s+1}} \le 1$ (Proposition~\ref{prop:sig-facts}\ref{item:sig-facts-3}), any process for picking $A_i$ and $g_i$ must terminate while $L \le 4 \eps^{-2} \eta^{-1}$.
  
  Suppose $A_i$ and $g_i$ have been chosen for $i \in [k]$.  If~\eqref{eq:graph-cover} holds already for $k$ then we stop.  If not, let
  \[
    T = R \setminus \bigcup_{i=1}^k S_i
  \]
  and let $U \subseteq H^{s-1}$ denote those points $x$ such that $(x,\chi) \in T$ for at least one value of $\chi \in \widehat H$.  We can bound
  \[
    \EE_{x \in H^{s-1}} \sum_{\chi \in \widehat{H}} \big(1 - 1_{\bigcup_{i=1}^k S_i}(x,\chi)\big)\, \fsig f(x, \chi)^2 \le \eps/2 + \EE_{x \in H^{s-1}} \sum_{\chi \in \widehat{H}} 1_T(x,\chi) \fsig f(x, \chi)^2
  \]
  by~\eqref{eq:small-points}, and hence 
  \[
    \mu(U) \ge \EE_{x \in H^{s-1}} \sum_{\chi \in \widehat{H}} 1_T(x,\chi) \fsig_s f(x,\chi)^2 \ge \eps/2
  \]
  using Proposition~\ref{prop:sig-facts}\ref{item:sig-facts-2} again.

  We define a function $\kappa \colon U \to \widehat{H}$ by setting $\kappa(x)$ to be one of the values such that $(x, \kappa(x)) \in T$, arbitrarily.  It follows by Corollary~\ref{cor:found-1pc} that $\kappa$ is a 1\% polynomial with parameter $\gg_s \eps^{O_s(1)}$.
  
  By Lemma~\ref{lem:lift-to-real} we can find $\kappa' \colon U \to \frac1N \ZZ$ such that $\kappa'(x) \bmod 1 = \kappa(x)$ for all $x \in U$ and $\kappa$ is a 1\% polynomial with parameter $\gg_s \eps^{O_s(1)}$ (using our lower bound on $N$ in terms of $\eps$).

  By Theorem~\ref{thm:1pc}, we can find a nil-polynomial $g_{k+1} \colon H^{s-1} \to \RR$ of degree $s-1$, dimension $O_s(\eps)^{-O_s(1)}$ and complexity $M$ as in the statement, and a set $A_{k+1} \subseteq U$ with $\mu(A_{k+1}) \gg_s \eps^{O_s(1)}$, such that $g_{k+1}|_{A_{k+1}} = \kappa'|_{A_{k+1}}$.  Pick $\eta \gg_s \eps^{O_s(1)}$ so that the lower bound on $\mu\big(A_{k+1}\big)$ is precisely $\eta$.

  It is clear by construction that $g_{k+1}$ and $A_{k+1}$ obey the properties (i), (ii), (iii) required, and so the recursive construction is complete.
\end{proof}

\subsection{Correlation of $\cS_s f$ with a sum of nilsequences}%
\label{subsec:hs-nil}

We now have to do something useful with the conclusion of Lemma~\ref{lem:use-1pc}.  This splits into two parts: first, we show that the function $\cS_s f \colon H^s \to \CC$ correlates strongly with a sum of degree $s$ nilsequences on $H^s$; then in the next subsection, we use this to find a sum of nilsequences $H \to \CC$ that correlates with the original function $f$.

We know that $\fsig f$ correlates strongly with the graphs of various nil-polynomials $g_i \colon H^{s-1} \to \RR$. By taking a Fourier transform, it would be sufficient to show that functions $H^{s-1} \times H \to \CC$ of the form $(x,y) \mapsto e\big(g_i(x)\, y\big)$ were essentially nilsequences.

Unfortunately this last fact is not quite true.  For example, a function $H^{s-1} \to \frac1N \ZZ$ taking $O(1)$ distinct values is always a nil-polynomial in some trivial way, but the corresponding function $(x,y) \mapsto e(g_i(x)\, y)$ is not nilsequence-like on $H^s$ if the values $g_i(x)$ are chosen arbitrarily.  More generally, the feature of nil-polynomials that we are free to apply arbitrary bounded $\Gamma$-translates pointwise to the function $r \colon H \to G$ (see Definition~\ref{def:nil-poly}) is not nilsequence behavior. 

If we only wish to find a nilsequence $H^{s} \to \CC$ that correlates weakly with $\cS_s f$, this is not actually a problem, essentially because $\fsig f$ is non-negative, and so it is good enough to choose a nilsequence that captures some small window of this arbitrary nil-polynomial behaviour.  However, this notion of ``weakly'' involves an exponential or double-exponential loss in parameters.  When we apply the inverse theorem inductively on $s$, this loss really hurts, leading to $O(s)$ iterated exponentials overall.

Instead, then, we want to understand most or almost all of the mass of $\cS_s f$ in terms of a family of nilsequences.  The technical statement is the following partition of unity argument.
\begin{lemma}%
  \label{lem:global-hs-nil}
  Let $f \colon H \to \RR$ be a function with $\|f\|_\infty \le 1$ and $\|f\|_{U^{s+1}} = \delta$.  Then there exists a function $B \colon H^s \to \RR$ with the following properties:---
  \begin{enumerate}[label=(\roman*)]
    \item $\|B\|_{\infty} \le 1$;
    \item the inner product $\langle \cS_s f, B \rangle$ is large, i.e.\ $\big|\EE_{h \in H^s} \cS_s f(h) \overline{B(h)} \big| \ge \delta^{2^{s+1}}/2$;
    \item there exists a family of nilsequences $\rho_i \colon H^s \to [0,1]$ for $i \in [T]$, where $T$ is some positive integer, with $\rho_i$ having degree $s$, dimension $O_s(\delta)^{-O_s(1)}$, complexity $M$ and parameter $K$; and complex numbers $\alpha_i$ for $i \in [T]$ with $|\alpha_i| \le 1$; such that
      $B = \sum_{i=1}^T \alpha_i \rho_i$;
  \end{enumerate}
  where as usual $T, M,K \ll \exp\big(O_s(\delta)^{O_s(1)}\big)$ if $s \le 3$ and $T, M, K \ll \exp \exp\big(O_s(\delta)^{O_s(1)}\big)$ if $s \ge 4$.
\end{lemma}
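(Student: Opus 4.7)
\medskip

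\noindent\textit{Plan.} The starting point is to unpack $\cS_s f$ via Fourier inversion in the last coordinate, writing $\cS_s f(h',h_s) = \sum_\chi \fsig_s f(h',\chi)\,\chi(h_s)$. Combining Proposition \ref{prop:sig-facts}(iii) (which equates the $L^2$-mass of $\fsig_s f$ with $\|f\|_{U^{s+1}}^{2^{s+1}} = \delta^{2^{s+1}}$) with Lemma \ref{lem:use-1pc}, applied with parameter $\eps = \delta^{2^{s+1}}/4$, I will produce $L \ll_s \delta^{-O_s(1)}$ nil-polynomials $g_i \colon H^{s-1} \to \RR$ of degree $s-1$ and sets $A_i$ whose ``graphs'' $S_i$ cover all but $\eps$ of the $L^2$-mass of $\fsig_s f$. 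After modifying the iterative construction in the proof of Lemma \ref{lem:use-1pc} to ensure the $A_i$ are disjoint (which costs at most a constant factor since at each step the candidate set $U$ has density $\gg \eps$ and we remove previously-chosen $A_j$ of total density $\ll L\cdot \delta^{O_s(1)}$), the natural candidate $B_0(h',h_s) = \sum_i 1_{A_i}(h') e(g_i(h') h_s)$ is pointwise bounded by $1$, and because $\fsig_s f \in [0,1]$ satisfies $\fsig_s f \ge \fsig_s f^2$, Fourier inversion gives $\langle \cS_s f, B_0 \rangle \ge \delta^{2^{s+1}}/2$.

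The core of the proof is to realise each summand $(h',h_s) \mapsto 1_{A_i}(h')\,e(g_i(h') h_s)$ as a short sum of one-bounded degree-$s$ nilsequences on $H^s$. I will use the nilmanifold construction underlying Lemma \ref{lem:poly-construction}: given $g_i = F_i \circ r_i$ with $F_i \colon G_i \to \RR_{(s-1)}$ polynomial and $r_i \colon H^{s-1} \to G_i$ a bounded lift of a polynomial map into $G_i/\Gamma_i$, I will build a Heisenberg-type extension $\tilde G_i/\tilde \Gamma_i$ of degree $s$ accumulating the product $F_i(r_i(h')) \cdot h_s$, together with a polynomial map $H^s \to \tilde G_i/\tilde \Gamma_i$ combining $r_i \bmod \Gamma_i$ and the degree-one map $h_s \mapsto h_s/N$; composition with the Lipschitz function $e \colon \RR/\ZZ \to \CC$ then yields the desired nilsequence. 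Because we work directly with $\CC$-valued nilsequences rather than $\RR$-valued nil-polynomials, the domain extension $H \to H'$ of Lemma \ref{lem:poly-construction} is not forced on us here, which keeps the complexity bounds clean.

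The hard part of the argument is the partition-of-unity step needed to deal simultaneously with the bounded-lift ambiguity in $r_i$ (the value $F_i(r_i(h'))$ depends on the specific lift, not just on $r_i \bmod \Gamma_i$) and with the indicator $1_{A_i}$. My plan is to cover the ball $B_{G_i}(\id, M_0)$ containing the image of $r_i$ by at most $M_0^{O_s(D)}$ translates $\gamma F$ of a bounded fundamental domain, parametrised by a finite set of $\gamma \in \Gamma_i$, and then pass to a finite-index sublattice $\Gamma_i' \le \Gamma_i$ (of index $\le M_0^{O_s(D)}$) chosen so that these translates map to distinct cosets modulo $\Gamma_i'$. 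The map $r_i \bmod \Gamma_i'$ is then a polynomial map into the finite cover $G_i/\Gamma_i'$ (with slightly worse complexity, controlled via Appendix \ref{app:nilmanifolds}), and a Lipschitz partition of unity on $G_i/\Gamma_i'$ subordinate to this separated cover furnishes genuine nilsequences $\eta_\gamma \circ (r_i \bmod \Gamma_i')$ that localise to the set where $r_i(h') \in \gamma F$. On each piece, $\tilde r_i(h') := \gamma^{-1} r_i(h')$ is bounded and the Heisenberg construction of paragraph two applies directly; the set $A_i$ (defined by $g_i(h') \in \tfrac{1}{N}\ZZ$) is approximated by a Lipschitz cutoff on the same cover, with the small pointwise error absorbed into the $\eps$-budget.

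Summing over $i$ and $\gamma$, and splitting complex phases into real and imaginary parts of $[0,1]$-valued nilsequences as needed to conform to the stated conclusion, produces $B = \sum_{j=1}^T \alpha_j \rho_j$ with $T \le L \cdot M_0^{O_s(D)}$, where the bounds from Lemma \ref{lem:use-1pc} propagate to give $T, M, K \le \exp\!\bigl(O_s(\delta)^{O_s(1)}\bigr)$ for $s \le 3$ and the double-exponential bound for $s \ge 4$. Careful choice of the partition of unity (ensuring at each point only boundedly many $\rho_j$ are simultaneously non-zero, and their coefficients sum to modulus at most $1$) gives $\|B\|_\infty \le 1$. The main obstacle throughout is the technical juggling in the third paragraph: verifying that passing to $\Gamma_i'$, building the Heisenberg extension $\tilde G_i/\tilde \Gamma_i'$, and composing with the Lipschitz partition of unity all preserve the complexity bounds (dimension, metric, and filtration data) at the polynomial level required by the statement, drawing on Proposition \ref{prop:poly-space} and related material from Appendix \ref{app:nilmanifolds}.
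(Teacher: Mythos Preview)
Your plan has a genuine gap at the step where you ``pass to a finite-index sublattice $\Gamma_i' \le \Gamma_i$'' and assert that $r_i \bmod \Gamma_i'$ is a polynomial map into the finite cover $G_i/\Gamma_i'$. This is false in general. For a concrete counterexample take $G=\RR$ with the degree $1$ filtration, $\Gamma=\ZZ$, $\Gamma'=2\ZZ$, $H=\ZZ/N\ZZ$ with $N$ odd, and $r(x)=\{x/N\}$. Then $r \bmod \ZZ$ is the homomorphism $x\mapsto x/N$, hence polynomial; but $\partial^2 r(\angle(0;h,h))=\{0\}-2\{h/N\}+\{2h/N\}$ equals $-1$ whenever $N/2<h<N$, so $r \bmod 2\ZZ$ is not a degree-$1$ polynomial map. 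The point is that the nil-polynomial data give you a polynomial map to $G_i/\Gamma_i$ together with an \emph{arbitrary} bounded lift $r_i$; that lift carries no further polynomial structure, so there is no reason for it to be compatible with any finer lattice.

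This is not a technical wrinkle but the central obstruction: the value $g_i(h')=F_i(r_i(h'))$, and hence the function $(h',h_s)\mapsto e(g_i(h')\,h_s)$, depends on the lift $r_i$ and not merely on $r_i \bmod \Gamma_i$. The paper spells this out explicitly in the discussion preceding Lemma~\ref{lem:nilsequence-construction} (a nil-polynomial taking $O(1)$ arbitrary values in $\tfrac1N\ZZ$ gives a function $(x,y)\mapsto e(g(x)y)$ that is not nilsequence-like). What Lemma~\ref{lem:nilsequence-construction} actually delivers is weaker: a single nilmanifold $G'/\Gamma'$, a single polynomial map $p\colon H^{s-1}\times H\to G'/\Gamma'$, and a \emph{family} of Lipschitz functions $F_z$ such that for each $x\in A_i$ there exists $z=z(x)=\sigma_{G_i}(r_i(x))$ with $F_{z(x)}\circ p(x,\cdot)=e(g_i(x)\,\cdot)$. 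Since $z(x)$ is determined by the lift and not by $p(x,\cdot)$, there is no nilsequence partition of unity (i.e.\ no Lipschitz function of $p$) that selects the correct $z$. Your localisation scheme is exactly an attempt to build such a selector, and the counterexample above shows it cannot succeed.

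The paper's route is genuinely different. It does not try to express $e(g_i(h')h_s)$ as a sum of nilsequences. Instead it uses Lemma~\ref{lem:nilsequence-construction} only to conclude that $\Lambda(h',h_s):=\sum_\chi 1_S(h',\chi)\,\fsig_s f(h',\chi)\,\chi(h_s)$ is $K_1$-Lipschitz in $h_s$ with respect to the induced nilmanifold metric $d(x,y)=d_{G'/\Gamma'}(p(x),p(y))$, uniformly in $h'$. Combining this with the symmetry of $\cS_s f$ gives a ``Lipschitz along paths'' bound $|\cS_s f(x)-\cS_s f(y)|\ll K_1\Delta(x,y)+\eps^{1/4}$ for a path-distance $\Delta$ on $H^s$ that \emph{is} a nilsequence (since $d_{G'/\Gamma'}$ is Lipschitz on the nilmanifold). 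The nilsequences $\rho_j$ are then tent functions in $\Delta(\,\cdot\,,y_j)$ for randomly sampled centers $y_j$, and the coefficients $\alpha_j$ are averages of $\cS_s f$ against $\rho_j$. The characters $e(g_i(h')h_s)$ never have to be realised as nilsequences; the lift ambiguity is absorbed into the scalar coefficients $\alpha_j$.

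A smaller issue: your claim that the $A_i$ can be made disjoint because ``previously-chosen $A_j$ [have] total density $\ll L\cdot\delta^{O_s(1)}$'' is not justified. Lemma~\ref{lem:use-1pc} only gives a \emph{lower} bound $\mu(A_j)\ge\eta$; each $A_j$ could have density close to $1$, so removing $\bigcup_{j<i}A_j$ could annihilate the candidate set $U$. (The paper only needs the graphs $S_i\subseteq H^{s-1}\times\widehat{H}$ disjoint, which is automatic from the construction.) This is fixable with a loss of a factor of $L$ in the correlation, but it is worth flagging.
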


The principal ingredient is the following result which---necessarily in a weak sense, given the discussion above---allows us to relate a function of the type $(x,y) \mapsto e\big(g_i(x)\, y\big)$, where $g_i \colon H^{s-1} \to \RR$ is a nil-polynomial of degree $s-1$, to a nilsequence of degree $s$ on $H^s$.

\begin{lemma}%
  \label{lem:nilsequence-construction}
  Suppose $\phi \colon H^{s-1} \to \RR$ is a nil-polynomial of degree $s-1$, dimension $D$ and complexity $M$, and $A \subseteq H^{s-1}$ is some set such that $\phi(x) \in \frac1N \ZZ$ for all $x \in A$.  Also let $\eps > 0$ be a parameter.

  Then there exist a nilmanifold $G/\Gamma$ of degree $s$, dimension $\ll_s D^{O_s(1)}$ and complexity $\ll_s (D M)^{O_s(1)}$, a polynomial map $p \colon H^{s-1} \times H \to G/\Gamma$, and a family $F_z \colon G/\Gamma \to \RR$ of $K$-Lipschitz functions where $K = O_s(M D)^{O_s(1)}$ and $z$ is some auxiliary variable, such that the following holds: for any $x \in A$ there exists some choice of $z$ such that $F_z(p(x, y)) = e\big(\phi(x)\, y\big)$ for all $y \in H$.
\end{lemma}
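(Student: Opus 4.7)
\emph{Plan.}
Writing $\phi = F \circ r$ per Definition~\ref{def:nil-poly}, with nilmanifold $G_0/\Gamma_0$ of degree $s-1$, the plan is to build a degree-$s$ nilmanifold that attaches an extra ``accumulator'' coordinate at the top of the filtration recording $\phi(x) \cdot y \bmod \ZZ$, using the same twisted-product construction as in the proof of Lemma~\ref{lem:poly-construction}, specialised to $t = s$ with polynomial phase $\lambda(y) = y/N$. The auxiliary variable $z$ will absorb the unavoidable discrete ambiguity arising when $\phi$ is arbitrary-real-valued off $A$.

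Concretely, I would take $G = G_0 \times \RR \times V \times V$ with $V = \poly(G_0, \RR_{(s-1)})^*$, multiplication
\[
(g_1, t_1, u_1, v_1) \ast (g_2, t_2, u_2, v_2) = \big(g_1 g_2,\ t_1 + t_2,\ u_1 + \fs(g_1, u_2),\ v_1 + \fs(g_1, v_2) + t_2 u_1\big),
\]
and filtration placing the second copy of $V$ at degree $s$, with integral lattice $\Gamma = \Gamma_0 \times \ZZ \times \Upsilon \times \Upsilon$ where $\Upsilon \subseteq V$ is the dual lattice of $\poly(\Gamma_0, \ZZ_{(s-1)})$. The dimension and complexity checks (that $G_\bullet$ is indeed a filtration and that $G_1$ is an explicitly presented filtered group of dimension $\ll_s D^{O_s(1)}$ and complexity $\ll_s (MD)^{O_s(1)}$) follow those in the proof of Lemma~\ref{lem:poly-construction} almost verbatim.

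For the polynomial map, I would lift $r$ to $\tilde r \colon \ZZ^{s-1} \to G_0$ via Proposition~\ref{prop:z-lift} and define $\tilde p \colon \ZZ^{s-1} \times \ZZ \to G_1$ mimicking the map $p'$ from the proof of Lemma~\ref{lem:poly-construction}; the same periodicity argument shows that $\tilde p$ descends to a polynomial map $(\ZZ/N^2\ZZ)^{s-1} \times H \to G_1/\Gamma_1$, and Lemma~\ref{lem:extension-fix} then brings it down to $H^{s-1} \times H$, inflating dimensions polynomially. The Lipschitz family I would take of the form $F_z([(g, t, u, v)]) = e(N v(F) + z \cdot t)$, with $z$ ranging over a finite, polynomially-bounded set of affine shifts encoding the ambiguities from the lift of $\tilde r$ and from Lemma~\ref{lem:extension-fix}'s descent. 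The Lipschitz bound $K \ll_s (MD)^{O_s(1)}$ follows from Proposition~\ref{prop:poly-space}(viii) applied to the evaluation functional $v \mapsto v(F)$.

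The main obstacle is the following. The top accumulator of $\tilde p(x, y)$ evaluates (after rescaling by $N$) to $y \cdot F(\tilde r(x)) = y\phi(x) + y C(x)$, where $C(x)$ is a correction depending on the specific $\gamma_x \in \Gamma_0$ with $\tilde r(x) = r(x) \gamma_x$. By Proposition~\ref{prop:nil-is-hierarchy}(ii), $C(x)$ is an integer-coefficient linear combination of lower-degree nil-polynomial values at $x$, but these are arbitrary reals in general, so $y C(x)$ need not lie in $\ZZ$ even for integer $y$, and a single scalar $z$ cannot cancel this term uniformly in $y$. The resolution is to absorb each of the lower-degree nil-polynomial values at $x$ into its own coordinate of an enlarged nilmanifold---so the nilmanifold records the full polynomial hierarchy associated to $\phi$ by Proposition~\ref{prop:nil-is-hierarchy}---and let $F_z$ pick out the correct $z$-indexed \emph{integer} combination of these coordinates. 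This effectively reduces the construction to a fibrewise application of Lemma~\ref{lem:poly-construction} across the hierarchy of $\phi$, and the bulk of the remaining work is the bookkeeping to verify that the resulting $z$-family has bounded size uniformly in $x$, while the $A$-hypothesis $\phi(x) \in \frac{1}{N}\ZZ$ ensures the top-level term $y\phi(x)$ genuinely descends to $H$.
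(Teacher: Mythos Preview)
Your approach diverges substantially from the paper's, and the obstacle you correctly identify is fatal to your route as described. The proposed function $F_z([(g,t,u,v)]) = e(N v(F) + z \cdot t)$ is not well-defined on $G/\Gamma$: under right multiplication by $(\id, 0, 0, \upsilon)$ for $\upsilon \in \Upsilon$, the last coordinate changes by $\fs(g,\upsilon)$, and there is no reason $N\,\fs(g,\upsilon)(F) \in \ZZ$ for general $g \in G_0$. Your proposed resolution---enlarging the nilmanifold to carry the whole polynomial hierarchy and letting $z$ select an integer combination---is too vague to assess, and even if it produced the right \emph{value}, it is unclear how the resulting $F_z$ would descend to a Lipschitz function on the quotient. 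Routing through Lemma~\ref{lem:poly-construction} and Lemma~\ref{lem:extension-fix} is a red herring: those produce nil-\emph{polynomials}, where the function $r$ is allowed arbitrary pointwise $\Gamma$-shifts; here you need a genuine nilsequence, i.e.\ a $\Gamma$-invariant Lipschitz function, and that is precisely the difficulty.

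The paper's proof sidesteps your obstacle by a different mechanism. It takes $W = \poly(G', \RR_{(s)})$ (not the dual) and forms $\wt G = G' \ltimes W$ via the \emph{right} action $\tau$. It decomposes $f' = f_0 + f_1$ with $f_1$ integral and $f_0$ in a fundamental domain, sets $\wt r(z) = (\id,f_1) \ast (r'(z),0) \ast (\id,-f_1)$, and takes $F(g,w) = e(f_0(g) + w(\id_{G'}))$. This $F$ is invariant under $(\id,\Psi_1)$ but \emph{not} under $(\Gamma',0)$. Rather than forcing full invariance, the paper localises: it multiplies $F$ by a tent function $T_z$ supported in a single fundamental domain for $\Gamma$ (centred at a chosen point $z$ in coordinate space) and then \emph{sums over the $(\Gamma',0)$-orbit}. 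This sum has at most two nonzero terms and is $\wt\Gamma_1$-invariant by construction, yielding a Lipschitz $F_z$ on $\wt G_1/\wt\Gamma_1$. For $x \in A$ one takes $z = \sigma_G(r(x))$, so the tent function picks out exactly the term at $r(x)$; the hypothesis $\phi(x) \in \frac1N\ZZ$ is then used only in the last line, to collapse the residual sum over $m \in \ZZ$ into $e(\phi(x)y)$. The auxiliary variable $z$ is thus a \emph{continuous} parameter (a point in coordinate space), not a discrete index of integer combinations, and no passage to $H' = (\ZZ/N^2\ZZ)^n$ or appeal to Lemma~\ref{lem:extension-fix} is needed.
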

This implies that $(x,y) \mapsto e\big(\phi(x)\, y\big)$ behaves like a Lipschitz function in the $y$ variable for each fixed $x$, under a metric on $H$ induced from a nilmanifold $G/\Gamma$ that does not depend on $x$.

Before starting on proofs we make one remark that will be used frequently, analogous to Remark~\ref{rem:nil-vs}.
\begin{remark}%
  \label{rem:prod-nilseq}
  If $\psi_i$ for $i \in [L]$ are all one-bounded nilsequences on a group $H'$ of degree $s$, dimension $D_i$, complexity $M_i$ and parameter $K_i$. Then there is a nilmanifold $G/\Gamma$ of degree $s$, dimension $D = \sum_{i=1}^L D_i$ and complexity $M = \max_{i=1}^L M_i$, such that any functions of the form $\psi_i$, $\sum_{i=1}^L \psi_i$ or $\prod_{i=1}^L \psi_i$ are nilsequences on $G/\Gamma$ with Lipschitz parameter $K = \max_{i=1}^L K_i$.

  Indeed, if $G^{(i)}/\Gamma^{(i)}$, $p_i \colon H' \to G^{(i)}/\Gamma^{(i)}$ and $F_i \colon G^{(i)}/\Gamma^{(i)} \to \CC$ are data associated to $\psi_i$, we can take direct products $G = \prod_{i=1}^L G^{(i)}$, $\Gamma = \prod_{i=1}^L \Gamma^{(i)}$, so $G/\Gamma \cong \prod_{i=1}^L G^{(i)}/\Gamma^{(i)}$, and then set $p(x) = (p_1(x),\dots,p_L(x))$.  Then, functions $F(x_1,\dots,x_L) = F_i(x_i)$, $F(x_1,\dots,x_L) = \sum_{i=1}^L F_i(x_i)$ or $F(x_1,\dots,x_L) = \prod_{i=1}^L F_i(x_i)$ give the required nilsequences $F \circ p$.

  It is easy to verify that the filtrations, explicitly presented filtered group structures, metrics and so on behave exactly as one would expect, giving the quantitative bounds claimed.
\end{remark}

Given Lemma~\ref{lem:nilsequence-construction}, the outline of the proof of Lemma~\ref{lem:global-hs-nil} is as follows.

By Lemma~\ref{lem:nilsequence-construction} and a Fourier transform, we can deduce that $\cS_s f(h_1,\dots,h_s)$ itself behaves (up to small errors) like a Lipschitz function in $h_s$ for $h_1,\dots,h_{s-1}$ fixed, for some notion of distance coming from a nilmanifold.  Because $\cS_s f$ is symmetric, the same holds for each other coordinate $h_i$.  Hence, we can potentially relate the values $\cS_s f(x)$ and $\cS_s f(y)$ by moving from $x$ to $y$, changing one coordinate at a time.

We use this induced notion of distance on all of $H^s$ and build an associated partition of unity on almost all of $H^s$, whose parts are the functions $\rho_i$ for $i \in [L]$ and such that $\cS_s f$ is approximately constant on the support of $\rho_i$ for each $i$.  We can then approximate $\cS_s f$ by a smoothed-out version of itself, built as a sum of the nilsequences $\rho_i$.

\begin{proof}[Proof of Lemma~\ref{lem:global-hs-nil} assuming Lemma~\ref{lem:nilsequence-construction}]
  We first take an integer $L$, sets $A_1,\dots,A_L \subseteq H^{s-1}$ and $S_i \subseteq H^{s-1} \times \widehat{H}$, and nil-polynomials $g_1,\dots,g_L \colon H^{s-1} \to \RR$ of degree $s-1$, dimension $D_0$ and complexity $M_0$, as provided by Lemma~\ref{lem:use-1pc} applied with some parameter $\eps$ to be determined.
  
  For each $i \in [L]$ we apply Lemma~\ref{lem:nilsequence-construction} with $\phi=g_i$, and obtain families of one-bounded nilsequences $F_z^{(i)} \circ p_i\colon H^{s-1} \times H \to \CC$ on nilmanifolds $G^{(i)}/\Gamma^{(i)}$ with the stated properties.  Specifically say these have degree $s$, dimension $D_1 \ll_s D_0^{O_s(1)}$, complexity $M$ and parameter $K_1$, where $M,K_1 \ll_s (D_0 M_0)^{O_s(1)}$.

  By Remark~\ref{rem:prod-nilseq} we can form one big nilmanifold $G/\Gamma$ of degree $s$, dimension $D_2 = L D_1$ and complexity $M$, and a polynomial map $p \colon H^{s-1} \times H \to G/\Gamma$, such that each nilsequence $F_z^{(i)} \circ p_i$ is equal to a nilsequence ${F'}_z^{(i)} \circ p$ on $G/\Gamma$ with parameter $K_1$.
  
  For $x,y \in H^s$, define $d(x,y) = d_G(p(x), p(y))$.  This can be thought of as a nilsequence $H^{2s} \to \RR$ on the nilmanifold $(G/\Gamma)^2$, since $(x,y) \mapsto (p(x), p(y))$ is a polynomial map and $(u,w) \mapsto d_{G/\Gamma}(u,w)$ is a Lipschitz function $(G/\Gamma)^2 \to \RR$ with parameter $1$, under the product metric on $(G/\Gamma)^2$.

  Consider the function $\Lambda \colon H^{s-1} \times H \to \CC$ obtained by restricting $\fsig f$ to $S = \bigcup_{i=1}^L S_i$ and taking an inverse Fourier transform; that is,
  \[
    \Lambda(x,y) = \sum_{\chi \in \widehat{H}} 1_S(x,\chi) \fsig f(x,\chi) \chi(y) .
  \]
  By Proposition~\ref{prop:sig-facts} we have $\|\Lambda(x,y)\|_\infty \le 1$, and by the conclusion of Lemma~\ref{lem:use-1pc} and Parseval we have
  \begin{equation}
    \label{eq:sf-lambda-l2}
    \EE_{h \in H^s} \big| \cS_s f(h) - \Lambda(h) \big|^2 \le \eps.
  \end{equation}
  We recall that $\cS_s f$ is a symmetric function in its arguments. However, $\Lambda$ will not be in general.  If $\pi$ is a bijection $[s] \to [s]$ we abuse notation to write $\pi \colon H^s \to H^s$ for the map sending $(h_1,\dots,h_s)$ to $\big(h_{\pi(1)},\dots,h_{\pi(s)}\big)$, and write $\Lambda_\pi = \Lambda \circ \pi$.  We then define $\Omega_0 \subseteq H^s$ by
  \begin{equation}
    \label{eq:sf-lambda-control}
    \Omega_0 = \big\{ h \in H^s \colon |\cS_s f(h) - \Lambda_\pi(h) \big| > \eps^{1/4} \text{ for some } \pi \colon [s]  \to [s] \big\}.
  \end{equation}
  We deduce from~\eqref{eq:sf-lambda-l2} that $\mu(\Omega_0) \le s!\, \eps^{1/2}$.  Also, for $x,y \in H^s$ we define $d_\pi(x,y) = d(\pi(x), \pi(y))$, for $d$ as above.

  By the discussion above we know that for every $(x,\chi) \in S$ there exists some $K_1$-Lipschitz function $F \colon G/\Gamma \to \CC$ such that $\chi(y) = F(p(x, y))$ for all $y \in H$.  In particular, for every $y,y' \in H$ we have $|\chi(y) - \chi(y')| \le K_1 d\big((x,y), (x,y')\big)$.
  
  For any $x \in H^{s-1}$ the function $y \mapsto \Lambda(x,y)$ is a linear combination of characters $\chi$ for which $(x,\chi) \in S$ by definition, with total weight at most $\sum_{\chi} \fsig f(x,\chi) \le 1$ (Proposition~\ref{prop:sig-facts} again), and hence $|\Lambda(x,y) - \Lambda(x,y')| \le K_1 d\big((x,y), (x,y')\big)$ for all $x \in H^{s-1}$ and $y \in H$.
  
  Replacing $\Lambda$ by $\Lambda_\pi$ where $\pi(s) = i$, similarly if $x, x' \in H^s$ and $x_j = x'_j$ for all $j \ne i$ then
  \begin{equation}
    \label{eq:lambda-lambda-control}
    |\Lambda_\pi(x) - \Lambda_\pi(x')| \le K_1 d_\pi(x,x') .
  \end{equation}

  We now construct a rather involved approximate partition of unity on $H^s$, as follows.
  \begin{claim}%
    \label{claim:partition}
    Let $\sigma, \eta > 0$ be parameters. Then there exists an integer $T \le O(M D_2 / \eta)^{-O_s(D_2)} \sigma^{O_s(1)}$, functions $\rho_j \colon H^s \to [0,1]$ for $j \in [T]$, and an exceptional set $\Omega_1 \subseteq H^s$ with $\mu(\Omega_1) \le \sigma$, such that:---
    \begin{enumerate}[label=(\roman*)]
      \item for any $x \in H^s \setminus \Omega_0 \setminus \Omega_1$ we have $\sum_{j\in[T]} \rho_j(x) = 1$, and for all $x \in H^s$ we have $\sum_{j \in [T]} \rho_j(x) \le 1$;
      \item each $\rho_j$ is a nilsequence of degree $s$, dimension $D = 2s D_2$, complexity $M$ and parameter $O(1/\eta)$; and
      \item for each $j \in [T]$ and any $x,x'$ in the support of $\rho_j$ we have $|\cS_s f(x) - \cS_s f(x')| \ll_s K_1 \eta  + \eps^{1/4}$.
    \end{enumerate}
  \end{claim}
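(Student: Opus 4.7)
The plan is to build each $\rho_j$ as a product of nilmanifold bump functions pulled back along carefully chosen polynomial maps $H^s\to G/\Gamma$, and then verify (iii) by connecting any $x,x'$ in a common support by a path that changes one coordinate at a time.

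First, construct a standard smooth partition of unity $\{\psi_z\}_{z\in Z}$ on $G/\Gamma$ at scale $\eta$: by pulling back a lattice of spacing $\eta$ from the fundamental domain (Proposition~\ref{prop:fun-dom}) and convolving with a bump, one obtains $|Z|\ll (MD_2/\eta)^{O_s(D_2)}$ functions, each $O(1/\eta)$-Lipschitz, supported in a ball of radius $\eta$, with $\sum_z\psi_z\equiv 1$. For each $i\in[s]$, fix a permutation $\pi_i$ of $[s]$ with $\pi_i(s)=i$ and let $\Phi_i\colon H^s\to G/\Gamma$ be the polynomial map $\Phi_i(x)=p(\pi_i(x))$; augment it with a companion polynomial map $\Phi_i'\colon H^s\to G/\Gamma$ (described below). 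Define
\[
\rho_{\vec z,\vec z\,{}'}(x)=\prod_{i=1}^s\psi_{z_i}(\Phi_i(x))\,\psi_{z_i'}(\Phi_i'(x))
\]
for $(\vec z,\vec z\,')\in Z^{2s}$. Properties (i) and (ii) are then immediate: $\sum_{\vec z,\vec z\,'}\rho_{\vec z,\vec z\,'}\equiv 1$ identically, and by Remark~\ref{rem:prod-nilseq} each $\rho_{\vec z,\vec z\,'}$ is a nilsequence on $(G/\Gamma)^{2s}$ of degree $s$, dimension $2sD_2$, complexity $M$, and Lipschitz parameter $O(1/\eta)$, giving $T\le (MD_2/\eta)^{O_s(D_2)}$.

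For (iii), given $x,x'$ both in $\operatorname{supp}(\rho_j)\setminus\Omega_0$ I connect them by the path $x=x^{(0)},x^{(1)},\ldots,x^{(s)}=x'$ where $x^{(i)}$ agrees with $x^{(i-1)}$ outside coordinate $i$. For each step, combining $|\cS_sf-\Lambda_{\pi_i}|\le\eps^{1/4}$ (valid off $\Omega_0$) with \eqref{eq:lambda-lambda-control} yields
\[
|\cS_sf(x^{(i-1)})-\cS_sf(x^{(i)})|\le K_1\,d_G\bigl(\Phi_i(x^{(i-1)}),\Phi_i(x^{(i)})\bigr)+2\eps^{1/4},
\]
provided $x^{(i-1)},x^{(i)}\notin\Omega_0$. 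Summing over $i$ will produce the desired $O_s(K_1\eta+\eps^{1/4})$ bound once we ensure (a) $d_G(\Phi_i(x^{(i-1)}),\Phi_i(x^{(i)}))=O(\eta)$ at each step, and (b) each intermediate hybrid point lies outside $\Omega_0$.

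The hard part is (a): the intermediate $x^{(i)}$ have coordinates mixed from $x$ and $x'$, so they need not lie in $\operatorname{supp}(\rho_j)$, and knowing $\Phi_i(x),\Phi_i(x')$ does not directly bound $\Phi_i$ at hybrid arguments. I plan to choose the companion $\Phi_i'$ so that joint $\eta$-control of $(\Phi_i,\Phi_i')$ at both $x$ and $x'$ forces $d_G(\Phi_i(x^{(i-1)}),\Phi_i(x^{(i)}))=O(\eta)$; natural candidates arise from the polynomial (Host--Kra) structure of $p$, expressing the value at a hybrid vertex as a bounded combination of values at ``real'' cube vertices. This doubling per direction is what produces the factor $2$ in the dimension $2sD_2$. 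For (b), I set $\Omega_1$ to be those $x$ whose partition piece meets $\Omega_0$ on more than a $\sigma$-fraction of its mass; Markov combined with $\mu(\Omega_0)\le s!\eps^{1/2}$ gives $\mu(\Omega_1)\le\sigma$ once $\eps$ is taken small enough relative to $\sigma$, and for $x,x'\notin\Omega_0\cup\Omega_1$ in a common part an intermediate path avoiding $\Omega_0$ is obtained by a further averaging. The substantive obstacle throughout is precisely the design of the companion maps $\Phi_i'$ and the verification that two polynomial-map constraints per direction suffice to control $\Phi_i$ on all path-relevant hybrids; the rest is bookkeeping.
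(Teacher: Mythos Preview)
Your proposal has a genuine gap at the point you yourself flag as ``the substantive obstacle'': the design of the companion maps $\Phi_i'$. You need, for each step $i$, control on $d_G\bigl(\Phi_i(x^{(i-1)}),\Phi_i(x^{(i)})\bigr)$, where the hybrid $x^{(i-1)}$ has some coordinates from $x$ and some from $x'$. But $\Phi_i = p\circ\pi_i$ is an arbitrary polynomial map $H^s\to G/\Gamma$; knowing $p(\pi_i(x))$ and $p(\pi_i(x'))$ --- and even one extra companion value at each --- places no constraint on $p$ at a mixed argument. The Host--Kra structure gives relations among the $2^s$ vertices of a full cube, not a formula expressing one vertex in terms of two others plus auxiliaries. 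So two constraints per direction cannot suffice, and adding enough constraints to pin down all hybrid values would blow up the dimension far beyond $2sD_2$.

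The paper avoids this by breaking the symmetry between $x$ and $x'$. It introduces the ``path distance'' $\Delta(x,y)=\sum_{i=1}^s d_{\pi_i}\bigl((y_1,\dots,y_{i-1},x_i,\dots,x_s),(y_1,\dots,y_i,x_{i+1},\dots,x_s)\bigr)$ and observes that for \emph{fixed} $y$, each hybrid is an affine-linear function of $x$, so $x\mapsto\Delta(x,y)$ is a nilsequence on $(G/\Gamma)^{2s}$. It then chooses random anchor points $y_1,\dots,y_T$ and sets $\rho_j$ to be a tent function of $\Delta(\,\cdot\,,y_j)$, normalized. Property~(iii) follows because both $x$ and $x'$ in the support of $\rho_j$ satisfy $\Delta(x,y_j),\Delta(x',y_j)\le 2\eta$, so each is close to $\cS_s f(y_j)$ via the Lipschitz bound \eqref{eq:lambda-delta-control}. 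The set $\Omega_1$ and the bound on $T$ come from a colouring/cleanup argument showing most $x$ have many admissible close neighbours (Claim~\ref{claim:many-pairs}), followed by random sampling. The anchoring trick --- comparing everything to a common $y_j$ rather than $x$ to $x'$ directly --- is the idea your proposal is missing.
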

  \begin{proof}[Proof of claim]
    For each $i \in [s]$, fix any permutation $\pi_i \colon [s] \to [s]$ with $\pi_i(s) = i$.
    We then consider the following function on pairs $x,y \in H^s$:
    \begin{equation}
      \label{eq:big-delta}
      \Delta(x,y) = \sum_{i=1}^s d_{\pi_i}\big((y_1,\dots,y_{i-1},x_i,\dots,x_s), (y_1,\dots,y_i,x_{i+1},\dots,x_s) \big)
    \end{equation}
    i.e.\ the total distance travelled as we walk from $x$ to $y$ by changing the coordinates over one at a time.  (Note $\Delta$ need not be symmetric nor obey the triangle inequality.)

    The motivation for this definition is the following: if $x,y \in H^s$ and $(y_1,\dots,y_{i},x_{i+1},\dots,x_s) \notin \Omega_0$ for each $0 \le i \le s$, by~\eqref{eq:sf-lambda-control},~\eqref{eq:lambda-lambda-control} and repeated use of the triangle inequality we deduce that
    \begin{equation}
      \label{eq:lambda-delta-control}
      |\cS_s f(x) - \cS_s f(y)| \ll_s K_1 \Delta(x,y) + \eps^{1/4} ;
    \end{equation}
    i.e., $\cS_s f$ behaves like a Lipschitz function under $\Delta$.  We call $(x,y)$ such that $(y_1,\dots,y_{i},x_{i+1},\dots,x_s) \notin \Omega_0$ for each $0 \le i \le s$ an \emph{admissible pair}.
    
    The function $\Delta \colon H^{2s} \to \RR$ is a nilsequence on $(G/\Gamma)^{2s}$ with parameter $1$.  Indeed, we know $(x,y) \mapsto d(x,y)$ is a nilsequence on $(G/\Gamma)^2$ with parameter $1$, and hence so is each term in the sum~\eqref{eq:big-delta}, as it composes $d \colon H^{2s} \to \RR$ with a group homomorphism $H^{2s} \to H^{2s}$.  Hence the sum is also a nilsequence on $(G/\Gamma)^{2s}$ by Remark~\ref{rem:prod-nilseq}. 

    It follows that for any fixed $y \in H^s$ the function $x \mapsto \Delta(x,y)$ is a nilsequence $H^s \to \RR$ on $(G/\Gamma)^{2s}$, with parameter $1$, whose associated polynomial map depends on $y$: indeed, it is a composition of $\Delta$ with an affine-linear map $H^s \to H^{2s}$.

    We next want to show that for most $x \in H^s$ there are many $y \in H^s$ such that $\Delta(x,y)$ is small.
    \begin{claim}%
      \label{claim:many-pairs}
      There is a set $\Omega_2 \subseteq H^s$ with $\mu(\Omega_2) \le \sigma/2$ such that the following holds: for all $x \in H^s \setminus \Omega_0 \setminus \Omega_2$, there exist at least $\nu |H|^s$ values $y \in H^s$ such that $(x,y)$ is an admissible pair and $\Delta(x,y) \le \eta$.  Here we can take $\nu \ge O(M D_2 / \eta)^{-O_s(D_2)} \sigma^{s}$.
    \end{claim}
    \begin{proof}[Proof of claim]
      We observe by Proposition~\ref{prop:fun-dom} and Proposition~\ref{prop:metric-compare} that $G/\Gamma$ may be covered by $Q = O(M D_2 / \eta)^{O_s(D_2)}$ balls of radius $\eta/2s$; write $U_1,\dots,U_Q \subseteq G/\Gamma$ for these sets.  Let $U'_j \subseteq U_j$ be disjoint subsets which partition $G/\Gamma$.  Clearly, if $p(x), p(y)$ lie in the same cell $U'_j$ then $d(x,y) \le \eta/s$.

      We therefore colour $H^s \setminus \Omega_0$ with colours $[Q]^s$, where $x \in H^s \setminus \Omega_0$ is given the colour $(j_1,\dots,j_s)$ such that $p(\pi_i(x)) \in U'_{j_i}$ for each $i \in [s]$.  We say an admissible pair $(x,y)$ is \emph{completely monochromatic} if $x$, $y$ and every intermediate point $(y_1,\dots,y_i,x_{i+1},\dots,x_s)$ ($1 \le i \le s-1$) all have the same color.  By~\eqref{eq:big-delta} this implies $\Delta(x,y) \le \eta$.

      We next perform another clean-up argument in the spirit of Lemma~\ref{lem:cube-system}.
      \begin{claim}
        If $X \subseteq H^s$ is any set and $c>0$ is a parameter, there exists $X' \subseteq X$ with the property that for each $i \in [s]$ and $x \in X'$,
        \begin{equation}
          \label{eq:robust-hs-sets}
          |\{ z \in X' \colon z_j = x_j \; \forall j \ne i \}| \ge c |H|
        \end{equation}
        and $\mu(X \setminus X') \le c s$.
      \end{claim}
      \begin{proof}[Proof of claim]
        We define $X'$ by starting with the set $X$ and repeatedly applying the following procedure: if $i \in [s]$ and $x \in X'$ fails to satisfy~\eqref{eq:robust-hs-sets}, remove the whole fiber of points $z \in X'$ such that $z_j = x_j$ for all $j \ne i$ (including $x$ itself).  When this process terminates, $X'$ has the required property.  Moreover, each of the $s |H|^{s-1}$ fibers can be deleted at most once, and in doing so we lose at most $c |H|$ elements from $X'$, so $|X \setminus X'| \le (c |H|) (s |H|^{s-1})$ as required.
      \end{proof}
      We apply the claim with $c = \sigma / (2 s Q^s)$ to each of the $Q^s$ colour classes of $H^s \setminus \Omega_0$, and set $\Omega_2$ to be the union of all the deleted sets $X \setminus X'$.  By applying~\eqref{eq:robust-hs-sets} once for each $i \in [s]$ in increasing order, it is immediate that for each $x \in H^s \setminus \Omega_0 \setminus \Omega_2$ we can find $(c |H|)^s$ elements $y \in H^s \setminus \Omega_0 \setminus \Omega_2$ such that $(x,y)$ is completely monochromatic, i.e.\ by choosing the coordinates $y_1,\dots,y_s$ one at a time.  This proves Claim~\ref{claim:many-pairs}.
    \end{proof}
    We continue to write $\nu$ and $\Omega_2$ for the values given by Claim~\ref{claim:many-pairs}. 
    If we now pick $y_1,\dots,y_T \in H^s$ independently and uniformly at random, for some $T$ to be determined, then for fixed $x \in H^s \setminus \Omega_0 \setminus \Omega_2$ the probability that there is at least one $j$ such that $(x,y_j)$ is admissible and $\Delta(x,y_j) \le \eta$ is at least $1 - (1-\nu)^{T}$. Taking any $T \ge \log(2/\sigma)/ \nu$ this is at least $1-\sigma/2$.
    
    So there is some choice of $y_1,\dots,y_T$ and some further exceptional set $\Omega_3 \subseteq H^s$, such that $\mu(\Omega_3) \le \sigma/2$, and for all $x \in H^s \setminus \Omega_0 \setminus \Omega_2 \setminus \Omega_3$ there is at least one $j \in [T]$ such that $(x,y_j)$ is an admissible pair and $\Delta(x,y_j) \le \eta$.
    We set $\Omega_1 = \Omega_2 \cup \Omega_3$.

    We lastly define our functions $\rho_j$ as follows. For each $j \in [T]$, define $\beta_j, \rho_j \colon H^s \to [0,1]$ by
    \[
      \beta_j(x) = \min\big(1,\; 2 \max(0, 1 - \Delta(x, y_j) / 2 \eta)\big)
    \]
    and
    \[
      \rho_j(x) = \frac{\beta_j(x)}{\min\left(1,\ \sum_{j' \in [T]} \beta_{j'}(x)\right)} .
    \]
    Note that $\beta_j$ is supported on $\{ x \colon \Delta(x,y_j) \le 2 \eta\}$.  Hence, $|\cS_s f(x) - \cS_s f(y_j)| \ll_s \eta K_1 + \eps^{1/4}$ whenever $\rho_j(x) > 0$ (by~\eqref{eq:lambda-delta-control}). Part (iii) of Claim~\ref{claim:partition} follows.
    
    Whenever $\Delta(x, y_{j_0}) \le \eta$ for at least one $j_{0} \in [T]$ we have $\beta_{j_0}(x) =1$ and hence $\sum_{j \in [T]} \rho_{j}(x) = 1$, since the minimum in the denominator of the definition of $\rho_{j}$ does not kick in.  This shows part (i).
    
    Finally, note that $\Delta(x, y_j)$ are nilsequences on $(G/\Gamma)^{2s}$ described above, and hence so are $\beta_j$ and $\rho_j$; moreover these have dimension $D = 2s D_2$, and the latter two have Lipschitz parameter $O(1/\eta)$, as required for (ii).
  \end{proof}

  To complete the proof of Lemma~\ref{lem:global-hs-nil}, we write $\|\rho_j\|_1 = \EE_{x \in H^s} \rho_j(x)$ and define complex numbers
  \[
    \alpha_j = \frac1{\|\rho_j\|_1} \EE_{y \in H^s} \rho_j(y)\, \cS_s f(y)
  \]
  which gives $|\alpha_j| \le 1$ since it is an average of values $|\cS_s f(y)|$ and $\|\cS_s f\|_\infty \le 1$. We also define the function $B \colon H^s \to \CC$ by $B = \sum_{j=1}^T \alpha_j\, \rho_j$.  This completes part (iii) of the lemma, up to finalizing parameters.

  Moreover for $x \in H^s$,
  \[
    |B(x)| \le \sum_{j=1}^T |\alpha_j| \rho_j(x) \le 1
  \]
  by Claim~\ref{claim:partition}(i), giving part (i) of the lemma.

  We note that if $x \in H^s \setminus \Omega_0 \setminus \Omega_1$ and $j$ is such that $\rho_j(x) \ne 0$, then $|\alpha_j - \cS_s f(x)| \ll_s K_1 \eta + \eps^{1/4}$, by averaging Claim~\ref{claim:partition}(iii).  By further averaging we deduce $|B(x) - \cS_s f(x)| \ll_s K_1 \eta + \eps^{1/4}$ for all such $x$.  Hence
  \[
    \EE_{x \in H^s} |B(x) - \cS_s f(x)|^2 \ll_s (K_1 \eta + \eps^{1/4})^2 + \eps^{1/2} + \sigma
  \]
  and part (ii) of the lemma follows by the triangle inequality (and Proposition~\ref{prop:sig-facts}\ref{item:sig-facts-3}) provided the right hand side is at most $\delta^{2^{s+1}} / 4$.

  This is achievable for some $\eps$, $\sigma$ and $\eta$ with $\eps,\sigma \gg_s \delta^{O_s(1)}$, and $\eta \gg_s \delta^{O_s(1)} / K_1$.  Hence the final Lipschitz parameter $K = O(1/\eta)$ is $\ll_s \delta^{-O_s(1)} K_1$.  Substituting these into the bounds for $D$, $M$, $K_1$ and $T$ already obtained gives final bounds in terms of $\delta$ of the correct form.
\end{proof}

We now prove Lemma~\ref{lem:nilsequence-construction}.
The main issue is that the given nil-polynomial $\phi = f \circ r$ makes no topological guarantees about the polynomial map $f \colon G \to \RR_{(s-1)}$: in particular a function $G \to \CC$ such as $g \mapsto e(f(g))$ may not be Lipschitz with a well-controlled constant, even on small neighbourhoods.  By contrast, a nilsequence requires a Lipschitz function.  To deal with this we need to import the polynomial map $f$ into the nilmanifold structure itself, where its failure to be Lipschitz can do no harm.

\begin{proof}[Proof of Lemma~\ref{lem:nilsequence-construction}]
  We fix a nilmanifold $G/\Gamma$, function $r \colon H^{s-1} \to G$ and polynomial map $f \colon G \to \RR_{(s-1)}$ defining the nil-polynomial $\phi = f \circ r$.  Also let $d_1,\dots,d_s$ and $(\gamma_{i,j})_{i \in [s], j \in [d_i]}$ be the data of the explicitly presented filtered group $G$ (see Definition~\ref{def:explicit-group}).

  We first construct an auxiliary nil-polynomial from $\phi$.
  \begin{claim}
    The function $H^{s-1} \times H \to \RR$ sending $(x,y) \mapsto \phi(x)\,y$ for each $x \in H^{s-1}$ and each integer $-N/2 < y \le N/2$, is a nil-polynomial $f' \circ r'$ of degree $s$, dimension $D+1$ and complexity $M$ on a nilmanifold $G'/\Gamma'$.
  \end{claim}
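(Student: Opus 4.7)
The plan is to build $G' = G \times \RR$ as a direct-product filtered group, with $\Gamma' = \Gamma \times \ZZ$, and exhibit the desired nil-polynomial via
\[
  r'(x, y) = \big(r(x),\ y/N\big), \qquad f'(g, a) = N\, f(g)\, a,
\]
where $y$ is identified with its integer representative in $(-N/2, N/2]$; this makes $f' \circ r'(x, y) = N f(r(x)) (y/N) = \phi(x) y$ on the nose. The filtration on $G'$ is inherited: $G'_i = G_i \times \RR$ at $i = 1$ and $G'_i = G_i \times \{0\}$ for $i \ge 2$, with the trivial top level at position $s$ so that $G'$ counts as ``degree $s$''---a harmless convention accommodated by allowing trivial top filtration levels.

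I would first verify $r' \bmod \Gamma'$ is a polynomial map $H^{s-1} \times H \to G'/\Gamma'$: the first coordinate is polynomial by hypothesis, while $y \mapsto y/N \bmod \ZZ$ is the canonical homomorphism $H \cong \tfrac{1}{N}\ZZ / \ZZ \hookrightarrow \RR/\ZZ$, a polynomial map of degree $1$. The dimension of $G'$ is visibly $D + 1$; the explicit presentation data for $G'$ is obtained by appending a single one-parameter subgroup $a \mapsto (\id_G, a)$ to that of $G$, so the complexity of $G'/\Gamma'$ is bounded by that of $G/\Gamma$ up to absolute constants. The size bound $d_{G'}(\id, r'(x, y)) \le d_G(\id_G, r(x)) + |y|/N \le M + 1/2$ is acceptable.

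The main content is verifying that $f' \colon G' \to \RR_{(s)}$ is a polynomial map of degree $s$. I would compute a general iterated discrete derivative
\[
  \partial_{(g_1, a_1)} \cdots \partial_{(g_k, a_k)} f'(g, a)
\]
by expanding via the discrete Leibniz rule (Lemma \ref{lem:product-rule}), obtaining a sum of terms of the form $\pm\, a_{\bullet} \cdot (\partial_{g_{j_1}} \cdots \partial_{g_{j_r}} f)(g \cdot \text{shift})$, where the $a_\bullet$ factor is either $a$ itself or one of the $a_j$ and the remaining indices act on $f$. Since $f'$ is linear in $a$, any term in which two or more derivatives hit the $a$-slot vanishes, so only terms with at most one ``$a$-derivative'' survive. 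For such a surviving term, the $g$-derivatives of $f$ have total filtration $\ge \sum_j i_j - 1$, and since $f$ has degree $s - 1$ on $G$, whenever $\sum_j i_j > s$ this total filtration exceeds $s - 1$ and the derivative vanishes. Thus all iterated derivatives of total filtration exceeding $s$ are identically zero, confirming that $f'$ has degree $s$.

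The main obstacle will be the bookkeeping in this last step: tracking how the single linear $a$-factor ``borrows'' one filtration level so that the degree-$(s-1)$ map $f$ becomes a degree-$s$ map after multiplication. Everything else is essentially routine verification of the direct-product filtered group and lattice data.
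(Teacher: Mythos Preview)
Your proposal is correct and matches the paper's construction exactly: the same direct-product $G' = G \times \RR$ with $\Gamma' = \Gamma \times \ZZ$, the same $r'(x,y) = (r(x), y/N)$ and $f'(g,a) = N a f(g)$, and the same verification that $r' \bmod \Gamma'$ is polynomial as a product of two polynomial maps. The only cosmetic difference is in checking that $f'$ is a polynomial map to $\RR_{(s)}$: the paper argues by a one-step induction on the degree (computing $\partial_{(t_1,g_1)}(t\,h(g)) = t\,\partial_{g_1} h(g) - t_1 h(g_1 g)$ and invoking the inductive hypothesis on the first term), whereas you expand all iterated derivatives at once via the discrete Leibniz rule---both arguments are valid and encode the same ``one filtration level is borrowed by the linear $a$-factor'' idea.
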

  \begin{proof}[Proof of claim]
    We define the direct product $G' = \RR \times G$, where $\RR$ carries the standard filtration; so, $G'_i = \RR \times G_i$ if $i \le 1$ and $G'_i = \{0\} \times G_i$ if $i \ge 2$.  Using the one-parameter subgroups $a \mapsto (0, \gamma_{i,j}^a)$ and $a \mapsto (a, \id_G)$, it is clear this is an explicitly presented filtered group of degree $s-1$, dimension $D+1$ and complexity $M$, with integral subgroup $\Gamma' = \ZZ \times \Gamma$.
  
    We then define $r' \colon H^{s-1} \times H \to G'$ by setting $r'(x,y) = (y/N, r(x))$ whenever $y$ is an integer with $-N/2 < y \le N/2$, and $f' \colon G' \to \RR_{(s)}$ by $f'(t, g) = N t f(g)$.  It is clear $f' \circ r'$ has the form claimed, and so it suffices to show $f' \colon G' \to \RR_{(s)}$ and $r' \bmod \Gamma' \colon H^s \to G'/\Gamma'$ are polynomial maps.

    To see $f'$ is polynomial, we claim for induction the more general fact that if $h \colon G \to \RR_{(t)}$ is a polynomial map of degree $t$ then $h' \colon G' \to \RR_{(t+1)}$ given by $h'(t,g) = t\, h(g)$ is a polynomial map of degree $t+1$.  Indeed, we can observe that for any $(t_1,g_1) \in G'_i$, the map
    \[
      (t,g) \mapsto \partial_{(t_1,g_1)} h'(t,g) = t\, h(g) - (t+t_1)\, h(g_1 g) = t\, \partial_{g_1} h(g) - t_1\, h(g_1 g)
    \]
    is a polynomial map $G' \to \RR_{(t+1-i)}$.  To see this, note $g \mapsto \partial_{g_1} h(g)$ is a polynomial map $G \to \RR_{(t-i)}$ by Proposition~\ref{prop:poly-equiv}, and so $(t, g) \mapsto t\, \partial_{g_1} h(g)$ is a polynomial map $G' \to \RR_{(t-i+1)}$ by inductive hypothesis.  Also, $(t,g) \mapsto t_1\, h(g_1 g)$ is a polynomial map $G' \to \RR_{(t)}$, which suffices if $i=1$, and if $i>0$ then $t_1=0$ anyway.  By Proposition~\ref{prop:poly-equiv} again this shows $h'$ has degree $t+1$ as claimed.

  The fact that $r' \bmod \Gamma' \colon H^{s-1} \times H \to G'/\Gamma'$ is a polynomial map is clear, as it is the direct product of polynomial maps $H \to \RR/\ZZ$ and $H^{s-1} \to G/\Gamma$.
  \end{proof}

  We next perform a nilmanifold construction closely related to the proof of Lemma~\ref{lem:poly-construction}.  In particular we recall the facts in Proposition~\ref{prop:poly-space}, as they apply to the vector space of polynomial maps $W = \poly\big(G', \RR_{(s)}\big)$, its filtration $W_0\supseteq\dots\supseteq W_{s}$ of degree $s$ where $W_i = \poly\big(G', \RR_{(s-i)}\big)$, the natural basis $\{e_{i,j}\}$ for $W$ where $0 \le i \le s$ and $j \in [\fd_i]$, and the lattice $\Psi = \poly\big(\Gamma', \ZZ_{(s)}\big)$ generated by the vectors $e_{i,j}$, consisting precisely of those $w \in W$ such that $w(\gamma) \in \ZZ$ for all $\gamma \in \Gamma'$.  We also recall the right action of $G'$ on $W$ given by $\tau(w,g) = x \mapsto w(x g)$, and define an $\ell^1$-norm on $W$ with respect to the basis $\{e_{i,j}\}$.
  
  We define\footnote{For avoidance of doubt we emphasize that, unlike in Lemma~\ref{lem:poly-construction}, we use the vector space $\poly\big(G', \RR_{(s)}\big)$ itself here and not its linear dual.} a new group $\wt G = G' \ltimes W$, i.e.\ the group whose elements as a set are pairs $(g, w) \in G' \times W$ and with group law
  \[
    (g, w) \ast (g', w') = \big(g g', \tau(w, g') + w' \big) .
  \]
  This is given the filtration $\wt G_i = G'_i \ltimes W_i$, which is a filtration by Proposition~\ref{prop:poly-space}(v); however it is not proper, as $W_0 \ne W_1$, and hence $\wt G$ cannot be an explicitly presented filtered group.  We write $\wt \Gamma = \Gamma' \ltimes \Psi$.
  
  If we consider the subgroup $\wt G_1$, with filtration $\wt G_1 = \wt G_1 \supseteq \wt G_2 \supseteq \dots$, and take
  one-parameter subgroups $a \mapsto ({\gamma'}_{i,j}^a, 0)$ and $a \mapsto (0,a e_{i,j})$ (for $1 \le i \le s$), then $\wt G_1$ does have the structure of an explicitly presented filtered group, of degree $s$, dimension $\ll_s D^{O_s(1)}$ and complexity $\ll_s (D M)^{O_s(1)}$.  Its integral subgroup is exactly $\wt \Gamma_1 = \wt \Gamma \cap \wt G_1$.  We write $\wt d_1,\dots,\wt d_s$ for the dimension sequence of $\wt G_1$ and $\sigma_{\wt G_1} \colon \wt G_1 \to \bigoplus_{i=1}^s \RR^{\wt d_i}$ for its coordinate map (in the sense of Definition~\ref{def:explicit-group}).

  The map $f'$ discussed above lies in $\poly\big(G', \RR_{(s)}\big)$.  We can furthermore decompose $f' = f_0 + f_1$, where $f_1 \in \Psi$ and $f_0 = \sum_{i=0}^s \sum_{j=1}^{\fd_i} \alpha_{i,j} e_{i,j}$ with $\alpha_{i,j} \in [0,1)$ for each $i,j$; i.e., the representative of $f'$ in a fundamental domain for $\Psi$. 

  We then define $\wt r \colon H^{s-1} \times H \to \wt G$ by
  \[
    \wt r(z) = (\id_{G'}, f_1) \ast (r'(z), 0) \ast (\id_{G'}, -f_1) = \big(r'(z), \tau(f_1, r'(z)) - f_1\big).
  \]
  Note that the second expression shows that $\wt r(z) \in \wt G_1$ for all $z$, as $f_1 - \tau(f_1, r'(z)) \in W_1$ by Proposition~\ref{prop:poly-space}(v).

  \begin{claim}
    The function $\wt r \bmod \wt \Gamma$ is a polynomial map $H^{s-1} \times H \to \wt G / \wt \Gamma$.
  \end{claim}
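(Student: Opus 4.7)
The plan is to realize $\wt r \bmod \wt\Gamma$ as coming from a genuine polynomial map on $\ZZ^s$, by combining a lift of $r'$ with the left- and right-multiplication by the constant element $c = (\id_{G'}, f_1)$, and then to check that the resulting map is periodic modulo $\wt\Gamma$.

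First I will invoke Proposition \ref{prop:z-lift} (and the fact that $r' \bmod \Gamma'$ is polynomial of bounded complexity, established in the previous claim) to produce a polynomial map $p' \colon \ZZ^s \to G'$ with $p'(z) \Gamma' = r'(z \bmod N) \Gamma'$ for all $z \in \ZZ^s$. Define $\wt p \colon \ZZ^s \to \wt G$ by
\[
  \wt p(z) = (\id_{G'}, f_1) \ast (p'(z), 0) \ast (\id_{G'}, -f_1) = \big(p'(z),\, \tau(f_1, p'(z)) - f_1 \big).
\]
Since $\tau(f_1, g) - f_1 \in W_1$ for any $g \in G'$ by Proposition \ref{prop:poly-space}(v), we have $\wt p(z) \in \wt G_1$. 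Moreover, $\wt p$ is polynomial into $\wt G_1$ in the sense of Example \ref{ex:poly-examples}: it is the composition of the polynomial map $z \mapsto (p'(z), 0)$ with left- and right-translation by the constants $(\id_{G'}, \pm f_1) \in \wt G$, both of which are polynomial operations that preserve the filtration on $\wt G$.

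The core calculation is to check that $\wt p(z)^{-1} \wt p(z + N e_i) \in \wt \Gamma$ for every $z \in \ZZ^s$ and $i \in [s]$. Writing $\gamma = p'(z)^{-1} p'(z+Ne_i) \in \Gamma'$ (which is in $\Gamma'$ by construction of $p'$), a direct calculation in the semidirect product $\wt G = G' \ltimes W$, using the identity $\tau(\tau(f_1, g), \gamma) = \tau(f_1, g\gamma)$, gives
\[
  \wt p(z)^{-1} \wt p(z+Ne_i) = \big(\gamma,\, \tau(f_1, \gamma) - f_1 \big).
\]
It therefore suffices to verify $\tau(f_1,\gamma) - f_1 \in \Psi$, which follows immediately from $f_1 \in \Psi = \poly(\Gamma', \ZZ_{(s)})$: for any $\gamma' \in \Gamma'$, $\big(\tau(f_1,\gamma) - f_1\big)(\gamma') = f_1(\gamma' \gamma) - f_1(\gamma') \in \ZZ$ since $\gamma' \gamma \in \Gamma'$.

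Combining these facts, $\wt p \bmod \wt \Gamma \colon \ZZ^s \to \wt G_1 / \wt \Gamma_1$ descends to a polynomial map $H^{s-1} \times H \to \wt G_1 / \wt \Gamma_1$, and by construction this map coincides with $\wt r \bmod \wt \Gamma$. The main technical hurdle is the explicit identity for $\wt p(z)^{-1} \wt p(z+Ne_i)$, which must be computed carefully because the group law on $\wt G$ twists the $W$-coordinate via $\tau$; however, this is a straightforward unwinding of the semidirect product multiplication.
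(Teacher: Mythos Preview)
Your proof is correct, and the key computation---that conjugating $(\gamma,0)$ by $(\id_{G'},f_1)$ yields $(\gamma,\tau(f_1,\gamma)-f_1) \in \wt\Gamma$ whenever $\gamma \in \Gamma'$---is exactly the algebraic content that makes this work.

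However, your route is noticeably longer than the paper's. The paper bypasses the lift to $\ZZ^s$ and the periodicity check entirely, arguing directly with the Host--Kra cube definition of polynomial map. It first drops the right factor $(\id_{G'},-f_1)$, which already lies in $\wt\Gamma$, so that $\wt r(z)\,\wt\Gamma = (\id_{G'},f_1)\ast(r'(z),0)\,\wt\Gamma$. Then for any cube $c$ in $H^{s-1}\times H$, since $r' \bmod \Gamma'$ is polynomial one can choose $\gamma(\omega)\in\Gamma'$ so that $\omega\mapsto r'(c(\omega))\gamma(\omega)$ lies in $\HK^k(G')$; embedding via $g\mapsto(g,0)$ and left-multiplying by the constant configuration $(\id_{G'},f_1)$ produces a Host--Kra cube in $\wt G$ representing $\omega\mapsto\wt r(c(\omega))\wt\Gamma$. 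This avoids Proposition~\ref{prop:z-lift} altogether. Your approach buys nothing extra here, though it is a perfectly natural alternative.

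One minor wrinkle in your write-up: you assert $\wt p$ is polynomial \emph{into $\wt G_1$} via translation by $(\id_{G'},\pm f_1)$, but these constants need not lie in $\wt G_1$ (since $f_1$ need not be in $W_1$), so strictly speaking your argument gives polynomiality into $\wt G$ with image in $\wt G_1$. This is harmless for the claim as stated, which only asks for a polynomial map to $\wt G/\wt\Gamma$; the passage to $\wt G_1/\wt\Gamma_1$ is handled separately afterwards in the paper.
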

  \begin{proof}[Proof of claim]
    Note that, since $(\id_{G'}, -f_1) \in \wt \Gamma$,
    \[
      \wt r(z)\, \wt \Gamma = (\id_{G'}, f_1) \ast (r'(z), 0) \,\wt \Gamma .
    \]
    Also, since $r' \bmod \Gamma'$ is a polynomial map, for any $c \in C^k(H^{s-1} \times H)$ we may find $\gamma \colon \llbracket k \rrbracket \to \Gamma'$ such that $\omega \mapsto r'(c(\omega)) \gamma(\omega)$ lies in $\HK^k(G')$. Therefore
    \[
      \omega \mapsto (\id_{G'}, f_1) \ast (r'(c(\omega)), 0) \ast (\gamma(\omega), 0) = (\id_{G'}, f_1) \ast (r'(c(\omega)) \gamma(\omega), 0)
    \]
    lies in $\HK^k(\wt G)$, since constant configurations are always Host--Kra cubes and $\HK^k(\wt G)$ is a group under pointwise multiplication.  This gives the claim.
  \end{proof}

  As $\wt r$ takes values in $\wt G_1$, we can also say that $\wt r \bmod \wt \Gamma_1$ is a polynomial map to $\wt G_1 / \wt \Gamma_1$, where as above $\wt \Gamma_1 = \wt \Gamma \cap \wt G_1$.

  We next define a function $F \colon \wt G_1 \to \CC$ by
  \[
    F(g, w) = e\big(f_0(g) + w(\id_{G'})\big) .
  \]
  \begin{claim}
    For $x_1,x_2 \in \wt G_1$ we have
    \[
      |F(x_1) - F(x_2)| \ll_s (M D)^{O_s(1)} d_{\wt G_1}(x_1,x_2) \big(1 + d_{\wt G_1}(\id_{\wt G},x_1) + d_{\wt G_1}(\id_{\wt G},x_2) \big)^{O_s(1)} ;
    \]
    i.e., $F$ is Lipschitz on bounded regions around $\id_{\wt G_1}$.
  \end{claim}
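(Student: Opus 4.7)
The plan is to bound $|F(x_1)-F(x_2)|$ by $2\pi$ times the absolute value of the difference of the real arguments of the two exponentials, using the elementary inequality $|e(a)-e(b)|\le 2\pi|a-b|$. That is, write $x_k=(g_k,w_k)\in \wt G_1$, so that the task reduces to estimating
\[
  \big|f_0(g_1)-f_0(g_2)\big| + \big|w_1(\id_{G'})-w_2(\id_{G'})\big|.
\]
Each term will be handled separately, after which the two bounds will be combined and re-expressed in terms of $d_{\wt G_1}(x_1,x_2)$ and $d_{\wt G_1}(\id_{\wt G_1},x_k)$.

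For the first term I will use that $f_0=\sum_{i,j}\alpha_{i,j}e_{i,j}$ with $\alpha_{i,j}\in[0,1)$, so the $\ell^1$-weight of $f_0$ in the standard basis is $\ll_s D^{O_s(1)}$. Applying Proposition \ref{prop:poly-space}(viii) to each basis polynomial $e_{i,j}$ and summing gives
\[
  |f_0(g_1)-f_0(g_2)| \ll_s (MD)^{O_s(1)}\, d_{G'}(g_1,g_2)\,\big(1+d_{G'}(\id,g_1)+d_{G'}(\id,g_2)\big)^{O_s(1)},
\]
and then $d_{G'}(\id,g_k)$ and $d_{G'}(g_1,g_2)$ are both bounded by the corresponding $d_{\wt G_1}$-distances on $x_k$ via the projection $\wt G_1\to G'_1$, which is a $1$-Lipschitz homomorphism by the construction of the explicit presentation of $\wt G_1$ (Appendix~\ref{app:nilmanifolds}).

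For the second term I will note that $\delta\colon W\to\RR$, $w\mapsto w(\id_{G'})$ is a linear functional whose value on the basis vector $e_{i,j}$ is bounded by $O_s(M)^{O_s(1)}$ (as $e_{i,j}$ itself is a polynomial map of complexity controlled by $M$); hence $|\delta(w_1-w_2)|\ll_s (MD)^{O_s(1)}\|w_1-w_2\|_1$. The quantity $\|w_1-w_2\|_1$ is in turn controlled by the distance from $(g_1,w_1)$ to $(g_2,w_2)$ in $\wt G_1$, after a bounded conjugation to account for the semi-direct product twist (using Proposition \ref{prop:poly-space}(v,vi,viii) to bound how conjugation by elements of bounded $G'_1$-size distorts the $W_1$-coordinates). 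This again produces a factor polynomial in $1+d_{\wt G_1}(\id,x_1)+d_{\wt G_1}(\id,x_2)$.

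The main obstacle, and the one place where some care is required, is this last step: translating between the ``coordinate'' bounds on the $W_1$-factor of $(g,w)$ coming from the explicit presentation of $\wt G_1$, and the intrinsic norm $\|w\|_1$ that controls $\delta(w)$. In other words, one must verify that writing a general element of $\wt G_1$ as a product of the one-parameter subgroups $(\gamma'^{a}_{i,j},0)$ and $(0,a\,e_{i,j})$ in the prescribed order does not distort the $W_1$-coordinates by more than polynomial factors in the $G'_1$-size. This is exactly the content of Proposition \ref{prop:poly-space}(vi)--(vii) applied to the shift action $\tau$, so once these bookkeeping bounds are assembled the claim follows.
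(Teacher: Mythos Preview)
Your approach is correct and follows the same two-term decomposition as the paper. The treatment of the $f_0$ term is identical: bound $\|f_0\|_1\ll_s D^{O_s(1)}$ and apply Proposition~\ref{prop:poly-space}(viii), then pass from $d_{G'}$ to $d_{\wt G_1}$ via the $1$-Lipschitz projection $(g,w)\mapsto g$.

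For the $w(\id_{G'})$ term the paper takes a slightly sharper and more direct route. By inspecting the construction of the basis $\{e_{i,j}\}$ in the proof of Proposition~\ref{prop:poly-space}, one has $e_{i,j}(\id_{G'})=0$ for $(i,j)\ne(s,1)$ and $e_{s,1}\equiv 1$, so the functional $w\mapsto w(\id_{G'})$ is exactly extraction of the single top-level coefficient; this gives $|w(\id_{G'})|\le\|w\|_1$ with constant~$1$ rather than your $O_s(M)^{O_s(1)}$. The paper then asserts that the $w$-coordinates are among the coordinates $\sigma_{\wt G_1}(g,w)$ and invokes Proposition~\ref{prop:metric-compare} directly on $\wt G_1$, bypassing any explicit bookkeeping on the action $\tau$. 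Your more careful discussion of the semi-direct product twist is not misplaced---the literal identification of $w$-coordinates with $\sigma_{\wt G_1}$-coordinates does gloss over the reordering of $G'$- and $W$-generators across levels---but Proposition~\ref{prop:metric-compare} applied to $\wt G_1$ already absorbs exactly this distortion into the polynomial factors, so in the end your Proposition~\ref{prop:poly-space}(v,vi) route and the paper's Proposition~\ref{prop:metric-compare} route amount to the same thing.
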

  \begin{proof}[Proof of claim]
    We note as in the proof of Lemma~\ref{lem:poly-construction} that by inspection of the proof of Proposition~\ref{prop:poly-space}, the function $e_{s,1} \in \poly\big(G', \RR_{(0)}\big)$ is the constant function $1$ and $e_{i,j}(\id_{G'}) = 0$ for all other $i,j$.  Hence, the linear map $W \to \RR$ given by $w \mapsto w(\id_G)$ is equivalent to extracting the coefficient of $e_{s,1}$, and in particular $|w(\id_{G'})| \le \|w\|_1$.
    
    Because the coordinates of $w$ form a subset of $\sigma_{\wt G}(g,w)$, it follows that if $x_1=(g_1,w_1)$ and $x_2 = (g_2,w_2)$ then
    \[
      |w_1(\id_{G'}) - w_2(\id_{G'})| \le \big\|\sigma_{\wt G_1}(x_1) - \sigma_{\wt G_1}(x_2)\big\|_1 .
    \]
    We have $\|f_0\|_1 \le \dim W \ll_s D^{O_s(1)}$, and so by Proposition~\ref{prop:poly-space}(viii) it follows that
    \[
      |f_0(g_1) - f_0(g_2)| \ll_s (D M)^{O_s(1)} d_{G'}(g_1, g_2) \big(1 + d_{G'}(\id_{G'}, g_1) + d_{G'}(\id_{G'}, g_2) \big)^{O_s(1)} .
    \]
    The claim follows from these bounds and Proposition~\ref{prop:metric-compare}.
  \end{proof}

  We note that for $g' \in G'$,
  \begin{equation}
    \label{eq:f-form}
    F\big(g', \tau(f_1, g') - f_1\big) = e\big(\tau(f_1, g')(\id_{G'}) - f_1(\id_{G'}) + f_0(g')\big) = e\big(f'(g')\big)
  \end{equation}
  since $f_0+f_1=f'$ and $f_1(\id_{G'}) \in \ZZ$ by definition.  In particular, whenever $x \in H^{s-1}$ and $y$ is an integer with $-N/2 < y \le N/2$, we have
  \[
    F(\wt r(x, y)) = e\big(\phi(x)\, y\big) .
  \]
  Similarly, note that $F$ is invariant under the right action of $(\id_{G'}, \Psi_1) \subseteq \wt \Gamma_1$, since multiplying $(g,w)$ on the right by $(0,\psi)$ for $\psi \in \Psi_1$ changes $w(\id_{G'})$ by an integer and leaves $f_0(g)$ unchanged.
  However, $F$ is not invariant under multiplication by $(\Gamma',0)$ and hence does not define a function on $\wt G_1/ \wt \Gamma_1$.

  Instead, we define a family of Lipschitz functions $F_z$ on $\wt G_1 / \wt \Gamma_1$ by localizing $F$ to some neighbourhood of $G'$ and then summing the action of $\Gamma'$.  Specifically let
  \begin{align*}
    T^{(\nu)} \colon \RR &\to [0,1]  \\
    t &\mapsto \max(0,\, 1 -|t|/\nu)
  \end{align*}
  denote a tent function centered at zero and supported on $(-\nu,\nu)$.  We recall that $G' = \RR \times G$.
  For any $z \in \bigoplus_{i=1}^{s-1} \RR^{d_i}$, define a function $T_z \colon G \to [0,1]$ by
  \[
    T_z(x) = \prod_{i=1}^{s-1} \prod_{j=1}^{d_i} T^{(1/10)} \big(\sigma_{G}(x)_{i,j} - z_{i,j}\big) ;
  \]
  i.e., a multi-dimensional tent function in the coordinate space of $G$, centered at $z$, supported on $B_z = \prod_{i,j} [z_{i,j}-1/2, z_{i,j}+1/2)$. 
  Because $T_z$ is a Lipschitz function on the coordinate space $\bigoplus_{i=1}^{s-1} \RR^{d_i}$ supported on points close to $z$, by Proposition~\ref{prop:metric-compare} again we deduce that $T_z$ is Lipschitz on $G$, or that $((t,g),w) \mapsto T_z(g)$ is Lipschitz on $\wt G_1$, with parameter $O_s(M D)^{O_s(1)} (1 + \|z\|_1)^{O_s(1)}$ in each case.

  We then define $F_z$ on $\wt G_1 / \wt \Gamma_1$ by
  \begin{align*}
    F_z((t,g), w) &= \sum_{((t',g'), w') \in ((t,g),w) \ast (\Gamma', 0)} T^{(1)}(t')\, T_z(g')\, F\big((t', g'), w'\big) \\
    &= \sum_{\gamma \in \Gamma} \sum_{m \in \ZZ} T^{(1)}(m+t)\, T_z(g \gamma)\, F\big((t+m, g\gamma), \tau(w, (m,\gamma))\big) .
  \end{align*}
  Noting Proposition~\ref{prop:fun-dom}, each function $T_z$ is supported on a fundamental domain for $\Gamma$ and so at most one term of the sum over $\Gamma$ has a non-zero summand.  Similarly, at most two terms $m$ give a non-zero summand.  Since the right action of $(\id_{G'},\Psi_1)$ leaves every function in the sum unchanged, it follows that $F_z$ is invariant under $(\Gamma', 0) (\id_{G'},\Psi_1) = \wt \Gamma_1$, so $F_z$ is a well-defined function $\wt G_1 / \wt \Gamma_1 \to \CC$.

  Moreover, note $((t, g), w) \mapsto T^{(1)}(t)\, T_z(g)\, F\big((t,g),w)$ is Lipschitz with parameter $O_s(M D)^{O_s(1)} (1 + \|z\|_1)^{O_s(1)}$, by our previous bounds and noting it is supported near $z$.
  Since the metric $d_{\wt G}$ is right-invariant and and most two terms in the sum are non-zero, $F_z$ is Lipschitz with a parameter of the same form.

  Hence, each function $F_z \circ \big(\wt r \bmod \wt \Gamma_1\big)$ is a nilsequence with acceptable bounds on its parameters, provided $\|z\|_1$ is not too large.

  It suffices to show that for each $x \in A$ there is some bounded choice of $z$ such that $F_z(\wt r(x,y)) = e\big(\phi(x)\, y\big)$ for each $y \in H$.  To do this, set $z = \sigma_G(r(x))$; so $T_z(r(x)) = 1$ and $T_z(r(x) \gamma) = 0$ for all $\gamma \in \Gamma \setminus \{\id_G\}$.
  
  For for any $y \in H$, by~\eqref{eq:f-form} we have
  \[
    F_z\big(\wt r(x,y) \wt \Gamma_1\big) = \sum_{t \in y/N + \ZZ} T^{(1)}(t) e\big(N\, \phi(x)\, t\big)
  \]
  and since $\phi(x) \in \frac1N \ZZ$ by assumption and $\sum_{t \in y/N + \ZZ} T^{(1)}(t) = 1$, this is just $e\big(\phi(x)\, y\big)$, as required.
\end{proof}

\subsection{Correlation of $f$ with a sum of nilsequences}%
\label{subsec:antidiff}

With Lemma~\ref{lem:global-hs-nil}, we have succeeded in showing that the function $\cS_s f \colon H^s \to \CC$ in $s$ variables correlates with a linear combination of nilsequences $B$ on $H^s$.  We need to deduce a similar statement for the function $f \colon H \to \CC$ itself, rather than $\cS_s f$.

We briefly remark on the approaches taken in~\cite{green-tao-u3} and~\cite{gowers-milicevic} at the analogous stage.  In both cases, a ``symmetry argument'' is deployed to show that one can essentially reduce to the case that $B$ is a symmetric function of its arguments. This is reasonable given $\cS_s f$ itself is symmetric.  Then, $B$ can be explicitly anti-differentiated: i.e., a function in one variable is exhibited which has this structured piece as its $s$-fold derivative.  This is essentially sufficient.  A related set of issues are addressed in~\cite{gtz}.

In all cases the symmetry arguments are not straightforward.  Also, in the current setting the anti-differentiation step is also troublesome, both because $B$ is potentially a polynomial object rather than a truly multilinear one, and because general nilsequences are unpleasant to work with.

Instead, we attempt to recover a one-variable function by sampling $B$ along its diagonals, e.g.\ considering $g(x) = B(x,x,\dots,x)$ or more generally $B(x+a_1,\dots,x+a_s)$ for some fixed $a_1,\dots,a_s$.  In the case that $f$ was originally a true phase polynomial $e(P(x))$, we could recover $g(x) = e(\pm s!\, P(x))$, which is not correct but not far off, provided $s!$ is invertible in the group $H$.  This can be fixed by taking a product of many copies of $g$ and then applying a change of variables. 

Specifically, we prove the following lemma.
\begin{lemma}%
  \label{lem:key-multilin-sampling}
  Suppose $|H|$ is coprime to $s!$.  If $f \colon H \to \CC$ is some function with $\|f\|_{\infty} \le 1$ and $B \colon H^s \to \CC$ is any function such that 
  \[
    \left| \EEE_{h \in H^s} \cS_s f(h) \overline{B(h)} \right| \ge \eta
  \]
  then there exists a function $g \colon H \to \CC$ such that $\|f\, \overline{g}\|_{U^s} \ge \eta^{L}$ and
  \[
    g(x) = \prod_{i=1}^L B\big(h_i - (x/s!,x/s!,\dots,x/s!)\big)
  \]
  for some fixed $h_1,\dots,h_L \in H^s$, where $L = s!^{s-1}$.
\end{lemma}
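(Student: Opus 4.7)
My plan is to prove the statement by a probabilistic construction: treat $h_1,\dots,h_L \in H^s$ as independent uniformly random samples, define $g$ as in the statement, and study the expectation $\EE_{h_1,\dots,h_L} \|f\bar g\|_{U^s}^{2^s}$. If I can show this expectation is at least $\eta^{L\cdot 2^s}$, then some deterministic choice of $h_1,\dots,h_L$ witnesses the required bound $\|f\bar g\|_{U^s} \ge \eta^L$, since $\|f\bar g\|_{U^s}^{2^s}$ is non-negative.

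The first step is a direct expansion. Expanding the Gowers norm and the product defining $g$, one has
\[
\|f\bar g\|_{U^s}^{2^s} = \EE_{x,k} \prod_{\omega \in \llbracket s\rrbracket} \cC^{|\omega|} f(x+\omega\cdot k)\, \prod_{i=1}^L \prod_{\omega} \cC^{|\omega|} \bar B\bigl(h_i - (x/s!)\mathbf{1} - (\omega\cdot k/s!)\mathbf{1}\bigr),
\]
where $\mathbf{1}=(1,\dots,1)$. Because the $h_i$ are independent and enter multiplicatively, the expectation over the $h_i$ factors as an $L$-th power. The change of variables $h_i \mapsto h_i + (x/s!)\mathbf{1}$ (valid because $s!$ is invertible in $H$) removes the $x$-dependence in each factor, and after taking the average over $x$ this collapses to the identity
\[
\EE_{h_1,\dots,h_L} \|f\bar g\|_{U^s}^{2^s} = \EE_k \cS_s f(k)\, \Phi(k)^L,
\qquad
\Phi(k) = \EE_h \prod_\omega \cC^{|\omega|} \bar B\bigl(h - (\omega\cdot k/s!)\mathbf{1}\bigr).
\]
So the entire problem is reduced to a lower bound on $\EE_k \cS_s f(k)\Phi(k)^L$ in terms of the hypothesis $|\EE_k \cS_s f(k)\bar B(k)| \ge \eta$.

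The second step is to unpack $\Phi$. Writing $h \in H^s$ uniquely as $h = u + t\mathbf{1}$ with $u$ in the codimension-one subgroup $\{u : u_1+\dots+u_s=0\}$ (possible since $s$ is invertible), the inner expectation in $\Phi(k)$ factors; the $t$-average produces a one-variable multiplicative $\cS_s$ of the diagonal slice $B_u(t) = B(u+t\mathbf{1})$, yielding $\Phi(k) = \EE_u \cS_s \bar B_u(-k/s!)$. Thus $\Phi(k)^L$ encodes $L$ copies of $\bar B$ restricted to diagonal lines, which is exactly the information the product $g(x) = \prod_i B(h_i - (x/s!)\mathbf{1})$ is designed to extract.

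The heart of the proof, and the main obstacle, is the third step: deducing $\EE_k \cS_s f(k)\Phi(k)^L \ge \eta^{L\cdot 2^s}$ from the hypothesis. I plan to achieve this by a chain of $s-1$ Cauchy--Schwarz inequalities, each time fixing all but one of the coordinates of the argument of $B$ and bounding the resulting one-variable correlation by the Fourier-analytic identity used in the $s=1$ case (where one simply observes $\EE_k \cS_1 f(k) \Phi(k) = \sum_\chi |\hat f(\chi)|^2 |\hat B(\chi)|^2 \ge |\EE_k \cS_1 f(k)\bar B(k)|^2$, since the inner product $\langle \cS_1 f, \bar B\rangle$ equals $\sum_\chi |\hat f(\chi)|^2 \overline{\hat B(\bar\chi)}$ and Cauchy--Schwarz is applied with the non-negative weights $|\hat f(\chi)|^2$). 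Each such step squares the lower bound and replaces a single copy of $\bar B(k)$ by a product of $s!$ copies coming from a multiplicative derivative in a new coordinate direction; after $s-1$ iterations the number of $B$-factors is $(s!)^{s-1} = L$ and the bound is $\eta^{L\cdot 2^s}$, matching the target. The obstacle is bookkeeping: ensuring that the $s-1$ Cauchy--Schwarz applications can be organized to land precisely on diagonal directions (so that the resulting $g$ has the specific form $\prod_i B(h_i - (x/s!)\mathbf{1})$ rather than one involving more general shifts), which is where the coprimality of $|H|$ and $s!$ becomes essential (needed to pass freely between ``cube'' configurations and ``diagonal'' configurations via changes of variables that divide by $s!$).
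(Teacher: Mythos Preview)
Your first two steps are correct: the probabilistic reduction to $\EE_k \cS_s f(k)\,\Phi(k)^L$ with $\Phi(k)=\EE_u \cS_s \overline{B_u}(-k/s!)$ is a valid and clean manoeuvre. The problem is your third step, which is where essentially all the content of the lemma lies, and your plan there does not work as written.

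First, the arithmetic is inconsistent: you say you will apply $s-1$ Cauchy--Schwarz steps, each ``squaring the lower bound'', starting from the hypothesis $\eta$. That yields $\eta^{2^{s-1}}$, not $\eta^{L\cdot 2^s}$; the missing factor is enormous (recall $L=s!^{s-1}$). Second, and more seriously, you have not explained why a chain of one-variable Cauchy--Schwarz/Fourier identities applied to $\overline{B(k)}$ would land on the specific object $\Phi(k)^L$. The expression $\Phi(k)$ is an average over \emph{all} diagonal slices $B_u$, and that averaging can cause substantial cancellation; nothing in your outline controls this. In short, you have correctly identified the target but have no mechanism to reach it.

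The paper's argument is structurally different and supplies the two ideas you are missing. First (Lemma~\ref{lem:multilin-step-1}), after the diagonal change of variables $h=r+t\mathbf{1}$ one \emph{pigeonholes} to fix $r=h_0$, then observes that the $2^s$ factors $\cC^{|\omega|}f(x+\omega\cdot h)$ group into $s+1$ functions of $x+|\omega|t$ (one for each value of $|\omega|$); $s$ Cauchy--Schwarz applications eliminate the nontrivial groups, yielding a correlation between $\cS_s f(u)$ and $\cS_s g_1(-u_1,-u_2/2,\dots,-u_s/s)$ of size $\eta^{2^s}$, where $g_1(x)=B(h_0+x\mathbf{1})$. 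The appearance of the twisted arguments $-u_j/j$ is the signature of this step. Second (Lemma~\ref{lem:multilin-step-2}, via the Fourier/convexity Lemma~\ref{lem:multilin-helper}), one untwists coordinate $j$ from $-u_j/j$ to $-u_j/s!$ at the cost of raising the exponent by $k_j=s!/j$ and replacing $g$ by a product of $k_j$ shifted copies; doing this for each $j$ gives the factor $\prod_j k_j = s!^{s-1}=L$ and the final exponent $2^s L$. Only after all this does one pigeonhole on the final translation to obtain a single $g$. The ``group by $|\omega|$'' trick and the convexity-based untwisting are the genuine content here, and neither appears in your plan.
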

The proof is elementary, insofar as the only ingredients are repeated applications of the Cauchy--Schwarz inequality and a small amount of Fourier analysis.

By Remark~\ref{rem:prod-nilseq} the nilsequence structure of $B$ is inherited directly by $g$, with some mild loss of parameters.  The conclusion of Lemma~\ref{lem:key-multilin-sampling} in terms of the $U^s$-norm is not quite what we want, and corresponds roughly to relating $f$ to a nilsequence up to as yet unknown lower-degree corrections (indeed, $\cS_s f$ does not carry such lower-degree information). However, we can handle this easily by applying Theorem~\ref{thm:main} for $s-1$ inductively to $f\, \overline{g}$.

\begin{proof}[Proof of Theorem~\ref{thm:main} assuming Lemma~\ref{lem:key-multilin-sampling}]
  Note the case $s=1$ of Theorem~\ref{thm:main} is an exercise in Fourier analysis (see e.g.\ \cite[(11.9)]{tao-vu}) so we assume $s \ge 2$.

  We take $B$, $\rho_i$, $\alpha_i$, $D$, $M$, $K$ and $T$ as in Lemma~\ref{lem:global-hs-nil}, and apply Lemma~\ref{lem:key-multilin-sampling} with this $B$ to get $g$, $L$ and $h_1,\dots,h_L$ as in Lemma~\ref{lem:key-multilin-sampling}.

  By applying Theorem~\ref{thm:main} inductively to $f\, \overline{g}$ with parameter $\eta^L$, we can find a nilsequence $\psi' \colon H \to \CC$ of degree $s-1$, dimension $D' \ll_s \eta^{O_s(1)}$, complexity $M'$ and parameter $K'$ such that
  \[
    \big| \EE_{x \in H} f(x)\, \overline{g}(x)\, \overline{\psi'(x)} \big| \ge \eps'
  \]
  where for $s \le 4$ we have
  \[
    {\eps'}^{-1},\, K',\, M' \le \exp\big(O(\eta)^{O(1)}\big)
  \]
  and for $s \ge 5$
  \[
    {\eps'}^{-1},\, K',\, M' \le \exp\exp\big(O_{s}(\delta)^{O_s(1)}\big) .
  \]
  By expanding $B = \sum_{j=1}^T \alpha_j \rho_j$ and applying the triangle inequality, we deduce that there are some $j_1,\dots,j_L \in [T]$ such that
  \[
    \left| \EE_{x \in H} f(x)\, \overline{\psi'(x)}\, \prod_{i=1}^L \phi_{j_i}\big(h_i - (x/s!,x/s!,\dots,x/s!)\big) \right| \ge \eps' / T^L .
  \]
  By Remark~\ref{rem:prod-nilseq}, and composing with the affine-linear maps $H \to H^s$ given by $x \mapsto h_i - (x/s!,\dots,x/s!)$, the function
  \[
    \psi'' \colon x \mapsto \prod_{i=1}^L \phi_{j_i}\big(h_i - (x/s!,\dots,x/s!)\big)
  \]
  is a nilsequence of degree $s$ dimension $L D$, complexity $M$ and parameter $K$.  By Remark~\ref{rem:prod-nilseq} again, $x \mapsto \psi''(x) \psi'(x)$ is again a nilsequence of degree $s$, dimension $L D + D'$, complexity $\max(M,M')$ and parameter $\max(K, K')$.  This proves the theorem.
\end{proof}

We will prove Lemma~\ref{lem:key-multilin-sampling} in two stages.

\begin{lemma}%
  \label{lem:multilin-step-1}
  Suppose $|H|$ is coprime to $s!$, $f \colon H \to \CC$ has $\|f\|_\infty \le 1$, and $B \colon H^s \to \CC$ satisfies 
  \[
    \left| \EEE_{h \in H^s} \cS_s f(h) \overline{B(h)} \right| \ge \eta \ .
  \]
  Then there exists some $h_0 \in H^s$ such that, defining $g \colon H \to \CC$ by $g(x) = B(h_0 + (x,x,\dots,x))$, we have
  \[
    \left| \EEE_{h_1,\dots,h_s \in H} \cS_s f(h_1,\dots,h_s) \overline{\cS_s g(-h_1,-h_2/2,-h_3/3,\dots,-h_s/s)} \right| \ge \eta^{2^s} .
  \]
\end{lemma}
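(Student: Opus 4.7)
The proof is elementary: it consists of a direct computation reducing the claim to a Gowers-type inner product (using an averaging argument over $h_0$ and pigeonhole), and then $s$ applications of the Cauchy--Schwarz inequality to boost the hypothesis to the required strength.

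\emph{Step 1 (Reduction via averaging over $h_0$).} Writing $g_{h_0}(x) = B(h_0 + \Delta(x))$, where $\Delta \colon H \to H^s$, $\Delta(x) = (x,\dots,x)$, is the diagonal embedding, one expands $\cS_s g_{h_0}$ directly from its definition:
\[
\cS_s g_{h_0}(-h_1, -h_2/2, \dots, -h_s/s) \;=\; \EE_{y \in H} \prod_{\omega \in \llbracket s \rrbracket} \cC^{|\omega|} B\bigl(h_0 + \Delta\bigl(y - \textstyle\sum_i \omega_i h_i/i\bigr)\bigr).
\]
(The divisions by $i$ are legitimate because $\gcd(|H|, s!) = 1$.) By pigeonhole it suffices to bound the $h_0$-average of the target from below by $\eta^{2^s}$. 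Applying the bijective change of variable $h_0 \mapsto h_0 - \Delta(y)$ on the outer $h_0$-integral to absorb the $y$-average, and then the substitution $h_i \mapsto i h_i$, one obtains
\[
\EE_{h_0} \EE_{\vec h} \cS_s f(\vec h) \overline{\cS_s g_{h_0}(-h_1, \dots, -h_s/s)} \;=\; \EE_{h_0, \vec h} \cS_s f(h_1, 2h_2, \dots, sh_s) \prod_{\omega \in \llbracket s \rrbracket} \cC^{|\omega|+1} B\bigl(h_0 - \Delta(\omega \cdot \vec h)\bigr),
\]
so it suffices to show that the absolute value of the right-hand side is at least $\eta^{2^s}$.

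\emph{Step 2 (Cauchy--Schwarz boosting).} After the same substitution $h_i \mapsto i h_i$, the hypothesis reads $\eta \le |\EE_{\vec h} A(\vec h)\, \overline{\tilde B(\vec h)}|$, with $A(\vec h) = \cS_s f(h_1, 2h_2, \dots, sh_s)$ and $\tilde B(\vec h) = B(h_1, 2h_2, \dots, sh_s)$. The plan is to apply Cauchy--Schwarz $s$ times, once in each coordinate of $\vec h$, each introducing a new shift variable $a_i$ and doubling both $A$ and $\tilde B$ in the $h_i$-direction. After $s$ iterations, one reaches a bound of the form
\[
\eta^{2^s} \;\le\; \Bigl|\EE_{\vec h,\vec a} \prod_{\omega \in \llbracket s \rrbracket} \cC^{|\omega|} A(h_1 + \omega_1 a_1, \dots, h_s + \omega_s a_s) \cdot \prod_{\omega} \cC^{|\omega|+1} \tilde B(h_1 + \omega_1 a_1, \dots, h_s + \omega_s a_s) \Bigr|.
\]

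\emph{Step 3 (Matching the two configurations).} The final --- and main --- technical step is to bridge between the ``coordinate-axis'' cube $(h_i + \omega_i a_i)_i$ produced by Cauchy--Schwarz and the ``diagonal'' cube $h_0 - \Delta(\omega \cdot \vec h)$ appearing in the reduced target. Both involve $s$ free $H$-variables and $s$ shift variables, so a dimension count matches. The bridge is produced by unfolding $A(\vec k) = \EE_x \prod_{\omega'} \cC^{|\omega'|} f\bigl(x + \sum_i i \omega'_i k_i\bigr)$ inside the $2^s$-fold product, and performing an affine change of variables on the inner averages over the $a_i$'s (tied to shifts of the $x$-variables in each factor of $A$): this collapses the $2^s$ factors of $A$ into a single $A(\vec h')$ (modulo absorbing the extra averages), while simultaneously repackaging the $\tilde B$-cube into exactly the diagonal configuration $\prod_\omega \cC^{|\omega|+1} B(h_0 - \Delta(\omega \cdot \vec h'))$ for a new aggregate variable $h_0 \in H^s$ built from the $a_i$'s. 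This is precisely where the factors $1/i$ enter, through tracking how the coordinate-axis shifts translate into diagonal shifts under the rescaling. Pigeonholing over $h_0$ at the end extracts a single $h_0$ for which the bound holds.

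The hardest part will be executing Step 3 cleanly: in particular, verifying that the affine reparametrization collapsing the $A$-cube into a single $A$-factor is bijective and preserves the conjugation parities, and that the resulting $\tilde B$-configuration matches the diagonal $B$-configuration of Step 1 exactly. All the bookkeeping is routine but intricate; no further ideas are needed beyond Cauchy--Schwarz and the substitution $h_i \mapsto i h_i$ made possible by $\gcd(|H|, s!) = 1$.
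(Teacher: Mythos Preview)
Your Step 3 contains a genuine gap, and the dimension count you cite is wrong. After Step 2 you have $2^s$ copies of $A$, each carrying its own inner averaging variable $x_\omega$; together with $\vec h$ and $\vec a$ this gives $2s + 2^s$ free $H$-variables and $2^{2s}$ factors of $f$. The target from Step 1 has a single copy of $A$, hence only $2s + 1$ free variables and $2^s$ factors of $f$. No bijective affine reparametrization can turn $2^{2s}$ factors of $f$ into $2^s$; the ``extra averages'' you hope to absorb would have to make $2^s - 1$ of the inner $x_\omega$-variables disappear from the integrand, and they do not. Already for $s=1$ you are asking for
\[
\EE_{h,a,x_0,x_1} f(x_0)\overline{f(x_0+h)}\,\overline{f(x_1)}\,f(x_1+h+a)\,\overline{B(h)}\,B(h+a)
\]
to equal $\EE_{h_0,h_1,x} f(x)\overline{f(x+h_1)}\,\overline{B(h_0)}\,B(h_0-h_1)$, which is false for generic $f,B$.

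The paper proceeds along quite different lines. It pigeonholes on $h_0$ \emph{before} any Cauchy--Schwarz, writing $h = r + (t,\dots,t)$ and fixing $r$ by the triangle inequality. The point is that $\prod_\omega \cC^{|\omega|} f(x + \omega\cdot h)$ then depends on $t$ only through $|\omega|\,t$, so grouping by $|\omega| = i$ yields
\[
\EE_{x,t}\ \overline{g(t)}\, f(x)\, \prod_{i=1}^s F_i(x + i t)
\]
for one-bounded functions $F_i$ and $g(t) = B(r+(t,\dots,t))$. This is now a two-variable average with linear forms $x + it$. One eliminates $F_k$ for $k = s, s-1, \dots, 1$ by substituting $y = x + kt$ and applying Cauchy--Schwarz in $y$; each such step doubles $f$ via $\Delta_u$ and doubles $g$ via $\Delta_{u/k}$ (because $t = (y-x)/k$), while the surviving $F_i$'s acquire harmless shifts. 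After $s$ steps only $\Delta_{u_1}\cdots\Delta_{u_s} f$ and $\Delta_{u_1}\Delta_{u_2/2}\cdots\Delta_{u_s/s} g$ remain, giving exactly the claimed inequality. The $1/k$ factors thus come from the specific forms $x+kt$, not from any rearrangement of a box-cube in $H^s$; applying Cauchy--Schwarz directly in the $h_i$-coordinates, as you do, overshoots and cannot be undone.
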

Note that this would be exactly what we want were it not for the factors $-1,-1/2,-1/3,\dots,-1/s$ in the iterated derivative of $g$.  Again, by considering the 100\% case $f(x) = e(P(x))$ where $P$ is a genuine polynomial $H \to \RR/\ZZ$ of degree $s$ and $B = \cS_s f$, we see that simply removing these factors yields a false statement, as the phases differ by a factor of $\pm s!$. 

We fix some notation for the remainder of this subsection. For $h \in H$ and a function $\lambda \colon H \to \CC$ we write $\Delta_h \lambda$ for the multiplicative derivative, i.e.\ the function  $H \to \CC$ given by $x \mapsto \lambda(x) \overline{\lambda(x+h)}$. In particular the definition of $\cS_s \lambda$ is equivalent to
\begin{equation}
  \label{eq:cs-as-derivs}
  \cS_s \lambda (h_1,\dots,h_s) = \EE_{x \in H} \Delta_{h_1} \dots \Delta_{h_s} \lambda(x) .
\end{equation}

\begin{proof}[Proof of Lemma~\ref{lem:multilin-step-1}]
  We may expand the original expression to
  \[
    \eta \le \left| \EEE_{x \in H} \EEE_{h \in H^s} \overline{B(h)} \prod_{\omega \in \llbracket s \rrbracket} \cC^{|\omega|} f(x + \omega \cdot h) \right|
  \]
  and then make a redundant change of variables $h = r + (t,t,\dots,t)$ for $r \in H^s$ and $t \in H$, i.e.:
  \[
    \EEE_{x \in H} \EEE_{h \in H^s} \overline{B(h)} \prod_{\omega \in \llbracket s \rrbracket} \cC^{|\omega|} f(x + \omega \cdot h) = 
    \EEE_{r \in H^s} \left( \EEE_{x, t \in H} \overline{B(r + (t,t,\dots,t))} \prod_{\omega \in \llbracket s \rrbracket} \cC^{|\omega|} f\big(x + |\omega| t + \omega \cdot r\big) \right) .
  \]
  Hence, by the triangle inequality there exists some $r \in H^s$, which we now regard as fixed, such that
  \[
    \left| \EEE_{x,t \in H} \overline{B(r + (t,t,\dots,t))} \prod_{\omega \in \llbracket s \rrbracket} \cC^{|\omega|} f\big(x + |\omega| t + \omega \cdot r\big) \right| \ge \eta .
  \]
  We now group the $f$'s into $s+1$ classes according to the value of $|\omega|$.  Define functions $F_i \colon H \to \CC$ for $1 \le i \le s$ by
  \[
    F_i(y) = \cC^{i} \prod_{|\omega| = i} f(y + \omega \cdot r)
  \]
  and also let $g(x) = B(r + (x,x,\dots,x))$, so that
  \[
    \left| \EEE_{x,t \in H} \overline{g(t)} f(x) \prod_{i=1}^s F_i(x + i t) \right| \ge \eta .
  \]
  Finally, we eliminate the functions $F_i$ by $s$ applications of Cauchy--Schwarz.  This is a standard calculation---e.g.\ the proof of~\cite[Lemma 11.4]{tao-vu} just works---but is not quotable, so we give some details.

  The general Cauchy--Schwarz step is as follows.
  \begin{claim}
    If $k \ge 1$, for any functions $\alpha, \beta \colon H \to \CC$ and $F_1,\dots,F_k \colon H \to \CC$ with $\|F_i\|_\infty \le 1$ for each $i \in [k]$, we have
    \[
      \left| \EEE_{x,t \in H} \overline{\beta(t)} \alpha(x) \prod_{i=1}^k F_i(x + i t) \right| \le \left( \EEE_{u_1,\dots,u_k \in H} \EEE_{x,t \in H} \big(\Delta_{u_1} \dots \Delta_{u_k} \alpha(x)\big) \, \big(\overline{ \Delta_{u_1} \Delta_{u_2/2} \dots \Delta_{u_k/k} \beta(t)}\big) \right)^{1/2^k} .
    \]
  \end{claim}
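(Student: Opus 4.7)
The plan is to prove the claim by induction on $k$, where at each step one application of Cauchy--Schwarz eliminates the ``last'' factor $F_k$ and reduces the $k$-factor average to a $(k-1)$-factor one, while introducing one new differencing variable $u_k$. This is the standard generalized--von--Neumann / PET induction mechanism; the only novelty is keeping careful track of the scaling factors that produce the $1/i$ shifts on the $\beta$-side.

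Base case $k=1$: substitute $y = x+t$ so that the expression becomes $\EEE_y F_1(y)\, G(y)$ with $G(y) = \EEE_t \overline{\beta(t)}\alpha(y-t)$. A single Cauchy--Schwarz in $y$, using $\|F_1\|_\infty \le 1$, bounds the square by $\EEE_y |G(y)|^2$. Expanding this square, reparametrizing by $u_1 = t_1 - t_2$ and $x = y - t_1$, and identifying the resulting factors, gives a quantity of the desired shape with the $\alpha$-factor equal to $\Delta_{u_1}\alpha(x)$ and the $\beta$-factor equal to $\overline{\Delta_{u_1}\beta(t)}$.

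Inductive step: to pass from $k-1$ to $k$, substitute $y = x + kt$; this is an invertible change of variables because $\gcd(k, |H|) = 1$, which is inherited from Lemma \ref{lem:multilin-step-1} (where $|H|$ is coprime to $s!$ and $k \le s$). Under this substitution $x + it = y - (k-i)t$, so the expression becomes $\EEE_y F_k(y)\, \Phi(y)$ with $\Phi(y)$ a $t$-average of $\overline{\beta(t)}\alpha(y-kt)\prod_{i < k} F_i(y-(k-i)t)$. Cauchy--Schwarz in $y$ eliminates $F_k$ and yields $\EEE_y|\Phi(y)|^2$. Duplicating as $t_1, t_2$, setting $v = t_1 - t_2$, then reparametrizing $u_k := kv$ (again using coprimality) and shifting the $y$-origin by $kt_2$, the $\alpha$-factor collapses to $\Delta_{u_k}\alpha$, the $\beta$-factor becomes a conjugated shift of $\Delta_{u_k/k}\beta$ --- this is precisely where the $1/k$ rescaling on the $\beta$-argument comes from, since $\alpha$ is shifted by $u_k = kv$ while $\beta$ is shifted by $v = u_k/k$ --- and each $F_i$ for $i < k$ is replaced by the one-bounded function $\Delta_{(k-i) u_k/k} F_i$, still of the form $F_i'(z' + i t')$ for appropriate new variables. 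Interchanging the $u_k$-average with Jensen's inequality to push it inside the $2^{k-1}$-th root, the inductive hypothesis applies pointwise in $u_k$ to the inner $(k-1)$-factor average, and the resulting bound is exactly the claim at level $k$.

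The only real obstacle is the bookkeeping: one must verify that the successive linear substitutions scale the new $\beta$-shift by $1/k$ at each step so that after $s$ iterations one ends up with the product $\Delta_{u_1}\Delta_{u_2/2}\cdots\Delta_{u_s/s}\beta$ on the $\beta$-side, while the $\alpha$-shifts are just $u_1, \dots, u_s$; one must also check that the new $F_i'$ remain bounded by $1$ in sup norm so that the next Cauchy--Schwarz step remains valid, and that the conjugation pattern on $\beta$ survives the chain of substitutions (which it does, because $\Delta_h$ commutes with complex conjugation). No tools beyond elementary Cauchy--Schwarz and affine changes of variable are needed.
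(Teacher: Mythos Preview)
Your proposal is correct and follows essentially the same route as the paper: substitute $y = x+kt$, apply Cauchy--Schwarz in $y$ to eliminate $F_k$, change variables to obtain a $(k-1)$-factor expression with $\alpha$, $\beta$, $F_i$ replaced by $\Delta_u\alpha$, $\Delta_{\pm u/k}\beta$, $\Delta_{(1-i/k)u}F_i$, then apply the inductive hypothesis pointwise in the new variable and use H\"older (equivalently Jensen for the concave map $x\mapsto x^{1/2^{k-1}}$) to pull the $u$-average inside the power. The only differences are cosmetic---the paper duplicates $x$ rather than $t$ and does not treat $k=1$ separately---and your phrasing of the order of the induction and Jensen steps is slightly garbled, but the intended argument is identical.
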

  \begin{proof}[Proof of claim]
    By a change of variables $y=x+k t$ and applying Cauchy--Schwarz to $F_k(y)$, we have
    \[
      \LHS \le \left( \EEE_{y,x,x' \in H} \overline{\beta((y-x)/k)} \beta((y-x')/k) \alpha(x) \overline{\alpha(x')} \prod_{i=1}^{k-1} F_i(x + i(y-x)/k) \overline{F_i(x' + i(y-x')/k)} \right)^{1/2}
    \]
    (noting the quantity under the square root is non-negative).  Under a further change of variables $t=(y-x)/k$ and $u = x'-x$, the right hand side is exactly
    \[
      \left(  \EEE_{u \in H} \EEE_{x,t \in H} \Delta_u \alpha(x)\, \overline{\Delta_{-u/k} \beta(t)} \prod_{i=1}^{k-1} \Delta_{(1-i/k) u} F_i(x+it) \right)^{1/2} .
    \]
    We apply the claim inductively to the inner expectation for every $u \in H$, to bound this by
    \[
      \left( \EEE_{u \in H} \EEE_{u_1,\dots,u_{k-1} \in H} \EEE_{x,t \in H} \left(\Delta_{u_1} \dots \Delta_{u_k} \alpha(x) \, \overline{ \Delta_{u_1} \Delta_{u_2/2} \dots \Delta_{u_k/k} \beta(t)} \right)^{1/2^{k-1}} \right)^{1/2}
    \]
    and applying H\"older's inequality this gives the claim.
  \end{proof}
  Noting~\eqref{eq:cs-as-derivs}, this completes the proof of Lemma~\ref{lem:multilin-step-1}.
\end{proof}

The second stage is to eliminate these unwanted multipliers.  We will prove the following.
\begin{lemma}%
  \label{lem:multilin-step-2}
  Suppose $f, g \colon H \to \CC$ are two functions, and $\|f\|_\infty \le 1$.  Suppose further that $a_1,\dots,a_s$ and $k \ge 1$ are integers coprime to $|H|$, and $1 \le j \le s$.  Then there exists a function $g' \colon H \to \CC$ such that
  \[
    g'(x) = \prod_{i=1}^k g(x+h_i)
  \]
  for some constants $h_i \in H$, and
  \begin{align*}
    & \left| \EEE_{h_1,\dots,h_s \in H} \cS_s f(h_1,\dots,h_s) \overline{\cS_s g(h_1/a_1,h_2/a_2,\dots,h_s/a_s)} \right| \\
    & \le 
    \left| \EEE_{h_1,\dots,h_s \in H} \cS_s f(h_1,\dots,h_s) \overline{\cS_s g'(h_1/a_1,h_2/a_2,\dots,h_j/(k a_j),\dots,h_s/a_s)} \right|^{1/k} .
  \end{align*}
\end{lemma}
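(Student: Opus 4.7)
The plan is to take the Fourier transform in the $j$-th variable (by symmetry of $\cS_s$ in its arguments I may assume $j=s$) and then apply Hölder's inequality. Fourier inversion using the definition of $\fsig_s$ gives
\[ \cS_s f(h_1, \ldots, h_s) = \sum_{\chi \in \widehat{H}} \fsig_s f(h_1, \ldots, h_{s-1}, \chi)\, \chi(h_s), \]
and analogously for $g$. Substituting into the expression for $X := \EE_h \cS_s f(h) \overline{\cS_s g(h_1/a_1, \ldots, h_s/a_s)}$ and using orthogonality of characters to collapse the average over $h_s$, I obtain
\[ X = \EE_{h_1, \ldots, h_{s-1}} \sum_\chi \alpha_\chi\, \beta_{a_s \chi}, \]
where $\alpha_\chi := \fsig_s f(h_1, \ldots, h_{s-1}, \chi)$ and $\beta_\chi := \fsig_s g(h_1/a_1, \ldots, h_{s-1}/a_{s-1}, \chi)$. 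By Proposition \ref{prop:sig-facts}\ref{item:sig-facts-1} these are non-negative real numbers, and $\sum_\chi \alpha_\chi \le 1$ by Proposition \ref{prop:sig-facts}\ref{item:sig-facts-2}; in particular $X \ge 0$ is real.

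For the right-hand side, I write the translate constants as $t_1, \ldots, t_k \in H$ so that $g'(x) = \prod_{i=1}^k g(x + t_i)$. The dual function factorizes as $F^{g'}_{\vec b}(x) = \prod_i F^g_{\vec b}(x + t_i)$, and by translation invariance, averaging independently over $t_1, \ldots, t_k \in H$ gives
\[ \EE_{t_1, \ldots, t_k \in H} \cS_s g'(\vec b, c) = (\cS_s g(\vec b, c))^k. \]
Taking the Fourier expansion of this identity in $c$ and substituting into $Y$ produces
\[ \EE_{t_1, \ldots, t_k \in H}\, Y = \EE_{h_1, \ldots, h_{s-1}} \sum_\chi \alpha_\chi \sum_{\mu_1 + \cdots + \mu_k = 0} \prod_{i=1}^k \beta_{a_s \chi + \mu_i}, \]
which is real and non-negative.

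The key inequality $|X|^k \le \EE_{t_i} Y$ will then follow from two elementary estimates. For each fixed $h_1, \ldots, h_{s-1}$, Hölder's inequality together with $\sum_\chi \alpha_\chi \le 1$ gives $(\sum_\chi \alpha_\chi \beta_{a_s \chi})^k \le \sum_\chi \alpha_\chi \beta_{a_s \chi}^k$. The $\mu_1 = \cdots = \mu_k = 0$ term inside the expression for $\EE_{t_i} Y$ contributes exactly $\sum_\chi \alpha_\chi \beta_{a_s \chi}^k$, while all other terms are non-negative (since $\beta \ge 0$), so combining with Jensen applied to the outer average over $h_1, \ldots, h_{s-1}$ (using that $X$ is real and non-negative) yields $|X|^k = X^k \le \EE_{t_i} Y$. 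Since $\EE_{t_i} |Y| \ge |\EE_{t_i} Y| = \EE_{t_i} Y \ge |X|^k$, a pigeonhole over $(t_1, \ldots, t_k) \in H^k$ produces specific constants with $|Y| \ge |X|^k$, which is the required inequality $|X| \le |Y|^{1/k}$.

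The main conceptual obstacle is recognizing that the direct manipulation of $\cS_s g(\cdot, h_j/a_j)$ as a correlation of the dual functions $F^g$ is cumbersome, whereas Fourier duality converts the factor $1/a_j$ (well-defined since $a_j$ is coprime to $|H|$) into a clean rescaling $\chi \mapsto a_j \chi$ of the dual frequency, and converts the $k$-fold product defining $g'$ into an inner sum over tuples $(\mu_1, \ldots, \mu_k)$ with $\sum_i \mu_i = 0$ (once one averages over the translates $t_i$). In these Fourier variables the inequality reduces to a routine Hölder manipulation on non-negative measures.
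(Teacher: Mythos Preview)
Your proof is correct and follows essentially the same route as the paper. The paper isolates the Fourier-side manipulation into a separate helper lemma (Lemma~\ref{lem:multilin-helper}) at the one-variable level---writing the inner average as $\sum_\chi |\widehat{F}(\chi)|^2 |\widehat{G}(a\chi)|^2$, applying convexity and the trivial non-negativity bound, and then Fourier inverting---and then applies it with $F = \Delta_{h_1}\cdots\Delta_{h_{s-1}} f$ and $G = \Delta_{h_1/a_1}\cdots\Delta_{h_{s-1}/a_{s-1}} g$; this is exactly your argument expressed via $\alpha_\chi = \fsig_s f(\vec h,\chi)$ and $\beta_\chi = \fsig_s g(\vec h/a,\chi)$, followed by the same outer Jensen step and pigeonhole over the translates.
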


This, together with Lemma~\ref{lem:multilin-step-1}, allows us to prove Lemma~\ref{lem:key-multilin-sampling}.
\begin{proof}[Proof of Lemma~\ref{lem:key-multilin-sampling} assuming Lemma~\ref{lem:multilin-step-2}]
  Given $f$ and $B$ as in the statement, since
  \[
    \left| \EEE_{h \in H^s} \cS_s f(h) \overline{B(h)} \right| \ge \eta
  \]
  by Lemma~\ref{lem:multilin-step-1} we have 
  \[
    \left| \EEE_{u_1,\dots,u_s \in H} \cS_s f(u_1,\dots,u_s) \overline{\cS_s g_1(-u_1,-u_2/2,-u_3/3,\dots,-u_s/s)} \right| \ge \eta^{2^s}
  \]
  where $g_1(x) = B(h_0 + (x,x,\dots,x))$ for some $h_0 \in H^s$.  Applying Lemma~\ref{lem:multilin-step-2} for each $j$ between $1$ and $s$, with $k=s!/j$, we deduce
  \[
    \left| \EEE_{u_1,\dots,u_s \in H} \cS_s f(u_1,\dots,u_s) \overline{\cS_s g_2(-u_1/s!, -u_2/s!, -u_3/s!,\dots,-u_s/s!)} \right| \ge \eta^{2^s\, s!^{s-1}}
  \]
where $g_2(x) = \prod_{i=1}^{s!^{s-1}} g_1(x+v_i)$ for some constants $v_i \in H$.  Setting $g_3(x) = g_2(-x/s!)$, we obtain 
  \[
    \left| \EEE_{h \in H^s} \cS_s f(h) \overline{\cS_s g_3(h)} \right| \ge \eta^{2^s\, s!^{s-1}}
  \]
  and note that the left hand side is exactly
  \[
    \EEE_{t \in H} \left\| x \mapsto f(x) \overline{g_3(x+t)} \right\|_{U^s}^{2^s} .
  \]
  Hence, we may select some $t \in H$ such that $g(x) = g_3(x+t)$ has the form required by the statement and also $\left\|f\overline{g}\right\|_{U^s} \ge \eta^{s!^{s-1}}$, as required.
\end{proof}

To prove Lemma~\ref{lem:multilin-step-2}, we isolate the following fact, which is essentially the special case $s=1$ of Lemma~\ref{lem:multilin-step-2} itself.
\begin{lemma}%
  \label{lem:multilin-helper}
  Suppose $f,g \colon H \to \CC$ are two functions, $\|f\|_\infty \le 1$, and $a$ and $k \ge 1$ are integers coprime to $|H|$.  Then
  \[
    \EEE_{h\in H} \left(\EEE_{x \in H} \Delta_h f(x)\right) \left( \EEE_{y \in H} \Delta_{h/a} \overline{g(y)} \right) \le
    \left( \EEE_{r \in H^k} \EEE_{h \in H} \left(\EEE_x \Delta_h f(x)\right) \left( \EEE_y \Delta_{h/ka} \overline{g_r(y)} \right)  \right)^{1/k}
  \]
  with both sides of the inequality being non-negative real quantities, and where $g_r \colon H \to \CC$ for $r \in H^k$ is defined by
  \[
    g_r(x) = \prod_{i=1}^k g(x+r_i) .
  \]
\end{lemma}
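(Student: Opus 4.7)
The plan is to translate both sides into the Fourier domain on $\widehat H$, where the claim reduces to a short H\"older estimate. Write $A(h) := \EE_x \Delta_h f(x)$ and $B(h) := \EE_y \Delta_h \overline{g(y)}$, and denote by $L$ the left-hand side of the inequality and by $R$ the quantity inside the $k$-th root on the right-hand side.

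My first step is the elementary pointwise identity
\[
  \EE_r \, \EE_y \, \Delta_{h'} \overline{g_r(y)} \;=\; B(h')^k.
\]
Interchanging the order of averaging and exploiting independence of the $r_i$, the left side factorises as $\EE_y \prod_{i=1}^k \EE_{r_i} \overline{g(y+r_i)} g(y+r_i+h')$; translation-invariance of $\EE_{r_i}$ reduces each inner expectation to $B(h')$, and the resulting $y$-independent product is $B(h')^k$. Taking $h' = h/(ka)$ then identifies $R = \EE_h A(h)\, B(h/(ka))^k$.

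Next I will pass to characters. By Parseval, $A(h) = \sum_\chi |\hat f(\chi)|^2 \overline{\chi(h)}$ and $B(h) = \sum_\chi |\hat g(\chi)|^2 \chi(h)$; since $\gcd(a,|H|) = \gcd(ka,|H|) = 1$, the maps $h \mapsto ah$ and $h \mapsto kah$ are bijections of $H$, and character orthogonality then yields
\[
  L \;=\; \sum_\chi |\hat f(\chi)|^2\, |\hat g(\chi^a)|^2,
  \qquad
  R \;=\; \sum_\chi |\hat f(\chi)|^2 \!\!\sum_{\substack{\chi_1,\dots,\chi_k \in \widehat H \\ \prod_i \chi_i = \chi^{ka}}} \prod_i |\hat g(\chi_i)|^2.
\]
Both expressions are sums of products of nonnegative real numbers, which verifies the nonnegativity assertion in the statement.

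The remaining step compares the two. For each $\chi$, the diagonal choice $\chi_i = \chi^a$ for every $i$ satisfies $\prod_i \chi_i = \chi^{ka}$, so keeping only this term (all others being nonnegative) gives $R \ge \sum_\chi |\hat f(\chi)|^2 |\hat g(\chi^a)|^{2k}$. On the other hand, Parseval combined with $\|f\|_\infty \le 1$ gives $\sum_\chi |\hat f(\chi)|^2 = \|f\|_2^2 \le 1$, so H\"older's inequality applied with weights $|\hat f(\chi)|^2$ and conjugate exponents $k/(k-1)$ and $k$ yields
\[
  L^k \;\le\; \Bigl(\sum_\chi |\hat f(\chi)|^2\Bigr)^{k-1} \sum_\chi |\hat f(\chi)|^2\, |\hat g(\chi^a)|^{2k} \;\le\; \sum_\chi |\hat f(\chi)|^2\, |\hat g(\chi^a)|^{2k},
\]
and chaining the two displays gives $L^k \le R$ as required. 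The only real subtlety is the first step, whose role is to recognise $g_r$ as precisely the device that, after $r$-averaging, turns $B$ into $B^k$; once this is understood, everything else is Fourier bookkeeping together with one application of H\"older, and the coprimality hypotheses serve only to make the character substitutions bijective.
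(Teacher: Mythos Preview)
Your proof is correct and essentially identical to the paper's: both pass to Fourier to get $L = \sum_\chi |\hat f(\chi)|^2 |\hat g(\chi^a)|^2$, apply H\"older (phrased as convexity in the paper) using $\sum_\chi |\hat f(\chi)|^2 \le 1$, and use the diagonal inclusion $\chi_i = \chi^a$ to bound $\sum_\chi |\hat f(\chi)|^2 |\hat g(\chi^a)|^{2k}$ by $R$. The only cosmetic difference is order: you first identify $\EE_r \EE_y \Delta_{h'} \overline{g_r(y)} = B(h')^k$ and then take Fourier transforms, whereas the paper works on the Fourier side throughout and only at the end introduces the redundant $r$-average to recognise the $g_r$ form.
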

\begin{proof}
  First note that
  \begin{equation}
    \label{eq:deriv-fourier}
    \EEE_{h\in H} \left(\EEE_{x \in H} \Delta_h f(x)\right) \overline{\left( \EEE_{y \in H} \Delta_{h/a} g(y) \right)} = \sum_{\chi \in \widehat{H}} |\widehat{f}(\chi)|^2 |\widehat{g}(a \chi)|^2
  \end{equation}
  which is in particular non-negative.  By convexity,
  \[
    \frac{\sum_{\chi \in \widehat{H}} |\widehat{f}(\chi)|^2 |\widehat{g}(a \chi)|^2}{\sum_{\chi \in \widehat{H}} |\widehat{f}(\chi)|^2} \le 
    \left( \frac{\sum_{\chi \in \widehat{H}} |\widehat{f}(\chi)|^2 |\widehat{g}(a \chi)|^{2k}}{\sum_{\chi \in \widehat{H}} |\widehat{f}(\chi)|^2}\right)^{1/k}
  \]
and as $\sum_{\chi \in \widehat{H}} |\widehat{f}(\chi)|^2 = \|f\|_2 \le 1$ we have that~\eqref{eq:deriv-fourier} is bounded by
  \[
    \left(\sum_{\chi \in \widehat{H}} |\widehat{f}(\chi)|^2 |\widehat{g}(a \chi)|^{2k}\right)^{1/k} .
  \]
  We also have the trivial inequality
  \[
    \sum_{\chi \in \widehat{H}} |\widehat{f}(\chi)|^2 |\widehat{g}(a \chi)|^{2k} \le 
    \sum_{\chi \in \widehat{H}} |\widehat{f}(\chi)|^2 \sum_{\substack{\xi_1,\dots,\xi_k \in \widehat{H} \\ \xi_1 + \cdots + \xi_k = k a \chi}} \prod_{i=1}^k |\widehat{g}(\xi_i)|^{2}
  \]
  since every term is non-negative and the terms on the left hand side form the subset of the terms on the right hand side for which $\xi_1=\cdots=\xi_k$.  Taking a Fourier transform again, the right hand side is exactly
  \[
    \EEE_{h \in H} \left(\EEE_{x \in H} \Delta_h f(x) \right) \left(\EEE_{y \in H} \Delta_{h/ka} g(y) \right)^k
  \]
  which, introducing one redundant average, is equal to the expression
  \[
    \EEE_{r \in H^k} \EEE_{h \in H} \left(\EEE_{x \in H} \Delta_h f(x)\right) \overline{\left( \EEE_{y \in H} \Delta_{h/ka} g_r(y) \right)} 
  \]
  as required.
\end{proof}

We now finish the proof of Lemma~\ref{lem:multilin-step-2}.

\begin{proof}[Proof of Lemma~\ref{lem:multilin-step-2}]
  Since the derivatives $\Delta_{h_i}$ commute, we may assume for simplicity that $j=s$.  For each choice of $h_1,\dots,h_{s-1}$, we may apply Lemma~\ref{lem:multilin-helper} to the functions
  \[
    F_{h_1,\dots,h_{s-1}}(x) = \Delta_{h_1} \dots \Delta_{h_{s-1}} f(x)
  \]
  and
  \[
    G_{h_1,\dots,h_{s-1}}(x) = \Delta_{h_1/a_1} \dots \Delta_{h_{s-1}/a_{s-1}} g(x)
  \]
  to obtain
  \begin{align}
    \nonumber
    & \EEE_{h_1,\dots,h_s \in H} \cS_s f(h_1,\dots,h_s) \overline{\cS_s g(h_1/a_1,h_2/a_2,\dots,h_s/a_s)} \\
    \nonumber
    &= \EEE_{h_1,\dots,h_{s-1} \in H} \EEE_{h \in H} \left( \EEE_{x \in H} \Delta_h F_{h_1,\dots,h_{s-1}}(x) \right) \left( \EEE_{y \in H} \Delta_{h/a_s} G_{h_1,\dots,h_{s-1}}(y) \right) \\
    \label{eq:multilin-step-2-1}
    &\le \EEE_{h_1,\dots,h_{s-1} \in H} \left(\EEE_{r \in H^k} \EEE_{h \in H} \left( \EEE_{x \in H} \Delta_h F_{h_1,\dots,h_{s-1}}(x) \right) \left( \EEE_{y \in H} \Delta_{h/ka_s} G^{(r)}_{h_1,\dots,h_{s-1}}(y) \right)\right)^{1/k}
  \end{align}
  where again the left hand side and each term $(\dots)^{1/k}$ are real and non-negative, and where
  \begin{align*}
    G_{h_1,\dots,h_{s-1}}^{(r)}(x) &= \prod_{i=1}^k G_{h_1,\dots,h_{s-1}}(x+r_i) \\
    &= \Delta_{h_1/a_1} \dots \Delta_{h_{s-1}/a_{s-1}} g_r(x)
  \end{align*}
  for $g_r(x) = \prod_{i=1}^k g(x+r_i)$, since $\Delta$ commutes with products and shifts.  By convexity,~\eqref{eq:multilin-step-2-1} is bounded by
  \begin{align*}
    &\left( \EEE_{h_1,\dots,h_{s-1} \in H} \EEE_{r \in H^k} \EEE_{h \in H} \left( \EEE_{x \in H} \Delta_h F_{h_1,\dots,h_{s-1}}(x) \right) \left( \EEE_{y \in H} \Delta_{h/ka_s} G^{(r)}_{h_1,\dots,h_{s-1}}(y) \right)\right)^{1/k} \\
    &= \left( \EEE_{r \in H^k} \EEE_{h_1,\dots,h_s \in H} \cS_s f(h_1,\dots,h_s) \overline{\cS_s g_r(h_1/a_1,\dots,h_s/ka_s)} \right)^{1/k} \ .
  \end{align*}
  Specializing the average to some particular $r \in H^k$, the stated result follows.
\end{proof}

\appendix
\counterwithout{lemma}{subsection}
\counterwithin{lemma}{section}
\section{Expansion and robust connected components}

We recall from Section~\ref{subsec:split} the definition of the normalized Cheeger constant of a graph.
The following lemma roughly states that the induced subgraph of $\cG$ on a randomly chosen set of at least $C h(\cG)^{-C}$ vertices is almost surely connected.
\begin{lemma}%
  \label{lem:subgraph-connected}
  Let $\cG=(V,E)$ be a graph with Cheeger constant $h = h(\cG) \le 1$.  Let $x_1,\dots,x_k \in V$ be vertices chosen independently and uniformly at random, and consider the graph $\cG'=(V',E')$ where $V' = [k]$ and $E' = \{ij \colon 1 \le i < j \le k,\, x_i x_j \in E\}$.\footnote{If the $x_i$ are distinct, which almost surely they are when $|V|$ is large compared to the other parameters, this is just the induced subgraph $G[\{x_1,\dots,x_k\}]$.}  Then
  \[
    \PP\big[\cG' \text{ is connected}\big] \ge 1 - (k+1) \exp\Big(-(k-1) h^4 / 64 \log\big(2/h^2\big) \Big) \ .
  \]
\end{lemma}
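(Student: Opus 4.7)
The plan is to bound the probability that $\cG'$ is disconnected by a union bound over partitions $[k] = S \sqcup T$, with each per-partition probability controlled via the expansion of $\cG$. Disconnection corresponds to a nonempty proper partition with no $\cG$-edges joining $\{x_i\}_{i \in S}$ to $\{x_j\}_{j \in T}$; writing $A := \{x_i\}_{i \in S}$, this is the event $E_{S,T}$ that $x_j \notin N_\cG(A)$ for every $j \in T$.

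I would first extract the consequences of the Cheeger hypothesis. Applying it with $|S|=1$ yields the minimum degree bound $d_\cG(v) \ge h|V|$, hence $|N_\cG(A)| \ge h|V|$ for any nonempty $A \subseteq V$. More generally, iterating the expansion $|A \cup N_\cG(A)| \ge (1+h)|A|$ valid for $|A| \le |V|/2$ gives the $r$-ball bound $|B_r(A)| \ge \min(|V|/2, (1+h)^r|A|)$, so the graph has mixing scale $r_0 = O(\log(1/h)/h)$. For a fixed partition with $|S| \le |T|$ (so $|T| \ge k/2$), conditioning on $\vec{x}_S$ and using independence of the $(x_j)_{j \in T}$ gives
\[
\PP[E_{S,T} \mid \vec{x}_S] = (1 - |N_\cG(A)|/|V|)^{|T|} \le (1-h)^{|T|} \le e^{-hk/2}.
\]
The naive union bound over the $\le 2^{k-1}$ partitions then gives $2^{k-1}e^{-hk/2}$, which is useful only when $h$ exceeds an absolute constant.

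To handle small $h$, the per-cut estimate must be sharpened. The natural tool is Cheeger's inequality $1 - \lambda_2(\cG) \gtrsim h^2$, combined with an expander-mixing-type bound at the covering/mixing scale $r_0$: this should upgrade the per-cut exponent from $h$ to $\Omega(h^4/\log(2/h^2))$, the $h^4$ factor arising as the squared Cheeger spectral gap (once for expander mixing, once for sampling concentration) and the $\log(2/h^2)$ denominator from the mixing-time overhead. Summing over partitions with $\binom{k}{s}$-type weights and bundling by the smaller side should then yield the claimed $(k+1)\exp(-(k-1)h^4/(64\log(2/h^2)))$.

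The main obstacle is this sharpening step: the naive bound is dominated by the $2^{k-1}$ combinatorial entropy whenever $h$ is small, so one cannot avoid passing from vertex expansion ($h$) to spectral expansion ($h^2$) via Cheeger, paying the logarithmic overhead associated with mixing. The specific shape $h^4/\log(2/h^2)$ of the exponent in the statement is consistent with this strategy --- squared spectral gap modulated by mixing-time overhead --- which is what one gets most naturally by combining these two standard tools.
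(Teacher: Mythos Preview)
Your proposal has a genuine structural gap. The union bound over the $2^{k-1}$ bipartitions of $[k]$ cannot be rescued by sharpening the per-cut probability in the way you suggest. Even granting a per-cut bound of the form $\exp(-c k h^4/\log(2/h^2))$, you still have to pay the $2^{k-1}$ entropy, and for small $h$ this dominates. Your appeal to Cheeger's inequality and expander mixing is reverse-engineered from the shape of the answer rather than an actual mechanism: expander mixing controls edge counts between \emph{fixed} vertex sets in $\cG$, not between random samples, and it does not by itself produce a per-cut bound strong enough to kill the exponential entropy in $k$. The sentence ``summing over partitions with $\binom{k}{s}$-type weights and bundling by the smaller side'' does not help either, since your per-cut bound $(1-h)^{|T|}$ is \emph{weakest} when $|S|$ is small, which is precisely where $\binom{k}{s}$ is small --- the two effects do not trade off usefully.

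The paper avoids the partition union bound entirely via a two-stage sequential argument. In the first stage one exposes $x_1,\dots,x_\ell$ (with $\ell = \lceil k/2\rceil$) and greedily builds a connected subsequence: starting from $S_1 = \Gamma(x_1)$, one adds $x_i$ to the subsequence (and sets $S_i = S_{i-1}\cup\Gamma(x_i)$) whenever $x_i \in S_{i-1}$. The Cheeger hypothesis gives the conditional drift $\mathbb{E}[|S_i\setminus S_{i-1}| \mid S_{i-1}] \ge h^2(|V|-|S_{i-1}|)$, so the missing fraction $T_i = 1-|S_i|/|V|$ contracts in expectation by a factor $(1-h^2)$ at each step. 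One then passes to the log-ratio $L_i = i\log(1-h^2/2) - \sum_{j\le i}\log R'_j$ (with $R'_j$ a version of $T_j/T_{j-1}$ capped below by $h^2/2$) and applies Azuma--Hoeffding; the increment bound $|\log R'_j| \le \log(2/h^2)$ is the source of the logarithmic denominator, and the squared drift $(\log(1-h^2/2))^2 \asymp h^4$ is the source of the $h^4$ in the exponent. This shows $|S_\ell| \ge (1-\delta)|V|$ with high probability. In the second stage, each of $x_{\ell+1},\dots,x_k$ lands in $S_\ell$ with probability $\ge 1-\delta$ (hence connects to the core), and each of $x_1,\dots,x_\ell$ acquires a neighbour among $x_{\ell+1},\dots,x_k$ with probability $\ge 1-(1-h)^{k-\ell}$ (using only the minimum-degree bound). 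The total failure probability is then a sum of $O(k)$ terms, not $2^k$, each of the required shape.
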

Although this lemma should be standard in some form, the author has not been able to find it in quotable form.  The general framework of this proof was suggested to the author by Jacob Fox.
\begin{proof}
  We run a two-stage argument.  In the first stage, we expose the first $\ell$ elements $x_1,\dots,x_\ell$ and find (almost surely) a subsequence $x_{i_1},\dots,x_{i_m}$ where $1 \le i_1 < \cdots < i_m \le \ell$, which induces a connected subgraph, and such that $\left|\bigcup_{j=1}^m \Gamma_{\cG}(x_{i_j}) \right| \ge (1 - \delta) |V|$ for some small $\delta$; i.e., almost every $v \in V$ is a neighbour of some element in this subsequence.  In the second stage, we expose $x_{\ell+1},\dots,x_k$, and claim that almost surely the following hold: each vertex from $x_{\ell+1},\dots,x_k$ has at least one neighbour in $x_{i_1},\dots,x_{i_m}$, and every vertex in $x_1,\dots,x_\ell$ has at least one neighbor in $x_{\ell+1},\dots,x_k$.  These conditions certainly imply that $\cG'$ is connected.

  Formally, set $S_1 = \Gamma(x_1)$, and for each $2 \le i \le \ell$, set
  \[
    S_i = \begin{cases} S_{i-1} \cup \Gamma(x_i) &\colon x_i \in S_{i-1} \\ S_{i-1} &\colon x_i \notin S_{i-1} \end{cases}
  \]
  and let $1 = i_1 < i_2 < \cdots < i_m \le \ell$ denote those indices for which $x_i \in S_{i-1}$ and the former case holds, together with $i=1$.  Then it is clear by construction that $S_{r} \subseteq \bigcup_{i_j \le r} \Gamma_{\cG}(x_{i_j})$ and that for each $1<r \le m$ the vertex $x_{i_r}$ has at least one neighbor in $x_{i_1},\dots,x_{i_{r-1}}$.  Hence the induced subgraph of $\cG'$ on $i_1,\dots,i_m$ is connected. Our next goal is to show that $S_\ell$ is almost all of $V$ with high probability.

  To do this, we roughly show a lower bound on the typical increase in size of $S_i$ over $S_{i-1}$, and use submartingale inequalities to bound the probability that $S_\ell$ is not basically everything. Note that
  \[
    \EE_{y \in S_{i-1}} |\Gamma(y) \setminus S_{i-1}| = \frac{|E(S_{i-1},\, V \setminus S_{i-1})|}{|S_{i-1}|}
  \]
  and so
  \[
    \EE_{y \in V} [y \in S_{i-1}] \,|\Gamma(y) \setminus S_{i-1}| = \frac{|E(S_{i-1},\, V \setminus S_{i-1})|}{|V|} \ge h \min(|S_{i-1}|, |V| - |S_{i-1}|) \ .
  \]
  Given that $|S_1| \ge h |V|$, we may bound the right hand side below by $h^2 (|V| - |S_{i-1}|)$, and so we get a conditional expectation bound
  \[
    \EE\big[ |S_i \setminus S_{i-1}|\ \big|\ S_1,\dots,S_{i-1} \big] \ge h^2 (|V| - |S_{i-1}|)
  \]
  for each $2 \le i \le \ell$.  Setting $S_0 = \emptyset$ the same bound holds for $i=1$ (as $|S_1| \ge h |V|$ always).

  If we define $T_i = 1 - |S_i| / |V|$ for $0 \le i \le \ell$, i.e.\ the proportion of vertices still missing from $S_i$ (with $T_0 = 1$), then $T_{i} \le T_{i-1}$ for all $1 \le i \le \ell$ and the bound above translates as
  \[
    \EE\big[ T_i\ \big|\ T_1,\dots,T_{i-1} \big] \le (1 - h^2) T_{i-1} .
  \]
  Changing variables again and letting $R_i = T_i / T_{i-1}$ for $1 \le i \ge \ell$, we have $0 \le R_i \le 1$ and 
  \[
    \EE\big[ R_i \ \big|\ R_1,R_2,\dots,R_{i-1} \big] \le 1 - h^2 .
  \]
  For technical reasons we will also define a variant where the ratio $R_i$ is capped below, i.e.\ setting $R'_i = \max(h^2/2, R_i)$. It follows that $R_i \le R'_i \le R_i + h^2/2$ and so
  \[
    \EE \big[R'_i \ \big| \ R_1,R_2,\dots,R_{i-1} \big] \le 1 - h^2/2 .
  \]
  Furthermore we have $h^2/2 \le R'_i \le 1$.  For our final change of variables to obtain a submartingale, let $L_0 = 0$ and $L_i = i \log(1 - h^2/2) - \sum_{j=1}^i \log R'_j $; so, $\log(1-h^2/2) \le L_i - L_{i-1} \le -\log(h^2/2)$, and
  \[
    \EE \big[L_i\ \big|\ L_1,\dots,L_{i-1} \big] = L_{i-1} - \EE \big[ \log R'_i\ \big|\ L_1,\dots,L_{i-1}] + \log(1 - h^2/2) \ge L_{i-1}
  \]
  (by convexity of $\log$) and hence $L_0,L_1,\dots,L_\ell$ do indeed form a submartingale.  Note $|\log(1-h^2/2)| \le \log 2 \le |\log(h^2/2)|$.  By the Azuma--Hoeffding inequality, we deduce that
  \[
    \PP[L_\ell \le -t] \le \exp\big(-t^2 / 2 \ell \log(2/h^2) \big) .
  \]
  We now recover information about previous variables. The value of $T_i$ is given by
  \[
    T_i = \prod_{j=1}^i R_i \le \prod_{j=1}^i R'_i = \exp\big(-L_i + i \log(1 - h^2/2) \big)
  \]
  and so we deduce that
  \[
    \PP\big[T_\ell \ge (1-h^2/2)^{\ell} e^t\big] \le \exp\big(-t^2 / 2 \ell \log(2/h^2) \big) .
  \]
  Setting $t = -(\ell/2) \log (1-h^2/2)$, it follows that
  \[
    \PP\big[T_\ell \ge (1-h^2/2)^{\ell/2} \big] \le \exp\left(-\frac{\ell \log(1-h^2/2)^2}{8 \log(2/h^2)} \right) \le \exp\left(-\ell (h^4 / 32 \log(2/h^2)) \right) \ .
  \]
  We write $\delta = (1-h^2/2)^{\ell/2}$ and $p = \exp\left(-\ell (h^4 / 32 \log(2/h^2))\right)$; so, we have shown that $|S_\ell| \ge (1 - \delta) |V|$ with probability at least $1-p$. We can further bound $\delta \le \exp\big(-\ell h^2/4\big)$.

  We now treat $x_1,\dots,x_\ell$ as fixed subject to the event $|S_\ell| \ge (1-\delta)|V|$, and enter the second stage where we expose $x_{\ell+1},\dots,x_k$.
  
  With probability at least $1 - (k-\ell) \delta$, each $x_i$ for $\ell+1 \le i \le k$ lies in $S_\ell$, i.e., has a neighbor among $x_{i_1},\dots,x_{i_m}$.
  Also, since each vertex $v \in \{x_1,\dots,x_\ell\}$ has degree at least $h |V|$, the probability (in $x_{\ell+1},\dots,x_k$) that $v$ has at least one neighbor among $x_{\ell+1},\dots,x_k$ is at least $1-(1-h)^{k-\ell}$.

  Hence, the probability that the graph is not connected is bounded by
  \[
    p + (k-\ell) \delta + \ell (1-h)^{k-\ell} .
  \]
  Setting $\ell = \lceil k/2 \rceil$, note that each of $p$, $\delta$ and $(1-h)^{k-\ell} \le \exp(-h(k-1)/2)$ is certainly bounded by $\exp \big(-(k-1) h^4 / 64 \log(2/h^2)\big)$, and so the total failure probability is at most
  \[
    (k+1) \exp\big(-(k-1) h^4 / 64 \log(2/h^2) \big)
  \]
  as required.
\end{proof}

\section{Discrete Cauchy--Schwarz and Cauchy--Schwarz complexity}%
\label{app:cs}

We supply the missing proof of the discrete Cauchy--Schwarz complexity result (Lemma~\ref{lem:gvn}) used above.

The proof is analogous to the proof that systems of Cauchy--Schwarz complexity $\le t$ are controlled by the $U^{t+1}$-norm (implicit in~\cite{gt-primes}, or~\cite[Theorem 2.3]{gowers-wolf}), only replacing every application of the Cauchy--Schwarz inequality with an appeal to the following lemma.

\begin{lemma}[Discrete Cauchy--Schwarz surrogate]%
  \label{lem:discrete-cs}
  Suppose $X$, $Y$ are finite sets, $Z$ is any set, and $\pi \colon X \to Y$ and $f \colon X \to Z$, $g \colon Y \to Z$ are functions.  If
  \[
    \big|\big\{ x \in X \colon f(x) = g(\pi(x)) \big\}\big| \ge \delta |X|
  \]
  then
  \[
    \big|\big\{ (x,x') \in X^2 \colon \pi(x) = \pi(x'),\, f(x) = f(x')\big\}\big| \ge \delta^2 |X|^2/|Y| \ .
  \]
  If furthermore every non-empty fiber $\pi^{-1}(y)$ for $y \in Y$ has the same size, we can rewrite this as
  \[
    \big|\big\{ (x,x') \in X^2 \colon \pi(x) = \pi(x'),\, f(x) = f(x')\big\}\big| \ge \delta^2 \big|\big\{ (x,x') \in X^2 \colon \pi(x) = \pi(x')\big\}\big| .
  \]
\end{lemma}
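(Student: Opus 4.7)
The idea is a one-line application of the Cauchy--Schwarz inequality, after observing that the ``agreement set'' $S = \{x \in X : f(x) = g(\pi(x))\}$ breaks up naturally along the fibres of $\pi$, and on each fibre $f$ is automatically constant (equal to $g(y)$).

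First I would set $S = \{x \in X : f(x) = g(\pi(x))\}$, so that by hypothesis $|S| \ge \delta |X|$. For each $y \in Y$ let $S_y = S \cap \pi^{-1}(y)$, so that $\sum_{y \in Y} |S_y| = |S|$, and note that any $x \in S_y$ satisfies $f(x) = g(y)$. Hence any pair $(x,x') \in S_y \times S_y$ satisfies both $\pi(x) = \pi(x') = y$ and $f(x) = f(x') = g(y)$. Therefore
\[
  \big|\big\{(x,x') \in X^2 : \pi(x) = \pi(x'),\, f(x) = f(x')\big\}\big| \ge \sum_{y \in Y} |S_y|^2.
\]

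For the first claim, I would apply Cauchy--Schwarz to $(|S_y|)_{y \in Y}$ directly, giving
\[
  \sum_{y \in Y} |S_y|^2 \ge \frac{\Big(\sum_{y \in Y} |S_y|\Big)^2}{|Y|} = \frac{|S|^2}{|Y|} \ge \frac{\delta^2 |X|^2}{|Y|},
\]
which is the required bound. (One could sum only over $y \in \pi(X)$ and get something stronger, but $|Y|$ is enough.)

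For the ``furthermore'' statement, where every non-empty fibre has a common size $k$, I would note that $|X| = k|\pi(X)|$ and
\[
  \big|\{(x,x') \in X^2 : \pi(x) = \pi(x')\}\big| = \sum_{y \in \pi(X)} |\pi^{-1}(y)|^2 = k^2 |\pi(X)| = k|X|,
\]
and then restrict the Cauchy--Schwarz step to the image, obtaining
\[
  \sum_{y \in \pi(X)} |S_y|^2 \ge \frac{|S|^2}{|\pi(X)|} = \frac{k|S|^2}{|X|} \ge \delta^2 k |X| = \delta^2 \big|\{(x,x') : \pi(x) = \pi(x')\}\big|.
\]
There is no real obstacle here; the only mild subtlety is being careful to run Cauchy--Schwarz over the image (equivalently, the non-empty fibres) in the second part so that the two forms of the conclusion match up exactly.
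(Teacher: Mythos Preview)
Your proof is correct and follows essentially the same approach as the paper: decompose the agreement set $S$ into fibres $S_y$, note each $S_y \times S_y$ contributes to the target count, and apply Cauchy--Schwarz to $\sum_y |S_y|^2$. The paper phrases the intermediate step slightly differently (bounding $|S_y|$ by the square root of the fibrewise pair count before applying Cauchy--Schwarz), but this is cosmetic; your handling of the ``furthermore'' clause, summing over $\pi(X)$ rather than all of $Y$, is if anything a bit more careful than the paper's, which tacitly needs $\pi$ surjective for its equality $\sum_y |\pi^{-1}(y)|^2 = |X|^2/|Y|$ to hold.
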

\begin{proof}
  We have
  \begin{align*}
    \delta |X| \le \big|\big\{ x \in X \colon f(x) = g(\pi(x)) \big\}\big| &= \sum_{y \in Y} \big|\big\{ x \in \pi^{-1}(y) \colon f(x) = g(y)\big\}\big|  \\
    &\le \sum_{y \in Y} \big|\big\{ (x,x') \in (\pi^{-1}(y))^2 \colon f(x) = f(x') \big\}\big|^{1/2}
  \end{align*}
  since the latter set contains all pairs $(x,x')$ with $f(x) = f(x') = g(y)$.  By Cauchy--Schwarz, we have
  \[
    \LHS \le |Y|^{1/2} \left(\sum_{y \in Y} \big|\big\{ (x,x') \in (\pi^{-1}(y))^2 \colon f(x) = f(x') \big\}\big|\right)^{1/2}
  \]
  and rearranging gives the first stated result.  For the second, note that under the assumption on the fibers,
  \[
    \big|\big\{(x,x') \in X^2 \colon \pi(x) = \pi(x')\big\}\big| = \sum_{y \in Y} |\pi^{-1}(y)|^2
    = \frac1{|Y|} \left( \sum_{y \in Y} |\pi^{-1}(y)| \right)^2
    = |X|^2/|Y|
  \]
  by the converse to Cauchy--Schwarz.
\end{proof}

From now on we write $X \times_\pi X$ to denote the set $\{ (x,x') \in X^2 \colon \pi(x) = \pi(x')\}$.
The next lemma captures what happens when we apply this bound a number of times, in the setting of abelian groups.

\begin{lemma}%
  \label{lem:multidim-cs}
  Suppose $k \ge 0$, $X$ is a finite abelian group, $K_1,\dots,K_k \subseteq X$ are subgroups, $Z$ is any abelian group, and $f \colon X \to Z$ and $g_i \colon X / K_i \to Z$ for $i \in [k]$ are any functions.  Let $\wt X$ denote the subset of $X^{\llbracket k\rrbracket}$ given by
  \[
    \wt X = \big\{ \omega \mapsto x + \omega_1 h_1 + \cdots + \omega_k h_k \colon x \in X, \, h_i \in K_i \big\} .
  \]
  If
  \[
    \left|\left\{ x \in X \colon f(x) = \sum_{i=1}^k g_i(x + K_i) \right\} \right| \ge \delta |X|
  \]
  then
  \[
    \big|\big\{ c \in \wt X \colon \partial^{k} f(c) = 0\big\}\big| \ge \delta^{2^k} | \wt X| .
  \]
\end{lemma}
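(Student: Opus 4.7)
The plan is to argue by induction on $k$. The case $k=0$ is trivial: $\wt X = X$, $\partial^0 f(c) = f(c)$, and the empty sum hypothesis just says $f(x) = 0$ for at least $\delta |X|$ values.

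For the inductive step, I would separate off the subgroup $K_k$ and use Lemma \ref{lem:discrete-cs} to eliminate $g_k$. Set $F(x) = f(x) - \sum_{i=1}^{k-1} g_i(x + K_i)$ and let $\pi \colon X \to X/K_k$ be the quotient map with $G(y) = g_k(y)$. The hypothesis says $F(x) = G(\pi(x))$ on a set of size at least $\delta |X|$; the fibers of $\pi$ all have size $|K_k|$, so Lemma \ref{lem:discrete-cs} gives
\[
  \big| \{ (x,h) \in X \times K_k \colon F(x) = F(x+h) \} \big| \ge \delta^2 |X|\,|K_k| \ .
\]
Rewriting the equality $F(x) = F(x+h)$ as $\partial_h f(x) = \sum_{i=1}^{k-1} \partial_h g_i(x + K_i)$, where $\partial_h g_i \colon X/K_i \to Z$ is the well-defined function $y \mapsto g_i(y) - g_i(y + h + K_i)$, the above says that for each $h \in K_k$ there is a density $\delta_h \ge 0$ such that $\EE_{h \in K_k} \delta_h \ge \delta^2$ and the equation $\partial_h f(x) = \sum_{i=1}^{k-1} \partial_h g_i(x + K_i)$ holds on at least $\delta_h |X|$ values of $x$.

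For each fixed $h \in K_k$, the inductive hypothesis applied to $\partial_h f$ and the functions $\partial_h g_i$ on the subgroups $K_1,\dots,K_{k-1}$ yields
\[
  \big|\{ c \in \wt X' \colon \partial^{k-1}(\partial_h f)(c) = 0 \}\big| \ge \delta_h^{2^{k-1}} |\wt X'| \ ,
\]
where $\wt X' = \{\omega \mapsto x + \omega_1 h_1 + \dots + \omega_{k-1} h_{k-1} \colon x \in X,\ h_i \in K_i\}$ does not depend on $h$. The cubes in $\wt X$ are in natural bijection with pairs $(c, h)$ with $c \in \wt X'$ and $h \in K_k$, via $c' = [c,\, c + \square^{k-1}(h)]_k$, and under this bijection $\partial^k f(c') = \partial^{k-1}(\partial_h f)(c)$; in particular $|\wt X| = |K_k| |\wt X'|$.

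Summing over $h \in K_k$, the total number of good cubes in $\wt X$ is at least
\[
  \sum_{h \in K_k} \delta_h^{2^{k-1}} |\wt X'| = |\wt X|\, \EE_{h \in K_k} \delta_h^{2^{k-1}} \ge |\wt X|\, \big(\EE_{h \in K_k} \delta_h\big)^{2^{k-1}} \ge \delta^{2^k} |\wt X|
\]
by Jensen's inequality (convexity of $t \mapsto t^{2^{k-1}}$), completing the induction. The only genuinely delicate point in this argument is verifying the bijection and the identity $\partial^k f(c') = \partial^{k-1}(\partial_h f)(c)$ with the correct orientation, but this is purely combinatorial bookkeeping following directly from the definitions.
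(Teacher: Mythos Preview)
Your proof is correct and follows essentially the same strategy as the paper: induct on $k$, use Lemma~\ref{lem:discrete-cs} to eliminate $g_k$, and then apply the inductive hypothesis. The only organizational difference is that the paper packages the inductive step by passing to the fiber-product group $X' = X \times_{\pi_k} X$ and applying induction once there, whereas you fix $h \in K_k$, apply induction to $\partial_h f$ on the original group $X$, and then average over $h$ using Jensen; these are equivalent reformulations of the same argument.
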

\begin{proof}
  We apply induction on $k$; the case $k=0$ is trivial so we fix $k \ge 1$.  Write $\pi_i \colon X \to X/K_i$ for the quotient map, and set $F(x) = f(x) - \sum_{i=1}^{k-1} g_i(x + K_i)$.  Then
  \[
    \big|\big\{ x \in X \colon F(x) = g_k(x+K_k) \big\}\big| \ge \delta |X|  
  \]
  and so by Lemma~\ref{lem:discrete-cs} we have
  \[
    \big|\big\{ (x,x') \in X \times_{\pi_k} X \colon F(x) - F(x') = 0 \big\}\big| \ge \delta^2\, |X \times_{\pi_k} X| \ ,
  \]
  noting that the condition on fibers is automatic for group homomorphisms.  If we define
  $X' = X \times_{\pi_k} X$, or equivalently
  \[
    X' = \big\{ (x, x+h_k) \colon x \in X,\, h_k \in K_k \big\},
  \]
  as well as subgroups $K'_i = X' \cap (K_i \times K_i)$, and functions $f'(x,x') = f(x) - f(x')$ for $(x,x') \in X'$ and
  \[
    g'_i\big((x,x') + K'_i\big) = g_i(x + K_i) - g_i(x' + K_i),
  \]
  then we have that
  \[
    \left|\left\{ y \in X' \colon f'(y) = \sum_{i=1}^{k-1} g_i'(y + K'_i) \right\}\right| \ge \delta^2 |X'|
  \]
  and so by inductive hypothesis,
  \[
    \big|\big\{ c \in \wt{X'} \colon \partial^{k-1} f'(c) = 0\big\}\big| \ge \delta^{2^k} \big|\wt{X'}\big| .
  \]
  However, it is straightforward to verify that $\wt{X'} = \wt X$ and $\partial^{k-1} f' = \partial^k f$, so the result follows.
\end{proof}

\begin{proof}[Proof of Lemma~\ref{lem:gvn}]
  We use the notation of Definition~\ref{def:cauchy-schwarz-complexity} and the statement.  In particular, recall we are given functions $f_1,\dots,f_k \to \RR$, linear forms $\phi_1,\dots,\phi_k \colon \ZZ^d \to \ZZ$, sets $\Sigma_1,\dots,\Sigma_{t+1}$ that cover $[k] \setminus \{j\}$, and elements $\sigma_1,\dots,\sigma_{t+1} \in \ZZ^d$ witnessing the Cauchy--Schwarz complexity of $\phi_1,\dots,\phi_k$ at index $j \in [k]$.  Note that without loss of generality we may assume $\Sigma_i$ are disjoint.

  Set $X = H^{d}$, and for $r \in [t+1]$ consider the subgroup $K_r = \{ h \sigma_r \colon h \in H\} \subseteq H^d$ and function
  \begin{align*}
    g_r \colon H^d / K_r &\to \RR \\
    x + K_r &\mapsto \sum_{i \in \Sigma_r} -f_i(\phi_i(x))
  \end{align*}
  which is well-defined as $\phi_i(\sigma_r) = 0$ for $i \in \Sigma_r$ by assumption.  Finally write $f(x) = f_j(\phi_j(x))$.  It follows that
  \[
    \left|\left\{x \in H^d \colon f(x) = \sum_{r=1}^{t+1} g_r(x + K_r) \right\}\right| \ge \delta |X|
  \]
  and so applying Lemma~\ref{lem:multidim-cs} we deduce that for
  \[
    \wt X = \big\{ \omega \mapsto x + \omega_1 h_1 \sigma_1 + \cdots + \omega_{t+1} h_{t+1} \sigma_{t+1} \colon x \in H^d, \, h_i \in H \big\} \subseteq C^{t+1}(X)
  \]
  we have
  \[
    \big|\big\{ c \in \wt X \colon \partial^{t+1} f_j(\phi_j(c)) = 0\big\}\big| \ge \delta^{2^{t+1}} |\wt X|.
  \]
  However, we note that
  \[
    \big\{ \phi_j(c) \colon c \in \wt X\big\} = \big\{ \omega \mapsto \phi_j(x) + \omega_1 q_1 h_1 + \cdots + \omega_{t+1} q_{t+1} h_{t+1} \colon x \in H^d, \, h_1,\dots,h_{t+1} \in H \big\}
  \]
  and since $(q_r,|H|) = 1$ for each $r$, this is precisely $C^{t+1}(H)$.  It follows that
  \[
    \big|\big\{ c \in C^{t+1}(H) \colon \partial^{t+1} f_j(c) = 0\big\}\big| \ge \delta^{2^{t+1}} |C^{t+1}(H)| .
  \]
  as required.
\end{proof}

\section{Algebraic definitions and facts concerning nilmanifolds and related objects}%
\label{app:nilmanifolds}

The theory of nilmanifolds is now an established part of additive combinatorics and ergodic theory, and accounts of the essential theory exist in the literature~\cite[\S 1.6]{tao-higher}\ \cite[Appendix A]{gmv-1}\ \cite[Appendix B]{gtz}\ \cite[Appendix E]{gt-primes}.  Given this, the purpose of this appendix is twofold:---
\begin{itemize}
  \item to fix the definitions and conventions to be used in this work, not all of which are standard; and
  \item to provide statements and in some cases proofs from the algebraic theory where these are not available in the standard literature or not straightforward to extract from it.
\end{itemize}
Little of this content is actually novel, although it may be idiosyncratic.  On the other hand, this is expressly not intended as a pedagogical introduction to the theory, with motivating examples etc., as such already exist and as this is not the role of the current work.

\subsection{Filtered groups and Host--Kra cubes}

The following are by now fairly standard concepts.

\begin{definition}[Filtered group]
  If $G$ is a group, a \emph{degree $s$ filtration} on $G$ is a decreasing sequence $G = G_0 \supseteq G_1 \supseteq \dots$ of subgroups of $G$ with $G_k = \{\id_G\}$ for $k>s$, with the property that if $g \in G_i$ and $h \in G_j$ then the commutator $[g,h] = g^{-1} h^{-1} g h$ lies in $G_{i+j}$.  The filtration is called \emph{proper} if $G_0 = G_1$.
  
  If $G$ has a topology, we insist $G_i$ are closed.
\end{definition}
We sometimes write $G_\bullet$ to emphasize that a particular filtration on $G$ has been specified.  Note that a proper filtered group is necessarily nilpotent.

\begin{definition}%
  If $H$ is an abelian group, the \emph{standard filtration} on $H$ is to take $H = H_0 = H_1$ and $H_i = \{0\}$ for $i \ge 2$.  On the other hand, we write $H_{(s)}$ to denote the \emph{standard degree $s$ filtration} $H = H_0 = \cdots = H_s$ and $H_{i} = \{0\}$ for $i \ge s+1$.
\end{definition}

\begin{definition}[Host--Kra cubes]%
  \label{def:hk-group}
  Given a filtered group $G_\bullet$ and an integer $k \ge 0$, the \emph{Host--Kra cube group} $\HK^k(G_\bullet)$ is the subgroup of $G^{\llbracket k \rrbracket}$ (with pointwise group operations) generated by elements of the form
  \begin{align*}
    [g]_F \colon \llbracket k \rrbracket &\to G \\
    \omega &\mapsto \begin{cases} g &\colon g \in F \\ \id_G &\colon g \notin F \end{cases}
  \end{align*}
  where $g \in G_i$ and $F$ is a face of $\llbracket k \rrbracket$ of codimension at most $i$.
\end{definition}

\begin{remark}
  If $H$ is an abelian group considered with the standard filtration, $\HK^k(H) = C^k(H)$, where $C^k(H)$ is defined as in~\eqref{eq:cube-definition}.
\end{remark}

\subsection{Nilmanifolds and complexity}

We now discuss nilmanifolds.  For current purposes, we always consider these objects to come with a certain amount of extra data, and to have some quantitative control on their complexity.  The data we refer to is referred to elsewhere in the literature as a set of \emph{Mal'cev coordinates of the second kind}. To avoid generalities, we give a fairly unsophisticated definition, which is equivalent (but not totally obviously so) to other conventions, notably~\cite[Definition 2.1]{gt-quantitative}.

Our main definition describes a class of filtered nilpotent groups which admit a low-complexity description.
\begin{definition}[Explicitly presented filtered group]%
  \label{def:explicit-group}
  Let $s \ge 1$ and $d_1,\dots,d_s \ge 0$ be integers, and write $d = (d_1,\dots,d_s)$.
  We say $G_\bullet$ is an \emph{explicitly presented filtered group of degree $s$, dimension $d$ and complexity $M$} if it is a proper filtered Lie group of degree $s$; and the following hold.
  \begin{enumerate}[label=(\roman*)]
    \item We are given homomorphisms $\gamma_{i,j} \colon \RR \to G$ (i.e., one-parameter subgroups) for each $i \in [s]$ and $j \in [d_i]$. We denote these by $\gamma_{i,j}^a$ for $a \in \RR$ (instead of $\gamma_{i,j}(a)$), and abbreviate $\gamma_{i,j}^1 = \gamma_{i,j}$.
    \item Every $g \in G$ has a unique representation
      $g = \prod_{i=1}^s \prod_{j=1}^{d_i} \gamma_{i,j}^{a_{i,j}}$
      for $a_{i,j} \in \RR$, where the product has $i$ and $j$ in increasing order from left to right.  We write
        $\sigma \colon G \to \bigoplus_{i=1}^s \RR^{d_i}$
      for the associated bijection sending $g$ to the tuple of real numbers $a_{i,j}$.  We require further that $\sigma$ is a homeomorphism.
    \item For each $r \in [s]$, the subgroup $G_{r} \subseteq G$ in the filtration consists precisely of those $g \in G$ such that $\sigma(g)_{i,j} = 0$ for all $i<r$ and $j \in [d_i]$.
    \item For each $i,j$ and $i',j'$, the coordinates $\sigma([\gamma_{i,j}, \gamma_{i',j'}])$ of the commutator $[\gamma_{i,j},\gamma_{i',j'}] \in G_{i+i'}$ are all integers bounded by $M$.
  \end{enumerate}
  The \emph{integral subgroup} of such a $G_\bullet$ is the set
  \[
    \Gamma = \big\{ \gamma \in G \colon \sigma(\gamma)_{i,j} \in \ZZ  \text{ for all } i,j \big\}
  \]
  and we write $\Gamma_i = G_i \cap \Gamma$, yielding a filtered group $\Gamma_\bullet$.
\end{definition}

We note the following.
\begin{proposition}[$\Gamma$ and the fundamental domain]%
  \label{prop:fun-dom}
  In the setting of Definition~\ref{def:explicit-group}, $\Gamma$ is indeed a subgroup of $G$.  Moreover, for each $i$, every element $g \in G_i$ has a unique representation $g = h \gamma$ where $\gamma \in \Gamma_i$, $h \in G_i$ and $\sigma(h)_{i',j'} \in [0,1)$ for each $i',j'$. Hence $\Gamma_i$ is co-compact in $G_i$. 

  Moreover, the same holds replacing the fundamental domain $\prod_{i',j'} [0,1)$ in $\bigoplus_{i=1}^s \RR^{d_i}$ with a shifted version $\prod_{i',j'} [a_{i',j'}, a_{i',j'}+1)$, for any real numbers $a_{i',j'}$. 
\end{proposition}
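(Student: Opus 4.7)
The plan is to establish the three assertions in order, leveraging the algebraic structure imposed by conditions (i)--(iv) of Definition \ref{def:explicit-group}.

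First I would show that $\Gamma$ is a subgroup by proving the stronger statement that, under $\sigma$, the group multiplication and inversion on $G$ correspond to \emph{integer} polynomial maps on $\bigoplus_{i=1}^s \RR^{d_i}$. Concretely, to compute the coordinates of a product $g h$, one expands the juxtaposed product $\prod \gamma_{i,j}^{a_{i,j}} \prod \gamma_{i',j'}^{b_{i',j'}}$ into the standard-ordered form by repeatedly applying commutator identities such as $y x = x y [x,y]^{-1}$, where $[\gamma_{i,j}, \gamma_{i',j'}] \in G_{i+i'}$ has integer coordinates by (iv). A downward induction on the nilpotency degree---at each stage the needed reorderings generate correction factors in higher graded pieces whose coordinates are integer polynomials in the already-known integer structure constants---yields that the resulting map $(\sigma(g), \sigma(h)) \mapsto \sigma(gh)$ is polynomial with integer coefficients. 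Inversion is handled in the same spirit. Integer coordinates are thus preserved by group operations, so $\Gamma$ is a subgroup and $\Gamma_i = G_i \cap \Gamma$ inherits a filtration.

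Next, for existence of the decomposition, I would fix $g \in G_i$ and construct $\gamma \in \Gamma_i$ by processing coordinates in lexicographic order of $(i', j')$, starting from $i' = i$ (since $\sigma(g)_{i',j'} = 0$ for $i' < i$ by (iii)) and working upward. At each step, with current residual $g'$, multiply on the right by $\gamma_{i',j'}^{-n_{i',j'}}$ where $n_{i',j'} = \lfloor \sigma(g')_{i',j'} \rfloor$. The key observation is that $\gamma_{i',j'}^{-n_{i',j'}} \in G_{i'}$, so the cosets modulo $G_{i'}$ are unchanged, which by (iii) means coordinates at levels below $i'$ are untouched; meanwhile, modulo $G_{i'+1}$ the group $G_{i'}/G_{i'+1}$ is abelian and $\sigma$ restricts to an isomorphism with $\RR^{d_{i'}}$, so the operation cleanly subtracts $n_{i',j'}$ from the $(i', j')$ coordinate without disturbing other level-$i'$ coordinates (coordinates at levels $>i'$ may change, but those are processed later). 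After all $(i',j')$ are treated, the residual $h$ satisfies $\sigma(h)_{i',j'} \in [0,1)$ and $\gamma$, being a finite product of integer powers of generators, lies in $\Gamma_i$.

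For uniqueness, suppose $g = h\gamma = h'\gamma'$ with both $h, h'$ in the fundamental domain. I argue by induction on $i'$ from $i$ upward that $\sigma(h)_{i',j'} = \sigma(h')_{i',j'}$ for all $j'$: modulo $G_{i'+1}$ and using the equalities already established at lower levels together with the integer polynomial form of the group law from Paragraph 1, the difference $\sigma(h)_{i',j'} - \sigma(h')_{i',j'}$ equals $\sigma(\gamma')_{i',j'} - \sigma(\gamma)_{i',j'}$ plus an integer polynomial in already-equal quantities, hence lies in $\ZZ$; being in $(-1,1)$ it must vanish. Thus $h = h'$ and $\gamma = \gamma'$. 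Co-compactness of $\Gamma_i$ in $G_i$ is then immediate, since $\sigma$ pulls the bounded set $\prod_{i' \ge i} [0,1)^{d_{i'}}$ back to a set with compact closure whose $\Gamma_i$-translates cover $G_i$. The shifted case is proved identically, replacing $\lfloor \cdot \rfloor$ in the existence argument by the integer translation making the coordinate land in $[a_{i',j'}, a_{i',j'}+1)$, and using that both $[0,1)$ and its shift are fundamental domains for $\ZZ$ in the uniqueness argument. The main technical obstacle is the integer-polynomial group law in Paragraph 1, on which both the clean level-by-level ``subtract an integer'' existence argument and the integrality statement driving uniqueness rely.
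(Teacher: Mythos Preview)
Your proposal is correct and follows essentially the same approach as the paper: commutator reordering via $\gamma_{i',j'}\gamma_{i,j} = \gamma_{i,j}\gamma_{i',j'}[\gamma_{i,j},\gamma_{i',j'}]^{-1}$ together with condition (iv) to show $\Gamma$ is a subgroup, and a level-by-level splitting into fractional and integer parts for the fundamental domain decomposition. Your write-up is more thorough---you spell out the integer-polynomial nature of the group law (which the paper defers to Corollary~\ref{cor:explicit-is-easy}) and give an explicit uniqueness argument that the paper omits---but the underlying ideas are the same.
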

\begin{proof}
  By (ii) it is clear $\Gamma$ is contained in the subgroup generated by the elements $\gamma_{i,j}$. Conversely, given a word in $\gamma_{i,j}$ can be reordered so that $(i,j)$ occur in increasing order from left to right, by repeated use of identity $\gamma_{i',j'} \gamma_{i,j} = \gamma_{i,j} \gamma_{i',j'} [\gamma_{i,j}, \gamma_{i',j'}]^{-1}$, and using the filtration property and an inductive argument to deal with the lower-order correction $[\gamma_{i,j}, \gamma_{i',j'}]$.

  The second statement is proven similarly: given $g \in G_i$ we can set $h = \prod_{j=1}^{d_i} \gamma_{i,j}^{\{\sigma(g)_{i,j}\}}$, $\gamma' = \prod_{j=1}^{d_i} \gamma_{i,j}^{\lfloor \sigma(g)_{i,j} \rfloor}$ where $\{\cdot\}$, $\lfloor \cdot \rfloor$ denote the fractional and integer parts of a real number, and note that $g = h g' \gamma'$ for some $g' \in G_{i+1}$.  The claimed fact then follows by induction on $i$.

  The extension to a customized fundamental domain is routine.
\end{proof}

With this done, we consider nilmanifolds.
\begin{definition}[Nilmanifold of bounded complexity]%
  \label{def:nilmanifold-complex}
  For $s \ge 1$ and $d = (d_1,\dots,d_s)$ as above, a \emph{nilmanifold of degree $s$, dimension $d$ and complexity $M$} is precisely a space of the form $G/\Gamma$, where $G_\bullet$ is an explicitly presented filtered group of degree $s$, dimension $d$ and complexity $M$, and $\Gamma$ is its integral subgroup.

  (If we wish to be less precise, we may say the dimension of $G/\Gamma$ is $D = \sum_{i=1}^s d_i$.)
\end{definition}

Again, it is true but not obvious that every filtered nilmanifold arising as in e.g.\ \cite[\S 8]{gt-primes} has finite complexity in this sense, but we do not have to be concerned with results of this nature as our constructions always supply the full data.

We also recall a notion of Host--Kra cubes on a nilmanifold.
\begin{definition}[Host--Kra cubes on nilmanifolds]%
  \label{def:hk-nm}
  Let $G_{\bullet}$ be an explicitly presented filtered group and $G/\Gamma$ the associated nilmanifold.  For $k \ge 0$ we define $\HK^k(G/\Gamma) \subseteq (G/\Gamma)^{\llbracket k \rrbracket}$ to consist of those configurations $\omega \mapsto x(\omega) \in G/\Gamma$ with the property that for some $y \in \HK^k(G_\bullet)$ (defined as in Definition~\ref{def:hk-group}) we have $x(\omega) = y(\omega) \Gamma$ for each $\omega \in \llbracket k \rrbracket$.

  In other words, $\HK^k(G/\Gamma)$ is the image of $\HK^k(G_\bullet)$ under the projection $G \to G/\Gamma$ applied at each coordinate.
\end{definition}

We also briefly mention some properties of these Host--Kra cubes.
\begin{proposition}
  If $G_{\bullet}$ is an explicitly presented filtered group, then for $k \ge 0$ we have that $\HK^k(\Gamma_\bullet) \subseteq \HK^k(G_\bullet)$ is a discrete and co-compact subgroup; that is, $\HK^k(\Gamma_\bullet)$ is discrete and the homogeneous space $\HK^k(G_\bullet) / \HK^k(\Gamma_\bullet)$ is compact.  Moreover there is a natural continuous injective map
  \[
    \HK^k(G_\bullet) / \HK^k(\Gamma_{\bullet}) \to (G/\Gamma)^{\llbracket k \rrbracket}
  \]
  whose image is $\HK^k(G/\Gamma)$; hence, $\HK^k(G/\Gamma)$ is compact and the map induces a homeomorphism $\HK^k(G_\bullet) / \HK^k(\Gamma_{\bullet}) \cong \HK^k(G/\Gamma)$.
\end{proposition}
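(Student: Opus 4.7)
The proof consists of four pieces: discreteness, co-compactness, well-definedness and injectivity of the natural quotient map, and promoting this continuous bijection to a homeomorphism. I would begin with discreteness, which is routine: by Definition~\ref{def:explicit-group} the coordinate map $\sigma\colon G\to \bigoplus_i \RR^{d_i}$ is a homeomorphism sending $\Gamma$ to $\ZZ^D$, so $\Gamma$ is discrete in $G$ and hence $\Gamma^{\llbracket k\rrbracket}$ is discrete in $G^{\llbracket k\rrbracket}$; equipping $\HK^k(G_\bullet)$ with the subspace topology from $G^{\llbracket k\rrbracket}$, $\HK^k(\Gamma_\bullet)\subseteq \Gamma^{\llbracket k\rrbracket}\cap \HK^k(G_\bullet)$ is then discrete in $\HK^k(G_\bullet)$.

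The substance of the argument lies in setting up Mal'cev coordinates on $\HK^k(G_\bullet)$ that mirror those on $G$ from Definition~\ref{def:explicit-group}. For each triple $(i,j,\eta)$ with $i\in[s]$, $j\in[d_i]$ and $\eta\in \llbracket k\rrbracket$ satisfying $|\eta|\le i$, introduce the one-parameter subgroup $p_{i,j,\eta}^a := [\gamma_{i,j}^a]_{F^\eta}$; the upper face $F^\eta$ has codimension $|\eta|\le i$, so this is indeed a generator of $\HK^k(G_\bullet)$. The key claim, to be proved by induction on $s$, is that once triples are ordered so that smaller $i$ comes first, every $W\in \HK^k(G_\bullet)$ admits a unique representation $W = \prod p_{i,j,\eta}^{a_{i,j,\eta}}$ with $a_{i,j,\eta}\in\RR$, and moreover $W\in \HK^k(\Gamma_\bullet)$ exactly when every $a_{i,j,\eta}\in\ZZ$. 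This is the main technical obstacle; the core ingredients are the commutator identity $[[g]_F,[g']_{F'}]=[[g,g']]_{F\cap F'}$ (from pointwise multiplication), the face identity $F^\eta\cap F^{\eta'}=F^{\eta\vee\eta'}$ with $|\eta\vee\eta'|\le |\eta|+|\eta'|$, and the filtration hypothesis $[G_i,G_{i'}]\subseteq G_{i+i'}$; together they ensure commutators of basis one-parameter subgroups lie in the subgroup generated by those of strictly higher level, so the standard Mal'cev normal-form argument (as in the proof of Proposition~\ref{prop:fun-dom} applied to $G$) goes through. In particular, this endows $\HK^k(G_\bullet)$ itself with the structure of an explicitly presented filtered group whose integral subgroup is precisely $\HK^k(\Gamma_\bullet)$.

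With these coordinates in hand, co-compactness follows directly from Proposition~\ref{prop:fun-dom} applied to the new explicitly presented filtered group: the unit box in Mal'cev coordinates is a relatively compact fundamental domain. For injectivity of the quotient map $\HK^k(G_\bullet)/\HK^k(\Gamma_\bullet)\to (G/\Gamma)^{\llbracket k\rrbracket}$, it suffices to verify that $\HK^k(G_\bullet)\cap \Gamma^{\llbracket k\rrbracket}=\HK^k(\Gamma_\bullet)$: arguing inductively on $s$, at the top level $G_s$ is central (and abelian), so $\sigma(W(\omega))_{s,j}=\sum_{\eta\le\omega} a_{s,j,\eta}$ with no commutator corrections, and M\"obius inversion on the Boolean lattice $\llbracket k\rrbracket$ expresses each $a_{s,j,\eta}$ as a signed integer combination of values $\sigma(W(\omega'))_{s,j}$ for $\omega'\le\eta$, which are integers by hypothesis; quotienting out this top-level part and passing to the degree-$(s{-}1)$ nilmanifold $G/G_s$ then reduces to the inductive hypothesis. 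Finally, continuity of the map $\HK^k(G_\bullet)\to(G/\Gamma)^{\llbracket k\rrbracket}$ is inherited coordinate-wise from $G\to G/\Gamma$, its image is $\HK^k(G/\Gamma)$ by Definition~\ref{def:hk-nm}, and the induced continuous bijection $\HK^k(G_\bullet)/\HK^k(\Gamma_\bullet)\to \HK^k(G/\Gamma)$ is automatically a homeomorphism, having compact domain and Hausdorff codomain.
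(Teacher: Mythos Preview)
Your approach is essentially the paper's: both rest on the upper-face (Mal'cev) decomposition of $\HK^k(G_\bullet)$, though the paper outsources the details to \cite[Lemma~E.10]{gt-primes} for co-compactness and to the face decomposition \cite[(E.1)]{gtz}, \cite[Proposition~A.5]{gmv-1} for the identity $\HK^k(G_\bullet)\cap\Gamma^{\llbracket k\rrbracket}=\HK^k(\Gamma_\bullet)$, while you spell these out directly.

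There is one slip in the injectivity argument. The formula $\sigma(W(\omega))_{s,j}=\sum_{\eta\le\omega} a_{s,j,\eta}$ is false for a general $W\in\HK^k(G_\bullet)$: commutators among the lower-level generators $p_{i,j,\eta}$ with $i<s$ \emph{do} contribute to the level-$s$ coordinates, so ``no commutator corrections'' is incorrect as stated. The induction must run in the opposite order. First project to $G/G_s$ and apply the inductive hypothesis, giving $a_{i,j,\eta}\in\ZZ$ for all $i<s$; since the structure constants are integers (Definition~\ref{def:explicit-group}(iv)), the level-$s$ commutator corrections are then integers, so $\sigma(W(\omega))_{s,j}-\sum_{\eta\le\omega}a_{s,j,\eta}\in\ZZ$ and M\"obius inversion gives $a_{s,j,\eta}\in\ZZ$. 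Equivalently, lift $\overline W\in\HK^k((\Gamma/\Gamma_s)_\bullet)$ to some $V\in\HK^k(\Gamma_\bullet)$; then $WV^{-1}\in\HK^k(G_\bullet)\cap G_s^{\llbracket k\rrbracket}\cap\Gamma^{\llbracket k\rrbracket}$, and for this element your formula holds without corrections. With this reordering the argument is complete.
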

\begin{proof}
  Given Proposition~\ref{prop:fun-dom}, co-compactness of $\HK^k(\Gamma_\bullet)$ in $\HK^k(G_\bullet)$ is discussed in~\cite[Lemma E.10]{gt-primes}.  We note that $\HK^k(\Gamma_\bullet) = \HK^k(G_\bullet) \cap \Gamma^{\llbracket k \rrbracket}$; this is not totally obvious but follows immediately from a ``face decomposition''; see~\cite[(E.1)]{gtz} or~\cite[Proposition A.5]{gmv-1}.  It follows that the map given is well-defined and injective, and it is continuous by definition.  The remaining statements are standard consequences in topology. 
\end{proof}

\subsection{Polynomial maps}

Central to the theory of nilmanifolds and related objects are \emph{polynomial maps}, which are effectively the morphisms in this category.  Possibly unconventionally, we define them as follows.

\begin{definition}[Polynomial map]%
  \label{def:poly-map}
  If $X$ and $Y$ are each either a filtered group or finite complexity nilmanifold, and $p \colon X \to Y$ is some continuous function, we say $p$ is a \emph{polynomial map} if the induced map $p \colon X^{\llbracket k \rrbracket} \to Y^{\llbracket k \rrbracket}$ has $p\big(\HK^k(X)\big) \subseteq \HK^k(Y)$, for each $k \ge 0$, where $\HK^k(X)$, $\HK^k(Y)$ are defined as in Definition~\ref{def:hk-group} or Definition~\ref{def:hk-nm}.

  If $A$ is an abelian group, we say $p \colon X \to A$ is a polynomial map \emph{of degree $s$} to mean the same as a polynomial map $X \to A_{(s)}$.

  We write $\poly(X,Y)$ for the collection of all polynomial maps $X \to Y$.  
\end{definition}

In other words, polynomial maps are maps which send Host--Kra cubes to Host--Kra cubes.  In the case of polynomial maps between groups, there is an equivalent alternative definition.

\begin{proposition}%
  \label{prop:poly-equiv}
  Let $H_\bullet$ and $G_\bullet$ be filtered groups and $p \colon H \to G$ a continuous function.  For $h \in H$ we write $\partial_h p \colon H \to G$ for the map $x \mapsto p(x) p(h x)^{-1}$.  Then $p$ is a polynomial map if and only if the following holds: for each $m \ge 0$,  $i_1,\dots,i_m \ge 0$, $h_1 \in H_{i_1},\dots,h_m \in H_{i_m}$ and $x \in H$ we have $\partial_{h_1} \dots \partial_{h_m} p(x) \in G_{i_1+\cdots+i_m}$.
\end{proposition}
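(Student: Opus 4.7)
\emph{Plan.} This is a standard equivalence in the theory of filtered nilpotent groups (going back essentially to Leibman); the proof in both directions goes via explicit manipulation of Host--Kra cubes. I may assume, as is standard, the basic closure properties of $\HK^k$ under coordinate permutations, reflections, and lower-dimensional restrictions/extensions, and the key structural fact that if $c \in \HK^k(G_\bullet)$ then the alternating vertex product $\prod_\omega c(\omega)^{(-1)^{|\omega|}}$ (taken in a fixed admissible order) lies in $G_k$. These are standard and are the ``face decomposition'' facts already cited in the discussion around Definition~\ref{def:hk-nm}.

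For the forward direction $(\Rightarrow)$: we may assume all $i_j \ge 1$ since if some $i_j=0$ the claim reduces to $\partial_{h_1}\cdots \partial_{h_m} p(x) \in G$, which is trivial. Set $k = i_1 + \dots + i_m$ and build a $k$-cube $c \in H^{\llbracket k \rrbracket}$ as follows: partition $[k]$ into consecutive blocks $B_1, \dots, B_m$ with $|B_j| = i_j$, and for each $j$ let $F_j$ be the codimension-$1$ face $\{\omega : \omega_{\min B_j} = 1\}$. Then
\[
c \;=\; [x]_{\llbracket k \rrbracket} \cdot [h_1]_{F_1} \cdots [h_m]_{F_m}
\]
is a product of generators of $\HK^k(H_\bullet)$ in the sense of Definition~\ref{def:hk-group} (each $h_j \in H_{i_j}$ is placed on a face of codimension $1 \le i_j$). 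By hypothesis $p\circ c \in \HK^k(G_\bullet)$. Now restrict $p\circ c$ to the sub-cube where for each $j$ the coordinates in $B_j\setminus\{\min B_j\}$ are all held at $0$; this is an $m$-dimensional face, on which $p\circ c$ restricts to the configuration $\omega' \mapsto p(x \cdot \prod_{j:\omega'_j=1} h_j)$. The alternating product of the $k$-cube's vertices (taken in a Gray-code order, which handles the nonabelian alternating product cleanly) telescopes to this same expression and, by the structural fact above, lies in $G_k = G_{i_1+\cdots+i_m}$. Unwinding the definition of $\partial_{h_j}$ identifies this alternating product with $\partial_{h_1}\cdots\partial_{h_m} p(x)$ up to sign and conjugation by elements of $G$ (which does not move one out of $G_k$).

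For the reverse direction $(\Leftarrow)$: induct on $k$. For each $k$, it suffices to show that $p$ sends every generator $[h]_F \cdot c'$ of $\HK^k(H_\bullet)$ to an element of $\HK^k(G_\bullet)$, where $h \in H_i$, $F$ has codimension $\le i$, and $c' \in \HK^k(H_\bullet)$ is arbitrary (and smaller in an appropriate sense). Since $p$ is not a homomorphism, we cannot simply distribute $p$ over the product; instead, writing $c(\omega) = h^{\mathbf 1_F(\omega)} \cdot c'(\omega)$, we express $p(c(\omega)) = p(c'(\omega)) \cdot \big(p(c'(\omega))^{-1} p(h \, c'(\omega))\big)^{\mathbf 1_F(\omega)}$, i.e.\ we decompose in terms of the shift-derivative $\partial_{h^{-1}} p$ evaluated along $c'$. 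By the derivative hypothesis, $\partial_{h^{-1}} p$ takes values in $G_i$, and more generally iterated derivatives along directions of $c'$ take values in appropriately shifted $G_j$'s, so $\partial_{h^{-1}} p \circ c' \in \HK^k((G_\bullet)^{+i})$ (the shifted filtration) by inductive hypothesis applied to the lower-degree map $\partial_{h^{-1}} p$. Multiplying by this element along the face $F$ of codimension $\le i$ is then exactly a generator operation in $\HK^k(G_\bullet)$, completing the induction.

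\emph{Main obstacle.} The forward direction reduces to a known structural fact about HK cubes. The genuine work is the backward direction, where the subtlety is that the ``derivative'' $\partial_{h^{-1}} p$ need not itself be a polynomial map of a lower degree in a trivial way; one must verify, using only the derivative condition, that $\partial_{h^{-1}} p$ satisfies the derivative condition with one coordinate's filtration shifted up by $i$, so that the induction actually closes. This is a bookkeeping exercise tracking how iterated discrete derivatives of $\partial_{h^{-1}} p$ relate to iterated derivatives of $p$ itself, and is the only place where one really uses that the $G_i$ form a genuine group filtration (commutators $[G_i, G_j] \subseteq G_{i+j}$) rather than merely a descending chain.
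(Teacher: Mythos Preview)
The paper does not give a proof; it defers entirely to \cite[Theorem~B.3]{gtz}. Your backward direction $(\Leftarrow)$ is essentially the standard argument (and is the one used in the cited reference), modulo some bookkeeping with conjugations and the need to check that the Leibman-type derivative condition is closed under pointwise products and inverses---which, as you correctly note, is where the filtration axiom $[G_i,G_j]\subseteq G_{i+j}$ is genuinely used.

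Your forward direction $(\Rightarrow)$, however, has a real gap. You place each $h_j\in H_{i_j}$ on a \emph{codimension-$1$} face $F_j=\{\omega:\omega_{\min B_j}=1\}$. The resulting $k$-cube $c$ then depends only on the $m$ coordinates $\min B_1,\dots,\min B_m$ and is constant in the other $k-m$ directions; hence $p\circ c$ is likewise constant there, and its alternating product over the full $k$-cube is the identity in any Gray-code order (adjacent flips in a dead coordinate cancel pairwise). So nothing is learned. The fix is to place $h_j$ on the \emph{codimension-$i_j$} upper face $\{\omega:\omega|_{B_j}\equiv 1\}$, which is legitimate since $h_j\in H_{i_j}$. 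A short computation---grouping the $2^k$ vertices by the indicator $\eta\in\{0,1\}^m$ with $\eta_j=[\omega|_{B_j}=\vec 1]$ and noting that each fixed $\eta$ contributes sign $(-1)^{k+m-|\eta|}$---then shows that the alternating product of $p\circ c$ equals (up to a global sign and conjugation) $\partial_{h_1}\cdots\partial_{h_m}p(x)$, which lies in $G_k=G_{i_1+\cdots+i_m}$ by the corner-completion fact you cite.
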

\begin{proof}
  This is a special case of~\cite[Theorem B.3]{gtz}.
\end{proof}

\begin{example}%
  \label{ex:poly-examples}
  It is clear that any group homomorphism $\phi \colon G \to G'$ between filtered groups $G_\bullet$ and $G'_\bullet$ is a polynomial map if and only if $\phi(G_i) \subseteq G'_i$ for each $i \ge 0$ (although polynomial maps do not need to be group homomorphisms in general).

  Any constant function $X \to Y$ is certainly a polynomial map, as constant elements of $Y^{\llbracket k \rrbracket}$ are always Host--Kra cubes.  The identity map $X \to X$ is also trivially a polynomial map.

  If $G_\bullet$ is a filtered group and $p_1,p_2 \colon X \to G$ are polynomial maps then so is the pointwise product $x \mapsto p_1(x) p_2(x)$ (since Host--Kra cubes on $G_\bullet$ are groups under pointwise multiplication).

  Hence e.g.\ the maps $G \to G$ given by $x \mapsto g x$ or $x \mapsto x g$ for fixed $g \in G$, or $x \mapsto x^2$, are all polynomial maps, as is $x \mapsto x^{-1}$.

  Finally, it is immediate with these definitions that a composite of two polynomial maps is polynomial.
\end{example}

For the main argument, we will need a number of facts about polynomial maps between filtered groups in various settings.  The next proposition shows that (at least in cases we care about) polynomial maps on filtered groups do not significantly generalize polynomial maps on abelian filtered groups.

\begin{proposition}%
  \label{prop:filtered-triv}
  Let $G_\bullet$ be an explicitly presented filtered group.  Consider the coordinate bijection $\sigma \colon G \to \bigoplus_{i=1}^s \RR^{d_i}$, and interpret the codomain as a filtered abelian group $G'=G'_0=G'_1 \supseteq G'_2 \supseteq \dots$, where $G'_r$ is the abelian group $\bigoplus_{i=r}^s \RR^{d_i}$. 

  Then $\sigma \colon G_\bullet \to G'_\bullet$ and $\sigma^{-1} \colon G'_\bullet \to G_\bullet$ are polynomial maps.
\end{proposition}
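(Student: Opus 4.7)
The plan is to handle $\sigma^{-1}$ directly and then tackle $\sigma$ by induction on the degree $s$.

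For $\sigma^{-1}$, I would argue that each map $\alpha_{i,j} \colon G' \to G$, $a \mapsto \gamma_{i,j}^{a_{i,j}}$, is polynomial, and then invoke the pointwise-product property from Example \ref{ex:poly-examples} to conclude that the (ordered) product $\sigma^{-1}$ is polynomial. Concretely, $\alpha_{i,j}$ factors as the coordinate projection $\pi_{i,j} \colon G' \to \RR$ followed by the one-parameter subgroup $t \mapsto \gamma_{i,j}^t$. The projection $\pi_{i,j}$ sends $G'_r$ onto $\RR$ when $r \le i$ and to $0$ when $r > i$, so it is a group homomorphism $G'_\bullet \to \RR_{(i)}$ respecting filtrations; the one-parameter subgroup is a group homomorphism $\RR \to G$ whose image lies in $G_i$, so it respects filtrations $\RR_{(i)} \to G_\bullet$. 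Both are polynomial (Example \ref{ex:poly-examples}), hence so is $\alpha_{i,j}$, and hence so is $\sigma^{-1}$.

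For $\sigma$, I would proceed by induction on $s$. The base case $s=1$ is immediate: $G$ is abelian and $\sigma$ is an isomorphism of filtered abelian groups. For the inductive step, observe that $G_s$ is central abelian (since $[G_s, G_1] \subseteq G_{s+1} = \{e\}$) and that the quotient $\bar{G} = G/G_s$ inherits the structure of an explicitly presented filtered group of degree $s-1$, with one-parameter subgroups the images of the $\gamma_{i,j}$ for $i < s$. Let $\bar{\sigma} \colon \bar{G} \to \bar{G}' = \bigoplus_{i=1}^{s-1} \RR^{d_i}$ denote the resulting coordinate map, which is polynomial by induction, and let $q \colon G \to \bar{G}$ be the quotient map. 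Decompose $\sigma = (\sigma_{<s}, \sigma_s)$ under $G' = \bar{G}' \oplus \RR^{d_s}$; it suffices to show each component is a polynomial map into the corresponding summand carrying its filtration as a subgroup of $G'$.

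The component $\sigma_{<s}$ equals $\bar{\sigma} \circ q$, polynomial by composition. For $\sigma_s$, define the ``section'' $\mathfrak{s} \colon \bar{G} \to G$ by $\mathfrak{s}(\bar{g}) = \prod_{i<s, j} \gamma_{i,j}^{\bar{\sigma}(\bar{g})_{i,j}}$, which is polynomial by the same argument used for $\sigma^{-1}$ (now composed with the polynomial $\bar{\sigma}$). The map $\tau \colon G \to G$, $g \mapsto \mathfrak{s}(q(g))^{-1} g$, is a pointwise product of polynomial maps and so is polynomial; moreover, by the uniqueness of the coordinate presentation of $g$ (Definition \ref{def:explicit-group}(ii)), $\tau(g) \in G_s$ for every $g$ and $\pi \circ \tau = \sigma_s$, where $\pi \colon G_s \xrightarrow{\sim} \RR^{d_s}$ is the natural group isomorphism. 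Now $\pi$ is a homomorphism of abelian groups, so iterated multiplicative derivatives satisfy $\partial_{h_1} \cdots \partial_{h_m}(\pi \circ \tau)(x) = \pi(\partial_{h_1} \cdots \partial_{h_m} \tau(x))$. For $h_k \in G_{i_k}$, Proposition \ref{prop:poly-equiv} applied to the polynomial $\tau$ puts the right-hand argument in $G_{i_1+\cdots+i_m}$, and whenever $i_1 + \cdots + i_m > s$ this subgroup is trivial; so $\partial_{h_1}\cdots\partial_{h_m} \sigma_s(x) = 0$, which by Proposition \ref{prop:poly-equiv} again means $\sigma_s$ is polynomial into $\RR^{d_s}$ with the filtration inherited from $G'$ (namely, $\RR^{d_s}$ at levels $\le s$ and $\{0\}$ above).

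The main obstacle is bookkeeping rather than conceptual: one must carefully verify that the filtration on $G_s$ viewed as a subgroup of $G$ matches the filtration on $\RR^{d_s}$ viewed as the top summand of $G'_\bullet$, and that the commutation of multiplicative derivatives with the projection $\pi$ gives the required vanishing. Once these bookkeeping points are in place, the content reduces to routine application of Proposition \ref{prop:poly-equiv} and the product/composition properties of polynomial maps from Example \ref{ex:poly-examples}.
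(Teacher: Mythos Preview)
Your argument is correct and takes a genuinely different route from the paper's. The paper works directly at the level of Host--Kra cubes: it shows that a configuration $c \colon \llbracket k \rrbracket \to G$ lies in $\HK^k(G_\bullet)$ if and only if it admits an ordered ``face decomposition'' $c = \prod_{i,j,F} [\gamma_{i,j}^{a_{i,j,F}}]_F$ (obtained by reordering generators via the commutator identity $[[g]_F,[h]_{F'}] = [[g,h]]_{F\cap F'}$), and observes that under $\sigma$ this becomes exactly the analogous additive decomposition characterizing $\HK^k(G'_\bullet)$. This handles both $\sigma$ and $\sigma^{-1}$ simultaneously in one symmetric stroke.

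Your approach instead leans on the iterated-derivative characterization (Proposition~\ref{prop:poly-equiv}) and the closure properties in Example~\ref{ex:poly-examples}: $\sigma^{-1}$ is a pointwise product of filtered homomorphisms, and $\sigma$ is built by induction on $s$ via the quotient $G/G_s$ and a section. This is more modular and avoids the explicit commutator-reordering combinatorics, at the cost of being somewhat more indirect and asymmetric in the two directions. The paper's version has the side benefit that the face decomposition it establishes is reused verbatim later (e.g.\ in Proposition~\ref{prop:effective-cubes}); your version instead showcases how much can be done just from the black-box properties of polynomial maps. Both are clean and of comparable length.
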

\begin{proof}
  We claim that a configuration $c \colon \llbracket k \rrbracket \to G$ lies in $\HK^k(G_\bullet)$ if and only if there are real numbers $a_{i,j,F}$ and an associated decomposition
  \begin{equation}
    \label{eq:cdecomp}
    c = \prod_{i=1}^s \prod_{j=1}^{d_i} \prod_{\codim(F) \le i} \big[\gamma_{i,j}^{a_{i,j,F}}\big]_F
  \end{equation}
  in $\HK^k(G_\bullet)$ (again, with pointwise group operations) where again the order of products is increasing from left to right, the third product is over all faces $F$ of $\llbracket k \rrbracket$ with the specified codimension (in any order), and the notation $[g]_F$ is as in Definition~\ref{def:hk-group}.
  
  The ``if'' direction is immediate.  For the ``only if'' part, we observe the commutator identity
  \begin{equation}
    \label{eq:face-commutator}
    \big[[g]_F, [h]_{F'} \big] = \big[ [g,h] \big]_{F \cap F'}
  \end{equation}
  for any $g,h \in G$ and any faces $F,F'$ of $\llbracket k \rrbracket$.  Hence, we are free to reorder any product of elements $[g]_F$ at the expense of introducing commutators of the same form which lie further down the filtration of $G$, and by an inductive process this proves the claim.

  Similarly, the configuration $\sigma(c) \colon \llbracket k \rrbracket \to G'$ lies in $\HK^k(G'_\bullet)$ if and only if there exist real numbers $a_{i,j,F}$ such that
  \begin{equation}
    \label{eq:sigmacdecomp}
    \sigma(c) = \sum_{i=1}^s \sum_{j=1}^{d_i} \sum_{\substack{F \\ \codim(F) \le i}} \big[\sigma\big(\gamma_{i,j}^{a_{i,j,F}}\big)\big]_F
  \end{equation}
  where we use additive notation for the group operation on $G'$: this time the group is abelian so we can reorder terms arbitrarily, and hence this is immediate from the definition of $\HK^k(G'_\bullet)$.

  Finally, note that for fixed coefficients $a_{i,j,F} \in \RR$,~\eqref{eq:cdecomp} and~\eqref{eq:sigmacdecomp} are equivalent, by the definition of $\sigma$. Therefore $c \in \HK^k(G_\bullet)$ if and only if $\sigma(c) \in \HK^k(G'_\bullet)$, as required.
\end{proof}

The next result gives an explicit characterization of polynomial maps on certain classes of filtered abelian group.  It is a variant of~\cite[Lemma B.9]{gtz}.

\begin{proposition}[``Taylor expansion'']%
  \label{prop:taylor}
  Let $d_1,\dots,d_s \ge 0$ be integers, consider the abelian group $H = \bigoplus_{i=1}^s \ZZ^{d_i}$ with filtration $H_r = \bigoplus_{i=r}^s \ZZ^{d_i}$, and let $A$ an arbitrary abelian group.  If $(k_{i,j})_{i \in [s], j \in [d_i]}$ is a tuple of non-negative integers and $v \in H$, define the integer coefficients
  \begin{equation} \label{eq:gen-binom}
    \binom{v}{k} = \prod_{i=1}^s \prod_{j=1}^{d_i} \binom{v_{i,j}}{k_{i,j}}
  \end{equation}
  \uppar{where the binomial coefficient $\binom{n}{r}$ is generalized to negative $n$, or even real numbers $n$, in the obvious way} and also write
  \begin{equation}
    \label{eq:w-def}
    w(k) = \sum_{i=1}^s i \sum_{j=1}^{d_i} k_{i,j} .
  \end{equation}
  Then for $t \ge 0$, the set $\poly\big(H, A_{(t)}\big)$ consists precisely of maps $p \colon H \to A$ of the form
  \begin{equation}
    \label{eq:taylor}
    p(v) = \sum_{k \colon w(k) \le t} \alpha_k \binom{v}{k}
  \end{equation}
  for elements $\alpha_k \in A$ determined uniquely by $p$, where the sum ranges over all non-negative integer tuples $k$ of the above shape.
\end{proposition}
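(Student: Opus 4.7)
The plan is to prove the two directions of the equivalence separately, establishing first that maps of the form \eqref{eq:taylor} are polynomial, and then that every polynomial map has such an expansion (with uniqueness).

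For the ``only if'' direction, by Proposition \ref{prop:poly-equiv} it suffices to verify that each summand $v \mapsto \alpha_k \binom{v}{k}$ is a polynomial map of degree $\le t$ (sums of polynomial maps into $A_{(t)}$ are polynomial, since the Host--Kra condition passes through pointwise group operations). First I would show that the coordinate projection $v \mapsto v_{i,j}$ is a polynomial map of degree $i$: its single derivative in direction $h \in H_r$ equals $-h_{i,j}$, which vanishes for $r > i$, and a second derivative vanishes altogether. Next, using the Vandermonde identity $\binom{v_{i,j}+h_{i,j}}{n} = \sum_\ell \binom{v_{i,j}}{n-\ell}\binom{h_{i,j}}{\ell}$, induction on $n$ shows that $v \mapsto \binom{v_{i,j}}{n}$ is a polynomial map of degree $in$. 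To multiply the factors $\binom{v_{i,j}}{k_{i,j}}$ together, I would establish a product rule: if $f_1, f_2 \colon H \to \ZZ$ are polynomial maps of degrees $d_1, d_2$, then $f_1 f_2$ is polynomial of degree $d_1 + d_2$. This follows by iterating $\partial_h(f_1 f_2) = (\partial_h f_1)\cdot(f_2 \circ T_h) + f_1 \cdot (\partial_h f_2)$ (where $T_h$ denotes translation) and pigeonholing: after applying derivatives $\partial_{h_1}\cdots\partial_{h_m}$ with $\sum_\ell i_\ell > d_1 + d_2$, in each resulting term one of $f_1, f_2$ has received derivatives of total direction-weight exceeding its own degree. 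Multiplication by $\alpha_k \in A$ to pass from $\ZZ$-valued to $A$-valued is immediate since $\partial_h (\alpha f) = \alpha \cdot \partial_h f$.

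For the ``if'' direction, define for each multi-index $k$
\[
  \alpha_k = \sum_{0 \le j \le k} (-1)^{|k|-|j|} \binom{k}{j} p(j) \in A,
\]
which is (up to sign) the iterated forward difference $\prod_{(i,j)} \partial_{e_{i,j}}^{k_{i,j}} p$ evaluated at $0$, expressible as a finite integer combination of values of $p$. The key observation is that $e_{i,j} \in H_i$, so this iterated derivative direction-weight equals $w(k)$; hence by Proposition \ref{prop:poly-equiv} applied to $p$, we have $\alpha_k = 0$ whenever $w(k) > t$.

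The main work, and the step I expect to be the principal obstacle, is showing that the map $q(v) = p(v) - \sum_{w(k) \le t} \alpha_k \binom{v}{k}$ vanishes identically on all of $H$ (not merely on $\NN^D$, where the classical Taylor formula handles things easily). By the ``only if'' direction $q$ is itself a polynomial map of degree $\le t$, and by construction every iterated forward difference $\partial^k q(0) = 0$ for $w(k) \le t$. I would prove the auxiliary claim that any polynomial map $q \colon H \to A_{(t)}$ with $\partial^k q(0) = 0$ for all $k$ with $w(k) \le t$ is identically zero, by induction on $t$. The base case $t = 0$ forces $q$ to be constant with $q(0)=0$. For the inductive step, fix any $e_{i,j}$: then $\partial_{e_{i,j}} q$ is polynomial of degree $\le t - i$ (again by Proposition \ref{prop:poly-equiv}, since further derivatives of total direction-weight exceeding $t-i$ give derivatives of $q$ of total weight exceeding $t$), and its iterated differences at $0$ satisfy the hypothesis at level $t - i$ because $\partial^\ell (\partial_{e_{i,j}} q)(0) = \partial^{\ell + e_{i,j}} q(0) = 0$ whenever $w(\ell) \le t - i$. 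By inductive hypothesis each $\partial_{e_{i,j}} q$ vanishes identically, so $q$ is invariant under all generators of $H$, hence constant; combined with $q(0) = 0$, this gives $q \equiv 0$. Uniqueness of the coefficients $\alpha_k$ then follows from the same vanishing argument applied to the difference of two expansions, or directly by solving for the $\alpha_k$ inductively using $\binom{\ell}{k} = 0$ unless $k \le \ell$ componentwise.
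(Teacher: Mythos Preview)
Your proof is correct, though you have the labels ``if'' and ``only if'' swapped (a cosmetic issue). The two directions are handled somewhat differently from the paper.

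For showing that binomial combinations are polynomial, the paper simply invokes the identity $\partial_{e^{(i,j)}} \binom{v}{k} = -\binom{v}{k - e^{(i,j)}}$ and a citation, whereas you build things up from a product rule and Vandermonde; both are fine and yield the same conclusion.

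The more interesting divergence is in the converse. The paper uses an ``integration'' trick: writing the shift $T^{r e^{(i,j)}} = (1 - \partial_{e^{(i,j)}})^r$ and expanding binomially, the series truncates at degree $t$ since higher derivatives vanish, and formal reasoning in $\ZZ[X]/(X^{t+1})$ extends this to negative $r$. Iterating over all coordinates and evaluating at $0$ gives the expansion in one shot. Your route---subtract the candidate Taylor polynomial and prove the remainder $q$ vanishes by induction on $t$, using that each $\partial_{e_{i,j}} q$ drops the degree by $i$ and inherits the vanishing-differences hypothesis---is more pedestrian but entirely valid and avoids the formal-series manoeuvre. Both approaches recover the same explicit formula $\alpha_k = (-1)^{|k|} \big(\prod_{i,j} \partial_{e^{(i,j)}}^{k_{i,j}}\big) p(0)$, which the paper later relies on quantitatively (e.g.\ in Corollary~\ref{cor:explicit-is-easy}), so nothing downstream is lost.
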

\begin{proof}
  We write $e^{(i,j)}$ for the tuple $e$ with $e_{i,j} = 1$ and all other entries zero.  In particular it can be thought of as an element of $H$.

  For one inclusion, it suffices to show that $v \mapsto \binom{v}{k}$ is a polynomial map $H \to \ZZ_{(t)}$ whenever $w(k) \le t$; it follows directly that scalar multiples $v \mapsto a \binom{v}{k} \in A$, and sums of these, are also polynomial maps.  The former follows from~\cite[Proposition B.8]{gtz} and the following identity: whenever $k_{i,j} > 0$,
  \[
    \partial_{e^{(i,j)}} \binom{v}{k} = -\binom{v}{k - e^{(i,j)}}.
  \]

  Now suppose $p \colon H \to A_{(t)}$ is an arbitrary polynomial map.  To determine the structure of $p$ we deploy the ``integration'' argument outlined in~\cite[Lemma B.9]{gtz}.
  For any function $f \colon H \to A$, write $T^h f$ for the shifted function $x \mapsto f(x+h)$; so, $\partial_h f = (1 - T^h) f$ in this abelian setting.  It follows that for a non-negative integer $r$ we have
  \[
    T^{r e^{(i,j)}} p = \big(1 - \partial_{e^{(i,j)}}\big)^r p = \sum_{u=0}^t \binom{r}{u} (-1)^{u} \partial_{e^{(i,j)}}^u p
  \]
  where we may stop the sum at $t$ irrespective of $r$ since $\partial_{e^{(i,j)}}^u p \equiv 0$ for $u \ge t+1$ by hypothesis.  By reasoning formally in the ring $\langle \partial_{e^{(i,j)}} \rangle \cong \ZZ[X] / \big(X^{t+1}\big)$, the argument can be extended to give the same conclusion for negative $r$.  Therefore, for $h \in H$,
  \[
    T^h p = \left( \prod_{i,j} T^{h_{i,j} e^{(i,j)}} \right) p = \sum_{k \colon w(k) \le t} (-1)^{\sum_{i,j} k_{i,j}} \binom{h}{k} \left(\prod_{i,j} \partial_{e^{(i,j)}}^{k_{i,j}} \right) p
  \]
  and evaluating at $0$ gives
  \[
    p(h) = \sum_{k \colon w(k) \le t} \binom{h}{k} \left[ (-1)^{\sum_{i,j} k_{i,j}}  \left(\prod_{i,j} \partial_{e^{(i,j)}}^{k_{i,j}}\right) p(0) \right]
  \]
  which has the required form, setting
  \begin{equation}
    \label{eq:alpha-form}
    \alpha_k = (-1)^{\sum_{i,j} k_{i,j}}  \left(\prod_{i,j} \partial_{e^{(i,j)}}^{k_{i,j}}\right) p(0) .
  \end{equation}

  Finally, for uniqueness of the coefficients $\alpha_k$, suppose that
  \[
    \sum_{k \colon w(k) \le t} \alpha_k \binom{v}{k} = 0
  \]
  for some coefficients $\alpha_k \in A$ that are not all zero, and specifically suppose $k_0$ is a minimal index with respect to the usual partial ordering on non-negative tuples such that $\alpha_{k_0} \ne 0$.  But then
  \[
    \sum_{k \colon w(k) \le t} \alpha_k \binom{k_0}{k} = \alpha_{k_0}
  \]
  which gives a contradiction.
\end{proof}

This explicit result has a number of useful consequences.

\begin{corollary}%
  \label{cor:rd-extend}
  Let $H$ be as in Proposition~\ref{prop:taylor} and let $\cH = H \otimes \RR$, i.e.\ $\cH = \bigoplus_{i=1}^s \RR^{d_i}$ is an abelian group with filtration $\cH_r = \bigoplus_{i=r}^s \RR^{d_i}$.  We identify $H$ as the subgroup of $\cH$ consisting of integer tuples.

  If $t \ge 0$ and $p \in \poly\big(H, \RR_{(t)}\big)$, then $p$ can be extended uniquely to a polynomial map $\cH \to \RR_{(t)}$ via~\eqref{eq:taylor}.
\end{corollary}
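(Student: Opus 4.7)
The plan is to extend $p$ via the explicit Taylor formula from Proposition \ref{prop:taylor} and verify that this extension inherits the polynomial map property, then handle uniqueness separately. First I would apply Proposition \ref{prop:taylor} to write $p(v) = \sum_{w(k) \le t} \alpha_k \binom{v}{k}$ for unique reals $\alpha_k$, and then define $\tilde p \colon \cH \to \RR$ by the same formula, interpreting each $\binom{v_{i,j}}{k_{i,j}}$ as the ordinary polynomial $x(x-1)\cdots(x-k_{i,j}+1)/k_{i,j}!$ in the real variable $x = v_{i,j}$. It is immediate that $\tilde p|_H = p$, so only ``$\tilde p$ is a polynomial map $\cH \to \RR_{(t)}$'' and ``uniqueness'' remain.

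For the polynomial map property, I would use the iterated derivative characterization in Proposition \ref{prop:poly-equiv}. It suffices to show that for each fixed tuple $k$ with $w(k) \le t$, the map $v \mapsto \binom{v}{k}$ lies in $\poly\big(\cH, \RR_{(w(k))}\big)$; the result then follows by linearity. Because $\binom{v}{k} = \prod_{i,j} \binom{v_{i,j}}{k_{i,j}}$ is a product of single-coordinate factors, an iterated derivative $\partial_{h_1}\cdots\partial_{h_m}\binom{v}{k}$ expands via the Leibniz rule into a sum over ways of distributing the directions $h_\ell$ among the coordinate factors; the $(i,j)$-factor can absorb at most $k_{i,j}$ derivatives before vanishing, and each direction $h_\ell \in \cH_{i_\ell}$ can act non-trivially on the $(i,j)$-factor only if $i_\ell \le i$. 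A straightforward bookkeeping then shows that whenever $\sum i_\ell > w(k) = \sum_{i,j} i\, k_{i,j}$ all terms must vanish, which is exactly what Proposition \ref{prop:poly-equiv} requires.

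For uniqueness, suppose $p_1, p_2 \in \poly\big(\cH, \RR_{(t)}\big)$ agree on $H$, and set $q = p_1 - p_2$. Since the filtration on $\cH$ is proper ($\cH_1 = \cH$), Proposition \ref{prop:poly-equiv} applied with all $h_\ell \in \cH_1$ forces $\partial_{h_1}\cdots\partial_{h_{t+1}} q \equiv 0$, so $q$ is an ordinary polynomial function of total degree $\le t$ in the real coordinates on $\cH \cong \RR^{D}$. Such a polynomial that vanishes on the Zariski-dense subset $H \subseteq \cH$ must be identically zero, giving $p_1 = p_2$.

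The only mildly subtle step will be the bookkeeping in the Leibniz expansion when verifying the filtration-weighted derivative condition on $\binom{v}{k}$; everything else is formal, and I do not anticipate a real obstacle.
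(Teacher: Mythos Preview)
Your proposal is correct. The existence half is essentially the paper's argument: extend $p$ via the Taylor formula \eqref{eq:taylor} with real arguments and check the result is a polynomial map. The paper simply asserts this is ``clearly'' a polynomial map (hinting at the degree bounds in each coordinate), while you spell it out via Proposition \ref{prop:poly-equiv} and a Leibniz bookkeeping argument; this is the same content with more detail. Note that the paper's own proof of Proposition \ref{prop:taylor} already verifies $v \mapsto \binom{v}{k}$ is polynomial using the identity $\partial_{e^{(i,j)}}\binom{v}{k} = -\binom{v}{k - e^{(i,j)}}$, and that identity holds verbatim over $\RR$, so you could also just quote that.

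For uniqueness the routes genuinely differ. The paper avoids any analytic input by observing that for each $N$, the restriction to $\tfrac{1}{N}H \cong H$ is a polynomial map, so the uniqueness clause of Proposition \ref{prop:taylor} pins down its values there; then continuity and density of $\bigcup_N \tfrac1N H$ finish. Your argument instead shows $q = p_1 - p_2$ has all $(t{+}1)$-fold finite differences vanishing, concludes $q$ is an ordinary polynomial of degree $\le t$, and then kills it on the lattice. The step ``continuous with vanishing $(t{+}1)$-fold finite differences $\Rightarrow$ polynomial of degree $\le t$'' is Fr\'echet's theorem; it is classical and true, but not quite as immediate as your write-up suggests, so it would be worth a citation or a one-line justification. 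The paper's density trick sidesteps this entirely and stays within the machinery already built, which is arguably cleaner here; your approach is more self-contained in that it does not re-invoke Proposition \ref{prop:taylor}.
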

\begin{proof}
  By treating binomial coefficients $\binom{\cdot}{r}$ as functions $\RR \to \RR$, we can interpret~\eqref{eq:taylor} as a function $\cH \to \RR_{(t)}$ extending $p$, which is clearly continuous and a polynomial map (indeed, it is a polynomial in the usual sense of the term, with the necessary bounds on the degrees of each coordinate of $\cH$).  For uniqueness, note that for any positive integer $N$ the extension of $p$ to a polynomial map $\frac1N H \to \RR_{(t)}$ implied by~\eqref{eq:taylor} is unique, by the uniqueness part of Proposition~\ref{prop:taylor} applied to $\frac1N H \cong H$. Then send $N \to \infty$ and invoke continuity.
\end{proof}

We can also use these results to argue that the group operations on an explicitly presented filtered group are given by low-complexity polynomial functions.

\begin{corollary}%
  \label{cor:explicit-is-easy}
  Let $G$ be an explicitly presented filtered group of degree $s$, dimension $d$ and complexity $M$, and write $G' = \bigoplus_{i=1}^s \RR^{d_i}$ with the filtration as in Proposition~\ref{prop:filtered-triv}.  
  
  Consider the maps $\phi_1, \phi_2 \colon G' \times G' \to G'$ and $\phi_3 \colon G' \to G'$ given by
  \begin{align*}
    \phi_1(x,y) &= \sigma(\sigma^{-1}(x) \sigma^{-1}(y)) \\
    \phi_2(x,y) &= \sigma([\sigma^{-1}(x), \sigma^{-1}(y)]) \\
    \phi_3(x) &= \sigma\big(\sigma^{-1}(x)^{-1}\big) ;
  \end{align*}
  i.e., various group operations on $G$ under the coordinate map.

  Then each entry $\phi_1(x,y)_{i,j}$, $\phi_2(x,y)_{i,j}$, $\phi_3(x)_{i,j}$ is a polynomial function of $x$ and $y$ of the form~\eqref{eq:taylor}, where the coefficients $\alpha_k$ \uppar{for $w(k) \le i$} are integers bounded by $O_s\big((DM)^{O_s(1)}\big)$, where $D = \sum_{i=1}^s d_i$.
\end{corollary}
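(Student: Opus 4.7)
The plan is to assemble the claim from three ingredients already developed in the appendix: the fact that the coordinate bijection $\sigma$ is itself a polynomial map, the Taylor expansion classification of polynomial maps between filtered abelian groups, and an inductive control of commutators in terms of the complexity data of $G_\bullet$.

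First I would establish that $\phi_1$, $\phi_2$, $\phi_3$ are polynomial maps in the sense of Definition \ref{def:poly-map}. The multiplication, commutator, and inversion maps $G \times G \to G$ and $G \to G$ are polynomial (as noted in Example \ref{ex:poly-examples}, since they send Host--Kra cubes to Host--Kra cubes). Composing on the inside with $\sigma^{-1} \colon G'_\bullet \to G_\bullet$ and on the outside with $\sigma \colon G_\bullet \to G'_\bullet$, both of which are polynomial by Proposition \ref{prop:filtered-triv}, shows that each $\phi_i$ is a polynomial map between the appropriate filtered abelian groups $G'_\bullet$ (or $G'_\bullet \times G'_\bullet$, which carries the obvious product filtration). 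Moreover, by inspection $\phi_1(x,y)_{i,j}$ is a map into $\RR_{(i)}$, and similarly for $\phi_2$ (lying in $\RR_{(i)}$ with $\phi_2(x,y)$ supported in filtration level $\ge 2$ by the commutator identity) and $\phi_3$. By Corollary \ref{cor:rd-extend} applied to each coordinate, $\phi_1(x,y)_{i,j}$, $\phi_2(x,y)_{i,j}$, $\phi_3(x)_{i,j}$ are uniquely expressible in the Taylor form \eqref{eq:taylor}, with coefficients $\alpha_k \in \RR$ indexed by $k$ with $w(k) \le i$.

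Next I would verify integrality. Because $\Gamma$ is closed under multiplication, inversion, and commutators, the restriction of each $\phi_i$ to $\sigma(\Gamma) = \ZZ^D$ (or the square thereof) is $\ZZ^D$-valued. Combined with the explicit formula \eqref{eq:alpha-form}, which expresses each $\alpha_k$ as a signed iterated discrete derivative of the function evaluated at the origin, this forces $\alpha_k \in \ZZ$.

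The main obstacle is bounding $|\alpha_k|$. The idea is to control evaluations of $\phi_i$ at integer points with bounded coordinates and then use the fact that the degree of the polynomial in each variable is bounded by $O_s(1)$ (via $w(k) \le i \le s$) to extract bounds on the coefficients, e.g.\ by a Lagrange interpolation or finite-difference formula. So the work reduces to the following: for $g_1, g_2 \in \Gamma$ with $\|\sigma(g_\ell)\|_\infty \le C_0$, show that $\|\sigma(g_1 g_2)\|_\infty$, $\|\sigma([g_1,g_2])\|_\infty$, $\|\sigma(g_1^{-1})\|_\infty$ are bounded by $O_s((C_0 D M)^{O_s(1)})$. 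I would prove this by induction on the filtration index $r$, bounding the coordinates of level $r$ in the putative normal form assuming bounds at all lower levels $< r$. The induction step proceeds by reordering the product $\prod \gamma_{i,j}^{a_{i,j}} \cdot \prod \gamma_{i,j}^{b_{i,j}}$ into normal form, one commutator swap at a time; each swap $\gamma_{i'',j''}^a \gamma_{i',j'}^b = \gamma_{i',j'}^b \gamma_{i'',j''}^a \cdot [\gamma_{i'',j''}^{-a}, \gamma_{i',j'}^{-b}]$ produces a correction in $G_{i'+i''}$ whose coordinates are, by the commutator identities inside a nilpotent group and the assumed bound $\|\sigma([\gamma_{i,j},\gamma_{i',j'}])\|_\infty \le M$, a polynomial of bounded degree in $a, b$ with coefficients of size $O_s((DM)^{O_s(1)})$. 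Since only $O_s(D^{O_s(1)})$ commutator swaps are required to reach normal form, iterating yields the claimed bound at each level. The commutator and inversion maps are handled analogously (noting that inversion in a nilpotent group can be written in terms of iterated multiplications and commutators). Combining the value bound with the interpolation step produces integer coefficients of size $O_s((DM)^{O_s(1)})$, completing the proof.
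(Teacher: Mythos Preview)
Your proposal is correct and follows essentially the same approach as the paper: establish polynomiality via Proposition~\ref{prop:filtered-triv} and Example~\ref{ex:poly-examples}, apply the Taylor expansion (Proposition~\ref{prop:taylor} and Corollary~\ref{cor:rd-extend}) to obtain the form \eqref{eq:taylor}, use \eqref{eq:alpha-form} to express each $\alpha_k$ as a bounded integer combination of values at small integer points, and then bound those values by iteratively reordering words in the generators $\gamma_{i,j}$ using commutator identities and the complexity bound on $[\gamma_{i,j},\gamma_{i',j'}]$. The paper's proof is slightly terser but structurally identical.
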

\begin{proof}
  By Example~\ref{ex:poly-examples} and Proposition~\ref{prop:filtered-triv}, each function $\phi_1(-,-)_{i,j}$, $\phi_2(-,-)_{i,j}$ or $\phi_3(-)_{i,j}$ is a polynomial map $G' \times G' \to \RR_{(i)}$ or $G' \to \RR_{(i)}$.  Hence by Proposition~\ref{prop:taylor} and Corollary~\ref{cor:rd-extend}, these functions $p$ have the form~\eqref{eq:taylor}, with coefficients $\alpha_k$ defined by~\eqref{eq:alpha-form}.  In particular, $\alpha_k$ is an $O_s(1)$-bounded integer combination of values $p(v)$ where $v$ ranges over integer tuples with each entry in $\{0,\dots,s\}$.  Hence it suffices to show that $\|p(v)\|_1 \ll_s (DM)^{O_s(1)}$ for such tuples $v$.
  
  In all cases, we have reduced to showing that words of length $O_s(D)$ in the group elements $\gamma_{i,j}$ (with integer exponents) can be rexpressed as $\prod_{i,j} \gamma_{i,j}^{a_{i,j}}$ where $a_{i,j} \in \ZZ$ obey $\sum_{i,j} |a_{i,j}| \ll_s (D M)^{O_s(1)}$.  This follows from iterative use of commutator identities and the hypothesis on the basic commutators $[\gamma_{i,j},\gamma_{i',j'}]$.
\end{proof}

We noted above that our definition of a polynomial map $H \to G/\Gamma$ when (say) $H$ is a group and $G/\Gamma$ is a nilmanifold, may be unconventional.  When $H = \ZZ^A$, an alternative common in the literature is to say that a polynomial map $\ZZ^A \to G/\Gamma$ is the same thing as the composite of a polynomial map $\ZZ^A \to G$ (in the sense of Definition~\ref{def:hk-group}) with the projection $G \to G/\Gamma$; or not to define the former notation at all.  It turns out these are equivalent.

\begin{proposition}%
  \label{prop:z-lift}
  If $A \ge 0$ is an integer and $p \colon \ZZ^A \to G/\Gamma$ is a polynomial map, then there exists a polynomial map $\wt{p} \colon \ZZ^A \to G$ such that $p(x) = \wt{p}(x) \Gamma$ for all $x \in \ZZ^A$.
\end{proposition}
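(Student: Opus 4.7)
My plan is to prove this by induction on the degree $s$ of the filtration on $G_\bullet$. The case $s=0$ (equivalently $G = \{\id\}$) is trivial.

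For the inductive step, I work with the central abelian subgroup $G_s$, which is the last nontrivial term of the filtration. The quotient $G/G_s$ inherits the structure of an explicitly presented filtered group of degree $s-1$, with integral subgroup $\Gamma/(\Gamma \cap G_s)$. The induced projection $G/\Gamma \to (G/G_s)/(\Gamma/(\Gamma\cap G_s))$ is a polynomial map (it is induced by a filtration-preserving group homomorphism, which is polynomial by Example~\ref{ex:poly-examples}), so composing with $p$ and applying the inductive hypothesis gives a polynomial map $q \colon \ZZ^A \to G/G_s$. Using Proposition~\ref{prop:filtered-triv} to identify everything with coordinate groups, the set-theoretic section ``append zeros in the last $d_s$ coordinates'' is polynomial (it is a linear inclusion of coordinate spaces), so we obtain a polynomial lift $\wt{q}_0 \colon \ZZ^A \to G$ with $\pi \circ \wt{q}_0 = q$, where $\pi\colon G \to G/G_s$ is the quotient.

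The residual task is to find a polynomial correction $r \colon \ZZ^A \to G_s$ such that $\wt{p}(x) := \wt{q}_0(x)\, r(x)$ satisfies $\wt{p}(x)\Gamma = p(x)$ for all $x$. By construction, $\wt{q}_0(x)$ and any set-theoretic lift of $p(x)$ agree modulo $G_s \Gamma$, so the element $\bar{r}(x) := \wt{q}_0(x)^{-1} p(x)$ is well-defined in $G_s\Gamma/\Gamma \cong G_s/(G_s\cap\Gamma)$, which is a torus $(\RR/\ZZ)^{d_s}$. The key claim is that $\bar{r}$ is a polynomial map $\ZZ^A \to (G_s/(G_s\cap\Gamma))_{(s)}$. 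Granting this, by Proposition~\ref{prop:taylor} we may write
\[
  \bar{r}(x) = \sum_{k \colon w(k) \le s} \bar\alpha_k \binom{x}{k}
\]
for uniquely determined $\bar\alpha_k \in G_s/(G_s \cap \Gamma)$; lifting each $\bar\alpha_k$ arbitrarily to some $\alpha_k \in G_s$ and defining $r(x) = \sum_k \alpha_k \binom{x}{k}$ (now viewed additively in $G_s$, which is abelian) produces a polynomial map $r \colon \ZZ^A \to (G_s)_{(s)}$ lifting $\bar{r}$. Since $G_s$ is central, the inclusion $(G_s)_{(s)} \hookrightarrow G_\bullet$ is a filtered homomorphism (hence polynomial), and the pointwise product $\wt{p} = \wt{q}_0 \cdot r$ is then polynomial as the product of polynomial maps into $G$, completing the induction.

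The main obstacle is verifying the polynomiality of $\bar{r}$ in the torus $G_s/(G_s\cap\Gamma)$. The map $\Phi \colon G \times G \to G$, $(g_1,g_2) \mapsto g_1^{-1}g_2$, is polynomial when $G \times G$ carries the product filtration (Example~\ref{ex:poly-examples}), and it descends to a polynomial map $\bar\Phi \colon G \times G/\Gamma \to G/\Gamma$; thus $x \mapsto \bar\Phi(\wt{q}_0(x), p(x))$ is a polynomial map $\ZZ^A \to G/\Gamma$ whose image lies inside the closed sub-nilmanifold $G_s\Gamma/\Gamma \cong G_s/(G_s\cap\Gamma)$. What remains is the comparison of Host--Kra cubes: a configuration in $(G_s\Gamma/\Gamma)^{\llbracket k\rrbracket}$ lies in $\HK^k(G/\Gamma)$ iff it lies in $\HK^k(G_s/(G_s\cap\Gamma))$ (where the latter is computed using the degree-$s$ abelian filtration on $G_s$). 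The forward direction requires noting that any lift of such a configuration to $\HK^k(G)$ can be corrected by elements of $\Gamma$ to take values in $G_s$; this uses that $G_s$ is central and that the face-decomposition (cf.~\cite[(E.1)]{gtz}) of a Host--Kra cube allows one to isolate the $G_s$-contribution, which by centrality assembles into a Host--Kra cube on $G_s$ with its intrinsic degree-$s$ filtration. Once this comparison is in hand, $\bar{r}$ is polynomial in the desired sense, and the rest of the argument proceeds as above.
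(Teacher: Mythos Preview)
Your proof is correct and essentially the same as the paper's: both peel off one abelian layer of the filtration at a time and invoke Proposition~\ref{prop:taylor} for the abelian lifting, differing only in the direction of the induction (you quotient by $G_s$ and recurse on $G/G_s$, while the paper peels off the layer $\sigma_{=i}$ first and recurses into $G_{i+1}/\Gamma_{i+1}$). Both arguments hinge on the same step --- that a polynomial map into $G/\Gamma$ whose image lies in a filtered sub-nilmanifold is polynomial as a map to that sub-nilmanifold --- which you correctly flag as the main point and sketch via face decomposition, while the paper asserts it without comment.
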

\begin{proof}
  If $G = \RR_{(t)}^k$ and $\Gamma = \ZZ_{(t)}^k$ for some $k \ge 0$, we can apply Proposition~\ref{prop:taylor} with $A = (\RR/\ZZ)_{(t)}$ to find coefficients $\alpha_k \in \RR/\ZZ$ such that~\eqref{eq:taylor} holds.  Lifting $\alpha_k$ arbitrarily to elements of $\RR$ gives a polynomial map $\ZZ^A \to \RR_{(t)}$ as required.

  Suppose $G/\Gamma$ has degree $s$.  For induction, suppose that $1 \le i \le s+1$ and that $p(x) \in G_i/\Gamma_i$ for all $x \in \ZZ^A$. If $i=s+1$ then we can just set $\wt{p}(x) = \id_G$ for all $x \in \ZZ^A$, and when $i=1$ we have not assumed anything new.

  Consider the restricted coordinate map $\sigma_{=i} \colon G_i \to \RR^{d_i}$.  By the properties of the explicitly presented filtered group $G$, this induces a polynomial map $\sigma_{=i} \bmod \ZZ^{d_i} \colon G_i / \Gamma_i \to (\RR^{d_i} / \ZZ^{d_i})_{(i)}$.  Composing and applying the remark above, we can find a polynomial map $p' \colon \ZZ^A \to \RR^{d_i}$ such that $\sigma_{=i}(p(x)) = p'(x) \bmod \ZZ^{d_i}$ for all $x \in \ZZ^A$.

  By Proposition~\ref{prop:filtered-triv}, $\sigma^{-1} \circ p' \colon \ZZ^A \to G$ is again a polynomial map.  Moreover, $p''(x) = p'(x)^{-1} p(x) \Gamma$ is a polynomial map $\ZZ^A \to G_{i+1} / \Gamma_{i+1}$. Applying induction, we can lift $p''$ to a polynomial map $\wt{p''} \colon \ZZ^A \to G$, and then set $\wt{p}(x) = p'(x) \wt{p''}(x)$ to complete the proof.
\end{proof}

If $H$ is instead a finite group, polynomial maps $H \to G$ are constant functions and this ceases to be a particularly useful way to think about polynomial maps $H \to G/\Gamma$.

\subsection{Metrics}

Given an explicitly presented filtered group $G_\bullet$, there are many ways to define metrics on $G$ or $G/\Gamma$ that generate the correct topology, and broadly they will given comparable results on bounded neighbourhoods of the identity in $G$.  For concreteness we make the following definitions.

\begin{definition}
  If $G_\bullet$ is an explicitly presented filtered group, the \emph{right-invariant word metric} $d_G$ on $G$ is defined by $d_G(x,y) = d_G(\id, y x^{-1})$ and
  \[
    d_G(\id, g) = \inf \left\{ \sum_{r=1}^K |a_r| \colon g = \gamma_{i_1,j_1}^{a_1} \dots \gamma_{i_K,j_K}^{a_K} \right\}
  \]
  i.e., the smallest $\ell^1$-norm of the exponents in a representation of $g$ as a ``word'' in the one-parameter subgroups $\gamma_{i,j}^{a}$.

  The associated metric on $G/\Gamma$ is defined to be
  \[
    d_{G/\Gamma}(x,y) = \inf \big\{ d_G(g, g') \colon g \Gamma = x,\ g' \Gamma = y \big\} .
  \]
\end{definition}

Another distance function on $G$ would be to use the $\ell^1$-distance (say) under the coordinate map, $(x,y) \mapsto \|\sigma(x) - \sigma(y)\|_1$.  This does not have good invariance properties.  However, it is sometimes easier to work with, and does control $d_G$ above and below up to polynomial losses.

\begin{proposition}%
  \label{prop:metric-compare}
  Let $G_\bullet$ be an explicitly presented filtered group with total degree $D = \sum_{i=1}^s d_i$, coordinate map $\sigma \colon G \to \bigoplus_{i=1}^s \RR^{d_i}$ and complexity $M$. For all $g \in G$ we have $d_G(\id, g) \le \|\sigma(g)\|_1$ and
  \[
    \|\sigma(g)\|_1 \ll_s (M D)^{O_s(1)} d_G(1,g) \big(1 + d_G(\id_G,g)\big)^{O_s(1)} . 
  \]
  Furthermore, for $g,g' \in G$ we have
  \[
    \|\sigma(g) - \sigma(g')\|_1 \ll_s (M D)^{O_s(1)} d_G(g,g') \big(1 + d_G(\id_G,g) + d_G(\id_G, g')\big)^{O_s(1)} .
  \]
\end{proposition}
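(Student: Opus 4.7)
The plan is to treat the three inequalities in order of difficulty, with the substantive step being the second.

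The first inequality is immediate from the presentation: Definition~\ref{def:explicit-group}(ii) supplies the canonical word $g = \prod_{i,j} \gamma_{i,j}^{\sigma(g)_{i,j}}$ whose exponent sum is exactly $\|\sigma(g)\|_1$, giving $d_G(\id,g) \le \|\sigma(g)\|_1$.

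For the second and main inequality, let $L = d_G(\id,g)$ and, for any $\eps > 0$, fix a near-optimal word $g = \gamma_{i_1,j_1}^{a_1} \cdots \gamma_{i_K,j_K}^{a_K}$ with $\sum_r |a_r| \le (1+\eps)L$; let $P_k$ denote the partial products so $P_K = g$. I would bound $\|\sigma(P_k)\|_1$ iteratively via the polynomial identity $\sigma(P_k) = \phi_1(\sigma(P_{k-1}),\, a_k e_{(i_k,j_k)})$ furnished by Corollary~\ref{cor:explicit-is-easy}. The key structural feature, encoded in the weighted-degree part of Proposition~\ref{prop:taylor}, is that each coordinate $\phi_1(x,y)_{i,j}$ is a polynomial of weighted degree $\le i$ (with weight $i'$ on variables at level $i'$) having $D^{O_s(1)}$ monomials with integer coefficients bounded by $(MD)^{O_s(1)}$. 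When we specialize $y$ to $a_k e_{(i_k,j_k)}$, monomials with at least one factor of $y$ contribute $a_k^m$ for some $m \ge 1$ with $i_k m \le i$, so the corresponding $x$-part has weighted degree $\le i - i_k \le i-1$ and therefore involves only the level-$<i$ coordinates of $\sigma(P_{k-1})$. I would then induct on $i$: the base case $i=1$ is trivial since $G/G_2$ is abelian, so $\phi_1(x,y)_{=1} = x_{=1} + y_{=1}$ and $\|\sigma(P_K)_{=1}\|_1 \le L$. Inductively, assuming $\|\sigma(P_{k-1})_{<i}\|_1 \le (MD)^{O_s(1)}(1+L)^{\alpha_{i-1}+1}$ uniformly in $k$, one extracts a single factor of $|a_k|$ from each new monomial to obtain
\[
  \|\sigma(P_K)_{=i}\|_1 \;\le\; (MD)^{O_s(1)} \sum_{k=1}^K |a_k|\, \big(1 + \|\sigma(P_{k-1})_{<i}\|_1 + |a_k|\big)^{i-1} \;\le\; (MD)^{O_s(1)}\, L\, (1+L)^{(i-1)(\alpha_{i-1}+1)},
\]
so the recursion $\alpha_i \le (i-1)(\alpha_{i-1}+1)$ with $\alpha_1=0$ yields $\alpha_s = O_s(1)$, establishing the second inequality.

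The third inequality follows cleanly. Since $d_G$ is right-invariant, setting $h = g'g^{-1}$ gives $d_G(\id,h) = d_G(g,g')$ and $\sigma(g') = \phi_1(\sigma(h), \sigma(g))$. As $\phi_1(0,y) = y$ (reflecting $\id_G \cdot g' = g'$), every monomial of $\phi_1(x,y)_{i,j} - y_{i,j}$ contains at least one $x$-factor; factoring out $\|x\|_1$ and bounding the remainder by $(1+\|x\|_1+\|y\|_1)^{i-1}$ yields
\[
  \|\sigma(g') - \sigma(g)\|_1 \;\le\; (MD)^{O_s(1)}\, \|\sigma(h)\|_1\, \big(1+\|\sigma(h)\|_1+\|\sigma(g)\|_1\big)^{s-1}.
\]
Applying the just-proven second inequality to $\|\sigma(h)\|_1$ and $\|\sigma(g)\|_1$, invoking $d_G(g,g') \le d_G(\id,g) + d_G(\id,g')$, and folding all $s$-dependent exponents into one $O_s(1)$ completes the argument. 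The main delicacy lies in the inductive bookkeeping in step two — verifying that the $\phi_1$-formula genuinely decouples level-$i$ growth from the level-$i$ coordinate of $P_{k-1}$ itself (avoiding a circular dependence on the very bound being proved), and checking that the nested recursion $\alpha_i \le (i-1)(\alpha_{i-1}+1)$, though factorial in $s$, still gives $\alpha_s = O_s(1)$ for fixed $s$.
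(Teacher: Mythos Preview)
Your proof is correct. The first and third inequalities are handled essentially as in the paper (indeed your treatment of the third is cleaner: the paper's text contains what appears to be a typo, claiming the polynomial $\sigma(x)-\sigma(yx)$ vanishes at $\sigma(x)=0$ rather than $\sigma(y)=0$).

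For the main second inequality, however, you take a genuinely different route. The paper argues combinatorially: it reorders a near-optimal word into canonical order by iterated commutator swaps $\gamma_{i',j'}\gamma_{i,j}=\gamma_{i,j}\gamma_{i',j'}[\gamma_{i,j},\gamma_{i',j'}]^{-1}$, tracking the total exponent weight $B_i=\sum_{r:\,i_r=i}|a_r|$ at each filtration level and bounding how much weight the commutators inject into deeper levels; after $s$ passes the reordered word is in canonical form and $\sum_i B_i$ equals $\|\sigma(g)\|_1$. You instead build partial products $P_k$ and run a level-by-level induction using the polynomial structure of $\phi_1$ from Corollary~\ref{cor:explicit-is-easy}: the key observation that the level-$i$ increment $\sigma(P_k)_{=i}-\sigma(P_{k-1})_{=i}$ depends only on $a_k$ and $\sigma(P_{k-1})_{<i}$ (never on $\sigma(P_{k-1})_{=i}$ itself) lets you telescope cleanly without circularity. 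Both approaches give $O_s(1)$ exponents; yours leans more on the already-packaged Corollary~\ref{cor:explicit-is-easy} and is arguably more systematic, while the paper's word-reordering is more hands-on but makes the source of the bounds (commutator depth) more transparent.
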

\begin{proof}
  The bound $d_G(\id, g) \le \|\sigma(g)\|_1$ is immediate from the definitions of $d_G$ and $\sigma$.
  
  By Corollary~\ref{cor:explicit-is-easy} we have the bound
  \[
    \|\sigma([x,y])\|_1 \ll_s (DM)^{O_s(1)} \|\sigma(x)\|_1 \|\sigma(y)\|_1 (1 + \|\sigma(x)\|_1 + \|\sigma(y)\|_1)^{O_s(1)}
  \]
  where we use the observation that the polynomial $\sigma(x), \sigma(y) \mapsto \sigma([x,y])$ as in Corollary~\ref{cor:explicit-is-easy} vanishes when $\sigma(x) = 0$ or $\sigma(y) =0$, and hence every term has degree at least $1$ in some coordinate of each of $x$ and $y$.  In particular we note this when $x=\gamma_{i,j}^a$, $y=\gamma_{i',j'}^{a'}$.

  We make the following claim.
  \begin{claim}
    For any $a_1,\dots,a_K \in \RR$ and tuples $(i_1,j_1),\dots,(i_K,j_K)$,
    \[
      \big\|\sigma\big(\gamma_{i_1,j_1}^{a_1} \dots \gamma_{i_K,j_K}^{a_K} \big)\big\|_1 \ll_s (M D)^{O_s(1)} \left(\sum_{r=1}^K |a_r| \right) \left(1 + \sum_{r=1}^K |a_r| \right)^{O_s(1)} .
    \]
  \end{claim}
  \begin{proof}[Proof of claim]
    We reorder the terms $\gamma_{i_r,j_r}^{a_r}$ into increasing order of $i$ and $j$, incurring commutators farther down the filtration, and keep track of the total weights in each piece of the filtration:
    \[
      B_{i} = \sum_{r \in [K], i_r = i} |a_r|
    \]
    as the word evolves.  Specifically, we repeatedly choose a term $\gamma_{i',j'}^{a'}$ with $i'$ as small as possible, and $j'$ as small as possible for that $i'$, whose correct position in the ordering lies to its left, and move it as far left as possible.

    After doing this once, the weights of the resulting word are controlled as
    \[
      B'_{i} \le \begin{cases} B_{i} &\colon i < 2 i' \\ B_{i} + |a'| O(MD)^{O_s(1)} (1 + |a'|)^{O_s(1)} \left(1 + \sum_{j=i'}^{i-i'} B_j\right)^{O_s(1)} &\colon i \ge 2 i' .\end{cases}
    \]
    Hence, after moving all the terms with a particular value of $i'$ leftward to their correct positions, the weights $B'_{i}$ afterwards crudely obey the bound
    \[
      B'_i \ll_s (M D)^{O_s(1)} \left(\sum_{j=1}^s B_j\right) \left(1 + \sum_{j=1}^s B_{j}\right)^{O_s(1)} .
    \]
    We only have to do this $s$ times.  At the end, the final sum $\sum_{i=1}^s B'_i$ is exactly $\big\|\sigma\big(\gamma_{i_1,j_1}^{a_1} \dots \gamma_{i_K,j_K}^{a_K}  \big)\big\|_1$, and the claim follows.
  \end{proof}
  The bound on $\|\sigma(g)\|_1$ follows immediately from the claim, by taking an infimum over all words with $\gamma_{i_1,j_1}^{a_1} \dots \gamma_{i_K,j_K}^{a_K} =g$.

  We now consider the third bound from the statement.  We note that
  \[
    \| \sigma(x) - \sigma(y x)\|_1 \ll_s (M D)^{O_s(1)} \|\sigma(y)\|_1 \big(1 + \|\sigma(x)\|_1 + \|\sigma(y)\|_1 \big)^{O_s(1)}
  \]
  by Corollary~\ref{cor:explicit-is-easy} again, where we again observe that the polynomial $\sigma(x), \sigma(y) \mapsto \sigma(x) - \sigma(y x)$ vanishes when $\sigma(x) = 0$, and so every term has degree at least $1$ in $\sigma(y)$.  Therefore, setting $x=g$ and $y = g' g^{-1}$,
  \[
    \| \sigma(g) - \sigma(g') \| \ll_s (M D)^{O_s(1)} \|\sigma(g' g^{-1})\|_1 \big(1 + \|\sigma(g)\|_1 + \|\sigma(g' g^{-1})\|_1 \big)^{O_s(1)}
  \]
  and by the previous bounds applied to the right hand side we get
  \[
    \| \sigma(g) - \sigma(g') \| \ll_s (M D)^{O_s(1)} d_G(\id_G, g' g^{-1}) \big(1 + d_G(\id, g) + d_G(\id, g' g^{-1}) \big)^{O_s(1)}
  \]
  and since $d_G$ is right-invariant and a metric this gives the result.
\end{proof}

\subsection{Miscellaneous results}%
\label{appsub:misc}

We recall that, by definition, for any Host--Kra cube $c \in \HK^k(G/\Gamma)$ there is some $\wt c \in \HK^k(G)$ whose image under the projection $G \to G/\Gamma$ is $c$.  In the spirit of previous results, we will need to know that we can always choose such a lift $\wt c$ whose elements are not too large.

\begin{proposition}%
  \label{prop:effective-cubes}
  If $G$ is an explicitly presented filtered group of degree $s$, dimension $D$ and complexity $M$ and $c \in \HK^k(G/\Gamma)$ is a Host--Kra cube, then there exists $\wt{c} \in \HK^k(G)$ such that $\wt{c}(\omega) \Gamma = c(\omega)$ for each $\omega \in \llbracket k \rrbracket$, and moreover $\max_{\omega \in \llbracket k \rrbracket} d_G(\id_G, \wt{c}(\omega)) \ll_{k} D$.
\end{proposition}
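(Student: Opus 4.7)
The plan is to apply the fundamental domain argument of Proposition \ref{prop:fun-dom} to the Host--Kra cube group $\HK^k(G_\bullet)$ itself, viewed as an explicitly presented filtered group in its own right.

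First I would endow $\HK^k(G_\bullet)$ with this structure. By the claim proved inside Proposition \ref{prop:filtered-triv} (see the decomposition \eqref{eq:cdecomp}), every element $c \in \HK^k(G_\bullet)$ admits a unique representation
\[
  c = \prod_{i=1}^s \prod_{j=1}^{d_i} \prod_{F : \codim F \le i} [\gamma_{i,j}^{a_{i,j,F}}]_F
\]
with $a_{i,j,F} \in \RR$ and the product taken in a fixed order over triples $(i,j,F)$. Using the face commutator identity \eqref{eq:face-commutator} to verify that all basic commutators $[\,[\gamma_{i,j}]_F,\,[\gamma_{i',j'}]_{F'}\,]=[[\gamma_{i,j},\gamma_{i',j'}]]_{F\cap F'}$ have integer coordinates bounded by $M$, the one-parameter subgroups $a \mapsto [\gamma_{i,j}^a]_F$ endow $\HK^k(G_\bullet)$ with the structure of an explicitly presented filtered group of degree $s$, total dimension $\sum_i d_i\cdot|\{F : \codim F \le i\}| \ll_k D$, and complexity $\ll_k M$. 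Its integral subgroup is precisely $\HK^k(\Gamma_\bullet)$, since a face decomposition has integer coefficients exactly when all of its constituent one-parameter subgroup pieces do.

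Next I would pick any lift $\wt c_0 \in \HK^k(G_\bullet)$ of $c$, which exists by the very definition of $\HK^k(G/\Gamma)$. Applying Proposition \ref{prop:fun-dom} to the filtered group $\HK^k(G_\bullet)$, we may write $\wt c_0 = \wt c \,\gamma$ with $\gamma \in \HK^k(\Gamma_\bullet)$ and $\wt c \in \HK^k(G_\bullet)$ whose canonical coordinates $a_{i,j,F}$ all lie in $[0,1)$. Since $\gamma(\omega) \in \Gamma$ for every $\omega \in \llbracket k\rrbracket$, the configuration $\wt c$ still projects vertex-wise to $c$, so it is the desired lift.

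Finally, to bound $d_G(\id_G,\wt c(\omega))$, I would evaluate the canonical representation at $\omega$: this gives
\[
  \wt c(\omega) = \prod_{i,j,F\,:\,\omega\in F} \gamma_{i,j}^{a_{i,j,F}},
\]
a product of at most $D\cdot 3^k$ (and trivially at most $D\cdot 4^k$) factors $\gamma_{i,j}^{a}$ with $|a|<1$. By the very definition of the right-invariant word metric $d_G$, this forces $d_G(\id_G,\wt c(\omega)) \le \sum |a_{i,j,F}| \ll_k D$, giving the claim. The only nontrivial point in the whole argument is the verification that the face decomposition really does give $\HK^k(G_\bullet)$ a well-behaved explicitly presented filtered structure (with $\HK^k(\Gamma_\bullet)$ as integral subgroup), and this is a routine consequence of \eqref{eq:face-commutator} together with the uniqueness part of the decomposition claim in the proof of Proposition \ref{prop:filtered-triv}.
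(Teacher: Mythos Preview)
Your proof is correct and is essentially the same as the paper's. The paper carries out the fundamental domain reduction directly on the face decomposition \eqref{eq:cdecomp} (multiplying on the right by integer-exponent factors $[\gamma_{i,j}^{-\alpha_{i,j,F}}]_F$ in decreasing order of $i$ and absorbing commutators via \eqref{eq:face-commutator}), while you package the same computation by recognising $\HK^k(G_\bullet)$ as an explicitly presented filtered group and invoking Proposition \ref{prop:fun-dom} as a black box; the underlying mechanism and bounds are identical.
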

\begin{proof}
  By Definition~\ref{def:hk-nm} there is some $\wt{c}' \in \HK^k(G)$ with $\wt{c}'(\omega) \Gamma = c(\omega)$ for each $\omega$.  Reordering terms as in the proof of Proposition~\ref{prop:filtered-triv}, we can write
  \[
    \wt{c}' = \prod_{i=1}^s \prod_{j=1}^{d_i} \prod_{\codim(F) \le i} \big[\gamma_{i,j}^{a_{i,j,F}}\big]_F
  \]
  as in~\eqref{eq:cdecomp}.  As in the proof of Proposition~\ref{prop:fun-dom}, and using~\eqref{eq:face-commutator}, in decreasing order of $i$ we can multiply on the right by an expression
  \[
    \prod_{j=1}^{d_i} \prod_{\codim(F) \le i} \big[\gamma_{i,j}^{-\alpha_{i,j,F}}\big]_F
  \]
  where $\alpha_{i,j,F}$ are integers, to replace $a_{i,j,F}$ with real numbers in $[0,1)$, and then absorb commutators in the parameters $a_{i',j,F}$ for $i' > i$.  When this process terminates, the exponents all lie in $[0,1)$ and the distance bound follows.
\end{proof}

We also record the proof of Proposition~\ref{prop:poly-space}, which incorporates much of the work of this appendix.
\begin{proof}[Proof of Proposition~\ref{prop:poly-space}]%
  Part (i) is covered in Example~\ref{ex:poly-examples}, and (ii) and (v) are immediate e.g.\ from Proposition~\ref{prop:poly-equiv}.
  
  Write $d$, $d'$ for the dimension sequences of $G$ and $G'$ respectively, and write $\sigma(G) = \bigoplus_{i=1}^s \RR^{d_i}$ and $\sigma'(G') = \bigoplus_{i=1}^{s'} \RR^{d'_i}$, where $\sigma$, $\sigma'$ are coordinate maps.  These are considered as filtered abelian groups, as in Proposition~\ref{prop:filtered-triv}.
  
  By Proposition~\ref{prop:filtered-triv}, $\poly(G, G')$ as a set is in bijection with $\poly(G, \sigma'(G'))$ or $\poly(\sigma(G), \sigma'(G'))$, since $\sigma$ and $\sigma'$ are polynomial isomorphisms.  The same holds for $\poly(G, G')_i$ and $\poly(\sigma(G), \sigma'(G'))_i$, and for $\poly(\Gamma, G')$ and $\poly(\sigma(\Gamma), \sigma'(G'))$, etc..
  
  For $X= G_\bullet$ or $\Gamma_\bullet$, a map $p \colon X \to \sigma'(G)$ is polynomial if and only if each restriction $p_{=i'} \colon X \to \RR^{d'_{i'}}_{(i')}$ is polynomial.  With Corollary~\ref{cor:rd-extend} this proves (iii).
  
  By Proposition~\ref{prop:taylor}, $p_{=i'}$ is polynomial if and only if $p_{=i'}(x) = \sum_{\substack{w(k) \le i'}} \alpha_{i',k} \binom{x}{k}$ for some $\alpha_{i',k} \in \RR^{d'_{i'}}$.  Moreover, $p_{=i'}$ lies in $\poly\Big(X, \RR^{d'_{i'}}_{(i')}\Big)_{i}$ if and only if $\alpha_{i',k}$ are supported on values with $w(k) \le i'-i$ (where $w(k)$ is as in~\eqref{eq:w-def}).
  
  Hence, a natural basis for $\poly(\sigma(G), \sigma'(G'))$ consists of elements $\wt{p}_{i',j',k}$, corresponding to $\alpha_{i',k} = e_{j'}$, the $j'$-th basis vector in $\RR^{d'_{i'}}$, and all other terms zero.  These give rise to  one-parameter subgroups $p_{i',j',k}^a \colon G \to G'$ given by
  \[
    p_{i',j',k}^a(x) = {\gamma'}_{i,j}^{\binom{\sigma(x)}{k}}
  \]
  and it follows that every polynomial map $G \to G'$ has the form
  \begin{equation}
    \label{eq:poly-decomp}
    x \mapsto \prod_{i',j',k} {\gamma'}_{i',j'}^{a_{i',j',k} \binom{\sigma(x)}{k}} = \prod_{i',j',k} p_{i',j',k}^{a_{i',j',k}}
  \end{equation}
  for some $a_{i',j',k} \in \RR$, and again this lies in $\poly(G, G')_{i}$ if and only if $\alpha_{i',j',k}$ is non-zero only when $w(k) \le i'-i$, and in $\poly(\Gamma, \Gamma')$ if and only if $a_{i',j',k}$ are integers.
  
  For $p(x)$ of the form~\eqref{eq:poly-decomp}, we recall the value $a_{i',j',k}$ is given explicitly by~\eqref{eq:alpha-form}, and hence is a linear combination of values $\sigma'\big(p(\sigma^{-1}(\ell))\big)$, as $\ell$ ranges over non-negative tuples with $\ell_{i,j} \le k_{i,j}$ for each $i,j$, with coefficients bounded by $O_{s'}(1)$.  This implies (vii).
  
  It also follows from this that if we expand $x \mapsto \big[p_{i'_1,j'_1,k_1}(x), p_{i'_2,j'_2,k_2}(x)\big]$ as in~\eqref{eq:poly-decomp}, the integer coefficients $a_{i',j',k}$ are necessarily bounded by $O_{s'}(D' M')^{O_{s'}(1)}$, since its value at any such argument $\sigma^{-1}(\ell)$ has coordinates bounded by $O_{s'}(D' M')^{O_{s'}(1)}$ by Corollary~\ref{cor:explicit-is-easy}.

  If we relabel the functions $p_{i',j',k}$, grouping by the value of $i = i' - w(k)$, this gives functions $p_{i,j}$ with all the properties needed for (iv).  It is immediate from our previous remarks that $\{p_{i,j} \colon i \ge 1\}$ make $\poly(G, G')_1$ (with filtration $\poly(G,G')_1 = \poly(G,G')_1 \supseteq \poly(G,G')_2 \supseteq \dots$) into an explicitly presented nilpotent group with the bounds required, giving (iv).

  Part (vi) follows from the formula for $a_{i,j}$ in terms of $p(\sigma^{-1}(\ell))$ above; the bound on $\|\sigma(x g)\|_1$ or $\|\sigma(g x)\|_1$ in terms of $\|\sigma(x)\|_1$ and $\|\sigma(g)\|_1$ given by Corollary~\ref{cor:explicit-is-easy}; and Proposition~\ref{prop:metric-compare}.

  Finally, (viii) is immediate from a quantitative continuity statement for the multivariate binomial coefficient $x \mapsto \binom{x}{k}$, and Proposition~\ref{prop:metric-compare} again.
\end{proof}

\bibliography{master}{}
\bibliographystyle{amsalpha}
\end{document}